\newtheorem{theorem}{Theorem}[chapter]
\newtheorem{lemma}[theorem]{Lemma}
\theoremstyle{definition}
\newtheorem{definition}[theorem]{Definition}
\newtheorem{example}[theorem]{Example}
\theoremstyle{remark}
\newtheorem{remark}[theorem]{Remark}
\numberwithin{section}{chapter}
\numberwithin{equation}{chapter}
\newtheorem{proposition}[theorem]{Proposition}
\newtheorem{corollary}[theorem]{Corollary}
\newtheorem{conjecture}[theorem]{Conjecture}
\newtheorem{problem}[theorem]{Problem}
\newcommand{\R}{\mathbb{R}}
\newcommand{\C}{\mathbb{C}}
\newcommand{\Z}{\mathbb{Z}}
\newcommand{\N}{\mathbb{N}}
\newcommand{\tor}{\mathbb{T}} 
\newcommand\T{\mathbb T}  
\newcommand{\disk}{\mathbb{D}} 
\newcommand\D{\mathbb D}  
\newcommand{\bH}{\mathbb{H}} 
\newcommand\Ha{{\mathbb C}^+}
\newcommand{\Dom}{D} 
\newcommand{\cP}{{\rm\bf P}}  
\newcommand{\ID}{{\rm \bf ID}} 
\newcommand{\IA}{{\rm\bf IA}} 
\newcommand{\SD}{{\rm\bf SD}} 
\newcommand{\Univ}{{\rm\bf Univ}} 
\newcommand\UM{{\rm\bf UM}} 
\newcommand{\Star}{{\rm\bf Star}} 
\newcommand{\cH}{{\rm\bf H}} 
\newcommand{\LL}{{\rm\bf L}}     
\newcommand{\cL}{\mathcal{L}} 
\newcommand{\GGC}{{\rm\bf GGC}} 
\newcommand{\EGGC}{{\rm\bf EGGC}} 
\newcommand{\cR}{{\rm\bf Ray}} 
\renewcommand{\Re}{\text{\normalfont Re}} 
\renewcommand{\Im}{\text{\normalfont Im}} 
\renewcommand{\epsilon}{\varepsilon}
\newcommand\DF{\mathcal{D}} 
\newcommand{\DD}{D}  
\newcommand{\wto}{\overset{\rm w}{\rightarrow}} 
\newcommand{\KM}{M} 
\newcommand{\gammabm}{{\bm\gamma}} 
\newcommand\haar{\mathbf{h}} 
\newcommand\pk{\mathbf{pk}} 
\newcommand{\bmu}{\overline{\mu}} 
\newcommand{\bfb}{\mathbf{b}}     
\newcommand{\bfm}{\mathbf{m}}     
\newcommand{\bff}{\mathbf{f}}     
\newcommand\MP{\mathbf{mp}} 
\newcommand{\bfu}{\mathbf{u}} 
\newcommand{\AS}{\mathbf{as}} 
\def\submm{\circlearrowright}
\def\mmm{\circlearrowright}
\def\mm{\circlearrowright}
  \def\mathcomposite{%
     \@ifstar
        {\def\@mathcomposite@option{%
            \baselineskip\z@skip\lineskiplimit-\maxdimen}%
         \@mathcomposite}%
        {\let\@mathcomposite@option\offinterlineskip
         \@mathcomposite}}
  \def\@mathcomposite{%
     \@ifnextchar[\@@mathcomposite{\@@mathcomposite[0]}}
  \def\@@mathcomposite[#1]#2#3#4{%
     #2{\mathchoice
        {\@mathcomposite@{#1}{#3}{#4}\displaystyle{1}}%
        {\@mathcomposite@{#1}{#3}{#4}\textstyle{1}}%
        {\@mathcomposite@{#1}{#3}{#4}%
         \scriptstyle\defaultscriptratio}%
        {\@mathcomposite@{#1}{#3}{#4}%
         \scriptscriptstyle\defaultscriptscriptratio}}}
  \def\@mathcomposite@#1#2#3#4#5{%
     \vcenter{\m@th\@mathcomposite@option
        \dimen@\f@size\p@\dimen@#1\dimen@\dimen@#5\dimen@
        \divide\dimen@ 18
        \edef\@mathcomposite@skipamount{\the\dimen@}%
        \ialign{\hfil$#4##$\hfil\cr
           #2\crcr
           \noalign{\vskip\@mathcomposite@skipamount}%
           #3\crcr}}}
\newcommand{\utimes}{\kern-0.1ex\mathcomposite[-12]{\mathrel}{\cup}{\times}}                           
\newcommand{\hutimes}{\kern-0.1ex\mathcomposite[-13.2]{\mathrel}{\cup}{\times}\kern-0.1ex}    
\begin{document}

\title{Monotone Increment Processes, Classical Markov Processes, and Loewner Chains}


\author{Uwe Franz}
\address{D\'epartement de math\'ematiques de Besan\c{c}on,
Universit\'e de Bourgogne Franche-Comt\'e, 16, route de Gray, F-25 030
Besan\c{c}on cedex, France.\newline
Email: uwe.franz@univ-fcomte.fr.\newline
 Website: http://lmb.univ-fcomte.fr/uwe-franz.}

\author{Takahiro Hasebe}
\address{Department of Mathematics, Hokkaido University, North 10, West 8, Kita-ku, Sapporo 060-0810, Japan.\newline
Email: thasebe@math.sci.hokudai.ac.jp.}

\author{Sebastian Schlei{\ss}inger}
\address{University of W\"urzburg, Emil-Fischer-Stra{\ss}e 40, 97074 W\"urzburg, Germany.\newline
 Email: sebastian.schleissinger@mathematik.uni-wuerzburg.de.}

\thanks{U.F.\ was supported by the French "Investissements d'Avenir" program, project ISITE-BFC (contract ANR-15-IDEX-03), by MAEDI/MENESR and JSPS through the SAKURA program, and by the ANR Project No. ANR-19-CE40-0002.\newline \newline
 T.H.\ was supported by JSPS Grant-in-Aid for Young Scientists (B) 15K17549, (A) 17H04823 and by JSPS and MAEDI Japan--France Integrated Action Program (SAKURA). \newline \newline
 S.S.\ was supported by the German Research Foundation (DFG), project no. 401281084.}

\subjclass[2010]{Primary: 30C35, 46L53, 60G51; Secondary: 30C55, 30C80, 46L54, 46N30, 60E07, 60E10, 81R15,
81S25}

\keywords{Loewner chain, Markov process, monotone convolution, monotone independence, quantum stochastic process, quantum probability, univalent Cauchy transform}

\date{\today}

\begin{abstract}
We prove one-to-one correspondences between certain decreasing Loewner chains in the upper half-plane, a special class of real-valued Markov processes, and quantum stochastic processes with
monotonically independent additive increments. This leads us to a detailed investigation of probability measures on $\R$ with univalent Cauchy transform. We discuss several subclasses of such measures and
obtain characterizations in terms of analytic and geometric properties of the corresponding Cauchy transforms.

Furthermore, we obtain analogous results for the setting of decreasing Loewner chains in the unit disk, which correspond to quantum stochastic processes of unitary operators with
monotonically independent multiplicative increments. 
\end{abstract}

\maketitle

\tableofcontents


%

\chapter*{Preface}

In non-commutative probability there exist several notions of independence of (non-commutative) random variables. For each notion of independence we also have a notion of (non-commutative) stochastic processes with independent increments. Here we focus on monotone independence, introduced by Muraki, and therefore on processes with monotonically independent increments, simply called monotone increment processes.

The marginal distributions of a monotone increment process, via the reciprocal\\ Cauchy transform, give rise to a decreasing Loewner chain, which is a family of univalent self-mappings of the upper half-plane with decreasing range and with some normalization at infinity.

Extending work of Biane, Letac-Malouche, and Franz-Muraki, we show how we can associate to each such Loewner chain a classical Markov process. The Markov processes that arise in this way are characterized by a special form of space-homogeneity.

Finally, given such a Markov process we can reconstruct the monotone increment process from it, as a non-commutative stochastic process consisting of (possibly unbounded) self-adjoint operators.

The above constructions give one-to-one correspondences between the three classes of objects: monotone increment processes, certain decreasing Loewner chains, and certain Markov processes. The Loewner chains we encounter have Denjoy-Wolff fixed points at infinity, and the classical theory of Loewner chains is not sufficient to treat them. Fortunately, recent work of Bracci, Contreras, D{\'{\i}}az-Madrigal, and Gumenyuk extended the theory of Loewner chains to the case where the Denjoy-Wolff fixed points of the mappings lie on the boundary, and this theory is quite well suited to our purpose.

We prove that the set of marginal distributions of monotone increment processes is exactly the set of probability measures with univalent Cauchy transform. Thus our bijection leads to a probabilistic interpretation of geometric function theory.

In geometric function theory, one important class of holomorphic functions is the set of starlike functions, i.e., univalent functions whose ranges are starlike domains with respect to some point. We identify the set of starlike Cauchy transforms (with respect to the origin) as the set of monotonically selfdecomposable distributions.  Furthermore, these distributions can be also characterized as the limit distributions in a limit theorem for the monotone convolution, in analogy to a classical limit theorem due to Paul L\'evy.

Similar results hold for unitary processes with monotonically independent multiplicative increments. The corresponding Loewner chains are radial and technically easier to treat. We can associate to unitary monotone increment processes classical Markov processes taking values in the unit circle, and obtain again one-to-one correspondences, this time between unitary monotone increment processes, a class of radial Loewner chains, and a class of Markov processes with values in the unit circle. We also discuss geometric properties and limit theorems for distributions on the unit circle and the multiplicative monotone convolution.\\

Uwe Franz, Takahiro Hasebe, Sebastian Schlei{\ss}inger



\chapter{Introduction}

\section{Quantum probability}

In quantum probability or non-commutative probability theory, random variables are regarded abstractly as elements of a unital *-algebra $\mathcal{A}$ over $\C$ together with a \emph{state} $\Phi$,
i.e. a linear functional $\Phi: \mathcal{A}\to \C$ with $\Phi(X^*X)\geq 0$ and $\Phi(1)=1,$ which corresponds to the classical expectation.
The pair $(\mathcal{A}, \Phi)$ is called an \index{quantum probability space!abstract}\emph{abstract quantum probability space}. An element $X \in \mathcal{A}$ is called a \index{random variable!abstract}\emph{random variable}.

\begin{example}\label{prel_ex_classical}Classical probability spaces fit into this setting as follows. Let $(\Omega, \mathcal{F}, \mathbb P)$ be a classical probability space. Then $\mathcal{A}=\bigcap_{1\leq p<\infty}L^p(\Omega, \C)$ is a unital *-algebra with * defined by $X^*(\omega):= \overline{X(\omega)}$, and
 $\Phi(X)=\int_\Omega X(\omega) {\rm d} \mathbb P(\omega)$ defines a state on $\mathcal{A}.$
\end{example}

\begin{example}\label{prel_ex_matrix} For $n\in\N$, let $\mathcal{A}=M_n(\C)$ be the *-algebra of complex $n\times n$ matrices, with * being the conjugate transpose, and let $\Phi(X)=\frac1{n}\text{Tr}(X)$. Then $(\mathcal{A}, \Phi)$ is an abstract quantum probability space.
\end{example}

It is now of interest to translate common notions from classical probability theory into this non-commutative setting. For instance, the distribution of  a random variable $X\in \mathcal{A}$ can be defined abstractly as the set $\{\Phi((X^{\epsilon_1})^{k_1}  \cdots (X^{\epsilon_n})^{k_n}) \mid \epsilon_i \in \{1,*\}, k_i \in \N, n\in\N\}$, called the *-moments of $X$. In particular, $\Phi(X^n)$ is called the \emph{$n$th moment} of $X\in \mathcal{A}$. If $\mathcal{A}$ is a $C^*$-algebra and $X$ is self-adjoint (namely $X=X^*$), then its moments $(\Phi(X^n))_{n\in\N}$ define a unique probability measure $\mu$ on $\R$ via
\begin{equation}\label{eq_moments}
\Phi(X^n)= \int_\R x^n \,\mu({\rm d}x), \qquad n \in \N.
\end{equation}
The measure $\mu$ is called the \emph{distribution} of $X$. As $X$ is an element of a $C^*$-algebra, the distribution $\mu$ of $X$ has compact support.

For our purposes, it will be mostly sufficient (the exception is Section \ref{alternative_constructions}) to work with the following concrete and most prominent example of a quantum probability space.
\begin{definition}\label{def_qu_space}
 A  \index{quantum probability space!concrete}\emph{(concrete) quantum probability space} $(H,\xi)$ consists of a Hilbert space $H$ and a unit vector $\xi\in H,$ which defines the vector state
 $\Phi_\xi:B(H)\to\mathbb{C}$ via \[\Phi_\xi(X)=\langle\xi, X\xi\rangle.\]
 Here, $B(H)$ denotes the space of all bounded linear operators on $H$ and we use inner products which are linear in the  second argument.
\end{definition}

Under some mild condition, e.g., if for each $X\in\mathcal{A}$ there exists a constant $C_X>0$ such that
\[
\forall Y\in \mathcal{A},\, \Phi(Y^*X^*XY) \le C_X \Phi(Y^*Y),
\]
we can realize an abstract quantum probability space $(\mathcal{A},\Phi)$ as a subalgebra of $B(H)$, acting on a concrete quantum probability space $(H,\xi)$. This is an immediate consequence of the GNS representation theorem. Even without any such a condition we can realize any abstract quantum probability space as an algebra of possibly unbounded linear operators acting on a pre-Hilbert space.

One advantage of using concrete quantum probability spaces is that we can include some unbounded operators as random variables.

\newpage

\begin{definition}\label{def_random_var_extended}${}$ Let $(H,\xi)$ be a concrete quantum probability space.
\begin{enumerate}[\rm(1)]
\item A \index{random variable!normal}\emph{normal random variable} is a densely defined closed operator $X$ such that $X X^* = X^* X$,  i.e.,
\begin{eqnarray*}
\Dom(XX^*)&:=&\{v\in \Dom(X^*): X^*v\in \Dom(X)\} \\
&=& \{v\in \Dom(X): Xv\in \Dom(X^*)\}=:\Dom(X^*X),
\end{eqnarray*}
and $XX^*$ and $X^*X$ agree on their domain.

\item In particular, if $X$ is self-adjoint/unitary, then we call it a \index{random variable!self-adjoint}\index{random variable!unitary}\emph{self-adjoint/unitary random variable}.

\item If $X$ is an essentially self-adjoint operator, then we call it an \index{random variable!self-adjoint}\index{random variable!essentially self-adjoint}\emph{essentially self-adjoint random variable}.
\end{enumerate}
\end{definition}
\begin{example}\label{ex:multiplication_op} Let $(\Omega, \mathcal{F}, \mathbb P)$ be a classical probability space. Then $(L^2(\Omega, \mathcal{F}, \mathbb P), \mathbf1_\Omega)$ is a concrete quantum probability space, where $\mathbf1_\Omega$ is the constant function on $\Omega$ taking the value $1$. 
If $f:\Omega\to\C$ is an $\mathcal{F}$-measurable function, then the multiplication operator $X\colon 
h\mapsto fh$ defined for $h$ in the dense domain
$$
\{h \in L^2(\Omega, \mathcal{F}, \mathbb P): fh \in L^2(\Omega, \mathcal{F}, \mathbb P)\}
$$
is a normal random variable. If $f$ takes only real values, then the random variable $X$ is self-adjoint.
\end{example}

Our random variables $X$ will be possibly unbounded operators, and so the domain of $X^n$ may not contain $\xi$ for some $n$. Thus we cannot define the distribution of $X$ by \eqref{eq_moments} in this case. We can generalize the definition by using resolvents and the Cauchy transform.

\begin{definition}\label{def_random_variable}
Let $X$ be an essentially self-adjoint random variable on a concrete quantum probability space $(H,\xi)$ and consider its closure $\overline{X}$.
The \index{distribution!of a self-adjoint random variable}distribution of $X$ is the unique probability measure $\mu$ on $\R$ such that
\begin{equation}\label{eq:Cauchy}
\Phi_\xi((z-\overline{X})^{-1}) = \int_{\R}\frac{1}{z-x}  \,\mu({\rm d}x) =: G_\mu(z), \qquad z \in \C^+:=\{w\in\C: \Im(w)>0\}.
\end{equation}
The function $G_\mu(z)$, which will also be denoted by $G_X$, is called the \index{Cauchy transform}\emph{Cauchy transform} of $\mu$ or of $X$. The \index{F-transform@$F$-transform}$F$-transform (or reciprocal Cauchy transform) of $\mu$ or of $X$ (denoted by $F_\mu$ or $F_X$) is defined to be the inverse of the Cauchy transform, i.e. as the mapping
\[
F_\mu\colon\Ha\to \Ha, \quad F_\mu(z) = \frac{1}{G_\mu(z)}.
\]
\end{definition}

\begin{remark} If $X$ is self-adjoint and bounded, then the distribution $\mu$ of $X$, as defined above, is indeed
the unique probability measure on $\R$ with moments $\Phi_\xi(X^n)$ as previously defined in \eqref{eq_moments}, because
\[
\Phi_\xi((z-X)^{-1}) = \frac1{z}\Phi_\xi((I-X/z)^{-1}) = \sum_{k=0}^\infty \frac{\Phi_\xi(X^k)}{z^{k+1}}
\]
and
\[
G_\mu(z) = \sum_{k=0}^\infty \frac{\int_\R x^k \,\mu({\rm d}x)}{z^{k+1}}
\]
for all $z\in\C$ with $|z|$ large enough (in fact $|z|> \|X\|$).
\end{remark}

\begin{example} In the setting of Example \ref{ex:multiplication_op}, if $f$ is real-valued, then the distribution of the multiplication operator $X$ is exactly the distribution of $f$ in the usual sense of probability theory.
\end{example}

For the basics of quantum probability we refer the reader to introductions such as \cite{Att,  meyer}.

\section{Monotone independence}\label{sec:monotone_independence}

Muraki has shown in \cite{MR2016316} that
the tensor, Boolean, free, monotone, and anti-monotone independences are the only five possible universal notions of independence in non-commutative probability theory.
We study monotone independence in this paper. This independence was introduced by Muraki \cite{M00,M01,Mur01b} based on earlier work on  monotone Fock spaces \cite{MR1467953, MR1462227,MR1483010, MR1455615}.

In what follows we denote by $C_b(S)$ the set of all continuous and bounded functions $f\colon S\to\C$, where $S$ is a topological space. For a normal random variable $X$ and $f\in C_b(\C)$, $f(X)$ is defined via functional calculus. If $X$ is self-adjoint, we can define $f(X)$ in the same way for $f\in C_b(\R)$.

\begin{definition}${}$\label{def-mon} Let $(H, \xi)$ be a concrete quantum probability space.
\begin{enumerate}[\rm(1)]
\item A family of *-subalgebras $(\mathcal{A}_\iota)_{\iota\in I}$ of $B(H)$  indexed by a linearly ordered set $I$
is called \emph{monotonically independent} if the following two conditions are satisfied.
\begin{description}
\item[(i)] \label{monotone_indep1}
For any $r,s\in\mathbb{N}\cup\{0\}$, $i_1,\ldots,i_r,j,k_1\ldots,k_s\in I$ with
\[
i_1>\cdots>i_r>j < k_s<\cdots<k_1\footnote{If $r=0$, then we just assume $j<k_s < \cdots <k_1$, and similarly for the case $s=0$.}
\]
and for any $X_1\in\mathcal{A}_{i_1},\ldots,X_r\in\mathcal{A}_{i_r}$, $Y\in\mathcal{A}_j$, $Z_1\in\mathcal{A}_{k_1},\ldots,Z_s\in\mathcal{A}_{k_s}$, we have
\[
\Phi_\xi(X_1\cdots X_r Y Z_s\cdots Z_1)=\Phi_\xi(X_1)\cdots \Phi_\xi(X_r)\Phi_\xi(Y) \Phi_\xi(Z_s)\cdots \Phi_\xi(Z_1).
\]
\item[(ii)]
For any $i,j,k\in I$ with $i<j>k$ and any $X\in\mathcal{A}_i$, $Y\in\mathcal{A}_j$, $Z\in\mathcal{A}_k$ we have
\[
XYZ = \Phi_\xi(Y) XZ.
\]
\end{description}
\item\label{enu:monotone_independence2} A family $(X_\iota)_{\iota\in I}$ of normal random variables indexed by a linearly ordered set $I$ is called \emph{monotonically independent} if the family
$(\mathcal A_\iota)_{\iota \in I}$  of *-algebras is monotonically independent, where
\[
\mathcal A_\iota =\{ f(X_{\iota})\mid f\in C_b(\C), f(0)=0\}.
\]
\end{enumerate}
\end{definition}

\begin{remark}
The following definition of monotone independence is also commonly used in the literature:
\begin{description}
\item[(iii)] For any $n\in\N$, $i_1, \dots, i_n \in I$ and any $X_1\in\mathcal{A}_{i_1},\ldots,X_n\in\mathcal{A}_{i_n}$, we have
\[
\Phi_\xi(X_1\cdots X_n) = \Phi_\xi(X_p)\Phi_\xi(X_1\cdots X_{p - 1}X_{p + 1}\cdots X_n)
\]
whenever $p$ is such that $i_{p - 1} < i_p > i_{p + 1}$, where the first or the last inequality is eliminated if $p = 1$ or $p = n$ respectively.
\end{description}

It can be checked that (i) and (ii) imply (iii). We prefer (i) and (ii) since our operator model satisfies these stronger conditions (see Theorem \ref{thm:additive_monotone_construction}). As noted in \cite[Remark 3.2 (c)]{franz07b}, the condition (iii) is equivalent to (i) and (ii) if the vacuum vector $\xi$ is cyclic regarding the algebra generated by $\mathcal{A}_i, i\in I$.
\end{remark}

\begin{remark}\label{rem:unit} Monotone (and anti-monotone) independence of two random variables is defined for ordered pairs $(X,Y)$, while tensor, free and Boolean independences do not need an order. Indeed, it is easy to see that $(X,I)$ is monotonically independent for all
random variables $X$, where $I\in B(H)$ denotes the identity. However, if $(I,X)$ is monotonically independent for $X\in B(H)$, then we have $X=IXI = \Phi_\xi(X)I,$ i.e. $X$ is a multiple of the identity. This also explains why we take functions $f \in C_b(\C)$ such that $f(0)=0$ in \eqref{enu:monotone_independence2}. If we remove the condition $f(0)=0$, then we can take $f \equiv 1$ and so $X_\iota$ must be multiples of the identity for all but the maximal index.
\end{remark}

Once a notion of independence of random variables is defined, one can introduce many concepts similar to those in probability theory: convolution of probability measures, central limit theorems,  quantum stochastic processes with independent increments, and quantum stochastic differential equations. For quantum independent increment processes, see the two books \cite{one, barndorff-nielsen+al}. We also refer to \cite{Oba17}, where the author shows how independences in quantum probability theory can be applied to the analysis of graphs. The different notions of independence appear in connection with certain products for graphs.

Assume that $(X,Y)$ is a pair of monotonically independent self-adjoint random variables on a concrete quantum probability space $(H,\xi)$ such that $X+Y$ is essentially self-adjoint. If $\mu$ and $\nu$ denote the distributions of $X$ and $Y$ respectively, then it can be shown that the distribution $\lambda$ of $X+Y$ can be computed
by
\begin{equation}\label{eq:monotone_convolution}
F_\lambda = F_\mu \circ F_\nu;
\end{equation}
see Lemma \ref{lem:mon-conv} below. 
Conversely, given two probability measures $\mu$ and $\nu$ on $\R$, one can always find monotonically independent self-adjoint operators $X$ and $Y$ with the distributions $\mu$  and $\mu$, respectively (e.g.\ use the operators in \cite[Proposition 3.9]{franz07b}).

Thus the formula \eqref{eq:monotone_convolution} defines the binary operation $\mu \rhd \nu := \lambda$, called the \index{monotone convolution!additive $\rhd$}\emph{(additive) monotone convolution} of probability measures $\mu$ and $\nu$ on $\R$.

\begin{remark} Monotone convolution was originally defined by Muraki in \cite{M00}. He first derived  formula \eqref{eq:monotone_convolution} for compactly supported probability measures by computing the moments of $(X+Y)^n$ when $X$ and $Y$ are monotonically independent bounded self-adjoint random variables \cite[Theorem 3.1]{M00}.  Then he extended the definition of monotone convolution to arbitrary probability measures via complex analysis \cite[Theorem 3.5]{M00}. Franz \cite{franz07b} constructed an unbounded self-adjoint operator model for monotone convolution of arbitrary probability measures as mentioned above.
\end{remark}

A \index{quantum stochastic process}non-commutative stochastic process or a quantum process is simply a family $(X_t)_{t\geq 0}$ of
random variables. In this work we study the following
monotone increment processes.

\begin{definition}\label{def_saip0}
Let $(H,\xi)$ be a concrete quantum probability space and $(X_t)_{t\ge 0}$ a family of essentially self-adjoint operators on $H$ with $X_0=0$.  We
call $(X_t)$ a \index{monotone increment process!additive (SAIP)}\emph{self-adjoint additive
monotone increment process (SAIP)} if the following conditions are satisfied.
\begin{itemize}
\item[(a)] The increment $X_t-X_s$ with domain $\Dom(X_t)\cap \Dom(X_s)$ is essentially self-adjoint for every $0\le s\le t$. 
	\item[(b)] $\Dom(X_s)\cap \Dom(X_t) \cap \Dom(X_u)$ is dense in $H$ and is a core for the increment $X_u-X_s$ for every $0\le s\le t \le u$. 
	\item[(c)] The mapping $(s,t)\mapsto \mu_{st}$ is continuous w.r.t.\ weak convergence, where $\mu_{st}$ denotes the distribution of the increment $X_t-X_s$.
	\item[(d)]The tuple
    \[
(X_{t_1},X_{t_2}-X_{t_1},\ldots,X_{t_n}-X_{t_{n-1}})
\]
is monotonically independent for all $n\in\mathbb{N}$ and all $t_1,\ldots,t_n\in\mathbb{R}$ s.t.\ $0\le t_1\le t_2\le\cdots\le t_n$.
\end{itemize}
Furthermore if $X_t-X_s$ has the same distribution as $X_{t-s}$ for all $0\leq s \leq t$ (the condition of \emph{stationary increments}), then $(X_t)_{t\geq0}$ is called a \index{monotone L\'evy process!additive}\emph{monotone L\'evy process.}
\end{definition}

\section{Summary of results}

The first goal is to establish one-to-one correspondences between SAIPs, some class of classical (in general time-inhomogeneous) Markov processes, and Loewner chains, motivated by or extending the past works  \cite{Bia98,franz+muraki04,franz07b,letac+malouche00,monotone}. We say that a probability kernel $k(x, \cdot),x \in \R$, is \emph{monotonically homogeneous} ($\rhd$-homogeneous, for short) if $\delta_x\rhd k(y, \cdot) = k(x+y,\cdot)$ for all $x,y \in \R$, and that a Markov process $(M_t)_{t\geq0}$ on $\R$ with transition kernels $\{k_{st}\}_{0\leq s\leq t}$ is \index{Markov process!$\rhd$-homogeneous}\emph{$\rhd$-homogeneous} if each $k_{st}$ is $\rhd$-homogeneous and the mapping $(s,t)\mapsto k_{st}(x,\cdot)$ is continuous w.r.t. weak convergence for every $x\in\R$.
{}From the complex analysis side, we call a decreasing Loewner chain $(F_t:\Ha\to\Ha)_{t\geq 0}$ in the upper half-plane $\Ha$ an \index{Loewner chain!additive}additive Loewner chain if $F_t'(\infty) =1$ in the sense of a non-tangential limit, see Definition \ref{EV_def:evolution_family}.

\newpage

The first main result of this paper can be summarized as follows.

\begin{theorem}\label{intro_thm0}
 We establish one-to-one correspondences between the following objects:

\begin{enumerate}[\rm(1)]
\item SAIPs $(X_t)_{t\ge 0}$ up to equivalence,
\item
additive Loewner chains $(F_{t})_{t\geq 0}$ in $\mathbb{C}^+$,
\item
real-valued $\rhd$-homogeneous Markov processes $(M_t)_{t\ge 0}$ with $M_0=0$ up to equivalence.
\end{enumerate}
\end{theorem}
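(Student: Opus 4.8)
The plan is to set up a triangle of constructions $(1)\to(2)\to(3)\to(1)$ and to verify that each composite is the identity on the relevant set of equivalence classes. I would organize the argument around the $F$-transform as the central dictionary: to a SAIP $(X_t)$ we attach the two-parameter family of reciprocal Cauchy transforms $F_{st}:=F_{\mu_{st}}$ of the increment distributions $\mu_{st}$, and conversely a decreasing Loewner chain supplies such a family.

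\emph{Step 1: from a SAIP to an additive Loewner chain.} Given a SAIP $(X_t)$, condition (d) (monotone independence of increments) together with formula \eqref{eq:monotone_convolution} for the distribution of a sum of monotonically independent self-adjoint variables gives the cocycle identity $\mu_{su}=\mu_{st}\rhd\mu_{tu}$ for $0\le s\le t\le u$, i.e. $F_{su}=F_{st}\circ F_{tu}$; here one must use condition (b) to guarantee that the relevant sums of increments are essentially self-adjoint on a common core so that Lemma~\ref{lem:mon-conv} applies. Setting $F_t:=F_{0t}$, one obtains $F_s=F_t\circ F_{s,t}^{\text{\normalfont rev}}$... more precisely the family $(F_{st})$ is a decreasing (or ``reverse'') Loewner evolution family, and by the Bracci--Contreras--D\'iaz-Madrigal--Gumenyuk correspondence between evolution families and Loewner chains one extracts a decreasing Loewner chain $(F_t)_{t\ge0}$. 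The normalization $F_t'(\infty)=1$ follows because each $F_{\mu_{st}}$ is the reciprocal Cauchy transform of a probability measure, hence has the Nevanlinna representation $F_{\mu_{st}}(z)=z-b_{st}+o(1)$ non-tangentially as $z\to\infty$, forcing $F_t'(\infty)=1$; weak continuity in (c) yields the required joint continuity of $(s,t)\mapsto F_{st}$ and hence the Loewner-chain regularity.

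\emph{Step 2: from a Loewner chain to a $\rhd$-homogeneous Markov process.} Starting from an additive Loewner chain, the associated evolution family $(F_{st})$ gives, via the Nevanlinna--Pick representation, probability measures $\mu_{st}$ on $\R$ with $F_{\mu_{st}}=F_{st}$. Define transition kernels by the Bercovici--Voiculescu/Letac--Malouche recipe $k_{st}(x,\cdot):=\delta_x\rhd\mu_{st}$; the semigroup property $k_{st}\ast\!\!\ast\, k_{tu}=k_{su}$ (Chapman--Kolmogorov) is exactly the cocycle identity $\mu_{su}=\mu_{st}\rhd\mu_{tu}$ plus associativity and left-translation-equivariance of $\rhd$ with respect to Dirac masses, i.e. $\delta_x\rhd(\delta_y\rhd\nu)=\delta_{x+y}\rhd\nu$. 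This makes $(k_{st})$ $\rhd$-homogeneous by construction, and weak continuity of $(s,t)\mapsto k_{st}(x,\cdot)$ comes from that of $\mu_{st}$. One then invokes the Kolmogorov extension theorem to produce a Markov process $(M_t)_{t\ge0}$ with $M_0=0$ and these transition kernels; here one should check the standard measurability of $(s,t,x)\mapsto k_{st}(x,A)$, which reduces to continuity of $x\mapsto G_{\delta_x\rhd\mu_{st}}(z)$ and an analytic-to-measurable argument.

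\emph{Step 3: from a Markov process back to a SAIP, and closing the loop.} Given a real-valued $\rhd$-homogeneous Markov process with kernels $k_{st}$, one has $k_{st}(0,\cdot)=\mu_{st}$ for a family of probability measures satisfying the cocycle identity (this is where $\rhd$-homogeneity is used: Chapman--Kolmogorov for the kernels evaluated at $0$ becomes $\mu_{st}\rhd\mu_{tu}=\mu_{su}$). I would then invoke the concrete operator model for monotone convolution --- the monotone-Fock-space construction of Franz (cf.\ \cite[Prop.~3.9]{franz07b} and Theorem~\ref{thm:additive_monotone_construction} referenced in the excerpt) --- to build essentially self-adjoint operators $X_t$ on a monotone product of Hilbert spaces whose increments $X_t-X_s$ have distribution $\mu_{st}$ and are jointly monotonically independent, verifying (a)--(d): (a) and (b) are the self-adjointness/core statements that come with that model, (c) is the assumed weak continuity, (d) is built in. Finally I would check the three round trips are identities on equivalence classes: SAIP $\to$ Loewner chain $\to$ SAIP recovers the same increment distributions and monotone-independence structure, hence an equivalent process (equivalence meaning equality of all mixed moments/$*$-distributions of the family); Loewner chain $\to$ Markov process $\to$ Loewner chain recovers $F_{st}$ from $k_{st}(0,\cdot)$; and Markov process $\to$ SAIP $\to$ Markov process recovers the kernels from $\mu_{st}$ via $\rhd$-homogeneity.

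\emph{Main obstacle.} The genuinely delicate point is not the algebra of the cocycle identity but the analytic passage between the ``decreasing Loewner chain'' picture and the evolution-family/transition-kernel picture when the Denjoy--Wolff points sit at infinity: one must make sure the reparametrization and normalization $F_t'(\infty)=1$ are compatible with the BCDG theory (absolute continuity in $t$, the Herglotz vector field, uniqueness of the associated chain up to the natural equivalence), and simultaneously that the unbounded-operator model in Step~3 genuinely delivers \emph{essentially} self-adjoint increments on a common core so that Lemma~\ref{lem:mon-conv} and conditions (a)--(b) are not vacuous. Handling these domain and normalization issues carefully --- rather than the bijection bookkeeping --- is where the real work lies, and it is precisely what the recent Loewner-theoretic input of Bracci--Contreras--D\'iaz-Madrigal--Gumenyuk is brought in to resolve.
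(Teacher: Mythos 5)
Your Steps 1 and 2 follow the paper's route (increment $F$-transforms give the transition mappings via Lemma \ref{lem:mon-conv}, and kernels are defined by $\int (z-y)^{-1}k_{st}(x,{\rm d}y)=1/(F_{st}(z)-x)$ with Chapman--Kolmogorov coming from the cocycle identity and Stieltjes inversion), but Step 3 has a genuine gap. You propose to go from the Markov process back to a SAIP by feeding the hemigroup $(\mu_{st})$ into the monotone-Fock-space / Franz pair model. That model (and the other Fock-space constructions of Muraki, Franz--Muraki, Belton, Jekel) only produces monotonically independent increments under restrictive hypotheses: bounded operators, finite moments, or stationarity. For a general weakly continuous $\rhd$-hemigroup of possibly heavy-tailed, non-stationary laws there is no existing operator model that delivers conditions (a)--(b) of Definition \ref{def_saip} (essential self-adjointness of each increment on $\Dom(X_t)\cap\Dom(X_s)$ and the common-core condition), and these are exactly the conditions you flag as the ``main obstacle'' without resolving them. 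The paper's actual construction is different and is the substantive content of this direction: one sets $X_t:=P_tM_t$ on $L^2(\Omega,\mathcal F,\mathbb P)$, where $M_t$ acts by multiplication and $P_t=\mathbb E[\,\cdot\,|\mathcal F_t]$, writes down an explicit bounded resolvent $R_{st}(z)$ for $X_t-X_s$ (Proposition \ref{prop:resolvent}), deduces essential self-adjointness and the core statements (Proposition \ref{prop:SAIP1}), proves the sandwich identity $P_s\{z-\overline{(X_t-X_s)}\}^{-1}P_s=G_{st}(z)P_s$ from the $\rhd$-homogeneity/Markov property $P_s(z-M_t)^{-1}P_s=(F_{st}(z)-M_s)^{-1}P_s$ (Proposition \ref{prop:monotone-indep}), and only then obtains monotone independence of the increments (Lemma \ref{lem-mon-relations}, Theorem \ref{thm:additive_monotone_construction}). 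So the Markov process itself, not just the family $(\mu_{st})$, is the input to the operator construction; replacing it by a Fock-space model is not a known alternative in this generality.

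A second, smaller omission: for the correspondence to be a bijection ``up to equivalence'' you need that the increment distributions determine the equivalence class of a SAIP. You assert this via ``equality of all mixed moments/$*$-distributions'', but for unbounded operators mixed moments need not exist and polynomial bookkeeping fails; the paper needs a separate argument (Theorem \ref{thm:equiv_saip}), which factorizes mixed expectations of bounded functions of the $X_{t_i}$ using Trotter's product formula applied to $e^{iz\overline{X_t}}=\text{s-}\lim_N(e^{iz(\overline{X_t-X_s})/N}e^{iz\overline{X_s}/N})^N$ and then Fourier inversion. Also, in Step 1 no appeal to the Bracci--Contreras--D\'iaz-Madrigal--Gumenyuk theory is needed: the chain is simply $F_t=F_{0t}$ with transition mappings $F_{st}$, and the normalization is $\lim_{y\to\infty}F_{st}(iy)/(iy)=1$ from Lemma \ref{Julia} (your expansion $z-b_{st}+o(1)$ presumes a first moment); BCDG enters only for the univalence/embedding statements of Theorem \ref{intro_thm2}, not for the bijection itself.
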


The details of the correspondences in Theorem \ref{intro_thm0} are as follows.  If $(X_t)_{t\geq 0}$ is a SAIP, then the reciprocal Cauchy transforms $(F_{X_t})_{t\geq0}$ form an additive Loewner chain in $\Ha$. Given an additive Loewner chain $(F_t)_{t\geq0}$ in $\Ha$, the Markov transition kernels $(k_{st})_{0\leq s\leq t}$ defined by the identity 
\[ \int_{y\in\R} \frac{1}{z-y} k_{st}(x,{\rm d}y) =\frac{1}{F_s^{-1}\circ F_t(z)-x}, \quad z\in\Ha, x\in\R, \] determine a unique real-valued $\rhd$-homogeneous Markov process $(M_t)_{t\geq0}$.\\  Finally, if $(M_t)_{t\geq0}$ is a real-valued $\rhd$-homogeneous Markov process on $(\Omega, \mathcal{F}, \mathbb P)$ with filtration $(\mathcal{F}_t)_{t\geq0}$, then the non-commutative stochastic process $(X_t)_{t\geq0}$ defined by \[X_th=\mathbb E [M_th |\mathcal{F}_t]\] for $h\in L^2(\Omega, \mathcal{F}, \mathbb P)$ satisfying the condition $M_th\in L^2(\Omega, \mathcal{F}, \mathbb P)$ is a SAIP. Note that the conditional expectation $P_t=\mathbb E [\,\cdot\, |\mathcal{F}_t]$ is viewed as the orthogonal projection from $L^2(\Omega, \mathcal{F}, \mathbb P)$ onto the subspace $L^2(\Omega, \mathcal{F}_t,\mathbb P)$. Thus, with a slight abuse of notation, we may write $X_t=P_tM_t$ by viewing the function $M_t$ as a multiplication operator on
$L^2(\Omega, \mathcal{F}, \mathbb P)$ in the sense of Example \ref{ex:multiplication_op}.

\begin{remark}
In the literature constructions of SAIPs have been limited to the case of bounded operators. In \cite{MR1462227}, Muraki constructed a \index{Brownian motion!monotone}monotone Brownian motion, i.e. a SAIP $(X_t)_{t\geq0}$ where the distribution of $X_t -X_s$ is the \index{arcsine distribution}arcsine distribution with mean 0 and variance $t-s$.  More generally, monotone L\'evy processes consisting of bounded self-adjoint operators have been constructed in \cite[Theorem 4.1]{franz+muraki04}. Jekel \cite[Theorem 6.25]{jekel} constructed (operator-valued) bounded monotone increment processes on a monotone Fock space.  Our construction based on classical Markov processes is different from all of them and has the advantage that we can include any unbounded processes. Yet other constructions of monotone L\'evy processes with finite moments are discussed in Section  \ref{alternative_constructions}.
\end{remark}

The class of (in particular, stationary) $\rhd$-homogeneous Markov processes may be of independent interest in terms of probability theory, so we study their  further properties. We will prove that they have
\begin{itemize}
\item the Feller property,
\item an explicit formula for the infinitesimal generator,
\item a martingale property.
\end{itemize}

It is another remarkable fact that a probability measure $\mu$ can occur as marginal distribution of an SAIP iff its Cauchy transform $G_\mu=1/F_\mu$ is univalent.

\begin{theorem}\label{intro_thm2}Let $\mu$ be a probability measure on $\R$. The following statements are equivalent.
\begin{enumerate}[\rm(1)]
\item\label{intro_thm2-1} \index{F-transform@$F$-transform}\index{Cauchy transform!univalent}$F_\mu$ is univalent.

\item \label{intro_thm2-2}There exists a SAIP $(X_t)_{t\geq0}$ such that the distribution of $X_1$ is $\mu$.

\item\label{intro_thm2-3} There exists an additive Loewner chain $(F_t)_{t\geq0}$ in $\Ha$ such that $F_1 = F_\mu$.
\end{enumerate}
\end{theorem}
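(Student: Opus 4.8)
The plan is to prove the cycle of implications $(2)\Rightarrow(3)\Rightarrow(1)\Rightarrow(2)$, since the first two links are essentially consequences of the machinery already set up in Theorem \ref{intro_thm0} and standard properties of decreasing Loewner chains, while the real content is the last implication: producing a SAIP with prescribed marginal $\mu$ from a single univalent $F_\mu$.

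For $(2)\Rightarrow(3)$: by the correspondence in Theorem \ref{intro_thm0}, a SAIP $(X_t)_{t\ge 0}$ gives rise to the additive Loewner chain $(F_{X_t})_{t\ge 0}$, and if the distribution of $X_1$ is $\mu$ then $F_{X_1}=F_\mu$, so we take this as the required chain. For $(3)\Rightarrow(2)$: given an additive Loewner chain $(F_t)_{t\ge 0}$ in $\Ha$ with $F_1=F_\mu$, Theorem \ref{intro_thm0} associates to it a SAIP $(X_t)_{t\ge 0}$ with $F_{X_t}=F_t$ for all $t$, in particular $F_{X_1}=F_\mu$, so $X_1$ has distribution $\mu$; this is immediate once the details of the correspondence (as spelled out after Theorem \ref{intro_thm0}) are invoked.

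The main work is $(1)\Rightarrow(3)$ — equivalently, $(1)\Rightarrow(2)$ via the chain just described. Assume $F_\mu:\Ha\to\Ha$ is univalent with $F_\mu'(\infty)=1$ (the Nevanlinna normalization of an $F$-transform forces the nontangential limit $F_\mu(z)/z\to 1$, so this holds automatically). The goal is to embed $F_\mu$ as the time-$1$ map of a decreasing Loewner chain $(F_t)_{t\ge 0}$ of univalent self-maps of $\Ha$ with $F_0=\mathrm{id}$, $F_t'(\infty)=1$, and $F_s(\Ha)\supseteq F_t(\Ha)$ for $s\le t$. The natural idea is to interpolate: since $F_\mu$ is univalent, its image $\Omega_1:=F_\mu(\Ha)$ is a simply connected domain with $\Ha\supseteq\Omega_1$, and one wants a decreasing family of simply connected domains $(\Omega_t)$ with $\Omega_0=\Ha$ and $\Omega_1$ the given one, together with the Riemann-type maps $F_t:\Ha\to\Omega_t$ normalized at $\infty$. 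For $t\in[0,1]$ one can try a geometric interpolation between $\Ha$ and $\Omega_1$; for $t>1$ one continues decreasing, e.g. by shrinking toward a slit or by a hyperbolic contraction, any choice producing a genuine decreasing chain will do since we only need $F_1=F_\mu$. One must then verify (i) each $\Omega_t$ is simply connected and admits the correct hydrodynamic normalization so that $F_t'(\infty)=1$ — this fixes the ambiguity in the Riemann map up to nothing, making $F_t$ canonical; (ii) the family is a decreasing Loewner chain in the sense of Definition \ref{EV_def:evolution_family}, i.e. the transition maps $F_s^{-1}\circ F_t$ form a decreasing evolution family; and (iii) the required continuity in $t$, which by Carathéodory kernel convergence reduces to continuity of the domains $\Omega_t$ in the Carathéodory sense. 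The cleanest route is probably to invoke the Bracci–Contreras–Díaz-Madrigal–Gumenyuk theory of Loewner chains with boundary Denjoy–Wolff point, referenced in the Preface: that framework guarantees that any univalent self-map $F$ of $\Ha$ fixing $\infty$ with $F'(\infty)=1$ embeds into such a chain, because the associated one-parameter data (Herglotz vector field / infinitesimal generator) can be prescribed, and $F_\mu$ itself is the time-$1$ map of the autonomous flow it generates when it is infinitesimally divisible — but in general one should not assume divisibility, so instead one builds the chain by the domain-interpolation argument above and checks the axioms directly.

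The step I expect to be the main obstacle is establishing that the interpolating family is genuinely \emph{decreasing} as a Loewner chain — i.e. that $F_s^{-1}\circ F_t$ is a well-defined self-map of $\Ha$ for all $s\le t$, equivalently $\Omega_t\subseteq\Omega_s$ — simultaneously with keeping the normalization $F_t'(\infty)=1$ and ensuring continuity. Monotonicity of the domains and the hydrodynamic normalization pull in opposite directions (shrinking a domain tends to change its "capacity at $\infty$"), so the interpolation must be chosen with care, most likely by a time-change that reparametrizes so that the half-plane capacity decreases at a controlled rate; this is exactly where the quoted extension of Loewner theory to boundary fixed points does the heavy lifting, and invoking it (rather than reproving it) is the intended shortcut. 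Once the chain exists, reading off $F_1=F_\mu$ is immediate, and the passage back to a SAIP is Theorem \ref{intro_thm0}.
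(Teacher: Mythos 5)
Your handling of $(2)\Leftrightarrow(3)$ via the correspondence of Theorem \ref{intro_thm0} matches the paper and is fine, but the rest has two genuine gaps. First, your cycle $(2)\Rightarrow(3)\Rightarrow(1)\Rightarrow(2)$ needs the link $(3)\Rightarrow(1)$, and you dispose of it with the phrase ``standard properties of decreasing Loewner chains'' and never return to it (your body instead proves $(3)\Rightarrow(2)$, which is not part of the cycle). Under Definition \ref{EV_def:evolution_family} a Loewner chain is only assumed continuous in $t$: univalence is \emph{not} part of the definition, and the chain need not be absolutely continuous in $t$ (Example \ref{EV_not_abs_continuous}), so the ``order $d$'' theory of \cite{MR2995431,contreres+al2014}, where univalence comes for free from the Loewner equation, does not apply directly. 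The paper has to prove univalence as Theorem \ref{EV_univalence}, by conjugating to the disk, renormalizing with Moebius maps, and reparametrizing time so that the normalized setting of Proposition \ref{EV_normal_mult} applies; this is genuine work, not a citable standard fact, and without it neither $(2)$ nor $(3)$ is shown to imply $(1)$.

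Second, your main implication $(1)\Rightarrow(3)$ is not actually proved. The domain-interpolation construction you fall back on requires knowing that each intermediate domain $\Omega_t$ is the range of a univalent $F$-transform, i.e.\ that its Riemann map onto $\Ha$ can be normalized with angular derivative $1$ at $\infty$; which domains admit this is exactly the open Problem stated in Section \ref{sec_EV_univalence}, so ``any choice producing a genuine decreasing chain will do'' cannot be verified, and this is precisely the obstacle you yourself identify without resolving. Moreover, your reason for retreating from the citation route rests on a confusion: \cite[Theorem 1.2]{bracci+al2015} embeds an arbitrary univalent self-map of $\Ha$ with a boundary regular fixed point at $\infty$ into a (generally non-autonomous) \emph{evolution family}; it does not require $F_\mu$ to be the time-one map of an autonomous flow, which would amount to $\rhd$-infinite divisibility and fails, e.g., for the semicircle law (Example \ref{semicircle not ID}). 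The paper's proof of Theorem \ref{EV_embed_F}(a) is exactly this: apply \cite[Theorem 1.2]{bracci+al2015} with $\Lambda\equiv0$ to obtain an evolution family $(f_{st})_{0\le s\le t\le 1}$ with $f_{01}=F_\mu$, boundary regular fixed point at $\infty$, and $f_{st}'(\infty)=1$, so that each $f_{st}$ is an $F$-transform by Lemma \ref{Julia}, and then reverse time by setting $f_t:=f_{1-t,1}$ for $t\in[0,1]$ and $f_t:=f_{0,1}$ for $t>1$. Replacing your interpolation sketch by this argument (and supplying Theorem \ref{EV_univalence} for $(3)\Rightarrow(1)$) is what is needed to close the proof.
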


The idea of the proof of Theorem \ref{intro_thm2} is as follows. The equivalence between  \eqref{intro_thm2-2} and \eqref{intro_thm2-3} is a part of Theorem \ref{intro_thm0}. Under suitable Cayley transforms and a suitable time change, the Loewner chain $(F_t)$ can be  transformed into a Loewner chain on the unit disk that is differentiable regarding $t$ almost everywhere and satisfies Loewner's partial differential equation (Section \ref{subsec_LPDE}). Then we can use recent work on Loewner chains \cite{contreres+al2014} to prove the equivalence between \eqref{intro_thm2-1} and \eqref{intro_thm2-3}.

\begin{remark}
The equivalence between \eqref{intro_thm2-1} and \eqref{intro_thm2-2} of Theorem \ref{intro_thm2} is to be compared with classical probability (see \cite[Theorems 7.10 and 9.1]{Sat13}): given a stochastic process $(Y_t)_{t\geq0}$ with independent (not necessarily stationary) increments, $Y_0=0$ and suitable continuity properties, the distribution of $Y_1$ is infinitely divisible; conversely any infinitely divisible distribution can be realized as such. The same statement is true if we consider L\'evy processes (namely, if we assume stationary increments). However, as we will see in Example \ref{semicircle not ID}, there exists a probability measure $\mu$ which is not monotonically infinitely divisible but $F_\mu$ is univalent. Therefore there exists a gap between the laws of SAIPs and those of monotone L\'evy processes.
\end{remark}

\begin{remark}R. Bauer has studied univalent Cauchy transforms in \cite{Bau05} and he has also regarded Loewner's differential equation from a quantum probabilistic point of view, see \cite{bauer03} and \cite{bauer04}. The relation to monotone independence is also discussed in \cite{monotone}.
\end{remark}

The study of univalent functions is a classical subject of \emph{geometric function theory}, which investigates
analytic functions in terms of their geometric properties. From this point of view, it is interesting to classify
univalent functions via the geometry of their image domains.
The fact that every probability measure $\mu$ on $\R$ is uniquely determined by its $F$-transform (due to the Stieltjes-Perron inversion formula)  motivates the investigation of relations between properties of the measure $\mu$ and analytic/geometric properties of  $F_\mu$ (or of other transforms such as the Voiculescu transform). 

Theorem \ref{intro_thm2} shows that univalence of $F$-transforms can indeed be interpreted in a probabilistic way,
which leads us to the question whether the many well-known subclasses of univalent functions\footnote{Such as convex, starlike, and spirallike functions, slit mappings, mappings with quasiconformal extensions, etc.} have reasonable interpretations in the context of monotone independence. In Sections \ref{section_limits_additive} and \ref{sectionunitcircle}, we investigate some subclasses from this point of view: limits of infinitesimal arrays,
infinitely divisible distributions, unimodal distributions, and selfdecomposable distributions. We compare such classes to the analogues obtained
by switching to classical and free independence, and we give various illustrating examples.

\begin{definition}
A family $\{\mu_{n,j}\}_{1 \leq j \leq k_n, 1\leq n}$ of probability measures on $\R$ is called an \index{infinitesimal array!of measures on $\R$}infinitesimal array if $k_n \to \infty$ as $n\to \infty$ and for any $\delta>0$
\begin{equation*}
\lim_{n\to \infty}\sup_{1 \leq j \leq k_n} \mu_{n,j}([-\delta,\delta]^c)=0.
\end{equation*}
\end{definition}
The reason for requiring $k_n\to \infty$ is that if $\{k_n\}_{n\ge1}$ were bounded, then the limit distributions in Theorem  \ref{intro_thm3}\eqref{intro_thm3-1} below would be trivially $\delta_0$.

We first establish the following limit theorem with respect to monotone convolution.

\begin{theorem}[Theorem \ref{thminfinitesimalarray}]\label{intro_thm3} For a probability measure $\nu$ on $\R$, denote by $\sigma^2(\nu)$ its variance. 
\begin{enumerate}[\rm(1)]
\item\label{intro_thm3-1} If $\mu$ is a probability measure such that \index{F-transform@$F$-transform}\index{Cauchy transform!univalent}$F_\mu$ is univalent, then there exists an infinitesimal array $\{\mu_{n,j}\}_{1 \leq j \leq k_n, 1\leq n}$ such that
\begin{equation*}
\mu_{n,1} \rhd \mu_{n,2} \rhd \cdots \rhd \mu_{n,k_n}
\end{equation*}
 converges weakly to $\mu$ as $n\to\infty$.

\item\label{intro_thm3-2} If an infinitesimal array $\{\mu_{n,j}\}_{1 \leq j \leq k_n, 1\leq n}$ satisfies the variance condition
$$
\sup_{1\leq j\leq k_n}\sigma^2(\mu_{n,j}) \to 0 \quad \text{as~} n\to\infty
$$
and if $\mu_{n,1} \rhd \mu_{n,2} \rhd \cdots \rhd \mu_{n,k_n} $ converges weakly to a probability measure $\mu$, then $F_\mu$ is univalent.
\end{enumerate}
\end{theorem}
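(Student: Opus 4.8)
The plan is to prove the two parts separately, in each case passing through the correspondence between monotone convolution and composition of $F$-transforms, i.e. $F_{\mu \rhd \nu} = F_\mu \circ F_\nu$. For part \eqref{intro_thm3-1}, suppose $F_\mu$ is univalent. By the equivalence \eqref{intro_thm2-1}$\Leftrightarrow$\eqref{intro_thm2-3} of Theorem \ref{intro_thm2}, there is an additive Loewner chain $(F_t)_{t\geq 0}$ with $F_1 = F_\mu$. First I would fix the partition $t_{n,j} = j/k_n$, $0 \le j \le k_n$, for an arbitrary sequence $k_n \to \infty$, and set $\mu_{n,j}$ to be the distribution whose $F$-transform is the transition map $F_{t_{n,j-1}}^{-1}\circ F_{t_{n,j}}$ (this is a genuine $F$-transform of a probability measure because the Loewner chain is decreasing, so these maps are $F$-transforms of probability measures — this is exactly what makes $(k_{st})$ a Markov kernel in Theorem \ref{intro_thm0}). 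Then $\mu_{n,1}\rhd\cdots\rhd\mu_{n,k_n}$ has $F$-transform $F_0^{-1}\circ F_0 \circ (\text{telescoping}) = F_{t_{n,0}}^{-1}\circ F_{t_{n,k_n}} = F_0^{-1}\circ F_1 = F_\mu$ exactly (using $F_0 = \mathrm{id}$), so the convolution equals $\mu$ on the nose, not just in the limit. It then remains to verify that $\{\mu_{n,j}\}$ is an infinitesimal array: this follows from the (weak) continuity of $t \mapsto F_t$ built into the definition of a Loewner chain, together with a standard fact that weak continuity at $t=0$ with limit $\delta_0$, on a compact time interval, gives the required uniform smallness $\sup_j \mu_{n,j}([-\delta,\delta]^c)\to 0$; here one uses uniform continuity of $(s,t)\mapsto F_s^{-1}\circ F_t$ on $[0,1]^2$ in a suitable topology (e.g. locally uniform convergence of analytic functions, which transfers to weak convergence of the associated measures). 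The main technical point in this part is the uniformity of the continuity estimate, which I would get from compactness of $[0,1]$ plus equicontinuity coming from the Nevanlinna representation of $F$-transforms (the maps $F_s^{-1}\circ F_t$ have the form $z + b_{st} + (\text{negative-imaginary-part term})$ with $b_{st}, $ and the measure in the Nevanlinna part, depending continuously on $(s,t)$).

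For part \eqref{intro_thm3-2}, suppose the infinitesimal array additionally satisfies $\sup_j \sigma^2(\mu_{n,j})\to 0$ and that $\lambda_n := \mu_{n,1}\rhd\cdots\rhd\mu_{n,k_n}$ converges weakly to $\mu$. I want to show $F_\mu$ is univalent, and by Theorem \ref{intro_thm2} it suffices to produce an additive Loewner chain $(F_t)$ with $F_1 = F_\mu$, or equivalently to exhibit $F_\mu$ as a composition limit of $F$-transforms that are infinitesimally close to the identity. The strategy is: set $g_{n,j} := F_{\mu_{n,j}}$, so each $g_{n,j}(z) = z + c_{n,j} + \int (\cdots)$ with $c_{n,j}$ real and the Nevanlinna measure of total mass controlled; the variance condition $\sigma^2(\mu_{n,j}) = (\text{first moment of Nevanlinna measure}) - c_{n,j}^2/(\dots)$ — more precisely the second moment data — forces these maps to be uniformly close to translations $z \mapsto z + c_{n,j}$ with $\sum_j$ of the "defect" controlled. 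One then recognizes $F_{\lambda_n} = g_{n,1}\circ\cdots\circ g_{n,k_n}$ as a discrete Loewner-type evolution: define $F^{(n)}_t := g_{n,1}\circ\cdots\circ g_{n,\lfloor t k_n\rfloor}$ (composition up to time $t$), a decreasing chain of $F$-transforms, normalized at $\infty$; by the infinitesimality plus variance condition these chains are precompact in the appropriate topology and any subsequential limit $(F_t)$ is an additive Loewner chain with $F_1 = \lim F_{\lambda_n} = F_\mu$. Hence $F_\mu$ is univalent.

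The hard part will be the precompactness / limit-extraction argument in part \eqref{intro_thm3-2}: one must show that the discrete compositions $F^{(n)}_t$ do not degenerate and converge (along a subsequence) to a genuine Loewner chain, which requires uniform control of the chains both in $z$ (away from $\infty$) and in $t$, uniformly in $n$. This is where the variance condition is essential — it is exactly what prevents the "jumps" of the discrete chain from accumulating into something that fails to be a decreasing Loewner chain (e.g. it rules out the phenomenon in Example \ref{semicircle not ID}). Concretely I expect to need: (a) a Lipschitz-type bound $|g_{n,j}(z) - z - c_{n,j}| \le \epsilon_{n,j}(z)$ with $\sum_j \epsilon_{n,j}$ bounded, derived from the Nevanlinna/Julia–Wolff–Carathéodory data of $g_{n,j}$ together with the second-moment bound; (b) an Arzelà–Ascoli argument on the induced families of analytic functions on compact subsets of $\Ha$; and (c) identification of the limit's normalization $F_t'(\infty) = 1$, which passes to the limit because each $g_{n,j}'(\infty) = 1$ and compositions preserve this. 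Assembling (a)–(c) and invoking Theorem \ref{intro_thm2} closes the proof.
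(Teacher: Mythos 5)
Your argument for part \eqref{intro_thm3-1} is essentially the paper's proof: embed $F_\mu$ into an additive Loewner chain via Theorem \ref{EV_embed_F}(a), take $\mu_{n,j}$ to be the transition measures along the partition $j/n$, note that the telescoping composition gives $\mu$ exactly, and deduce infinitesimality from the continuity of $(s,t)\mapsto F_{\mu_{st}}$; nothing further is needed there.

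For part \eqref{intro_thm3-2}, however, your strategy has a genuine gap. You propose to build discrete chains $F^{(n)}_t=g_{n,1}\circ\cdots\circ g_{n,\lfloor t k_n\rfloor}$ and extract a subsequential limit which is an additive Loewner chain with $F_1=F_\mu$, claiming precompactness and a summable bound $\sum_j \epsilon_{n,j}$ from the variance hypothesis. But the hypothesis controls only $\sup_j\sigma^2(\mu_{n,j})$, not $\sum_j\sigma^2(\mu_{n,j})$ and not the drifts, and weak convergence of the full composition gives no control of the intermediate compositions. A concrete obstruction: take $k_n=2n$, $\mu_{n,j}=\delta_{1/\sqrt{n}}$ for $j\le n$ and $\mu_{n,j}=\delta_{-1/\sqrt{n}}$ for $j>n$. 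This array is infinitesimal with all variances zero and $\mu_{n,1}\rhd\cdots\rhd\mu_{n,k_n}=\delta_0\wto\delta_0$, yet $F^{(n)}_{1/2}(z)=z-\sqrt{n}$, so the intermediate measures escape to infinity and no subsequence of $(F^{(n)}_t)$ converges to a Loewner chain; your steps (a)--(c) cannot be carried out. The paper's proof of \eqref{intro_thm3-2} avoids any chain construction and is much shorter: by Lemma \ref{univfinitevariance} (Maassen's estimate), a measure $\nu$ with variance $\sigma^2(\nu)$ has $F_\nu$ univalent on the half-plane $\C_{\sigma(\nu)}$, so once $\sup_j\sigma^2(\mu_{n,j})$ is small each $F_{\mu_{n,j}}$ is univalent on $\C_\epsilon$; since $\Im F_{\mu_{n,j}}(z)\ge\Im z$ each factor maps $\C_\epsilon$ into itself, hence the full composition $F_{\mu_{n,1}}\circ\cdots\circ F_{\mu_{n,k_n}}$ is univalent on $\C_\epsilon$, and by Hurwitz its locally uniform limit $F_\mu$, being non-constant, is univalent on $\C_\epsilon$; letting $\epsilon\downarrow0$ gives univalence on $\C^+$. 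If you want to keep your part (2), you should replace the chain-extraction argument by this per-factor half-plane univalence argument (or supply an additional hypothesis, such as a bound on $\sum_j\sigma^2(\mu_{n,j})$ and on the drifts, which the theorem does not assume).
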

The proof of \eqref{intro_thm3-1} is just an application of Theorem \ref{intro_thm2}.  Furthermore, an analogous and more complete limit theorem for multiplicative monotone convolution on the unit circle is proved in Theorem \ref{Khintchineunit}.

The most interesting and complete result is the following analytic-geometric characterization of selfdecomposable distributions.
A probability measure $\mu$ on $\R$ is called \index{selfdecomposable distribution!additive monotone convolution}\emph{monotonically selfdecomposable} if for every $c\in (0,1)$ there exists a probability measure $\bmu^c$ on $\R$ such that
\begin{equation*}
\mu = (\DD_c\mu) \rhd \bmu^c,
\end{equation*}
where  $(\DD_c\mu)(A)=\mu(A/c)$ for Borel sets $A \subset \R$. We obtain the following characterization.

\begin{theorem}[Theorems \ref{msd2} and \ref{starlike1}] Let $\mu$ be a probability measure on $\R$. The following statements are equivalent.
\begin{enumerate}[\rm (a)]
 \item $\mu$ is monotonically selfdecomposable.
 \item $F_\mu$ is univalent and \index{F-transform@$F$-transform}\index{Cauchy transform!starlike}starlike w.r.t. $\infty$ in the sense that $c\cdot F_\mu(\Ha)\subset F_\mu(\Ha)$ for all $c\in (1,\infty).$
 \item
 $\Im\left(\frac{F'_\mu(z)}{F_\mu(z)}\right)\leq 0$ for all $z\in \Ha.$
 \item
 There exists a probability measure $\nu$ on $\R$ satisfying the integrability condition\\
$
\int_{\R} \log(1+|x|)\,\nu({\rm d}x)<\infty
$
such that
$$
F_\mu(z) = \exp\left(\int_\R \log(z-x) \, \nu({\rm d}x)\right), \qquad z \in \C^+.
$$
\end{enumerate}
\end{theorem}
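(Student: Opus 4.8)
The plan is to prove the cycle of implications (d) $\Rightarrow$ (b) $\Rightarrow$ (c) $\Rightarrow$ (a) and then close the loop with (a) $\Rightarrow$ (d), drawing on the Loewner-theoretic machinery established earlier (Theorems \ref{intro_thm0} and \ref{intro_thm2}) and on the classical theory of starlike functions.

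First, for (d) $\Rightarrow$ (b), suppose $F_\mu(z)=\exp\bigl(\int_\R\log(z-x)\,\nu({\rm d}x)\bigr)$. The log-integrability condition guarantees the integral converges and defines a holomorphic function on $\Ha$ (choosing the branch of $\log(z-x)$ with argument in $(0,\pi)$). One computes directly that $F_\mu'(z)/F_\mu(z)=\int_\R\frac{1}{z-x}\,\nu({\rm d}x)=G_\nu(z)$, which maps $\Ha$ into the lower half-plane $-\Ha$; hence $\Im(F_\mu'/F_\mu)\le 0$, giving (b). One must also check that $F_\mu$ really is the reciprocal Cauchy transform of a probability measure, i.e.\ that $F_\mu:\Ha\to\Ha$ with $F_\mu(z)/z\to 1$ nontangentially as $z\to\infty$; this follows from the Nevanlinna representation, since $\log F_\mu(z)=\int\log(z-x)\,\nu({\rm d}x)$ has nonnegative imaginary part bounded by $\pi$, so $F_\mu$ has nonnegative imaginary part, and the asymptotics $\log(z-x)=\log z+o(1)$ yield $F_\mu(z)\sim z$.

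Next, (b) $\Leftrightarrow$ (c) is essentially the classical analytic characterization of starlikeness (the Noshiro--Warschawski / Nevanlinna criterion) transported to $\Ha$: a locally univalent holomorphic $F:\Ha\to\Ha$ has range starlike with respect to $\infty$ (meaning $cF(\Ha)\subset F(\Ha)$ for $c>1$, equivalently $\C\setminus F(\Ha)$ is starlike w.r.t.\ $0$) if and only if $\Im(zF'(z)/F(z))\le 0$ after the appropriate coordinate change — here, because the star center is at $\infty$ rather than $0$, the relevant quantity is $F'(z)/F(z)$ rather than $zF'(z)/F(z)$, and the correct inequality direction is $\Im(F'_\mu/F_\mu)\le0$ as stated. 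The equivalence with univalence-plus-starlikeness is the standard fact that this differential inequality is equivalent to the range being a starlike domain; one direction is an argument-principle/winding-number computation, the other uses that a starlike domain is swept out by the curves $t\mapsto e^{-t}F(z)$. Then (a) $\Leftrightarrow$ (b) should come from the characterization already available: monotone selfdecomposability $\mu=(\DD_c\mu)\rhd\bmu^c$ translates, via $F_{\DD_c\mu}(z)=cF_\mu(z/c)$ and the convolution formula $F_{\mu_1\rhd\mu_2}=F_{\mu_1}\circ F_{\mu_2}$, into the statement that $z\mapsto cF_\mu(z/c)$ is "subordinate from the left" to $F_\mu$, i.e.\ $F_\mu(\Ha)\subset cF_\mu(\Ha)^{-1}$-type containment; unwinding this gives exactly $cF_\mu(\Ha)\subset F_\mu(\Ha)$, the starlikeness in (b), together with univalence of $F_\mu$ (which by Theorem \ref{intro_thm2} corresponds to $\mu$ being realizable by a SAIP — and the decomposition forces this). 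For the converse one constructs $\bmu^c$ from the region $F_\mu(\Ha)\setminus (\text{image of }cF_\mu(\Ha))$.

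The integral representation (d) is the part I expect to be the main obstacle. Given starlikeness (b)/(c), one knows $F_\mu'/F_\mu = G_\nu$ for \emph{some} finite positive measure; the work is to show $\nu$ is a probability measure satisfying $\int\log(1+|x|)\,\nu({\rm d}x)<\infty$ and that $\log F_\mu$ has the claimed form. The mass $\nu(\R)=1$ should follow from the normalization $F_\mu(z)\sim z$ at infinity: $F_\mu'/F_\mu\sim 1/z$ forces $\lim_{y\to\infty}iy\,G_\nu(iy)=\nu(\R)=1$. The log-integrability is the delicate point: it is needed precisely so that $\int\log(z-x)\,\nu({\rm d}x)$ converges, and it should be extracted from the fact that $\log F_\mu$ itself is a well-defined Herglotz-type function on $\Ha$ whose boundary behavior controls the tail of $\nu$ — integrating $G_\nu$ and matching with $\log F_\mu - \log z$ near infinity. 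Assembling these pieces cleanly — connecting the differential equation $(\log F_\mu)' = G_\nu$ to the explicit exponential formula, with the correct branch and the correct integrability — is where the care is required; the earlier results reduce everything else to fairly standard geometric function theory and the monotone convolution calculus.
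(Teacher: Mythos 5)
Your overall architecture is close to the paper's: the paper proves (a)$\Leftrightarrow$(b) as Theorem \ref{msd2} and the remaining cycle (a)$\Rightarrow$(c)$\Rightarrow$(d)$\Rightarrow$(a) as Theorem \ref{starlike1}, and your computation for (d) (namely $F_\mu'/F_\mu=G_\nu$, hence the differential inequality, plus the normalization check), as well as your identification of where $\nu(\R)=1$ and the $\log$-integrability must come from in (c)$\Rightarrow$(d), match the paper's strategy (note, though, that what you derive from (d) is (c), not (b)). The genuine gap is your treatment of (b)$\Leftrightarrow$(c) as ``the classical starlikeness criterion transported to $\Ha$''. The classical criterion $\Re(zf'(z)/f(z))>0$ concerns a star center at an \emph{interior} point; here the star center is the boundary point $\infty$, and the corresponding equivalence (due to Lecko and Lecko--Lyzzaik, via Julia's lemma) is precisely the nontrivial content of Theorem \ref{starlike1}, to which the paper devotes its own multi-step proof. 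Your sketch does not survive the transfer: the level lines $\{\Im z=\sigma\}$ are non-compact, so any winding-number/argument-principle step needs the fact that $|F_\mu|\to\infty$ (equivalently $G_\mu\to0$) along $\C_\sigma$ (Lemma \ref{lemmainfinity}) in order to close the image curves into Jordan curves; and in the direction (b)$\Rightarrow$(c) it is not automatic that starlikeness of the full range localizes to a sign condition on each level curve --- the paper obtains this only by routing through (a), using that the subordinators $F_{\bmu^c}=G_{\DD_c\mu}^{-1}\circ G_\mu$ satisfy $\Im F_{\bmu^c}(z)\ge\Im z$ and hence preserve each $\C_\sigma$ (steps (a)--(d) of the proof of (1)$\Rightarrow$(2)); an alternative would be a Julia/angular-derivative argument applied to $\omega_c=F_\mu^{-1}(cF_\mu)$, but some such argument must be supplied. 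As written, ``(b)$\Leftrightarrow$(c) is standard'' leaves the hardest link of your cycle unproved.

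The second gap is the univalence of $F_\mu$ in (a)$\Rightarrow$(b): ``the decomposition forces this'' is not an argument. The paper's proof iterates the selfdecomposition at all scales, using $\DD_t\mu=(\DD_s\mu)\rhd(\DD_t\bmu^{s/t})$ for $0<s\le t\le1$, checks that $c\mapsto\bmu^c$ is weakly continuous (Lemma \ref{equivconvergence}), so that $(F_{\DD_t\mu})_{0\le t\le1}$ is a decreasing Loewner chain with transition mappings $F_{\DD_t\bmu^{s/t}}$, then invokes Theorem \ref{EV_univalence} to get univalence of these transition maps, and finally passes to $\mu=\lim_{s\downarrow0}\bmu^s$ using that $\Univ(\R)$ is weakly closed. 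Some version of this construction is required; Theorem \ref{intro_thm2} cannot be applied until the chain has actually been exhibited. Also, in the converse (b)$\Rightarrow$(a), the measure $\bmu^c$ is not built ``from the region $F_\mu(\Ha)\setminus cF_\mu(\Ha)$''; one defines $F_{\bmu^c}:=F_{\DD_c\mu}^{-1}\circ F_\mu$, which makes sense by the containment and univalence, and verifies $F_{\bmu^c}(iy)/(iy)\to1$ (Lemma \ref{Julia}) to conclude it is an $F$-transform. These points are repairable, but they are exactly where the paper does its real work, so they cannot be left as assertions.
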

The mapping $\nu \mapsto \mu$ defined by (d) is called the Markov transform, see Section \ref{label_markov_transform} for further details. The above result says that the set of Markov transforms of probability measures (with the above integrability condition) is exactly the set of monotonically selfdecomposable distributions.

Furthermore, the analogue of L\'evy's limit theorem (see \cite[Theorem 56]{Lev54} or \cite[\S29, Theorem 1]{GK54}) holds for monotone convolution.

\begin{theorem}[Theorem \ref{SL}] If $\mu$ is a weak limit of probability measures
\begin{equation*}
\DD_{b_n}(\mu_1\rhd \cdots  \rhd \mu_n),  \qquad n\to \infty,
\end{equation*}
where $b_n$ are positive real numbers and $\mu_{n}$ are probability measures on $\R$ such that\\
 $\{\DD_{b_n}(\mu_k)\}_{1 \leq k \leq n, 1\leq n}$ forms an infinitesimal array, then $\mu$ is monotonically selfdecomposable. Conversely, any monotonically selfdecomposable distribution can be obtained as such a limit.
\end{theorem}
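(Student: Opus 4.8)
The plan is to prove the two implications separately. Two facts I would use throughout are that the monotone convolution is associative (immediate from $F_{\mu\rhd\nu}=F_\mu\circ F_\nu$) and that dilation distributes over it, $\DD_b(\mu\rhd\nu)=(\DD_b\mu)\rhd(\DD_b\nu)$ for $b>0$, which is a one-line computation from $F_{\DD_b\sigma}(z)=bF_\sigma(z/b)$. I would also use freely that weak convergence of probability measures on $\R$ corresponds to locally uniform convergence of the reciprocal Cauchy transforms on $\Ha$ (with the limit being an $F$-transform of a probability measure), that $\Im F_\sigma(z)\ge\Im z$ for every $\sigma$, that $\DD_{b_n}\sigma\to\delta_0$ when $b_n\to0$, and --- only for the converse below --- the characterization of monotone selfdecomposability from Theorems~\ref{msd2} and \ref{starlike1}, in particular the fact that it forces $F_\mu$ to be univalent.

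\emph{Limit $\Rightarrow$ monotonically selfdecomposable.} Write $\pi_n:=\mu_1\rhd\cdots\rhd\mu_n$ and $\mu^\star_n:=\DD_{b_n}\pi_n\to\mu$; the aim is to produce, for each $c\in(0,1)$, a probability measure $\bmu^c$ with $\mu=(\DD_c\mu)\rhd\bmu^c$, which is the definition. Point masses being trivially selfdecomposable, assume $\mu$ is not one. I would first show $b_n\to0$: since $\sup_{1\le k\le n}(\DD_{b_n}\mu_k)([-\delta,\delta]^c)\ge\mu_j([-\delta/b_n,\delta/b_n]^c)$ for each fixed $j$, a subsequence along which $b_n$ stays bounded below would force $\mu_j=\delta_0$ for all $j$, hence $\mu=\delta_0$, contradicting the assumption. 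Next, $b_{n+1}/b_n\to1$: from $\pi_{n+1}=\pi_n\rhd\mu_{n+1}$ and distributivity, $\mu^\star_{n+1}=(\DD_{b_{n+1}/b_n}\mu^\star_n)\rhd\varepsilon_n$ with $\varepsilon_n:=\DD_{b_{n+1}}\mu_{n+1}\to\delta_0$, so, $\mu^\star_n$ and $\mu^\star_{n+1}$ both converging to the non-degenerate $\mu$, the usual argument (dilation by an unbounded factor destroys tightness; any subsequential limit $\alpha$ of $b_{n+1}/b_n$ satisfies $\mu=\DD_\alpha\mu$, forcing $\alpha=1$) gives $b_{n+1}/b_n\to1$. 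Now fix $c\in(0,1)$ and put $m(n):=\max\{\,m<n : b_m\ge b_n/c\,\}$, well defined for large $n$ since $b_n\to0<b_1$; then $m(n)\to\infty$ and $b_n/b_{m(n)}\to c$. Splitting $\pi_n=\pi_{m(n)}\rhd(\mu_{m(n)+1}\rhd\cdots\rhd\mu_n)$ and dilating by $b_n$,
\[
\mu^\star_n=(\DD_{b_n/b_{m(n)}}\mu^\star_{m(n)})\rhd\rho_n,\qquad \rho_n:=\DD_{b_n}(\mu_{m(n)+1}\rhd\cdots\rhd\mu_n),
\]
where the left factor tends weakly to $\DD_c\mu$. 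From $\Im F_{\rho_n}(iy)\le\Im F_{\mu^\star_n}(iy)$ and tightness of $\{\mu^\star_n\}$ I would deduce tightness of $\{\rho_n\}$; then, along a subsequence with $\rho_{n_j}\to\bmu^c$, taking locally uniform limits of the $F$-transforms in the displayed identity yields $F_\mu=F_{\DD_c\mu}\circ F_{\bmu^c}$, i.e.\ $\mu=(\DD_c\mu)\rhd\bmu^c$. As $c$ was arbitrary, $\mu$ is monotonically selfdecomposable.

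\emph{Monotonically selfdecomposable $\Rightarrow$ such a limit.} A point mass $\delta_a$ is monotonically selfdecomposable and of the required form ($\mu_k=\delta_{a/n}$, $b_n=1$), so assume $\mu$ is monotonically selfdecomposable and not a point mass; then $F_\mu$ is univalent. For $c\in(0,1)$ take $\bmu^c$ with $\mu=(\DD_c\mu)\rhd\bmu^c$; distributivity shows $\DD_c\mu$ is again monotonically selfdecomposable, and iterating the decomposition $m$ times yields
\[
\mu=(\DD_{c^m}\mu)\rhd(\DD_{c^{m-1}}\bmu^c)\rhd\cdots\rhd(\DD_c\bmu^c)\rhd\bmu^c.
\]
I would then pick $c_n\uparrow 1$ and $m_n\to\infty$ with $c_n^{m_n}\to0$ (e.g.\ $c_n=1-1/n$, $m_n=n^2$) and set $\nu_n:=(\DD_{c_n^{m_n-1}}\bmu^{c_n})\rhd\cdots\rhd\bmu^{c_n}$, so $\mu=(\DD_{c_n^{m_n}}\mu)\rhd\nu_n$. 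Since $F_{\DD_{c_n^{m_n}}\mu}(z)=c_n^{m_n}F_\mu(z/c_n^{m_n})\to z$ locally uniformly and these maps are univalent, inverting the factorization gives $F_{\nu_n}\to F_\mu$, i.e.\ $\nu_n\to\mu$ weakly; likewise $F_{\DD_{c_n}\mu}\to F_\mu$ locally uniformly, so $F_{\bmu^{c_n}}=(F_{\DD_{c_n}\mu})^{-1}\circ F_\mu\to\mathrm{id}$, whence $\bmu^{c_n}\to\delta_0$ and $\bmu^{c_n}([-\delta,\delta]^c)\to0$ for all $\delta>0$. Since dilation by a factor in $(0,1]$ does not increase the mass outside $[-\delta,\delta]$, the array $\{\DD_{c_n^{\,m_n-k}}\bmu^{c_n}\}_{1\le k\le m_n,\ n\ge1}$ is then infinitesimal, and $\mu=\lim_n\nu_n$ has the required form (with norming constants $1$).

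The main obstacle is the first implication. Establishing the regularity of the norming constants ($b_n\to0$ and $b_{n+1}/b_n\to1$) is a delicate classical-type step, and the tightness of the tail factors $\rho_n$ --- indispensable for passing to the limit inside a monotone convolution --- has to be squeezed out of a comparison of imaginary parts of $F$-transforms. A recurring subtlety is the non-commutativity of $\rhd$: one must split the partial products from the left and use $b_n\to0$ precisely so that the dilated copy of $\mu$ ends up as the \emph{left} factor, matching the definition of monotone selfdecomposability.
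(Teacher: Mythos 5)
Your overall skeleton for the direction ``limit $\Rightarrow$ selfdecomposable'' (show $b_n\to0$, $b_{n+1}/b_n\to1$, pick $m(n)$ with $b_n/b_{m(n)}\to c$, split the product and pass to the limit) is the same as the paper's, but the step where you extract a weak limit of the tail factors $\rho_n$ is genuinely broken. The inequality $\Im F_{\rho_n}(iy)\le\Im F_{\mu^\star_n}(iy)$ is true but carries no tightness information, because the imaginary part of an $F$-transform is blind to translations: for $\rho_n=\delta_n$ one has $F_{\rho_n}(z)=z-n$, so $\Im F_{\rho_n}(iy)=y\le\Im F_{\sigma}(iy)$ for \emph{every} probability measure $\sigma$, yet $\{\delta_n\}$ is not tight. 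What is actually needed is a cancellation statement for $\rhd$ under weak convergence: if $\lambda_n=\sigma_n\rhd\rho_n$ with $\lambda_n$ and $\sigma_n$ weakly convergent, then $\rho_n$ converges weakly. This is Lemma \ref{equivconvergence}\,(3) of the paper, proved via the Bercovici--Voiculescu local inverses of $F$-transforms on truncated cones; it cannot be replaced by your elementary comparison, and note that the global-inversion trick you use in the converse is unavailable here since at this stage $F_{\DD_c\mu}$ is not known to be univalent on $\Ha$. The same lemma (part (2)) is also what hides behind ``the usual argument'' in your proof that $b_{n+1}/b_n\to1$: passing from $\mu^\star_{n+1}=(\DD_{b_{n+1}/b_n}\mu^\star_n)\rhd\varepsilon_n$ with $\varepsilon_n\wto\delta_0$ to $\DD_{b_{n+1}/b_n}\mu^\star_n\wto\mu$ is exactly such a cancellation on the right, not an automatic fact.

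In the converse there is a structural gap: the theorem requires a \emph{single} sequence $\mu_1,\mu_2,\dots$ and constants $b_n$ with $\DD_{b_n}(\mu_1\rhd\cdots\rhd\mu_n)\wto\mu$, i.e.\ row $n$ must be a common dilation of the product of the first $n$ terms of one fixed sequence. Your approximants $\nu_n=(\DD_{c_n^{m_n-1}}\bmu^{c_n})\rhd\cdots\rhd\bmu^{c_n}$ use factors that change with $n$ (through $c_n$ and $m_n$), so you only exhibit $\mu$ as a limit over an infinitesimal triangular array, which proves $\mu\in\IA(\rhd)$ --- weaker than the claim, and in fact already known from univalence of $F_\mu$ via Theorem \ref{thminfinitesimalarray}\,(1) --- but not membership in the L\'evy class $\LL(\rhd)$, which is what the statement asserts. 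The paper's construction avoids this by letting the selfdecomposability parameter vary along one fixed sequence: with $\mu_k:=\DD_k(\bmu^{(k-1)/k})$ one gets the telescoping identity $\DD_k\mu=(\DD_{k-1}\mu)\rhd\mu_k$, hence the exact representation $\mu=\DD_{1/n}(\mu_1\rhd\cdots\rhd\mu_n)$ with $b_n=1/n$ for every $n$, and infinitesimality of $\{\DD_{1/n}\mu_k\}=\{\DD_{k/n}\bmu^{(k-1)/k}\}$ follows from $\bmu^c\wto\delta_0$ as $c\uparrow1$ together with your (correct) observation that dilation by a factor in $(0,1]$ does not increase mass outside $[-\delta,\delta]$. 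Your auxiliary observations in the converse (treating point masses separately, deducing $\bmu^{c_n}\wto\delta_0$ by inverting the univalent $F_{\DD_{c_n}\mu}$) are fine, but the approximation scheme itself has to be replaced by one of this telescoping type.
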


Another subclass of probability measures having univalent $F$-transforms is given by unimodal distributions. A (Borel) measure $\mu$ on $\R$ is said to be \index{unimodal distribution!on $\R$}\emph{unimodal} with mode
$c \in \R$ if there exist a non-decreasing function $f\colon(-\infty, c)\mapsto [0,\infty)$ and a non-increasing function $g\colon(c,\infty)\mapsto [0,\infty)$ and $\lambda\in[0,\infty]$ such that
\begin{equation*}
\mu({\rm d}x) = f(x)\mathbf{1}_{(-\infty,c)}(x)\,{\rm d}x + \lambda\delta_c +  g(x)\mathbf{1}_{(c,\infty)}(x)\,{\rm d}x.
\end{equation*}

We give a characterization of unimodal distributions as follows. The result is in fact a combination of the past works of Khintchine \cite{K38}, Kaplan \cite{K52}, and Hengartner and Schober \cite{HS70}.

\begin{theorem}[Theorem \ref{anotherkhintchine}] Let $\mu$ be a probability measure on $\R$.  The following are equivalent.
\begin{enumerate}[\rm(1)]
\item $\mu$ is unimodal with mode $c$.
\item $\Im((z-c)G_\mu'(z)) \geq0$ for all $z\in\C^+.$
\item There exists an $\R$-valued random variable $X$, independent of a uniform random variable $U$ on $(0,1)$, such that $\mu$ is the law of $U X +c$.  

\item The following three assertions hold:
\begin{itemize}
\item $G_\mu$ is univalent in $\C^+$.
\item $G_\mu(\C^+)$ is horizontally convex, namely if $z_1,z_2 \in G_\mu(\C^+)$ with the same imaginary part, then $(1-t)z_1+t z_2 \in G_\mu(\C^+)$ for any $t\in(0,1)$.
\item There exist points $z_n \in \C^+$ such that $z_n \to c$ and
$$
\lim_{n\to\infty} \Im (G_\mu(z_n)) = \inf_{z\in \C^+} \Im (G_\mu(z)).
$$
\end{itemize}
\end{enumerate}
\end{theorem}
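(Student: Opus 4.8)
The plan is to reduce to $c=0$ and then prove $(1)\Leftrightarrow(3)$, $(2)\Leftrightarrow(3)$ and $(2)\Leftrightarrow(4)$. The reduction is harmless: replacing $\mu$ by its image under $x\mapsto x-c$ replaces $G_\mu(z)$ by $G_\mu(z+c)$, under which $\Im((z-c)G_\mu'(z))$ in (2) and the last bullet of (4) become the corresponding unshifted statements; so assume $c=0$. The equivalence $(1)\Leftrightarrow(3)$ is Khintchine's classical theorem: $\mathrm{law}(UX)$ is always unimodal with mode $0$ (an elementary computation with distribution functions), and conversely a unimodal $\mu$ with mode $0$ is of this form, the nontrivial half also producing the ``shadow'' measure $\rho$ of $X$ explicitly from $\mu$. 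The bridge to (2) is the identity
\[
zG_\mu'(z)=-G_\rho(z),\qquad z\in\mathbb{C}^+,
\]
valid when $\mu=\mathrm{law}(UX)$ with $X\sim\rho$; it follows by differentiating the explicit formula $G_\mu(z)=\int_{\mathbb{R}}\frac{\log z-\log(z-x)}{x}\,\rho(\mathrm dx)$, which itself comes from $G_\mu(z)=\int_{\mathbb{R}}\int_0^1\frac{\mathrm du}{z-ux}\,\rho(\mathrm dx)$ (the inner integrand being $1/z$ at $x=0$). Since $\Im G_\rho<0$ on $\mathbb{C}^+$, this identity gives $(3)\Rightarrow(2)$ immediately. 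For $(2)\Rightarrow(3)$: if $\Im(zG_\mu'(z))\ge0$ then $h(z):=-zG_\mu'(z)$ maps $\mathbb{C}^+$ into $\{\Im\le0\}$, hence (being nonconstant) into $\mathbb{C}^-$, and from $zG_\mu(z)\to1$ and $z^2G_\mu'(z)\to-1$ non-tangentially at $\infty$ we get $zh(z)\to1$; the Nevanlinna representation of the Pick function $-h$ together with this normalization forces $h=G_\rho$ for a unique probability measure $\rho$. Integrating $zG_\mu'(z)=-G_\rho(z)$ and matching $zG_\mu(z)\to1$ recovers the explicit formula above, so $\mu=\mathrm{law}(UX)$ with $X\sim\rho$, which is $(3)$, hence also $(1)$.

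For $(2)\Leftrightarrow(4)$ I substitute $w=\log z$, which maps $\mathbb{C}^+$ conformally onto the strip $S=\{u+iv:u\in\mathbb{R},\ 0<v<\pi\}$, and set $H(w):=-iG_\mu(e^w)$. Then $H'(w)=-ie^wG_\mu'(e^w)$, so $\Re H'(w)=\Im(zG_\mu'(z))$; also $\Re H=\Im G_\mu$ and $H(S)=-i\,G_\mu(\mathbb{C}^+)$, so $G_\mu(\mathbb{C}^+)$ is horizontally convex exactly when $H(S)$ is convex in the imaginary direction. Thus (2) reads $\Re H'\ge0$ on $S$. For $(2)\Rightarrow(4)$ I argue directly: $\Re H'\ge0$ with $H$ non-affine (the affine case is excluded by $G_\mu(\infty)=0$) forces $\Re H'>0$ by the minimum principle for the harmonic function $\Re H'$, so $H$ is univalent by the Noshiro--Warschawski lemma and so is $G_\mu$; since $\partial_u\Re H=\Re H'>0$, for each $v$ the map $u\mapsto\Re H(u+iv)$ is a strictly increasing bijection onto $(L(v),0)$, where $L(v):=\lim_{u\to-\infty}\Re H(u+iv)\in[-\infty,0)$ (the limit as $u\to+\infty$ is $0$ because $e^{u+iv}\to\infty$ non-tangentially); and since $-\Re H$ is a positive harmonic function on $S$ vanishing as $u\to+\infty$, its boundary data force $v\mapsto L(v)$ to be affine (or $\equiv-\infty$), so each level set $\{\Re H=c\}\cap S$ is a single graph $u=\gamma_c(v)$ over a subinterval of $(0,\pi)$; a one-variable computation along it yields $\frac{d}{dv}\Im H(\gamma_c(v)+iv)=|H'|^2/\Re H'>0$, so $H$ maps it onto one vertical segment, i.e.\ $H(S)$ is convex in the imaginary direction. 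Finally $\partial_u\Re H\ge0$ says $r\mapsto\Im G_\mu(re^{i\theta})$ is non-decreasing for each $\theta\in(0,\pi)$, so $\inf_{\mathbb{C}^+}\Im G_\mu$ is attained in the limit as $z\to0$, which is the last bullet of (4). For the converse $(4)\Rightarrow(2)$ I would invoke Hengartner--Schober's characterization of univalent maps onto domains convex in one direction, applied to $H$ on $S$ (after transferring to $\mathbb{D}$): it yields a representation $\Re\{e^{i\alpha}q(w)H'(w)\}\ge0$ with $q$ a ``quadratic'' factor whose two zeros lie at the distinguished boundary points of $S$ carrying the extreme imaginary-direction slices of $H(S)$; the last bullet of (4) together with $G_\mu(\infty)=0$ identifies these with the two ends $u\to\pm\infty$ of the strip, i.e.\ with $z=0$ and $z=\infty$, so that in the $z$-variable $q(w)H'(w)$ becomes a constant times $zG_\mu'(z)$, and the normalization $G_\mu(z)\sim z^{-1}$ pins down $e^{i\alpha}=-i$, giving $\Re\{-izG_\mu'(z)\}=\Im(zG_\mu'(z))\ge0$. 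Kaplan's work on close-to-convex functions enters at this step, to handle the case where the extreme slices of $G_\mu(\mathbb{C}^+)$ do not shrink to points.

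The main obstacle is exactly this converse $(4)\Rightarrow(2)$: making the Hengartner--Schober representation explicit on the strip and proving that the hypotheses of (4) force its quadratic factor to degenerate to the monomial $z$ -- equivalently, that the two distinguished boundary points are precisely the mode $c$ and $\infty$ and no others. This needs careful control of the prime-end and boundary behaviour of $G_\mu$ at the finite boundary point $c$ (where $\Im G_\mu$ may tend to $-\infty$, e.g.\ when $\mu$ has an atom or an unbounded density at the mode) and at $\infty$, together with a uniform treatment of the degenerate cases such as $\mu=\delta_c$ and measures with $\inf_{\mathbb{C}^+}\Im G_\mu=-\infty$. Everything else -- Khintchine's theorem, the identity $zG_\mu'=-G_\rho$, the Nevanlinna representation, and the harmonic-function argument for $(2)\Rightarrow(4)$ -- is routine once set up.
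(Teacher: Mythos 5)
Your handling of (1)$\Leftrightarrow$(2)$\Leftrightarrow$(3) is essentially the paper's argument: Khintchine's theorem for (1)$\Leftrightarrow$(3), and the Pick-function identity $zG_\mu'(z)=-G_\nu(z)$ with the normalization $\lim_{y\to\infty}iy\,(iyG_\mu'(iy))=-1$, integrated to the explicit mixture formula, for (2)$\Leftrightarrow$(3). The genuine gap is exactly where you flag it: (4)$\Rightarrow$(2) is not proved, and the difficulty you describe is of your own making. The paper's proof of (2)$\Leftrightarrow$(4) is a direct application of Hengartner--Schober's theorem, which is already an \emph{equivalence} in which the two distinguished boundary points are part of the \emph{hypotheses} (they are the points $\pm1$ of $\D$ at which $\Re f$ approaches its supremum and infimum along some sequences), not something to be extracted from an a priori representation with an unknown quadratic factor. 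Concretely: since $\Im G_\mu<0$ on $\C^+$ and $G_\mu(z)\to0$ non-tangentially, $\sup_{\C^+}\Im G_\mu=0$ is automatically approached as $z\to\infty$; together with the third bullet of (4) this says the two extreme ``slices'' of the horizontally convex domain $G_\mu(\C^+)$ are attained in the limit at the prescribed boundary points $c$ and $\infty$. Composing with the M\"obius map of $\D$ onto $\C^+$ sending $\pm1$ to $\infty$ and $c$ (and multiplying by $\pm i$ to turn horizontal convexity of $G_\mu(\C^+)$ into convexity in the imaginary direction), the hypotheses of (4) become precisely those of Hengartner--Schober, and their inequality $\Re[(1-w^2)h'(w)]\ge0$ pulls back to $\Im\bigl(\lambda\,(z-c)G_\mu'(z)\bigr)\ge0$ for a unimodular constant $\lambda$, which the asymptotics $G_\mu(iy)\sim 1/(iy)$ pin down to $\lambda=1$. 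No degeneration argument, prime-end analysis at $c$, or appeal to Kaplan is needed for this direction (in the paper, Kaplan enters only in the remark, for (1)$\Rightarrow$(2),(4) in the atomless case). So the missing idea is simply to use the cited theorem as the two-sided statement it is, with the normalization points supplied by (4) and by the behaviour of $G_\mu$ at infinity.

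A secondary issue: your direct argument for (2)$\Rightarrow$(4) on the strip rests on two unproved claims, namely that $v\mapsto L(v)=\lim_{u\to-\infty}\Re H(u+iv)$ is affine (deduced only heuristically from positivity of $-\Re H$), and that each level set of $\Re H$ is a graph over a subinterval of $(0,\pi)$ which $H$ maps onto a full vertical segment. These are the delicate points in the proofs of Hengartner--Schober and Royster--Ziegler, and they need real work (or, again, a citation); the paper avoids them by getting both directions of (2)$\Leftrightarrow$(4) from the same reference. The minimum-principle step ($\Re H'\ge0$, $H$ not affine $\Rightarrow \Re H'>0$) and the monotonicity of $\Im G_\mu$ along rays, giving the third bullet, are fine.
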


 By considering unitary operators, one can translate most of the notions we discussed into a multiplicative setting,
 which leads to a multiplicative convolution for probability measures on the unit
 circle, Loewner chains in the unit disk, and unitary multiplicative monotone increment processes. We obtain analogous
results for this setting and thus both cases will be treated simultaneously throughout this work.

\section{Organization of the paper} The remaining sections are structured in the following way. \\

\begin{itemize}
\item Section \ref{section_preliminaries}: We give the necessary definitions and notations, and we recall several properties of $F$-transforms and $\eta$-transforms.\\

\item Section \ref{EV_sec_Loewner}: We review the theory of Loewner chains and their relation to the Loewner differential equation (Section \ref{subsec_LPDE}) and we prove the equivalence between \eqref{intro_thm2-1} and \eqref{intro_thm2-3} in Theorem \ref{intro_thm2} (Sections \ref{sec_EV_univalence}, \ref{EV_section_embeddings}).\\

\item Section \ref{section_processes}: We prove Theorem \ref{intro_thm0}. The construction is based on a one-to-one correspondence between
additive Loewner chains and $\rhd$-homogeneous Markov processes (Sections \ref{section_additive_processes} - \ref{summary_additive}). Furthermore, we explain how these Markov processes
arise from free increment processes (Section \ref{sec:Free-Markov_additive}) and we clarify the Feller property, give explicit formula of the generator, and construct martingales that are naturally associated to $\rhd$-homogeneous Markov processes (Section \ref{analysis_Markov_additive}). In Section \ref{alternative_constructions}, we look at alternative constructions of SAIPs via quantum stochastic differential equations.\\

\item Section \ref{section_processes_mult}: This section is the multiplicative analogue of Section \ref{section_processes}. We establish one-to-one correspondences between the unitary version of SAIPs, the Loewner chains on the unit disk fixing the origin, and certain Markov processes on the unit circle (Sections \ref{from_mult_processes_to_Loewner} - \ref{summary_multiplicative}). We also obtain such Markov processes from multiplicative free increment processes (Section \ref{mult_con_from_free}) and we analyze their properties (Section \ref{analysis_mult_markov}).\\

\item Section \ref{section_limits_additive}: We study several subclasses of probability measures on $\R$ in the context of monotone, free, Boolean, and the classical convolution. These classes are: limits of infinitesimal arrays (Section \ref{inf_arrays_additive}),  infinitely divisible distributions (Sections \ref{subsec_mon_inf_div_add}, \ref{subsec_free_inf_div_add}), unimodal distributions (Section \ref{subsec_unimodal_add}), and selfdecomposable distributions (Sections \ref{sec SD}, \ref{sec CSD}, \ref{sec FSD}, \ref{additive_selfdecom}).\\

\item Section \ref{sectionunitcircle}: This section is the multiplicative analogue of Section \ref{section_limits_additive} and deals with probability measures on the unit circle. In Section \ref{subsec_khin_mult}, we characterize the class of limits of monotone infinitesimal arrays. We furthermore investigate infinitely divisible distributions (Sections \ref{mon_inf_mult}, \ref{FIDunit}) and unimodal distributions (Section \ref{subsec_unimodal_mult}).
\end{itemize}



\chapter{Preliminaries}\label{section_preliminaries}


\section{The four other additive convolutions}\label{intro_convolutions}
We denote by $\cP(\R)$ the family of all Borel probability
measures on $\mathbb{R}.$

As mentioned in Section \ref{sec:monotone_independence}, monotone convolution is a binary operation on $\cP(\R)$ defined by
\begin{equation}
F_{\mu\rhd \nu}(z)=(F_\mu \circ F_\nu)(z), \quad z\in \Ha.
\end{equation}
As already mentioned, there are only five notions of independence in a certain sense:
monotone, anti-monotone, Boolean, free, and tensor independence; see \cite{MR2016316}.
All of these notions lead to additive convolutions of probability measures on $\R$ by regarding the sum of independent self-adjoint random variables. In particular, anti-monotone independence is simply defined by reversing the order in Definition \ref{def-mon} and leads to the \index{anti-monotone convolution!additive $\lhd$}anti-monotone convolution $\mu\lhd \nu$ given by
\[ F_{\mu\lhd \nu} = F_\nu \circ F_\mu. \]
We will also encounter the \index{Boolean convolution!additive $\uplus$}additive Boolean convolution $\mu\uplus \nu$, which was introduced in \cite{SW97}.
 Let $B_\mu=z-F_\mu(z)$.
Then $\mu\uplus \nu$ is characterized by
\begin{equation}\label{bool_convolutions_addi}
B_{\mu\uplus \nu}(z) = B_\mu(z) + B_\nu(z), \quad z\in\Ha.
 \end{equation}

Next we look at \index{free convolution!additive $\boxplus$}free convolution. For $\lambda,M>0$, the truncated cone is the domain
\begin{equation*}
\Gamma_{\lambda,M} = \{z \in \C^+: \Im(z)>M, |\Re(z)| <\lambda \Im(z)\}.
\end{equation*}
Bercovici and Voiculescu \cite{BV93} showed that for any $\lambda>0$, there exist $\lambda',M',M>0$ such that $F_\mu$ is univalent in the truncated cone $\Gamma_{\lambda',M'}$
and $F_\mu(\Gamma_{\lambda',M'}) \supset \Gamma_{\lambda,M}$. Hence the right compositional inverse map $F_\mu^{-1}$ may be defined in $\Gamma_{\lambda,M}$.
The \index{Voiculescu transform}\emph{Voiculescu transform} of $\mu$ is  then defined by
\begin{equation}\label{eq:Voiculescu_transform}
\varphi_{\mu }(z) =F_{\mu }^{-1}(z)-z,\qquad z \in \Gamma_{\lambda,M}.
\end{equation}
For $\mu,\nu\in\cP(\R)$, the additive free convolution $\boxplus$ is characterized by
\begin{equation*}
\varphi_{\mu\boxplus\nu}(z) = \varphi_{\mu}(z) + \varphi_{\nu}(z)
\end{equation*}
on the intersection of the domains.

Finally, tensor convolution is nothing but the \index{classical convolution!additive $\ast$}classical convolution $\mu \ast \nu$ characterized by
\[ \int_\R e^{ixz}\, (\mu\ast \nu)({\rm d}x) = \int_\R e^{ixz} \,\mu({\rm d}x) \cdot  \int_\R e^{ixz} \,\nu({\rm d}x), \qquad z \in \R.
\]

\section{The five multiplicative convolutions}\label{mono_convolutions_mult}
 Denote by $\cP(\T)$ the family of all Borel probability measures on $\T:=\partial \D,$ where $\D=\{z\in\C\,|\, |z|<1\}$ is the unit disk. We denote the moments of probability measures on $\T$ by
$$
m_n(\mu)=\int_\T x^n \,\mu({\rm d}x).
$$
Let $\cP_{\times}(\tor)$ be the set of probability measures on $\tor$ with non-zero mean.

If $U$ is a unitary random variable on a concrete quantum probability space $(H,\xi)$, then there exists a unique probability measure $\mu\in \cP(\T)$ such that
\begin{equation}
\Phi_\xi(U^n) = \int_{\T} x^n \,\mu({\rm d}x), \qquad n \in \N,
\end{equation}
which is referred to as the \index{distribution!of a unitary random variable}\emph{distribution} of $U$. In this case rather than the Cauchy transform, the \index{moment generating function}moment generating function $\psi_\mu$ and the \index{eta-transform@$\eta$-transform}$\eta$-transform
\begin{equation}\label{def:psi-eta}
\psi_\mu(z)=\int_{\T}\frac{xz}{1-xz}\, \mu({\rm d}x) \qquad \text{and} \qquad \eta_\mu(z)=\frac{\psi_\mu(z)}{1+\psi_\mu(z)}, \qquad z \in \D,
\end{equation}
are more useful. It can be seen that $\eta_\mu$ maps $\D$ into $\D$ with
$$
\eta_\mu(0)=0\qquad  \text{and} \qquad \eta_{\mu}'(0)=\int_{\T}x\, \mu({\rm d}x).
$$
Then the distribution $\mu$ of a unitary random variable $U$ is equivalently characterized by
\begin{equation}
\Phi_\xi\left(\frac{z U}{1-zU}\right) = \psi_\mu(z), \qquad z \in \D.
\end{equation}

Now let $U,V$ be two unitary random variables, independent in some sense. A natural convolution arises as the distribution of $UV$, called a multiplicative convolution. However, for the monotone, anti-monotone, and Boolean cases, we do not simply assume $(U,V)$ is independent, since if we do so, then the distribution of
$UV$ is given by a rather trivial expression, see \cite[p. 930]{B05} for the monotone case. In \cite{B05}, Bercovici considers another situation, which leads to a more interesting convolution of probability measures on the unit circle: the \index{monotone convolution!multiplicative $\submm$}\emph{multiplicative monotone convolution} $\mu \submm \nu$ of probability measures $\mu,\nu\in\cP(\T)$ is defined by
\begin{equation}\label{eq:multiplicative_monotone_convolution}
\eta_{\mu \mmm \nu}:=\eta_\mu \circ \eta_\nu.
\end{equation}
Now assume that $(U-I, V)$ is monotonically independent (this is not equivalent to assuming $(U,V)$ is monotonically independent since in Definition \ref{def-mon} \eqref{enu:monotone_independence2} we are not allowed to take $f(x)=x\pm1$), and let $\mu$ and $\nu$ be the distributions of $U$ and $V$ respectively. Then the distribution of $UV$ is given by $\mu \submm \nu$, see \cite[Corollary 2.3]{B05}, \cite[Corollary 4.2]{franz-mult}, and the distribution of $VU$ is also equal to $\mu \submm \nu$, see \cite[Corollary 4.2]{franz-mult}.

\index{anti-monotone convolution!multiplicative}Anti-monotone convolution on $\T$ is simply defined by reversing the order in \eqref{eq:multiplicative_monotone_convolution}.

For Boolean convolution, let $h_\mu(z)=\eta_\mu(z)/z$ which can be analytically defined in $\D$ since $\eta_\mu(0)=0$. Then the \index{Boolean convolution!multiplicative $\hutimes$}multiplicative Boolean convolution $\mu\hutimes\nu$ is defined via
\begin{equation}\label{bool_convolutions_mult}
h_{\mu\hutimes\nu}(z)= h_\mu(z)\cdot h_\nu(z), \qquad z \in \D.
\end{equation}
The Boolean convolution $\mu \hutimes \nu$ is the distribution of $UV$ where $U$ and $V$ are unitary random variables with distributions $\mu$ and $\nu$ respectively, such that $U-I$ and $V-I$ are Boolean independent, see \cite{Ber06,franz_boolean}. Similarly to the monotone case, if we simply assume $(U,V)$ is Boolean independent, then the distribution of $UV$ is rather trivial, see \cite[p.\ 6]{franz_boolean}.

\begin{remark}
Why we assume the independence of $(U-I,V)$ or $(U-I,V-I)$ can be somehow explained from the viewpoint of so-called \emph{conditionally monotone independence} or \emph{conditionally free independence}, see \cite[Proposition 2.1]{Has13}.
\end{remark}
\begin{remark}\label{u-I}
The monotone independence of $(U-I,V)$ is equivalent to the monotone independence of $(U-I,V-I)$.
\end{remark}

The remaining cases are free and tensor independences. In these cases we define multiplicative convolutions by simply assuming that $U$ and $V$ are tensor/freely independent. Note that for any $a, b \in \C$, tensor or free independence of $U$ and $V$ is equivalent to tensor, resp.\ free  independence of $U +a I$ and $V+b I$.

For $\mu\in\cP_{\times}(\tor)$ the series expansion of $\eta_\mu(z)$ is $m_1(\mu)z + O(z^2)$, and hence the inverse series $\eta_\mu^{-1}$ exists and is convergent in an open neighborhood of $0$. Then we define $\Sigma_\mu$ by
\begin{equation*}
\Sigma_\mu(z)=\frac{1}{z}\eta_\mu^{-1}(z)
\end{equation*}
in the neighborhood of $0$ where $\eta_\mu^{-1}$ is defined.
For $\mu,\nu\in\cP_{\times}(\tor)$ Voiculescu \cite{Voi87} characterized \index{free convolution!multiplicative $\boxtimes$}multiplicative free convolution $\boxtimes$ by
\begin{equation}\label{FM}
\Sigma_{\mu \boxtimes \nu}(z) = \Sigma_{\mu}(z) \Sigma_{\nu}(z)
\end{equation}
in a neighborhood of $0$.

Finally, \index{classical convolution!multiplicative $\circledast$}classical multiplicative convolution $\circledast$ on $\tor$ is defined by
\begin{equation*}
(\mu\circledast\nu)(A) = \int_{\tor^2} \mathbf{1}_A(\xi \zeta) \mu({\rm d}\xi) \nu({\rm d}\zeta)
\end{equation*}
for Borel sets $A \subset \tor$. This convolution can be characterized in a simple way using the moments:
\begin{equation*}
m_n(\mu\circledast\nu) =m_n(\mu) m_n(\nu), \qquad n\in\N.
\end{equation*}

The free convolution $\mu\boxtimes \nu$ and the classical convolution $\mu\circledast\nu$ are the distributions of the product $UV$ of two unitary random variables $U,V$ distributed according to $\mu$ and $\nu$, when these random variables are free, respectively, tensor independent.

The arc-length measure $\haar=\frac{{\rm d}\theta}{2\pi}$, which is the normalized Haar measure on $\tor$, is a singular object in the context of convolutions on $\tor$. It has the property that
\begin{equation}\label{Haar}
\haar \submm \mu = \mu \submm \haar = \haar \hutimes \mu =\haar \boxtimes \mu= \haar \circledast \mu=\haar
\end{equation}
for any $\mu \in \cP(\tor)$. All moments of $\haar$ are zero, and hence $\psi_\haar= \eta_\haar=0$.
Actually the $\eta$-transform $\eta_\mu$ is constant if and only if $\mu=\haar$.


\section{Properties of $F$-transforms}\index{F-transform@$F$-transform|(} As we saw in Sections \ref{sec:monotone_independence} and \ref{intro_convolutions}, the $F$-transform characterizes the convolutions of probability measures except the classical case. This section summarizes useful facts about the $F$-transform.

A holomorphic function $F:\Ha\to \Ha \cup \R$ is called a Pick function. Note that either $F(\Ha)\subseteq \Ha$ or $F$ is constant. Any Pick function can be written as
\begin{equation}\label{EV_eq:1}
F(z)=az+b+\int_\mathbb{R}\frac{1+xz}{x-z} \rho({\rm d}x) \quad \index{Pick-Nevanlinna representation}\text{(Pick-Nevanlinna representation)},
\end{equation}
where $a\geq0, b\in \mathbb{R}$ and $\rho$ is a finite non-negative (Borel) measure on $\mathbb{R}$; see \cite[Thm. F.1]{MR2953553}.
With the one-point compactification $\widehat\R:=\R\cup\{\infty\}$, the above formula may be written in the form
\begin{equation}\label{eq:compactification}
F(z)=b + \int_{\widehat\R}\frac{1+xz}{x-z}\widehat\rho({\rm d}x),
\end{equation}
where $\widehat\rho|_{\R} = \rho$ and $\widehat\rho(\{\infty\})=a$.
The triplet $(a,b,\rho)$ is uniquely determined by the formulas
\begin{align*}
a &= \lim_{y\to\infty}\frac{F(i y)}{i y}, \\
F(i) &= b + i (a+\rho(\R)),
\end{align*}
and the \index{Stieltjes inversion formula}Stieltjes inversion formula
\begin{align}
\frac{1}{2}\tau(\{\alpha\})+\frac{1}{2}\tau(\{\beta\})+\tau((\alpha,\beta)) &= \frac{1}{\pi} \lim_{\epsilon\downarrow0}\int_\alpha^\beta \Im[F(x+i \epsilon)]\, {\rm d}x, \label{Stieltjes} \\
\tau(\{\alpha\})&=-\lim_{\epsilon\downarrow0} i\epsilon F(\alpha+i\epsilon), \label{eq:atom}
\end{align}
where $-\infty< \alpha<\beta<\infty$ and $\tau({\rm d}x) = (1+x^2) \rho ({\rm d}x)$. The number $a$ is also called the \emph{angular derivative} of $F$ at $\infty$ and it is also denoted by $F'(\infty)$.
If $F$ is not an automorphism of $\Ha,$ then the iterates $F^{\circ n}$ converge locally uniformly in $\Ha$ to a point $\tau \in \Ha \cup \widehat{\R}$, the \emph{Denjoy-Wolff point} of $F$; see \cite[The Grand Iteration Theorem]{shapiro}.

$F$-transforms of probability measures can be characterized as follows.
\begin{lemma}[Prop. 2.1 and 2.2 in \cite{M92}]\label{Julia} Let $F\colon \C^+ \to \C^+\cup\R$ be holomorphic. Then the following are equivalent.
\begin{enumerate}[\rm(1)]
\item There exists a probability measure $\mu$ on $\R$ such that $F=F_\mu$.

\item\label{Julia2} $\lim_{y\to \infty}\frac{F(i y)}{i y}=1$. 

\item $F$ has the Pick-Nevanlinna representation
\begin{equation}\label{EV_eq:2}
F(z) =  z + b + \int_{\R}\frac{1+xz}{x-z}\rho({\rm d}x),
\end{equation}
where $b \in \R$ and $\rho$ is a finite, non-negative measure on $\R$.
\end{enumerate}
Note that if these equivalent conditions hold, then $\Im(F(z)) \geq \Im(z)$ for all $z\in\C^+$. Furthermore, $\mu$ has mean zero and finite variance $\sigma^2$ if and only if there exists a
finite non-negative measure $\tau$ on $\mathbb{R}$ with $\tau(\R)=\sigma^2$
such that
\begin{equation}\label{EV_finite_var}
F_\mu(z) = z + \int_\mathbb{R} \frac1{x-z}\tau({\rm d}x).
\end{equation}
\end{lemma}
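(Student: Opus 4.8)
The plan is to prove the chain $(1)\Rightarrow(2)\Rightarrow(3)\Rightarrow(1)$, together with the two supplementary claims. First I would dispose of the implication $(1)\Rightarrow(2)$: starting from the integral representation $F_\mu(z)=1/G_\mu(z)$ with $G_\mu(z)=\int_\R (z-x)^{-1}\mu({\rm d}x)$, I would compute $\lim_{y\to\infty} iy\, G_\mu(iy)$ by dominated convergence, using that $|iy/(iy-x)|\le 1$ and that the integrand tends pointwise to $1$; this gives $\lim_{y\to\infty} iy\,G_\mu(iy)=1$, hence $\lim_{y\to\infty} F_\mu(iy)/(iy)=1$.

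Next, $(2)\Rightarrow(3)$. Since $F$ maps $\Ha$ into $\Ha\cup\R$ and is non-constant (as $F(iy)/(iy)\to1\ne0$), it is a genuine Pick function, so it admits the Pick–Nevanlinna representation \eqref{EV_eq:1} with parameters $(a,b,\rho)$. Using the formula $a=\lim_{y\to\infty}F(iy)/(iy)$ recorded just after \eqref{eq:compactification}, condition (2) forces $a=1$, and substituting $a=1$ into \eqref{EV_eq:1} yields exactly \eqref{EV_eq:2}. For $(3)\Rightarrow(1)$, I would show directly that a function of the form \eqref{EV_eq:2} is the reciprocal Cauchy transform of a probability measure. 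Rewriting $\frac{1+xz}{x-z} = -z + \frac{1+x^2}{x-z}$, one gets $F(z) = b + \int_\R\frac{1+x^2}{x-z}\rho({\rm d}x)$ after absorbing the $-z\,\rho(\R)$ term; more to the point, $\Im F(z) = \Im z \cdot\big(1 + \int_\R \frac{1+x^2}{|x-z|^2}\rho({\rm d}x)\big)\ge \Im z>0$ for $z\in\Ha$, so $F$ is a self-map of $\Ha$, never zero there, whence $G:=1/F$ is holomorphic $\Ha\to\Ha^-$ (lower half-plane, since $\Im(1/F) = -\Im F/|F|^2<0$) with $iy\,G(iy)\to 1$. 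By the Nevanlinna representation for Cauchy transforms (the standard fact that holomorphic $G\colon\Ha\to\overline{\Ha^-}$ with $\lim_{y\to\infty} iyG(iy)=1$ equals $G_\mu$ for a unique probability measure $\mu$, recovered via the Stieltjes inversion formula \eqref{Stieltjes}), we obtain $F=F_\mu$. I would also note $\Im F(z)\ge \Im z$ is already proved by the displayed computation.

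For the mean-and-variance supplement: if $\mu$ has mean zero and variance $\sigma^2<\infty$, I would expand $\frac{1}{z-x} = \frac1z + \frac{x}{z^2} + \frac{x^2}{z^2(z-x)}$, integrate against $\mu$, and use $\int x\,\mu({\rm d}x)=0$ to get $G_\mu(z) = \frac1z + \frac{1}{z^2}\int\frac{x^2}{z-x}\,\mu({\rm d}x)$, hence, after inverting, $F_\mu(z) = z + \int\frac{x^2}{x-z}\,\mu({\rm d}x) + o(1)$; comparing with \eqref{EV_eq:2} one identifies $\rho$ via $(1+x^2)\rho({\rm d}x) = \tau({\rm d}x)$ and checks $\tau(\R)=\sigma^2$, with $b=0$. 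Conversely, from \eqref{EV_finite_var} one reads off the first two asymptotic terms of $F_\mu$ at $\infty$ and deduces mean zero and variance $\tau(\R)$. The main obstacle is the clean justification of $(3)\Rightarrow(1)$ — specifically invoking the correct form of the Nevanlinna/Stieltjes representation for Cauchy transforms of \emph{probability} measures (i.e. getting total mass exactly $1$, not just finiteness), and making sure the normalization $iy\,G(iy)\to1$ is what pins down the total mass; everything else is routine asymptotic expansion and the dominated convergence theorem.
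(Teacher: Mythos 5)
The paper offers no proof of this lemma at all — it is quoted from Maassen \cite{M92} — so the only comparison available is with the classical argument, and your plan follows exactly that route (Pick--Nevanlinna representation plus the normalization at infinity), which is fine. Two concrete points need repair, though. First, the algebraic identity you use in $(3)\Rightarrow(1)$ is false: $\frac{1+xz}{x-z}\neq -z+\frac{1+x^2}{x-z}$; the correct decomposition is $\frac{1+xz}{x-z}=\frac{1+x^2}{x-z}-x$, and in any case the displayed consequence ``$F(z)=b+\int\frac{1+x^2}{x-z}\rho({\rm d}x)$'' cannot hold as written (it drops the leading $z$, and splitting the integral requires $\int x^2\,\rho({\rm d}x)<\infty$, which is not given). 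This slip is non-fatal because the step you actually rely on, $\Im F(z)=\Im z\,\bigl(1+\int_\R\frac{1+x^2}{|x-z|^2}\rho({\rm d}x)\bigr)\ge\Im z$, is correct and follows directly from \eqref{EV_eq:2}. Second, your $(3)\Rightarrow(1)$ outsources the crux to ``the standard fact'' that $G\colon\Ha\to\overline{-\Ha}$ with $iyG(iy)\to1$ is the Cauchy transform of a probability measure; inside this paper that proposition is \emph{stated as a consequence} of Lemma \ref{Julia}, so quoting it here is essentially circular unless you prove it: apply the Pick--Nevanlinna representation to $-1/F$, use $\Im(-G(iy))\to0$ to kill the linear coefficient, and use $\Re\bigl(iyG(iy)\bigr)=y\,\Im(-G(iy))\to\int(1+x^2)\sigma({\rm d}x)$ (monotone convergence) together with $iyG(iy)\to1$ to pin the total mass at $1$ and the constant at $0$. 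This is a short argument and is exactly what ``pins down the total mass,'' as you anticipated.

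On the finite-variance supplement, the sketch is a little too formal in both directions. In the forward direction, ``$b=0$'' should be ``$b=\int x\,\rho({\rm d}x)$ so that the constant term cancels,'' and you must first show $\int x^2\,\rho({\rm d}x)<\infty$ before splitting $\frac{1+xz}{x-z}=\frac{1+x^2}{x-z}-x$; this follows from $y\,\Im\bigl(F_\mu(iy)-iy\bigr)=\int\frac{(1+x^2)y^2}{x^2+y^2}\rho({\rm d}x)\to\int(1+x^2)\rho({\rm d}x)$ (monotone convergence) compared with the expansion $F_\mu(iy)-iy=-\sigma^2/(iy)+o(1/y)$. In the converse direction, ``reading off the first two asymptotic terms'' does not by itself give finiteness of the second moment of $\mu$: you need $y^2\bigl(1-\Re(iyG_\mu(iy))\bigr)=\int\frac{x^2y^2}{x^2+y^2}\mu({\rm d}x)\uparrow\int x^2\mu({\rm d}x)$ and the boundedness of the left-hand side from \eqref{EV_finite_var} (this is the content of the reference in Remark \ref{rm_finite_var}), after which the mean is handled by Lemma \ref{first_moment}. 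With these repairs your plan is a complete proof.
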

\begin{remark}\label{rm_finite_var}
 Condition \eqref{EV_finite_var} is furthermore equivalent to the normalization
 \[F_\mu(z) = z - \frac{\sigma^2}{z} + {\scriptstyle\mathcal{O}}(1/|z|)\]
 as $z\to\infty$ non-tangentially in $\Ha,$ see \cite[Lemma 1]{MR1201130}.
\end{remark}

\begin{lemma}\label{first_moment}
If the first moment of $\mu$ exists, then
 \[ \int_\R x\, \mu({\rm d}x) =  \lim_{y\to\infty}(iy-F_\mu(iy)).\]
  \end{lemma}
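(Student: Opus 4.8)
The plan is to work directly with the Cauchy transform $G_\mu$, rewrite $iy-F_\mu(iy)$ as a quotient of two integrals against $\mu$, and then apply dominated convergence. First I would observe that $F_\mu(iy)=1/G_\mu(iy)$ makes sense because, by Lemma~\ref{Julia}, $\Im F_\mu(iy)\geq y>0$, so in particular $G_\mu(iy)\neq 0$. From the algebraic identity
\[
iy-F_\mu(iy)=\frac{iy\,G_\mu(iy)-1}{G_\mu(iy)}
\]
together with the elementary manipulation $\frac{iy}{iy-x}-1=\frac{x}{iy-x}$, I get
\[
iy\,G_\mu(iy)-1=\int_\R\frac{x}{iy-x}\,\mu({\rm d}x),\qquad iy\,G_\mu(iy)=\int_\R\frac{iy}{iy-x}\,\mu({\rm d}x).
\]
Multiplying numerator and denominator of the quotient by $iy$ then yields
\[
iy-F_\mu(iy)=\frac{\displaystyle\int_\R\frac{iy\,x}{iy-x}\,\mu({\rm d}x)}{\displaystyle\int_\R\frac{iy}{iy-x}\,\mu({\rm d}x)}.
\]

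The two limits are handled by dominated convergence. For the denominator, $\left|\frac{iy}{iy-x}\right|=\frac{y}{\sqrt{x^2+y^2}}\leq 1$ and $\frac{iy}{iy-x}\to 1$ pointwise as $y\to\infty$, so the denominator tends to $1$ (this also re-derives the normalization $G_\mu(iy)\sim 1/(iy)$). For the numerator, $\left|\frac{iy\,x}{iy-x}\right|=\frac{y|x|}{\sqrt{x^2+y^2}}\leq|x|$, and here the hypothesis that the first moment of $\mu$ exists is exactly what makes $|x|$ an integrable dominating function; since $\frac{iy\,x}{iy-x}\to x$ pointwise, the numerator converges to $\int_\R x\,\mu({\rm d}x)$. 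Taking the ratio of the two limits gives the asserted identity.

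The only genuine subtlety — and the step I would be most careful about — is the domination in the numerator: it is precisely the finiteness of $\int_\R|x|\,\mu({\rm d}x)$ that is being invoked, and without it neither the stated limit nor even the convergence of $iy-F_\mu(iy)$ is guaranteed. Everything else is bookkeeping: the passage to the quotient form is purely algebraic, and the non-vanishing of $G_\mu(iy)$ is immediate from Lemma~\ref{Julia}. One could alternatively start from the Pick--Nevanlinna representation \eqref{EV_eq:2} of $F_\mu$, but that route requires translating between the Pick measure $\rho$ and $\mu$, which is less direct, so I would keep the computation above.
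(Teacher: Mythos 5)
Your proof is correct and follows essentially the same route as the paper: after the algebraic rearrangement your numerator $\int_\R \frac{iy\,x}{iy-x}\,\mu({\rm d}x)$ is exactly the integral $\int_\R \frac{xy^2-iyx^2}{x^2+y^2}\,\mu({\rm d}x)$ the paper obtains from $\frac{iy}{F_\mu(iy)}(iy-F_\mu(iy))$, and both arguments finish by dominated convergence using integrability of $|x|$ together with the normalization $iyG_\mu(iy)\to 1$ (which you re-derive by dominated convergence rather than citing Lemma~\ref{Julia}\eqref{Julia2}). The only cosmetic difference is your single dominating bound $\bigl|\tfrac{iy\,x}{iy-x}\bigr|\le |x|$ in place of the paper's separate use of $|xy|\le (x^2+y^2)/2$.
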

  \begin{proof}
   The proof is essentially given in  the proof of \cite[Theorem 2.2]{M92}:
 \[
 \begin{split}
 \frac{iy}{F_\mu(iy)}(iy-F_\mu(iy))
 &=-\frac{y^2}{F_\mu(iy)}-iy =  y^2 \int_\R\left( \frac1{x-iy}+\frac1{iy}\right)\mu({\rm d}x) \\
 &= y^2 \int_\R \frac{y^2(x+iy)-iy(x^2+y^2)}{(x^2+y^2)y^2}\,\mu({\rm d}x) =
 \int_\R \frac{xy^2-iyx^2}{x^2+y^2}\,\mu({\rm d}x).
 \end{split}
 \]
 The dominated convergence theorem, with the simple inequality $|xy| \leq (x^2+y^2)/2$,  implies that
 \[
\lim_{y\to\infty}\int_\R \frac{xy^2 - i y x^2}{x^2+y^2}\,\mu({\rm d}x) =  \int_\R x \,\mu({\rm d}x).
 \]
Then Lemma \ref{Julia} \eqref{Julia2} finishes the proof.
  \end{proof}

The next lemma deals with convergence of a sequence of $F$-transforms.

\begin{lemma}\label{lemmaconvergence}
Let $F, F_n$ be analytic mappings from $\C^+$ to $\C^+ \cup \R$. They have the Pick-Nevanlinna representations
\[
\begin{split}
&F(z) =  b + \int_{\widehat\R}\frac{1+xz}{x-z}\widehat\rho({\rm d}x) = a z + b + \int_{\R}\frac{1+xz}{x-z}\rho({\rm d}x), \\
&F_n(z) =  b_n + \int_{\widehat\R}\frac{1+xz}{x-z}\widehat\rho_n({\rm d}x) = a_n z + b_n + \int_{\R}\frac{1+xz}{x-z}\rho_n({\rm d}x),
\end{split}
\]
where $b \in \R$, $\widehat\rho$ is a non-negative finite, non-negative measure on $\widehat\R$, $a =\widehat\rho(\{\infty\})\geq 0$ and similarly for $b_n, \widehat\rho_n, a_n$.
Then the following are equivalent:
\begin{enumerate}[\rm(1)]
\item $F_n$ converges to $F$ locally uniformly on $\C^+$.
\item There is a sequence of distinct points $\{z_n \}_{n\geq 1}$ converging to a point of $\C^+$ such that
$F_n(z_k)$ converges to $F(z_k)$ as $n \to \infty$ for any $k \geq 1$.
\item $b_n$ converges to $b$ and $\widehat\rho_n$ converges weakly to $\widehat\rho$ on $\widehat\R$.
\end{enumerate}
In particular, if $F=F_\mu$ and $F_n = F_{\mu_n}$ for probability measures $\mu,\mu_n$ on $\R$, or equivalently if $a=a_n = 1$, the above conditions are also equivalent to the weak convergence $\mu_n \wto\mu$.
\end{lemma}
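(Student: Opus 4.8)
The plan is to establish $(1)\Leftrightarrow(2)$ and $(1)\Leftrightarrow(3)$ separately, and then to deduce the final assertion about $\mu_n\wto\mu$. The implication $(1)\Rightarrow(2)$ is immediate: take any sequence of distinct points converging to an interior point of $\C^+$, say $z_k=i(1+1/k)$. For $(2)\Rightarrow(1)$ I would run a normal families argument. Composing with the Cayley transform $C(w)=\frac{w-i}{w+i}$ turns $\{F_n\}$ into a family $\{C\circ F_n\}$ of holomorphic maps $\C^+\to\overline{\disk}$, which is uniformly bounded and hence normal by Montel's theorem. Given any subsequence, extract a further subsequence with $C\circ F_{n_j}\to g$ locally uniformly; by the maximum modulus principle $g$ is either a unimodular constant or maps into $\disk$. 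If $g\equiv c$ with $|c|=1$, then $F_{n_j}$ converges locally uniformly to $C^{-1}(c)\in\R\cup\{\infty\}$, which cannot be $\infty$ since $F_{n_j}(z_k)\to F(z_k)\in\C$; hence it equals a real number $r$, and then $F(z_k)=r$ for all $k$ forces $F\equiv r$ by the identity theorem (the $z_k$ accumulate at an interior point of $\C^+$), so $F_{n_j}\to F$. If $g$ maps into $\disk$, then $F_{n_j}\to C^{-1}\circ g=:H$ locally uniformly with $H$ holomorphic into $\C^+$, and $H=F$ on the accumulating set $\{z_k\}$ forces $H\equiv F$. In all cases the sub-subsequence converges to $F$ locally uniformly, so $F_n\to F$ locally uniformly.

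For $(3)\Rightarrow(1)$ the key observation is that the Pick kernel $\phi_z(x):=\frac{1+xz}{x-z}$ can be written as $z+\frac{1+z^2}{x-z}$, so it extends continuously to $\widehat\R$ with value $z$ at $\infty$, and $(z,x)\mapsto\phi_z(x)$ is jointly continuous and uniformly bounded on $K\times\widehat\R$ for every compact $K\subset\C^+$. Then weak convergence $\widehat\rho_n\to\widehat\rho$ on the compact space $\widehat\R$ gives $\int_{\widehat\R}\phi_z\,{\rm d}\widehat\rho_n\to\int_{\widehat\R}\phi_z\,{\rm d}\widehat\rho$ uniformly for $z\in K$, and combined with $b_n\to b$ this yields $(1)$. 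For $(1)\Rightarrow(3)$ I would first evaluate at $z=i$ (where $\phi_i\equiv i$) to get $F_n(i)=b_n+i\,\widehat\rho_n(\widehat\R)$, hence $b_n\to b$ and $\sup_n\widehat\rho_n(\widehat\R)<\infty$; then use weak sequential compactness of bounded non-negative measures on the compact metric space $\widehat\R$ to reduce, along subsequences, to a weak limit $\widehat\sigma$. By the already-proven $(3)\Rightarrow(1)$ (applied with $b_{n_j}\to b$), the corresponding subsequence of $F_n$ converges to the Pick function with Pick--Nevanlinna data $(b,\widehat\sigma)$, which must be $F$, so $\widehat\sigma=\widehat\rho$ by the uniqueness of the Pick--Nevanlinna representation recalled after \eqref{eq:compactification}. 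Hence $\widehat\rho_n\to\widehat\rho$ weakly, which is $(3)$.

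For the final assertion, assume $a=a_n=1$, so $F=F_\mu$ and $F_n=F_{\mu_n}$. If $\mu_n\wto\mu$, then testing against the bounded continuous functions $x\mapsto(z-x)^{-1}$ gives $G_{\mu_n}\to G_\mu$ pointwise on $\C^+$, and since $G_\mu$ is zero-free there, $F_{\mu_n}=1/G_{\mu_n}\to 1/G_\mu=F_\mu$ pointwise, which is $(2)$. Conversely, from $(1)$ we get $G_{\mu_n}\to G_\mu$ pointwise; using the identity $-y\,\Im G_{\mu_n}(iy)=\int_\R\frac{y^2}{x^2+y^2}\,\mu_n({\rm d}x)$ and letting $n\to\infty$ and then $y\to\infty$ shows that $(\mu_n)$ is tight, so each subsequence has a weak sub-limit $\nu$, necessarily a probability measure with $G_\nu=G_\mu$, hence $\nu=\mu$; thus $\mu_n\wto\mu$.

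The main obstacle is the passage from analytic convergence to weak convergence of the representing data, i.e.\ the directions $(1)\Rightarrow(3)$ and the converse half of the addendum, where one must rule out escape of mass to infinity. This is precisely why one works on the compactification $\widehat\R$ — there the relevant measures automatically form a tight family — together with the uniqueness of the Pick--Nevanlinna representation to identify the limit; on the probabilistic side, the same role is played by the tightness bound coming from $\Im G_{\mu_n}(iy)$.
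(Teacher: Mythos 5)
Your proof is correct and uses essentially the same ingredients as the paper's: a Montel normal-families argument for $(2)\Rightarrow(1)$, evaluation at $z=i$ together with Helly compactness of the bounded measures $\widehat\rho_n$ on the compact space $\widehat\R$ and the uniqueness of the Pick--Nevanlinna representation for $(1)\Rightarrow(3)$, and the boundedness/continuity of the kernel $\frac{1+xz}{x-z}$ on $\widehat\R$ for $(3)\Rightarrow(1)$ (where your claimed uniformity in $z$ on compacts needs only the one-line remark that the $z$-derivative of the kernel is bounded on $K\times\widehat\R$, or an appeal to Vitali; alternatively pointwise convergence plus your $(2)\Rightarrow(1)$ already suffices). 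The only genuine difference is the final assertion about $\mu_n\wto\mu$, which the paper settles by citing \cite[Theorem 2.5]{M92}, whereas you prove it directly via the tightness bound coming from $-y\,\Im G_{\mu_n}(iy)$ together with the uniqueness of the Cauchy transform; that self-contained argument is fine.
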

\begin{proof}
$(2) \Rightarrow (1)$ and $(3)$: by using a dilation $z \mapsto \lambda z$ and a translation $z \mapsto z + a$, we can always assume that $z_1 = i$. Since $F_n(i) = b_n + i\widehat\rho_n(\widehat\R)$, the sequence $b_n$ converges to $b $, and $\widehat\rho_n(\widehat\R)$ converges to $\widehat\rho(\widehat\R)$. In particular, they are bounded.
We can then prove that $F_n$ is uniformly bounded on any compact subset of $\C^+$.
Given a subsequence of $F_n$, Montel's theorem and Helly's theorem show the existence of a further subsequence, denoted as $F_{m(k)}$, such that $F_{m(k)}$ and $\widehat\rho_{m(k)}$ converge to limits $\widetilde{F}$ and $\widetilde{\widehat\rho}$, respectively. Then $\widetilde{F}$ coincides with $F$ on $\{z_p \}_{p\geq 1}$, so that $F = \widetilde{F}$ by the identity theorem and $\widehat\rho = \widetilde{\widehat\rho}$ by the uniqueness of the Pick-Nevanlinna representation. Therefore, (1) and (3) follow.

The implication $(1) \Rightarrow (2)$ is immediate and $(3) \Rightarrow (2)$ follows by the definition of weak convergence.

By \cite[Theorem 2.5]{M92}, locally uniform convergence of $F_{\mu_n}$ is equivalent to the weak convergence of $\mu_n$.
\end{proof}
\index{F-transform@$F$-transform|)} 

\section{Cauchy transform}

\index{Cauchy transform}

While the $F$-transform is central in our considerations, the Cauchy transform \eqref{eq:Cauchy} is also useful in some cases. 
Let $\mu$ be a probability measure on $\R$. The Cauchy transform $G_\mu$ can be expressed as $-F(z)$, where $F$ is a Pick function of the form \eqref{EV_eq:1} with $a=0, (1+x^2)\rho({\rm d}x)=\mu({\rm d}x)$ and a suitable $b$. Then the Stieltjes inversion formula \eqref{Stieltjes}--\eqref{eq:atom} implies
\begin{align}
\frac{1}{2}\mu(\{\alpha\})+\frac{1}{2}\mu(\{\beta\})+\mu((\alpha,\beta)) 
&= -\frac{1}{\pi} \lim_{\epsilon\downarrow0}\int_\alpha^\beta \Im[G_\mu(x+i \epsilon)]\, {\rm d}x, \label{Stieltjes2} \\
\mu(\{\alpha\}) &= \lim_{\epsilon\downarrow0} i\epsilon G_\mu(\alpha+i\epsilon), \qquad \alpha \in \R, \label{eq:atom2}
\end{align}
which are useful for computing $\mu$. This in particular implies that for two probability measures $\mu$ and $\nu$, $\mu = \nu$ if and only if $G_\mu=G_\nu$. 

A characterization of the Cauchy transform can be derived from that of the $F$-transform in Lemma \ref{Julia}.  
\begin{proposition} Let $G\colon \C^+ \to (-\C^+)\cup\R$ be holomorphic. Then the following are equivalent.
\begin{enumerate}[\rm(1)]
\item There exists a probability measure $\mu$ on $\R$ such that $G=G_\mu$.
\item $\lim_{y\to \infty}i yG(i y)=1$. 
\end{enumerate}
\end{proposition}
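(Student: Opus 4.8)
The plan is to deduce this from Lemma \ref{Julia} by passing to reciprocals, which is natural since the $F$-transform is by definition the reciprocal of the Cauchy transform.

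For $(1)\Rightarrow(2)$ I would compute directly. If $G=G_\mu$, then
\[
iyG_\mu(iy)=\int_\R\frac{iy}{iy-x}\,\mu({\rm d}x)=\int_\R\frac{1}{1-x/(iy)}\,\mu({\rm d}x),
\]
and since $\lvert iy/(iy-x)\rvert=y/\sqrt{x^2+y^2}\le 1$ while the integrand converges pointwise to $1$ as $y\to\infty$, the dominated convergence theorem gives $\lim_{y\to\infty}iyG_\mu(iy)=\mu(\R)=1$.

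For $(2)\Rightarrow(1)$ the first step is to notice that $G$ cannot be constant: a constant value $c$ would yield $iyG(iy)=icy$, which tends to $1$ for no value of $c$ (it is identically $0$ if $c=0$ and diverges otherwise). Thus $G$ is a non-constant holomorphic map on the connected domain $\C^+$ whose image lies in the closed lower half-plane $(-\C^+)\cup\R$, so by the open mapping theorem its image lies in the interior $-\C^+$; in particular $G$ is zero-free, and $F:=1/G$ is a well-defined holomorphic map $\C^+\to\C^+$, i.e.\ a non-constant Pick function. Moreover
\[
\lim_{y\to\infty}\frac{F(iy)}{iy}=\lim_{y\to\infty}\frac{1}{iy\,G(iy)}=1,
\]
so by the equivalence of conditions (1) and (2) in Lemma \ref{Julia} there is a probability measure $\nu$ on $\R$ with $F=F_\nu$. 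Since $F_\nu=1/G_\nu$, this gives $G=1/F=G_\nu$, which is the assertion with $\mu=\nu$.

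The only point needing care is the open mapping argument ensuring $G$ omits all real values, so that $1/G$ has no poles; after that the proof is a one-line transfer of Lemma \ref{Julia} through the involution $H\mapsto 1/H$, and I do not anticipate a genuine obstacle. An alternative route, avoiding Lemma \ref{Julia}, would substitute the Pick--Nevanlinna representation $-G(z)=az+b+\int_\R\frac{1+xz}{x-z}\,\rho({\rm d}x)$ into $iyG(iy)$ and read off from the real part that $a=0$ and (by monotone convergence) $\int_\R(1+x^2)\,\rho({\rm d}x)=1$, and from the imaginary part that $b=\int_\R x\,\rho({\rm d}x)$; the elementary identity $\frac{1+xz}{x-z}=-\frac{1+x^2}{z-x}-x$ then identifies $G$ with $G_\mu$ for $\mu({\rm d}x)=(1+x^2)\,\rho({\rm d}x)$. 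I would prefer the first, shorter route.
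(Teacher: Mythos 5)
Your proof is correct, and it follows the route the paper itself indicates: the paper gives no detailed argument but states that the characterization "can be derived from that of the $F$-transform in Lemma \ref{Julia}," which is exactly your main route of passing to the reciprocal $F=1/G$ (your open-mapping step, ensuring $G$ omits real values so that $1/G$ is a genuine Pick function, is the right way to fill in the one detail the paper leaves implicit). Your alternative via the Pick--Nevanlinna representation is likewise sound and matches the remark in the paper's Cauchy-transform section that $G_\mu=-F$ with $a=0$ and $(1+x^2)\rho({\rm d}x)=\mu({\rm d}x)$.
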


\section[$\eta$-transforms and moment generating functions]{Properties of $\eta$-transforms and moment generating functions}

\index{eta-transform@$\eta$-transform|(}
\index{moment generating function|(}${}$

The $\eta$-transform \eqref{def:psi-eta} is used to characterize multiplicative convolutions (see Section \ref{mono_convolutions_mult}). We collect several facts about it.
Hereafter $\bH$ denotes the right half-plane of $\C$.
We quote the following result from \cite[p.91]{A65}, which can be obtained from \eqref{eq:compactification} and \eqref{Stieltjes} with a suitable Moebius transformation.

\begin{lemma}\label{inversionunit}
Let $f\colon\disk \to \bH\cup i\R$ be an analytic function (called a Herglotz function). Then $f$ can be represented as
\[
f(z) = ib + \int_\tor \frac{\xi+z}{\xi-z}\rho({\rm d}\xi),
\]
where $b \in \R$ and $\rho$ is a finite (Borel) non-negative measure. Then $b = \Im (f(0))$, $\rho(\tor)=\Re (f(0))$ and
\begin{align*}
\frac{1}{2}\rho(\partial A) + \rho(A) &= \lim_{r \uparrow 1}\int_A \Re(f(r \xi))\,\haar({\rm d}\xi), \\
\rho(\{\alpha\}) &= \frac{1}{2}\lim_{r\uparrow1}(1-r)f(r\alpha), \qquad \alpha \in \tor, 
\end{align*}
for every open arc $A \subset \tor$, where $\partial A$ consists of the two endpoints of $A$. Here, $\haar$ denotes the normalized
Haar measure on $\T$.
\end{lemma}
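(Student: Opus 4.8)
The plan is to push the statement to the upper half-plane $\Ha$, where the corresponding facts are already in hand: the Pick--Nevanlinna representation \eqref{eq:compactification} and the Stieltjes inversion formulas \eqref{Stieltjes}--\eqref{eq:atom}. Fix the Cayley map $\kappa\colon\disk\to\Ha$, $\kappa(z)=i\frac{1+z}{1-z}$, with inverse $\kappa^{-1}(w)=\frac{w-i}{w+i}$; it extends to a homeomorphism $\tor\to\widehat\R$ carrying $1$ to $\infty$. Because multiplication by $i$ sends $\bH\cup i\R$ onto $\Ha\cup\R$, the function $F:=i\,(f\circ\kappa^{-1})$ is holomorphic from $\Ha$ into $\Ha\cup\R$, i.e.\ a Pick function. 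If $\Re f$ vanishes at an interior point of $\disk$, the minimum principle applied to the non-negative harmonic function $\Re f$ forces $f\equiv ib$ with $b\in\R$, and everything holds trivially with $\rho=0$; so from now on assume $\Re f>0$ on $\disk$, equivalently $F$ non-constant.

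First I would prove the representation. Applying \eqref{eq:compactification} to $F$ produces $\beta\in\R$ and a finite non-negative measure $\widehat\sigma$ on $\widehat\R$ with $F(w)=\beta+\int_{\widehat\R}\frac{1+xw}{x-w}\,\widehat\sigma({\rm d}x)$. Substituting $w=\kappa(z)$ and $x=\kappa(\xi)$ and simplifying -- the limiting case $x=\infty\leftrightarrow\xi=1$ being included, where both sides equal $\kappa(z)$ -- gives the kernel identity
\[
\frac{1+xw}{x-w}=i\,\frac{\xi+z}{\xi-z}.
\]
Letting $\rho$ be the push-forward of $\widehat\sigma$ under $\kappa^{-1}\colon\widehat\R\to\tor$, and using $f=-i\,(F\circ\kappa)$, we obtain $f(z)=-i\beta+\int_\tor\frac{\xi+z}{\xi-z}\,\rho({\rm d}\xi)$, which is the asserted formula with $b=-\beta$; here $\rho$ is finite and non-negative because $\widehat\sigma$ is. Putting $z=0$ yields $f(0)=-i\beta+\rho(\tor)$, hence $b=\Im(f(0))$ and $\rho(\tor)=\Re(f(0))$.

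For the two boundary formulas I would argue directly on the disk, which is cleaner than transporting \eqref{Stieltjes}--\eqref{eq:atom} through $\kappa$. Since $\Re(ib)=0$ and $\Re\frac{\xi+r\zeta}{\xi-r\zeta}=\frac{1-r^2}{|\xi-r\zeta|^2}$ is the Poisson kernel, we have $\Re f(r\zeta)=\int_\tor\frac{1-r^2}{|\xi-r\zeta|^2}\,\rho({\rm d}\xi)$ for $r\in(0,1)$. Integrating over an open arc $A$ against $\haar$ and applying Fubini, the formula $\lim_{r\uparrow1}\int_A\Re f(r\xi)\,\haar({\rm d}\xi)=\rho(A)+\tfrac12\rho(\partial A)$ follows from the elementary fact that $\int_A\frac{1-r^2}{|\xi-r\zeta|^2}\,\haar({\rm d}\zeta)$ tends to $1$ for $\xi$ interior to $A$, to $0$ for $\xi$ outside $\overline A$, and to $\tfrac12$ at each of the two endpoints. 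For the atom formula, one notes $|\xi-r\alpha|\ge 1-r$, so $(1-r)\bigl|\tfrac{\xi+r\alpha}{\xi-r\alpha}\bigr|\le 2$, while $(1-r)\tfrac{\xi+r\alpha}{\xi-r\alpha}\to 0$ for $\xi\neq\alpha$ and $\to 2$ for $\xi=\alpha$; dominated convergence then gives $\lim_{r\uparrow1}(1-r)f(r\alpha)=2\rho(\{\alpha\})$, i.e.\ $\rho(\{\alpha\})=\tfrac12\lim_{r\uparrow1}(1-r)f(r\alpha)$.

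There is no deep obstacle here; the content is bookkeeping plus the one algebraic identity. The point that needs a moment's care is the distinguished boundary point $\xi=1$: it corresponds to $\infty\in\widehat\R$, which the half-plane inversion formulas over bounded intervals cannot detect, so any attempt to transport \eqref{Stieltjes}--\eqref{eq:atom} verbatim must treat $\xi=1$ separately (using instead the asymptotics $\widehat\sigma(\{\infty\})=\lim_{y\to\infty}F(iy)/(iy)$). The direct disk argument above avoids this entirely, and one verifies a posteriori that $\rho(\{1\})$ indeed matches that atom. If one does prefer the transport route for the arc formula, the calculation hinges on the conformal factor $\tfrac{{\rm d}\theta}{{\rm d}x}=\tfrac{2}{1+x^2}$ exactly cancelling the weight $(1+x^2)$ relating the measure $\tau$ of \eqref{Stieltjes} to the Pick measure of $F$, and getting that cancellation straight is then the main computational step.
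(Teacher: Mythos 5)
Your proof is correct. For the record, the paper does not prove Lemma \ref{inversionunit} at all: it quotes the result from Akhiezer and only remarks that it ``can be obtained from \eqref{eq:compactification} and \eqref{Stieltjes} with a suitable Moebius transformation.'' Your argument fills in exactly that sketch for the representation part (the Cayley conjugation, the kernel identity $\frac{1+xw}{x-w}=i\,\frac{\xi+z}{\xi-z}$, and the push-forward measure, with the $x=\infty\leftrightarrow\xi=1$ case handled correctly), and then deviates mildly by proving the two boundary formulas directly on the disk via the Poisson kernel $\Re\frac{\xi+r\zeta}{\xi-r\zeta}=\frac{1-r^2}{|\xi-r\zeta|^2}$ and dominated convergence, rather than transporting \eqref{Stieltjes}--\eqref{eq:atom} through the Cayley map. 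That variation is a genuine simplification: it avoids the bookkeeping with the weight $1+x^2$ and, as you note, the fact that the half-plane inversion over bounded intervals cannot see the mass at $\infty$ (i.e.\ at $\xi=1$), which the direct disk argument handles on the same footing as every other boundary point. One small remark: the preliminary reduction to $\Re f>0$ is not strictly needed, since the Pick--Nevanlinna representation \eqref{EV_eq:1}/\eqref{eq:compactification} is stated for all Pick functions, constants included; but it is harmless as written.
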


Since the moment generating function \eqref{def:psi-eta} of a probability measure $\mu$ on $\tor$ can be expressed as
\begin{equation}\label{eq:psi2}
\psi_\mu(z)=-\frac{1}{2}+\frac{1}{2}\int_\tor \frac{\xi^{-1}+z}{\xi^{-1}-z}\,\mu({\rm d}\xi),
\end{equation}
the inversion formula of Lemma \ref{inversionunit} gives 
\begin{align}
\frac{1}{2}\mu(\partial A)+ \mu(A)&= \lim_{r \uparrow 1}\int_A \Re (2\psi_\mu(r \xi^{-1})+1)\,\haar({\rm d}\xi), \label{eq:Herglotz-inv1}\\
\mu(\{\alpha\}) &= \lim_{r\uparrow1}(1-r)\psi_\mu(r\alpha), \qquad \alpha \in \tor, \label{eq:atom3}
\end{align}
where $A$ is an open arc of $\tor$ and $\partial A$ consists of two endpoints of $A$. 

Lemma \ref{inversionunit} yields the following analytic characterization of $\psi_\mu$.
\begin{lemma}\label{characterizationpsi}
Let $\psi\colon\disk \to \C$ be an analytic function. The following conditions are
equivalent.
\begin{enumerate}[\rm(1)]
\item There exists a probability measure $\mu$ on $\tor$ such that $\psi=\psi_\mu$.
\item $\psi(0)=0$ and $\Re[\psi(z)] \geq -\frac{1}{2}$ for all $z\in \D$.
\end{enumerate}
\end{lemma}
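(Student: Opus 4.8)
The statement to prove is the analytic characterization of moment generating functions of probability measures on $\T$: an analytic $\psi\colon\D\to\C$ equals $\psi_\mu$ for some $\mu\in\cP(\T)$ if and only if $\psi(0)=0$ and $\Re[\psi(z)]\ge-\tfrac12$ on $\D$. The plan is to deduce this from the integral representation of Herglotz functions already recorded in Lemma~\ref{inversionunit}, by passing between $\psi$ and the auxiliary function $f(z):=2\psi(z)+1$.

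\textbf{Key steps.} First I would handle the easy implication $(1)\Rightarrow(2)$: from the defining formula $\psi_\mu(z)=\int_\T \frac{xz}{1-xz}\,\mu({\rm d}x)$ one reads off $\psi_\mu(0)=0$ immediately, and from the rewriting \eqref{eq:psi2}, namely $\psi_\mu(z)=-\tfrac12+\tfrac12\int_\T\frac{\xi^{-1}+z}{\xi^{-1}-z}\,\mu({\rm d}\xi)$, one sees that $f=2\psi_\mu+1$ is exactly $\int_\T\frac{\eta+z}{\eta-z}\,\tilde\mu({\rm d}\eta)$ where $\tilde\mu$ is the pushforward of $\mu$ under $\xi\mapsto\xi^{-1}$; since $\Re\frac{\eta+z}{\eta-z}=\frac{1-|z|^2}{|\eta-z|^2}>0$ for $|z|<1$, $|\eta|=1$, we get $\Re f\ge0$, i.e. $\Re\psi_\mu\ge-\tfrac12$. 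For $(2)\Rightarrow(1)$, set $f:=2\psi+1$; then $f$ is analytic on $\D$ with $\Re f\ge0$, so $f$ is a Herglotz function in the sense of Lemma~\ref{inversionunit}, hence $f(z)=ib+\int_\T\frac{\xi+z}{\xi-z}\rho({\rm d}\xi)$ with $b\in\R$ and $\rho\ge0$ finite. Evaluating at $z=0$: $\psi(0)=0$ forces $f(0)=1$, and Lemma~\ref{inversionunit} gives $b=\Im f(0)=0$ and $\rho(\T)=\Re f(0)=1$, so $\rho$ is a probability measure. Finally define $\mu$ as the pushforward of $\rho$ under $\xi\mapsto\xi^{-1}$; reversing the computation in \eqref{eq:psi2} shows $\psi=\tfrac12(f-1)=\psi_\mu$, completing the proof.

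\textbf{Main obstacle.} There is essentially no deep obstacle here — the content is entirely a bookkeeping translation between $\psi$ and a positive-real-part function, and between measures on $\T$ and their reflections $\xi\mapsto\xi^{-1}$. The one point requiring a little care is matching the normalizations: one must check that the condition $\psi(0)=0$ is precisely what pins down both the additive constant $b=0$ and the total mass $\rho(\T)=1$, so that $\rho$ (and hence $\mu$) is genuinely a probability measure rather than merely a finite positive measure; this is why the two conditions in (2) are exactly the right pair. I would also remark that uniqueness of $\mu$ follows from the uniqueness part of the Herglotz representation in Lemma~\ref{inversionunit}, though the statement as phrased only asserts existence.
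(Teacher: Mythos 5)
Your proof is correct and follows exactly the route the paper intends: the paper derives this lemma directly from the Herglotz representation of Lemma~\ref{inversionunit} via the rewriting \eqref{eq:psi2}, which is precisely your passage between $\psi$ and $f=2\psi+1$ together with the reflection $\xi\mapsto\xi^{-1}$. The normalization check ($b=\Im f(0)=0$, $\rho(\T)=\Re f(0)=1$ forced by $\psi(0)=0$) is the only substantive point, and you handle it correctly.
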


A more useful characterization in terms of $\eta_\mu$ follows from Lemma \ref{characterizationpsi} (see \cite[Proposition 3.2]{BB05}). 

\begin{lemma}\label{characterizationeta}
Let $\eta\colon\disk \to \C$ be an analytic function. The following conditions are
equivalent.
\begin{enumerate}[\rm(1)]
\item There exists a probability measure $\mu$ on $\tor$ such that $\eta=\eta_\mu$.
\item $\eta(0)=0$ and $\eta$ maps $\disk$ into $\disk$.
\item $|\eta(z)|\leq |z|$ for all $z\in\disk$.
\end{enumerate}
\end{lemma}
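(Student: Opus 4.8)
The plan is to derive this from Lemma \ref{characterizationpsi}, since $\eta_\mu = \psi_\mu/(1+\psi_\mu)$ is just a Möbius transformation applied to $\psi_\mu$, and the condition $\Re[\psi] \geq -\frac12$ translates cleanly into the condition that $\eta$ maps $\D$ into $\D$. Concretely, the map $w \mapsto w/(1+w)$ sends the half-plane $\{\Re(w) > -\frac12\}$ bijectively onto the disk $\D$ (one checks: $\Re(w) = -\frac12$ is the line $w = -\frac12 + it$, which maps to $(-\frac12+it)/(\frac12+it)$, a point of modulus $1$; and $w=0$ maps to $0 \in \D$), with inverse $\eta \mapsto \eta/(1-\eta)$.

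First I would prove (1) $\Rightarrow$ (2). If $\eta = \eta_\mu$, then by Lemma \ref{characterizationpsi}, $\psi_\mu$ is analytic with $\psi_\mu(0)=0$ and $\Re[\psi_\mu] \geq -\frac12$ on $\D$; in particular $1+\psi_\mu$ is nonvanishing (its real part is $\geq \frac12 > 0$), so $\eta = \psi_\mu/(1+\psi_\mu)$ is genuinely analytic on $\D$. Evaluating at $0$ gives $\eta(0) = 0$. For each $z \in \D$, the point $\psi_\mu(z)$ lies in the closed half-plane $\{\Re \geq -\frac12\}$, hence its image under $w \mapsto w/(1+w)$ lies in $\overline{\D}$; to get the strict inclusion $\eta(z) \in \D$ I would invoke that $\eta$ is analytic and non-constant unless $\mu = \haar$ — but more simply, an analytic map of $\D$ into $\overline{\D}$ that is not constantly of modulus $1$ actually maps into $\D$ by the maximum modulus principle (and if $\eta \equiv$ a boundary point it would contradict $\eta(0)=0$; if $\eta\equiv 0$ it's already in $\D$).

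Next, (2) $\Rightarrow$ (3) is the Schwarz lemma applied to $\eta$ directly: $\eta$ maps $\D$ to $\D$ with $\eta(0)=0$, so $|\eta(z)| \leq |z|$ for all $z \in \D$. For (3) $\Rightarrow$ (1): given $|\eta(z)|\leq|z|$, in particular $\eta(\D)\subseteq\D$ and $\eta(0)=0$, so $1-\eta$ is nonvanishing on $\D$ and we may define $\psi := \eta/(1-\eta)$, which is analytic on $\D$ with $\psi(0)=0$. The Möbius map $\eta \mapsto \eta/(1-\eta)$ sends $\D$ into the half-plane $\{\Re(w) > -\frac12\}$ (again check boundary and center: $|\eta|=1$ maps to the line $\Re = -\frac12$, $\eta=0$ maps to $0$), so $\Re[\psi] \geq -\frac12$ on $\D$. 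By Lemma \ref{characterizationpsi} there is a probability measure $\mu$ on $\tor$ with $\psi = \psi_\mu$, and then $\eta = \psi/(1+\psi) = \psi_\mu/(1+\psi_\mu) = \eta_\mu$, as required.

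The only mildly delicate point — and the one I would be most careful about — is handling the edge cases where $\eta$ takes values on $\partial\D$ or where denominators might vanish, i.e. justifying that the relevant Möbius transformations are honest bijections between the open half-plane and the open disk and that the compositions stay analytic on all of $\D$. The degenerate case $\mu = \haar$ (so $\psi_\mu \equiv 0$ and $\eta_\mu \equiv 0$) satisfies all three conditions trivially, and the maximum modulus argument rules out $\eta$ being a nonzero constant of modulus $1$; otherwise everything is a routine transfer of Lemma \ref{characterizationpsi} along the conformal equivalence between $\{\Re > -\tfrac12\}$ and $\D$ together with one application of the Schwarz lemma.
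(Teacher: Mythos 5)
Your proof is correct and follows essentially the same route as the paper, which obtains Lemma \ref{characterizationeta} precisely by transferring Lemma \ref{characterizationpsi} along the M\"obius equivalence $w\mapsto w/(1+w)$ between $\{\Re(w)>-\tfrac12\}$ and $\D$ (citing \cite[Proposition 3.2]{BB05}), with the equivalence of (2) and (3) being the standard Schwarz lemma. Your careful handling of the boundary/degenerate cases via the maximum modulus principle is a fine way to settle the only delicate point, so nothing further is needed.
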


The following result can be obtained from Lemma \ref{lemmaconvergence} and a Moebius transformation sending $\C^+$ onto $\disk$. Notice that the equivalence of the last condition \eqref{moment_converge} has no analogue in Lemma \ref{lemmaconvergence}, but the proof simply follows by dominated convergence theorem.
\begin{lemma}\label{lemmaconvergenceunit}
Let $f,f_n$ be analytic maps from $\disk$ to $\bH \cup i\R$ with the Herglotz representations
\[
\begin{split}
&f(z) =   i b + \int_{-\pi}^\pi\frac{\xi+z}{\xi-z}\rho({\rm d}\xi),  \\
&f_n(z) =   i b_n + \int_{\tor}\frac{\xi+z}{\xi-z}\rho_n({\rm d}\xi),
\end{split}
\]
where $b,b_n \in \R$ and $\rho,\rho_n$ are finite, non-negative measures on $\tor$.
Then the following are equivalent:
\begin{enumerate}[\rm(1)]
\item $f_n$ converges to $f$ locally uniformly on $\disk$.
\item\label{unit2} There is a sequence of distinct points $\{z_n \}_{n\geq 1}\subset \tor$ converging to a point of $\disk$ such that
$$
\lim_{n\to\infty}f_n(z_k)=f(z_k) \quad \text{for any~} k \in \N.
$$
\item $\lim_{n\to\infty} b_n =b$ and $\rho_n \wto \rho$.
\end{enumerate}
In particular, if $f=\psi_\mu+1/2$ and $f_n = \psi_{\mu_n}+1/2$ for probability measures $\mu,\mu_n$ on $\tor$, then
 the above conditions are also equivalent to any one of the following conditions:
 \begin{enumerate}[\rm(1)]\setcounter{enumi}{3}
 \item  $\eta_{\mu_n}$ converges to $\eta_\mu$ locally uniformly on $\disk$.
\item The weak convergence $\mu_n\wto\mu$ holds.
\item\label{moment_converge} Moments of any degree converge, i.e.
$
\lim_{n\to\infty}m_k(\mu_n) = m_k(\mu)$ for any $k\in\N.$
\end{enumerate}
\end{lemma}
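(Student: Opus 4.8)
The plan is to deduce Lemma \ref{lemmaconvergenceunit} from the already-established Lemma \ref{lemmaconvergence} by transporting everything through a Möbius transformation, and then to handle the extra probabilistic equivalences by elementary arguments. First I would fix a Cayley-type biholomorphism $\Psi\colon \C^+\to\disk$, say $\Psi(w)=\frac{w-i}{w+i}$ with inverse $\Psi^{-1}(z)=i\frac{1+z}{1-z}$, and observe that $w\mapsto iF(w)$ is a Pick function precisely when $F$ maps $\C^+$ into $\bH\cup i\R$ (so that $iF$ maps into $\C^+\cup\R$). Thus, given Herglotz functions $f,f_n$ on $\disk$ as in the statement, the functions $\tilde f(w):=-i\,f(\Psi(w))$ and $\tilde f_n(w):=-i\,f_n(\Psi(w))$ are Pick functions on $\C^+$, and one computes that their Pick--Nevanlinna data $(\tilde b,\widehat{\tilde\rho})$ correspond under the pushforward by $\Psi^{-1}\colon\T\to\widehat\R$ (sending the point $1\in\T$ to $\infty$) to the Herglotz data $(b,\rho)$ of $f$, and similarly for $f_n$; the scalar $b$ is recovered as $\Im(f(0))=\Im\tilde f(\Psi^{-1}(0))$ and the total mass $\rho(\T)=\Re(f(0))$ matches the total mass of $\widehat{\tilde\rho}$. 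Since $\Psi$ is a homeomorphism, locally uniform convergence on $\disk$ is equivalent to locally uniform convergence on $\C^+$, convergence along a sequence of distinct points accumulating inside $\disk$ corresponds to the same on $\C^+$, and weak convergence of $\rho_n$ on $\T$ is equivalent to weak convergence of $\widehat{\tilde\rho}_n$ on $\widehat\R$ together with convergence of the scalars $b_n\to b$. Hence (1)$\Leftrightarrow$(2)$\Leftrightarrow$(3) is an immediate translation of Lemma \ref{lemmaconvergence}.

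Next I would treat the probabilistic part, assuming $f=\psi_\mu+\tfrac12$ and $f_n=\psi_{\mu_n}+\tfrac12$. By \eqref{eq:psi2}, $f$ is exactly the Herglotz function $\tfrac12\int_\T\frac{\xi^{-1}+z}{\xi^{-1}-z}\mu(d\xi)$, so its representing measure is $\rho=\tfrac12\,\iota_*\mu$ where $\iota(\xi)=\xi^{-1}$ is the (homeomorphic) inversion on $\T$, and $b=0$. Therefore condition (3) — i.e.\ $b_n\to b$ and $\rho_n\wto\rho$ — is, after pushing forward by the homeomorphism $\iota$, literally the weak convergence $\mu_n\wto\mu$; this gives (3)$\Leftrightarrow$(5). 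For (4): since $\eta_\mu=\psi_\mu/(1+\psi_\mu)$ and $\psi_\mu(0)=0$, the maps $\psi_\mu\mapsto\eta_\mu$ and $\eta_\mu\mapsto\psi_\mu$ are given by the Möbius maps $t\mapsto t/(1+t)$ and $s\mapsto s/(1-s)$, which are biholomorphic near $0$; because $\eta_{\mu_n}$ and $\eta_\mu$ all map $\disk$ into $\disk$ (Lemma \ref{characterizationeta}), the family $\{\eta_{\mu_n}\}$ is normal, and one checks that locally uniform convergence of $\eta_{\mu_n}\to\eta_\mu$ is equivalent to locally uniform convergence of $\psi_{\mu_n}\to\psi_\mu$, hence to (1). (The only point to watch is that $1+\psi_{\mu_n}$ stays away from $0$ on compacts, which follows from $\Re\psi_{\mu_n}\geq-\tfrac12$ in Lemma \ref{characterizationpsi}, so the denominator never vanishes; conversely $1-\eta_{\mu_n}$ is bounded away from $0$ on compacts of $\disk$ by the Schwarz lemma estimate $|\eta_{\mu_n}(z)|\le|z|$.) Finally, for \eqref{moment_converge}: since $\psi_\mu(z)=\sum_{k\ge1}m_k(\mu)z^k$ with $|m_k(\mu)|\le1$, convergence of all moments gives, by dominated convergence in the series (uniformly on $|z|\le r<1$), locally uniform convergence $\psi_{\mu_n}\to\psi_\mu$, which is (1); and conversely locally uniform convergence of $\psi_{\mu_n}$ forces convergence of all Taylor coefficients by Cauchy's integral formula, i.e.\ $m_k(\mu_n)\to m_k(\mu)$. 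Thus \eqref{moment_converge} is also equivalent to the rest.

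The main obstacle is bookkeeping rather than deep analysis: one must make sure that the three measure-theoretic incarnations — the Herglotz measure $\rho_n$ on $\T$, the measure $\mu_n$ on $\T$, and (via the Cayley map) the Pick measure $\widehat{\tilde\rho}_n$ on $\widehat\R$ — are transported correctly, keeping track of which homeomorphism ($\xi\mapsto\xi^{-1}$ on $\T$, resp.\ $\Psi^{-1}\colon\T\to\widehat\R$) is being applied and of the scalar shifts (the $+\tfrac12$ and the imaginary part $b$), so that ``weak convergence'' on one side corresponds exactly to ``weak convergence plus convergence of scalars'' on the other. Once the dictionary is set up, every implication is either a direct quotation of Lemma \ref{lemmaconvergence} or a one-line application of normal families / dominated convergence, so I would organize the write-up as: (i) the Cayley dictionary and the resulting (1)$\Leftrightarrow$(2)$\Leftrightarrow$(3); (ii) identification of $\rho$ with $\tfrac12\iota_*\mu$ giving (3)$\Leftrightarrow$(5); (iii) the $\psi\leftrightarrow\eta$ Möbius equivalence giving (4); (iv) the power-series argument giving \eqref{moment_converge}.
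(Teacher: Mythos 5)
Your proposal is correct and is essentially the paper's own argument: the paper proves this lemma precisely by transporting Lemma \ref{lemmaconvergence} through a Moebius transformation of $\C^+$ onto $\disk$ and handling the moment condition \eqref{moment_converge} by dominated convergence, which is what you do in more detail. The only slip is the sign in your transported function: with $\Re f\geq 0$ it is $w\mapsto i\,f(\Psi(w))$ (not $-i\,f(\Psi(w))$) that is a Pick function, consistent with your own preceding sentence, and indeed one checks $i\frac{\xi+\Psi(w)}{\xi-\Psi(w)}=\frac{1+xw}{x-w}$ for $\xi=\Psi(x)$, so the dictionary works exactly as you describe.
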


\begin{corollary}\label{weakconv}
The convolutions $\circledast, \boxtimes,\utimes,\submm \colon \cP(\tor) \times \cP(\tor) \to \cP(\tor)$ are weakly continuous.
\end{corollary}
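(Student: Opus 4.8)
\textbf{Proof plan for Corollary \ref{weakconv}.}

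The plan is to reduce weak continuity of each convolution to the analytic characterizations already established, using Lemma \ref{lemmaconvergenceunit} as the main tool. By that lemma, for probability measures on $\tor$, weak convergence $\mu_n \wto \mu$ is equivalent to locally uniform convergence $\eta_{\mu_n} \to \eta_\mu$ on $\disk$ (conditions (4) and (5)), and also to convergence of all moments (condition (6)). So it suffices, given $\mu_n \wto \mu$ and $\nu_n \wto \nu$, to verify the corresponding analytic convergence for the transform that characterizes each of the four convolutions, and then invoke Lemma \ref{lemmaconvergenceunit} in the reverse direction to conclude $\mu_n \star \nu_n \wto \mu \star \nu$.

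First I would treat $\submm$. Here $\eta_{\mu_n \submm \nu_n} = \eta_{\mu_n} \circ \eta_{\nu_n}$ by \eqref{eq:multiplicative_monotone_convolution}. Since $\eta_{\nu_n} \to \eta_{\nu}$ locally uniformly on $\disk$ and each $\eta_{\nu_n}$ maps $\disk$ into $\disk$ (Lemma \ref{characterizationeta}), for any compact $K \subset \disk$ the images $\eta_{\nu_n}(K)$ eventually lie in a common compact subset of $\disk$ on which $\eta_{\mu_n} \to \eta_\mu$ uniformly; combining the two convergences gives $\eta_{\mu_n} \circ \eta_{\nu_n} \to \eta_\mu \circ \eta_\nu$ locally uniformly, hence $\mu_n \submm \nu_n \wto \mu \submm \nu$. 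Next, for $\hutimes$: by \eqref{bool_convolutions_mult}, $h_{\mu_n \hutimes \nu_n}(z) = h_{\mu_n}(z) h_{\nu_n}(z)$ where $h_{\mu_n}(z) = \eta_{\mu_n}(z)/z$. Locally uniform convergence $\eta_{\mu_n} \to \eta_\mu$ on $\disk$ forces $h_{\mu_n} \to h_\mu$ locally uniformly (the singularity at $0$ is removable since $\eta_{\mu_n}(0) = 0$, and one can divide by $z$ after noting the convergence persists on circles $|z| = r$ and applying the maximum principle, or simply use that $h_{\mu_n}$ are themselves uniformly bounded analytic functions with $h_{\mu_n} \to h_\mu$ pointwise off $0$). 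Hence $h_{\mu_n} h_{\nu_n} \to h_\mu h_\nu$ locally uniformly, so $\eta_{\mu_n \hutimes \nu_n} = z\, h_{\mu_n} h_{\nu_n} \to z\, h_\mu h_\nu = \eta_{\mu \hutimes \nu}$, and Lemma \ref{lemmaconvergenceunit} finishes this case.

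For $\circledast$, the cleanest route is via moments: $m_k(\mu_n \circledast \nu_n) = m_k(\mu_n) m_k(\nu_n) \to m_k(\mu) m_k(\nu) = m_k(\mu \circledast \nu)$ for every $k \in \N$, using condition \eqref{moment_converge} of Lemma \ref{lemmaconvergenceunit} in both directions. The case of $\boxtimes$ is the main obstacle, because the characterizing transform $\Sigma_\mu$ is only defined near $0$ and only for measures in $\cP_\times(\tor)$, so one cannot directly feed $\Sigma$-convergence into Lemma \ref{lemmaconvergenceunit}; moreover the limiting measure could have zero mean even if the approximants do not, in which case $\mu \boxtimes \nu$ should equal $\haar$ by \eqref{Haar}. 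I would handle this by appealing to the known weak continuity of multiplicative free convolution on $\cP(\tor)$ (Bercovici--Voiculescu; this is the statement that $\boxtimes$ is separately, hence jointly, continuous with respect to weak convergence — e.g. via the fact that $\eta_{\mu \boxtimes \nu}$ is determined by a subordination relation $\eta_{\mu \boxtimes \nu} = \eta_\mu \circ \omega$ for a subordination function $\omega \colon \disk \to \disk$ with $\omega(0) = 0$, and such subordination functions depend continuously on the data). Concretely, I would cite the relevant result establishing that $\boxtimes$ extends to a weakly continuous operation on all of $\cP(\tor)$ and quote it; if a self-contained argument is wanted, one uses the subordination functions $\omega_{1,n}, \omega_{2,n}$ satisfying $\eta_{\mu_n \boxtimes \nu_n} = \eta_{\mu_n} \circ \omega_{1,n} = \eta_{\nu_n} \circ \omega_{2,n}$ and $\omega_{1,n} \omega_{2,n}/z = \eta_{\mu_n \boxtimes \nu_n}/z$, extracts locally uniform limits by normal families, identifies them using the analytic characterizations above, and reads off convergence of $\eta_{\mu_n \boxtimes \nu_n}$ — with the degenerate case $\mu \boxtimes \nu = \haar$ emerging automatically when $m_1(\mu) = 0$ or $m_1(\nu) = 0$. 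In all four cases the final step is the same: locally uniform convergence of the appropriate transform plus Lemma \ref{lemmaconvergenceunit} yields weak continuity.
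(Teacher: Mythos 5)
Your arguments for $\submm$, $\utimes$ and $\circledast$ are correct: composing $\eta$-transforms (with the Schwarz-lemma bound $|\eta_{\nu_n}(z)|\leq|z|$ from Lemma \ref{characterizationeta} keeping the images in a fixed compact set), multiplying the $h$-transforms, and multiplying moments all pass to the limit, and Lemma \ref{lemmaconvergenceunit} converts this into weak convergence. The weak point is $\boxtimes$: there you do not give an argument but either cite an external continuity theorem or sketch a subordination scheme whose ingredients (existence of subordination functions for arbitrary, possibly mean-zero, measures on $\tor$, and identification of all subsequential limits in the degenerate case $m_1=0$) are exactly the parts that would need proof, so as written that case is not self-contained.

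The paper avoids all of this with a single uniform observation, which is in fact the natural extension of your treatment of $\circledast$: for each of the four convolutions, $m_n(\mu\star\nu)$ is a universal polynomial in $m_1(\mu),\dots,m_n(\mu),m_1(\nu),\dots,m_n(\nu)$ (for $\circledast$ the moments literally multiply; for $\submm$, $\utimes$, $\boxtimes$ this follows from expanding $\Phi_\xi((UV)^n)$ and factorizing the mixed moments via the respective notion of independence, or equivalently from the moment--cumulant formulas). Since on $\tor$ weak convergence is equivalent to convergence of all moments (Lemma \ref{lemmaconvergenceunit}\eqref{moment_converge} --- an equivalence with no analogue on $\R$, which is why this shortcut is available here), joint weak continuity of all four convolutions, including $\boxtimes$ and including the degenerate limits where the mean vanishes and the limit is $\haar$, follows in one stroke. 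So your route works and is more informative about the analytic mechanism behind $\submm$ and $\utimes$, but it is longer and leaves the free case resting on a citation; replacing your transform arguments by the moment-polynomial argument you already used for $\circledast$ would close that gap and reproduce the paper's proof.
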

\begin{proof}
For $\star$ being any one of the three convolutions, the moments $m_n(\mu \star \nu)$ can be represented by polynomials on $m_k(\mu), m_k(\nu), k=1,2,\dots,n$. Then Lemma \ref{lemmaconvergenceunit} \eqref{moment_converge} implies the conclusion.
\end{proof}

\index{eta-transform@$\eta$-transform|)}
\index{moment generating function|)}



\chapter{Loewner Chains} \label{EV_sec_Loewner}

 In what follows, $D$ stands either for the upper half-plane
$\Ha$ or the unit disk $\D.$\\

The distributions of processes with monotonically independent increments will lead us to certain families of
holomorphic mappings. These families turn out to be decreasing \emph{Loewner chains}.

\begin{definition}\label{EV_def:evolution_family}${}$
\begin{enumerate}[\rm (1)]
\item  Let $(f_{st})_{0\leq s \leq t}$ be a family of holomorphic self-mappings $f_{st}:D\to D$ satisfying
 \begin{enumerate}[\quad(a)]
\item $f_{ss}(z)=z$ for all $z\in D$ and $s\geq 0$,
\item $f_{su} = f_{st} \circ f_{tu}$ for all $0\leq s \leq t \leq u$,
\item  $(s,t)\mapsto f_{st}$ is continuous with respect to locally uniform convergence.
\end{enumerate}
The family $(f_t)_{t\geq 0}:=(f_{0t})_{t\geq 0}$ is called a \index{Loewner chain}\emph{(decreasing) Loewner chain on $D$}. We will call the mappings $f_{st}$ the \index{transition mappings}\emph{transition mappings} of the Loewner chain.

\item We call a Loewner chain $(f_{t})_{t\geq 0}$ an \index{Loewner chain!additive}\index{F-transform@$F$-transform}\emph{additive Loewner chain} if $D=\Ha$ and \\
$\lim_{y \to \infty} f_{st}(iy)/ (iy)=1$, or equivalently
$$f_{st} = F_{\mu_{st}}$$
 for all $0\leq s\leq t$, where each $\mu_{st}$ is a probability measure on $\R$. 
	
\item We call a Loewner chain $(f_{t})_{t\geq 0}$ a \index{Loewner chain!multiplicative}\emph{multiplicative Loewner chain}\index{eta-transform@$\eta$-transform} if $D=\D$ and $f_{st}(0)=0$, or equivalently,
$$f_{st} = \eta_{\mu_{st}}$$
for all $0\leq s\leq t$, where each $\mu_{st}$ is a probability measure on $\T$.

\end{enumerate}

\end{definition}

Some remarks concerning this definition are in order.

\begin{remark}
Due to property (b), the domains $f_t(D)$ are decreasing, i.e. $f_t(D)\subseteq f_s(D)$ for all $s\leq t.$
In Loewner theory, the term \emph{Loewner chain} usually refers to \emph{increasing} domains described by a family $(f_t)$ of
\emph{univalent} functions.
In Section \ref{sec_EV_univalence} we will see that Loewner chains always consist of univalent functions.
\end{remark}

\begin{remark}\label{EV_remark_ev}
In Loewner theory, a family $(\phi_{st})_{0\leq s\leq t}$ of holomorphic mappings $\phi_{st}:D\to D$ is called an \emph{evolution family on $D$}
if
 \begin{itemize}
\item[(a)] $\phi_{ss}(z)=z$ for all $z\in D$ and all $s \geq 0$,
\item[(b)] $\phi_{su} = \phi_{tu} \circ \phi_{st}$ whenever $0\leq s\leq t\leq u$,
\item[(c)] $(s,t)\mapsto \phi_{st}$ is continuous with respect to locally uniform convergence.
\end{itemize}
If (b) is replaced by $\phi_{su} = \phi_{st} \circ \phi_{tu}$, then the family is usually called a
\emph{reverse evolution family}. Thus the transition mappings of a decreasing Loewner chain form a reverse evolution family. 
\end{remark}

\begin{remark}
In case of an additive or a multiplicative Loewner chain, condition (c) is equivalent to 
$$\text{$(s,t)\mapsto\mu_{st}$ is continuous with respect to weak convergence}$$
due to Lemma \ref{lemmaconvergence} and Lemma \ref{lemmaconvergenceunit}.
\end{remark}

\newpage

\begin{lemma}\label{non_constant}All transition mappings $f_{st}$ of a Loewner chain are non-constant.
\end{lemma}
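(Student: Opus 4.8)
The plan is to exploit the cocycle/semigroup property (b) together with the continuity property (c). Suppose for contradiction that some transition mapping $f_{s_0 t_0}$ is constant, say $f_{s_0 t_0}\equiv c$ for some $c\in D$. First I would observe that the set of pairs $(s,t)$ with $0\le s\le t$ for which $f_{st}$ is constant cannot contain a pair with $s=t$, since $f_{ss}=\mathrm{id}$ by (a); hence $s_0<t_0$. The key structural point is that constancy propagates in both time directions: if $f_{st}$ is constant and $s'\le s\le t\le t'$, then by (b) we have $f_{s't'}=f_{s's}\circ f_{st}\circ f_{tt'}$, and since the middle factor is constant, so is the composition. Thus once a transition mapping is constant, every "longer" transition mapping containing that time interval is also constant.

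Next I would derive a contradiction with the continuity in (c), using the boundary behaviour built into the semigroup. Concretely, consider shrinking the interval: by (b), $f_{s_0 t_0} = f_{s_0 t}\circ f_{t t_0}$ for any $t\in[s_0,t_0]$. If $f_{s_0 t}$ were non-constant (hence, by the later Section~\ref{sec_EV_univalence} or directly by the open mapping theorem, an open map) for every $t<t_0$, then letting $t\uparrow t_0$ and using continuity we get $f_{tt_0}\to f_{t_0 t_0}=\mathrm{id}$ locally uniformly; but then $f_{s_0 t}\circ f_{t t_0}\to f_{s_0 t_0^-}$ which should recover the identity composed on the left with the limit of $f_{s_0 t}$, forcing a non-constant limit — contradicting $f_{s_0 t_0}\equiv c$. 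To make this rigorous without circular reference to univalence, I would instead argue via evaluation at points: pick two distinct points $z_1,z_2\in D$. Since $f_{tt_0}(z_j)\to z_j$ as $t\uparrow t_0$, and $f_{s_0 t_0}(z_j)=c$ for both $j$, continuity of the composition gives $f_{s_0 t}(f_{tt_0}(z_1)) - f_{s_0 t}(f_{tt_0}(z_2))\to 0$; combined with the fact that $f_{tt_0}(z_1)$ and $f_{tt_0}(z_2)$ stay uniformly separated for $t$ near $t_0$, this forces $f_{s_0 t}$ to have vanishing "modulus of separation" on a fixed compact set, which by the Schwarz–Pick/hyperbolic contraction estimate for self-maps of $D$ together with the semigroup structure can be pushed to yield that $f_{s_0 t}$ itself degenerates — ultimately contradicting that $f_{s_0 s_0}=\mathrm{id}$ via a telescoping of finitely many subintervals.

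The cleanest route, which I would actually write up, is a telescoping argument: fix the offending pair $(s_0,t_0)$ and choose a partition $s_0 = r_0 < r_1 < \cdots < r_N = t_0$. Then $f_{s_0 t_0} = f_{r_0 r_1}\circ f_{r_1 r_2}\circ \cdots \circ f_{r_{N-1} r_N}$. A composition of holomorphic self-maps of $D$ is constant if and only if at least one factor is constant (since a non-constant holomorphic self-map of $D$ is an open map, and a composition of open maps is open, hence non-constant). Therefore $f_{r_{k-1} r_k}$ is constant for some $k$. By property (c) (continuity with respect to locally uniform convergence) together with the normalization — for an additive Loewner chain $f_{st}=F_{\mu_{st}}$ so $\Im f_{st}(z)\ge \Im z$ by Lemma~\ref{Julia}, and for a multiplicative one $f_{st}(0)=0$ with $f_{st}$ a self-map of $\D$ — one sees that constant transition mappings are impossible on short intervals: in the additive case a constant Pick function would have to be real, violating $\Im f_{st}(z)\ge\Im z>0$; in the multiplicative case $f_{st}$ constant with $f_{st}(0)=0$ forces $f_{st}\equiv 0\notin\D$'s interior in a way incompatible with $f_{r_{k-1}r_k}\to\mathrm{id}$ as $r_k\downarrow r_{k-1}$. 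Letting the mesh of the partition tend to $0$ and invoking (c) gives $f_{r_{k-1}r_k}\to\mathrm{id}$, which is non-constant, a contradiction.

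The main obstacle is that the generic ($D$ arbitrary, no normalization) version of the statement needs the open mapping theorem plus the fact that the mesh-refinement forces some factor to be arbitrarily close to the identity, and one must be careful that "close to the identity in locally uniform convergence" genuinely rules out "constant" — this is where I expect to spend the most care. In our two concrete cases (additive: $\Im f_{st}\ge\Im z$; multiplicative: $f_{st}(0)=0$ and $|f_{st}(z)|\le|z|$ by Lemma~\ref{characterizationeta}) the normalization makes the contradiction immediate, so the proof really reduces to the telescoping/open-mapping observation combined with these already-established bounds. I would present the open-mapping telescoping argument as the general core and then note the two normalizations dispatch the continuity step cleanly.
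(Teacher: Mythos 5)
Your final telescoping argument is correct in substance and rests on the same two ingredients as the paper's proof: (i) a composition of non-constant holomorphic maps is non-constant (open mapping plus the identity theorem), so constancy of $f_{s_0t_0}$ forces constancy of some short-interval factor, and (ii) property (c) together with $f_{tt}=\mathrm{id}$ makes a constant transition mapping over a short interval impossible. Where you differ is in how the constancy gets localized. The paper takes $T$ minimal with $f_{s_0T}$ constant (the set of such times is closed by (c), and $T>s_0$ since $f_{s_0s_0}=\mathrm{id}$), writes $f_{s_0T}=f_{s_0,T-\varepsilon}\circ f_{T-\varepsilon,T}$, deduces that $f_{T-\varepsilon,T}$ is constant for every $\varepsilon$, and contradicts $f_{T-\varepsilon,T}\to\mathrm{id}$; this uses continuity only at the single diagonal point $(T,T)$. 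In your version the constant factor $f_{r_{k-1}r_k}$ moves with the partition, so ``mesh $\to 0$ gives $f_{r_{k-1}r_k}\to\mathrm{id}$'' is not a direct application of (c): you must either observe that $(s,t)\mapsto f_{st}$ is uniformly continuous on the compact triangle $\{(s,t): s_0\le s\le t\le t_0\}$ (continuous map from a compact metric space into the metrizable space of holomorphic functions), or pass to a subsequence along which the endpoints of the constant intervals converge to a common point $T^*$ and invoke joint continuity at $(T^*,T^*)$. With that one line added, evaluating at two distinct points of $D$ finishes the proof, and no normalization is needed --- the lemma concerns general Loewner chains. Two side remarks in your write-up should be dropped or corrected: a constant Pick function need not be real (e.g.\ $F\equiv i$), although a constant still violates $\lim_{y\to\infty}f_{st}(iy)/(iy)=1$, so in the additive case the conclusion is in fact immediate from the definition; and in the multiplicative case a constant $\eta$-transform vanishing at $0$ is perfectly admissible, namely $\eta_{\haar}\equiv 0$, so the normalization $f_{st}(0)=0$ by itself rules out nothing --- only the proximity to the identity obtained from (c) does.
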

\begin{proof}
Assume that some $f_{st}$ is constant. Let $T>s$ be the smallest time such that $f_{sT}$ is constant. Then
\[f_{sT} = f_{s,T-\varepsilon} \circ f_{T-\varepsilon, T}\]
for all $\varepsilon \in (0,T-s]$ and we see that $f_{T-\varepsilon,T}$ must be constant for every $\varepsilon \in (0,T-s]$, because $f_{s, T-\varepsilon}$ is non-constant.  But $f_{T-\varepsilon,T}(z) \to z$ locally uniformly as $\varepsilon\to 0$, a contradiction.
\end{proof}


\begin{example}
 Assume that the Loewner chain $(f_{t})$ is a semigroup, i.e. it satisfies
 $$f_{t+s} = f_{t}\circ f_{s}$$ for all $s,t\geq 0.$ In this case, the following limit exists for every $z\in D$:
 $$G(z):=\lim_{t\searrow 0} \frac{f_t(z)-z}{t}.$$
 The function $G$ is also holomorphic and it is called the \emph{infinitesimal generator of the semigroup
 $(f_t)_{t\geq0}$}.\footnote{Note that sometimes
$-G(z)$ is called the infinitesimal generator of the semigroup.} The function $f_t$ can be recovered from $G$ by solving the initial
value problem \begin{equation}\label{EV_semigroup} \frac{{\rm d}}{{\rm d}t}f_t = G(f_t), \quad f_0(z)=z \in D.\end{equation}
The family of all infinitesimal generators on $\D$ can be represented quite explicitly by the
 \index{Berkson-Porta formula}Berkson-Porta formula (\cite{BP78}):
 \begin{equation}\label{EV_berkson_porta}G(z) = (\tau-z)(1-\overline{\tau}z)p(z)
 \end{equation}
 with $\tau\in \overline{\D}$ and $p:\D\to \C$ is holomorphic with $\Re (p(z)) \geq 0$  for all $z\in \D$.
 
 By using the Cayley mapping $C:\Ha\to\D, C(z)=\frac{z-i}{z+i},$ we obtain semigroups and infinitesimal generators $H$
 on $\Ha$ with the general form
 \[ H(z) = -\frac{i}{2}(i + z)^2 (\tau-C(z))(1-\overline{\tau}C(z))p(C(z)).\]
 In particular, for $\tau=1$ we obtain $H(z)=2 i p(C(z))$, which shows that every holomorphic $H:\Ha \to \Ha \cup \R$ is an infinitesimal generator on $\Ha$.
\end{example}

In this section, we explain the relation of Loewner chains to the Loewner differential equation,
and we prove two important facts: Each element $f_{t}$ of an additive or a multiplicative Loewner chain is a univalent function,
and conversely, every univalent $F$-transform/$\eta$-transform can be embedded into an
additive/a multiplicative Loewner chain.

\section{The Loewner differential equation}\label{subsec_LPDE}

In 1923, C. Loewner introduced a differential equation for univalent functions
 to attack the so called Bieberbach conjecture. Afterwards, his ideas have been extended to more general
 settings, with recent applications in stochastic geometry (Schramm-Loewner evolution). We refer to \cite{AbateBracci:2010}
for an historical overview of Loewner theory.\\

 First we look at Loewner chains with a certain regularity property, which lead to a one-to-one correspondence
 with so called Herglotz vector fields via Loewner's differential equation.
 Most of the following definitions and statements can be found in \cite{MR2995431}.

 \begin{definition}
Let $d \in [1,\infty]$. A family $(f_{t})_{t \geq 0}$ of non-constant holomorphic
mappings $f_{t}:D\to D$ is called a
\index{Loewner chain!of order $d$}\emph{Loewner chain of order $d$} if it satisfies the conditions of Definition \ref{EV_def:evolution_family} with
$(c)$ replaced by the condition
\begin{itemize}
\item[(c')] for any $z \in D$ and any $S>0$ there exists a non-negative function
$k_{z,S} \in L^d([0,S],\mathbb{R})$ such that
$$|f_{st}(z) - f_{su}(z)| \leq \int_t^u k_{z,S}(\xi)
\,{\rm d}\xi$$ for all
$0\leq s\leq t \leq u \leq S$.
\end{itemize}
\end{definition}

\begin{example}\label{EV_not_abs_continuous}
 Let $D=\Ha$ and $f_{st}(z) = z + C(t)-C(s),$ where $C:[0,\infty)\to \mathbb{R}$ is
 continuous but not absolutely continuous. Then $(f_{0t})$ is an additive Loewner chain
 with $\mu_{t}=\delta_{C(0)-C(t)}$, and we have
$$|f_{st}(z)-f_{su}(z)|= |C(t)-C(u)|.$$ Hence $(f_{t})$ is not a Loewner chain of any order $d.$
\end{example}

Property (c') ensures that $t\mapsto f_{st}$ is differentiable almost everywhere. For a precise statement,
 we also need the following notion.

\begin{definition} A \index{Herglotz vector field}\emph{Herglotz vector field of order $d \in [1,\infty]$ on $D$} is a function
$M:D\times [0,\infty) \to \mathbb{C}$ with the following properties:
\begin{itemize}
	\item[(i)] The function $t\mapsto M(z,t)$ is measurable for every $z\in D.$
	\item[(ii)] The function $z\mapsto M(z,t)$ is holomorphic for every $t\in[0,\infty).$
	\item[(iii)] For any compact set $K \subset D$ and for all $S>0$ there exists a non-negative function
$k_{K,S} \in L^d([0,S],\mathbb{R})$ such that $|M(z,t)| \leq k_{K,S}(t)$ for all $z \in K$ and for almost
every $t \in [0,S]$.
	\item[(iv)] $M(\cdot, t)$ is an infinitesimal generator on $D$ for a.e. $t\geq0.$
\end{itemize}
We call $M$ an \index{Herglotz vector field!additive}\emph{additive Herglotz vector field} if $D=\Ha$ and, for a.e. $t\geq 0,$ $M(\cdot, t)$ has the form
\begin{equation}\label{EV_eq:4}
M(z,t)=\gamma_t + \int_\mathbb{R}\frac{1+xz}{x-z} \rho_t({\rm d}x),
\end{equation}
 where $\gamma_t\in\mathbb{R}$ and $\rho_t$ is a finite non-negative Borel measure on $\mathbb{R}$.

 We call $M$ a \index{Herglotz vector field!multiplicative}\emph{multiplicative Herglotz vector field} if $D=\D$ and, for a.e. $t\geq 0,$ $M(\cdot, t)$ has the form
\begin{equation*}
M(z,t)= -zp_t(z),
\end{equation*}
where $p_t:\D\to \C$ is holomorphic with $\Re (p_t(z)) \geq 0$  for all $z\in \D$.
\end{definition}

Now we have the following one-to-one correspondence.

\begin{theorem}
A Loewner chain $(f_{t})_{t\geq0}$ of order $d$ satisfies the Loewner partial differential equation
\begin{equation}\label{EV_Loewner}
\frac{\partial}{\partial t} f_{t}(z) = \frac{\partial}{\partial z}f_{t}(z)\cdot M(z,t) \quad \text{for a.e. $t\geq 0$, $f_{0}(z)=z\in D,$}
\end{equation}
for a Herglotz vector field $M$ of order $d$. Conversely, the unique solution to \eqref{EV_Loewner}
for a given Herglotz vector field of order $d$ is always a Loewner chain of order $d$.

Moreover, each element $f_t:D\to D$ of a Loewner chain of order $d$ is a \index{univalent function}univalent function.
\end{theorem}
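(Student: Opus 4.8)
The plan is to establish the three assertions of the theorem in turn. The first assertion — that a Loewner chain $(f_t)_{t\geq0}$ of order $d$ satisfies the Loewner PDE \eqref{EV_Loewner} for some Herglotz vector field $M$ of order $d$ — is the heart of the matter and is where I expect to invoke the existing literature most directly. The reverse evolution family $(f_{st})$ is, by condition (c'), locally absolutely continuous in $t$ for each fixed $z$, so for a.e.\ $t$ the limit $M(z,t) := \lim_{h\to 0^+}\frac{f_{t,t+h}(z)-z}{h}$ exists; one checks using the measurability/integrability bound in (c') that this defines a Herglotz vector field of order $d$ in the sense of the definition above (the key point being that each $M(\cdot,t)$ is an infinitesimal generator, since $f_{t,t+h}$ are self-maps of $D$ fixing nothing in particular but with the semigroup-type local structure). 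Differentiating the algebraic identity $f_{su} = f_{st}\circ f_{tu}$ in $u$ at $u=t$, and using the chain rule together with locally uniform control of difference quotients, yields $\frac{\partial}{\partial t}f_{st}(z) = f_{st}'(z)\cdot\big(-M(z,t)\big)$ — wait, the sign conventions for reverse evolution families reverse this — more carefully, differentiating $f_{s,u}=f_{s,t}\circ f_{t,u}$ with respect to $s$ at $s=t$ gives the correct form $\frac{\partial}{\partial t}f_{t}(z)=f_t'(z)M(z,t)$ after translating back to the chain $f_t=f_{0t}$. This is precisely the content of the correspondence theorems in \cite{MR2995431, MR2995431}, which I would cite; the main obstacle here is bookkeeping the sign/order conventions between decreasing Loewner chains, reverse evolution families, and the Herglotz vector field, rather than any genuine analytic difficulty.

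For the converse — that the solution of \eqref{EV_Loewner} for a given Herglotz vector field of order $d$ is a Loewner chain of order $d$ — the plan is again to appeal to the Carathéodory–type existence and uniqueness theory for the Loewner–Kufarev ODE: for each fixed $z$, \eqref{EV_Loewner} becomes a non-autonomous ODE $\dot w = M(w,t)$ with $L^d$-Carathéodory right-hand side (the bound (iii) and measurability (i) give exactly the hypotheses needed for existence, uniqueness, and continuous dependence on initial data), so a unique maximal solution exists; one shows it is defined for all $t\geq 0$ and stays in $D$ using the fact that $M(\cdot,t)$ is an infinitesimal generator (which forces the flow to remain in $D$), and holomorphy in $z$ follows from holomorphic dependence on parameters. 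The algebraic cocycle property (b) of the transition mappings, $f_{su}=f_{st}\circ f_{tu}$, and the normalizations (a) then follow from uniqueness of solutions, and the regularity (c') is read off from the integral bound on $M$. Again this is standard and I would cite it rather than reprove it.

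For the final assertion — univalence of each $f_t$ — the plan is to use the ODE flow structure just established. Fix $t>0$ and, for $0\le s\le t$, consider the solution map $z\mapsto f_{st}(z)$, which by the converse direction is the time-$s$-to-time-$t$ flow of the Loewner–Kufarev equation. Running the ODE \emph{backward} from time $t$ to time $s$ is well-posed by the uniqueness half of Carathéodory theory: distinct points $z_1\neq z_2$ in $D$ remain distinct under the flow because two solution curves that agree at any time must agree identically (uniqueness of solutions to the backward IVP). Hence $f_{st}$ is injective, i.e.\ univalent, for every $0\le s\le t$; taking $s=0$ gives univalence of $f_t$. The slight subtlety — and the only real care needed — is that the backward flow might leave $D$, so one should phrase the argument as: if $f_{st}(z_1)=f_{st}(z_2)=w$, then the two forward solution curves starting at $z_1$ and $z_2$ both pass through $w$ at time $t$, hence by uniqueness (applied to the solution passing through $w$ at time $t$, traced backward to time $s$, which stays in $D$ since it coincides with the forward curves) they coincide at time $s$, forcing $z_1=z_2$. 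I expect the first assertion's convention-tracking to be the main obstacle; the univalence argument itself is short once the ODE picture is in place.
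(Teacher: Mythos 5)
Your proposal is correct and follows essentially the same route as the paper: the paper likewise delegates the analytic core to the evolution-family results of \cite{MR2995431} and \cite{contreres+al2014}, resolving your ``bookkeeping'' worry by observing that for fixed $T$ the family $(f_{T-t,T-s})_{0\le s\le t\le T}$ is an evolution family to which \cite[Theorem 1.1]{MR2995431} applies (and that the proof of \cite[Theorem 3.2]{contreres+al2014} never uses the univalence built into that paper's definition of Loewner chain, so there is no circularity). The converse and the univalence are then quoted from \cite[Theorem 1.11]{contreres+al2014}, whose proof is the same flow-uniqueness argument you sketch; only note that \eqref{EV_Loewner} is a genuine PDE and does not reduce to an ODE for fixed $z$ --- the Carath\'eodory ODE one actually solves is the equation \eqref{EV_ord} for the transition mappings in their first variable.
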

\begin{proof}
In \cite{contreres+al2014}, Loewner chains consist of univalent functions by definition. However, the proof
of \cite[Theorem 3.2]{contreres+al2014} does not use this property and proves equation \eqref{EV_Loewner}. This can also be seen by looking at the family
$(f_{T-t,T-s})_{0\leq s\leq t\leq T}$ for some fixed $T>0$. It can be verified that it forms an evolution family (see Remark \ref{EV_remark_ev}) and we obtain \eqref{EV_Loewner} from \cite[Theorem 1.1]{MR2995431}. 

Conversely, by \cite[Theorems 1.11]{contreres+al2014}, the unique solution to \eqref{EV_Loewner} yields a Loew\-ner chain of order $d$ consisting of univalent functions.
\end{proof}

\begin{remark}From the relation $f_t = f_s \circ f_{st}$ we obtain
\begin{equation*}
\frac{\partial}{\partial t} f_{st}(z) = \frac{\partial}{\partial z}f_{st}(z)\cdot M(z,t) \quad \text{for a.e. $t\geq s$, $f_{ss}(z)=z\in D.$}
\end{equation*}
Furthermore, we can also differentiate $f_{st}$ w.r.t. $s$ and obtain
 \begin{equation}\label{EV_ord} \frac{\partial}{\partial s} f_{st}(z) = -M(f_{st}(z), s) \quad \text{for a.e. $s\leq t$, $f_{tt}(z)=z\in D$}.
\end{equation}
Conversely, this equation has a unique solution, which gives the transition mappings of a decreasing Loewner chain of order $d$,
see again \cite[Theorems 1.11 and 3.2]{contreres+al2014}.
\end{remark}

Our special Loewner chains now satisfy the following relationship.

\begin{proposition}
 Let $(f_{t})$ be an additive Loewner chain of order $d$.
 Then $(f_t)$ satisfies \eqref{EV_Loewner} for an additive Herglotz vector field $M$ of order $d$. 
 
Conversely, let $M$ be an additive Herglotz vector field of order $d$. Then the solution $f_{t}$ to \eqref{EV_Loewner}
is an additive Loewner chain of order $d$.
\end{proposition}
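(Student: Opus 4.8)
The plan is to take the Loewner-chain/Herglotz-vector-field correspondence of the preceding theorem for granted and merely match the two extra normalizations. On the chain side, ``additive'' means $f_{st}=F_{\mu_{st}}$ for all $0\le s\le t$, equivalently (Lemma~\ref{Julia}) $\lim_{y\to\infty}f_{st}(iy)/(iy)=1$; on the vector-field side, the form \eqref{EV_eq:4} says exactly that $M(\cdot,t)\colon\Ha\to\Ha\cup\R$ is, for a.e.\ $t$, a Pick function with vanishing angular derivative at $\infty$ (compare the Pick--Nevanlinna representation \eqref{EV_eq:1}). Since the example above shows that \emph{every} Pick function on $\Ha$ is an infinitesimal generator, matching these two normalizations is all that is needed. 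Throughout I would work with the integrated form of \eqref{EV_ord}, i.e.\ $f_{st}(z)=z+\int_s^t M(f_{ut}(z),u)\,{\rm d}u$, and with the single-point bounds $|\gamma_u|,\ \rho_u(\R)\le k_{\{i\},S}(u)$ (and also $a_u\le k_{\{i\},S}(u)$ in the converse direction) coming from property (iii) of the Herglotz vector field applied at $z=i$, using $M(i,u)=\gamma_u+i(a_u+\rho_u(\R))$.

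For ``$M$ additive $\Rightarrow$ $(f_t)$ additive'', I would first note that $\Im M(\cdot,u)\ge 0$ for a.e.\ $u$ forces $\Im f_{st}(z)\ge\Im z$ via the integral formula. Next, plugging $z=iy$ into the Loewner equation $\partial_t f_{st}(iy)=f_{st}'(iy)\cdot M(iy,t)$, estimating $|M(iy,t)|\le C y\,k_{\{i\},S}(t)$ from \eqref{EV_eq:4} and $|f_{st}'(iy)|\le \Im f_{st}(iy)/y$ by the Schwarz--Pick inequality for self-maps of $\Ha$, a Gronwall argument gives $y\le\Im f_{st}(iy)\le C_t y$ and $|\Re f_{st}(iy)|\le C_t y$ uniformly in $y\ge 1$; in particular $f_{st}(iy)\to\infty$ inside a fixed non-tangential cone and $f_{st}(iy)/(iy)$ stays bounded. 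Writing $M(w,u)=\gamma_u+P_u(w)$ with $P_u$ Pick of zero angular derivative (so $P_u(w)/w\to 0$ non-tangentially), these facts yield $M(f_{ut}(iy),u)/(iy)\to 0$ pointwise in $u$ with an $L^1([s,t])$ domination of the form $C_t\, k_{\{i\},S}(u)$; dominated convergence in $\frac{f_{st}(iy)-iy}{iy}=\frac1{iy}\int_s^t M(f_{ut}(iy),u)\,{\rm d}u$ then gives $\lim_{y\to\infty}f_{st}(iy)/(iy)=1$, i.e.\ $f_{st}=F_{\mu_{st}}$.

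For the converse, I would use that $M(z,u)=\lim_{h\downarrow 0}\frac1h\bigl(f_{u,u+h}(z)-z\bigr)$ for a.e.\ $u$ (standard from the Loewner equation and Lebesgue differentiation), and that each $f_{u,u+h}=F_{\mu_{u,u+h}}$ is an $F$-transform, hence $\Im f_{u,u+h}(z)\ge\Im z$; letting $h\downarrow 0$ shows $\Im M(\cdot,u)\ge 0$, so $M(z,u)=a_u z+\gamma_u+\int_\R\frac{1+xz}{x-z}\rho_u({\rm d}x)$ with $a_u\ge 0$. To show $a_u=0$ a.e., substitute $z=iy$ into the integral formula and divide by $iy$: the left side tends to $1$ because $f_{st}=F_{\mu_{st}}$, while on the right, using that $f_{ut}=F_{\mu_{ut}}$ (so $f_{ut}(iy)/(iy)\to 1$, non-tangential escape, and $|f_{ut}(iy)|\le C_t y$ for $y\ge 1$ by a direct estimate on its Pick--Nevanlinna representation), one gets $M(f_{ut}(iy),u)/(iy)\to a_u$ pointwise in $u$ with the same type of $L^1$ domination; dominated convergence yields $1=1+\int_s^t a_u\,{\rm d}u$ for all $0\le s\le t$, hence $a_u=0$ for a.e.\ $u$ and $M$ has the form \eqref{EV_eq:4}.

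The hard part is the rigorous execution of the two dominated-convergence steps, for which two ingredients are essential: (i) uniform-in-$y$ control that $f_{ut}(iy)$ runs off to $\infty$ inside one fixed Stolz cone --- supplied by Schwarz--Pick plus Gronwall in the first direction, and by the explicit Pick--Nevanlinna form of $F_{\mu_{ut}}$ together with continuity in $u$ of the chain in the converse --- and (ii) a uniform $L^1$ bound for $|M(f_{ut}(iy),u)/(iy)|$, which reduces to linear-in-$|w|$ control of $|P_u(w)|$ on such cones, obtained from $\bigl|\frac{1+xw}{x-w}\bigr|=\bigl|w+\frac{1+w^2}{x-w}\bigr|\le |w|+\frac{1+|w|^2}{\Im w}$ and the single-point bound on $\rho_u(\R)$. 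All remaining steps are bookkeeping with \eqref{EV_eq:1}, \eqref{EV_eq:4} and Lemma~\ref{Julia}.
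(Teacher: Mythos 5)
Your argument is correct, but it takes a genuinely different route from the paper. The paper disposes of both directions by invoking the boundary-fixed-point machinery of Bracci--Contreras--D\'iaz-Madrigal--Gumenyuk: $\infty$ is a boundary regular fixed point of each $f_{st}$, and their Theorem~1.1 relates the ``spectral function'' $f_t'(\infty)$ to the time-integral of the dilation $M'(\infty,t)$ of the vector field, so that $f_t'(\infty)\equiv 1$ holds iff $M'(\infty,t)=0$ a.e.; the only hand-made step in the paper is the monotonicity of $s\mapsto\Im f_{st}(z)$, which gives that $M(\cdot,t)$ is a Pick function a.e. You instead reprove exactly this dilation-zero case by elementary means: the integrated ODE $f_{st}(z)=z+\int_s^t M(f_{ut}(z),u)\,{\rm d}u$, the single-point bounds $|\gamma_u|,\rho_u(\R),a_u\le k_{\{i\},S}(u)$ extracted from $M(i,u)$, Schwarz--Pick plus Gronwall to confine the orbits $f_{ut}(iy)$ to a fixed Stolz cone with $y\le\Im f_{ut}(iy)$ and $|f_{ut}(iy)|\le C_t y$ (in the converse direction the same confinement comes from the Pick--Nevanlinna form of $F_{\mu_{ut}}$ together with continuity of $u\mapsto f_{ut}(i)$, which bounds $|b_{ut}|+\rho_{ut}(\R)$ on compacts), and then two dominated-convergence passages in $\frac1{iy}\int_s^t M(f_{ut}(iy),u)\,{\rm d}u$ to identify the limit of $f_{st}(iy)/(iy)$ with $1$, respectively $1+\int_s^t a_u\,{\rm d}u$. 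Your estimates check out (in particular $\bigl|\tfrac{1+xw}{x-w}\bigr|\le|w|+\tfrac{1+|w|^2}{\Im w}$ gives the linear-in-$|w|$ control on cones, and $a_u\ge0$ turns $\int_s^t a_u\,{\rm d}u=0$ into $a_u=0$ a.e.), and the only facts you import silently are standard parts of the order-$d$ theory the proposition already presupposes: that $M(\cdot,u)$ is, for a.e.\ $u$ (with the null set independent of $z$), the derivative of the transition maps, and that the PDE/ODE hold for all $f_{st}$. What each approach buys: the paper's proof is short and scales to nonzero dilations, but leans entirely on the external theorem about boundary regular fixed points; yours is self-contained, stays inside the half-plane with the representations \eqref{EV_eq:1}, \eqref{EV_eq:2}, \eqref{EV_eq:4} already set up in the text, and makes explicit why the normalization $f_{st}'(\infty)=1$ is equivalent to vanishing dilation, at the price of noticeably more bookkeeping (Gronwall, cone confinement, and the two domination arguments).
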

\begin{proof} ``$\Longrightarrow$'':
{}From  representation \eqref{EV_eq:2} we obtain $$\Im(f_{su}(z))=
\Im(f_{st}(f_{tu}(z))) \geq \Im(f_{tu}(z))$$ for all $0\leq s\leq t\leq u.$ So
$s\mapsto \Im(f_{st}(z))$ is non-increasing for every $z\in\Ha$. From \eqref{EV_ord} we see that
$\Im(M(z,t))\geq 0$ for almost every $t\geq 0$ and every $z\in\Ha$. Hence, $M(\cdot, t)$ has the form \eqref{EV_eq:1} for a.e. $t\geq 0.$ (See also \cite[Thm. 8.1]{MR2995431}.) 

Assume that $M'(\infty,t)>0$ for a set $I\subset [0,T]$ of positive Lebesgue measure. Then, by \cite[Thm. 1.1]{bracci+al2015}, we obtain that
$ f_{T}'(\infty) >1$, a contradiction. This proves that $M'(\infty,t)=0$ for a.e. $t\geq 0$, i.e. $M$ is an additive Herglotz vector field.

\vspace{3mm}

``$\Longleftarrow$'': We have to show that every
$f_{t}$ can be written as $f_{t}=F_{\mu_{t}}$ for a probability measure $\mu_{t}$.
 By \eqref{EV_eq:1}, we can write $f_{t}$ as
$$f_{t}(z) = A_{t} + B_{t} z + \int_\mathbb{R} \frac{1+xz}{x-z} \sigma_{t}({\rm d}x).$$
 Furthermore, as $M(\cdot, t)$ has the form \eqref{EV_eq:4} for a.e. $t\geq0,$ $M(\cdot, t)$ has a ``boundary regular null point'' at $\infty$ with dilation $0$ for a.e.\ $t\geq 0$, see \cite[Def. 2.6]{bracci+al2015} which handles the unit disk case.
 
By \cite[Thm. 1.1]{bracci+al2015}, the ``spectral function'' of $f_{t}$ at $\infty$ is equal to 0, which translates in our setting to
$$B_{t} = 1$$ for every $t\geq 0$ (note that $B_{t}=f_{t}'(\infty)$, which corresponds to $f'_{t}(\sigma)$ in
 \cite{bracci+al2015}, must be a non-negative real number by \cite[Thm. 2.2 (vi)]{bracci+al2015}). 
Hence, \eqref{EV_eq:2} implies $f_{t}=F_{\mu_{t}}$ for a probability measure $\mu_{t}$ for every $t\geq 0.$
\end{proof}

\begin{proposition}\label{EV_multiplicative_Loewner}
 Let $(f_{t})$ be a multiplicative Loewner chain of order $d$.
 Then $(f_{t})$ satisfies \eqref{EV_Loewner} for a multiplicative Herglotz vector field $M$ of order $d$.
 
Conversely, let $M$ be a multiplicative Herglotz vector field of order $d$. Then the solution $(f_{t})$ to \eqref{EV_Loewner}
is a multiplicative Loewner chain of order $d$.
\end{proposition}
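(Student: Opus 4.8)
The plan is to follow the proof of the preceding (additive) proposition almost verbatim, exploiting that everything has already been reduced to the general order-$d$ theory: by the theorem above, a Loewner chain of order $d$ solves the Loewner PDE \eqref{EV_Loewner} for a Herglotz vector field $M$ of order $d$, with transition mappings satisfying \eqref{EV_ord}, and conversely. So I only need to show that, for such an $M$, the multiplicative normalization $f_{st}(0)=0$ (for all $0\le s\le t$) is equivalent to $M$ being of multiplicative form $M(z,t)=-z\,p_t(z)$ with $\Re p_t\ge0$. The multiplicative case is easier than the additive one because here the distinguished fixed point $0$ is interior, so the boundary-null-point results of \cite{bracci+al2015} used in the additive proof get replaced by an elementary Schwarz-lemma argument.

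For the direction ``$\Longrightarrow$'' I would start from \eqref{EV_ord} together with the fact that $s\mapsto f_{st}(0)$ is constantly $0$: evaluating \eqref{EV_ord} at $z=0$ yields $M(0,s)=-\,\partial_s f_{st}(0)=0$ for a.e.\ $s\le t$, hence (intersecting over $t\in\N$) for a.e.\ $s\ge0$. Then, fixing such an $s$ at which moreover $M(\cdot,s)$ is an infinitesimal generator on $\D$, the generated semigroup $(\phi_u)_{u\ge0}$ fixes $0$ (the constant curve $0$ solves $\dot z=M(z,s)$ because $M(0,s)=0$, and solutions are unique), so $|\phi_u(z)|\le|z|$ on $\D$ by the Schwarz lemma; differentiating $u\mapsto|\phi_u(z)|^2$ at $u=0$ gives $2\Re\big(\overline z\,M(z,s)\big)\le 0$, i.e.\ $\Re\big(-\overline z\,M(z,s)\big)\ge 0$. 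Since $M(0,s)=0$, the function $p_s(z):=-M(z,s)/z$ is holomorphic on $\D$, and the last inequality reads $|z|^2\Re p_s(z)\ge0$, so $\Re p_s\ge0$; hence $M(z,s)=-z\,p_s(z)$ for a.e.\ $s$, i.e.\ $M$ is a multiplicative Herglotz vector field of order $d$. (Equivalently, one reads this off the Berkson--Porta formula \eqref{EV_berkson_porta}, where $G(0)=0$ forces the point $\tau$ there to be $0$, except in the degenerate case that $p$ is a purely imaginary constant, which then makes $G\equiv0$; in all cases $G(z)=-z\,p(z)$ with $\Re p\ge0$.)

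For the direction ``$\Longleftarrow$'', given $M(z,t)=-z\,p_t(z)$ multiplicative of order $d$ (so $M(0,t)=0$ for a.e.\ $t$), I would invoke the theorem above to get that the solution $(f_t)$ of \eqref{EV_Loewner} is a Loewner chain of order $d$ with transition mappings solving \eqref{EV_ord}. Fixing $t\ge0$, the curve $\gamma(s):=f_{st}(0)$, $s\in[0,t]$, solves the backward ODE $\dot\gamma(s)=-M(\gamma(s),s)$ with terminal value $\gamma(t)=0$; since the constant curve $0$ solves the same problem and the solution of \eqref{EV_ord} is unique, $\gamma\equiv0$, so $f_{st}(0)=0$ for all $0\le s\le t$. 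Hence $(f_t)$ is a multiplicative Loewner chain, of order $d$ by the theorem above.

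I expect the only genuinely delicate point to be the step identifying, for a.e.\ fixed $t$, ``infinitesimal generator on $\D$ vanishing at $0$'' with the form ``$-z\,p_t(z)$, $\Re p_t\ge0$'' — that is, ruling out the degenerate Berkson--Porta configurations — which the Schwarz-lemma computation above handles uniformly. The measurability and $L^d$-estimates in the definition of a Herglotz vector field of order $d$, and the local absolute continuity of $t\mapsto f_{st}(z)$, are inherited directly from the general order-$d$ theory and need no further argument.
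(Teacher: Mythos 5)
Correct, and essentially the same approach as the paper: you reduce to the general order-$d$ correspondence and then identify the condition $f_{st}(0)=0$ with the generator having its Berkson--Porta point at the origin, i.e.\ the form $-z\,p_t(z)$ with $\Re p_t\ge 0$; the paper quotes the Berkson--Porta formula for this step, while you verify it by hand via the Schwarz-lemma differentiation and ODE uniqueness for \eqref{EV_ord}, which is a sound fleshing-out of the paper's ``it is easy to see''. Only your parenthetical Berkson--Porta remark is slightly off (if $p\equiv ic$ with $c\neq 0$, then $G(0)=0$ still forces $\tau=0$ and $G(z)=-icz\not\equiv 0$; the truly degenerate case $p(0)=0$ forces $p\equiv 0$), but this is immaterial since your main Schwarz-lemma argument covers all cases.
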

\begin{proof}
By using the \index{Berkson-Porta formula}Berkson-Porta formula \eqref{EV_berkson_porta}, it is easy to see that
the property $f_{t}(0)=0$ for all $t \geq 0$ is equivalent to $\tau=0$ for a.e. generator $M(\cdot,t)$.
\end{proof}

\section{Normalized Loewner chains}

As we saw in Example \ref{EV_not_abs_continuous}, not every Loewner chain satisfies Loewner's differential equation. However,
certain normalizations guarantee the differentiability of additive and multiplicative Loewner chains.

\begin{proposition}\label{EV_normal_add}
 Let $(f_{t})$ be an \index{Loewner chain!additive}additive Loewner chain such that the first and second moments
 of all $\mu_{t}$ exist with
 \begin{equation*}
\text{$\int_\mathbb{R} x \mu_{t}({\rm d}x)=0$ and $\int_\mathbb{R} x^2 \mu_{t}({\rm d}x)=t$ for all $t\geq0.$}
 \end{equation*}
 Then $(f_{t})$ satisfies
 \eqref{EV_Loewner} for an \index{Herglotz vector field!additive}additive Herglotz vector field $M$ of the form
$$
M(z,t)= \int_\mathbb{R}\frac1{u-z}\,\tau_t({\rm d}u),
$$
where $\tau_t$ is a probability measure for a.e. $t\geq0.$

Conversely, let $M$ be a  Herglotz vector field of the above form. Then the solution $(f_{t})$ to \eqref{EV_Loewner}
is an additive Loewner chain having the above normalization.
\end{proposition}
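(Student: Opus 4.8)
The plan is to reduce the claim to the general correspondence between additive Loewner chains of order $d$ and additive Herglotz vector fields established just above, and then pin down the precise form of the vector field using the moment normalizations. The starting point is the following observation: an additive Loewner chain whose measures $\mu_t$ all have mean zero and second moment $t$ is automatically a Loewner chain of order (at least) $2$. Indeed, by Remark \ref{rm_finite_var} the normalization $\int x\,\mu_t({\rm d}x)=0$, $\int x^2\,\mu_t({\rm d}x)=t$ forces $F_t(z)=F_{\mu_t}(z)=z-\tfrac{t}{z}+{\scriptstyle\mathcal O}(1/|z|)$; combined with the reverse evolution identity $F_u = F_t\circ F_{tu}$ and the representation \eqref{EV_finite_var} one checks that the increment measures $\mu_{tu}$ have mean zero and variance $u-t$, and a Schwarz--Pick type estimate on $\Ha$ (using $\Im F_{st}(z)\geq \Im z$ and the explicit form \eqref{EV_finite_var}) gives a Lipschitz-in-time bound $|f_{st}(z)-f_{su}(z)|\le C_{z}\,(u-t)$ locally. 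Hence condition (c') holds with $d=\infty$ (in particular for all finite $d$), so the preceding Proposition applies and $(f_t)$ satisfies \eqref{EV_Loewner} for some additive Herglotz vector field $M$ of the general form \eqref{EV_eq:4}, $M(z,t)=\gamma_t+\int_{\R}\frac{1+xz}{x-z}\rho_t({\rm d}x)$.

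Next I would identify $\gamma_t$ and $\rho_t$. Write $F_t(z)=z+b_t+\int_\R \frac{1+xz}{x-z}\sigma_t({\rm d}x)$ via Lemma \ref{Julia}, or more conveniently, using the mean-zero finite-variance normalization, in the form \eqref{EV_finite_var}: $F_t(z)=z+\int_\R\frac1{u-z}\tau_t({\rm d}u)$ with $\tau_t(\R)=t$. Differentiating the Loewner--Kufarev equation \eqref{EV_Loewner} asymptotically as $z\to\infty$ non-tangentially: from $F_t(z)=z-\tfrac{t}{z}+o(1/z)$ we get $\partial_t F_t(z)=-\tfrac1z+o(1/z)$ and $\partial_z F_t(z)=1+\tfrac{t}{z^2}+o(1/z^2)\to 1$. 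On the other hand $M(z,t)$ of the form \eqref{EV_eq:4} behaves like $\gamma_t - \rho_t(\R)/z + o(1/z)$ as $z\to\infty$ non-tangentially (since $\frac{1+xz}{x-z}=-z+\frac{(x^2+1)}{x-z}+x-x = \ldots$; more cleanly, $\frac{1+xz}{x-z} = -\frac{1+x^2}{z}+o(1/z)$ after subtracting the part killed by $M'(\infty,t)=0$). Comparing the $z^0$ term forces $\gamma_t=0$ for a.e.\ $t$, and comparing the $1/z$ term gives $\rho_t(\R)\cdot(\text{const})=1$. Setting $\tau_t({\rm d}u):=(1+u^2)\rho_t({\rm d}u)$, the vector field becomes $M(z,t)=\int_\R \frac1{u-z}\tau_t({\rm d}u)$, and the $1/z$ comparison yields $\tau_t(\R)=1$, i.e.\ $\tau_t$ is a probability measure for a.e.\ $t$. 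A clean way to get $\tau_t(\R)=1$ is to integrate: from $F_t(z)=z+\int_\R\frac1{u-z}\tau_t({\rm d}u)$ one has $\partial_t F_t(z)=\int_\R \frac1{u-z}\,\partial_t\tau_t({\rm d}u)$ in a suitable sense, and matching with $\partial_z F_t(z)\cdot M(z,t)=\big(1+\int\frac{\tau_t({\rm d}u)}{(u-z)^2}\big)M(z,t)$; evaluating the leading $-1/z$ coefficient on both sides (the $t$-derivative of the total mass $\tau_t(\R)=t$ is $1$) directly gives that the total mass of the measure representing $M(\cdot,t)$ is $1$ for a.e.\ $t$.

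For the converse, suppose $M(z,t)=\int_\R\frac1{u-z}\tau_t({\rm d}u)$ with $\tau_t$ a probability measure for a.e.\ $t$; this is an additive Herglotz vector field (of whatever order $d$ is compatible with the measurability/integrability of $t\mapsto\tau_t$), so the preceding Proposition gives that the solution $(f_t)$ of \eqref{EV_Loewner} is an additive Loewner chain, i.e.\ $f_t=F_{\mu_t}$ for probability measures $\mu_t$. It remains to verify the moment normalization. Expanding at $\infty$: write $F_t(z)=z+c_t+\frac{m_t}{z}+o(1/z)$ non-tangentially; then $\partial_t F_t=\dot c_t+\dot m_t/z+o(1/z)$, $\partial_z F_t=1-m_t/z^2+o(1/z^2)\to 1$, and $M(z,t)=-\frac{\tau_t(\R)}{z}+o(1/z)=-\frac1z+o(1/z)$. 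Matching the $z^0$ term gives $\dot c_t=0$, and since $f_0={\rm id}$ forces $c_0=0$, we get $c_t\equiv 0$, hence by Lemma \ref{first_moment} the mean of $\mu_t$ is $0$ for all $t$. Matching the $1/z$ term gives $\dot m_t=-\tau_t(\R)=-1$, so $m_t=-t$ (again $m_0=0$), which by Remark \ref{rm_finite_var} says $\int x^2\,\mu_t({\rm d}x)=t$.

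The main obstacle is the first direction: justifying that the normalized Loewner chain has order $d$ (i.e.\ establishes the local absolute continuity (c') from the moment conditions alone) and then rigorously extracting $\gamma_t=0$ and $\tau_t(\R)=1$ from the PDE \eqref{EV_Loewner}. The asymptotic expansions at $\infty$ must be taken non-tangentially and one must ensure the error terms are uniform enough in $t$ (locally) to differentiate under the limit; the cleanest route is probably to work with the representing measures directly — track the total masses $\sigma_t(\R)$ of $F_t$ and use $\int x^2\mu_t({\rm d}x)=t$ to control them — rather than with pointwise Taylor coefficients, and to invoke \cite[Thm. 1.1]{bracci+al2015} (as in the previous Proposition) to handle the behaviour at the boundary fixed point $\infty$.
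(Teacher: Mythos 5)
Your overall strategy coincides with the paper's: the paper's proof consists precisely of observing that the normalization yields the Goryainov--Ba estimate $|f_s(z)-f_t(z)|\le (t-s)/\Im(z)$ (so the chain has order $\infty$ and the preceding proposition applies), and then deferring the identification of the vector field to \cite[Prop.~3.6]{monotone}; your step establishing order $\infty$ from the variance additivity of Lemma \ref{finitevariance0} and the representation \eqref{EV_finite_var} is the same reduction. However, the part of the argument that the paper outsources to the reference is exactly where your proposal, as written, has a genuine gap.

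The gap is clearest in the converse direction: you begin by writing $F_t(z)=z+c_t+\frac{m_t}{z}+o(1/z)$ and then match coefficients, but by Remark \ref{rm_finite_var} (and \eqref{EV_finite_var}) the existence of such an expansion is \emph{equivalent} to $\mu_t$ having mean zero and finite second moment --- which is the conclusion you are trying to reach, so the coefficient matching is circular. The correct route avoids any a priori expansion: integrate the transition-map equation \eqref{EV_ord} to get $f_{st}(z)=z+\int_s^t M(f_{\sigma t}(z),\sigma)\,{\rm d}\sigma$; since $|M(w,\sigma)|\le 1/\Im(w)$ and $\Im f_{\sigma t}(z)\ge\Im(z)$, one has $|f_{st}(iy)-iy|\le (t-s)/y$, and dominated convergence (the integrand $-iy\,M(f_{\sigma t}(iy),\sigma)$ is bounded by $1$ and tends to $\tau_\sigma(\R)=1$) gives $-iy\,(f_{st}(iy)-iy)\to t-s$. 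The uniform bound already forces $\int_\R(1+x^2)\rho_{st}({\rm d}x)<\infty$ in the Nevanlinna representation \eqref{EV_eq:2} of $f_{st}$ (monotone convergence), and the limit then identifies mean $0$ and variance $t-s$ via \cite[Lemma 1]{MR1201130}. A parallel comment applies to the forward direction, whose obstacle you flag but do not resolve: one cannot differentiate the expansion of $F_t$ in $t$, and also your statement that ``the $z^0$ term forces $\gamma_t=0$'' is imprecise, since the constant term of $\int\frac{1+xz}{x-z}\rho_t({\rm d}x)$ is $-\int x\,\rho_t({\rm d}x)$ \emph{provided this integral is finite}; the finiteness of $\int(1+x^2)\rho_t({\rm d}x)$ is itself part of what must be proved before the rewriting $M(z,t)=\int\frac{\tau_t({\rm d}u)}{u-z}$ makes sense. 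The standard fix is again to integrate in time: using that the increments satisfy $y\,\Im\bigl(f_{st}(iy)-iy\bigr)\to t-s$, apply Fatou's lemma to $y\,\Im\bigl(f_{st}(iy)-iy\bigr)=\int_s^t y\,\Im M(f_{\sigma t}(iy),\sigma)\,{\rm d}\sigma$ to obtain $\int_s^t\int_\R(1+x^2)\rho_\sigma({\rm d}x)\,{\rm d}\sigma\le t-s$ for all $s<t$, hence $\tau_\sigma(\R)\le1$ for a.e.\ $\sigma$, and then a complementary dominated-convergence estimate yields $\lim_{y\to\infty}(-iy)M(iy,\sigma)=1$ and the vanishing of the drift a.e., so that the Cauchy-transform characterization identifies $M(\cdot,\sigma)=-G_{\tau_\sigma}$ with $\tau_\sigma$ a probability measure. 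With these repairs your plan becomes the proof the paper cites; without them, both directions stop at the point where finiteness of second moments (resp.\ of $\int(1+x^2)\rho_t$) is silently assumed.
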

\begin{proof}
See \cite[Prop. 3.6]{monotone}. We note that the normalization implies that
\begin{equation*}\label{EV_regularity2}|f_{s}(z) - f_{t}(z)| \leq \frac{t-s}{\Im(z)},\end{equation*}
for all $0\leq s \leq t$ and $z\in \Ha,$ see \cite[p. 1214]{MR1201130}.
Hence, $(f_{t})$ is an additive Loewner chain of order $\infty$.
\end{proof}

\begin{proposition}\label{EV_normal_mult}
 Let $(f_t)$ be a \index{Loewner chain!multiplicative}multiplicative Loewner chain such that
 \begin{equation*}
\text{$f'_{t}(0)=\int_{\T} x \mu_{t}({\rm d}x)=e^{-t}$ for all $t\geq 0.$}
\end{equation*}
Then $(f_{t})$ satisfies
 \eqref{EV_Loewner} for a \index{Herglotz vector field!multiplicative}multiplicative Herglotz vector field $M(z,t) = -zp_t(z)$ with $p_t(0)=1$ for a.e. $t\geq0.$ 
 
 Conversely, let $M$ be a Herglotz vector field of the above form. Then the solution $(f_t)$ to \eqref{EV_Loewner}
is a multiplicative Loewner chain having the above normalization.
\end{proposition}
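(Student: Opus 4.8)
The plan is to mimic the proof of Proposition \ref{EV_normal_add}: first establish that the normalization forces $(f_t)$ to be a Loewner chain of order $\infty$, and then extract the form of the associated multiplicative Herglotz vector field from the behaviour of the Loewner equation at the origin.

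First I would record that, differentiating the composition identity $f_{su}=f_{st}\circ f_{tu}$ at $0$ (where all transition maps vanish) and using $f_t'(0)=e^{-t}$, one gets $f_{st}'(0)=e^{-(t-s)}$ for all $0\le s\le t$. The crucial ingredient is then a Lipschitz-in-time bound. Writing $g_{st}(z)=f_{st}(z)/z$, which is holomorphic on $\D$, maps $\D$ into $\D$ (by the Schwarz lemma, since $f_{st}(0)=0$ and $f_{st}$ is non-constant for $s<t$), and has $g_{st}(0)=e^{-(t-s)}$, the Schwarz--Pick lemma gives
$$
|g_{st}(z)-e^{-(t-s)}|\ \le\ \frac{|z|\,\bigl(1-e^{-2(t-s)}\bigr)}{1-|z|\,e^{-(t-s)}}\ \le\ \frac{2r}{1-r}\,(t-s)\qquad(|z|\le r<1),
$$
the decisive feature being that $e^{-(t-s)}$ lies close to the unit circle, so the pseudohyperbolic disk of radius $|z|$ about it is small. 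This yields $|f_{st}(z)-z|\le C_r\,(t-s)$ on $\{|z|\le r\}$ and then, using $f_{su}=f_{st}\circ f_{tu}$ together with the Schwarz--Pick bound $|f_{st}'(w)|\le (1-r^2)^{-1}$ for $|w|\le r$, the estimate
$$
|f_{st}(z)-f_{su}(z)|=|f_{st}(z)-f_{st}(f_{tu}(z))|\ \le\ \frac{|z-f_{tu}(z)|}{1-r^2}\ \le\ \frac{C_r}{1-r^2}\,(u-t)
$$
for $0\le s\le t\le u$ and $|z|\le r$ (the segment from $f_{tu}(z)$ to $z$ stays in $\{|w|\le r\}$). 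Hence condition (c') holds with a constant kernel, so $(f_t)$ is a Loewner chain of order $\infty$. This a priori regularity estimate is, I expect, the only genuinely technical step.

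With order $\infty$ in hand, the correspondence recalled in Section \ref{subsec_LPDE} provides a Herglotz vector field $M$ of order $\infty$ with $(f_t)$ solving \eqref{EV_Loewner}, and Proposition \ref{EV_multiplicative_Loewner} identifies $M$ as multiplicative, $M(z,t)=-zp_t(z)$ with $\Re p_t\ge0$. Since the above and Cauchy's formula $f_t'(0)=\frac{1}{2\pi i}\oint f_t(z)z^{-2}\,{\rm d}z$ make $t\mapsto f_t'(0)$ locally absolutely continuous, differentiating \eqref{EV_Loewner} in $z$ and evaluating at $z=0$ gives the linear ODE $\frac{{\rm d}}{{\rm d}t}f_t'(0)=-f_t'(0)\,p_t(0)$ for a.e.\ $t$; plugging in $f_t'(0)=e^{-t}$ forces $p_t(0)=1$ a.e., which is the forward assertion.

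For the converse, let $M(z,t)=-zp_t(z)$ be a multiplicative Herglotz vector field with $p_t(0)=1$ for a.e.\ $t$. By Section \ref{subsec_LPDE} together with Proposition \ref{EV_multiplicative_Loewner}, the solution $(f_t)$ of \eqref{EV_Loewner} is a multiplicative Loewner chain, so $f_t(0)=0$ and $t\mapsto f_t'(0)$ is locally absolutely continuous; the ODE from the previous paragraph now reads $\frac{{\rm d}}{{\rm d}t}f_t'(0)=-f_t'(0)$ with $f_0'(0)=1$, whence $f_t'(0)=e^{-t}$, and since $\eta_{\mu_t}'(0)=\int_\T x\,\mu_t({\rm d}x)$ this is precisely the claimed normalization. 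The main obstacle is the regularity estimate of the second paragraph; once that is established, both implications reduce to the elementary linear ODE obtained from the Loewner equation at the boundary fixed point $0$.
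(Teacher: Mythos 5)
Your proposal is correct and follows essentially the same route as the paper: derive a Lipschitz-in-time estimate from the normalization so that $(f_t)$ is a Loewner chain of every order, then invoke Proposition \ref{EV_multiplicative_Loewner} and read off the ODE $\frac{{\rm d}}{{\rm d}t}f_t'(0)=-f_t'(0)p_t(0)$ at the fixed point $0$ for both directions. The only difference is cosmetic: you prove the key estimate $|f_{tu}(z)-z|\le C_r(u-t)$ directly via Schwarz--Pick applied to $f_{tu}(z)/z$ (and bound $|f_{st}'|$ by Schwarz--Pick), whereas the paper quotes the analogous bound from the proof of Lemma 6.1 in \cite{P75} and uses a normal-family Lipschitz constant.
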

\begin{proof}
Let $0\leq s\leq t\leq u$. The normalization implies that
$$|f_{tu}(z)-z| \leq
2|z| \frac{1+|z|}{1-|z|}(1-e^{u-t}),$$
see the proof of Lemma 6.1 in \cite{P75}.
Let $K=r\overline{\D}\subset \D$ for some $r\in[0,1).$ As $(f_{st})_{0\leq s\leq t}$ is a normal family, we find
a Lipschitz constant $L$ for this family w.r.t. the set $K$. Let $z\in K$. Then
also $f_{tu}(z)\in K$ because of the Schwarz lemma and we obtain
$$ |f_{su}(z)-f_{st}(z)|= |f_{st}(f_{tu})-f_{st}(z)|\leq L|f_{tu}(z)-z| \leq
2L|z| \frac{1+|z|}{1-|z|}(1-e^{u-t}).$$

So $(f_{t})$ is a Loewner chain of any order $d.$
The statement is now an easy consequence of Proposition \ref{EV_multiplicative_Loewner}.
By looking at the first coefficient of the power series expansion of $f_t(z)$ in \eqref{EV_Loewner}, we find that
\begin{equation}\label{eq:diff}
\text{$\frac{\partial}{\partial t}f_{t}'(0) = -f_{t}'(0)\cdot p_t(0)$\; for a.e. $t\geq 0,$\; i.e.
$f_{t}'(0)=e^{-\int_0^t p_\tau(0)\, {\rm d}\tau}.$} 
\end{equation}
The converse statement follows from \eqref{eq:diff} and Proposition \ref{EV_multiplicative_Loewner}.
\end{proof}

\section{Univalent functions}\label{sec_EV_univalence}\index{univalent function|(} 

Each element of a Loewner chain of order $d$ is a univalent function. A general Loewner chain does not satisfy \eqref{EV_Loewner}, but a suitable reparametrization yields univalence also for this case.

\begin{theorem}\label{EV_univalence}${}$
 Let $(f_{t})$ be a \index{Loewner chain}Loewner chain.
        Then every transition mapping $f_{st}$, in particular every $f_t=f_{0t}$, is a univalent function.
\end{theorem}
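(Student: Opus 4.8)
The plan is to deduce this from the already-established case of Loewner chains of order $d$ by passing through the standard correspondence between evolution families and (increasing) Loewner chains of univalent functions. Fix $0 \le s \le t$ and choose any $T \ge t$. First I would reverse time on $[0,T]$: set $\varphi_{u,v} := f_{\,T-v,\;T-u}$ for $0 \le u \le v \le T$. Using only the cocycle identity $f_{pr} = f_{pq}\circ f_{qr}$ and the continuity of $(p,q)\mapsto f_{pq}$, one checks that $\varphi_{uu} = \mathrm{id}_D$, that $\varphi_{uw} = \varphi_{vw}\circ\varphi_{uv}$ for $u \le v \le w$, and that $(u,v)\mapsto\varphi_{uv}$ is continuous; that is, $(\varphi_{uv})_{0\le u\le v\le T}$ is an evolution family on $D$ in the sense of Remark \ref{EV_remark_ev} (the reversal of composition order turns a reverse evolution family into an evolution family), and each $\varphi_{uv}$ is non-constant by Lemma \ref{non_constant}.

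Next I would invoke the basic structural result of Loewner theory (see \cite{contreres+al2014} and the discussion in \cite{MR2995431, AbateBracci:2010}): to every evolution family $(\varphi_{uv})$ there is an associated Loewner chain $(h_v)_{0\le v\le T}$ consisting of \emph{univalent} holomorphic maps $h_v\colon D\to\C$ with $h_u(D)\subseteq h_v(D)$ for $u\le v$ and $\varphi_{uv} = h_v^{-1}\circ h_u$ for all $u\le v$. The existence of this family of univalent functions does not require any order condition; when the evolution family is itself of order $d$ this is precisely the statement already used. (Equivalently, one may first reparametrize the chain so that it becomes a chain of order $d$ and transport univalence back along the reparametrization, as indicated before the statement; in either formulation the nontrivial content is the construction of a single univalent ``superordinate'' family out of which all transition mappings factor.)

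Finally, writing $s = T-v$ and $t = T-u$ with $u = T-t \le v = T-s$, the above identity reads $f_{st} = \varphi_{uv} = h_{T-s}^{-1}\circ h_{T-t}$. Since $h_{T-t}(D)\subseteq h_{T-s}(D)$, the map $h_{T-s}^{-1}$ is well defined and univalent on $h_{T-s}(D)$, which contains $h_{T-t}(D)$; hence $f_{st}$ is a composition of two univalent maps and is therefore univalent. Taking $s=0$ gives univalence of $f_t = f_{0t}$. The one genuinely delicate ingredient is the cited structural result about associated Loewner chains of univalent functions; everything else is bookkeeping with the cocycle relation and the monotonicity $f_{su}(D)\subseteq f_{st}(D)$ of the domains.
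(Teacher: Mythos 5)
There is a genuine gap, and it sits exactly where you wave your hands: the assertion that ``to every evolution family there is an associated Loewner chain of univalent maps'' and that this ``does not require any order condition.'' The structural results you cite (\cite{contreres+al2014}, \cite{MR2995431}, and also \cite{contreres+al2010}) are all formulated for evolution families and Loewner chains \emph{of order} $d\in[1,\infty]$: the $L^d$-regularity is part of the definition of an evolution family there, and the construction of the associated chain of univalent functions goes through the Loewner ODE, which needs absolute continuity in $t$. A Loewner chain in the sense of Definition \ref{EV_def:evolution_family} is only assumed jointly continuous, and Example \ref{EV_not_abs_continuous} exhibits an additive Loewner chain that is not of any order $d$; its time-reversal is an evolution family only in the weak sense of Remark \ref{EV_remark_ev}, to which the cited theorems simply do not apply. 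In this generality the ``structural result'' you invoke is essentially equivalent to the statement you are trying to prove (if such univalent $h_v$ with $\varphi_{uv}=h_v^{-1}\circ h_u$ existed, univalence of $f_{st}$ is immediate; conversely, once Theorem \ref{EV_univalence} is known one can take the $h$'s to be the $f_{0u}$ themselves), so quoting it here is circular.

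The honest version of your argument is the parenthetical you set aside: one must first \emph{reduce} the chain to one of order $d$, and that reduction is the actual content of the paper's proof. Concretely: conjugate by the Cayley map and by the Moebius automorphisms $h_t(z)=(z+a(t))/(1+\overline{a(t)}z)$ with $a(t)=F_{tT}(0)$ to obtain a multiplicative chain ($G_t(0)=0$); use the Schwarz lemma to see that $t\mapsto|G_t'(0)|$ is non-increasing and continuous, treat separately the possibility that it vanishes at some time (where Lemma \ref{non_constant} and a Hurwitz-type limit argument give a contradiction); rotate and perform the (possibly discontinuous) time change $\tau(s)=\inf\{t:\Re C(t)=-s\}$ so that the reparametrized chain satisfies the normalization $h_s'(0)=e^{-s}$ of Proposition \ref{EV_normal_mult}, hence is of order $\infty$; only at that point does the order-$d$ machinery yield univalence, which one then transports back through the rotations, Moebius conjugations and the time change. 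Your time-reversal bookkeeping and the final composition argument are fine, but without carrying out this normalization/reparametrization step the proof does not go through.
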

\begin{proof} 
Because of $f_t = f_s \circ f_{st}$, it is sufficient to prove that $f_t$ is univalent for all $t\geq0.$ 

\vspace{3mm}

a) Assume that $(f_t)$ is a multiplicative Loewner chain. Let $a_{t}:=f'_{t}(0).$ Due to the Schwarz lemma, we have $|a_{t}|\leq 1$ and, as $f_t = f_s \circ f_{st}$,
$t\mapsto |a_t|$ is non-increasing. Furthermore, as $t\mapsto f_t$ is continuous, also $t\mapsto a_t$ is continuous and we conclude that  $a_{t}\not=0$ for all $t\in[0,\varepsilon]$ and some $\varepsilon>0$. 
 
First, assume that $a_{t}\not=0$ for all $t\geq 0.$ Then we have $0<|a_t|\leq 1$ for all $t\geq 0$ and there exists a uniquely determined continuous function $C:[0,\infty)\to \{z\in\C \,|\, \Re (z)\leq 0\}$ with $C(0)=0$ such that $a_{t}=e^{C(t)}$.
It is easy to see that $$g_{t}(z):= f_{t}(e^{-i\Im(C(t))}z)$$ is also a multiplicative Loewner chain with
$$g'_{t}(0)=e^{\Re(C(t))}.$$ The function $t\mapsto \Re(C(t))$ is
non-increasing and continuous. Note that $\Re(C(t))=\Re(C(s))$, $s\leq t$, implies that $g_{t}=g_{s}$, for  $g_t = g_s \circ g_{st}$ with $g_{st}:\D\to\D$, $g_{st}(0)=0, g'_{st}(0)=1$, i.e. $g_{st}$ is the identity by the Schwarz lemma.
        
We can reparametrize $g_{t}$ to $h_{s}:=g_{\tau(s)}$ such that $h'_{s}(0)=e^{-s}$ for all $s \in [0,S)$ for some $0<S \leq \infty$, where $\tau(s)$ is defined by 
$$
\tau(s) = \inf \{ t \geq0: \Re[C(t)]=-s\}, 
$$  
which is a strictly increasing, possibly discontinuous function. The reparametrization\\ $(h_{s})_{s\in[0,S)}$ is (part of) a multiplicative Loewner chain with the normalization from
Proposition \ref{EV_normal_mult}. Hence, each $h_{s}$ is univalent, which implies that each $f_{t}$ is univalent.    

No assume that $a_{\tau}=0$ for some $\tau > 0$ and
$a_{t}\not=0$ for $t< \tau.$  The previous case implies that $f_{t}$ is univalent for all $t<\tau.$
Hence, $f_{\tau}=\lim_{t\uparrow \tau}f_{t}$ is the limit of univalent
self-mappings of $\D$ fixing $0$. It follows that $f_{\tau}(z)\equiv 0$. This is a contradiction as all elements of a Loewner chain are non-constant by Lemma \ref{non_constant}. 
				
\vspace{3mm}
				
b) Now we consider the general case. If $D=\Ha$, then we can use the Cayley transform $I:\Ha\to \D,$ $I(z)=\frac{z-i}{z+i},$ to transfer the problem to the unit disk, i.e. we define
$F_{st}:= I \circ f_{st} \circ I^{-1},$ which gives transition mappings of a Loewner chain on $\D$.
Hence, we may assume that $(F_t)$ is a Loewner chain on $\D$ with transition mappings $F_{st}$. 
Next we use an idea from Proposition 2.9 in \cite{contreres+al2010}. Fix some $T>0.$ We define
$$ a(t) := F_{tT}(0),
\quad h_t(z):=\frac{z+a(t)}{1+\overline{a(t)}z},$$
for $t\geq 0,$ $z\in \D.$ Note that $h_t$ is an automorphism of $\D$ mapping
$0$ onto $a(t)$. 

Define $(G_{st})_{0\leq s\leq t\leq T}:= (h^{-1}_{s}\circ F_{st} \circ h_t)_{0\leq s\leq t\leq T}$ and $G_t=G_{0t}.$
Then $(G_t)$ is (a part of) a Loewner chain on $\D$ with
\[ G_t(0)=  (h^{-1}_{0}\circ F_t \circ h_t)(0)= (h^{-1}_{0}\circ F_t)(F_{tT}(0))=h^{-1}_{0}(F_T(0))=0.\]
Hence, $(G_t)$ is a multiplicative Loewner chain and a) implies that every $G_t$, $t\in[0,T]$, is univalent.
As $T>0$ can be chosen arbitrarily large, we conclude that every $F_t$ is univalent.
\end{proof}

%

Every univalent $f:\D\to\D$ maps $\D$ conformally onto a simply connected subdomain
of $\D$. Conversely,
if $D\subseteq \D$ is a simply connected subdomain with $0\in D,$ we
find a unique conformal mapping $f:\D\to D$
with $f(0)=0$ and $f'(0)>0$ due to the Riemann mapping theorem.
This mapping is always a univalent $\eta$-transform, see Lemma \ref{characterizationeta}.

The situation for $F$-transforms is more complicated since we need a normalization at infinity. In order that a simply connected domain $\Omega \subset \C$ is the range of some univalent $F$-transform, at least $\Omega$ must contain truncated cones $\Gamma_{\lambda, M_\lambda}$ for all $\lambda>0$, where $M_\lambda>0$ is a function of $\lambda$ (see Section \ref{intro_convolutions}). We do not know whether these conditions are necessary and sufficient.   

We provide some sufficient conditions. We call a closed Jordan curve in the Riemann sphere \emph{Dini-smooth} if it has a parametrization with non-zero and Dini-continuous derivative; see \cite[p.46, 48]{MR1217706}.

\begin{theorem}\label{ftransform_images}${}$
\begin{enumerate}[\rm (a)]
 \item Let $\Omega \subseteq \Ha$ be a simply connected domain and assume that there is a Dini-smooth closed Jordan curve $C$ in the Riemann sphere such that $\infty\in C$
 and one component $\Omega_0$ of $\hat{\C}\setminus C$ is contained in $\Omega.$ Then there exists a probability measure
 $\mu$ on $\R$ with \index{F-transform@$F$-transform}$F_\mu(\Ha)=\Omega.$
 \item Let $\Omega \subseteq\Ha$ be a simply connected domain such that $\Ha\setminus \Omega$ is
a bounded set. Then there exists a unique probability measure $\mu$ on $\R$ with mean $0$
and compact support
such that $F_\mu(\Ha)=\Omega.$
\end{enumerate}
\end{theorem}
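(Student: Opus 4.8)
The plan is to reduce both statements to the Riemann mapping theorem plus a boundary-regularity analysis of the conformal map at $\infty$, and then to verify the normalization $f'(\infty)=1$ via the Pick--Nevanlinna characterization in Lemma~\ref{Julia}. In both (a) and (b), the Riemann mapping theorem gives a conformal $f\colon\Ha\to\Omega$, and since $\Omega\subseteq\Ha$, $f$ is a Pick function; by Lemma~\ref{Julia} it suffices to arrange $\lim_{y\to\infty} f(iy)/(iy)=1$, i.e. $f$ has angular derivative $1$ at $\infty$. The point $\infty$ should be a prime end of $\Omega$ approached along the relevant cone, so after composing with the Cayley transform $I\colon\Ha\to\D$ we are looking at the boundary behaviour of a conformal map of $\D$ at a boundary point; the local smoothness hypotheses on $\partial\Omega$ near $\infty$ are exactly what make this boundary point regular.

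For part (a), I would argue as follows. First, the Dini-smooth Jordan curve $C$ through $\infty$ bounds a domain $\Omega_0\subseteq\Omega$ that has a Dini-smooth boundary near $\infty$. By the Kellogg--Warschawski theorem (see \cite[Thm.~3.6]{MR1217706}), a conformal map onto a Jordan domain with Dini-smooth boundary extends $C^1$ up to the boundary with non-vanishing derivative; transferring this to a neighbourhood of $\infty$ by the Cayley transform, the conformal map $f\colon\Ha\to\Omega$ has a well-defined, finite, nonzero angular derivative at $\infty$, say $f(z)=cz+O(1)$ as $z\to\infty$ non-tangentially with $c>0$. (Here one uses that $\Omega_0\subseteq\Omega\subseteq\Ha$ pinches the image near $\infty$ so that the prime end of $\Omega$ at $\infty$ behaves like that of $\Omega_0$.) Then replace $f$ by $z\mapsto f(z)/c$ if necessary; but note that rescaling the image would change $\Omega$, so instead I would precompose: there is no freedom there, so the correct move is to observe that the angular-derivative value $c$ is forced and then to renormalize by using $\tilde f(z) := f(z/c)$... wait---that changes the domain too. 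The clean resolution: the Riemann map $f\colon\Ha\to\Omega$ is unique only up to precomposition with automorphisms of $\Ha$; among these, the dilations $z\mapsto\lambda z$ and translations $z\mapsto z+b$ act on the expansion at $\infty$, and one checks that exactly one choice of $\lambda$ (namely $\lambda=c$) yields angular derivative $1$. So set $F_\mu := f\circ(z\mapsto cz)$... again this is wrong because precomposition by $z\mapsto cz$ sends angular derivative $c$ to $c^2$; the right statement is that $g:=f\circ\phi$ where $\phi(z)=z/\sqrt{\text{(something)}}$... I will simply say: among all conformal maps $\Ha\to\Omega$, which form a family $f\circ\psi$ with $\psi\in\mathrm{Aut}(\Ha)$, the angular derivative at the preimage of $\infty$ ranges over all of $(0,\infty)$ as $\psi$ ranges over dilations fixing $\infty$, so we may select the one with angular derivative $1$; by Lemma~\ref{Julia} this map is $F_\mu$ for a probability measure $\mu$, and $F_\mu(\Ha)=\Omega$.

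For part (b), $\Ha\setminus\Omega$ is bounded, so near $\infty$ the domain $\Omega$ coincides with $\Ha$ minus a bounded set; equivalently, under $z\mapsto 1/z$ (or the Cayley transform), $0$ is an interior-type boundary point and $\Omega$ fills a full neighbourhood of the prime end at $\infty$. In this case the conformal map $f\colon\Ha\to\Omega$ satisfies $f(z)=z+b+o(1)$ as $z\to\infty$ (after the dilation normalization of part (a) forces the leading coefficient to be $1$, by the same angular-derivative argument), and in fact more: since $\partial\Omega$ is a bounded subset of $\R\cup\Ha$, the measure $\rho$ in the Pick--Nevanlinna representation \eqref{EV_eq:2} is compactly supported, whence $\mu$ is compactly supported. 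The mean-zero normalization is achieved by the remaining translation freedom $z\mapsto z+b$, which does not change $\Omega$'s shape but shifts $\mu$; by Lemma~\ref{first_moment}, choosing the translation so that $\lim_{y\to\infty}(iy-F_\mu(iy))=0$ gives mean $0$. Uniqueness follows because any two such $\mu$ have $F$-transforms that are conformal maps onto the same $\Omega$ differing by an automorphism of $\Ha$, and the normalizations (angular derivative $1$ at $\infty$, mean $0$) pin down that automorphism to the identity.

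The main obstacle I anticipate is the boundary-regularity step in part (a): justifying that the prime end of $\Omega$ at $\infty$ inherits the Dini-smooth local structure of $\Omega_0$, so that the Kellogg--Warschawski theorem applies and yields a \emph{finite nonzero} angular derivative rather than $0$ or $\infty$. This requires a comparison argument---sandwiching the image near $\infty$ between $\Omega_0$ and $\Ha$---together with the conformal-invariance of the existence of a nonzero angular derivative (the Julia--Wolff--Carathéodory machinery). Everything after that is bookkeeping with the automorphism group of $\Ha$ and the characterizations in Lemmas~\ref{Julia}, \ref{first_moment}, and \ref{lemmaconvergence}.
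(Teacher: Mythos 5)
Your overall architecture coincides with the paper's (Riemann map onto $\Omega$, normalize the angular derivative at $\infty$ by the dilation freedom, invoke Lemma~\ref{Julia}, and for (b) use the hydrodynamic normalization, Lemma~\ref{first_moment}, and the automorphism argument for uniqueness), and your eventual resolution of the dilation bookkeeping is correct. However, there are two genuine gaps. In (a), the step you flag as ``the main obstacle'' is the entire content of the theorem, and your stated route to it is not valid as written: the Kellogg--Warschawski theorem applies to the Riemann map onto the Jordan domain $\Omega_0$ bounded by the Dini-smooth curve $C$, not to the map onto $\Omega$, whose boundary need not have any smoothness at the prime end at $\infty$. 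What is actually needed, and what the paper supplies, is: locate the prime end of $I(\Omega)$ corresponding to $\infty$ at a single point of $\partial\D$ via the curve argument and \cite[Prop.~2.14]{MR1217706}; get the angular limit and an angular derivative $c\in(0,\infty]$ there by normality (Lehto--Virtanen) and Julia--Wolff; and then prove \emph{finiteness} of $c$ by first applying the Dini-smoothness result \cite[Thm.~3.5]{MR1217706} to the map onto $I(\Omega_0)$ and then transferring finiteness to the map onto $I(\Omega)$ by the comparison theorem for angular derivatives under the inclusion $\Omega_0\subseteq\Omega$, namely \cite[Thm.~4.14]{MR1217706}. Without this, the coefficient $a$ in \eqref{EV_eq:1} for the half-plane map could be $0$, and no precomposition with a dilation can then produce the normalization \eqref{EV_eq:2}; merely invoking ``Julia--Wolff--Carath\'eodory machinery'' does not close this.

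In (b), your justification of compact support is a non sequitur: $\partial\Omega$ is \emph{not} bounded (it contains all of $\R$ outside a compact set); what is bounded is the hull $\Ha\setminus\Omega$, and that alone does not immediately constrain the representing measure. One must show that $F_\mu$ (equivalently $G_\mu=1/F_\mu$) extends continuously with real values across two rays $(-\infty,a]\cup[b,\infty)$, so that the Stieltjes--Perron inversion forces $\mathrm{supp}\,\mu\subset[a,b]$. The paper does this by considering $H=f^{-1}(\Ha\setminus\overline{R\D})$ for $R$ large, identifying $\partial H$ via \cite[Prop.~2.14]{MR1217706}, and applying the continuity theorem \cite[Thm.~2.6]{MR1217706}. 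Your treatment of the normalization (leading coefficient $1$, mean $0$ via translation freedom and Lemma~\ref{first_moment}) and of uniqueness (an automorphism of $\Ha$ with expansion $z+o(1)$ at $\infty$ is the identity) is essentially right, and agrees with citing the unique hydrodynamically normalized map as in \cite[Prop.~3.36]{lawler05}; but as it stands the compact-support step, like the angular-derivative step in (a), is asserted rather than proved.
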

\begin{proof}
(a) This statement is basically shown in the proof of \cite[Theorem 6.1]{MR2719792}.
For the sake of completeness, we extract the necessary steps.

 We transfer the problem to the unit disk, i.e. we set $D=I(\Omega)$ and $D_0=I(\Omega_0)$,
 where $I(z)=\frac{z-i}{z+i}$ is the Cayley map. The boundary point $\infty$ now corresponds to the point $1$.
 
 First, we find a conformal mapping $f_0:\D\to D.$
 Next, let $\gamma:[0,1)\to D$ be a continuous curve with $\gamma(t)\to 1$
 as $t\to 1.$ By \cite[Prop. 2.14]{MR1217706}, the curve $f_0^{-1}\circ \gamma$
 is a curve in $\D$ with $f_0^{-1}(\gamma(t))\to p\in\partial\D$ as $t\to 1.$ By precomposing
 $f_0$ with a rotation, we can assume that $p=1.$
 
 The Schwarz-Pick lemma implies $(1-|z|^2)|f_0'(z)|\leq 1-|f_0(z)|^2$ for all $z\in\D.$
 Hence, $f_0$ is a normal function
 and the Lehto-Virtanen theorem  (\cite[Section 4.1, p.71]{MR1217706}) implies that
 $f_0(z)\to 1$ as $z\to 1$ non-tangentially. The Julia-Wolff lemma (\cite[Prop. 4.13]{MR1217706})
 implies that $f_0$ has an angular derivative $f_0'(1)=c\in (0,\infty],$ i.e.
 $(f_0(z)-1)/(z-1)\to c$ as $z\to 1$ non-tangentially.
 
 Actually, $c$ is finite:
 Let $f_1:\D\to D_0.$ By the same arguments, we can assume that $f_1(z)\to 1$
 as $z\to 1$ non-tangentially and the angular derivative$f_1'(1)\in (0,\infty]$
 exists.
Now we use the Dini-smoothness of the curve $C$. By  \cite[Thm. 3.5]{MR1217706}, $f_1'(1)<\infty$ and 
 \cite[Thm. 4.14]{MR1217706} implies that $f_0'(1)<\infty.$
 By going back to the upper half-plane, we find that $f_2:=I^{-1}f\circ I$
 maps $\Ha$ conformally onto $\Omega$ and has the form $\eqref{EV_eq:1}$ with
 $a=\frac1{f_0'(1)}\not=0$ and $b\in\R$. Hence, the map $f(z):=f_2(z/a)$
 maps $\Ha$ conformally onto $\Omega$ and has the form \eqref{EV_eq:2}. We conclude that
 $f=F_\mu$ for some $\mu\in \cP(\R).$ 
 
 \vspace{3mm}

(b) By \cite[Prop. 3.36]{lawler05}, there exists a unique conformal mapping
  $f:\Ha\to \Omega$ with $$f(z)=z - \frac{c}{z} + \mathcal{O}(z^{-2})$$ as
  $z\to\infty,$ $c\geq0.$ By \eqref{EV_eq:2}, we have
  $f=F_\mu$ for some $\mu\in \cP(\R)$. 
  
  Consider the complement $B=\Ha\setminus\Omega$. As $B\cup\{0\}$ is bounded, we find a disc $R\cdot\D$ 
  such that $B\cup\{0\}\subset R\cdot\D$. 
  Now consider the domain $H=f^{-1}(\Ha\setminus \overline{R\D})$. The set $f^{-1}(\partial (R\D) \cap \Ha)$ 
  is clearly a simple curve in $\Ha$. Due to \cite[Prop. 2.14]{MR1217706}, its closure intersects $\R$ at exactly two points 
  $a,b$, $a<b$. Hence, $\partial H = f^{-1}(\partial (R\D))\cup(-\infty,a]\cup[b,\infty)$.
 
   We can now apply  \cite[Theorem 2.6]{MR1217706} to see that $f$ extends continuously to $\overline{H}$ with 
  $f((-\infty,a]\cup[b,\infty))\subset \R\setminus\{0\}$. 
  (In \cite[Theorem 2.6]{MR1217706}, the simply connected domains are assumed 
  to be bounded, but we can simply regard $H\cap R_2\cdot \D$ for any $R_2$ large enough.)
  Hence, $1/f$, which is the Cauchy transform of $\mu$, maps $(-\infty,a]\cup[b,\infty)$ 
  into $\R$ and the Stieltjes-Perron inversion formula implies that $\mu$ has compact support. Due to Lemma \ref{first_moment},
  the first moment of $\mu$ is $0$.
\end{proof}

\begin{remark}\label{rm_capacity}
In (b), $\mu$ has finite variance $\sigma^2=c$.
This value is also called the \index{half-plane capacity}\emph{half-plane capacity} of the ``hull''
$\Ha\setminus \Omega,$ see \cite[Section 3.4]{lawler05}. It has a more or less geometric interpretation, see
\cite{LLN09}. 

More generally, a set $A\subset \Ha$, such that $\Ha\setminus A$ is a simply connected domain, is said to have \emph{finite half-plane capacity $c\geq0$} if there exists a conformal mapping $G:\Ha\to \Ha\setminus A$
with
\[ G(z) = z - \frac{c}{z} + {\scriptstyle\mathcal{O}}(1/|z|) \]
as $z\to\infty$ non-tangentially in $\Ha$. Let $A=\Ha \setminus F_\mu(\Ha)$ for a probability measure $\mu$ on $\R.$ Then
$\mu$ has finite variance $\sigma^2$ if and only if $A$ has finite half-plane capacity $\sigma^2.$
This can be shown by regarding the mapping $G(z)=F_\mu(z-m)$, where $m$ is the first moment of
$\mu$, together with Lemma \ref{Julia}.
\end{remark}

\begin{problem} Characterize the domains $\Omega\subseteq \Ha$
appearing as image domains of univalent $F$-transforms.
\end{problem}

\section{Embeddings}\label{EV_section_embeddings}

Finally we prove the following statements, which are converse to Theorem \ref{EV_univalence}.

\begin{theorem}\label{EV_embed_F}${}$
\begin{enumerate}[\rm(a)]
 \item Let $\mu$ be a probability measure on $\R$ such that \index{F-transform@$F$-transform}$F_\mu$ is univalent.
Then there exists an \index{Loewner chain!additive}additive Loewner chain $f_{t}$ such that $f_{1} = F_\mu.$
\item Let $\mu$ be a probability measure on $\R$  such that $F_\mu$ is univalent and
$$\int_\mathbb{R} x\, \mu({\rm d}x)=0, \int_\mathbb{R} x^2\, \mu({\rm d}x)=:T<\infty.$$
 Then there exists an additive Loewner chain $f_{t}$ satisfying the normalization from Proposition
 \ref{EV_normal_add} such that  $f_{T} = F_\mu.$
\end{enumerate}
\end{theorem}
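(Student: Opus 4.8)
The plan is to reduce the claim to the Loewner theory developed above, specifically Theorem \ref{EV_univalence}'s converse direction via the differential-equation machinery, by constructing a decreasing one-parameter chain that interpolates between the identity and $F_\mu$. Since $F_\mu$ is univalent, its range $\Omega := F_\mu(\Ha)$ is a simply connected domain with $\Omega \subseteq \Ha$ (recall $\Im F_\mu(z) \ge \Im z$ from Lemma \ref{Julia}). The idea for part (a) is to exhibit a decreasing family of simply connected domains $\Omega_t$, $t \ge 0$, with $\Omega_0 = \Ha$, $\Omega_1 = \Omega$, such that each $\Omega_t$ is the range of a univalent $F$-transform $F_{\mu_t}$; then set $f_t := F_{\mu_t}$ and verify the algebraic and continuity axioms of Definition \ref{EV_def:evolution_family}, together with the normalization $f_t'(\infty) = 1$.

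First I would transfer to the unit disk via the Cayley map $I(z) = (z-i)/(z+i)$, so that the problem becomes one about decreasing chains of simply connected subdomains of $\D$. The natural candidate for the interpolating family is obtained by a Loewner-type exhaustion: write $\phi := I \circ F_\mu \circ I^{-1}: \D \to \D$, a univalent self-map fixing the boundary point $1$ with finite angular derivative there (this is where the condition $F_\mu'(\infty)=1$ translates, via the Julia-Wolff-Carathéodory theory, into $\phi'(1) = 1$). One then wants a Herglotz vector field $M(z,t)$ whose associated Loewner chain, solving \eqref{EV_Loewner}, has $f_0 = \mathrm{id}$ and $f_1 = \phi$. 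A clean way to produce this: parametrize the "hulls" $\D \setminus \phi(\D)$; if this hull is, say, a slit or more generally a Jordan-type hull one can use the classical chordal Loewner equation directly, but in general one should argue more softly. The robust approach is to pick any Loewner chain of order $d$ (e.g. order $\infty$) with $f_1 = \phi$ by a limiting/exhaustion argument: approximate $\Omega$ from inside by domains with Dini-smooth boundary (invoking Theorem \ref{ftransform_images}(a)) and build the chain for each approximant, then pass to the limit using the compactness of normalized univalent maps and Lemma \ref{lemmaconvergence}. Part (b) is then the specialization: under the mean-zero, finite-variance hypothesis, Remark \ref{rm_capacity} identifies $\sigma^2 = T$ with the half-plane capacity of $\Ha \setminus \Omega$, and one reparametrizes the chain by (half-plane) capacity so that $\int x^2\,\mu_t({\rm d}x) = t$; Proposition \ref{EV_normal_add} then guarantees this normalized chain solves \eqref{EV_Loewner} for a Herglotz field $M(z,t) = \int (u-z)^{-1}\tau_t({\rm d}u)$ with $\tau_t$ a probability measure, and $f_T = F_\mu$ as required.

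I expect the main obstacle to be the \emph{existence} of a decreasing Loewner chain with a prescribed terminal map $f_1 = F_\mu$ when no regularity of $\partial\Omega$ is assumed — i.e. showing that an arbitrary univalent $F$-transform really does embed. The subtlety is that the Denjoy-Wolff point sits at $\infty$ (on the boundary), so the classical interior-Loewner-chain theory does not apply directly; one must lean on the boundary-fixed-point Loewner theory of \cite{contreres+al2014} and the angular-derivative results of \cite{bracci+al2015} already cited above. Concretely, the delicate point is to control the angular derivative at the boundary fixed point all along the chain: one needs that $f_t'(\infty) = 1$ for \emph{every} $t$ (not just $t=0,1$), which by the product rule $f_1'(\infty) = f_t'(\infty)\cdot f_{t1}'(\infty)$ forces $f_{t1}'(\infty) \le 1$ and hence requires the spectral/dilation quantities of the generators to vanish — exactly the content invoked in the proof of the "$\Longleftarrow$" direction of the Proposition preceding Proposition \ref{EV_multiplicative_Loewner}. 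Once that is in place, the remaining verifications (evolution-family axioms, weak continuity of $t \mapsto \mu_t$, the reparametrization in (b)) are routine given Lemmas \ref{lemmaconvergence} and \ref{non_constant} and Proposition \ref{EV_normal_add}.
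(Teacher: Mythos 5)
There is a genuine gap: the central existence step is never carried out. Your plan — approximate $\Omega=F_\mu(\Ha)$ from inside by Dini-smooth domains, build a chain for each approximant, then pass to the limit — leaves unexplained how to build the chain even for a single approximant. Producing a continuously decreasing family of intermediate domains joining $\Ha$ to a given (even smooth) $\Omega$ in such a way that \emph{every} intermediate map is again an $F$-transform (angular derivative $1$ at the boundary Denjoy--Wolff point $\infty$) is precisely the embedding problem the theorem asserts; Theorem \ref{ftransform_images}(a) only says the terminal domain is the range of some $F$-transform and gives no interpolation, and for a general univalent $F$-transform the hull $\Ha\setminus\Omega$ may be unbounded with infinite half-plane capacity, so the classical chordal parametrization you gesture at is unavailable. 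The limit passage is also not routine: locally uniform convergence does not by itself preserve the normalization $f_t'(\infty)=1$ at intermediate times (mass of the intermediate measures can escape to infinity unless you prove tightness), and you need equicontinuity in $(s,t)$ to obtain axiom (c) of Definition \ref{EV_def:evolution_family}. Finally, your appeal to the ``$\Longleftarrow$'' direction of the additive-Herglotz-field proposition is circular at this stage, since it presupposes an additive Herglotz vector field that your construction has not produced.

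By contrast, the paper's proof is a direct citation: \cite[Theorem 1.2]{bracci+al2015}, applied with $\Lambda(t)\equiv 0$, is itself an embedding theorem and yields an evolution family $(f_{st})_{0\le s\le t\le 1}$ with a boundary regular fixed point at $\infty$ and $f_{st}'(\infty)=1$ for all $s\le t$ (one only has to note that $|f_{st}'(\infty)|=1$ forces $f_{st}'(\infty)=1$ because the angular derivative at a boundary fixed point is non-negative), with $F_\mu=f_{0,1}$; each $f_{st}$ then has the form \eqref{EV_eq:2}, and the decreasing chain is obtained by the time reversal $f_t:=f_{1-t,1}$ for $t\in[0,1]$, $f_t:=f_{0,1}$ for $t>1$. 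Part (b) is the analogous citation of \cite[Theorem 5]{MR1201130}, which already delivers the hydrodynamically normalized embedding, so no separate capacity reparametrization argument is needed. You correctly identified \cite{bracci+al2015} and \cite{contreres+al2014} as the relevant ingredients, but you use them only for angular-derivative control rather than recognizing that they supply the embedding itself, and the constructive substitute you propose does not close that gap.
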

\begin{proof}
(a) Theorem 1.2 in \cite{bracci+al2015}, with $\Lambda(t)\equiv 0,$ implies that
we can write $F_\mu = f_{0,1}$ where $\{f_{st}\}_{0\leq s\leq t\leq 1}$ is an evolution family in the sense of Remark \ref{EV_remark_ev} and
\begin{itemize}
	\item[(i)] $f_{st}$ has a boundary regular fixed point at $\infty$ for all $0\leq s \leq t.$
	\item[(ii)] $f_{st}'(\infty) = 1$ for all $0\leq s \leq t.$
\end{itemize}
(Note that \cite[Thm. 1.2]{bracci+al2015} only gives $|f_{st}'(\infty)| = 1$. 
However, $f_{st}'(\infty)$ must be non-negative as $\infty$ is a fixed point of $f_{st},$
see again \cite[Thm. 2.2 (vi)]{bracci+al2015}.) 
We conclude that every $f_{st}$ has the form \eqref{EV_eq:2}. 
Finally, the family $(f_t)_{t\geq 0}$ with $f_t = f_{1-t,1}$ for $t\in[0,1]$, $f_t = f_{0,1}$ for $t>1$, is an additive Loewner chain with $f_1 = f_{0,1} = F_\mu.$ 

\vspace{3mm}

(b) This statement follows in a similar way by using \cite[Theorem 5]{MR1201130}.
\end{proof}

\begin{theorem}\label{embed_multiplicative}${}$
\begin{enumerate}[\rm(a)]
\item Let $\mu$ be a probability measure on $\T$ such that \index{eta-transform@$\eta$-transform}$\eta_\mu$ is univalent. Then there exists a \index{Loewner chain!multiplicative}multiplicative Loewner chain $f_{t}$
 such that $f_{1} = \eta_\mu.$
\item Let $\mu$ be a probability measure on $\T$ such that $\eta_\mu$ is univalent and
$$\eta_\mu'(0)=\int_\mathbb{\T} x\, \mu({\rm d}x)=e^{-T}, T>0.$$
  Then there exists a multiplicative Loewner chain $f_{t}$ satisfying the normalization from Proposition
 \ref{EV_normal_mult} and $f_T = \eta_\mu.$
\end{enumerate}
\end{theorem}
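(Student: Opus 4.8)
The plan is to realize $\eta_\mu$ as one member of a nested family of Riemann maps. Since $\eta_\mu$ is univalent, $\Omega:=\eta_\mu(\D)$ is a simply connected subdomain of $\D$ with $0\in\Omega$, and (as $\eta_\mu$ has nonvanishing derivative) $0<|\eta_\mu'(0)|\leq 1$. Let $\psi\colon\D\to\Omega$ be the conformal map with $\psi(0)=0$ and $\psi'(0)>0$; then $\psi'(0)=|\eta_\mu'(0)|=:e^{-T}$ with $T\geq 0$, and $\eta_\mu=\psi\circ r_\alpha$ for the rotation $r_\alpha(z)=e^{i\alpha}z$ with $e^{i\alpha}=\eta_\mu'(0)/|\eta_\mu'(0)|$. (If $|\eta_\mu'(0)|=1$ then $\eta_\mu$ is a rotation and $f_t(z):=e^{it\alpha}z$ for $t\in[0,1]$, $f_t:=\eta_\mu$ for $t\geq1$, is already the required chain for part~(a); so assume $0<|\eta_\mu'(0)|<1$, i.e.\ $T>0$.)

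The key step is to produce a strictly decreasing family of simply connected subdomains $\Omega_t\subseteq\D$, $t\geq 0$, continuous in the Carath\'eodory-kernel sense, with $0\in\Omega_t$, $\Omega_0=\D$, conformal radius at $0$ equal to $e^{-t}$, and $\Omega_T=\Omega$: for $t\in[0,T]$ one interpolates between $\D$ and $\Omega$ inside $\D$, and for $t\geq T$ one takes $\Omega_t:=\psi(e^{-(t-T)}\D)$. Granting this, let $g_t\colon\D\to\Omega_t$ be the conformal map with $g_t(0)=0$, $g_t'(0)=e^{-t}>0$; by the Carath\'eodory kernel theorem the $g_t$ depend continuously on $t$ locally uniformly, $g_0=\mathrm{id}$ (the normalized Riemann map of $\D$ onto itself is the identity), and $g_T=\psi$ by uniqueness of the normalized Riemann map. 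Since $g_s(\D)=\Omega_s\supseteq\Omega_t=g_t(\D)$ for $s\leq t$, the transition maps $g_{st}:=g_s^{-1}\circ g_t$ are holomorphic self-maps of $\D$ fixing $0$, they satisfy the cocycle and continuity conditions of Definition~\ref{EV_def:evolution_family}, and $g_0=\mathrm{id}$; hence $(g_t)_{t\geq0}$ is a multiplicative Loewner chain with $g_t'(0)=e^{-t}$, i.e.\ it carries the normalization of Proposition~\ref{EV_normal_mult}.

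This already proves part~(b): when $\eta_\mu'(0)=e^{-T}$ is a positive real, $\alpha=0$, so $g_T=\psi=\eta_\mu$, and by Proposition~\ref{EV_normal_mult} the chain $(g_t)$ satisfies \eqref{EV_Loewner} for a multiplicative Herglotz vector field $M(z,t)=-zp_t(z)$ with $p_t(0)=1$. For part~(a), replace $g_t$ by $\tilde g_t:=g_t\circ r_\alpha$, which is again a multiplicative Loewner chain (conjugation by the rotation $r_\alpha$ on the right preserves all the conditions) with $\tilde g_T=\psi\circ r_\alpha=\eta_\mu$; a final continuous, strictly increasing time change $t\mapsto t/T$ turns it into a multiplicative Loewner chain $(f_t)_{t\geq0}$ with $f_1=\eta_\mu$.

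The main obstacle is precisely the interpolation between $\D$ and $\Omega$ claimed in the second paragraph: one must exhibit simply connected domains $\Omega_t$, $t\in[0,T]$, decreasing from $\D$ to $\Omega$ whose Riemann maps vary continuously. For domains bounded by Jordan arcs this is classical, and the general case follows by exhausting $\D\setminus\Omega$ by well-behaved compact pieces and passing to a Carath\'eodory limit; the delicate point is maintaining simple connectivity along the way and the continuity of the associated Riemann maps. Alternatively one can bypass this construction by invoking the embedding of an arbitrary univalent self-map of $\D$ into a decreasing Loewner chain on $\D$ — the interior-fixed-point counterpart of \cite[Theorem~1.2]{bracci+al2015}, used for Theorem~\ref{EV_embed_F}(a) — and then conjugating by the disk automorphisms $h_t(z)=(z+F_{t,1}(0))/(1+\overline{F_{t,1}(0)}\,z)$ exactly as in the proof of Theorem~\ref{EV_univalence}(b), so that every transition mapping fixes $0$ and the time-one map is still $\eta_\mu$; the reparametrization by $-\log|g_t'(0)|$ of the proof of Theorem~\ref{EV_univalence}(a) then recovers the normalization of Proposition~\ref{EV_normal_mult} for part~(b).
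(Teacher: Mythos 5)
Your overall architecture coincides with the paper's: for (b) realize $\eta_\mu$ (after rotation) as the normalized Riemann map $\psi$ onto $\Omega=\eta_\mu(\D)$ and exhibit it as the time-$T$ element of a chain of normalized Riemann maps onto a decreasing family of domains, then deduce (a) by a rotation and a time change. The problem is that the single step you flag as ``the main obstacle'' --- the existence of a kernel-continuous, decreasing family $(\Omega_t)_{t\in[0,T]}$ of simply connected subdomains of $\D$ with $0\in\Omega_t$, $\Omega_0=\D$, $\Omega_T=\Omega$ and $g_t'(0)=e^{-t}$ --- \emph{is} the theorem; everything after it is routine. Your sketch (``exhaust $\D\setminus\Omega$ by well-behaved compact pieces and pass to a Carath\'eodory limit'') does not produce it: exhaustion yields a sequence of domains, not a continuum; adding or removing compact pieces of $\D\setminus\Omega$ can destroy simple connectivity; and arranging that the conformal radius sweeps through every value $e^{-t}$, $t\in[0,T]$, continuously and monotonically is precisely the nontrivial part. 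The paper fills exactly this hole by solving Pommerenke's Problem 3 in \cite[Section 6.1]{P75}: approximate $e^{T}\eta_\mu$ by $\eta_n=e^{T+1/n}\eta_\mu(e^{-1/n}z)$, whose image is an analytic Jordan domain, map the ring domain between $\partial\eta_n(\D)$ and $\partial(e^{T+1/n}\D)$ onto a round annulus, take the preimages of the concentric circles as the interpolating boundaries, reparametrize by the derivative at $0$, and extract a limit chain via \cite[Lemma 6.2]{P75}. Your proposed bypass via an ``interior-fixed-point counterpart'' of \cite[Theorem 1.2]{bracci+al2015} is not a stated result in that reference, and for self-maps fixing $0$ such an embedding statement is essentially equivalent to what is to be proved, so as written it is circular unless you supply a precise citation or proof. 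Hence part (b) has a genuine gap.

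There is also a concrete error in your reduction of (a) to (b): the family $\tilde g_t:=g_t\circ r_\alpha$ is \emph{not} a multiplicative Loewner chain, since $\tilde g_0=r_\alpha\neq \mathrm{id}$, while Definition \ref{EV_def:evolution_family} forces $f_0=f_{00}=\mathrm{id}$; a time change cannot repair this. (True conjugation $r_\alpha^{-1}\circ g_t\circ r_\alpha$ restores $f_0=\mathrm{id}$ but changes the terminal map to $r_\alpha^{-1}\circ\eta_\mu$.) The correct fix is the paper's time-dependent rotation: in your notation, set $f_t(z):=g_{tT}\bigl(e^{i\alpha t}z\bigr)$, which satisfies $f_0=\mathrm{id}$, has transition mappings $f_{st}(z)=e^{-i\alpha s}\,g_{sT}^{-1}\bigl(g_{tT}(e^{i\alpha t}z)\bigr)$ fixing $0$, and gives $f_1=g_T\circ r_\alpha=\psi\circ r_\alpha=\eta_\mu$.
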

\begin{proof} We start with (b). 

\vspace{3mm}

(b) The function $G:=e^T\eta_\mu$ is univalent and satisfies $G(0)=0,$ $G'(0)=1$, i.e. it belongs to the class $S$,
see \cite[p. 11]{P75}. By Problem 3 in \cite[Section 6.1]{P75}, there exists a a family $(g_t)_{t\geq 0}$ of
univalent functions $g_t:\D\to\C$
with $g_t(0)=0, g_t'(0)=e^t$ and $g_s(\D)\subseteq g_t(\D)$ whenever $s\leq t$ (an increasing
Loewner chain) such that $g_0=G$ and $g_t(z)=e^tz$ for all $t\geq T.$
We define $f_t = e^{-T}\cdot g_{T-t}$ for $t\in[0,T]$ and $f_t=f_T(e^{T-t}z)$ for all $t > T$. Then $(f_t)$ is a normalized multiplicative
Loewner chain with $f_T=e^{-T} g_{0}=e^{-T}G=e^{-T} e^T\eta_\mu=\eta_\mu.$ 

For the sake of completeness, we include the solution of \cite[Section 6.1, Problem 3]{P75}, which
is similar to the proof of Theorem 6.1 in \cite{P75}. 
First, define $\eta_n:=e^{T+1/n}\eta_\mu(e^{-1/n}z)$. Then $\eta_n$ belongs to the class $S$ and
$\eta_n(\D)$ is bounded by an analytic Jordan curve contained in the disk $e^{T+1/n}\D$. 
Denote by $R_n$ the ring domain whose boundary is given by
$\partial \eta_n(\D)$ and $\partial (e^{T+1/n}\D).$ Then we find a conformal mapping
$g_n:R_n\to \{z\in\C\,|\, 1<|z|<r_n\}$ for some $r_n>1.$ For $0\leq t< \log r_n,$ denote by
$D_{t,n}$ the simply connected domain
bounded by the analytic curve $g_n^{-1}(e^t\partial \D).$ Denote by $f_{t,n}:\D\to D_{t,n}$
the conformal mapping
with $f_{t,n}(0)=0, f_{t,n}'(0)>0.$ After a reparametrization, we have
that $f_{t,n}'(0)=e^t$ for all $0\leq t < T+1/n$. For $t\geq T+1/n$, we define $f_{t,n}(z)=e^tz.$

Now it is easy to see that $(f_{t,n})_{t\geq 0}$ is an increasing Loewner chain with
$f_{0,n}=\eta_n.$ By \cite[Lemma 6.2]{P75}, the sequence of Loewner chains contains a subsequence that
converges locally uniformly in $\D$, pointwise w.r.t. $t,$ to a Loewner chain $(f_t)_{t\geq 0}.$
We have $f_0(z)=\lim_{n\to\infty} f_{0,n}(z)=\lim_{n\to\infty} e^{T+1/n}\eta_\mu(e^{-1/n}z)= e^T\eta_\mu(z)$ and
$f_{t}(z)=e^tz$ for all $t\geq T.$ 

\vspace{3mm}

(a) If $\eta_\mu$ is the identity, we can simply take the constant Loewner chain $f_t(z)=z$.\\
 If $\eta_\mu$ is not the identity, we find $\alpha\in \R$, $T> 0,$ such that $e^{i\alpha }\eta_\mu'(0)=e^{-T}.$ By (b),
$\eta_\mu(e^{i\alpha} z)=g_{T}(z)$
for a multiplicative Loewner chain $g_{t}$ with $g_{t}'(0)=e^{-t}.$
Let $$f_{t}(z)= g_{t\cdot T}(e^{-i\alpha t}z).$$
It is easy to see that $f_{t}$ is a multiplicative Loewner chain and
$f_{1}(z)=g_T(e^{-i\alpha}z)=\eta_\mu(z).$
\end{proof}
\index{univalent function|)}



\chapter[SAIPs, Markov Processes and Loewner Chains]{Additive Monotone Increment Processes, Classical Markov Processes, and Loewner Chains}\label{section_processes}

In this section we establish a bijection between SAIPs, some class of classical Markov processes and additive Loewner chains. We will encounter many unbounded operators. For two linear operators $X, Y$ on a Hilbert space with domains $\Dom(X)$ and $\Dom(Y)$ respectively, we always assume that the domain of $X+Y$ is the intersection $\Dom(X)\cap \Dom(Y)$ and the domain of $XY$ consists of all vectors $u \in \Dom(Y)$ such that $Y u \in \Dom(X)$ unless specified otherwise. By $X \subset Y$ we mean that $\Dom(X)\subset \Dom(Y)$ and $X=Y$ on $\Dom(X)$.

\section{Quantum stochastic processes}

A non-commutative or quantum stochastic process is a mapping 
$$
[0,\infty) \ni t \mapsto X_t, 
$$
where the values are linear operators on a Hilbert space with some domains. In this paper we focus on  densely defined normal operators or essentially self-adjoint operators and call them normal processes and essentially self-adjoint processes, respectively. For a classical $\C$-valued stochastic process $(Y_t)$ on a underlying probability space $(\Omega, \mathcal F, \mathbb P)$ we can associate for each $t\geq0$ the multiplication operator $X_t\colon f \mapsto Y_t f$ with $\Dom(X_t) = \{f \in L^2(\Omega,\mathcal F, \mathbb P):Y_t f \in L^2(\Omega,\mathcal F, \mathbb P)\}$, which is a densely defined normal operator. The family $(X_t)$ is a standard construction of a non-commutative stochastic process from a classical one.

We introduce an equivalence relation for normal processes. 

\begin{definition}\label{def:equiv_saip}\index{equivalence!of quantum stochastic processes}
Let $(H, \xi)$ and $(H', \xi')$ be concrete quantum probability spaces and let $(X_t)$ and $(X_t')$ be two normal processes on $(H, \xi)$ and $(H', \xi')$ respectively. Then $(X_t)$ and $(X_t')$ are equivalent if the finite dimensional distributions are equal, namely 
\begin{equation}\label{eq:equiv_saip}
\langle \xi, f_1(X_{t_1}) \cdots f_n(X_{t_n})\xi\rangle_H =\langle \xi', f_1(X_{t_1}') \cdots f_n(X_{t_n}')\xi'\rangle_{H'} 
\end{equation}
for all $n\in \N, t_1,\dots, t_n \geq0, f_1,\dots, f_n \in C_b(\C)$. For essentially self-adjoint processes the same definition is adopted by taking the closure.  
\end{definition}

\begin{remark}\label{rem:equiv_saip} For self-adjoint processes, it is enough to take $f_i$ from $C_b(\R)$ to define the finite dimensional distributions. 
Moreover, it is enough to take $f_i$ to be continuous functions on $\R$ with compact support, or even smooth functions with compact support. To see this, for a self-adjoint operator $X$ a general function $f \in C_b(\R)$ can be approximated by continuous functions $g_n$ with compact support such that $g_n(X) \to f(X)$ strongly as $n\to\infty$. Indeed, we take $h_n$ to be a continuous function such that $0 \leq h_n \leq 1$, $h_n=1$ on $[-n,n]$ and $h_n=0$ on $\R \setminus(-2n,2n)$. The functions $g_n=f h_n$ then converge to $f$ pointwise. Denoting by $E(x)$ the spectral projection of $X$ over $(-\infty, x]$, by the dominated convergence theorem one has
\begin{equation*}
\|g_n(X) u -f(X) u\|^2 = \int_\R |g_n(x)-f(x)|^2 \,{\rm d}\langle u,E(x)u\rangle \to 0
\end{equation*}
for all $u \in H$. Finally, since continuous functions with compact support are bounded and uniformly continuous, we can approximate them uniformly by smooth functions with compact support using mollifiers. 
\end{remark}


\section[From SAIPs to Loewner chains]{From additive monotone increment processes to additive Loewner chains}\label{section_additive_processes}\index{monotone increment process!additive (SAIP)|(}

We recall and extend Definition \ref{def_saip0}. 

\begin{definition}\label{def_saip}
Let $(H,\xi)$ be a concrete quantum probability space and $(X_t)_{t\ge 0}$ an essentially self-adjoint process on $H$ with $X_0=0$.  We
call $(X_t)$ an \emph{(essentially) self-adjoint additive
monotone increment process (SAIP)} if the following conditions are satisfied.
\begin{itemize}
\item[(a)] The increment $X_t-X_s$ with domain $\Dom(X_t)\cap \Dom(X_s)$ is essentially self-adjoint for every $0\le s\le t$. 
	\item[(b)] $\Dom(X_s)\cap \Dom(X_t) \cap \Dom(X_u)$ is dense in $H$ and is a core for the increment $X_u-X_s$ for every $0\le s\le t \le u$. 
	\item[(c)] The mapping $(s,t)\mapsto \mu_{st}$ is continuous w.r.t.\ weak convergence, where $\mu_{st}$ denotes the distribution of the increment $X_t-X_s$.
	\item[(d)]The tuple
    \[
(X_{t_1},X_{t_2}-X_{t_1},\ldots,X_{t_n}-X_{t_{n-1}})
\]
is \index{monotone independence}monotonically independent for all $n\in\mathbb{N}$ and all $t_1,\ldots,t_n\in\mathbb{R}$ s.t.\ $0\le t_1\le t_2\le\cdots\le t_n$.
\end{itemize}
Furthermore if $X_t-X_s$ has the same distribution as $X_{t-s}$ for all $0\leq s \leq t$ (the condition of \emph{stationary increments}), then $(X_t)_{t\geq0}$ is called a \index{monotone L\'evy process!additive}\emph{monotone L\'evy process.}
We call $(X_t)$ \emph{normalized} if $\xi\in\Dom(\overline{X_t})$,  $\langle\xi,\overline{X_t}\xi\rangle=0$ and  $\|\overline{X_t}\xi\|^2=t$ for all $t\ge 0$. 
\end{definition}

\begin{remark}
The assumption (b) is needed in the proofs of Theorem \ref{from_add_process_to_Loewner_chains} and Theorem \ref{thm:equiv_saip}. The reason for this requirement is that we want to use certain self-adjointness regarding the decomposition $X_u -X_s \supset (X_u-X_t)+(X_t-X_s)$, i.e., we have equality on the common domain $\Dom(X_s)\cap \Dom(X_t)\cap \Dom(X_u)$.
\end{remark}

We establish the following result. Note that the essential self-adjointness of $X+Y$ is a consequence if $\xi$ is assumed to be cyclic regarding $X$ and $Y$ \cite{franz07b}. 

\begin{lemma}\label{lem:mon-conv}\index{F-transform@$F$-transform}\index{monotone convolution!additive $\rhd$}\index{monotone independence}
Suppose that $(X,Y)$ is a pair of monotonically independent essentially self-adjoint operators on a concrete quantum probability space $(H,\xi)$ such that $X+Y$ is essentially self-adjoint. Then 
$F_{X+Y} = F_X \circ F_Y$. 
\end{lemma}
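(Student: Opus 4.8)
The plan is to compute the resolvent $\Phi_\xi((z-\overline{X+Y})^{-1})$ directly using the defining properties of monotone independence (conditions (i) and (ii) in Definition \ref{def-mon}) and a resolvent-expansion argument. First I would reduce to the case where $z$ lies in a region where everything converges absolutely. The key analytic point is that for $z\in\Ha$ with $\Im(z)$ large, the resolvents $(z-\overline X)^{-1}$, $(z-\overline Y)^{-1}$, $(z-\overline{X+Y})^{-1}$ are bounded operators of small norm, and one can expand $(z-\overline{X+Y})^{-1}$ as a Neumann-type series. Since $X+Y\supset X+Y$ on the common core, the resolvent identity
\[
(z-\overline{X+Y})^{-1} = (z-\overline{X})^{-1} + (z-\overline X)^{-1}\, Y\, (z-\overline{X+Y})^{-1}
\]
holds on an appropriate dense domain, which I would iterate to get a series whose terms are alternating products of $(z-\overline X)^{-1}$, $Y$, and eventually $(z-\overline Y)^{-1}$. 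The bookkeeping has to be done carefully because of unbounded operators, so this is where assumption (b) of Definition \ref{def_saip} (the common domain being a core) is used, exactly as flagged in the Remark following that definition.

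The combinatorial heart of the argument is then to apply monotone independence to the scalar quantities $\Phi_\xi$ of these alternating products. In each monomial the factors belonging to $X$ (index, say, $1$) and those belonging to $Y$ (index $2$) alternate, with $2>1$; condition (ii) lets one collapse any pattern $X\cdots Y\cdots X$ by pulling out $\Phi_\xi$ of the middle $Y$-block, and condition (i) handles the remaining factorizations. The upshot, after resumming, should be the functional equation
\[
G_{X+Y}(z) = G_X\!\bigl(F_Y(z)\bigr)\cdot\frac{1}{?}
\]
— more precisely one expects to recover $F_{X+Y}(z)=F_X(F_Y(z))$; the cleanest route is probably to show directly that $1/G_{X+Y}(z) = F_X(1/G_Y(z))$ by organizing the resolvent expansion so that the "outer" $X$-resolvents build up $G_X$ evaluated at the point $F_Y(z)$. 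Alternatively, and perhaps more robustly, I would avoid delicate series manipulations and instead verify the identity on bounded truncations: approximate $X$ and $Y$ by bounded self-adjoint operators (spectral truncation) preserving monotone independence in the limit of moments, use Muraki's original moment computation \cite[Theorem 3.1]{M00} for the bounded case where $F_{X+Y}=F_X\circ F_Y$ is classical, and then pass to the limit using the continuity of $F$-transforms under weak convergence (Lemma \ref{lemmaconvergence}) together with Lemma \ref{Julia}.

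The main obstacle I anticipate is the domain/core analysis: making sense of the resolvent expansion when $X$, $Y$, $X+Y$ are only essentially self-adjoint and possibly unbounded, and justifying that $\xi$ and the relevant vectors stay in the domains at each step so that condition (ii) of monotone independence (which is an algebraic identity $XYZ=\Phi_\xi(Y)XZ$ valid on $B(H)$) can legitimately be invoked after bounded truncation. Once the truncation is in place, the rest is the standard monotone-independence moment calculation plus a weak-convergence limit, both of which are routine given the lemmas already established. I would therefore structure the proof as: (1) reduce to bounded operators via spectral truncation, checking that monotone independence and essential self-adjointness of the sum are inherited and that distributions converge weakly; (2) invoke Muraki's bounded-case identity $F_{X+Y}=F_X\circ F_Y$; (3) pass to the limit using Lemmas \ref{Julia} and \ref{lemmaconvergence} to conclude $F_{X+Y}=F_X\circ F_Y$ in general.
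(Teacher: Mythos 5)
Your second (truncation) route is essentially the paper's proof: the paper takes bounded continuous functions $f_n$ with $f_n(0)=0$, $|f_n(x)|\leq|x|$, $f_n(x)\to x$, notes that $(f_n(\overline{X}),f_n(\overline{Y}))$ is monotonically independent so Muraki's bounded-case formula applies, and then passes to the limit. The only difference is in the limiting step: instead of weak convergence via Lemmas \ref{Julia} and \ref{lemmaconvergence}, the paper deduces strong resolvent convergence of $f_n(\overline{X})$, $f_n(\overline{Y})$ and $f_n(\overline{X})+f_n(\overline{Y})$ from strong convergence on $\Dom(\overline{X})$, $\Dom(\overline{Y})$ and on the core $\Dom(X)\cap\Dom(Y)$ of $\overline{X+Y}$ (this is exactly where the hypothesis that $X+Y$ is essentially self-adjoint enters), which gives pointwise convergence of the three $F$-transforms directly.
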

\begin{proof} 
Let $\{f_n\}$ be a sequence of bounded continuous $\R$-valued functions such that $f_n(0)=0$, $|f_n(x)| \leq |x|$ and $f_n(x) \to x$ for all $x\in\R$. The pair of bounded self-adjoint operators $(f_n(\overline{X}),f_n(\overline{Y}))$ is monotonically independent, and hence $F_{f_n(\overline{X})+f_n(\overline{Y})} =F_{f_n(\overline{X})} \circ F_{f_n(\overline{Y})}$ by Muraki's formula \cite[Theorem 3.1]{M00}. The corresponding arguments in Remark \ref{rem:equiv_saip} show that $\lim_{n\to \infty} f_n(\overline{X}) u = \overline{X} u$ for all $u \in \Dom(\overline{X})$, and hence by \cite[Theorem 1.41]{Ara18} the resolvents of   $f_n(\overline{X})$ converge strongly to that of $\overline{X}$. This implies that $F_{f_n(\overline{X})}(z) \to F_X(z)$ as $n\to\infty$ for every $z\in \C\setminus\R$.  
Similarly, $F_{f_n(\overline{Y})}(z) \to F_Y(z)$. 

A similar argument works for $X+Y$. Namely, for $u \in \Dom(X)\cap \Dom(Y)$ it follows that
$$
\|(f_n(\overline{X}) +f_n(\overline{Y}) - \overline{(X+Y)} )u\| \leq \|(f_n(\overline{X}) - \overline{X})u\| + \|(f_n(\overline{Y}) - \overline{Y})u\| \to 0
$$ 
as $n\to\infty$. Therefore, the resolvents of  $f_n(\overline{X})+f_n(\overline{Y})$ converge strongly to that of $\overline{X+Y}$, and hence 
\[F_{f_n(\overline{X})+f_n(\overline{Y})} (z) \to F_{X+Y}(z).\] 
\end{proof}

\begin{theorem}\label{from_add_process_to_Loewner_chains}
Let $(X_t)_{t\ge 0}$ be a SAIP in a concrete quantum probability space $(H,\xi)$. For $0\le s\le t$, let $F_{st}$ be the $F$-transform of the increment $X_t-X_s$, i.e.
\[
\frac{1}{F_{st}(z)} = \left\langle \xi, \left\{z-\overline{(X_t-X_s)}\right\}^{-1}\xi \right\rangle, 
z\in \Ha,
\]
and let $F_t=F_{0t}$. Then $(F_{t})_{t\geq 0}$ is an \index{Loewner chain!additive}additive Loewner chain with transition mappings
$(F_{st})_{0\leq s\leq t}.$
\end{theorem}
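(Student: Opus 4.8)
The plan is to verify the three defining properties of an additive Loewner chain for the family $(F_{st})_{0\le s\le t}$, using the monotone independence of the increments together with Lemma \ref{lem:mon-conv}. Recall that an additive Loewner chain (Definition \ref{EV_def:evolution_family}(2)) is a family of holomorphic self-maps $F_{st}\colon\Ha\to\Ha$ with $F_{ss}=\operatorname{id}$, the semigroup (reverse evolution family) identity $F_{su}=F_{st}\circ F_{tu}$, continuity in $(s,t)$, and each $F_{st}=F_{\mu_{st}}$ for a probability measure $\mu_{st}$ on $\R$; we must check all of these.

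First I would observe that by hypothesis (a) in Definition \ref{def_saip}, each increment $X_t-X_s$ is essentially self-adjoint, so it has a well-defined distribution $\mu_{st}$ in the sense of Definition \ref{def_random_variable}, and $F_{st}=F_{\mu_{st}}$ is indeed an $F$-transform of a probability measure; this gives the normalization $\lim_{y\to\infty}F_{st}(iy)/(iy)=1$ via Lemma \ref{Julia}. The condition $F_{ss}=\operatorname{id}$ is immediate since $X_s-X_s=0$ has distribution $\delta_0$. The continuity of $(s,t)\mapsto F_{st}$ with respect to locally uniform convergence is exactly condition (c) in Definition \ref{def_saip}, rephrased via Lemma \ref{lemmaconvergence}, which says weak convergence of $\mu_{st}$ is equivalent to locally uniform convergence of $F_{\mu_{st}}$.

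The heart of the matter is the composition identity $F_{su}=F_{st}\circ F_{tu}$ for $0\le s\le t\le u$. Here I would use the algebraic decomposition $X_u-X_s \supset (X_u-X_t)+(X_t-X_s)$, which holds as an equality on the common core $\Dom(X_s)\cap\Dom(X_t)\cap\Dom(X_u)$ guaranteed by hypothesis (b). By hypothesis (d), the pair $(X_t-X_s,\,X_u-X_t)$ is monotonically independent — but one must be careful about the order: monotone independence is not symmetric, and the tuple $(X_{t_1},X_{t_2}-X_{t_1},\dots)$ is monotonically independent in that specific order, so the pair $(X_t-X_s, X_u-X_t)$ is monotonically independent as an \emph{ordered} pair, which is what is needed for the formula $F_{X+Y}=F_X\circ F_Y$ of Lemma \ref{lem:mon-conv} with $X=X_t-X_s$ and $Y=X_u-X_t$. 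Applying that lemma, with the essential self-adjointness of the sum supplied by (b) (the common domain being a core), yields $F_{su}=F_{X_u-X_s}=F_{X_t-X_s}\circ F_{X_u-X_t}=F_{st}\circ F_{tu}$.

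The main obstacle I anticipate is the careful bookkeeping of operator domains and the invocation of Lemma \ref{lem:mon-conv}: one needs that $(X_t-X_s)+(X_u-X_t)$, defined on $\Dom(X_t-X_s)\cap\Dom(X_u-X_t)$, is essentially self-adjoint and that its closure coincides with $\overline{X_u-X_s}$. This is precisely why condition (b) is imposed — it ensures $\Dom(X_s)\cap\Dom(X_t)\cap\Dom(X_u)$ is a common core, so the two a priori different essentially self-adjoint operators $X_u-X_s$ and $(X_u-X_t)+(X_t-X_s)$ have the same closure, and Lemma \ref{lem:mon-conv} applies to $(X_t-X_s,X_u-X_t)$ to give $F_{(X_t-X_s)+(X_u-X_t)} = F_{st}\circ F_{tu}$. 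Once this identification is made, everything else is routine. I would also remark that the case $s=0$ recovers that $(F_t)=(F_{0t})$ is the associated Loewner chain with transition maps $(F_{st})$, completing the statement.
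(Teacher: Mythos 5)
Your proposal is correct and follows essentially the same route as the paper's proof: the composition identity $F_{su}=F_{st}\circ F_{tu}$ comes from the decomposition $X_u-X_s=(X_t-X_s)+(X_u-X_t)$ on the common core $\Dom(X_s)\cap\Dom(X_t)\cap\Dom(X_u)$ supplied by hypothesis (b), combined with Lemma \ref{lem:mon-conv} applied to the correctly ordered pair $(X_t-X_s,\,X_u-X_t)$, while $F_{ss}=\mathrm{id}$ and the continuity of $(s,t)\mapsto F_{st}$ follow from (c) and Lemma \ref{lemmaconvergence}, exactly as in the paper. No gaps.
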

\begin{proof} Let $0 \leq s \leq t \leq u$ and let $X_{st} = X_t-X_s$. Let $D=\Dom(X_s)\cap \Dom(X_t)\cap \Dom(X_u)$. The increments satisfy $X_{su}|_D= X_{st}+X_{tu}$ and, by definition, $\overline{X_{su}|_D}=\overline{X_{su}}$ is self-adjoint. Lemma \ref{lem:mon-conv} shows that $F_{su}(z) = F_{st}(F_{tu}(z))$. 
As $X_{ss}=0$, we have $F_{ss}(z)\equiv z$ and the weak continuity of
 $(s,t)\mapsto \mu_{st}$ implies continuity of $(s,t)\mapsto F_{st}$
by Lemma \ref{lemmaconvergence} (3).
\end{proof}

For two SAIPs $(X_t)$ and $(X_t')$ consisting of bounded operators, it is easy to show that they are equivalent if and only if their distributions of increments coincide. This is because we can take polynomials $f_i$ in \eqref{eq:equiv_saip}. For example, for $t \geq s \geq0$ we can proceed as 
$$
\langle\xi, X_s X_t X_s \xi \rangle = \langle\xi, X_s (X_t-X_s + X_s) X_s \xi \rangle= \langle\xi, (X_t-X_s) \xi \rangle\langle\xi, X_s^2 \xi \rangle +\langle\xi, X_s^3 \xi \rangle, 
$$
and the RHS can be computed via the distributions of increments only. For general unbounded operators, the above arguments are not valid and so we need a trick.  

\begin{theorem}\label{thm:equiv_saip}\index{equivalence!of quantum stochastic processes}  Two SAIPs are equivalent if and only if the distributions of increments coincide. 
\end{theorem}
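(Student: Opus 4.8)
The plan is to treat the two implications separately, the necessity being short and the sufficiency being the substantial part.

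\emph{Necessity.} Suppose $(X_t)$ and $(X_t')$ are equivalent. Taking $n=1$ in \eqref{eq:equiv_saip} and letting $f_1$ range over $C_b(\R)$ shows that $\overline{X_t}$ and $\overline{X_t'}$ have the same distribution, i.e.\ $\mu_{0t}=\mu_{0t}'$ for all $t\ge 0$. By Theorem \ref{from_add_process_to_Loewner_chains}, $(F_{0t})_{t\ge0}$ and $(F_{0t}')_{t\ge0}$ are additive Loewner chains with transition mappings $F_{st}=F_{\mu_{st}}$ and $F_{st}'=F_{\mu_{st}'}$, and $F_{0t}=F_{0t}'$ for every $t$. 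Since $F_{0t}=F_{0s}\circ F_{st}$ with $F_{0s}$ univalent (Theorem \ref{EV_univalence}) and $F_{0t}(\C^+)=F_{0s}(F_{st}(\C^+))\subseteq F_{0s}(\C^+)$, one may write $F_{st}=F_{0s}^{-1}\circ F_{0t}$, and likewise $F_{st}'=F_{0s}^{-1}\circ F_{0t}$; hence $F_{st}=F_{st}'$ and therefore $\mu_{st}=\mu_{st}'$ for all $0\le s\le t$, because an $F$-transform determines its measure.

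\emph{Sufficiency.} It suffices to prove that the finite-dimensional distributions $\langle\xi,f_1(\overline{X_{t_1}})\cdots f_n(\overline{X_{t_n}})\xi\rangle$ of a SAIP depend only on the family $\{\mu_{st}\}_{0\le s\le t}$; then two SAIPs with coinciding increment distributions have coinciding finite-dimensional distributions, i.e.\ are equivalent. Fix $0=t_0\le t_1\le\cdots\le t_n$; by Remark \ref{rem:equiv_saip} we may assume $f_1,\dots,f_n\in C_c(\R)$. Put $Y_j=X_{t_j}-X_{t_{j-1}}$, so that $(Y_1,\dots,Y_n)$ is monotonically independent by condition (d) of Definition \ref{def_saip}. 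Choose bounded continuous $\phi_m$ with $\phi_m(0)=0$, $|\phi_m(x)|\le|x|$ and $\phi_m(x)\to x$ for all $x\in\R$, and set $\tilde Y_j^{(m)}:=\phi_m(\overline{Y_j})$ and $\tilde X_{t_k}^{(m)}:=\tilde Y_1^{(m)}+\cdots+\tilde Y_k^{(m)}$. Since $\phi_m(0)=0$, the operator $\tilde Y_j^{(m)}$ lies in the $*$-subalgebra $\mathcal A_j$ associated, as in Definition \ref{def-mon}, to the increment $Y_j$; hence $(\tilde Y_1^{(m)},\dots,\tilde Y_n^{(m)})$ is a monotonically independent tuple of \emph{bounded} self-adjoint operators. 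As the $\tilde X_{t_k}^{(m)}$ are bounded, each $f_k$ can be approximated uniformly on the spectrum of $\tilde X_{t_k}^{(m)}$ by polynomials; expanding these polynomials in the $\tilde Y_j^{(m)}$ and reducing the resulting words by the monotone-independence relations (i)--(ii) of Definition \ref{def-mon} reduces all mixed moments to single-variable ones, so that $\langle\xi,f_1(\tilde X_{t_1}^{(m)})\cdots f_n(\tilde X_{t_n}^{(m)})\xi\rangle$ is determined by the moments $\langle\xi,(\tilde Y_j^{(m)})^{\ell}\xi\rangle=\int_\R\phi_m(x)^{\ell}\,\mu_{t_{j-1}t_j}(\mathrm dx)$ alone, hence by $\{\mu_{st}\}$ (and $m$).

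It then remains to let $m\to\infty$. Using condition (b) of Definition \ref{def_saip}, applied to the relevant triples among $t_0,\dots,t_n$ and intersected, one checks that $D:=\bigcap_{j=0}^n\Dom(X_{t_j})$ is dense, is contained in every $\Dom(\overline{Y_j})$, satisfies $\overline{X_{t_k}}|_D=\sum_{j\le k}\overline{Y_j}|_D$, and is a core for each $\overline{X_{t_k}}$. Since $\tilde Y_j^{(m)}u\to\overline{Y_j}u$ for every $u\in\Dom(\overline{Y_j})$, we get $\tilde X_{t_k}^{(m)}u\to\overline{X_{t_k}}u$ for all $u\in D$; as the $\tilde X_{t_k}^{(m)}$ are self-adjoint and $D$ is a core, it follows that $\tilde X_{t_k}^{(m)}\to\overline{X_{t_k}}$ in the strong resolvent sense, whence $f_k(\tilde X_{t_k}^{(m)})\to f_k(\overline{X_{t_k}})$ strongly (recall $f_k\in C_c(\R)$). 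Passing to the limit in the previous paragraph shows $\langle\xi,f_1(\overline{X_{t_1}})\cdots f_n(\overline{X_{t_n}})\xi\rangle$ is determined by $\{\mu_{st}\}$, which proves the claim and hence the theorem. I expect the main obstacle to be exactly this domain bookkeeping, namely producing a \emph{single} subspace $D$ that is simultaneously a core for all the $\overline{X_{t_k}}$ and on which the increment decomposition holds; once this is in place the strong-resolvent-convergence and polynomial-approximation steps, as well as the monotone-independence reduction in the bounded case, are routine.
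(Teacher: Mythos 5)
Your necessity argument is fine and is essentially the paper's: take $n=1$ to identify $\mu_{0t}$, then use Theorem \ref{from_add_process_to_Loewner_chains} together with univalence (Theorem \ref{EV_univalence}) to recover $F_{st}=F_{0s}^{-1}\circ F_{0t}$ and hence all increment distributions. The problem lies in the sufficiency direction, at exactly the point you defer as ``domain bookkeeping.'' You assert that $D:=\bigcap_{j=0}^n\Dom(X_{t_j})$ is dense and is a core for every $\overline{X_{t_k}}$, ``using condition (b) of Definition \ref{def_saip} applied to the relevant triples and intersected.'' This does not follow. Condition (b) only controls \emph{triple} intersections $\Dom(X_s)\cap\Dom(X_t)\cap\Dom(X_u)$, and it makes such a triple a core for the increment $X_u-X_s$, not for $X_u$ itself (the core statement for $X_{t_k}$ only comes from the triple with $s=0$, i.e.\ from $\Dom(X_{t})\cap\Dom(X_{t_k})$ for a single intermediate time $t$). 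For $n\geq 4$ distinct times the axioms say nothing about the $n$-fold intersection: intersections of dense domains of unbounded self-adjoint operators need not be dense at all, let alone form a common core on which the telescoping identity $\overline{X_{t_k}}|_D=\sum_{j\le k}\overline{Y_j}|_D$ holds. Without this, your strong-resolvent-convergence step $\tilde X^{(m)}_{t_k}\to\overline{X_{t_k}}$ has no foundation, and the whole passage from the (correct) bounded truncated computation to the unbounded process collapses. So the gap is not a routine verification; it is the central difficulty of the theorem for unbounded SAIPs.

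The paper's proof is built precisely to sidestep this. Instead of truncating the increments, it works with the bounded unitaries $e^{iz\overline{X_t}}$ and uses Trotter's product formula, $e^{iz\overline{X_t}}=\operatorname{s-lim}_N\bigl(e^{iz\overline{(X_t-X_s)}/N}e^{iz\overline{X_s}/N}\bigr)^N$, which only requires the identity $\overline{X_t}=\overline{\overline{X_t-X_s}+\overline{X_s}}$; this identity is deduced from the pairwise/triple assumptions (a), (b) alone. The resulting products are polynomials in the operators $e^{iz\overline{Y_j}}-I$, which lie in the algebras $\mathcal A_j$ of Definition \ref{def-mon}, so monotone independence factorizes the vacuum expectations in terms of increment distributions; finally one passes from exponentials to general $f_i\in C_b(\R)$ via Fourier inversion for rapidly decreasing functions and Remark \ref{rem:equiv_saip}. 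If you want to keep your truncation scheme, you would either have to add the density/core property of $n$-fold domain intersections as an extra hypothesis (changing the theorem) or prove it from Definition \ref{def_saip}, which is not possible in general; otherwise you should replace that step by the Trotter-formula argument.
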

\begin{proof} We take SAIPs $(X_t)$ on $(H,\xi)$ and $(X_t')$ on $(H', \xi')$. 

The only if part: let $F_{st}$ and $H_{st}$ be the reciprocal Cauchy transforms of $X_t-X_s$ and $X_t'-X_s'$ respectively. We take $n=1$ and $f_1(x) = 1/(z-x)$ in \eqref{eq:equiv_saip} to get $F_{0t}=H_{0t}$. By Theorem \ref{from_add_process_to_Loewner_chains} we obtain $F_{st}=F_{0s}^{-1}\circ F_{0t} = H_{0s}^{-1}\circ H_{0t}=H_{st}$, and hence the distributions of $X_t-X_s$ and $X_t'-X_s'$ are equal. 

For the if part, 
we use Trotter's product formula (see \cite{Tro59} or \cite{Che68} for proofs, and \cite[Theorem VIII.31]{RS80} for further information)
$$
e^{i z \overline{X_t}} = e^{iz \overline{\overline{X_t-X_s} +  \overline{X_s}}} = \text{s-}\lim_{N\to\infty} (e^{iz (\overline{X_t-X_s})/N}e^{iz \overline{X_s}/N})^N, \qquad z \in \R, 0 \leq s \leq t. 
$$
The identity $\overline{X_t} =  \overline{\overline{X_t-X_s} +  \overline{X_s}}$ is used above, and it can be proved as follows.  
Since $X_t|_{\Dom(X_t)\cap \Dom(X_s)}$ is essentially self-adjoint by assumption, $\overline{X_t-X_s}+\overline{X_s}$ is also essentially self-adjoint because the range of $\overline{X_t-X_s}+\overline{X_s} \pm i$ contains that of $X_t|_{\Dom(X_t)\cap \Dom(X_s)} \pm i$ and the latter is dense in $H$; see Corollary to \cite[Theorem VIII.3]{RS80}. By the uniqueness of the self-adjoint extension of $X_t|_{\Dom(X_t)\cap \Dom(X_s)}$, we conclude that $\overline{X_t} =\overline{\overline{X_t-X_s} +  \overline{X_s}}=  \overline{X_t|_{\Dom(X_t)\cap \Dom(X_s)}}$. 

 For example, if $t_1 \leq  t_3 \leq t_2$, then we have 
\begin{align*}
&\langle \xi, e^{i z_1 \overline{X_{t_1}}}  e^{i z_2 \overline{X_{t_2}}} e^{iz_3 \overline{X_{t_3}}}\xi\rangle \\
&\qquad= \lim_{N\to\infty} \langle \xi, e^{i z_1 \overline{X_{t_1} }}  (e^{i z_2 (\overline{X_{t_2} -X_{t_1}})/N} e^{iz_2 \overline{X_{t_1}}/N})^N (e^{iz_3 (\overline{X_{t_3}-X_{t_2}})/N}e^{iz_3 \overline{X_{t_2}}/N})^N\xi\rangle. 
\end{align*}
The product of operators on the RHS can be written as a polynomial of the three bounded operators $e^{iz_3 \overline{(X_{t_3}-X_{t_2})}/N} -I, e^{iz_2 \overline{(X_{t_2}-X_{t_1})}/N} -I, e^{iz_1 \overline{X_{t_1}}} -I$. Then we can use the monotone independence (since the function $e^{izx}-1$ vanishes at $0$) to factorize the inner product, and then compute the value in terms of the distributions of the increments $X_{t_i}-X_{t_{i-1}},i=2,3$ and of $X_{t_1}$. A similar idea can be used for the general case and thus for two SAIPs $(X_t)$ and $(X_t')$ with the same distributions of increments the equation \eqref{eq:equiv_saip} holds for all functions $f_i$ of the form $f_i(x)=e^{i z_i x}, z_i \in \R$. By Remark \ref{rem:equiv_saip} it suffices to show \eqref{eq:equiv_saip} for rapidly decreasing functions, namely $C^\infty$ functions which and whose derivatives decay at infinity faster than any polynomial.  A rapidly decreasing function $f$ can be written as the inverse Fourier transform of a rapidly decreasing function, and so by Fubini's theorem
\begin{align*}
\langle u, f(\overline{X_{t}})v\rangle 
&= \int_\R f(x) \,{\rm d}\langle u,E_t(x)v\rangle 
= \int_\R \left(\int_\R e^{i z x} \hat f(z) \,{\rm d}z\right) {\rm d}\langle u,E_t(x)v\rangle \\
&= \int_\R \langle u,e^{i z \overline{X_{t}}}v\rangle \hat f(z) \,{\rm d }z,  
\end{align*}
for all $t\geq0$ and $u,v \in H$, where $\hat f$ is the Fourier transform of $f$ and $E_t(x)$ is the spectral projection of $\overline{X_t}$ over $(-\infty,x]$. Iterating this we obtain 
$$
\langle \xi, f_1(\overline{X_{t_1}}) \cdots f_n(\overline{X_{t_n}})\xi\rangle 
= \int_{\R^n} \langle \xi,e^{i z_1 \overline{X_{t_1}}}\cdots e^{i z_n \overline{X_{t_n}}}\xi\rangle \hat f(z_1) \cdots \hat f_n(z_n) \,{\rm d}z_1 \cdots {\rm d}z_n 
$$
for all rapidly decreasing functions $f_i$ on $\R$. Thus \eqref{eq:equiv_saip} holds for all functions $f_i \in C_b(\R)$. 
\end{proof}

\index{monotone increment process!additive (SAIP)|)}

\section{Markov processes}\index{Markov process|(}
First we give some basic concepts on Markov processes. This section is based on \cite{RY99} and \cite{kallenberg02}. 
In this section $S$ denotes a locally compact space with countable basis, and $\mathcal S$ denotes the Borel $\sigma$-field.

A \emph{probability kernel} $k$ on $(S,\mathcal{S})$ is a map $k:S\times\mathcal{S}\to[0,1]$ such that
\begin{itemize}
\item[(i)]
$k(x,\,\cdot\,):\mathcal{S}\ni B \mapsto k(x,B)$ is a probability measure for each $x\in S$;
\item[(ii)]
$k(\,\cdot\,,B):S\ni x\mapsto k(x,B)$ is a measurable function for each $B\in\mathcal{S}$.
\end{itemize}
For two probability kernels $k$ and $l$ we can define its composition
$$
(k\star l)(x, B) = \int_S k(x,{\rm d}y) l(y,B)\qquad \mbox{ for }x\in S, B\in\mathcal{S}.
$$

A family $(k_{st})_{0\le s\le t}$ of probability kernels is called \emph{transition kernels} if it satisfies 
\begin{equation}\label{eq:CK}
k_{su} = k_{st}\star k_{tu} ~~\text{and}~~k_{ss}(x,\,\cdot\,)=\delta_x(\cdot)
\end{equation}
for all $0\le s \le t\le u$ and $x\in S$. The former relation is called the \index{Chapman-Kolmogorov relation}Chapman-Kolmogorov relation. It is similar to the compositional relation satisfied by the \index{transition mappings}transition mappings of a Loewner chain (see Definition \ref{EV_def:evolution_family}). Indeed, the Chapman-Kolmogorov relation is a crucial ingredient for establishing a connection to Loewner chains (see Theorem \ref{from_add_process_to_Loewner_chains}). 

\begin{definition}
Let $(k_{st})_{0\le s\le t}$ be a family of transition kernels. A stochastic process $(M_t)_{t\geq0}$  on $(S,\mathcal{S})$ adapted to a filtration $(\mathcal F_t)$ is called a \emph{Markov process with transition kernels $(k_{st})_{0\leq s \leq t}$} if for each $0\leq s\leq t$ and $B \in \mathcal S$ we have 
\begin{equation}\label{eq:transition}
\mathbb P[M_t\in B|\mathcal F_s] = k_{st}(M_s,B)~~a.s. 
\end{equation} 
The distribution $\mathbb P\circ M_0^{-1}$ on $(S,\mathcal{S})$ is called the \emph{initial distribution}. When we simply say a \emph{Markov process}, it is a Markov process with some transition kernels and some filtration. A Markov process is said to be \emph{stationary} if its transition kernels satisfy $k_{st}=k_{0,t-s}$. Then we simply denote $k_{0t}$ by $k_t$ and call $(k_t)_{t\geq0}$ the transition kernels as well. In this case the Chapman-Kolmogorov relation reads $k_s \star k_t = k_{s+t}$ for $s,t\geq0$.
\end{definition}

The equation \eqref{eq:transition} is called the \emph{Markov property}. It is equivalent to 
\begin{equation}\label{eq:Markov2}
\mathbb E[f(M_t)|\mathcal F_s] = \int_{S} f(x)k_{st}(M_s,{\rm d}x)~~a.s.
\end{equation}
for all bounded measurable functions $f\colon S \to \C$. 

It is known that for a distribution $\mu$ on $(S,\mathcal{S})$ and a family of transition kernels $(k_{st})_{0\le s\le t}$ on $(S,\mathcal{S})$ satisfying \eqref{eq:CK}, there exists a Markov process $(M_t)_{t\ge 0}$ with initial distribution $\mu$ and $(k_{st})_{0\le s\le t}$ as transition kernels. Moreover, the Markov process is unique up to finite dimensional distributions, namely, with respect to the following equivalence. 
\begin{definition}\label{def:Markov_equiv}\index{equivalence!of classical stochastic processes}
Two stochastic processes $(M_t)_{t\geq0}$ and $(N_t)_{t\geq0}$ are equivalent if 
\begin{equation}\label{eq:finite-dim}
\mathbb P[(M_{t_1}, \dots,M_{t_n})\in B]=\mathbb P[(N_{t_1}, \dots, N_{t_n})\in B] 
\end{equation}
for all times $t_1, \dots, t_n \geq0$, all $n\in \N$ and all $B \in \mathcal S^n$. 
\end{definition}
Suppose that two Markov processes $(M_t)_{t\geq0}$ and $(N_t)_{t\geq0}$ have the same transition kernels $(k_{st})_{0\le s\le t}$ and initial distribution $\mu$. Then they are equivalent, and actually the above common value \eqref{eq:finite-dim} is given by 
$$
\int_{S^{n+1}} 1_{B}(x_1,\dots, x_n) \mu({\rm d}x_0)k_{0t_1}(x_0,{\rm d}x_1) \cdots k_{t_{n-1}t_n}(x_{n-1},{\rm d}x_n).  
$$
In this paper, we fix the initial distribution to be a delta measure. Then an equivalence class of Markov processes is determined by $(k_{st})$, on which we mainly focus.  

\index{Markov process|)}


\section[From Loewner chains to Markov processes]{From additive Loewner chains to $\rhd$-homogeneous Markov processes}\index{Markov process!$\rhd$-homogeneous|(}

In this subsection we will construct a classical real-valued Markov process from a given additive Loewner chain. The Markov processes obtained in this way have a special space-homogeneity property.  Our construction is the continuous-time version of the Markov chains constructed in \cite{letac+malouche00}.

\begin{definition}\label{def_equi_Markov}
A probability kernel $k$ on $\R$ is called \emph{$\rhd$-homogeneous} if it satisfies
\[
\delta_x\rhd k(y,\,\cdot\,) = k(x+y,\,\cdot\,)
\]
for all $x,y\in\mathbb{R}$. A real-valued Markov process $(M_t)_{t\geq0}$ is called a \emph{$\rhd$-homogeneous Markov process} if its transition kernels $(k_{st})_{0\le s\le t}$
 satisfy the following two conditions:
\begin{itemize}
\item[(a)] The mapping $(s,t)\mapsto k_{st}(x,\cdot)$ is continuous w.r.t.\ weak convergence for all $x\in\R$.
\item[(b)] The kernel $k_{st}$ is \emph{$\rhd$-homogeneous} for all $0\le s\le t$.
\end{itemize}
Later in Section \ref{sec:Feller_additive} we prove that any stationary $\rhd$-homogeneous Markov process has the Feller property, so that it has a cadlag (right continuous with left limits) modification. 
\end{definition}
\begin{remark}
The standard notion of homogeneity is 
$$
\delta_x \ast k(y,\,\cdot\,) = k(x+y,\,\cdot\,), \qquad x,y \in \R,  
$$
where $\ast$ is the classical convolution of probability measures. 
If the transition kernels of a Markov process satisfy the homogeneity, then the process has independent increments (see \cite[Proposition 8.5]{kallenberg02}). Our $\rhd$-homogeneity is a variant of homogeneity, which corresponds to monotonically independent increments of the corresponding non-commutative stochastic process (see Section \ref{subsec:Markov-SAIP}). 
\end{remark}

\begin{theorem}\label{theo-Markov-proc}
Let $(F_t)_{t\geq0}$ be an \index{Loewner chain!additive}additive Loewner chain with \index{transition mappings}transition mappings $(F_{st})_{0\le s\le t}$. Then there exists a $\rhd$-homogeneous Markov process $(M_t)_{t\ge 0}$ whose transition kernels $(k_{st})_{0\le s\le t}$ are given by
\begin{equation}\label{eq:additive_equivariance}
\int_\mathbb{R} \frac{1}{z-y}\,k_{st}(x,{\rm d}y) = \frac{1}{F_{st}(z)-x}
\end{equation}
for $0\le s\le t$, $x\in\mathbb{R}$ and $z\in\Ha$.
\end{theorem}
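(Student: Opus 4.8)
The plan is to define the transition kernels explicitly from the Loewner chain, check that they form a family of transition kernels in the sense of \eqref{eq:CK}, verify the two properties of Definition \ref{def_equi_Markov}, and then invoke the general existence theorem for Markov processes quoted above.

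Since $(F_t)$ is an additive Loewner chain, each transition mapping equals $F_{st}=F_{\mu_{st}}$ for a probability measure $\mu_{st}$ on $\R$ (Definition \ref{EV_def:evolution_family}). For $x\in\R$ I would set $k_{st}(x,\cdot):=\delta_x\rhd\mu_{st}$. Because $F_{\delta_x}(w)=w-x$ and $F_{\mu\rhd\nu}=F_\mu\circ F_\nu$, this gives $F_{k_{st}(x,\cdot)}(z)=F_{st}(z)-x$, and taking reciprocals yields exactly \eqref{eq:additive_equivariance}; by the Stieltjes inversion formula $k_{st}(x,\cdot)$ is the \emph{only} probability measure with that Cauchy transform. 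To see that $k_{st}$ is a probability kernel, note that $x\mapsto k_{st}(x,\cdot)$ is weakly continuous: if $x_n\to x$ then $F_{st}(\cdot)-x_n\to F_{st}(\cdot)-x$ locally uniformly on $\Ha$, hence $k_{st}(x_n,\cdot)\to k_{st}(x,\cdot)$ weakly by Lemma \ref{lemmaconvergence}; measurability of $x\mapsto k_{st}(x,B)$ for Borel $B$ then follows by a monotone-class argument starting from open sets.

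For the Chapman--Kolmogorov relation $k_{su}=k_{st}\star k_{tu}$, fix $x\in\R$ and $z\in\Ha$. Since $F_{tu}$ is a non-constant Pick function it maps $\Ha$ into $\Ha$, so $F_{tu}(z)\in\Ha$, and since $w\mapsto 1/(z-w)$ is bounded on $\R$, Fubini and two applications of \eqref{eq:additive_equivariance} give
\[
\int_\R\frac{1}{z-w}\,(k_{st}\star k_{tu})(x,{\rm d}w)
=\int_\R\frac{1}{F_{tu}(z)-y}\,k_{st}(x,{\rm d}y)
=\frac{1}{F_{st}(F_{tu}(z))-x}
=\frac{1}{F_{su}(z)-x},
\]
the last step using the cocycle identity $F_{su}=F_{st}\circ F_{tu}$. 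As the right-hand side is the Cauchy transform of $k_{su}(x,\cdot)$, uniqueness gives $(k_{st}\star k_{tu})(x,\cdot)=k_{su}(x,\cdot)$. Moreover $F_{ss}=\mathrm{id}$ forces $\mu_{ss}=\delta_0$, hence $k_{ss}(x,\cdot)=\delta_x$; thus $(k_{st})_{0\le s\le t}$ is a family of transition kernels.

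Finally, $\rhd$-homogeneity of each $k_{st}$ follows from associativity of $\rhd$ (immediate from associativity of composition of $F$-transforms, using that a measure is determined by its $F$-transform) together with $\delta_x\rhd\delta_y=\delta_{x+y}$, since $\delta_x\rhd k_{st}(y,\cdot)=(\delta_x\rhd\delta_y)\rhd\mu_{st}=\delta_{x+y}\rhd\mu_{st}=k_{st}(x+y,\cdot)$; and weak continuity of $(s,t)\mapsto k_{st}(x,\cdot)$ follows from property (c) of Loewner chains together with Lemma \ref{lemmaconvergence}, exactly as in the continuity-in-$x$ argument above. Applying the existence theorem for Markov processes with initial distribution $\delta_0$ (so that $M_0=0$) produces a Markov process $(M_t)_{t\ge0}$ with transition kernels $(k_{st})$, which is $\rhd$-homogeneous by what precedes. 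The only genuinely delicate points are recognizing $1/(F_{st}(z)-x)$ as a Cauchy transform and the measurability of $x\mapsto k_{st}(x,B)$; both are handled above, respectively by the identity $F_{\delta_x\rhd\mu_{st}}=F_{st}-x$ and by weak continuity in $x$.
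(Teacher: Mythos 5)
Your proposal is correct and follows essentially the same route as the paper's proof: you obtain the kernels from the fact that $1/(F_{st}(\cdot)-x)$ is the Cauchy transform of a probability measure (equivalently, $k_{st}(x,\cdot)=\delta_x\rhd\mu_{st}$), verify the Chapman--Kolmogorov relation from $F_{su}=F_{st}\circ F_{tu}$ together with uniqueness of Cauchy transforms, and get $\rhd$-homogeneity and joint weak continuity via $F$-transforms and Lemma \ref{lemmaconvergence}, exactly as the paper does. The only cosmetic difference is your measurability argument (weak continuity in $x$, hence lower semicontinuity of $x\mapsto k_{st}(x,B)$ for open $B$, then a monotone-class step), where the paper instead applies the Stieltjes inversion formulas to open intervals and concludes with Dynkin's $\pi$-$\lambda$ theorem; both are equally valid.
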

\begin{proof} Firstly, since $z\mapsto 1/(F_{st}(z)-x)$ maps $\C^+$ into $-\C^+$ and $iy/(F_{st}(iy)-x) \to 1$ as $y\to\infty$, by Lemma \ref{Julia} there exists a probability measure $k_{st}(x,\,\cdot\,)$ such that \eqref{eq:additive_equivariance} holds for each $(s,t,x)$.  Since $F_{ss}(z)=z$, we have $k_{ss}(x,\cdot)=\delta_x(\cdot)$. 

Concerning the measurability of $x\mapsto k_{st}(x,B)$, by the inversion formulas \eqref{Stieltjes2} and \eqref{eq:atom2}, we have
\begin{align*}
k_{st}(x,\{\alpha\}) &= \lim_{\epsilon\downarrow0} \frac{i\epsilon}{F_{st}(\alpha+i\epsilon)-x}, \\
\frac{1}{2}k_{st}(x,\{\alpha\})+ \frac{1}{2}k_{st}(x,\{\beta\})+k_{st}(x,(\alpha,\beta)) &= -\frac{1}{\pi}\lim_{\epsilon\downarrow0}\int_\alpha^\beta \frac{1}{F_{st}(y+i\epsilon)-x}\,{\rm d}y, 
\end{align*}
which imply the measurability of $x\mapsto k_{st}(x,B)$ for open intervals $B$. The remaining arguments are standard in probability theory: the set $\mathcal B$ of all Borel subsets $B \subset \R$ such that $x\mapsto k_{st}(x,B)$ is measurable forms a Dynkin system by monotone convergence, and the set of open intervals is closed under the intersection and is contained in $\mathcal B$, and hence by Dynkin's $\pi-\lambda$ theorem $\mathcal B$ coincides with the set of all Borel subsets of $\R$.

For the existence of a Markov process it suffices to prove that $(k_{st})$ satisfies the Chapman-Kolmogorov relation. For $0 \leq s \leq t \leq u$, by using \eqref{eq:additive_equivariance} we get
\begin{align*}
&\int_{\R_w} \frac{1}{z-w} \int_{\R_y} k_{st}(x,{\rm d}y) k_{tu}(y,{\rm d}w)
= \int_{\R} \frac{1}{F_{tu}(z)-y}\, k_{st}(x,{\rm d}y)\\ &= \frac{1}{F_{st}(F_{tu}(z))-x}
=  \frac{1}{F_{su}(z)-x}
=  \int_{\R} \frac{1}{z-w}\, k_{su}(x,{\rm d}w),
\end{align*}
where the subscripts in $\R_y$ and $\R_w$ indicate the variable to be integrated. By the Stieltjes-Perron inversion \eqref{Stieltjes}--\eqref{eq:atom} we obtain the Chapman-Kolmogorov relation. 

The $\rhd$-homogeneity of the transition kernels follows from the straightforward calculation
$$
F_{\delta_x \rhd k_{st}(y,\cdot)}(z) = F_{k_{st}(y,\cdot)}(z)-x =  F_{st}(z)-(x+y) = F_{k_{st}(x+y,\cdot)}(z).
$$
Finally, Lemma \ref{lemmaconvergence} implies the weak continuity of $(s,t)\mapsto k_{st}(x,\cdot)$ for all $x\in\R$.
\end{proof}

\begin{remark}
In the stationary case ($F_s \circ F_t = F_{s+t}$) the defining formula \eqref{eq:additive_equivariance} becomes
\begin{equation*}
\int_\mathbb{R} \frac{1}{z-y}\,k_{t}(x,{\rm d}y) = \frac{1}{F_{t}(z)-x}, \qquad z \in \C^+, x \in \R.
\end{equation*}
This formula has similarity with the one for continuous-time branching processes in one dimension. In the latter processes, the transition kernels $(l_{t})_{t\geq0}$ are characterized by
\begin{equation*}
\int_0^\infty e^{-\lambda y}\,l_{t}(x,{\rm d}y) = e^{-x v_t(\lambda)}, \qquad \lambda ,x \geq0,
\end{equation*}
where $(v_t)_{t\geq0}$ is a compositional semigroup of mappings on $[0,\infty)$ generated by a certain vector field (see \cite[Section 3]{Li11}).
\end{remark}

Let $(M_t)$ be the $\rhd$-homogeneous Markov process constructed in
Theorem \ref{theo-Markov-proc}. Then \eqref{eq:additive_equivariance} and the Markov property \eqref{eq:Markov2} imply that for each $0 \leq s \leq t$ and $z \in \Ha$ we have 
\begin{equation}\label{eq:Markovianity}
\mathbb E\left[\frac{1}{z-M_t} \Bigg| \mathcal F_s\right] =\frac{1}{F_{st}(z)-M_s}~~a.s.,
\end{equation}
which will be intensively used in the next section. 
\index{Markov process!$\rhd$-homogeneous|)}
\section[From Markov processes to SAIPs]{From $\rhd$-homogeneous Markov processes to additive monotone increment processes}\label{subsec:Markov-SAIP}\index{monotone increment process!additive (SAIP)|(}

We will now come to the main result of this section, namely the construction of a SAIP from a $\rhd$-homogeneous Markov process.
Let $(M_t)_{t\ge 0}$ be a \index{Markov process!$\rhd$-homogeneous}$\rhd$-homogeneous Markov process such that $M_0=0$ and adapted to a filtration $(\mathcal{F}_t)_{t\ge 0}$. Denote by $(\Omega,\mathcal F, \mathbb P)$ the underlying probability space and by $P_t$ the conditional expectation
\[
P_t = \mathbb{E}[\,\cdot\,|\mathcal{F}_t], \qquad t\ge 0.
\]
 We will now show that the family of symmetric operators $(X_t)_{t\ge 0}$ defined by
\begin{align}
X_t&=P_t M_t, \qquad t\ge 0, \label{def_SAIP}\\
\Dom(X_t) &= \{\psi \in L^2: M_t \psi \in L^2\} = \Dom(M_t), 
\end{align}
is a SAIP on $(L^2(\Omega,\mathcal{F},\mathbb P),\mathbf{1}_\Omega)$, where $M_t$ acts by multiplication on $L^2(\Omega,\mathcal{F},\mathbb P)$ and $\mathbf{1}_\Omega$ is the constant function with value $1$ on $\Omega$.
In this section we always regard $M_t$ as the multiplication operator. Notice that $P_t M_t  \subset M_t P_t$ as unbounded operators. 

We denote by $G_{st}$ the Cauchy transform of $k_{st}(0,\,\cdot\,)$ and by $F_{st}$ its reciprocal; namely
\begin{equation*}
G_{st}(z) = \int_\R \frac{1}{z-y}\,k_{st}(0,{\rm d}y), \qquad F_{st}(z)=\frac{1}{G_{st}(z)}.
\end{equation*}
Then the $\rhd$-homogeneity implies that $F_{k_{st}(x,\cdot)}(z) = F_{\delta_x} (F_{st}(z)) =F_{st}(z)-x$ which is exactly \eqref{eq:additive_equivariance}. Therefore \eqref{eq:Markovianity} holds true, which can also be stated as \begin{equation}\label{eq:additive_markovianity}
P_s \frac{1}{z-M_t}P_s = \frac{1}{F_{st}(z)-M_s}P_s. 
\end{equation}

Firstly we check that the domains appearing in this section are dense in $L^2$. 

\begin{lemma}
For every $n\in \N$ and reals $0\leq t_1 <t_2 \cdots < t_n$, the subspace
$$
\bigcap_{i=1}^n \Dom(X_{t_i})
$$
is dense in $L^2$. 
\end{lemma}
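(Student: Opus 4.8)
The plan is to reduce the statement to the elementary fact that the domain of a multiplication operator by an almost everywhere finite function is dense in $L^2$, together with the stability of this property under finite intersections. First I would record that, since $P_t$ is the everywhere-defined bounded conditional expectation, the composition $X_t = P_t M_t$ has domain exactly $\Dom(M_t) = \{\psi\in L^2 : M_t \psi \in L^2\}$, so that
\[
\bigcap_{i=1}^n \Dom(X_{t_i}) = \bigcap_{i=1}^n \Dom(M_{t_i}) = \{\psi\in L^2 : M_{t_i}\psi\in L^2 \text{ for all } i=1,\dots,n\},
\]
where throughout $M_{t_i}$ denotes the multiplication operator by the real-valued random variable $M_{t_i}$ on $L^2(\Omega,\mathcal F,\mathbb P)$.

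Next I would introduce the truncation sets $A_N := \bigcap_{i=1}^n \{\omega\in\Omega : |M_{t_i}(\omega)|\le N\}$ for $N\in\N$. Since each $M_{t_i}$ is a finite real-valued random variable, one has $\mathbb P(|M_{t_i}|<\infty)=1$, hence $\mathbb P(\Omega\setminus A_N)\to 0$ and in fact $A_N\uparrow\Omega$ up to a $\mathbb P$-null set. For an arbitrary $\psi\in L^2$, the truncated function $\mathbf 1_{A_N}\psi$ lies in $\bigcap_{i=1}^n\Dom(M_{t_i})$, because $|M_{t_i}\,\mathbf 1_{A_N}\psi|\le N|\psi|\in L^2$ for every $i$. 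Moreover
\[
\|\psi - \mathbf 1_{A_N}\psi\|_{L^2}^2 = \int_{\Omega\setminus A_N}|\psi|^2\,{\rm d}\mathbb P \longrightarrow 0 \quad\text{as } N\to\infty
\]
by dominated convergence. This exhibits $\psi$ as an $L^2$-limit of elements of $\bigcap_{i=1}^n\Dom(X_{t_i})$, which gives the asserted density.

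I do not expect any serious obstacle here; the only point requiring (minor) care is the observation that the $M_{t_i}$ are genuinely $\R$-valued, so that the sets $A_N$ exhaust $\Omega$ almost surely — this is precisely what makes the truncation argument valid and distinguishes it from the case of operators whose essential range reaches infinity. Note in particular that the Markov property and the $\rhd$-homogeneity of the transition kernels play no role in this statement; it is purely a fact about multiplication operators by finite random variables.
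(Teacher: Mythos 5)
Your proposal is correct and is essentially the paper's own argument: the paper likewise truncates an arbitrary $\psi\in L^2$ by the indicator $\mathbf 1_{[-k,k]^n}(M_{t_1},\dots,M_{t_n})$, observes these functions lie in $\bigcap_{i=1}^n\Dom(X_{t_i})$, and lets $k\to\infty$. Your spelled-out justification via the sets $A_N$ and dominated convergence is just a slightly more detailed version of the same truncation argument.
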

\begin{proof} 
For any $\psi \in L^2$ we can take functions $f_k=\mathbf1_{[-k,k]^n}(M_{t_1}, \dots, M_{t_n}) \psi, k\in \N,$ which are contained in $\cap_{i=1}^n \Dom(X_{t_i})
$ and converge to $\psi$ in $L^2$. 
\end{proof}

It is not obvious that the increment $X_t-X_s$ with the dense domain $\Dom(X_t) \cap \Dom(X_s)$ is essentially self-adjoint. We prove this by finding and using an explicit form of the resolvent operator of increments.

\begin{proposition}\label{prop:resolvent}
For $t\ge c\ge s\ge 0$ and $z\in \mathbb{C}\setminus\mathbb{R}$ the operator
\begin{gather*}
R_{st}(z)\psi := \frac{\psi}{z} + \frac{M_t}{z(z-M_t)}P_t\psi - \frac{ M_s\big(F_{st}(z)-M_s\big)}{F_{st}(z)(z-M_t)}P_s\frac{1}{z-M_t}\psi
\end{gather*}
is a bounded linear operator on $L^2$, preserves $D_{sct}=\Dom(X_s)\cap \Dom(X_c)\cap\Dom(X_t)$, and satisfies
$$
\{z-(X_t-X_s)\}R_{st}(z)|_{D_{sct}} = R_{st}(z)\{z-(X_t-X_s)\}|_{D_{sct}}=I_{D_{sct}}.
$$
\end{proposition}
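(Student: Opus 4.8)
The plan is to treat the displayed formula for $R_{st}(z)$ as an ansatz --- it is the continuous-time analogue of the resolvent computation behind the Markov chains of \cite{letac+malouche00}, obtained by formally inverting $z-(X_t-X_s)$ with the help of $X_t=P_tM_t$, $X_s=P_sM_s$ and the Markovian identity \eqref{eq:additive_markovianity} --- and to verify the three assertions in turn. Throughout I write $F=F_{st}(z)$ and $A=(z-M_t)^{-1}$, and I use that $M_s,M_c,M_t$ are commuting (in general unbounded) multiplication operators, that $A$, $(F-M_s)^{-1}$ and $\tfrac{M_t}{z(z-M_t)}=A-\tfrac1z$ are bounded Borel functions of self-adjoint multiplication operators (hence preserve every $\Dom(M_r)$), that $P_rM_r\subseteq M_rP_r$ and $P_sP_t=P_tP_s=P_s$. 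The only analytic input beyond this is the conditional estimate that follows from \eqref{eq:Markovianity} and the elementary formula $\int_\R|z-y|^{-2}\,\mu({\rm d}y)=-\Im G_\mu(z)/\Im z$:
\[
\mathbb E\big[\,|z-M_t|^{-2}\mid\mathcal F_s\,\big]=\frac{\Im F_{st}(z)}{\Im z}\,|F_{st}(z)-M_s|^{-2},
\]
together with the observation that, $\Im F_{st}(z)$ and $\Im z$ having the same sign, $y\mapsto y^2/|F_{st}(z)-y|^2$ is a bounded function on $\R$.

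Boundedness: the summand $\psi/z$ is trivial and $(A-\tfrac1z)P_t\psi$ is bounded, being a bounded operator applied after the contraction $P_t$. For the third summand $T\psi=\tfrac{M_s(F-M_s)}{F}\,A\,P_s[A\psi]$, set $h=P_s[A\psi]\in L^2(\mathcal F_s)$; taking the $L^2$-norm and conditioning on $\mathcal F_s$ (where $M_s$ and $h$ live), the identity above collapses this to $\|T\psi\|^2=\tfrac{\Im F}{\Im z\,|F|^2}\|M_sh\|^2$. Now the conditional Cauchy--Schwarz inequality $|h|^2=|P_s[\psi\cdot A]|^2\le P_s[|\psi|^2]\,P_s[|A|^2]=P_s[|\psi|^2]\,\tfrac{\Im F}{\Im z|F-M_s|^2}$, multiplied by $M_s^2$ and combined with boundedness of $y^2/|F-y|^2$ and the tower property, yields $\|M_sh\|^2\le\tfrac{\Im F}{\Im z}\,\|y^2/|F-y|^2\|_\infty\,\|\psi\|^2$. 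Hence $R_{st}(z)$ is bounded on $L^2$ with a norm bound depending only on $z$ and $F_{st}(z)$.

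Invariance of $D_{sct}$: for $\psi\in D_{sct}$ the first two summands stay in $D_{sct}$, since $P_t$ preserves each $\Dom(M_r)$ with $r\le t$ and bounded functions of $M_t$ preserve all $\Dom(M_r)$. For the third summand the key point is that $h=P_s[A\psi]\in\Dom(M_s^2)$ with $\|M_s^2h\|\le C\|M_s\psi\|$: the same conditional Cauchy--Schwarz bound gives $M_s^4|h|^2\le\tfrac{\Im F}{\Im z}\tfrac{M_s^4}{|F-M_s|^2}P_s[|\psi|^2]\le C'M_s^2P_s[|\psi|^2]$, and taking expectations (tower property; $M_s\psi\in L^2$) gives the claim. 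Then $\tfrac{M_s(F-M_s)}{F}h\in L^2$, and multiplying it by the commuting operators $M_tA=zA-I$, $M_sA$, $M_cA$ keeps it in $L^2$: the $M_sA$ estimate is the conditioning on $\mathcal F_s$ just used, and the $M_cA$ estimate follows by conditioning successively on $\mathcal F_c$ and $\mathcal F_s$ and applying the conditional estimate at the pair $(c,t)$ (this is where $s\le c\le t$ enters). Hence the third summand lies in $\Dom(M_s)\cap\Dom(M_c)\cap\Dom(M_t)$, so $R_{st}(z)D_{sct}\subseteq D_{sct}$.

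The inverse relations are then a direct, if lengthy, computation on $D_{sct}$: for $\psi\in D_{sct}$ both $\psi$ and $R_{st}(z)\psi$ lie in $\Dom(M_s)\cap\Dom(M_t)$, so every product is legitimate, and one expands $\{z-(X_t-X_s)\}R_{st}(z)\psi$ and $R_{st}(z)\{z-(X_t-X_s)\}\psi$ using $X_t\phi=P_t(M_t\phi)$, $X_s\phi=P_s(M_s\phi)$, the algebraic identities $M_tA=zA-I$, $\tfrac{M_s(F-M_s)}{F}=M_s-\tfrac{M_s^2}{F}$ and $(F-M_s)^{-1}-F^{-1}=\tfrac{M_s}{F}(F-M_s)^{-1}$, the relations $P_sP_t=P_s$, $P_s^2=P_s$, and above all $P_sAP_s=(F-M_s)^{-1}P_s$ (equation \eqref{eq:additive_markovianity}); after collecting terms everything telescopes to $\psi$ in both cases. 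The genuinely delicate points are only the boundedness of the third summand and the $\Dom(M_s^2)$-regularity of $P_s[(z-M_t)^{-1}\psi]$, and both rest entirely on the conditional estimate above --- $\rhd$-homogeneity is exactly what forces $\mathbb E[|z-M_t|^{-2}\mid\mathcal F_s]$ to decay like $|M_s|^{-2}$, cancelling the would-be quadratic growth in $M_s$ of the coefficient $M_s(F_{st}(z)-M_s)/F_{st}(z)$; everything else is bookkeeping with unbounded multiplication operators and conditional expectations.
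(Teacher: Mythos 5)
Your proposal is correct and follows essentially the paper's own argument: the boundedness and the preservation of $D_{sct}$ rest on exactly the same conditional Cauchy--Schwarz estimate combined with the Markov-kernel identity $\mathbb{E}\big[\,|z-M_t|^{-2}\mid\mathcal F_s\,\big]=\frac{\Im F_{st}(z)}{\Im z\,|F_{st}(z)-M_s|^{2}}$ (the paper's \eqref{eq:conditional_square}), with the intermediate time $c$ handled by inserting the conditional expectation at $c$ and using boundedness of $y\mapsto y/(F_{ct}(z)-y)$, just as in the paper. The differences are cosmetic (your exact identity for $\|T\psi\|^2$ and the explicit $\Dom(M_s^2)$-bound on $P_s[(z-M_t)^{-1}\psi]$ versus the paper's chain of inequalities), and the final cancellation, which you assert rather than display, is precisely the paper's expansion into the terms $I_1,\dots,I_8$ driven by the relation $P_s(z-M_t)^{-1}P_s=(F_{st}(z)-M_s)^{-1}P_s$ that you correctly single out.
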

\begin{remark}
The above formula for resolvents looks miraculous, but for bounded Markov processes it can be naturally derived from a series expansion method which is similar to the proof of Proposition \ref{prop:unitary_key} for the unitary case.
\end{remark}
\begin{proof}
First we prove that $R_{st}(z)$ is a bounded linear operator. It suffices to prove that the operator
$$
T\psi :=\frac{ M_s\big(F_{st}(z)-M_s\big)}{z-M_t}P_s\frac{1}{z-M_t}\psi
$$
is bounded. By the conditional Schwarz inequality, we obtain
$$
\left| P_s\frac{1}{z-M_t}\psi \right|^2 \leq \left(P_s \frac{1}{|z-M_t|^2}\mathbf{1}_\Omega\right) P_s |\psi|^2.
$$
We can further compute the first factor as
\begin{align}\label{eq:conditional_square}
P_s \frac{1}{|z-M_t|^2}\mathbf{1}_\Omega
&= \frac{1}{\overline{z} - z} P_s \left(\frac{1}{z-M_t} - \frac{1}{\overline{z}-M_t}\right)\mathbf{1}_\Omega \\
&= \frac{1}{\overline{z} - z}\left(\frac{1}{F_{st}(z)-M_s} - \frac{1}{F_{st}(\overline{z})-M_s}\right)\mathbf{1}_\Omega  \notag\\
&=  \frac{\Im(F_{st}(z))}{\Im(z) |F_{st}(z)-M_s|^2} \mathbf{1}_\Omega,  \notag
\end{align}
and hence
\begin{equation}\label{eq:estimate_f}
|T\psi|^2 \leq \frac{\Im(F_{st}(z))}{\Im(z)}\frac{|M_s|^2}{|z-M_t|^2}P_s |\psi|^2.
\end{equation}
Taking the expectation shows that
\begin{align}\label{eq:bounded} 
\mathbb E|T\psi|^2 = \mathbb E [P_s|T\psi|^2]  
&\leq   \frac{\Im(F_{st}(z))}{\Im(z)} \mathbb E  \left[ \left(M_s^2 P_s |\psi|^2\right) \cdot\left(P_s\frac{1}{|z-M_t|^2}\mathbf{1}_\Omega\right) \right] \notag\\
&=  \left|\frac{\Im(F_{st}(z))}{\Im(z)}\right|^2 \mathbb E  \left[ \left(P_s |\psi|^2\right) \cdot \left(\frac{M_s^2}{|F_{st}(z)-M_s|^2}\mathbf{1}_\Omega\right)\right].  
\end{align}
Since $M_s/(F_{st}(z)-M_s)$ is a bounded random variable and $\mathbb E [P_s |\psi|^2] = \|\psi\|_{L^2}$, we conclude that $T$ is a bounded operator.

We show that $R_{st}(z)$ preserves the space $\Dom(X_s)\cap\Dom(X_c)$ for every $c \in[s,t]$. For this it suffices to show that $T$ does. For $\psi \in \Dom(X_s)\cap \Dom(X_c)$, we imitate the estimate \eqref{eq:bounded} and insert the conditional expectation $P_c$:   
\begin{align*}
\mathbb E|M_cT\psi|^2 &= \mathbb E [P_c|M_cT\psi|^2]  
\leq   \frac{\Im(F_{st}(z))}{\Im(z)} \mathbb E  \left[ \left(M_s^2 P_s |\psi|^2\right) \cdot\left(|M_c|^2P_c\frac{1}{|z-M_t|^2}\mathbf{1}_\Omega\right) \right] \\
&=  \frac{\Im(F_{st}(z))\Im(F_{ct}(z))}{\Im(z)^2} \mathbb E  \left[ \left(P_s |M_s\psi|^2\right) \cdot \left(\frac{|M_c|^2}{|F_{ct}(z)-M_c|^2}\mathbf{1}_\Omega\right)\right].
\end{align*}
Again by the boundedness of $M_c/(F_{ct}(z)-M_c)$ this shows that $M_c T\psi \in L^2$, and hence we conclude that $T\psi \in \Dom(X_c)$. Since we can also take $c=s$ we conclude that $T\psi \in \Dom(X_s)\cap\Dom(X_c)$. This also shows that $T$ and hence $R_{st}$ preserves $D_{sct}$. 
 
For $\psi \in \Dom(X_t) \cap \Dom(X_s)$ expand the quantity
\begin{align*}
&\left\{z-(P_t M_t-P_s M_s)\right\}\left( \frac{\psi}{z} + \frac{M_t}{z(z-M_t)}P_t\psi - \frac{ M_s\big(F_{st}(z)-M_s\big)}{F_{st}(z)(z-M_t)}P_s\frac{1}{z-M_t}\psi\right) \\
&= \psi + \underbrace{\frac{M_t}{z-M_tP_t \vphantom{(F_{st})}} \psi}_{I_1} \underbrace{- \frac{z M_s (F_{st}(z)-M_s)}{F_{st}(z)(z-M_t)}P_s \frac{1}{z-M_t}\psi}_{I_2} \underbrace{+ \frac{M_s}{z\vphantom{(F_{st})}}P_s \psi}_{I_3} \underbrace{+ \frac{M_s}{z\vphantom{(F_{st})}}P_s\frac{M_t}{z-M_t}\psi}_{I_4}\\
&\qquad  \underbrace{- \frac{M_s^2(F_{st}(z)-M_s)}{F_{st}(z)}P_s \frac{1}{z-M_t} P_s \frac{1}{z-M_t}\psi}_{I_5}  \underbrace{- \frac{M_t}{z\vphantom{(F_{st})}}P_t \psi}_{I_6}  \underbrace{- \frac{M_t^2}{z(z-M_t)\vphantom{(F_{st})}}P_t\psi}_{I_7} \\
&\qquad \underbrace{+ \frac{M_sM_t(F_{st}(z)-M_s)}{F_{st}(z)(z-M_t)}P_s \frac{1}{z-M_t}\psi}_{I_8}.
\end{align*}
We can easily show that $I_1 + I_6+I_7=0$.  For $I_5$, the Markov property \eqref{eq:additive_markovianity} shows that
\begin{align*}
P_s \frac{1}{z-M_t} P_s \frac{1}{z-M_t}\psi
&= \frac{1}{F_{st}(z)-M_s}P_s\frac{1}{z-M_t}\psi,
\end{align*}
and then we can prove that
\begin{equation*}
I_2 + I_5 + I_8 = -M_s P_s \frac{1}{z-M_t}\psi.
\end{equation*}
Then it is straightforward to show that $I_2 + I_3+ I_4+ I_5 + I_8=0$. Thus we conclude that $\{z-(X_t-X_s)\} R_{st}(z)=I_{\Dom(X_s)\cap \Dom(X_t)}. $

A similar computation shows that $R_{st}(z)\{z-(X_t-X_s)\}=I_{\Dom(X_s)\cap \Dom(X_t)}$. Since we know that $R_{st}(z)$ preserves $D_{sct}$, the desired formula holds on $D_{sct}$ as well. 
\end{proof}

\begin{proposition}\label{prop:SAIP1} The increment $X_t-X_s$ is essentially self-adjoint for every $0 \leq s \leq t < \infty$, and the resolvent of $\overline{X_t-X_s}$ is given by the bounded operator $R_{st}(z)$. Furthermore, $(X_t-X_s)|_{\Dom(X_s) \cap \Dom(X_c) \cap \Dom(X_t)}$ is essentially self-adjoint for every $0 \leq s \leq c \leq t$. 
\end{proposition}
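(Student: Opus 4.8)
The plan is to deduce the whole proposition from Proposition~\ref{prop:resolvent} together with the standard criterion that a densely defined symmetric operator $A$ is essentially self-adjoint if and only if $\operatorname{Ran}(A-i)$ and $\operatorname{Ran}(A+i)$ are both dense (the Corollary to \cite[Theorem VIII.3]{RS80} already invoked above). First I would record the preliminaries: with domain $\Dom(X_s)\cap\Dom(X_t)$ the operator $X_t-X_s$ is symmetric, being the difference on the common domain of the symmetric operators $X_t=P_tM_t$ and $X_s=P_sM_s$; and $D_{sct}=\Dom(X_s)\cap\Dom(X_c)\cap\Dom(X_t)$ (for any $s\le c\le t$) is dense in $L^2$ by the density lemma above.

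For essential self-adjointness, fix $z\in\{i,-i\}\subset\C\setminus\R$. By Proposition~\ref{prop:resolvent}, $R_{st}(z)$ is a bounded operator on $L^2$ that maps $D_{sct}$ into itself and satisfies $\{z-(X_t-X_s)\}R_{st}(z)\psi=\psi$ for every $\psi\in D_{sct}$. Hence for $\psi\in D_{sct}$ the vector $R_{st}(z)\psi$ lies in $\Dom(X_s)\cap\Dom(X_t)$ and is sent to $\psi$ by $z-(X_t-X_s)$, so $D_{sct}\subseteq\operatorname{Ran}\bigl(z-(X_t-X_s)\bigr)$; since $D_{sct}$ is dense, this range is dense. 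Doing this for both $z=i$ and $z=-i$ and applying the criterion gives that $X_t-X_s$ is essentially self-adjoint. The very same two inclusions, now read with the operator restricted to $D_{sct}$, yield $D_{sct}\subseteq\operatorname{Ran}\bigl(z-(X_t-X_s)|_{D_{sct}}\bigr)$, so $(X_t-X_s)|_{D_{sct}}$ is essentially self-adjoint as well; and because its closure is a self-adjoint restriction of the self-adjoint operator $\overline{X_t-X_s}$, maximality of self-adjoint operators forces the two closures to agree. In particular $D_{sct}$ is a core for $\overline{X_t-X_s}$.

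It remains to identify $R_{st}(z)$ with $(z-\overline{X_t-X_s})^{-1}$ for every $z\in\C\setminus\R$. Given $\phi\in L^2$, pick $\phi_n\in D_{sct}$ with $\phi_n\to\phi$ in $L^2$. Then $R_{st}(z)\phi_n\in D_{sct}\subseteq\Dom(\overline{X_t-X_s})$, and $R_{st}(z)\phi_n\to R_{st}(z)\phi$ by boundedness of $R_{st}(z)$, while $(X_t-X_s)R_{st}(z)\phi_n=zR_{st}(z)\phi_n-\phi_n\to zR_{st}(z)\phi-\phi$; closedness of $\overline{X_t-X_s}$ then gives $R_{st}(z)\phi\in\Dom(\overline{X_t-X_s})$ and $(z-\overline{X_t-X_s})R_{st}(z)\phi=\phi$. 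Thus $R_{st}(z)$ is a bounded right inverse of $z-\overline{X_t-X_s}$, which is a bijection of its domain onto $L^2$ since $\overline{X_t-X_s}$ is self-adjoint and $z\notin\R$; hence $R_{st}(z)=(z-\overline{X_t-X_s})^{-1}$.

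I do not anticipate a genuine obstacle, since all the analytic difficulty already sits inside Proposition~\ref{prop:resolvent}. The one point requiring care is the order of the argument: essential self-adjointness must be extracted first, purely from the dense-range criterion, and only afterwards can $R_{st}(z)$ be identified as the resolvent — before essential self-adjointness is known one only has that $R_{st}(z)$ preserves the dense subspace $D_{sct}$, not that it maps all of $L^2$ into $\Dom(X_t-X_s)$.
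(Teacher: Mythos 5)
Your proof is correct and follows essentially the same route as the paper: essential self-adjointness is extracted from the density of $\operatorname{Ran}(z-(X_t-X_s))$ via Proposition~\ref{prop:resolvent} and the Corollary to \cite[Theorem VIII.3]{RS80}, and the restriction to $D_{sct}$ is handled by the same range argument. The paper merely states that the resolvent identification ``readily follows''; your closedness/approximation argument is exactly the intended filling-in of that step, with the correct ordering of the two claims.
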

\begin{proof}
Proposition \ref{prop:resolvent} implies that the range of $z-(X_t-X_s)$ contains $\Dom(X_t) \cap \Dom(X_s)$, which is dense in $L^2$. By Corollary to \cite[Theorem VIII.3]{RS80}, $X_t - X_s$ is essentially self-adjoint. The second statement readily follows from Proposition \ref{prop:resolvent}. 
The last statement follows from the same arguments as for $\Dom(X_t)\cap \Dom(X_s)$. 
\end{proof}

Next we compute the distributions of the increments and show that the increments are monotonically independent.

\begin{proposition}\label{prop:monotone-indep}
We have
\begin{equation}\label{eq:additive_sandwich}
P_s \left\{z-\overline{(X_t-X_s)}\right\}^{-1} P_s = G_{st}(z) P_s
\end{equation}
for $t\ge s\ge 0$ and $z\in \mathbb{C}\setminus\mathbb{R}$. In particular, the distribution of $X_t-X_s$ with respect to the state $\langle \mathbf{1}_\Omega, \cdot \mathbf{1}_\Omega \rangle_{L^2(\Omega)}$ is equal to $k_{st}(0,\cdot)$.
\end{proposition}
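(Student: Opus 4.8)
The plan is to insert the explicit resolvent formula from Proposition~\ref{prop:resolvent} into the left-hand side of \eqref{eq:additive_sandwich}, using Proposition~\ref{prop:SAIP1} to identify $\left\{z-\overline{(X_t-X_s)}\right\}^{-1}$ with the bounded operator $R_{st}(z)$, and then to simplify $P_s R_{st}(z) P_s$ by hand. The only ingredients beyond elementary algebra are three facts about the conditional expectations: since $s\le t$ we have $P_t P_s = P_s$; the operator $M_s$ is $\mathcal F_s$-measurable and hence commutes with $P_s$; and the Markov identity \eqref{eq:additive_markovianity}, i.e.\ $P_s\tfrac{1}{z-M_t}P_s = \tfrac{1}{F_{st}(z)-M_s}P_s$.

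First I would apply $R_{st}(z)$ to $P_s\psi$. The first term of $R_{st}(z)$ gives $\tfrac1z P_s\psi$. In the second term, $P_t P_s\psi = P_s\psi$; after rewriting $\tfrac{M_t}{z(z-M_t)} = -\tfrac1z + \tfrac{1}{z-M_t}$ and applying the outer $P_s$ together with \eqref{eq:additive_markovianity}, this term contributes $-\tfrac1z P_s\psi + \tfrac{1}{F_{st}(z)-M_s}P_s\psi$. In the third term, \eqref{eq:additive_markovianity} collapses the inner $P_s\tfrac{1}{z-M_t}P_s\psi$ to $\tfrac{1}{F_{st}(z)-M_s}P_s\psi$, the factor $F_{st}(z)-M_s$ cancels, and a second application of the outer $P_s$ and \eqref{eq:additive_markovianity} yields $-\tfrac{M_s}{F_{st}(z)(F_{st}(z)-M_s)}P_s\psi$. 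Adding the three contributions, the two $\pm\tfrac1z P_s\psi$ terms cancel and the remaining two combine over the common denominator $F_{st}(z)(F_{st}(z)-M_s)$ to $\tfrac{1}{F_{st}(z)}P_s\psi = G_{st}(z)P_s\psi$, which is \eqref{eq:additive_sandwich}. For the ``in particular'' clause I would evaluate \eqref{eq:additive_sandwich} on $\psi = \mathbf{1}_\Omega$; since $\mathbf{1}_\Omega$ is $\mathcal F_s$-measurable we have $P_s\mathbf{1}_\Omega = \mathbf{1}_\Omega$, so
\[
\left\langle \mathbf{1}_\Omega, \left\{z-\overline{(X_t-X_s)}\right\}^{-1}\mathbf{1}_\Omega\right\rangle = G_{st}(z),
\]
which is the Cauchy transform of $k_{st}(0,\cdot)$; by uniqueness of the probability measure with a prescribed Cauchy transform (Stieltjes inversion, see the remark following \eqref{eq:atom2}), the distribution of $X_t-X_s$ with respect to $\langle \mathbf{1}_\Omega, \,\cdot\, \mathbf{1}_\Omega\rangle$ is $k_{st}(0,\cdot)$.

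I do not anticipate a genuine obstacle here: the analytically delicate point --- producing and justifying the bounded resolvent $R_{st}(z)$ --- has already been dealt with in Proposition~\ref{prop:resolvent}, and what remains is a careful but routine bookkeeping exercise with the projections $P_s, P_t$. The one place that calls for attention is to invoke \eqref{eq:additive_markovianity} at exactly the right moments and to notice that the two surviving terms recombine into the \emph{scalar} $1/F_{st}(z)$ times $P_s$, rather than into an $\mathcal F_s$-measurable multiplication operator; this cancellation is precisely what makes \eqref{eq:additive_sandwich} nontrivial and is the hinge of the subsequent proof that the increments are monotonically independent.
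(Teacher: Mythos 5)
Your proposal is correct and follows essentially the same route as the paper: both insert the explicit resolvent $R_{st}(z)$ from Propositions \ref{prop:resolvent}--\ref{prop:SAIP1} into $P_s(\,\cdot\,)P_s\psi$, collapse the terms using the Markov identity \eqref{eq:additive_markovianity} together with the pull-out property of the conditional expectation (the paper phrases the same cancellation via $P_s\frac{1}{z-M_t}\mathbf{1}_\Omega=\frac{1}{F_{st}(z)-M_s}\mathbf{1}_\Omega$), and arrive at $\frac{1}{F_{st}(z)}P_s\psi$. The ``in particular'' clause is also handled exactly as in the paper, by evaluating at $\mathbf{1}_\Omega$ and invoking uniqueness of the measure with given Cauchy transform.
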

\begin{proof} Using properties of the conditional expectation and the Markov property \eqref{eq:additive_markovianity}, for $\psi \in L^2$ we obtain
\begin{align*}
&P_s \big(z-(P_tM_t-P_sM_s)\big)^{-1}P_s\psi \\
&= \frac{1}{z}P_s\psi + P_s\frac{M_t}{z(z-M_t)}P_s\psi - P_s\frac{ M_s\big(F_{st}(z)-M_s\big)}{F_{st}(z)(z-M_t)}P_s\frac{1}{z-M_t}P_s\psi\\
&=  \frac{1}{z}P_s\psi + \frac{1}{z}\left( P_s\psi \right) \cdot \left( P_s\frac{M_t}{z-M_t} \mathbf{1}_\Omega\right) - \left(\frac{M_s(F_{st}(z)-M_s)}{F_{st}(z)} P_s\psi \right) \cdot \left(P_s\frac{1}{z-M_t} \mathbf{1}_\Omega \right)^2 \\
&=  \frac{1}{z}P_s\psi
+\frac{1}{z}\left( P_s\psi \right) \cdot \left( - \mathbf{1}_\Omega+ \frac{z}{F_{st}(z)-M_s} \mathbf{1}_\Omega\right) \\&\qquad- \left(\frac{M_s(F_{st}(z)-M_s)}{F_{st}(z)}P_s\psi \right) \cdot \left(\frac{1}{F_{st}(z)-M_s} \mathbf{1}_\Omega \right)^2\\
&= G_{st}(z)P_s \psi,
\end{align*}
the conclusion. The last statement follows by applying \eqref{eq:additive_sandwich} to the constant function $\mathbf1_\Omega$ and taking the expectation.
\end{proof}

We want to replace the function $1/(z-x)$ in Proposition \ref{prop:monotone-indep} with a more general bounded continuous function $f$ (to obtain Lemma \ref{lem-mon-relations} \eqref{lem-mon-relations2}). For this the following lemma suffices.

\begin{lemma}\label{lem:approx}
The set of functions
$$
D=\text{\rm span}_\C\left\{(z- \cdot)^{-1}: z\in \C \setminus \R\right\}
$$
is dense in 
$$
C_0(\R)=\left\{f \in C(\R): \lim_{x \to\infty} f(x) =\lim_{x \to -\infty} f(x)  =0\right\}
$$
with respect to uniform convergence.
\end{lemma}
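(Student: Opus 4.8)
The plan is to recognize Lemma~\ref{lem:approx} as a direct consequence of the Stone--Weierstrass theorem for the (non-unital) commutative $C^*$-algebra $C_0(\R)$. Concretely, I would show that the uniform closure $\overline{D}$ is a subalgebra of $C_0(\R)$ that is closed under complex conjugation, separates the points of $\R$, and vanishes nowhere; the Stone--Weierstrass theorem for a locally compact Hausdorff space then yields $\overline{D}=C_0(\R)$, which is exactly the assertion.

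Most of the required properties are immediate. Each generator $x\mapsto(z-x)^{-1}$ is continuous on $\R$, bounded by $1/|\Im z|$, and tends to $0$ as $|x|\to\infty$, so $D\subseteq C_0(\R)$ and hence $\overline{D}\subseteq C_0(\R)$. Closure under conjugation holds since $\overline{(z-x)^{-1}}=(\bar z-x)^{-1}$ and $\bar z\in\C\setminus\R$. Point separation and the nowhere-vanishing property are already witnessed by the single function $x\mapsto(i-x)^{-1}$, which is injective on $\R$ and never zero. The only point that needs an argument is that $\overline{D}$ is closed under multiplication, since $D$ itself is not.

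To handle this, note that for $z\neq w$ in $\C\setminus\R$ partial fractions give
\[
\frac{1}{(z-x)(w-x)}=\frac{1}{w-z}\left(\frac{1}{z-x}-\frac{1}{w-x}\right)\in D,
\]
so the only products of two generators escaping $D$ are the squares $(z-x)^{-2}$. To see $(z-x)^{-2}\in\overline{D}$, put $w=z+h$ with $h\in\R\setminus\{0\}$ in the identity above, which exhibits $x\mapsto\frac{1}{(z-x)(z+h-x)}$ as an element of $D$, and observe that
\[
\left|\frac{1}{(z-x)(z+h-x)}-\frac{1}{(z-x)^2}\right|=\frac{|h|}{|z-x|^2\,|z+h-x|}\le\frac{|h|}{|\Im z|^3}
\]
uniformly in $x\in\R$ (using $|z-x|\ge|\Im z|$ and $|z+h-x|\ge|\Im(z+h)|=|\Im z|$), so letting $h\to0$ gives $(z-x)^{-2}\in\overline{D}$. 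Writing two elements of $D$ as finite linear combinations of generators, their product is a finite linear combination of functions each lying in $\overline{D}$ (each term being either in $D$ or a square), hence in $\overline{D}$; a routine argument using that uniformly convergent sequences in $D$ approximating a fixed limit may be taken uniformly bounded then extends this to $fg\in\overline{D}$ for all $f,g\in\overline{D}$.

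The main obstacle is thus just this algebra-closure step, and it is mild. If one prefers to avoid arguing about $\overline{D}$ directly, an equivalent packaging is to apply Stone--Weierstrass to the $\ast$-subalgebra $B$ generated by $D$ (which lies in $C_0(\R)$ since that algebra is closed under products), obtaining $\overline{B}=C_0(\R)$, and then to check $B\subseteq\overline{D}$ by iterating the same partial-fraction computation to produce $(z-x)^{-k}\in\overline{D}$ for every $k\ge1$; this forces $\overline{D}=\overline{B}=C_0(\R)$.
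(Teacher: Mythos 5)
Your proof is correct and takes essentially the same route as the paper's: the same partial-fraction identity showing $g_zg_w\in D$ for $z\neq w$, the same uniform limit $w\to z$ (your $h\to 0$) to place the squares $(z-\cdot)^{-2}$ in $\overline{D}$, and then the Stone--Weierstrass theorem for $C_0(\R)$. The only difference is that you spell out the routine step passing from products of generators to full multiplicative closure of $\overline{D}$, which the paper leaves implicit.
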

\begin{proof} A proof is given in \cite[Lemma 6]{Ans13}. We give another proof based on the Stone-Weierstrass theorem. Since $D$ itself is not a *-algebra, we need a trick. 

Step 1. Let $g_z(x)=(z-x)^{-1}$. For $z\neq w$ we have 
$$
g_z g_w = \frac{1}{w-z} g_z - \frac{1}{w-z}g_w,  
$$
and hence $g_z g_w \in D$. 

Step 2. For $z\in \C \setminus \R$ we can approximate $g_z^2$ uniformly by $g_z g_w$ as $w\to z$ with $w\neq z$, because   
$$
\|g_z^2 - g_z g_w\|_{C_0(\R)}  \leq \frac{|w-z|}{\Im(z)^2 |\Im(w)|}. 
$$
By Step 1, we conclude that $g_z^2 \in \overline{D}$. 

Step 3. The above steps show that $\overline{D}$ is a *-algebra. It also separates points and vanishes nowhere, and hence equals $C_0(\R)$ by Stone-Weierstrass' theorem. 
\end{proof}

Let us collect key relations between increments and conditional expectations to prove $(X_t)$ is a SAIP. 
\begin{lemma}\label{lem-mon-relations}
\begin{enumerate}[\rm(1)]
\item \label{lem-mon-relations1}
For $0\le s\le t\le u$ and $f\in \mathcal C_b(\mathbb{R})$ with  $f(0)=0$, we have
\[
f(\overline{X_t-X_s})P_u = P_u f(\overline{X_t-X_s}) = f(\overline{X_t-X_s}).
\]
\item \label{lem-mon-relations2}
For $0\le s\le t\le u$ and $f\in C_b(\mathbb{R})$, we have
\[
P_s f(\overline{X_u-X_t}) P_s = \left\langle \mathbf{1}_\Omega, f(\overline{X_u-X_t})\mathbf{1}_\Omega \right\rangle_{L^2(\Omega)}P_s.
\]
\end{enumerate}
\end{lemma}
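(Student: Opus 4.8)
\emph{Proof proposal.} Both identities are linear in $f$ and stable under limits in the usual sense: if $f_n\in C_b(\R)$ with $\sup_n\|f_n\|_\infty<\infty$ and $f_n\to f$ pointwise, then $f_n(A)\to f(A)$ strongly for any self-adjoint $A$, so $Pf_n(A)Q\to Pf(A)Q$ strongly and $\langle\mathbf 1_\Omega,f_n(A)\mathbf 1_\Omega\rangle\to\langle\mathbf 1_\Omega,f(A)\mathbf 1_\Omega\rangle$ for bounded $P,Q$; and if moreover $\|f_n-f\|_\infty\to 0$ then $f_n(A)\to f(A)$ in operator norm. By Lemma \ref{lem:approx}, $\mathrm{span}_\C\{g_z:z\in\C\setminus\R\}$ with $g_z(x)=(z-x)^{-1}$ is uniformly dense in $C_0(\R)$, and using the cut-offs $h_n$ from Remark \ref{rem:equiv_saip} every $f\in C_b(\R)$ is a pointwise bounded limit of $fh_n\in C_0(\R)$. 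Hence (chiefly for part (2)) it suffices to treat $f=g_z$, i.e.\ to argue with the resolvents $R_{st}(z)=\{z-\overline{X_t-X_s}\}^{-1}$ written out explicitly in Proposition \ref{prop:resolvent}, with one extra observation for (1) since $g_z(0)=1/z\neq 0$.

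\emph{Part (1).} Fix $0\le s\le t\le u$. As $\mathcal F_s\subseteq\mathcal F_t\subseteq\mathcal F_u$ we have $P_sP_u=P_s$, $P_tP_u=P_t$, so for $\psi$ orthogonal to $L^2(\Omega,\mathcal F_u,\mathbb P)$ one has $P_s\psi=P_t\psi=0$ and the formula of Proposition \ref{prop:resolvent} collapses to $R_{st}(z)\psi=\psi/z$; since $\psi/z$ is again orthogonal to $L^2(\mathcal F_u)$, $R_{st}(z)$ preserves $L^2(\mathcal F_u)^\perp$ and acts there as $\tfrac1z I$. Reading the same formula term by term — using that $P_s,P_t$ preserve $L^2(\mathcal F_u)$ and that the multiplying factors are (bounded functions of $M_s,M_t$, hence) $\mathcal F_t$-measurable — one sees that $R_{st}(z)$ also preserves $L^2(\mathcal F_u)$; therefore $R_{st}(z)$ commutes with $P_u$ for every $z\in\C\setminus\R$, and since $\overline{X_t-X_s}$ is self-adjoint this forces $P_u$ to commute with its whole bounded Borel functional calculus, giving the first equality in (1) for all $f\in C_b(\R)$. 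Moreover $L^2(\mathcal F_u)^\perp$ is a reducing subspace of $\overline{X_t-X_s}$ on which the resolvent equals $\tfrac1zI$, so by uniqueness $\overline{X_t-X_s}$ restricts to $0$ there; hence $f(\overline{X_t-X_s})$ restricts to $f(0)I$ on $L^2(\mathcal F_u)^\perp$, which vanishes when $f(0)=0$, i.e.\ $f(\overline{X_t-X_s})(I-P_u)=0$, yielding the remaining equalities.

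\emph{Part (2).} Fix $0\le s\le t\le u$ and substitute the explicit $R_{tu}(z)$ from Proposition \ref{prop:resolvent} into $P_sR_{tu}(z)P_s$. Applying it to $\psi\in L^2$ and writing $\phi=P_s\psi$ (which is $\mathcal F_s$-, hence $\mathcal F_t$- and $\mathcal F_u$-measurable), the plan is: absorb $P_u\phi=\phi$; compute $P_t[\tfrac1{z-M_u}\phi]=\phi/(F_{tu}(z)-M_t)$ from the Markov relation \eqref{eq:Markovianity}/\eqref{eq:additive_markovianity}; then evaluate the remaining outer conditional expectations via $P_s[\tfrac1{z-M_u}]=1/(F_{su}(z)-M_s)$ and $P_s[\tfrac1{F_{tu}(z)-M_t}]=1/(F_{st}(F_{tu}(z))-M_s)=1/(F_{su}(z)-M_s)$, the latter by $\rhd$-homogeneity (i.e.\ \eqref{eq:additive_equivariance}, extended off $\R$ by reflection) and the Loewner composition law $F_{st}\circ F_{tu}=F_{su}$. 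The three resulting terms telescope, the $M_s$-dependent contributions cancelling, leaving $P_sR_{tu}(z)P_s=G_{tu}(z)P_s$; since a one-line computation (or Proposition \ref{prop:monotone-indep}) gives $\langle\mathbf 1_\Omega,R_{tu}(z)\mathbf 1_\Omega\rangle=G_{tu}(z)$, this is exactly (2) for $f=g_z$, and the reduction above extends it to all $f\in C_b(\R)$.

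\emph{Main obstacle.} The only genuinely delicate point is the computation in (2): the individual factors $M_t/(z-M_u)$ and $M_s(F_{st}(z)-M_s)$ are unbounded, so the ``pull out what is $\mathcal F_s$-measurable'' and tower steps must be justified by first observing — exactly as in the proof of Proposition \ref{prop:resolvent} — that the combinations actually occurring (e.g.\ $M_t/(F_{tu}(z)-M_t)$ and $\tfrac{M_t(F_{tu}(z)-M_t)}{z-M_u}P_t\tfrac1{z-M_u}$) are bounded, so that the intermediate expressions lie in $L^2$ and every conditional expectation is legitimate; after that one must verify that the three telescoping terms really collapse to $G_{tu}(z)$. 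Everything else is bookkeeping with nested projections and the already-established Propositions \ref{prop:resolvent}, \ref{prop:SAIP1}, \ref{prop:monotone-indep} and the Markov relation \eqref{eq:Markovianity}.
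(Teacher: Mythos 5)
Your proposal is correct, but it routes both parts through the explicit resolvent of Proposition \ref{prop:resolvent}, whereas the paper argues differently at the two key points. For (1) the paper shows directly that $K=L^2(\Omega,\mathcal F_u,\mathbb P)$ reduces $\overline{X_t-X_s}$ (from $X_t-X_s=P_tM_t-P_sM_s$ one gets $X=P_uX\subset XP_u$, likewise for the closure) and then quotes standard facts on reduced self-adjoint operators and their functional calculus; your version --- $R_{st}(z)$ preserves $K$ and acts as $\tfrac1z I$ on $K^\perp$, hence commutes with $P_u$, and the restricted operator is $0$ so $f(\overline{X_t-X_s})$ restricts to $f(0)I$ --- reaches the same reduction through the concrete formula. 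Two small points in your write-up should be repaired rather than glossed: the multiplier $M_s\bigl(F_{st}(z)-M_s\bigr)/\bigl(F_{st}(z)(z-M_t)\bigr)$ is \emph{not} a bounded function of $(M_s,M_t)$ (what you actually need is only its $\mathcal F_u$-measurability together with the already-established boundedness of the whole term $T$), and for $\psi\perp K$ the vanishing of $P_s\bigl[\tfrac{1}{z-M_t}\psi\bigr]$ follows because multiplication by the bounded $\mathcal F_u$-measurable function $\tfrac{1}{z-M_t}$ preserves $K^\perp$, not merely from $P_s\psi=0$. For (2) the paper is much shorter: since $P_s=P_sP_t$, sandwiching Proposition \ref{prop:monotone-indep} in the form $P_tR_{tu}(z)P_t=G_{tu}(z)P_t$ between $P_s$'s gives the resolvent case at once, and then the same two approximation steps you describe (Lemma \ref{lem:approx} for $C_0(\R)$, the cut-off argument of Remark \ref{rem:equiv_saip} for $C_b(\R)$) finish the proof. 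Your direct computation of $P_sR_{tu}(z)P_s$ --- Markov property at $z$ and at $F_{tu}(z)$ plus the composition law $F_{st}\circ F_{tu}=F_{su}$ --- does indeed telescope to $G_{tu}(z)P_s$, and the bounded combinations you single out (e.g.\ $M_t/(F_{tu}(z)-M_t)$) are exactly what legitimizes the pull-out and tower steps; what it buys is a self-contained re-derivation generalizing Proposition \ref{prop:monotone-indep} to an outer projection $P_s$ with $s\le t$, at the cost of redoing algebra that the one-line identity $P_s=P_sP_t$ makes unnecessary.
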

\begin{proof}
(1) In this proof for a closed subspace $K$ of $L^2$ and a linear operator $X$ with domain $\Dom(X)$ we denote by $X|_K$ the operator with domain $\Dom(X)\cap K$. Let $X= X_t-X_s$, $P=P_u$ and $K$ be the range of the orthogonal projection $P$. We can check that, by a property of conditional expectations, $P$ preserves $\Dom(X)$ and we have $X=P X \subset XP$. We can also check, by the definition of closure, that $P$ preserves $\Dom(\overline{X})$ and $\overline{X}=P \overline{X} \subset \overline{X}P$, and thus $\overline{X}$ is reduced by $K$ and by $K^\perp$. By \cite[Theorems 1.38 and 1.40]{Ara18}, we have $\overline{X}|_{K}=\overline{X|_{K}}$ and it is self-adjoint in $K$, and for any $f \in C_b(\R)$ we have $f(\overline{X|_{K}})=f(\overline{X})|_{K}$. 
This implies $f(\overline{X})$ preserves $K$, and similar arguments show that $f(\overline{X})$ preserves $K^{\perp}$. Therefore, $Pf(\overline{X}) = f(\overline{X})P$. 

Similarly, we have $0|_{\Dom(\overline{X})}=(1-P) \overline{X} \subset \overline{X}(1-P)$, and hence $\overline{X}|_{K^\perp}=\overline{X|_{K^\perp}}=0$ and $f(\overline{X})|_{K^\perp}=f(\overline{X|_{K^\perp}})=0$ if $f(0)=0$. This implies that $f(\overline{X}) = f(\overline{X})P$. 

(2) 
For $f \in C_0(\R)$ the statement follows from Proposition \ref{prop:monotone-indep} and Lemma \ref{lem:approx}. For general $f \in C_b(\R)$, we can adapt the corresponding arguments in Remark \ref{rem:equiv_saip} to find a sequence $f_n \in C_0(\R)$ such that $f_n(\overline{X_u-X_t}) \to f(\overline{X_u-X_t})$ strongly. 
\end{proof}

\begin{theorem}\label{thm:additive_monotone_construction}
The family $(X_t)_{t\geq0}$ of essentially self-adjoint operators is a SAIP on the quantum
probability space $(L^2(\Omega,\mathcal{F},\mathbb P),\mathbf{1}_\Omega)$.
\end{theorem}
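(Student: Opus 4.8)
The plan is to verify, one by one, the four defining conditions (a)--(d) of a SAIP from Definition~\ref{def_saip} for the family $X_t = P_t M_t$, drawing on the results already proved in this section. Conditions (a)--(c) are essentially in hand. Condition (a), essential self-adjointness of $X_t - X_s$ on $\Dom(X_t)\cap\Dom(X_s)$, is the first assertion of Proposition~\ref{prop:SAIP1}. For (b): the density lemma preceding Proposition~\ref{prop:resolvent} gives that $D_{sut} = \Dom(X_s)\cap\Dom(X_t)\cap\Dom(X_u)$ is dense, and the last assertion of Proposition~\ref{prop:SAIP1} gives that $(X_u - X_s)|_{D_{sut}}$ is essentially self-adjoint; since $(X_u - X_s)|_{D_{sut}} \subseteq (X_u-X_s)|_{\Dom(X_u)\cap\Dom(X_s)}$ and both are essentially self-adjoint, uniqueness of the self-adjoint extension of the smaller operator forces $\overline{(X_u-X_s)|_{D_{sut}}} = \overline{X_u-X_s}$, i.e.\ $D_{sut}$ is a core. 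Condition (c) follows because Proposition~\ref{prop:monotone-indep} identifies $\mu_{st}$ with $k_{st}(0,\cdot)$, and weak continuity of $(s,t)\mapsto k_{st}(x,\cdot)$ is built into Definition~\ref{def_equi_Markov}(a).

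The substantive point is condition (d). Put $t_0 = 0$ and let $\mathcal{A}_j = \{f(\overline{X_{t_j}-X_{t_{j-1}}}) : f\in C_b(\R),\ f(0)=0\}$ for $1\le j\le n$; these consist of bounded operators, and by Remark~\ref{rem:equiv_saip} it suffices to use $C_b(\R)$, while $\mathcal{A}_j = \{0\}$ when $t_{j-1}=t_j$ so those indices may be ignored. Order $\{1,\dots,n\}$ naturally. I would verify conditions (i) and (ii) of Definition~\ref{def-mon} directly from Lemma~\ref{lem-mon-relations}, the key being that $\mathcal{A}_j$ is ``anchored'' to the interval $[t_{j-1},t_j]$: by Lemma~\ref{lem-mon-relations}(1), any $A\in\mathcal{A}_i$ with $t_i\le u$ satisfies $AP_u = P_u A = A$; and by Lemma~\ref{lem-mon-relations}(2), any $B\in\mathcal{A}_j$ with $s\le t_{j-1}$ satisfies $P_s B P_s = \langle\mathbf{1}_\Omega, B\,\mathbf{1}_\Omega\rangle P_s$. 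Condition (ii) is then immediate: for $i<j>k$, $X\in\mathcal{A}_i$, $Y\in\mathcal{A}_j$, $Z\in\mathcal{A}_k$ one has $t_i,t_k\le t_{j-1}$, hence $XYZ = XP_{t_{j-1}}YP_{t_{j-1}}Z = \langle\mathbf{1}_\Omega, Y\,\mathbf{1}_\Omega\rangle\,XP_{t_{j-1}}Z = \langle\mathbf{1}_\Omega, Y\,\mathbf{1}_\Omega\rangle\,XZ$.

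For condition (i) I would induct on the length of the word $X_1\cdots X_r\,Y\,Z_s\cdots Z_1$ with indices $i_1>\cdots>i_r>j<k_s<\cdots<k_1$ (a ``V-shaped'' word), the base case $r=s=0$ being trivial. The globally largest index is $i_1$ or $k_1$ and sits at one end of the word; say it is $k_1$. Then every other factor is anchored to an interval ending at a time $\le t_{k_1-1}$, hence commutes with $P_{t_{k_1-1}}$ and fixes it, so the sub-word $V = X_1\cdots X_r\,Y\,Z_s\cdots Z_2$ satisfies $P_{t_{k_1-1}}V = VP_{t_{k_1-1}} = V$; combining this with $P_{t_{k_1-1}}\mathbf{1}_\Omega = \mathbf{1}_\Omega$ and Lemma~\ref{lem-mon-relations}(2) applied to $Z_1$ gives $\langle\mathbf{1}_\Omega, X_1\cdots Z_1\,\mathbf{1}_\Omega\rangle = \langle\mathbf{1}_\Omega, Z_1\,\mathbf{1}_\Omega\rangle\,\langle\mathbf{1}_\Omega, V\,\mathbf{1}_\Omega\rangle$, with $V$ again of the same shape but shorter; the case where $i_1$ is largest is symmetric, peeling $X_1$ from the left.

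\textbf{Main obstacle.} All analytic content has already been extracted in Propositions~\ref{prop:resolvent}--\ref{prop:monotone-indep} and Lemma~\ref{lem-mon-relations}, so the remaining work for (d) is essentially combinatorial bookkeeping: in each step one must identify the correct conditional expectation $P_{t_\bullet}$ to insert and check that the time inequalities required by Lemma~\ref{lem-mon-relations}(1)--(2) hold. That is the only place I expect to need care.
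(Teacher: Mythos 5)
Your route is the same as the paper's: (a)--(c) from Proposition \ref{prop:SAIP1}, the density lemma and Proposition \ref{prop:monotone-indep}, and condition (d) by inserting conditional expectations and invoking Lemma \ref{lem-mon-relations}(1)--(2), exactly as in the paper's Steps 1--2. There is, however, one concrete flaw in your induction for condition (i). Definition \ref{def-mon}(i) does not require the two arms of the V to use disjoint index sets; in particular $i_1=k_1$ is admissible (e.g.\ $r=s=1$, $i_1=k_1=2>j=1$, which encodes the genuinely needed monotone relation $\langle\mathbf{1}_\Omega, XYZ\,\mathbf{1}_\Omega\rangle=\langle\mathbf{1}_\Omega, X\mathbf{1}_\Omega\rangle\langle\mathbf{1}_\Omega, Y\mathbf{1}_\Omega\rangle\langle\mathbf{1}_\Omega, Z\mathbf{1}_\Omega\rangle$ for $X,Z$ functions of the \emph{same} later increment and $Y$ of an earlier one), and the same coincidence can appear in the middle of your induction whenever the current maxima of the two arms agree. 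In that situation your statement that ``every other factor is anchored to an interval ending at a time $\leq t_{k_1-1}$'' is false -- the letter $X_1$ lives on $[t_{k_1-1},t_{k_1}]$ -- so the claimed identity $P_{t_{k_1-1}}V=VP_{t_{k_1-1}}=V$ does not hold, and your stated justification for the peeling step breaks down.

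The repair is small and is in effect what the paper does. In your computation $\langle\mathbf{1}_\Omega, VZ_1\mathbf{1}_\Omega\rangle=\langle\mathbf{1}_\Omega, VP_{t_{k_1-1}}Z_1P_{t_{k_1-1}}\mathbf{1}_\Omega\rangle$ only the one-sided absorption $VP_{t_{k_1-1}}=V$ is used, and this already follows from Lemma \ref{lem-mon-relations}(1) applied to the rightmost letter of $V$ alone (that letter is $Z_2$ or $Y$, whose index is strictly smaller than $k_1$); the two-sided claim is never needed, and the symmetric remark applies when you peel $X_1$ from the left. Alternatively, peel $X_1$ and $Z_1$ simultaneously: insert $P_{t_{i_1-1}}$ around $X_1$ and $P_{t_{k_1-1}}$ around $Z_1$, the outer insertions being absorbed by the vacuum vector and the inner ones by the letters adjacent to $X_1$ and $Z_1$, whose indices are strictly smaller; then apply Lemma \ref{lem-mon-relations}(2) to both ends at once. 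This two-sided insertion is exactly the paper's Step 1, and with this adjustment your argument is complete.
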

\begin{proof}
As $M_0=0$, we have $X_0=0$. By Proposition \ref{prop:SAIP1} $\Dom(X_s)\cap\Dom(X_c)\cap \Dom(X_t)$ is a core for $X_t-X_s$ for every $0\leq s\leq c\leq t$. By Proposition \ref{prop:monotone-indep}, the distribution of $X_t-X_s$ is equal to $k_{st}(0,\cdot)$.
The mapping $t\mapsto k_{st}(0,\cdot)$ is continuous by assumption. It remains to show that $(X_t)$ has monotonically independent increments.

Step 1.
Let $t_1,\ldots,t_p, s_1,\ldots, s_p, t'_1,\ldots,t'_q, s'_1,\ldots, s'_q,t,s\geq0$ be such that
\[
t_1\geq s_1\geq t_2\geq  \cdots \geq t_p\geq s_p\geq t\geq s \leq t \leq t_q' \leq \cdots \leq t_2' \leq s_1' \leq t_1',
\]
$f_1,\ldots, f_p,g, h_1,\ldots,h_q\in C_b(\mathbb{R})$ vanishing at $0$, and set
\begin{gather*}
W_1=f_1(\overline{X_{t_1}-X_{s_1}}), \ldots,\, W_p=f_p(\overline{X_{t_p}-X_{s_p}}),\, Y=g(\overline{X_t-X_s}), \\
Z_1=h_1(\overline{X_{t'_1}-X_{s'_1}}),\ldots,\, Z_q=h_q(\overline{X_{t'_q}-X_{s'_q}}).
\end{gather*}
Then we have from Lemma \ref{lem-mon-relations}   \eqref{lem-mon-relations1} and \eqref{lem-mon-relations2}
\begin{align*}
&\langle \mathbf{1}_\Omega, W_1\cdots W_pYZ_q\cdots Z_1 \mathbf{1}_\Omega \rangle\\
&= \langle \mathbf{1}_\Omega, P_{t_2}W_1 P_{t_2} W_2 W_3 \cdots W_p Y Z_q  \cdots Z_2 P_{t'_2} Z_1 P_{t'_2} \mathbf{1}_\Omega \rangle \\
&= \langle \mathbf{1}_\Omega, W_1\mathbf{1}_\Omega\rangle
\langle \mathbf{1}_\Omega, W_2 W_3 \cdots W_p Y Z_q  \cdots Z_2 \mathbf{1}_\Omega \rangle \langle \mathbf{1}_\Omega, Z_1 \mathbf{1}_\Omega \rangle \\
&= \cdots \\
&=  \langle \mathbf{1}_\Omega, W_1\mathbf{1}_\Omega\rangle  \cdots \langle\mathbf{1}_\Omega , W_p\mathbf{1}_\Omega\rangle \langle\mathbf{1}_\Omega ,Y\mathbf{1}_\Omega\rangle \langle\mathbf{1}_\Omega Z_q\mathbf{1}_\Omega\rangle \cdots \langle \mathbf{1}_\Omega, Z_1 \mathbf{1}_\Omega \rangle.
\end{align*}
It is clear that this implies Condition (i) of Definition \ref{def-mon}.

Step 2.
Let $t,s,t',s',t'',s''\in\mathbb{R}$ such that $0\le s'\leq t'\leq s\leq t$ and $0\le s''\leq t''\le s\le t$, $f,g,h\in C_b(\mathbb{R})$ vanishing at $0$ and set $X=f(\overline{X_{t'}-X_{s'}})$, $Y=g(\overline{X_t-X_s})$, $Z=h(\overline{X_{t''}-X_{s''}})$. From Lemma \ref{lem-mon-relations}  \eqref{lem-mon-relations1} and  \eqref{lem-mon-relations2} we get
\[
XYZ = XP_{t'}YP_{t''}Z = \left\langle \mathbf{1}_\Omega, Y\mathbf{1}_\Omega \right\rangle_{L^2(\Omega)}XZ.
\]
This shows that Condition (ii) of Definition \ref{def-mon} is also satisfied and concludes the proof.
\end{proof}

\index{monotone increment process!additive (SAIP)|)}

\section{Summary of the one-to-one correspondences}${}$\label{summary_additive}

All in all, Theorems \ref{from_add_process_to_Loewner_chains}, \ref{theo-Markov-proc}, and
\ref{thm:additive_monotone_construction} yield one-to-one correspondences between 

\begin{enumerate}[\quad(A)]
\item\label{item:A}
\index{Loewner chain!additive}additive Loewner chains $(F_{t})_{t\geq 0}$ in $\mathbb{C}^+$ (Def.\ \ref{EV_def:evolution_family}),

\item \label{item:B}\index{Markov process!$\rhd$-homogeneous}
real-valued $\rhd$-homogeneous Markov processes $(M_t)_{t\ge 0}$ such that $M_0=0$ up to equivalence (Def. \ref{def_equi_Markov} and Def.\ \ref{def:Markov_equiv}), 

\item\label{item:C} \index{monotone increment process!additive (SAIP)}SAIPs: self-adjoint additive monotone increment processes $(X_t)_{t\ge 0}$ up to equivalence (Def.\ \ref{def_saip} and Def.\ \ref{def:equiv_saip}). 
\end{enumerate}
Moreover, the above objects also correspond to:
\begin{enumerate}[\quad(A)]
\setcounter{enumi}{3}
\item\label{item:D} families $(\mu_{st})_{0\leq s \leq t}$ of probability measures on $\R$ such that 
\begin{enumerate}[(i)]
\item $\mu_{tt} = \delta_0$ for all $t\geq0,$ 

\item\label{hemigroup} $\mu_{su} = \mu_{st} \rhd \mu_{tu}$ for all $0\leq s\leq t\leq u,$ 

\item $(s,t)\mapsto \mu_{st}$ is weakly continuous.
\end{enumerate}
\end{enumerate}
\eqref{item:C} $\Rightarrow$ \eqref{item:D}: Given a SAIP $(X_t)$ we define $\mu_{st}$ to be the law of $X_t-X_s$. This is independent of  a choice of a SAIP in the same equivalence class by Theorem \ref{thm:equiv_saip}. \eqref{item:D} $\Rightarrow$ \eqref{item:A}: Given $(\mu_{st})$ we define the \index{transition mappings}transition mappings $F_{st}=F_{\mu_{st}}$. Then $(F_{0t})$ forms an additive Loewner chain. Thus our constructions yield bijections between the objects \eqref{item:A}--\eqref{item:D}. Note that property (ii) in \eqref{item:D} corresponds to the Chapman-Kolmogorov relation for transition kernels of Markov processes.

We call such a family of probability measures that satisfies the three conditions in \eqref{item:D} a \index{hemigroup}\emph{weakly continuous $\rhd$-convolution hemigroup}.

\begin{remark}
For the object \eqref{item:B} we identify the equivalent Markov processes in the sense of Definition \ref{def:Markov_equiv}, and so we are actually looking at the $\rhd$-homogeneous transition kernels $(k_{st})$ with weak continuity on $t\geq s$. 
\end{remark}

\begin{remark}
A SAIP associates an additive Loewner chain, which is a part of a reverse evolution family (a family of transition mappings). If we adopt \emph{anti-monotone independence} instead of monotone independence, we can obtain an evolution family in the sense of Remark \ref{EV_remark_ev}, but then the corresponding probability kernels do not satisfy the standard Chapman-Kolmogorov relation. What we obtain is something that might be called a ``backward Chapman-Kolmogorov relation'', but the authors do not know any theory on such a relation and the corresponding stochastic processes. 
\end{remark}

Furthermore, we can relate properties of the three notions as follows: 

\begin{itemize}
 \item[(1)] The Markov process $(M_t)$ has finite second moment for all $t\ge 0$ if and only if the complement $\mathbb{C}^+\setminus F_t(\mathbb{C}^+)$ has
finite \index{half-plane capacity}half-plane capacity (see Remark \ref{rm_capacity}) for all $t\ge 0$ if and only if $\xi \in \Dom (\overline{X_t})$ for all $t\geq0$. The last equivalence follows from the spectral theory, see \cite[Section 5.3]{MR2953553}. 

 \item[(2)] The Markov process $(M_t)$ has mean $0$ and variance $t$ for all $t\geq0$ if and only if
\[ F_t(z)=z-\frac{t}{z}+{\scriptstyle\mathcal{O}}(1/|z|)
\]
 as $z\to\infty$ non-tangentially in $\Ha$ for all $t\ge 0$ (Remark \ref{rm_finite_var}) if and only if the SAIP $(X_t)$ is normalized (again by the spectral theory). 

\item[(3)] The Markov process $(M_t)$ is stationary if and only if $F_{s} \circ F_t = F_{s+t}$ for all $s,t\geq0$ if and only if the SAIP $(X_t)$ has stationary increments, i.e.\ $X_t - X_s$ has the same distribution as $X_{t-s}$ for all $0 \leq s \leq t$.
\end{itemize}

\section[Additive free increment processes]{Construction of $\rhd$-homogeneous Markov processes from additive free increment processes} \label{sec:Free-Markov_additive}
\index{Markov process!$\rhd$-homogeneous|(}
\index{free increment process!additive|(}
Following the idea of \cite{franz07b} we can construct $\rhd$-homogeneous transition kernels from free additive increment processes.
Assume that $(\mu_{st})_{0\leq s \leq t}$ are the laws of the increments of such a process, i.e.\ a weakly continuous $\boxplus$-convolution hemigroup (defined as in Section \ref{summary_additive} for the monotone convolution).
By the subordination property \cite{Bia98}, there exists a probability measure $\nu_{st}$ such that $\mu_{0t}= \mu_{0s} \rhd \nu_{st}$ for $0\leq s \leq t$. They satisfy $\nu_{tt}=\delta_0$ and the hemigroup property
$$
\nu_{st} \rhd \nu_{tu}=\nu_{su}, \qquad 0 \leq s \leq t \leq u.
$$
The reciprocal Cauchy transform $F_{\nu_{st}}$ can be expressed by use of the Voiculescu transform $\varphi_{\mu_{st}}$: 
\[
\begin{split}
F_{\nu_{st}}(z)
&= F_{\mu_{0s}}^{-1} \circ F_{\mu_{0t}}(z) \\
&=\varphi_{\mu_{0s}}(F_{\mu_{0t}}(z)) + F_{\mu_{0t}}(z) \\
&= \varphi_{\mu_{0t}}(F_{\mu_{0t}}(z)) + F_{\mu_{0t}}(z) - \varphi_{\mu_{st}}(F_{\mu_{0t}}(z)) \\
&= z - \varphi_{\mu_{st}}(F_{\mu_{0t}}(z)), 
\end{split}
\]
on a truncated cone where all the functions and compositions make sense. Eventually the identity  $F_{\nu_{st}}(z)=z - \varphi_{\mu_{st}}(F_{\mu_{0t}}(z))$ holds on $\C^+$ by analytic continuation, because now $\varphi_{\mu_{st}}$ is defined on $\C^+$ by $\boxplus$-infinite divisibility of $\mu_{st}$.   
Using \cite[Proposition 5.7]{BV93}, $\nu_{st}$ is weakly continuous with respect to $(s,t)$. The probability measures $(\nu_{st})_{0 \leq s \leq t}$ therefore form a weakly continuous $\rhd$-convolution hemigroup. We can thus construct a $\rhd$-homogeneous Markov process through the correspondence in Section \ref{summary_additive}.

Note that each $\nu_{st}$ is $\boxplus$-infinitely divisible (see Section \ref{inf_arrays_additive}), since $\varphi_{\nu_{st}}$ can be calculated as
\begin{equation*}
\varphi_{\nu_{st}}(z) = \varphi_{\mu_{st}}(F_{\mu_{0s}}(z)),
\end{equation*}
and so Theorem \ref{thmBV93} is available.

In the particular case of free L\'evy processes, namely stationary additive free increment processes, the Markov transition kernels are related to additive Boolean convolution $\uplus$ \eqref{bool_convolutions_addi}. Let $(\mu_t)_{t\geq 0}$ be a weakly continuous free convolution semigroup with $\mu_0 = \delta_0$.
Then the $F$-transform of $\nu_{st}$ has the explicit formula
\begin{equation}\label{eq:subordination_stationary_additive}
F_{\nu_{st}}(z)= \frac{s}{t}z + \left(1-\frac{s}{t}\right)F_{\mu_t}(z),
\end{equation}
which was essentially calculated in \cite{BB04}. Therefore, we have the formula
\begin{equation*}
\nu_{st}=\mu_t^{\uplus (1-s/t)}.
\end{equation*}

\begin{example} (See \cite[Section 5.3]{Bia98}.)\\
If $\mu_t$ is the centered \index{semicircle distribution}semicircle law with variance $t$, then
\[
F_{\nu_{st}}(z) = \frac{1}{2}\Big( 1 + \frac{s}{t}\Big) z + \frac{1}{2}\Big( 1-\frac{s}{t}\Big) \sqrt{z^2-4t}.
\]
Using \eqref{eq:subordination_stationary_additive} we can compute the transition kernels for the associated Markov process, to get
\begin{equation*}
k_{st}(x,{\rm d}y) = \frac{t-s}{2\pi}\frac{\sqrt{4t-y^2}}{(sy-tx)(y-x) + (t-s)^2} \,\mathbf1_{[-2\sqrt{t},2\sqrt{t}]}(y)\,{\rm d}y + \lambda \delta_a,
\end{equation*}
where
\begin{align*}
a &=
\frac{(t+s)x -\text{sign}(x)(t-s)\sqrt{x^2-4s}}{2s},\\
 \lambda &= \begin{cases} 0, &|x| \leq \frac{t+s}{\sqrt{t}}, \\
 \frac1{2s}\left(t+s-\frac{|x|(t-s)}{\sqrt{x^2-4s}}\right),& |x| > \frac{t+s}{\sqrt{t}}.
 \end{cases}
 \end{align*}
We extend the above expressions of $a$ and $\lambda$ to the case $s=0$ by continuity.
\end{example}

\begin{remark}
D.\ Jekel has shown that not all Loewner chains arise from free increment processes, see \cite[Section 4.4]{jekel}; his counter-example is an additive Loewner chain with $F_1=F_\sigma$ and  $F_2=F_{\sigma\rhd\sigma}$, where $\sigma=\frac{1}{2\pi} \sqrt{4 -x^2} \mathbf{1}_{[-2,2]}(x)\,{\rm d}x$ denotes the centered semicircle distribution with variance $1$. Combined with the one-to-one correspondences from the previous section, this shows that not all real-valued $\rhd$-homogeneous Markov chains can be obtained from additive free increment processes in the way described in this section.
\end{remark}

\index{free increment process!additive|)}

\section[Generators, Feller property, and martingale property]{Generators, Feller property, and martingale property of $\rhd$-homogeneous Markov processes}\label{analysis_Markov_additive}

\subsection{Generators in the stationary case} \label{sec:Hunt}
We now compute \index{Hunt's formula}Hunt's formula for the generator of stationary $\rhd$-homogeneous Markov processes. The computation was given in \cite{franz+muraki04} for compactly supported transition kernels. This section treats a general case.

Let $(k_t)$ be transition kernels for a stationary $\rhd$-homogeneous Markov process. Then the probability measures $\mu_t := k_t(0,\cdot)$ are weakly continuous regarding $t$, and form a $\rhd$-convolution semigroup, namely the relation
$$
\mu_s \rhd \mu_t = \mu_{s+t}
$$
holds for all $s,t\geq0$. This family admits a generator which has an integral representation called the \index{L\'evy-Khintchine representation!monotone}\emph{monotone L\'evy-Khintchine representation} proved by Muraki \cite{M00} in the finite variance case. The general case follows from \index{Berkson-Porta formula}Berkson and Porta's result \cite{BP78}.
\begin{theorem} \label{MLK}
\begin{enumerate}[\rm(1)]
\item\label{MLKC} Let $\{\mu_t\}_{t\geq0}\subset\cP(\R)$ be a weakly continuous $\rhd$-convolution semigroup such that $\mu_0=\delta_0$ and let $F_t=F_{\mu_t}$. Then the right derivative $A(z) = \frac{\rm d}{{\rm d}t}\big|_{t=0} F_{t}(z)$ exists, and $F_t$ satisfies the differential equation
\begin{align}
\frac{{\rm d}}{{\rm d}t} F_t(z) &= A(F_t(z)), \quad F_0(z) = z, \qquad z \in\C^+, t\geq0. \label{eq:diff1}
\end{align}
Moreover, the analytic function $A$ is of the form
\begin{equation}\label{MLK1}
- A(z)=\gamma +\int_{\mathbb{R}}\frac{1+z x}{z-x} \rho({\rm d}x) ,\quad z\in \C^+,
\end{equation}
where $\gamma \in \R$ and $\rho$ is a finite, non-negative measure on $\R$.
The pair $(\gamma,\rho)$ is unique and is called the generating pair.

\item Conversely, given a pair $(\gamma,\rho)$ of a real number and a finite, non-negative measure, define a function $A$ by \eqref{MLK1}. Then the solution to the differential equation \eqref{eq:diff1} defines a flow $\{F_t\}_{t\geq0}$ on $\C^+$, and then there exists a weakly continuous $\rhd$-convolution semigroup $\{\mu_t\}_{t\geq0}$ such that $F_t=F_{\mu_t}$ for all $t\geq0$.
\end{enumerate}
\end{theorem}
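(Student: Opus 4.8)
The plan is to split the statement into its two halves and treat the harder direction (part (1)) by reducing it to the semigroup theory for holomorphic self-maps of $\C^+$ already available in the excerpt, and part (2) by a direct appeal to the Loewner/Berkson--Porta machinery combined with the $F$-transform characterization in Lemma \ref{Julia}.

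For part (1): Since $\{\mu_t\}$ is a weakly continuous $\rhd$-convolution semigroup, the family $F_t := F_{\mu_t}$ satisfies $F_{s+t} = F_s \circ F_t$ (by the definition of $\rhd$) and $F_0 = \mathrm{id}$, and $t \mapsto F_t$ is continuous with respect to locally uniform convergence on $\C^+$ by Lemma \ref{lemmaconvergence}. Thus $(F_t)_{t\ge 0}$ is exactly a one-parameter semigroup of holomorphic self-maps of $\C^+$ of the kind discussed in the Example following Remark \ref{EV_remark_ev}: the infinitesimal generator $A(z) = \lim_{t\searrow 0}(F_t(z)-z)/t$ exists, is holomorphic on $\C^+$, and $F_t$ is recovered as the unique solution of $\frac{\rm d}{{\rm d}t}F_t = A(F_t)$, $F_0(z)=z$, which is \eqref{eq:diff1}. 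It remains to identify the form \eqref{MLK1} of $A$. Here the key point is that each $F_t$ is an $F$-transform, hence by Lemma \ref{Julia} satisfies $\Im(F_t(z)) \ge \Im(z)$ for all $z \in \C^+$; differentiating at $t=0$ gives $\Im(A(z)) \ge 0$, so $-A$ maps $\C^+$ into $\C^+ \cup \R$, i.e.\ $-A$ is a Pick function and admits the Pick--Nevanlinna representation \eqref{EV_eq:1}: $-A(z) = a z + \gamma + \int_\R \frac{1+xz}{x-z}\rho({\rm d}x)$ with $a \ge 0$. To conclude I must show $a = 0$. I would do this exactly as in the ``$\Longrightarrow$'' direction of the Proposition following Proposition \ref{EV_multiplicative_Loewner}: the additive Loewner chain generated by the constant-in-$t$ Herglotz vector field $M(z,t)=A(z)$ has $F_t'(\infty)=1$ for all $t$ (because $F_t = F_{\mu_t}$), whereas $a = -A'(\infty) > 0$ would force $F_T'(\infty) > 1$ by \cite[Thm.\ 1.1]{bracci+al2015}, a contradiction. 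Hence $a=0$ and $A$ has the form \eqref{MLK1}. Uniqueness of $(\gamma,\rho)$ is the uniqueness of the Pick--Nevanlinna representation.

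For part (2): Given $(\gamma,\rho)$, define $A$ by \eqref{MLK1}; then $-A$ is a Pick function, so $A$ is holomorphic $\C^+ \to \C^+ \cup \R$ and, as noted in the Example after Remark \ref{EV_remark_ev}, $A$ is an infinitesimal generator on $\C^+$. Therefore the time-independent Herglotz vector field $M(z,t) := A(z)$ is an additive Herglotz vector field of order $\infty$, and the Proposition following Proposition \ref{EV_multiplicative_Loewner} (or the Example on semigroups) gives that the solution $(F_t)_{t\ge0}$ of $\frac{\rm d}{{\rm d}t}F_t = A(F_t)$, $F_0(z)=z$, is an additive Loewner chain; in particular each $F_t = F_{\mu_t}$ for a unique probability measure $\mu_t$ on $\R$. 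The semigroup property $F_{s+t} = F_s\circ F_t$ is automatic from autonomy of the ODE (uniqueness of solutions), and it translates into $\mu_s \rhd \mu_t = \mu_{s+t}$; weak continuity of $t \mapsto \mu_t$ follows from locally uniform continuity of $t\mapsto F_t$ via Lemma \ref{lemmaconvergence}, and $\mu_0 = \delta_0$ from $F_0 = \mathrm{id}$.

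The main obstacle is the step in part (1) establishing that the angular derivative coefficient $a$ vanishes, i.e.\ that differentiating the normalization $F_t'(\infty)=1$ really yields $A'(\infty)=0$ rather than merely $A'(\infty)\ge 0$; this is where the boundary-regular-fixed-point results of \cite{bracci+al2015} are essential, exactly as in the earlier Proposition, and the cleanest route is to invoke that Proposition's reasoning verbatim for the autonomous vector field $M(\cdot,t)\equiv A$. Everything else is bookkeeping with the dictionary between $\rhd$-convolution semigroups, holomorphic semigroups on $\C^+$, and autonomous additive Loewner chains.
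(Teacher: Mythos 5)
Your proposal takes essentially the route the paper itself intends: the paper gives no written proof of this theorem, attributing it to Muraki \cite{M00} in the finite-variance case and to Berkson--Porta \cite{BP78} in general, and your reconstruction --- the semigroup example for existence of the generator and the ODE, Lemmas \ref{Julia} and \ref{lemmaconvergence} for the $F$-transform dictionary, and the paper's proposition on additive Loewner chains/Herglotz vector fields (i.e.\ \cite[Thm.~1.1]{bracci+al2015}) to rule out a positive angular derivative of the generator at $\infty$ --- is exactly that route, and you correctly single out the vanishing of the linear term as the only delicate point. One bookkeeping correction: since $\Im(F_t(z))\ge\Im(z)$ gives $\Im(A(z))\ge 0$, it is $A$, not $-A$, that is a Pick function; because the kernel in \eqref{MLK1} satisfies $\frac{1+zx}{z-x}=-\frac{1+xz}{x-z}$, you should write $A(z)=az+b+\int_{\R}\frac{1+xz}{x-z}\,\rho({\rm d}x)$ and show $a=A'(\infty)=0$, which yields \eqref{MLK1} with $\gamma=-b$, and likewise in part (2) it is $A$ itself that maps $\C^+$ into $\C^+\cup\R$ and is therefore an infinitesimal generator.
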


Now we relate the monotone L\'evy-Khintchine representation to our Markov processes.
\begin{lemma}\label{lem:generator} Let $(M_t)_{t\geq0}$ be a stationary $\rhd$-homogeneous Markov process with transition kernels $(k_t)_{t\geq0}$. Let $(\gamma,\rho)$ be the generating pair in \eqref{MLK1} associated to the monotone convolution semigroup $\{k_t(0,\cdot)\}_{t\geq0}$.   Then for all $x\in \R$ we have, as $t\downarrow0$,
$$
\frac{1}{t}\frac{(y-x)^2}{1+y^2}\,k_t(x,{\rm d}y) \to \rho({\rm d}y)~~{\it(weakly)},
$$
and
$$
\frac{1}{t}\int_\R \frac{(y-x)(1+x y)}{1+y^2}\, k_t(x,{\rm d}y) \to \gamma.
$$
\end{lemma}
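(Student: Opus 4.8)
The key identity to exploit is the $\rhd$-homogeneity together with the defining relation \eqref{eq:additive_equivariance}. By homogeneity, $k_t(x,\cdot) = \delta_x \rhd k_t(0,\cdot) = \delta_x \rhd \mu_t$, so the $F$-transform of $k_t(x,\cdot)$ is $z\mapsto F_{\mu_t}(z) - x = F_t(z) - x$. Thus everything about $k_t(x,\cdot)$ can be read off from $F_t$, and the quantities in the statement should be expressible through the right $t$-derivative $A(z) = \frac{\rm d}{{\rm d}t}\big|_{t=0}F_t(z)$, whose explicit form is given by \eqref{MLK1} in Theorem \ref{MLK}. The plan is: first, rewrite the two limits as limits of integrals against $k_t(x,\cdot)$ of fixed bounded continuous test functions (after the substitution absorbing $x$); second, identify those limits with the coefficients $\gamma$ and $\rho$ in the L\'evy--Khintchine data of $A$ via the Pick--Nevanlinna/Stieltjes inversion machinery recalled in Section \ref{intro_convolutions} and the paragraph around \eqref{EV_eq:1}.

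Concretely, I would proceed as follows. Translate: the measure $\frac{1}{t}\frac{(y-x)^2}{1+y^2}k_t(x,{\rm d}y)$ equals, after the change of variable $y = x+w$, the measure $\frac{1}{t}\frac{w^2}{1+(x+w)^2}\mu_t({\rm d}w)$; since $\mu_t \to \delta_0$ weakly, for the weak limit we may as well replace $\frac{1}{1+(x+w)^2}$ by its value near $w=0$ only after checking tightness, so it is cleaner to work directly with the Pick representation. Write $F_t(z) = z + b_t + \int_\R \frac{1+uz}{u-z}\rho_t({\rm d}u)$ as in Lemma \ref{Julia}, equivalently $F_t(z) - x = z - x + b_t + \int \frac{1+uz}{u-z}\rho_t({\rm d}u)$. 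The Stieltjes inversion formula \eqref{Stieltjes}--\eqref{eq:atom} relates $\rho_t$ to $k_t(x,\cdot)$: indeed $-G_{k_t(x,\cdot)}(z) = -\frac{1}{F_t(z)-x}$ is a Pick function with representing measure proportional to $\frac{1}{1+y^2}k_t(x,{\rm d}y)$ (this is exactly the computation in the ``Cauchy transform'' section, with $(1+y^2)\rho({\rm d}y) = \mu({\rm d}y)$). Differentiating the semigroup relation in $t$ at $t=0$, one gets $A(z) = \frac{\rm d}{{\rm d}t}\big|_0 F_t(z)$, and from \eqref{MLK1}, $-A(z) = \gamma + \int \frac{1+uz}{z-u}\rho({\rm d}u)$. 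Matching: $\frac{1}{t}(F_t(z)-z) \to A(z)$ locally uniformly; expanding $\frac{1}{F_t(z)-x} = \frac{1}{z-x}\cdot\frac{1}{1 + (F_t(z)-z)/(z-x)} = \frac{1}{z-x} - \frac{F_t(z)-z}{(z-x)^2} + o(F_t(z)-z)$, so $\frac{1}{t}\big(G_{k_t(x,\cdot)}(z) - \frac{1}{z-x}\big) \to -\frac{A(z)}{(z-x)^2}$. Then test this against the appropriate contour/inversion to extract that $\frac{1}{t}\big(\mu_{t,x} - \delta_x\big)$ converges, in the sense of the L\'evy measure, to the measure with $F$-data $\frac{A(z)}{(z-x)^2}\cdot(\text{something})$; unwinding the normalization $(1+y^2)$ versus $(y-x)^2$ gives precisely $\rho({\rm d}y)$ and $\gamma$ in the claimed form. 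The second limit is the ``drift plus truncated-mean'' term and comes out of the real part / the constant $b$ in the Pick--Nevanlinna data of $A$ composed with the point evaluation at $x$.

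The cleanest rigorous route is probably to avoid contour manipulations and instead invoke a standard equivalence theorem for $\rhd$- (or rather, since we only need the infinitesimal behavior, for classical-type) triangular arrays: the weak convergence $\frac{1}{t}\frac{(y-x)^2}{1+y^2}k_t(x,{\rm d}y)\to\rho$ together with the convergence of the truncated first moment is equivalent to $\frac{1}{t}(F_{k_t(x,\cdot)}(z) - (z-x)) \to -(\,\gamma + \int\frac{1+uz}{z-u}\rho({\rm d}u)\,) =$ (the $x$-independent function) $A(z)$, uniformly on compacts; this is exactly the content of the Pick--Nevanlinna convergence Lemma \ref{lemmaconvergence} applied to $F_{k_t(x,\cdot)}(z) = F_t(z) - x$, whose left side converges to $A(z)$ by Theorem \ref{MLK}\eqref{MLKC} since the derivative is uniform on compacts. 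One reads off from Lemma \ref{lemmaconvergence}(3): the measures $\widehat\rho_t$ (scaled by $1/t$, with the point mass at $\infty$ handled by the $z$-coefficient, which is $O(1/t)\cdot 0$ since $F_t'(\infty)=1$) converge weakly on $\widehat\R$ to $\widehat\rho$, where $\widehat\rho|_\R = \rho$, and the constants $b_t/t \to \gamma$ (after incorporating the truncated-mean correction that turns $b$ into $\gamma$). Converting $\widehat\rho_t$ to $k_t(x,\cdot)$ through $\rho_t^{(x)}({\rm d}y) = \frac{1}{1+(y-x)^2}\,$ [...] — here I must be careful: the representing measure of the Pick function $F_t(z)-x$ in variable $z$ is tied to the Cauchy transform of $k_t(x,\cdot)$ through $(1+y^2)$, not $(1+(y-x)^2)$; sorting out this bookkeeping so that the factor $(y-x)^2$ in the statement emerges (rather than $y^2$) is the step requiring the most care.

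The main obstacle I anticipate is precisely this normalization bookkeeping: the L\'evy--Khintchine kernel in \eqref{MLK1} is written with weight $\frac{1+z x}{z-x}$ in the spectral variable $x$ (not centered at the current position), while the claimed limits are naturally centered at the position $x$ of the Markov process (weights $(y-x)^2/(1+y^2)$ and $(y-x)(1+xy)/(1+y^2)$). Reconciling ``centered at $0$ in the generating pair'' with ``centered at the process position $x$ in the Markov-kernel description'' — i.e. tracking how the homogeneity shift $y \mapsto y+x$ interacts with the $\frac{1+uz}{z-u}$ kernel and with the compactification point at $\infty$ — is where the computation has to be done honestly. Everything else (tightness, passage to the limit, identification of constants) is routine given Theorem \ref{MLK} and Lemma \ref{lemmaconvergence}.
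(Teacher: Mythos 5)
Your starting point is the same as the paper's: by homogeneity $F_{k_t(x,\cdot)}=F_{\mu_t}-x$, and differentiating gives $\tfrac1t\bigl(G_{k_t(x,\cdot)}(z)-\tfrac1{z-x}\bigr)\to -A(z)/(z-x)^2$. But the route you propose for converting this into the two stated limits has a genuine gap. Applying Lemma \ref{lemmaconvergence} to the Pick functions $\tfrac1t\bigl(F_{k_t(x,\cdot)}(z)-(z-x)\bigr)=\tfrac1t\bigl(F_{\mu_t}(z)-z\bigr)$ only yields convergence of the Nevanlinna data of $F_{\mu_t}-\mathrm{id}$, i.e.\ of $b_t/t$ and of the representing measures $\rho_t/t$ of the $F$-transform. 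That is \emph{not} the statement of Lemma \ref{lem:generator}, which concerns the weighted kernels $\tfrac1t\tfrac{(y-x)^2}{1+y^2}\,k_t(x,{\rm d}y)$. The representing measure of $F_\mu-\mathrm{id}$ is not related to $\mu$ by any pointwise density (e.g.\ for $\mu=\tfrac12(\delta_{-1}+\delta_1)$ one has $F_\mu(z)=z-1/z$, whose Nevanlinna measure is $\delta_0$), so passing from "convergence of $\rho_t/t$" to "convergence of the weighted kernels" is precisely the unproven content; the "standard equivalence theorem for infinitesimal arrays" you invoke is not available in the paper for monotone convolution (cf.\ Problem \ref{m-analogue2}) and is essentially the lemma itself. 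A second, related slip: your change of variables $y=x+w$ turning $k_t(x,{\rm d}y)$ into $\mu_t({\rm d}w)$ is false, because $\delta_x\rhd\mu_t$ is \emph{not} the translate of $\mu_t$ (only the right shift $\mu_t\rhd\delta_x$ is); homogeneity acts as a shift only on the $F$-transform, and your closing paragraph still frames the difficulty as "tracking the shift $y\mapsto y+x$", which is not the actual issue.

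What the paper does instead, and what your plan is missing, is to stay with integrals against $k_t(x,\cdot)$ throughout: from the derivative formula one gets $A(z)=\lim_{t\to0}\tfrac1t\int_\R \tfrac{(z-x)(x-y)}{z-y}\,k_t(x,{\rm d}y)$; specializing to $z=x+iv$ and using $A(x+iv)=o(v)$ as $v\to\infty$ gives tightness, and a comparison of $1+y^2$ with $1+(x-y)^2$ (constant depending on $x$) gives uniform boundedness of the family $\tfrac{(y-x)^2}{t(1+y^2)}k_t(x,{\rm d}y)$. One then extracts subsequential weak limits and identifies them through the algebraic decomposition $\tfrac{(z-x)(x-y)}{z-y}=\bigl(\tfrac{1+y^2}{y-z}-y\bigr)\tfrac{(x-y)^2}{1+y^2}-\bigl(y-x-\tfrac{(x-y)^2y}{1+y^2}\bigr)$ together with uniqueness of the Pick--Nevanlinna representation of $A$; the second term is exactly $\tfrac{(y-x)(1+xy)}{1+y^2}$ and produces $\gamma$. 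This decomposition is the "bookkeeping" you flagged as the hard step but did not carry out, and neither Lemma \ref{lemmaconvergence} nor a translation argument supplies it; the tightness argument, which you dismiss as routine, is the other substantive ingredient.
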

\begin{proof} Let $G_{x,t}$ and $F_{x,t}$ be the Cauchy transform and its reciprocal of the distribution $k_t(x,\cdot)$.
Since the infinitesimal generator $A(z)$ in Equation \eqref{MLK1} is defined by $A(z) = \left.\frac{{\rm d}}{{\rm d}t}\right|_{t=0} F_{0,t}(z)$, we have
$$
\left.\frac{{\rm d}}{{\rm d}t}\right|_{t=0} G_{x,t}(z)  =\left.\frac{{\rm d}}{{\rm d}t}\right|_{t=0} \frac{1}{F_{0,t}(z)-x}= -(z-x)^{-2} A(z)
$$
and then
\begin{align*}
A(z) &= -\lim_{t\to 0} (z-x)^2 \frac{1}{t}\left(\int_\R \frac{1}{z-y}\,k_{t}(x,{\rm d}y) -\frac{1}{z-x} \right) \\
&= \lim_{t\to 0}\frac{1}{t} \int_\R \frac{(z-x)(x-y)}{z-y}\,k_{t}(x,{\rm d}y).
\end{align*}
In particular for $z= x + i v$ we get
$$
\Im(A(x+iv)) = \lim_{t\to 0}\frac{1}{t} \int_\R \frac{(x-y)^2 v}{v^2+(x-y)^2}\,k_{t}(x,{\rm d}y).
$$
Fix $x \in \R$. Since $A(x+ iv) = o(v)$ as $v\to\infty$, for every $\epsilon>0$ there exists $v_0>0$ such that $|A(x+iv_0)| < \epsilon v_0$. This shows that there exists $t_0>0$ such that for all $0<t<t_0$
$$
\frac{1}{t} \int_\R \frac{(x-y)^2}{v_0^2+(x-y)^2}\,k_{t}(x,{\rm d}y) < 2\epsilon.
$$
For some $y_0>0$ we have $1+y^2 \geq \frac{1}{2}(v_0^2+(x-y)^2)$ for all $y \geq y_0$, and hence
$$
\frac{1}{t}\int_{|y|> y_0} \frac{(x-y)^2}{1+y^2}\,k_{t}(x,{\rm d}y) < 4\epsilon, \qquad 0 < t< t_0.
$$
This implies that the family $\{\frac{(x-y)^2}{t(1+y^2)}\,k_{t}(x,{\rm d}y):  0<t< t_0\}$ is tight.  Also this family is uniformly bounded, because there exists a constant $C>0$ not depending on $y$ such that  we have $1 \leq C \frac{1+y^2}{1+(x-y)^2}$ for all $y \in \R$, and hence
$$
\frac{1}{t}\int_{\R} \frac{(x-y)^2}{1+y^2}\,k_{t}(x,{\rm d}y) \leq \frac{C}{t}\int_{\R} \frac{(x-y)^2}{1+(x-y)^2}\,k_{t}(x,{\rm d}y) \to C\,\Im(A(x+i)), \qquad t\downarrow0.
$$
Take a weak limit $\rho'$ of this family. Then
\begin{align*}
&A(z)
= \lim_{t\to 0}\frac{1}{t} \int_\R \frac{(z-x)(x-y)}{z-y}\,k_{t}(x,{\rm d}y) \\
&=  \lim_{t\to 0}\frac{1}{t} \int_\R \left( x-y - \frac{(x-y)^2}{z-y}\right)k_{t}(x,{\rm d}y) \\
&=  \lim_{t\to 0}\frac{1}{t} \int_\R \left(\frac{1+y^2}{y-z} -y\right) \frac{(x-y)^2}{1+y^2} k_{t}(x,{\rm d}y) - \lim_{t\to 0}\frac{1}{t} \int_\R \left( y-x - \frac{(x-y)^2 y}{1+y^2}\right)k_{t}(x,{\rm d}y) \\
&=  \int_\R \frac{1+y z}{y-z}\rho'({\rm d}y) - \lim_{t\to 0}\frac{1}{t} \int_\R \left( y-x - \frac{(x-y)^2 y}{1+y^2}\right)k_{t}(x,{\rm d}y).
\end{align*}
By the uniqueness of the \index{Pick-Nevanlinna representation}Pick-Nevanlinna representation, we conclude that
$$
\rho'=\rho \text{~~and~~} \gamma = \lim_{t\to 0}\frac{1}{t} \int_\R \left( y-x - \frac{(x-y)^2 y}{1+y^2}\right)k_{t}(x,{\rm d}y),
$$
which shows the desired claim.
\end{proof}
In order to state \index{Hunt's formula}Hunt's formula we introduce the free difference quotient $\partial\colon C^1(\R)\to C(\R^2)$ by
\begin{equation}\label{eq:FDQ}
(\partial f) (x,y) = \begin{cases} \frac{f(x)-f(y)}{x-y}, & x \neq y, \\ f'(x), & x=y. \end{cases}
\end{equation}
Then for $f \in C^2(\R)$ we have
\begin{equation}\label{eq:Ans}
(\partial_x\partial f)(x,y) = \begin{cases}\frac{f(y)-f(x)-(y-x)f'(x)}{(y-x)^2} ,&x\neq y, \\ \frac{1}{2}f''(x), & x=y.
\end{cases}
\end{equation}
Notice that the operator \eqref{eq:Ans} was previously used by Anshelevich in a similar context \cite{Ans13}.

Now we can compute the generators of the Markov processes.  
Let $\mathcal B_b(\R)$ be the set of all bounded Borel measurable functions $f \colon \R \to \C$. 
\begin{theorem}\label{thm:Hunt_additive}
Let $(M_t)_{t\geq0}$ be a stationary $\rhd$-homogeneous Markov process with transition kernels $(k_t)_{t\geq0}$. Let $T_t\colon \mathcal B_b(\R)\to \mathcal B_b(\R)$ be its transition semigroup
\begin{equation*}
(T_t f)(x) = \int_\R f(y) \,k_t(x,{\rm d}y), \qquad f \in \mathcal B_b(\R),
\end{equation*}
which satisfies $T_s T_t = T_{s+t}$ for $s,t\geq0$.
The generator of the transition semigroup is then given by
\begin{align*}
(G f)(x) &:= \left.\frac{{\rm d}}{{\rm d} t}\right|_{t=0} (T_t f)(x) \\
&=  \gamma f'(x) + \int_\R\left\{(1+y^2)(\partial_x\partial f)(x,y) +y f'(x)\right\} \rho({\rm d}y)
\end{align*}
for $f \in C_b(\R)\cap C^2(\R)$ and $x\in\R$, where $(\gamma,\rho)$ is the pair in \eqref{MLK1} associated to the monotone convolution semigroup $\{k_t(0,\cdot)\}_{t\geq0}$.
\end{theorem}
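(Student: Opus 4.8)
The plan is to differentiate the transition semigroup at $t=0$ and transfer the limit computations to the weak limits of measures already established in Lemma \ref{lem:generator}. First I would write, for $f \in C_b(\R)\cap C^2(\R)$ and $x\in\R$,
\[
\frac{(T_t f)(x) - f(x)}{t} = \frac{1}{t}\int_\R \big(f(y)-f(x)\big)\,k_t(x,{\rm d}y),
\]
and apply the second-order Taylor expansion of $f$ around $x$ in the form
\[
f(y)-f(x) = (y-x) f'(x) + (y-x)^2 (\partial_x\partial f)(x,y),
\]
which is exactly \eqref{eq:Ans} (note $(\partial_x\partial f)(x,y)$ is continuous and bounded on compact sets and tends to $\tfrac12 f''(x)$ as $y\to x$). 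Splitting the $(y-x) f'(x)$ term by writing $y-x = \big(y-x-\tfrac{(y-x)^2 y}{1+y^2}\big) + \tfrac{(y-x)^2 y}{1+y^2}$ and regrouping, I would arrive at
\[
\frac{(T_t f)(x) - f(x)}{t} = f'(x)\cdot\frac{1}{t}\int_\R\Big(y-x-\tfrac{(y-x)^2 y}{1+y^2}\Big)k_t(x,{\rm d}y) + \frac{1}{t}\int_\R\big((1+y^2)(\partial_x\partial f)(x,y) + y f'(x)\big)\frac{(y-x)^2}{1+y^2}\,k_t(x,{\rm d}y).
\]

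The first term on the right converges to $\gamma f'(x)$ as $t\downarrow0$ by the second limit in Lemma \ref{lem:generator}. For the second term, Lemma \ref{lem:generator} tells us that the finite measures $\tfrac{1}{t}\tfrac{(y-x)^2}{1+y^2}k_t(x,{\rm d}y)$ converge weakly to $\rho$ as $t\downarrow0$, so I would want to integrate the bounded continuous test function $g(y) := (1+y^2)(\partial_x\partial f)(x,y) + y f'(x)$ against them and pass to the limit. The function $g$ is continuous in $y$, but it is \emph{not} bounded as $y\to\pm\infty$ (it grows like $(1+y^2)\cdot\tfrac12 f''(y)\cdot$... actually like $f(y)-f(x)-(y-x)f'(x)$, which is $O(|y|)$ since $f$ is bounded), so plain weak convergence does not immediately apply. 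The main obstacle is therefore this uniform-integrability/tightness issue: I expect to handle it exactly as in the proof of Lemma \ref{lem:generator}, by using the tightness of the family $\{\tfrac{(y-x)^2}{t(1+y^2)}k_t(x,{\rm d}y): 0<t<t_0\}$ together with the uniform bound $g(y) = f(y)-f(x)-(y-x)f'(x) = O(1+|y|)$ and the bound $1+|y| \le C'\tfrac{1+y^2}{1+(y-x)^2}$, which shows $\tfrac{1}{t}\int |g(y)|\tfrac{(y-x)^2}{1+y^2}k_t(x,{\rm d}y)$ stays bounded and that the contribution from $|y|>y_0$ is uniformly small; then split $\int = \int_{|y|\le y_0} + \int_{|y|>y_0}$, apply weak convergence on the compact piece and the tail estimate on the rest, and let $y_0\to\infty$.

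Finally I would rewrite the limit $\int_\R g(y)\,\rho({\rm d}y) = \int_\R\big((1+y^2)(\partial_x\partial f)(x,y) + y f'(x)\big)\rho({\rm d}x)$ and add the $\gamma f'(x)$ term to obtain the claimed formula for $(Gf)(x)$. The semigroup property $T_s T_t = T_{s+t}$ is immediate from the Chapman-Kolmogorov relation \eqref{eq:CK} established in Theorem \ref{theo-Markov-proc}, and that $T_t$ maps $\mathcal B_b(\R)$ into itself follows from the measurability of $x\mapsto k_t(x,B)$ proved there. Throughout I will only need that $\rho$ is finite and $\gamma$ real, which are guaranteed by Theorem \ref{MLK}, so no additional moment assumptions on the process enter.
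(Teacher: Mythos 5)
Your decomposition is exactly the paper's: Taylor-expand via \eqref{eq:Ans}, regroup with the compensator $\frac{(y-x)^2y}{1+y^2}$, and send both pieces to Lemma \ref{lem:generator}; the drift term, the semigroup property, and the measurability remarks are all handled correctly. The flaw is in your analysis of the remaining term. The test function
\[
g(y) \;=\; (1+y^2)(\partial_x\partial f)(x,y) + y f'(x)
\;=\; \frac{(1+y^2)\bigl(f(y)-f(x)\bigr)}{(y-x)^2} \;-\; f'(x)\,\frac{1+xy}{y-x}, \qquad y\neq x,
\]
is in fact \emph{bounded} and continuous in $y$: near $y=x$ it is continuous with value $(1+x^2)\tfrac12 f''(x)+x f'(x)$ since $f\in C^2$, while for $|y-x|\ge 1$ both terms above are bounded because $f$ is bounded, $(1+y^2)/(y-x)^2 \le 1+(1+|x|)^2$, and $|1+xy|/|y-x| \le 1+|x|+|x|^2$. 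This is precisely the point of the compensator $yf'(x)$: it cancels the $O(|y|)$ growth of $(1+y^2)(\partial_x\partial f)(x,y)$ (cf.\ the remark following the theorem). Hence plain weak convergence of the finite measures $\tfrac1t\tfrac{(y-x)^2}{1+y^2}k_t(x,{\rm d}y)$ to $\rho$ applies directly, and the proof closes as in the paper, with no truncation argument.

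Moreover, the patch you propose would not work even if the obstacle you describe were real: the inequality $1+|y|\le C'\tfrac{1+y^2}{1+(y-x)^2}$ is false (for fixed $x$ the right-hand side is bounded in $y$, the left-hand side is not; the correct bound used in Lemma \ref{lem:generator} has $1$, not $1+|y|$, on the left), and the tightness of $\{\tfrac{(y-x)^2}{t(1+y^2)}k_t(x,{\rm d}y):0<t<t_0\}$ only makes the tail \emph{mass} uniformly small, not the tail mass weighted by $|y|$. Since $\rho$ need not have a finite first moment, an integrand genuinely of order $|y|$ could not be passed to the limit this way (and the limiting integral might not even converge). So delete that paragraph, note $g\in C_b(\R)$, and your argument coincides with the paper's.
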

\begin{proof} For $f \in C_b(\R)\cap C^2(\R)$ the identity
\begin{align*}
&\int_\R f(y) \,k_t(x,{\rm d}y) -f(x) = \int_{\R\setminus\{x\}} \{ f(y)-f(x)\} \,k_t(x,{\rm d}y) \\
&=\int_{\R\setminus\{x\}}\left(\frac{(1+y^2)\{f(y)-f(x)-(y-x)f'(x)\}}{(y-x)^2} +y f'(x)\right) \frac{(y-x)^2}{1+y^2} k_t(x,{\rm d}y)  \\
& \qquad + f'(x)\int_{\R\setminus\{x\}}\left(y-x -  \frac{(y-x)^2 y}{1+y^2}\right) k_t(x,{\rm d}y) \\
&= \int_\R \left\{(1+y^2)(\partial_x\partial f)(x,y) +y f'(x)\right\} \frac{(y-x)^2}{1+y^2} k_t(x,{\rm d}y) \\
&\qquad + f'(x)\int_\R\left(y-x -  \frac{(y-x)^2 y}{1+y^2}\right) k_t(x,{\rm d}y)
\end{align*}
holds. Lemma \ref{lem:generator} implies the desired formula.
\end{proof}

\begin{remark} The term
$y f'(x)$ in the generator is a compensator: it is placed so that the integral converges. If the measure $\rho$ has a finite first moment, then we can integrate out the compensator to get the reduced form
\begin{align}\label{eq:reduced}
(G f)(x) =  \tilde\gamma f'(x) + \int_\R (1+y^2)(\partial_x\partial f)(x,y)\, \rho({\rm d}y),
\end{align}
where
$$
\tilde\gamma = \gamma + \int_{\R} y \,\rho({\rm d}y).
$$
\end{remark}

\begin{example} \begin{enumerate}
\item If $\gamma=0$ and $\rho = \delta_0$, then the generator is
$$
(G f)(x)
= \begin{cases}
\frac{f(0)-f(x) +x f'(x)}{x^2} ,& x \neq0, \\
\frac{1}{2} f''(0), & x=0,
\end{cases}
$$
and $k_t(0,\cdot)$ is the \index{arcsine distribution}arcsine law with mean $0$ and variance $t$. The infinitesimal generator (see Theorem \ref{MLK}) for the $\rhd$-convolution semigroup $\{k_t(0,\cdot)\}_{t\geq0}$ is given by $A(z) = -\frac{1}{z}$, and hence $F_{k_t(0,\cdot)}(z)=\sqrt{z^2 -2t}$.
The transition probability of the associated stationary Markov process is
\begin{equation*}
k_t(x,{\rm d}y) = \frac{\sqrt{2t-y^2}}{\pi (x^2-y^2 +2t)} \,\mathbf1_{(-\sqrt{2t},\sqrt{2t})}(y)\,{\rm d}y + \frac{|x|}{\sqrt{x^2+2t}}\delta_{\text{sign}(x)\sqrt{x^2+2t}}({\rm d}y).
\end{equation*}

\item If $\gamma=-\lambda/2$ and $\rho= (\lambda /2)\delta_1$ for $\lambda>0$, then
$$
(G f)(x)
= \begin{cases}
\lambda\frac{f(1)-f(x) -(1-x) f'(x)}{(1-x)^2} ,& x \neq1, \\
\frac{\lambda}{2} f''(1), & x=1,
\end{cases}
$$
and $k_t(0,\cdot)$ is the monotone Poisson law with mean $\lambda t$ and variance $\lambda t$ \cite{Mur01b}.

\item 
The monotone stable process with index $\alpha \in(0,2)$ and asymmetry parameter $\beta \in [1-1/\alpha,1/\alpha]\cap[0,1]$ has the distribution characterized by $F_{\mu_t}(z) = (z^\alpha+ t e^{i \alpha \beta \pi})^{1/\alpha}$ (see also Example \ref{mst}). The function $A(z)$ in Theorem \ref{MLK} is given by $\frac{1}{\alpha}e^{i\alpha\beta\pi}z^{1-\alpha}$. The pair $(\gamma, \rho)$ is computed by the Stieltjes inversion formula to be $\gamma=-\alpha^{-1} \sin\alpha(\beta-1/2)\pi$ and
\begin{equation*}
(1+y^2)\rho({\rm d}y) = \frac{\sin(\alpha\beta\pi)}{\alpha\pi} \mathbf1_{(0,\infty)}(y) y^{1-\alpha}\, 
{\rm d}y + \frac{\sin(\alpha(1-\beta)\pi)}{\alpha\pi} \mathbf1_{(-\infty,0)}(y) |y|^{1-\alpha}\,  {\rm d}y.
\end{equation*}
In particular, if $1<\alpha \leq 2$, then $\rho$ has a finite first moment, and one can show that $\tilde \gamma =0$ in the reduced version \eqref{eq:reduced}.

\item If $\alpha=1/2$ and $\beta=1$ in the above example, then the transition kernel $k_t(x,\cdot) = \delta_x \rhd \mu_t$ is explicitly given by
\begin{equation*}
k_t(x, {\rm d}y) = \frac{2t\sqrt{y}}{\pi[y^2+2(t^2-x)y+(t^2+x)^2]} \mathbf1_{(0,\infty)}(y)\,{\rm d}y +\lambda \delta_{-(\sqrt{|x|}-t)^2},
\end{equation*}
where
\begin{align*}
\lambda &= \begin{cases}  1-\frac{t}{\sqrt{|x|}}, & x< -t^2, \\ 0, & x \geq -t^2. \end{cases}
\end{align*}
The generator is given by
\begin{equation*}
(G f)(x) =  -\sqrt{2} f'(x) + \int_{0}^\infty \left( \frac{f(y)-f(x)-(y-x)f'(x)}{(y-x)^2} + \frac{y}{1+y^2}f'(x)\right) \frac{2\sqrt{y}}{\pi}\,{\rm d}y
\end{equation*}
for $f \in C_b(\R)\cap C^2(\R)$ and $x\in\R$.

\end{enumerate}
\end{example}

\subsection{Feller property in the stationary case}\label{sec:Feller_additive}
\index{Feller property|(}
We continue the study of the stationary case. We show below that a stationary $\rhd$-homogeneous Markov process actually is a Feller process. The advantage of Feller processes includes the fact that the process has a cadlag version and  has the strong Markov property. We refer the reader to \cite[Chap.\ 19]{kallenberg02} and \cite[Chapter III]{RY99} for Feller processes and semigroups.

\begin{definition}\label{def:Feller}
Let $A$ be a locally compact Hausdorff space. Let $C_0(A)$ be the Banach space of continuous functions $f\colon A \to \C$ vanishing at infinity, equipped with the uniform norm. A one-parameter semigroup $(Q_t)_{t\geq0}$ of linear operators on $C_0(A)$ is called a \emph{Feller semigroup} if
\begin{enumerate}[\quad(F1)]
\item\label{F1} $0\leq Q_t f \leq 1$ whenever $0 \leq f \leq 1$ and $t\geq0$,
\item\label{F2} $(Q_tf)(x)\to f(x)$ as $t\downarrow0$ for all $f \in C_0(A)$ and $x\in A$.
\end{enumerate}
\end{definition}

Let $S_t := T_t|_{C_0(\R)}$, where $(T_t)_{t\geq0}$ is the transition semigroup on $\mathcal B_b(\R)$ defined in Theorem \ref{thm:Hunt_additive}.
\begin{theorem}\label{thm:Feller}
The family $(S_t)_{t\geq0}$ is a Feller semigroup on $C_0(\R)$.
\end{theorem}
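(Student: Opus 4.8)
To prove that $(S_t)_{t\geq 0}$ is a Feller semigroup we have to verify two things: (F\ref{F1}) the positivity/subnormalization property, and (F\ref{F2}) the strong continuity at $t=0$; but crucially we must also check the implicit claim that $S_t$ actually maps $C_0(\R)$ into itself, i.e.\ that $T_t f \in C_0(\R)$ whenever $f \in C_0(\R)$. The semigroup property $S_sS_t=S_{s+t}$ is inherited from $T_sT_t=T_{s+t}$ (Theorem \ref{thm:Hunt_additive}), so nothing new is needed there. Property (F\ref{F1}) is immediate from the fact that each $k_t(x,\cdot)$ is a probability measure: if $0\le f\le 1$ then $0 \le (T_tf)(x) = \int_\R f(y)\,k_t(x,{\rm d}y) \le k_t(x,\R) = 1$.

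\textbf{The two substantive steps.} First, the Feller property $S_t C_0(\R)\subseteq C_0(\R)$. Here I would use the $\rhd$-homogeneity explicitly. By the correspondence in Section \ref{summary_additive} we have $F_{k_t(x,\cdot)}(z) = F_t(z) - x$ where $F_t = F_{k_t(0,\cdot)}$, hence
\begin{equation}\label{eq:feller-cauchy}
(T_t g_w)(x) = \int_\R \frac{1}{w-y}\,k_t(x,{\rm d}y) = \frac{1}{F_t(w)-x}, \qquad w \in \C^+,\ x\in\R,
\end{equation}
where $g_w(y) = (w-y)^{-1}$. For fixed $w\in\C\setminus\R$ the right-hand side is, as a function of $x$, continuous and tends to $0$ as $|x|\to\infty$ (since $F_t(w)\in\C\setminus\R$ is bounded away from $\R$), so $T_t g_w \in C_0(\R)$. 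The same holds for $g_{\bar w}$ by conjugation, and hence $T_t$ maps the span $D$ of Lemma \ref{lem:approx} into $C_0(\R)$. Now for general $f\in C_0(\R)$ pick $f_n\in D$ with $\|f_n-f\|_\infty\to 0$; then $\|T_tf_n - T_tf\|_\infty \le \|f_n-f\|_\infty \to 0$ because $T_t$ is a contraction in sup-norm (again since $k_t(x,\cdot)$ is a probability measure), so $T_tf$ is a uniform limit of elements of $C_0(\R)$ and therefore lies in $C_0(\R)$. This proves $S_t\colon C_0(\R)\to C_0(\R)$.

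\textbf{Continuity at $0$.} For (F\ref{F2}) I would again reduce to the dense set $D$. For $g_w$ with $w\in\C\setminus\R$, \eqref{eq:feller-cauchy} gives $(S_tg_w)(x) - g_w(x) = \frac{1}{F_t(w)-x} - \frac{1}{w-x}$, and since $F_t(w)\to F_0(w)=w$ as $t\downarrow 0$ (continuity of the Loewner chain, equivalently weak continuity of $k_t(0,\cdot)$ at $t=0$ together with Lemma \ref{lemmaconvergence}), and since $\Im(F_t(w))$ stays bounded below by $\Im(w)$ while $|F_t(w)|$ stays bounded on a neighbourhood of $0$, this difference goes to $0$ \emph{uniformly in $x\in\R$}; in particular pointwise. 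Hence $\|S_tg_w - g_w\|_\infty\to 0$. For general $f\in C_0(\R)$, given $\varepsilon>0$ choose $h\in D$ with $\|f-h\|_\infty<\varepsilon$; then
\[
\|S_tf - f\|_\infty \le \|S_t(f-h)\|_\infty + \|S_th - h\|_\infty + \|h-f\|_\infty \le 2\varepsilon + \|S_th-h\|_\infty,
\]
and the middle term $\to 0$ as $t\downarrow0$ since $h$ is a finite linear combination of functions $g_w$. Thus $\limsup_{t\downarrow0}\|S_tf-f\|_\infty\le 2\varepsilon$ for every $\varepsilon$, giving $S_tf\to f$ uniformly, which is stronger than the pointwise convergence required in (F\ref{F2}).

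\textbf{Expected main obstacle.} The only place that needs care is the claim that $F_t(w)\to w$ as $t\downarrow 0$ locally uniformly and, more importantly, that the convergence $(S_tg_w)(x)\to g_w(x)$ is uniform in $x$ rather than merely pointwise. For pointwise convergence in (F\ref{F2}) one does not strictly need uniformity, so a more economical argument: fix $w$, fix $x$, and just use $F_t(w)\to w$. But to push the approximation argument through for \emph{general} $f$ one does want the $C_0$-valued continuity; the uniform bound above delivers it, the key input being that $w\mapsto F_t(w)$ are, for $t$ in a neighbourhood of $0$, uniformly bounded and uniformly bounded away from $\R$ near a fixed $w\in\C\setminus\R$ — which follows from $\Im F_t(w)\ge \Im w$ (Lemma \ref{Julia}) and local uniform convergence $F_t\to\mathrm{id}$. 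I expect essentially no other difficulty; this is the ``soft'' part of the paper and the heavy lifting (Hunt's formula, the resolvent computations) has already been done.
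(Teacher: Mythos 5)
Your proof is correct and follows essentially the same route as the paper: density of the resolvent span $D$ from Lemma \ref{lem:approx}, the explicit formula $(S_t g_w)(x)=1/(F_t(w)-x)$ coming from $\rhd$-homogeneity, the sup-norm contraction property to pass from $D$ to all of $C_0(\R)$, and continuity at $t=0$ via $F_t(w)\to w$. The only (harmless) difference is that you upgrade the continuity at $t=0$ to uniform convergence using $\Im(F_t(w))\ge\Im(w)$, whereas the paper only verifies the pointwise condition (F2).
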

\begin{proof}
We need to prove that $S_t$ defines a linear operator on $C_0(\R)$. The set 
\begin{equation*}
D=\text{\rm span}_\C\left\{\frac{1}{z-x}: z \in \C \setminus\R\right\}
\end{equation*}
is dense in $C_0(\R)$ by Lemma \ref{lem:approx}. By the Markov property it is easy to show that $S_t D \subset D$. Indeed,
\begin{equation}\label{eq:D}
S_t \sum_{i=1}^n \frac{\lambda_i}{z_i - x} = \sum_{i=1}^n \frac{\lambda_i}{F_t(z_i) - x} \in D,
\end{equation}
where $F_t$ is the reciprocal Cauchy transform of $k_t(0,\cdot)$ as before. By approximating $C_0(\R)$ by $D$ and using $\sup_{x \in \R}|S_tf(x)| \leq \|f\|_{C_0(\R)}$ we conclude that $S_t C_0(\R) \subset C_0(\R)$.

The property (F\ref{F1}) is obvious, since each $k_t(x,\cdot)$ is a probability measure on $\R$.
For continuity at $t=0$, first observe that for $g \in D$ and $x\in\R$ we have $(S_t g)(x) \to g(x)$ by using \eqref{eq:D} and the fact that $F_t(z)$ is right-continuous (even right-differentiable) at $t=0$. For a general $f \in C_0(\R)$ we can take $g \in D$ close in uniform norm to $f$, so that
\begin{align*}
|(S_t f)(x) - f(x)|
&\leq |(S_t (f-g))(x)| +|(S_t g)(x)-g(x)| + |g(x)-f(x)|\\
&\leq 2\|f-g\|_{C_0(\R)} +|(S_t g)(x)-g(x)|,
\end{align*}
and hence $(S_t f)(x) \to f(x)$ for all $x\in\R$ as $t\downarrow0$.
\end{proof}
{}From the theory of Feller processes, the family $(S_t)_{t\geq0}$ has the infinitesimal generator $L$ with the domain
\begin{equation*}
\Dom(L) = \left\{f \in C_0(\R): \lim_{t\downarrow0} \frac{S_t f -f}{t} \text{~exists in the uniform norm}\right\}.
\end{equation*}
Then $Lf$ is defined to be the limit above for all $f \in \Dom(L)$. {}From Theorem \ref{thm:Hunt_additive}, for $f \in \Dom(L) \cap C^2(\R)$ the generator $L$ coincides with $G$. We show that $D$ is contained in $\Dom(L)$ (it is clear that $D \subset C^2(\R))$.

\begin{proposition} We have that $D \subset \Dom(L)$, and hence for functions in $D$ the convergence of the Hunt formula in Theorem \ref{thm:Hunt_additive} holds with respect to the uniform norm.
\end{proposition}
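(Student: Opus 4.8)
The plan is to verify directly, for each generator $g_z(x) = (z-x)^{-1}$ with $z\in\C\setminus\R$, that the uniform limit
\[
\lim_{t\downarrow 0} \frac{S_t g_z - g_z}{t}
\]
exists in $C_0(\R)$; since $\Dom(L)$ is a linear subspace, this gives $D\subset\Dom(L)$. From \eqref{eq:D} we know $S_t g_z(x) = (F_t(z)-x)^{-1}$, where $F_t = F_{k_t(0,\cdot)}$ is the $F$-transform of the monotone convolution semigroup $\{k_t(0,\cdot)\}_{t\ge0}$, so
\[
\frac{S_t g_z(x) - g_z(x)}{t} = \frac{1}{t}\left(\frac{1}{F_t(z)-x} - \frac{1}{z-x}\right) = \frac{1}{t}\cdot\frac{z - F_t(z)}{(F_t(z)-x)(z-x)}.
\]
By Theorem \ref{MLK}\eqref{MLKC}, the right derivative $A(z)=\frac{\rm d}{{\rm d}t}\big|_{t=0}F_t(z)$ exists and $F_0(z)=z$, so $\frac{1}{t}(z-F_t(z)) \to -A(z)$ as $t\downarrow0$. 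Hence the expression above converges \emph{pointwise} to $-A(z)/(z-x)^2 = A(z)\,\partial_x g_z(x)$, which indeed lies in $C_0(\R)$ and which, by Theorem \ref{thm:Hunt_additive} applied to $f=g_z$, equals $(G g_z)(x)$.

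The real content is upgrading this pointwise convergence to uniform convergence on $\R$. For that I would bound
\[
\left|\frac{S_t g_z(x) - g_z(x)}{t} - \frac{A(z)}{(z-x)^2}\right|
\]
uniformly in $x$. Write the difference as a sum of two terms: one where $\frac{1}{t}(z-F_t(z))$ is replaced by its limit $-A(z)$, controlled by $\big|\frac{1}{t}(z-F_t(z)) + A(z)\big|\cdot\big|(F_t(z)-x)(z-x)\big|^{-1}$, and one coming from replacing the factor $(F_t(z)-x)^{-1}$ by $(z-x)^{-1}$, which is
\[
\frac{A(z)}{z-x}\left(\frac{1}{F_t(z)-x} - \frac{1}{z-x}\right) = \frac{A(z)}{z-x}\cdot\frac{z-F_t(z)}{(F_t(z)-x)(z-x)}.
\]
The key uniform estimate is that $|F_t(z)-x|$ is bounded below uniformly in $x\in\R$ and for $t$ small: indeed $\Im F_t(z)\ge\Im z$ for all $t$ by Lemma \ref{Julia} (since $F_t$ is an $F$-transform), so $|F_t(z)-x|\ge \Im z>0$; similarly $|z-x|\ge\Im z$. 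Therefore both error terms are $O\big(|z-F_t(z)|\big)$ with a constant depending only on $z$ and $A(z)$, plus the term $\big|\frac{1}{t}(z-F_t(z))+A(z)\big|\cdot(\Im z)^{-2}$, and all of these tend to $0$ (uniformly in $x$, because $x$ has dropped out of the bounds) as $t\downarrow0$.

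The one point requiring a little care is that the convergence $\frac{1}{t}(z-F_t(z))\to -A(z)$, and the boundedness of $z-F_t(z)$, are only asserted in Theorem \ref{MLK} as pointwise-in-$z$ statements; but since we fix a single $z$ for each generator $g_z$ and only need uniformity in the \emph{spatial} variable $x$, this is enough. I expect the main (minor) obstacle to be simply organizing the algebra of the two-term splitting cleanly and recording that every constant that appears depends on $z$ alone, not on $x$. Concluding: each $g_z\in\Dom(L)$ with $Lg_z = A(z)\,\partial_x g_z$, which by linearity gives $D\subset\Dom(L)$, and by Theorem \ref{thm:Hunt_additive} the limit coincides with $Gg_z$, so the Hunt-formula convergence holds in uniform norm for all $f\in D$.
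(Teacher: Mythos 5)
Your argument is correct and follows essentially the same route as the paper's proof: the same two-term splitting of $\frac{S_t g_z - g_z}{t} + \frac{A(z)}{(z-x)^2}$ into a piece controlled by $\big|\tfrac{1}{t}(z-F_t(z))+A(z)\big|$ and a piece controlled by $|z-F_t(z)|$, together with the uniform-in-$x$ lower bounds $|F_t(z)-x|\ge \Im F_t(z)\ge \Im z$ and $|z-x|\ge \Im z$ (which the paper leaves implicit). One cosmetic slip: since $\partial_x g_z(x)=(z-x)^{-2}$, the limit is $-A(z)\,\partial_x g_z$, not $A(z)\,\partial_x g_z$; this sign typo does not affect the argument, as the substantive identification $L g_z = (G g_z)(x) = -A(z)/(z-x)^2$ is correct.
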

\begin{proof} By linearity it suffices to take $g(x) =\frac{1}{z - x} $. Using \eqref{eq:D} we have
\begin{equation*}
(S_t g) (x) - g(x) = - \frac{F_t(z)-z}{(F_t(z)-x)(z-x)}.
\end{equation*}
Let $A(z):= \lim_{t\downarrow0}(F_t(z)-z)/t$ which exists locally uniformly on $\C^+$ from \cite{BP78}. Then
\begin{align*}
&\frac{(S_t g) (x) - g(x)}{t} + \frac{A(z)}{(z-x)^2} \\
&= \frac{1}{z-x} \left[ \left( A(z) -\frac{F_t(z)-z}{t}\right) \frac{1}{F_t(z)-x} + A(z)\left( \frac{1}{z-x} -\frac{1}{F_t(z)-x}\right) \right],
\end{align*}
which converges to zero uniformly in $x \in \R$.
\end{proof}
It is known from \cite[Prop.\ 19.9]{kallenberg02} that any dense subspace of $\Dom(L)$ which is invariant by operators $\{S_t:t\geq0\}$ is a core for $L$. Thus we conclude that $D$ is a core for $L$.

\begin{problem}
Is it true that $C_c^\infty(\R) \subset \Dom(L)$, or more strongly $C_c^2(\R) \subset \Dom(L)$?
\end{problem}

\index{Feller property|)}

\subsection{Martingale property}
\index{Martingale property|(}

A martingale property for (not necessarily stationary) $\rhd$-homogeneous Markov processes follows from \eqref{eq:Markovianity}. 

\begin{proposition}\label{prop:martingale1} Let $(M_t)_{t\ge 0}$ be a $\rhd$-homogeneous Markov process with a filtration $(\mathcal F_t)$ such that $M_0=0$, and let $(F_t)_{t\geq0}$ be the associated additive Loewner chain. Then, for each fixed $u\geq0$ and $z \in F_u(\C^+)$ the process $(N_t^{z})_{0\leq t \leq u}$ defined by 
$$
N_t^{z}=\frac{1}{F_t^{-1}(z)-M_t} 
$$ 
is an $(\mathcal{F}_t)$-martingale.
\end{proposition}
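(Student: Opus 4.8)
The plan is to verify directly the martingale identity using the Markov property in the form \eqref{eq:Markovianity}, together with the cocycle relation $F_{su}^{-1}\circ F_t = F_s^{-1}\circ F_t$ for the transition mappings, and the fact that $z\in F_u(\C^+)$ forces $F_t^{-1}(z)$ to lie in $\C^+$ for every $t\le u$, so that $N_t^z$ is a well-defined bounded random variable. First I would fix $u\ge 0$ and $z\in F_u(\C^+)$. Since the transition mappings satisfy $F_u = F_t\circ F_{tu}$, we have $F_u(\C^+)\subseteq F_t(\C^+)$ for all $t\le u$, so $w_t:=F_t^{-1}(z)$ is a well-defined point of $\C^+$ for each $t\in[0,u]$, and because $\Im(F_t(\zeta))\ge\Im(\zeta)$ (Lemma \ref{Julia}), the preimage $w_t$ stays in $\C^+$ with $\Im(w_t)$ bounded below; hence $|N_t^z|\le 1/\Im(w_t)$ and in particular $N_t^z\in L^1$ (indeed $L^\infty$), and $N_t^z$ is $\mathcal F_t$-measurable since $M_t$ is.

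\textbf{The key computation.} For $0\le s\le t\le u$ I would condition on $\mathcal F_s$ and apply \eqref{eq:Markovianity}, which for the additive Loewner chain reads
\[
\mathbb E\!\left[\frac{1}{\zeta-M_t}\,\Big|\,\mathcal F_s\right] = \frac{1}{F_{st}(\zeta)-M_s}\qquad\text{a.s.,}\quad \zeta\in\C^+.
\]
Applying this with $\zeta=w_t=F_t^{-1}(z)$ (which is legitimate since $w_t\in\C^+$) gives
\[
\mathbb E[N_t^z\mid \mathcal F_s] = \frac{1}{F_{st}(F_t^{-1}(z))-M_s}\qquad\text{a.s.}
\]
Now I use the relation between $F_{st}$ and the chain: from $F_t = F_s\circ F_{st}$ we get $F_{st} = F_s^{-1}\circ F_t$ (on the appropriate domain, and $z$ being in $F_u(\C^+)\subseteq F_t(\C^+)$ makes $F_t^{-1}(z)$ and then $F_{st}(F_t^{-1}(z))=F_s^{-1}(z)$ well defined). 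Therefore
\[
\mathbb E[N_t^z\mid\mathcal F_s] = \frac{1}{F_s^{-1}(z)-M_s} = N_s^z\qquad\text{a.s.,}
\]
which is the martingale property. I would also remark that adaptedness was checked above and integrability is immediate from the bound $|N_t^z|\le 1/\Im(F_t^{-1}(z))$, with $t\mapsto\Im(F_t^{-1}(z))$ continuous (hence bounded away from $0$ on $[0,u]$) by the continuity of the Loewner chain.

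\textbf{Main obstacle.} The only delicate point is the bookkeeping of domains: one must be sure that $F_t^{-1}(z)\in\C^+$ for all $t\le u$ so that both the substitution into \eqref{eq:Markovianity} and the identity $F_{st}\circ F_t^{-1}=F_s^{-1}$ are valid. This is precisely why the statement restricts to $z\in F_u(\C^+)$: univalence of each $F_t$ (Theorem \ref{EV_univalence}) makes $F_t^{-1}$ single-valued on $F_t(\C^+)\supseteq F_u(\C^+)$, and the semigroup relation $F_u=F_t\circ F_{tu}$ guarantees the needed inclusions of ranges. Once this is in place the rest is a one-line application of \eqref{eq:Markovianity}; I do not expect any further complication.
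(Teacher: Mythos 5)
Your proof is correct and follows essentially the same route as the paper, whose proof consists of the single remark that the claim is a direct consequence of the Markov identity \eqref{eq:Markovianity} once univalence of each $F_t$ (Theorem \ref{EV_univalence}) makes $F_t^{-1}$ well defined on $F_u(\C^+)\subseteq F_t(\C^+)$. Your write-up merely fills in the domain bookkeeping, the substitution $\zeta=F_t^{-1}(z)$, and the integrability bound, which is exactly the argument the paper leaves implicit (aside from a harmless typo, $F_{su}^{-1}$ for $F_{st}$, in your plan).
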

\begin{proof}
Note that each map $F_t$ is univalent by Theorem \ref{EV_univalence}. 
\end{proof}

We show that $\rhd$-homogeneous Markov processes with suitable shifts are martingales. This fact was observed by Wang and Wendler \cite{WW13} in the discrete time setting. We do not assume the finite variance of the process, which was assumed in \cite{WW13}.

\begin{proposition} Let $(M_t)_{t\ge 0}$ be a $\rhd$-homogeneous Markov process with a filtration $(\mathcal F_t)$ such that $M_0=0$ and $\mathbb E |M_t| <\infty$ for all $t>0$. Then the process $(M_t - \mathbb E[M_t])_{t\geq0}$ is an $(\mathcal{F}_t)$-martingale.
\end{proposition}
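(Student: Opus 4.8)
The plan is to prove the martingale identity $\mathbb E[M_t \mid \mathcal F_s] = M_s + \mathbb E[M_t] - \mathbb E[M_s]$ for $0\le s\le t$ by exploiting the $\rhd$-homogeneity of the transition kernels together with Lemma~\ref{first_moment}. First I would record that by the Markov property, $\mathbb E[M_t\mid\mathcal F_s]=\int_\R y\,k_{st}(M_s,\mathrm dy)$ whenever the first moment is well defined, so the whole statement reduces to computing the first moment of the kernel $k_{st}(x,\cdot)$ as a function of $x$. By $\rhd$-homogeneity (Definition~\ref{def_equi_Markov}) we have $k_{st}(x,\cdot)=\delta_x\rhd k_{st}(0,\cdot)$, and since monotone convolution on the level of $F$-transforms is composition, $F_{k_{st}(x,\cdot)}(z)=F_{k_{st}(0,\cdot)}(z)-x$ as already noted in \eqref{eq:additive_equivariance}. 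Therefore, if the first moment of $k_{st}(0,\cdot)$ exists, then by Lemma~\ref{first_moment},
\[
\int_\R y\, k_{st}(x,\mathrm dy) = \lim_{y\to\infty}\bigl(iy - F_{k_{st}(x,\cdot)}(iy)\bigr) = \lim_{y\to\infty}\bigl(iy - F_{k_{st}(0,\cdot)}(iy)\bigr) + x = x + \int_\R y\, k_{st}(0,\mathrm dy),
\]
i.e.\ the mean of $k_{st}(x,\cdot)$ is $x$ plus a constant depending only on $(s,t)$.

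The second step is to make sure the integrability hypotheses are in place. The assumption $\mathbb E|M_t|<\infty$ for all $t$ means $\int_\R |y|\, (\mathbb P\circ M_t^{-1})(\mathrm dy)<\infty$; since $M_0=0$, the law of $M_t$ is $k_{0t}(0,\cdot)$, so in particular each $k_{0t}(0,\cdot)$ has a finite first moment. For the conditional statement I also need $k_{st}(0,\cdot)$ to have a finite first moment for $0<s<t$; this follows because $k_{0t}(0,\cdot)=k_{0s}(0,\cdot)\rhd k_{st}(0,\cdot)$, and a standard fact about monotone convolution (provable directly from $F_{\mu\rhd\nu}=F_\mu\circ F_\nu$ and Lemma~\ref{first_moment}, or cited from the literature on monotone convolution) is that if $\mu\rhd\nu$ and $\mu$ both have finite first moments then so does $\nu$; alternatively one notes $F_{k_{st}(0,\cdot)}=F_{k_{0s}(0,\cdot)}^{-1}\circ F_{k_{0t}(0,\cdot)}$ and checks the non-tangential asymptotics directly. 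With that in hand, apply the displayed computation with $x$ replaced by the random variable $M_s$: taking the conditional expectation and using \eqref{eq:Markov2} gives $\mathbb E[M_t\mid\mathcal F_s] = M_s + c_{st}$ with $c_{st}=\int_\R y\,k_{st}(0,\mathrm dy)$ a deterministic constant.

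Finally, integrate out: taking the ordinary expectation of $\mathbb E[M_t\mid\mathcal F_s]=M_s+c_{st}$ yields $\mathbb E[M_t]=\mathbb E[M_s]+c_{st}$, so $c_{st}=\mathbb E[M_t]-\mathbb E[M_s]$ and hence $\mathbb E[M_t-\mathbb E[M_t]\mid\mathcal F_s]=M_s-\mathbb E[M_s]$, which is exactly the $(\mathcal F_t)$-martingale property (adaptedness is immediate and integrability is the hypothesis). I expect the only genuinely delicate point to be the measure-theoretic justification of substituting the random variable $M_s$ into the pointwise identity for the kernel mean — i.e.\ confirming that $x\mapsto\int_\R y\,k_{st}(x,\mathrm dy)$ is a genuinely measurable (here, affine) function and that $\int_\R|y|\,k_{st}(M_s(\omega),\mathrm dy)<\infty$ almost surely, which is automatic once we know this integral equals $|M_s(\omega)+c_{st}|$ — together with the ancillary lemma that the first moment of $k_{st}(0,\cdot)$ is finite. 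Everything else is a direct consequence of Lemma~\ref{first_moment} and $\rhd$-homogeneity.
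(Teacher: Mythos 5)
Your main line is the same as the paper's: use $\rhd$-homogeneity to write $F_{k_{st}(x,\cdot)}=F_{k_{st}(0,\cdot)}-x$, apply Lemma \ref{first_moment} to conclude that the mean of $k_{st}(x,\cdot)$ is $x$ plus a constant depending only on $(s,t)$, feed this into the Markov property \eqref{eq:Markov2}, and identify the constant by taking expectations. The genuine gap is in the integrability bookkeeping, exactly the point you yourself flag as ``delicate'' but do not resolve. To apply Lemma \ref{first_moment} to $k_{st}(x,\cdot)$ for $x$ ranging over the values of $M_s$, you must first know that $\int_\R|y|\,k_{st}(x,{\rm d}y)<\infty$ for ($\mathbb P\circ M_s^{-1}$-almost) all such $x$; the lemma only converts an already existing first moment into the limit formula, it does not produce one. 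Neither of your justifications delivers this. The ``standard fact'' that finiteness of the first moments of $\mu\rhd\nu$ and $\mu$ forces finiteness for $\nu$ is not in the paper (only the variance analogue, Lemma \ref{finitevariance0}) and is certainly not ``provable directly from Lemma \ref{first_moment}'', which has no converse; and even granting it, you would still need that $\delta_x\rhd k_{st}(0,\cdot)$ inherits a finite first moment, another unproved moment statement about monotone convolution. Worse, your closing remark that $\int_\R|y|\,k_{st}(M_s(\omega),{\rm d}y)<\infty$ a.s.\ is ``automatic once we know this integral equals $|M_s(\omega)+c_{st}|$'' conflates $\int|y|\,{\rm d}\kappa$ with $\bigl|\int y\,{\rm d}\kappa\bigr|$ and is circular: the formula for the signed integral is only meaningful after absolute integrability is known.

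The paper closes this gap with a one-line argument you never invoke: by the Chapman--Kolmogorov relation and Tonelli,
\[
\infty>\int_\R|y|\,k_{0t}(0,{\rm d}y)=\int_\R\Bigl(\int_\R|y|\,k_{st}(x,{\rm d}y)\Bigr)k_{0s}(0,{\rm d}x),
\]
so the inner integral is finite for $k_{0s}(0,\cdot)$-a.e.\ $x$, i.e.\ for a.e.\ value of $M_s$, which is precisely what is needed; no moment hypothesis on $k_{st}(0,\cdot)$ itself is required, since the constant may simply be taken as $\lim_{y\to\infty}\bigl(iy-F_{k_{st}(0,\cdot)}(iy)\bigr)$, which exists because it differs from the mean of $k_{st}(x,\cdot)$ by $x$ for any good $x$. (Equivalently, conditional monotone convergence applied to \eqref{eq:Markov2} gives $\int_\R|y|\,k_{st}(M_s,{\rm d}y)=\mathbb E[\,|M_t|\,|\,\mathcal F_s]<\infty$ a.s.) With this substitution your argument goes through; as written, the integrability step does not.
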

\begin{proof} There is a proof based on Proposition \ref{prop:martingale1}, but we give another proof. Let $(k_{st})_{0\leq s \leq t }$ be the transition kernels.
The Chapman-Kolmogorov relation implies
$$
\infty > \int_\R |y| \, k_{0t}(0, {\rm d}y) = \int_\R \left( \int_{\R} |y| \,  k_{st}(x, {\rm d}y) \right) k_{0s}(0, {\rm d}x),
$$
which implies that
$$
\int_{\R} |y| \,  k_{st}(x, {\rm d}y) <\infty
$$
for almost all $x$ with respect to $k_{0s}(0, {\rm d}x)=\mathbb P (M_s \in {\rm d}x)$. For such $x$ or $x=0$, the equation $F_{k_{st}(x, \cdot)}(z)=F_{k_{st}(0,\cdot)}(z)-x$ and Lemma \ref{first_moment} show that
\begin{eqnarray*}
&&\int_{\R} y \,  k_{st}(x, {\rm d}y)
=  \lim_{y\to\infty} (iy -F_{k_{st}(0,\cdot)}(iy) +x)= \int_{\R} y \,  k_{st}(0, {\rm d}y) +  x.
\end{eqnarray*}
This shows that, by the Markov property \eqref{eq:Markov2},  
$$
\mathbb E [M_t | \mathcal F_s] = M_s + \int_{\R} y \,  k_{st}(0, {\rm d}y)~~a.s.
$$
Taking the expectation shows that the last integral is equal to $\mathbb E[M_t-M_s]$. 
\end{proof}

\index{Martingale property|)}
\index{Markov process!$\rhd$-homogeneous|)}


\section[Alternative constructions of SAIPs]{Alternative constructions of additive monotone increment processes}\label{alternative_constructions}

One can also use quantum stochastic calculus to construct additive monotone increment processes as operators on the symmetric Fock space. We will present two such constructions here. It should be remarked that both these constructions require stronger conditions than the construction via classical Markov processes that we presented above.

The first construction uses results of \cite{franz-unif} to reduce the problem of constructing monotone increment processes to Sch\"urmann's theory of L\'evy processes on involutive bialgebras \cite{schurmann}. This requires in particular stationarity, i.e., that the distributions of increments depend only on the difference $t-s$. We will give only a very rough sketch of this construction, more details about the tools that have to be used here can be found in Franz' lecture in \cite{franz+skalski}.

The second construction was carried out by Belton \cite{belton-mon}, who proved that a class of vacuum-adapted stochastic integrals produces stochastic processes with monotonically independent increments. We will show how the coefficients can be chosen to insure that the resulting process is associated to a given additive Loewner chain. This construction requires all distributions to be compactly supported. It produces bounded operators on the symmetric Fock space, which can easily be seen to be self-adjoint.

For the prerequisites on quantum stochastic calculus we refer to \cite{partha-book,meyer,lindsay}.


\subsection{Generating functionals and Sch\"urmann triples}
\index{generating functional|(}
\index{Sch\"urmann triple|(}

Let $(\mu_t)_{t\ge 0}$ be a weakly continuous monotone convolution semigroup of probability measures on $\mathbb{R}$ which admit moments of all orders and $\mu_0=\delta_0$. Then we can associate to it a linear functional $\psi$ on the algebra of polynomials $\mathbb{C}[x]$ in one self-adjoint variable by setting
\[
\psi(p) = \lim_{t\searrow0}\frac{1}{t} \left(\int_\mathbb{R} p(x)\mu_t ({\rm d}x) - p(0)\right), \qquad p\in\mathbb{C}[x].
\]
This functional is hermitian, positive on positive polynomials vanishing at $0$, and maps constant functions to zero. We call a functional on $\mathbb{C}[x]$ with these properties a \emph{generating functional}. By a GNS-type construction one can extend any generating functional $\psi:\mathbb{C}[x]\to\mathbb{C}$ to a \emph{Sch\"urmann triple} $(\rho,\eta,\psi)$ of linear maps $\rho:\mathbb{C}[x]\to \mathcal{L}(D)$, $\eta:\mathbb{C}[x]\to D$, $\psi:\mathbb{C}[x]\to\mathbb{C}$. Here $D$ is a pre-Hilbert space, $\mathcal{L}(D)$ denotes the *-algebra of adjointable operators on $D$, i.e.,
\[
\mathcal{L}(D)=\{f:D\to D \mbox{ linear s.t.\  }\exists f^*:D\to D\mbox{ satisfying }\forall u,v\in D: \langle u,fv\rangle = \langle f^*u,v\rangle\},
\]
and the three maps $(\rho,\eta,\psi)$ verify the relations
\begin{align*}
&\rho(pq) = \rho(p)\rho(q), \quad \rho(p^*)=\rho(p)^*, \quad \rho(1)={\rm id}_D, \\
&\eta(pq) = \rho(p)\eta(q) + \eta(p)q(0), \\
&\langle \eta(p^*),\eta(q)\rangle = \psi(pq)-p(0)\psi(q) - \psi(p)q(0)
\end{align*}
for $p,q\in\mathbb{C}[x]$. Denote by $\varepsilon:\mathbb{C}[x]\to\mathbb{C}$ the evaluation at $0$, i.e., $\varepsilon(p)=p(0)$. Then the relations mean that $\rho$ is a unital *-representation, $\eta$ is a $1$-$\rho$-$\varepsilon$-cocycle, and $(p,q)\mapsto \langle\eta(p^*),\eta(q)\rangle$ is the $2$-$\varepsilon$-$\varepsilon$-coboundary of $\psi$ in the usual Hochschild cohomology of associative algebras (where we view $D$ and $\mathbb{C}$ as $(\rho,\varepsilon)$-bimodule and as $(\varepsilon,\varepsilon)$-bimodule, respectively). This is actually a special case of a more general construction for generating functionals on augmented *-algebras, see \cite{schurmann,franz+skalski}. In the constructions below we will only need the values $R=\rho(x)$, $e=\eta(x)$, and $b=\psi(x)$ of these maps.

Let $\psi$ be a generating functional. Then there exists a triple $(b,\sigma,\nu)$, called the \emph{characteristic triple} of $\psi$, where $b,\sigma\in\mathbb{R}$, $\sigma\ge 0$, and $\nu$ is a finite non-negative measure on $\mathbb{R}$ with $\nu(\{0\})=0$ and which admits moments of all orders, such that
\begin{equation}\label{eq-LK-type}
\psi(p) = b p'(0) + \frac{\sigma^2}{2} p''(0) + \int_{\mathbb{R}\backslash\{0\}} \frac{p(u)-p(0)-p'(0)u}{u^2} \nu({\rm d}u)
\end{equation}
for $p\in\mathbb{C}[x]$. If $\psi(x^2)=0$, then one sets $b=\psi(x)$, $\sigma=0$, $\nu=0$. If $\psi(x^2)\not=0$, then one chooses a probability measure  $\mu$ that solves the moment problem $\int_\mathbb{R}u^k\mu({\rm d}u)= \psi(x^{k+2})/\psi(x^2)$, $k\in\mathbb{N}$, and sets $b=\psi(x)$, $\sigma^2=\psi(x^2)\mu(\{0\})$, and $\nu = \psi(x^2)\big(\mu-\mu(\{0\})\delta_0\big)$.
Note that $\nu$ is obtained as solution of a moment problem, and it is in general not uniquely determined.

Let $H=\mathbb{C}\oplus L^2(\mathbb{R},\frac{1}{x^2}\nu)$. Then one can check that $\eta(p)=\left(\sigma p'(0),p-p(0)\right)$ for $p\in\mathbb{C}[x]$, $D=\eta(\mathbb{C}[x])\subseteq H$,
and $\rho(p)(\lambda, f)=(p(0)\lambda,pf)$ for $(\lambda,f)\in\mathbb{C}\oplus D$, $p\in\mathbb{C}[x]$, defines a Sch\"urmann triple for $\psi$. The operator $\rho(p)$ may be unbounded, but it is well-defined on the space $D$, which is spanned by polynomials, because all moments of $\nu$ are finite.

Let $e=\eta(x)$, $R=\rho(x)$, then we have
\begin{equation}\label{eq-psi-triple}
\psi(x^n) =
\left\{\begin{array}{cl}
0 & \mbox{ if } n=0, \\
b & \mbox{ if } n=1, \\
\langle e,e\rangle = \sigma^2 + \nu(\mathbb{R}) &\mbox{ if } n=2, \\
\langle e,R^{n-2}e \rangle = \int x^{n-2}\nu({\rm d}x) & \mbox{ if } n\ge 3.
\end{array}\right.
\end{equation}

Let us summarize this result.

\begin{proposition}\label{prop-coeff-schurmann}
Let $\psi:\mathbb{C}[x]\to\mathbb{C}$ be a generating functional. Then there exists a pre-Hilbert space $D$, a symmetric operator $R$ acting on $D$, a vector $e\in D$, and a real number $b$ such that $\psi$ is given by \eqref{eq-psi-triple}.
\end{proposition}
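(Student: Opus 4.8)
The plan is to follow the GNS-type construction sketched in the paragraphs preceding the statement and simply record the data it produces. First I would invoke the structure theorem for generating functionals stated just above: since $\psi$ is a generating functional on $\mathbb{C}[x]$, it admits a characteristic triple $(b,\sigma,\nu)$ with $b,\sigma\in\mathbb{R}$, $\sigma\ge 0$, and $\nu$ a finite non-negative measure on $\mathbb{R}$ with $\nu(\{0\})=0$ admitting moments of all orders, such that the L\'evy--Khintchine-type formula \eqref{eq-LK-type} holds. (If $\psi(x^2)=0$ one takes $\nu=0$, $\sigma=0$, $b=\psi(x)$; otherwise one solves the indicated moment problem. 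The non-uniqueness of $\nu$ is harmless — we only need existence.)

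Next I would build the Hilbert space $H=\mathbb{C}\oplus L^2(\mathbb{R},\tfrac{1}{x^2}\nu)$ and the pre-Hilbert space $D=\eta(\mathbb{C}[x])\subseteq H$, where $\eta(p)=\bigl(\sigma p'(0),\,p-p(0)\bigr)$; note $D$ is genuinely a subspace of $H$ because all moments of $\nu$ are finite, so every polynomial minus its value at $0$ lies in $L^2(\mathbb{R},\tfrac{1}{x^2}\nu)$. Define $\rho(p)(\lambda,f)=(p(0)\lambda,\,pf)$, which restricts to an operator on $D$, and set $R=\rho(x)$, $e=\eta(x)=(\sigma,\,x)$, $b=\psi(x)$. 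One checks directly that $(\rho,\eta,\psi)$ satisfies the three cocycle/representation relations (these are exactly the computations the excerpt says ``one can check''), so in particular $R$ is symmetric on $D$: for $p,q\in\mathbb{C}[x]$ one has $\langle \rho(x)\eta(p),\eta(q)\rangle=\langle \eta(p),\rho(x)\eta(q)\rangle$ since multiplication by the real polynomial $x$ is symmetric on $L^2(\mathbb{R},\tfrac{1}{x^2}\nu)$ and the $\mathbb{C}$-component contributes nothing as $x$ vanishes at $0$.

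Finally I would verify \eqref{eq-psi-triple}. The cocycle identity $\eta(pq)=\rho(p)\eta(q)+\eta(p)q(0)$ gives $\eta(x^n)=\rho(x)^{n-1}\eta(x)=R^{n-1}e$ for $n\ge 1$, and the coboundary identity $\langle\eta(p^*),\eta(q)\rangle=\psi(pq)-p(0)\psi(q)-\psi(p)q(0)$ applied to $p=q=x^{n/2}$-type monomials — more precisely to $p=x^{k}$, $q=x^{m}$ with $k,m\ge 1$ — yields $\psi(x^{k+m})=\langle R^{k-1}e,R^{m-1}e\rangle$ whenever $k+m\ge 2$. Taking $k=1$, $m=n-1$ for $n\ge 2$ gives $\psi(x^n)=\langle e,R^{n-2}e\rangle$; together with $\langle e,e\rangle=\sigma^2+\|x\|^2_{L^2(\mathbb{R},\nu/x^2)}=\sigma^2+\nu(\mathbb{R})$ and $\langle e,R^{n-2}e\rangle=\int x^{n-2}\,\nu({\rm d}x)$ for $n\ge 3$ (the $\mathbb{C}$-component drops out once a factor of $R$ is present), and with $\psi(1)=0$, $\psi(x)=b$ by construction, this is precisely \eqref{eq-psi-triple}.

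The only real point requiring care — the ``main obstacle'', though it is more bookkeeping than difficulty — is checking that the three maps $(\rho,\eta,\psi)$ genuinely form a Sch\"urmann triple, i.e.\ that the representation, cocycle, and coboundary relations all hold with the concrete choices above; this hinges on the defining property of the characteristic triple, namely that $\int x^{n}\,\nu({\rm d}x)=\psi(x^{n+2})$ for $n\ge 1$ and $\sigma^2+\nu(\mathbb{R})=\psi(x^2)$, which is exactly what formula \eqref{eq-LK-type} encodes. Since the excerpt explicitly states these relations hold and that the triple is well-defined on $D$, the proof reduces to assembling these ingredients and reading off \eqref{eq-psi-triple}.
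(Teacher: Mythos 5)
Your proposal is correct and follows essentially the same route as the paper: the paper's proposition is exactly a summary of the preceding construction, namely passing to a characteristic triple $(b,\sigma,\nu)$ via \eqref{eq-LK-type} and realizing the Sch\"urmann triple concretely on $D=\eta(\mathbb{C}[x])\subseteq\mathbb{C}\oplus L^2(\mathbb{R},\tfrac{1}{x^2}\nu)$ with $R=\rho(x)$, $e=\eta(x)=(\sigma,x)$, $b=\psi(x)$. Your explicit verification of the cocycle/coboundary identities and the symmetry of $R$ merely fills in the ``one can check'' steps the paper leaves to the reader.
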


For Belton's construction of not necessarily stationary monotone increment processes we have to construct a family of triples $(R_t, e_t, b_t)_{t\ge 0}$ acting on a fixed Hilbert space $H$ for a time-dependent family of generating functionals $(\psi_t)_{t\ge 0}$. So let $(\psi_t)_{t\ge 0}$ be a family of generating functionals such that the following conditions hold:
\begin{itemize}
\item[(M)]
Measurability: the functions $\mathbb{R}_+\ni t\mapsto \psi_t(x^k)\in \mathbb{C}$ are measurable for all $k\in\mathbb{N}$;
\item[(L)]
Local integrability and boundedness: $\psi_t(x),\psi_t(x^2)\in L^1_{\rm loc}(\mathbb{R}_+)$, i.e.,
\[
\int_0^T |\psi_t(x)|{\rm d}t <\infty \quad\mbox{ and }\quad \int_0^T |\psi_t(x^2)|{\rm d}t <\infty 
\]
for all $T>0$, and the function $M:\mathbb{R}_+\to\mathbb{R}_+$,
\[
M(t) =
\left\{\begin{array}{ll}
0 & \mbox{ if } \psi_t(x^2)=0, \\
\sup_{k\ge 1} \left|\frac{\psi_t(x^{k+2})}{\psi_t(x^2)}\right|^{1/k} & \mbox{ else } 
\end{array}\right.
\]
is locally bounded, i.e.
\[
\sup_{0\le t\le T} M(t) < \infty
\]
for all $T>0$.
\end{itemize}

We start by rephrasing these conditions in terms of the characteristic triples.
\begin{proposition}\label{prop-cond-psi}
Let $(\psi_t)_{t\ge 0}$ be a family of generating functionals on $\mathbb{C}[x]$, and let\\ $\big((b_t,\sigma_t,\nu_t)\big)_{t\ge 0}$ be a family of characteristic triples associated to it as in Equation \eqref{eq-LK-type}.

Then $(\psi_t)_{t\ge 0}$ satisfies conditions (M) and (L) if and only if $\big((b_t,\sigma_t,\nu_t)\big)_{t\ge 0}$ satisfies the following three conditions:
\begin{itemize}
\item[(CP)] Compactness:
for all $T>0$, the measures $\nu_t$ in the triple $(b_t,\sigma_t,\nu_t)$ associated to $\psi_t$ as in Equation \eqref{eq-LK-type} are supported in some compact interval $[-M_T,M_T]$ for $0\le t\le T$;
\item[(WM)] Weak measurability:
the function $\mathbb{R}_+\ni t\to \nu_t$ is weakly measurable, i.e., $\mathbb{R}_+\ni t\to \int_\mathbb{R} f \nu_t({\rm d}x) \in\mathbb{C}$ is measurable for all $f\in C_b(\mathbb{R})$;
\item[(B)] Local boundedness:
the functions $\mathbb{R}_+\ni t\to b_t\in\mathbb{R}$ and $\mathbb{R}_+\ni t\to \nu_t(\mathbb{R})\in\mathbb{R}_+$ are locally integrable, $\mathbb{R}_+\ni t\to \sigma_t\in\mathbb{R}_+$ is locally square integrable. 
\end{itemize}
\end{proposition}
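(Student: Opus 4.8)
The statement is an ``if and only if'' between the two conditions (M)+(L) on the family of generating functionals $(\psi_t)_{t\ge 0}$ and the three conditions (CP)+(WM)+(B) on the associated family of characteristic triples $\big((b_t,\sigma_t,\nu_t)\big)_{t\ge 0}$. The whole argument is a pointwise-in-$t$ translation dictionary, obtained by applying the explicit formulas \eqref{eq-LK-type} and \eqref{eq-psi-triple} that link $\psi_t$ to $(b_t,\sigma_t,\nu_t)$, combined with elementary facts about the moment problem and about weak measurability of measure-valued maps. So the plan is: first record the pointwise identities $b_t=\psi_t(x)$, $\sigma_t^2+\nu_t(\mathbb{R})=\psi_t(x^2)$, and $\int_\mathbb{R}x^{k}\,\nu_t(\mathrm{d}x)=\psi_t(x^{k+2})$ for $k\ge 1$; then prove each implication by reading these identities in the appropriate direction.

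\textbf{($\Leftarrow$).} Assume (CP), (WM), (B). For measurability (M): by \eqref{eq-psi-triple}, $\psi_t(x^k)$ is a fixed polynomial (indeed affine) combination of $b_t$, $\sigma_t^2+\nu_t(\mathbb{R})$ and the moments $\int x^{k-2}\,\nu_t(\mathrm{d}x)$; the first two are measurable by (B), and each moment $t\mapsto\int x^{j}\,\nu_t(\mathrm{d}x)$ is measurable because on a compact interval $[-M_T,M_T]$ the monomial $x^j$ can be uniformly approximated by functions in $C_b(\mathbb{R})$ (e.g. cut-offs), so (WM) plus (CP) gives measurability of all polynomial moments. For (L): local integrability of $\psi_t(x)=b_t$ and $\psi_t(x^2)=\sigma_t^2+\nu_t(\mathbb{R})$ is exactly (B) (local integrability of $b_t$ and $\nu_t(\mathbb{R})$, and local square integrability of $\sigma_t$, together with $\sigma_t^2\le (\sup\sigma_t)\,\sigma_t$ won't work directly — instead use that $\sigma_t^2$ is locally integrable because $\sigma_t$ is locally square integrable, i.e. $\sigma_t\in L^2_{\mathrm{loc}}\Rightarrow\sigma_t^2\in L^1_{\mathrm{loc}}$). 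Finally the local boundedness of $M(t)$: when $\psi_t(x^2)\ne 0$ one has $\nu_t\ne 0$ and
\[
\left|\frac{\psi_t(x^{k+2})}{\psi_t(x^2)}\right|^{1/k}=\left(\frac{\left|\int x^{k}\nu_t(\mathrm{d}x)\right|}{\sigma_t^2+\nu_t(\mathbb{R})}\right)^{1/k}\le\left(\frac{M_T^{k}\,\nu_t(\mathbb{R})}{\nu_t(\mathbb{R})}\right)^{1/k}=M_T
\]
for $0\le t\le T$, using (CP) for the numerator bound and $\sigma_t^2+\nu_t(\mathbb{R})\ge\nu_t(\mathbb{R})>0$ for the denominator; hence $\sup_{0\le t\le T}M(t)\le M_T<\infty$, giving (L).

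\textbf{($\Rightarrow$).} Assume (M) and (L). Local boundedness (B) is immediate: $b_t=\psi_t(x)\in L^1_{\mathrm{loc}}$ and $\nu_t(\mathbb{R})\le\sigma_t^2+\nu_t(\mathbb{R})=\psi_t(x^2)\in L^1_{\mathrm{loc}}$, and $\sigma_t^2\le\psi_t(x^2)$ gives $\sigma_t\in L^2_{\mathrm{loc}}$. For compactness (CP): fix $T$ and set $C:=\sup_{0\le t\le T}M(t)<\infty$ from (L). For $t\le T$ with $\psi_t(x^2)\ne 0$ one has, for every $k\ge 1$, $\big|\int x^{k}\,\nu_t(\mathrm{d}x)\big|\le C^{k}\psi_t(x^2)=C^k(\sigma_t^2+\nu_t(\mathbb{R}))$; in particular $\int x^{2k}\,\nu_t(\mathrm{d}x)\le C^{2k}\nu_t(\mathbb{R})$ (after bounding $\psi_t(x^2)$ by $\nu_t(\mathbb R)$ is false — instead note $\sigma_t^2\le\psi_t(x^2)$ does not help here; rather argue directly) so that $\nu_t\big(\{|x|>C\}\big)\cdot r^{2k}\le\int x^{2k}\nu_t(\mathrm dx)\le C^{2k}\psi_t(x^2)$ for any $r>C$, which upon letting $k\to\infty$ forces $\nu_t\big(\{|x|>C\}\big)=0$; hence $\mathrm{supp}\,\nu_t\subseteq[-C,C]=:[-M_T,M_T]$. (When $\psi_t(x^2)=0$ we have $\nu_t=0$ by construction, trivially supported anywhere.) For weak measurability (WM): we already know all moments $t\mapsto\int x^{j}\,\nu_t(\mathrm{d}x)$ are measurable (they are affine combinations of the measurable functions $\psi_t(x^{j+2})$, $\psi_t(x)$, $\psi_t(x^2)$), and since by (CP) the measures $\nu_t$ restricted to $[0,T]$ are all supported in a fixed compact $[-M_T,M_T]$ with uniformly bounded total mass, a Stone--Weierstrass / bounded-convergence argument upgrades measurability of $t\mapsto\int p\,\nu_t$ for polynomials $p$ to measurability of $t\mapsto\int f\,\nu_t$ for all $f\in C_b(\mathbb{R})$ (uniformly approximate $f$ on $[-M_T,M_T]$ by polynomials and use the uniform mass bound to pass to the limit). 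This proves (WM) and completes the equivalence.

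\textbf{Expected main obstacle.} The routine bookkeeping (translating each bulleted condition via \eqref{eq-LK-type}, \eqref{eq-psi-triple}) is not hard. The one place requiring genuine care is the deduction of the \emph{compact support} (CP) from the local boundedness of $M(t)$: one must turn a bound on the growth rate of moments, $\big|\psi_t(x^{k+2})/\psi_t(x^2)\big|^{1/k}\le C$, into a hard support bound $\mathrm{supp}\,\nu_t\subseteq[-C,C]$, handling the sign of $\int x^k\,\nu_t$ (use even moments, which are nonnegative) and the case $\psi_t(x^2)=0$ separately. Once (CP) is in hand, weak measurability follows from the polynomial approximation argument, and everything else is direct.
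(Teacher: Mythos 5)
Your proof is correct and follows essentially the same route as the paper: everything reduces, via the identities $b_t=\psi_t(x)$, $\sigma_t^2+\nu_t(\mathbb{R})=\psi_t(x^2)$ and $\int_{\mathbb{R}} x^k\,\nu_t({\rm d}x)=\psi_t(x^{k+2})$, to the fact that local boundedness of $M(t)$ is equivalent to a uniform compact-support bound for the $\nu_t$ — which the paper obtains by identifying $M(t)=\|x\|_{L^\infty(\nu_t)}$ as the increasing limit of $L^k$-norms of the moment-problem measure, while you re-derive the same fact in two halves with an even-moment/Chebyshev estimate — together with the same polynomial-approximation argument for the equivalence of (M) and (WM) and the immediate identities for (L)-integrability versus (B). The only blemishes are cosmetic: in your bound $M(t)\le M_T$ you divide by $\nu_t(\mathbb{R})$, which can vanish (but then the numerator vanishes as well), and your Chebyshev display should read $\nu_t(\{|x|\ge r\})\,r^{2k}\le\int_{\mathbb{R}} x^{2k}\,\nu_t({\rm d}x)$ for $r>C$, after which letting $k\to\infty$ and then $r\downarrow C$ gives $\operatorname{supp}\nu_t\subseteq[-C,C]$ as you intend.
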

\begin{proof}
Let us first show that if $\psi_t(x^2)\not=0$ for some $t\in\mathbb{R}_+$, then $M(t)$ as defined in Condition (L) is the smallest real number $M>0$ such that $\nu_t$ is supported in $[-M,M]$. Indeed, if the probability measure $\mu_t$ denotes a solution of the moment problem $\int_\mathbb{R} x^k{\rm d}\mu_t=\psi_t(x^{k+2})/\psi_t(x^2)$, then in (L) we take the supremum of
\[
\left|\frac{\psi_t(x^{k+2})}{\psi_t(x^2)}\right|^{1/k} \le \| x\|_{L^k(\mu_t)}
\]
with equality for even $k$. Since the right-hand-side increases to $\|x\|_{L^\infty(\mu_t)}=\|x\|_{L^\infty(\nu_t)}$ as $k\to\infty$, we have proved our claim, and we can deduce that condition (CP) is equivalent to local boundedness of the function $M$ in Condition (L).

Condition (CP) insures that the triples $(b_t,\sigma_t,\nu_t)$ are uniquely determined by $\psi_t$, because the moment problem for compactly supported measures is determinate.

Since bounded continuous functions can be approximated by polynomials on some compact interval containing the support of $\nu_t$ for all $0\le t\le T$, we see that (M) and (WM) are equivalent, when (CP) holds. That local integrability of $\psi_t(x)$ and $\psi_t(x^2)$ is equivalent to (B) follows immediately from the relations $\psi_t(x)=b_t$ and $\psi_t(x^2)=\nu_t(\mathbb{R})+\sigma_t^2$.
\end{proof}

Let us now look at the equivalent conditions for the triples $\big((R_t,e_t,b_t)\big)_{t\ge 0}$.

\begin{proposition}\label{prop-coeff-belton}
Let $(\psi_t)_{t\ge 0}$ be a family of generating functionals satisfying conditions (M) and (L).

Then there exists a Hilbert $H$ and a family of triples $\big((R_t,e_t,b_t)\big)_{t\ge 0}$ such that
\begin{itemize}
\item[(i)]
$\psi_t$ is determined by $(R_r,e_t,b_t)$ via formula \eqref{eq-psi-triple} for all $t\ge0$;
\item[(ii)]
$\mathbb{R}_+\ni t\mapsto R_t\in B(H)$ is weakly measurable, $\mathbb{R}_+\ni t\mapsto e_t\in H$ is weakly measurable, and $\mathbb{R}_+\ni t\mapsto b_t\in \mathbb{R}$ is measurable;
\item[(iii)]
$\int_0^T |b_t|\,{\rm d}t + \left(\int_0^T \|e_t\|^2\,{\rm d}t\right)^{1/2} + \sup_{t\in[0,T]} \|R_t\|$ is finite for all $T>0$.
\end{itemize}
\end{proposition}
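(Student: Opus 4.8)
The plan is to parallel the single-functional construction of Proposition~\ref{prop-coeff-schurmann}, carried out uniformly in $t$. First I would invoke Proposition~\ref{prop-cond-psi} to replace conditions (M) and (L) by the conditions (CP), (WM), (B) on the characteristic triples $\big((b_t,\sigma_t,\nu_t)\big)_{t\ge0}$ of the $\psi_t$. The key simplification is to absorb the Gaussian part into an atom at $0$: set
\[
m_t := \sigma_t^2\,\delta_0 + \nu_t,
\]
a finite non-negative measure with total mass $m_t(\mathbb{R})=\sigma_t^2+\nu_t(\mathbb{R})=\psi_t(x^2)$ and, by (CP), with support inside a fixed compact interval $[-M_T,M_T]$ for all $t\in[0,T]$. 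Since $\int u^k\,\delta_0({\rm d}u)=0$ for $k\ge1$, one has $\int_\mathbb{R} u^k\,m_t({\rm d}u)=\psi_t(x^{k+2})$ for $k\ge1$ and $\int_\mathbb{R} m_t({\rm d}u)=\psi_t(x^2)$, and by (M) all these numbers are measurable functions of $t$.

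Next I would realize the pairs $(R_t,e_t)$ on one fixed separable Hilbert space $H=\ell^2(\mathbb{N}_0)$. For each $t$, compact support of $m_t$ makes the polynomials dense in $L^2(\mathbb{R},m_t)$, so Gram--Schmidt applied to $1,u,u^2,\dots$ produces an orthonormal basis $p^t_0,p^t_1,\dots$ of $L^2(\mathbb{R},m_t)$ (a finite list when $m_t$ has finite support); let $V_t\colon L^2(\mathbb{R},m_t)\to H$ be the isometry $p^t_k\mapsto\delta_k$. I then set $b_t:=\psi_t(x)$, $e_t:=V_t\mathbf{1}$, and let $R_t\in B(H)$ be the self-adjoint operator that agrees with $V_t\,(\text{multiplication by }u)\,V_t^*$ on $\operatorname{ran}V_t$ and vanishes on its orthogonal complement. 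Because $\mathbf{1}=\sqrt{m_t(\mathbb{R})}\,p^t_0$ one obtains the explicit formula $e_t=\sqrt{\psi_t(x^2)}\,\delta_0$, and because $R_t$ is a zero extension of multiplication by $u$ against a measure carried by $[-M_T,M_T]$, one gets $\|R_t\|\le M_T$ for $t\in[0,T]$.

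With these definitions the three requirements fall out of moment identities. For (i): since $e_t\in\operatorname{ran}V_t$ and this subspace is $R_t$-invariant, for every $k\ge0$
\[
\langle e_t,\,R_t^{\,k}e_t\rangle=\langle\mathbf{1},\,(\text{mult.\ by }u)^k\mathbf{1}\rangle_{L^2(m_t)}=\int_\mathbb{R} u^k\,m_t({\rm d}u),
\]
which equals $\psi_t(x^2)$ for $k=0$ and $\psi_t(x^{k+2})$ for $k\ge1$; together with $\psi_t(1)=0$ and $\psi_t(x)=b_t$ this is exactly formula \eqref{eq-psi-triple}. For (iii): $\int_0^T|b_t|\,{\rm d}t=\int_0^T|\psi_t(x)|\,{\rm d}t<\infty$ and $\int_0^T\|e_t\|^2\,{\rm d}t=\int_0^T\psi_t(x^2)\,{\rm d}t<\infty$ by the local integrability in (L), while $\sup_{t\in[0,T]}\|R_t\|\le M_T<\infty$ by (CP). For (ii): $t\mapsto b_t$ and $t\mapsto e_t=\sqrt{\psi_t(x^2)}\,\delta_0$ are measurable by (M), and $t\mapsto R_t$ is weakly measurable because each $\langle\delta_i,R_t\delta_j\rangle$ is, up to truncation at $\dim L^2(\mathbb{R},m_t)$, an integral against $m_t$ of a polynomial whose coefficients are Borel functions of the moments of $m_t$.

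I expect the only genuine obstacle to be precisely that last measurability issue: arranging the Gram--Schmidt data (equivalently, the three-term recurrence coefficients / Jacobi matrix of $m_t$, together with $\operatorname{ran}V_t$) to depend measurably on $t$, including across the jump in $\dim L^2(\mathbb{R},m_t)$ when $m_t$ changes from infinitely to finitely supported. The remedy is to observe that $N_t:=\#\operatorname{supp}(m_t)$ equals the rank of the Hankel moment matrix of $m_t$ and hence is measurable in $t$, and that, off this stratification, the orthonormal polynomials are explicit rational functions of Hankel determinants in the moments $\psi_t(x^{k+2})$; everything else is bookkeeping resting on the moment identities above and on Proposition~\ref{prop-cond-psi}. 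If one prefers to stay closer to the construction used for Proposition~\ref{prop-coeff-schurmann}, one can instead take $H=\mathbb{C}\oplus\ell^2(\mathbb{N})$ and keep $\sigma_t$ in the first coordinate, at the cost of slightly longer formulas.
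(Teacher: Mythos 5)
Your construction is correct, and it follows the same overall strategy as the paper (realize the triple on a fixed separable Hilbert space via orthogonal polynomials of a compactly supported measure, then get measurability of the Jacobi data from measurability of moments), but with a genuinely different and somewhat cleaner decomposition. The paper stays with the Sch\"urmann-type space $\mathbb{C}\oplus\ell^2$ of Proposition \ref{prop-coeff-schurmann}, keeping the Gaussian part separate: there $e_t=(\sigma_t,\sqrt{\nu_t(\mathbb{R})}\,v_0)$ and $R_t$ is built from the orthogonal polynomials of $\nu_t$, so one must prove that $t\mapsto\sigma_t$, $t\mapsto\nu_t(\mathbb{R})$, and the Jacobi parameters of $\nu_t$ are measurable; this is done by expanding the Cauchy transform $G_{\mu_t}$ of the normalized moment measure and using Stieltjes inversion \eqref{eq:atom2} to isolate $\mu_t(\{0\})$, i.e.\ the atom carrying $\sigma_t^2$. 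By instead orthogonalizing with respect to $m_t=\sigma_t^2\delta_0+\nu_t=\psi_t(x^2)\mu_t$, you make the relevant moments literally the numbers $\psi_t(x^{k+2})$, so measurability of the Gram--Schmidt/Jacobi data reduces to condition (M) together with Hankel-determinant formulas, and the Stieltjes-inversion step disappears; the verification of \eqref{eq-psi-triple} still goes through because $\delta_0$ contributes nothing to the moments of order $\ge 1$, and $\|R_t\|\le M_T$, $\|e_t\|^2=\psi_t(x^2)$ give (iii) exactly as in the paper. The one point you rightly flag — measurable dependence of the orthonormal polynomials across the stratification by $N_t=\#\,\mathrm{supp}(m_t)$ — is handled adequately by your remark that $N_t$ is the rank of the Hankel moment matrix (hence measurable, via positivity of leading principal minors up to the rank) and that on each stratum the recurrence coefficients are rational in those determinants; the paper is no more detailed at the corresponding step, so I do not regard this as a gap. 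Note also that the later Belton construction uses the triples only through the identities $\langle e_s,R_s^{n-2}e_s\rangle=\psi_s(x^n)$, $b_s=\psi_s(x)$ and conditions (ii)--(iii), so nothing downstream is lost by merging the Gaussian part into $m_t$.
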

\begin{proof}
By Proposition \ref{prop-coeff-schurmann}, there exists a triple $(\tilde{R}_t,\tilde{e}_t,\tilde{b}_t)$ constructed on the pre-Hilbert space $\tilde{D}=\mathbb{C}\oplus L^2(\mathbb{R},\frac{1}{x^2}\nu_t)$ representing $\psi_t$ as in Equation \eqref{eq-psi-triple} for each $t\ge 0$. Weak measurability of $(\psi_t)_{t\ge0}$ immediately gives the measurability of $t\mapsto \tilde{b}_t=\psi_t(x)$.

We construct a family of triples $\big((R_t,e_t,b_t)\big)_{t\ge0}$ acting on $D=\mathbb{C}\oplus \ell^2$, using the theory of orthogonal polynomials. To each of the finite measure $\nu_t$, $t\ge 0$, we can associate a family of polynomials $(p^t_n)_{n=0}^{N(t)}$ with $p^t_0=1$ and  ${\rm deg}(p^t_n)=n$ that are mutually orthogonal with respect to the measure $\nu_t$. If the support of $\nu_t$ is finite, then $N(t)+1$ is equal to its cardinality, otherwise we have $N(t)=\infty$. We normalize these polynomials such that $\int \overline{p_n^t(x)}p^t_n(x)\nu_t({\rm d}x) = \nu_t(\mathbb{R})$ for $n=0,1,\ldots,N(t)$. We can define an isometry $\imath_1: L^2(\mathbb{R},\nu_t)\to \ell^2$ by $\imath_1(p^t_n)=\sqrt{\nu_t(\mathbb{R})} v_n$, $n=0,1,\ldots,N(t)$, where $(v_n)_{n\ge0}$ denotes the canonical orthonormal basis of $\ell^2$. Furthermore, the Hilbert spaces $L^2(\mathbb{R},\frac{1}{x}\nu_t)$ and $L^2(\mathbb{R},\nu_t)$ are isomorphic via $\imath_2:L^2(\mathbb{R},\frac{1}{x^2}\nu_t)\ni f \mapsto \frac{f}{x}\in L^2(\mathbb{R},\nu_t)$.

We set
\[
R_t= \imath \tilde{R}_t\imath^*, \quad e_t=\imath(\tilde{e}_t),\quad b_t=\tilde{b}_t,
\]
for $t\ge 0$, with $\imath=({\rm id}_\mathbb{C}\oplus \imath_1\circ\imath_2):\mathbb{C}\oplus L^2(\mathbb{R},\frac{1}{x^2}\nu_t)\to \mathbb{C}\oplus L^2(\mathbb{R},\nu_t)$. It is clear that this triple also satisfies (i).

Since we have $\tilde{e}_t=(\sigma_t,x)$ and $p_0^t=1$, we get $e_t= (\sigma_t,\sqrt{\nu_t(\mathbb{R})}v_0)$. If $(\alpha^t_n)_{n\in\mathbb{N}}$ and $(\beta^t_n)_{n\in\mathbb{N}}$ denote the Jacobi parameters from the three-term-recurrence relation
\[
xp_n^t = \alpha^t_n p_{n+1}^t + \beta_n^t p_n^t + \alpha_{n-1}^t p^t_{n-1},
\]
then $R_t$ acts as
\[
R_t (\lambda, v_n) = (0,\alpha^t_n v_{n+1} + \beta^t_nv_n+\alpha^t_{n-1} v_{n-1})
\]
on $\mathbb{C}\oplus \ell^2$. So weak measurability of $t\mapsto e_t$ and $t\mapsto R_t$, i.e., measurability of the functions $t\mapsto \langle v,e_t\rangle$ and $t\mapsto \langle v, R_t w\rangle$ for all $v,w\in\ell^2$, is equivalent to measurability of $t\mapsto \nu_t(\mathbb{R})$, $t\mapsto \sigma_t$, and of the $t$-dependence of the Jacobi parameters.

Let $A=\{t\ge 0;\psi_t(x^2)\not=0\}$, this is a measurable set by the weak measurability of $(\psi_t)_{t\ge0}$. Recall that $\sigma_t=0$ and $\nu_t=0$ for $t\not\in A$, and that for $t\in A$ we can define $\nu_t$ as $\nu_t=\psi_t(x^2) \big(\mu_t-\mu_t(\{0\})\delta_0\big)$, where $\mu_t$ is the solution of the moment problem $\int u^k\mu_t({\rm d}u)=\psi_t(x^{k+2})/\psi_t(x^2)$, which is unique thanks to Condition (CP). Using again Condition (CP), we have
\[
G_{\mu_t}(z) = \sum_{k\ge 0} \frac{\psi_t(x^{k+2})}{\psi_t(x^2)z^{k+1}}
\]
for sufficiently large $z$. This is clearly measurable as a function of $t$ for each $z$ with sufficiently large $|z|$, which implies that its analytic continuation to $\mathbb{C}^+$ is also measurable for all $z\in\mathbb{C}^+$. Therefore, by Stieltjes inversion \eqref{eq:atom2}, we get the measurability of
\[
\sigma^2_t = \psi_t(x^2) \mu_t(\{0\}) = \psi_t(x^2) \lim_{\varepsilon\searrow 0} i \varepsilon G_{\mu_t}(i\varepsilon),
\]
and of $\nu_t(\mathbb{R})=\psi_t(x^2)\big(1-\mu_t(\{0\})\big)$.

Similarly we can prove measurability of $\nu_t(B)$ for all Borel subsets $B$ (see the arguments in Theorem \ref{theo-Markov-proc}), of $\int f {\rm d}\nu_t$ for all bounded continuous $f$, and finally of the Jacobi parameters of $\nu_t$.

The compactness condition (CP) and the boundedness assumption (B) insure that (iii) holds.
\end{proof}

\begin{remark}
It is not difficult to check that conversely, if we have a family of triples $\big((R_r,e_t,b_t)\big)_{t\ge0}$ satisfying conditions (ii) and (iii), and define $(\psi_t)_{t\ge0}$ via Equation \eqref{eq-psi-triple}, the $(\psi_t)_{t\ge0}$ will satisfy (M) and (L).
\end{remark}

\index{generating functional|)}
\index{Sch\"urmann triple|)}


\subsection{A construction of monotone L\'evy processes using Sch\"urmann's representation theorem}
\index{monotone L\'evy process!additive|(}

Let $\mathcal{A}=\mathbb{C}[x]$ be the algebra of polynomials in one self-adjoint variable. Note that $x\mapsto 0\in\mathbb{C}$ and $x\mapsto x_1 + x_2\in \mathbb{C}[x_1,x_2]\cong \mathcal{A}\coprod\mathcal{A}$ extend to unique unital  *-homomorphisms $\varepsilon:\mathcal{A}\to\mathbb{C}$ and  $\Delta:\mathcal{A}\to \mathcal{A}\coprod \mathcal{A}$ (where $\coprod$ denotes the coproduct in the category of associative unital *-algebra, i.e., the free product with identification of the unit elements), and that $(\mathcal{A},\Delta,\varepsilon)$ is a dual semigroup in the sense of \cite[Section 2.1]{franz-unif}.

Let $X=(X_t)_{t\ge 0}\subseteq \mathcal{L}(\mathcal{H})$ be a SAIP over some concrete quantum probability space $(\mathcal{H},\Omega)$, where $\mathcal{H}$ is a pre-Hilbert space. Note that the condition $X\subseteq \mathcal{L}(\mathcal{H})$ insures that the definition of monotone independence in Definition \ref{def-mon} makes sense even if $\mathcal{H}$ is only a pre-Hilbert space, and that we can work with $(\mathcal{L}(\mathcal{H}),\Phi_\Omega)$ as an abstract quantum probability space. Then all moments of $X$ exist and, if the distributions of the increments $X_t-X_s$ of $X$ depend only on $t-s$, then $x\mapsto j_{st}(x)=X_t-X_s\in \mathcal{L}(\mathcal{H})$ defines a quantum stochastic process $J=\big(j_{st}:\mathcal{A}\to (\mathcal{L}(H),\Phi_\Omega)\big)_{0\le s\le t}$ with values in the abstract probability space $(\mathcal{L}(\mathcal{H}),\Phi_\Omega)$, which is a monotone L\'evy process in the sense of \cite[Definition 2.5]{franz-unif}.

Let $(\mu_t)_{t\ge 0}$ be the monotone convolution semigroup consisting of the distributions of $X$ w.r.t\ to the state vector in which the increments are monotonically independent. We denote by $\psi$ the \index{generating functional}generating functional defined as the derivative at $t=0$ of $(\mu_t)_{t\ge 0}$. For our construction we will need the pre-Hilbert space $D$, the operator $R=\rho(x)$, the vector $e=\eta(x)$, and the scalar $b=\psi(x)$ obtained from the \index{Sch\"urmann triple}Sch\"urmann triple of $\psi$ in the previous paragraph, see Proposition \ref{prop-coeff-schurmann}.

Let $\mathcal{P}$ be the two-dimensional *-algebra generated by one projection $p$. We equip $\tilde{\mathcal{A}}=\mathcal{A}\coprod\mathcal{P}$ with the *-bialgebra structure given in \cite[Proposition 3.1]{franz-unif} for the monotone case. Franz proved in \cite[Theorem 3.3]{franz-unif} that we have a one-to-one correspondence between monotone L\'evy processes $J$ on $\mathcal{A}$ and a class of L\'evy processes in the sense of Sch\"urmann $\tilde{J}=\big(\tilde{\jmath}_{st}:\tilde{\mathcal{A}}\to (\mathcal{L}(\mathcal{H}),\Phi_\Omega)\big)_{0\le s\le t}$ on the *-bialgebra $\tilde{\mathcal{A}}$ and over some algebraic probability space $(\mathcal{L}(\mathcal{H}),\Phi_\Omega)$.
By Sch\"urmann's representation theorem, cf.\ \cite[Theorem 2.5.3]{schurmann}, $\tilde{J}$ has a realisation on a symmetric Fock space $\Gamma$ acting as possibly unbounded operators on some common invariant dense subspace $\mathcal{H}\subseteq \Gamma$. The operators $\tilde{X}_{t}=\tilde{\jmath}_{0t}(x)$ and $Q_{t}=\tilde{\jmath}_{0t}(p)$ are obtained as solutions of the quantum stochastic differential equations
\begin{eqnarray*}
Q_t &=& {\rm id}_{\Gamma} - \int_0^t Q_s{\rm d}\Lambda_s({\rm id}_D), \\
\tilde{X}_t &=& \int_0^t \left( {\rm d}A_s^+(e) + {\rm d}\Lambda_s(R-{\rm id}_D) + {\rm d}A_s(e) + \psi(x){\rm d}s - \tilde{X}_s {\rm d}\Lambda_s({\rm id}_D)\right).
\end{eqnarray*}
This follows from the extension of the \index{Sch\"urmann triple}Sch\"urmann triple on $\mathcal{A}$ to a Sch\"urmann triple on $\tilde{\mathcal{A}}$ given in \cite[Proposition 3.9]{franz-unif}. Note that we can explicitly solve the first of these equations, $Q_t$ is the second quantization of multiplication by the indicator function $\mathbf{1}_{[t,+\infty)}$ on $L^2(\mathbb{R}_+,D)$, see \cite[Proposition 3.10]{franz-unif}. Equivalently, it is the tensor product $Q_t=P_\Omega\otimes I_{\Gamma_{[t,+\infty)}}$ of the orthogonal projection onto the vacuum vector $\Omega$ with the identity operator, if we use the factorization $\Gamma_{\mathbb{R}_+}\cong \Gamma_{[0,t]}\otimes\Gamma_{[t,+\infty)}$ of the Fock space.

By \cite[Theorem 3.7]{franz-unif}, we can recover a monotone L\'evy process on $\mathbb{C}[x]$ and therefore a SAIP $Y=(Y_t)_{t\ge 0}$ by setting
\[
Y_t = \tilde{X}_t P_t,
\]
where $P_t$ is the tensor product $P_t=I_{\Gamma_{[0,t]}}\otimes P_\Omega$ of the identity operator with the orthogonal projection onto the vacuum vector $\Omega$, w.r.t.\ to the factorization $\Gamma_{\mathbb{R}_+}\cong \Gamma_{[0,t]}\otimes\Gamma_{[t,+\infty)}$. The process $Y$ constructed in this manner is equivalent to $X$, i.e., all their expectation values agree.

To realize a monotone L\'evy processes on a symmetric Fock space we only need to know its generating functional.

We can summarize this as follows.

\begin{theorem}
Let $(c_n)_{n\ge 1}$ be a sequence of real numbers that is conditionally positive definite in the sense that the functional $\psi:\mathbb{C}[x]\to\mathbb{C}$ determined by $\psi(x^n)=c_n$ for $n\ge 1$ and $\psi(\mathbf{1})=0$ is positive on all positive polynomials vanishing at $0$.

Then there exists a family of symmetric operators $X=(X_t)_{t\ge 0}$ defined on a common invariant subspace of the symmetric Fock space $\Gamma=\Gamma(L^2(\mathbb{R},D))$ constructed from some pre-Hilbert space $D$, containing the vacuum vector $\Omega\in\Gamma$, such that the following are true:
\begin{itemize}
\item[(i)]
The algebras $\mathcal{A}_1={\rm span}\{X_{t_1}^k;k\ge 1\}$, $\mathcal{A}_2={\rm span}\{(X_{t_2}-X_{t_1})^k;k\ge 1\}$, $\ldots$, $\mathcal{A}_n={\rm span}\{(X_{t_n}-X_{t_{n-1}})^k;k\ge 1\}$ generated by the increments of $X$ are monotonically independent w.r.t.\ the vacuum state $\Phi_\omega(\cdot)=\langle\Omega,\,\cdot \, \Omega\rangle$ for $n\ge 1$ and any choice of $0<t_1<\cdots< t_n$.

\item[(ii)]
The functions $\mathbb{R}_+\ni t\mapsto \langle\Omega,(X_{s+t}-X_s)^n\Omega\rangle\in\mathbb{C}$ are differentiable in $t$ for $s,t>0$ and $n\ge 1$, and we have
\[
\lim_{t\searrow 0} \frac{1}{t}\langle\Omega, (X_{s+t}-X_s)^n\Omega\rangle = c_n.
\]
\end{itemize}
\end{theorem}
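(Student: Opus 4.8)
The plan is to assemble the statement from the Fock-space dilation machinery developed in this subsection, so most of the work consists in checking that the hypotheses of the cited results are met.

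First I would turn the sequence $(c_n)_{n\ge1}$ into the hermitian functional $\psi$ on $\mathbb{C}[x]$ with $\psi(x^n)=c_n$ and $\psi(\mathbf{1})=0$; by assumption $\psi$ is positive on positive polynomials vanishing at $0$, so it is a generating functional in the present sense. Then I would apply the GNS-type construction recorded in Proposition \ref{prop-coeff-schurmann} to produce a pre-Hilbert space $D$, a symmetric operator $R=\rho(x)$ on $D$, a vector $e=\eta(x)\in D$, and a real number $b=\psi(x)=c_1$ obeying the moment relations \eqref{eq-psi-triple}; concretely one may take $D=\mathbb{C}\oplus L^2\big(\mathbb{R},\tfrac{1}{x^2}\nu\big)$ for the measure $\nu$ in the characteristic triple \eqref{eq-LK-type} of $\psi$, which is spanned by polynomials and has finite moments of all orders.

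Next I would realize the associated monotone L\'evy process on a symmetric Fock space. Following \cite{franz-unif}, I equip $\mathcal{A}=\mathbb{C}[x]$ with its dual-semigroup structure, pass to the $*$-bialgebra $\tilde{\mathcal{A}}=\mathcal{A}\coprod\mathcal{P}$, extend the Sch\"urmann triple of $\psi$ to $\tilde{\mathcal{A}}$ as in \cite[Proposition 3.9]{franz-unif}, and invoke Sch\"urmann's representation theorem \cite[Theorem 2.5.3]{schurmann} to obtain a L\'evy process $\tilde{J}=(\tilde{\jmath}_{st})$ on $\tilde{\mathcal{A}}$, realized by operators on a common invariant dense subspace $\mathcal{H}\subseteq\Gamma=\Gamma(L^2(\mathbb{R},D))$ containing $\Omega$, with $\tilde{X}_t=\tilde{\jmath}_{0t}(x)$ and $Q_t=\tilde{\jmath}_{0t}(p)$ solving the quantum stochastic differential equations displayed above and $Q_t=I_{\Gamma_{[0,t]}}\otimes P_\Omega$ for the factorization $\Gamma\cong\Gamma_{[0,t]}\otimes\Gamma_{[t,\infty)}$. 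I would then set $X_t:=\tilde{X}_t P_t$ with $P_t:=I_{\Gamma_{[0,t]}}\otimes P_\Omega$. Since the data $(R,e,b)$ are symmetric and $\tilde{X}_t$ is adapted, $X_t=\big(\tilde{X}_t|_{\Gamma_{[0,t]}}\big)\otimes P_\Omega$ is a symmetric operator and $\mathcal{H}$ is invariant under all $X_t$. By \cite[Theorems 3.3 and 3.7]{franz-unif} the vacuum-adapted process $(X_t)$ is a Fock-space realization of the monotone L\'evy process with generating functional $\psi$, so its increments satisfy the monotone independence relations of Definition \ref{def-mon}, which gives (i); moreover the distributions $\mu_t$ of $X_{s+t}-X_s$ (independent of $s$ by stationarity) form a weakly continuous $\rhd$-convolution semigroup with $\mu_0=\delta_0$ having $\psi$ as its derivative at $t=0$, so that $\langle\Omega,(X_{s+t}-X_s)^n\Omega\rangle=m_n(t):=\int_\mathbb{R}x^n\,\mu_t({\rm d}x)$ and $\lim_{t\searrow0}\tfrac{1}{t}m_n(t)=\psi(x^n)=c_n$ for $n\ge1$. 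For the differentiability claimed in (ii) I would invoke Theorem \ref{MLK}: $F_{\mu_t}$ solves the autonomous equation $\frac{{\rm d}}{{\rm d}t}F_{\mu_t}(z)=A(F_{\mu_t}(z))$, and expanding $G_{\mu_t}(z)=\sum_{n\ge0}m_n(t)z^{-n-1}$ for large $|z|$ converts this into a triangular system of ordinary differential equations with polynomial right-hand sides for the $m_n(t)$, so each $m_n$ is smooth on $(0,\infty)$ and right-differentiable at $t=0$.

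The main obstacle is not so much a new argument as the verification that the cited machinery applies: one needs $\nu$ to have finite moments of all orders, so that $R=\rho(x)$ and the entire process live on the polynomial pre-Hilbert space $D$ and the moment problem for $\nu$ is determinate, together with the symmetry of $(R,e,b)$. Granting this, the genuinely delicate facts — that the vacuum-adapted cutoff $\tilde{X}_t P_t$ has monotonically independent increments, and that monotone L\'evy processes on $\mathbb{C}[x]$ correspond bijectively to Sch\"urmann L\'evy processes on $\tilde{\mathcal{A}}$ — are precisely \cite[Theorems 3.3 and 3.7]{franz-unif}. The only point that requires a genuinely separate (though short) argument is the differentiability assertion in (ii), since the abstract definition of a SAIP asks only for weak continuity; it follows from the ODE description of the moments above, which is available exactly because here all moments exist and evolve according to Muraki's monotone L\'evy-Khintchine flow.
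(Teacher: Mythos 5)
Your proposal follows essentially the same route as the paper: the paper's own proof is a one-line appeal to the machinery set out just before the theorem (Proposition \ref{prop-coeff-schurmann}, the dual-semigroup and $*$-bialgebra construction on $\tilde{\mathcal{A}}=\mathbb{C}[x]\coprod\mathcal{P}$, Sch\"urmann's representation theorem \cite{schurmann}, and Theorems 3.3 and 3.7 of \cite{franz-unif}, with $X_t=\tilde{X}_tP_t$), which is exactly what you assemble, so part (i) is handled identically. Two remarks. First, a small slip: $Q_t=\tilde{\jmath}_{0t}(p)$ is $P_\Omega\otimes I_{\Gamma_{[t,+\infty)}}$ in the factorization $\Gamma\cong\Gamma_{[0,t]}\otimes\Gamma_{[t,+\infty)}$, not $I_{\Gamma_{[0,t]}}\otimes P_\Omega$; the latter is $P_t$. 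This does not affect your argument since you only use $P_t$. Second, the one place where you go beyond the paper is the justification of (ii), and there your argument as written is not quite valid: the Laurent expansion $G_{\mu_t}(z)=\sum_n m_n(t)z^{-n-1}$ converges for large $|z|$ only when $\mu_t$ is compactly supported, whereas here the measure $\nu$ in the characteristic triple (and hence $\mu_t$) is only guaranteed to have finite moments of every order, so the series is merely an asymptotic expansion and the termwise passage from the flow equation of Theorem \ref{MLK} to a triangular ODE system for the $m_n(t)$ needs additional justification. The cleaner route stays inside the framework you already cite: in Sch\"urmann's theory the marginal distributions of $\tilde{J}$ are the convolution exponentials $\exp_\star(t\tilde{\psi})$, which are analytic in $t$ on each fixed element of $\tilde{\mathcal{A}}$ (every element lies in a finite-dimensional subcoalgebra), and \cite[Theorem 3.7]{franz-unif} identifies $\langle\Omega,(X_{s+t}-X_s)^n\Omega\rangle$ with such values; this gives both the differentiability in $t$ and the limit $\lim_{t\searrow 0}\frac{1}{t}\langle\Omega,(X_{s+t}-X_s)^n\Omega\rangle=\psi(x^n)=c_n$ directly.
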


\begin{proof}
This follows from the results in \cite{franz-unif,schurmann} mentioned above.
\end{proof}

\begin{remark}
If $(\mu_t)_{t\ge0}$ is a monotone convolution semigroup of compactly supported probability measures and $(F_t)_{t\ge0}$ denotes its reciprocal Cauchy transforms, then its generator $A(z) = \left.\frac{{\rm d}}{{\rm d}t}\right|_{t=0} F_t(z)$ can be written as $-A(z) = \alpha + \int \frac{1}{z-x} \tau({\rm d}x)$ with a real number $\alpha$ and a compactly supported finite non-negative measure $\tau$.

To construct the SAIP associated to $(\mu_t)_{t\ge0}$, we have to choose the sequence $(c_n)_{n\in\mathbb{N}}$ given by $c_1=\alpha$, and $c_n=\int x^{n-2}\tau({\rm d}x)$ for $n\ge2$, since
\[
\sum_{n=0}^\infty c_n z^{-n-1} = \psi\left(\frac{1}{z-x}\right) = \left. \frac{{\rm d}}{{\rm d}t}\right|_{t=0} \int \frac{1}{z-x} \mu_t({\rm d}x) = - \frac{A(z)}{z^2}.
\]
\end{remark}

\index{monotone L\'evy process!additive|)}


\subsection{Belton's construction}

\index{monotone increment process!additive (SAIP)|(}
\index{quantum stochastic integral|(}

Let us now discuss Belton's approach using vacuum-adapted stochastic calculus to realize not necessarily stationary quantum stochastic processes with monotonically independent increments, cf.\ \cite{belton-mon}.

We continue to use the more basic and more explicit notation of \cite{schurmann,franz+skalski} for stochastic integrals, rather than the more elegant notation of Lindsay and Belton in  \cite{lindsay,belton-mon,belton-iso}. In this construction all stochastic integrals will be vacuum-adapted, i.e., the integrands are of the form
\[
E_t = \hat{E}(s) \otimes P_\Omega \quad \mbox{ on }\quad \Gamma\cong \Gamma_{0,t}\otimes \Gamma_{t,\infty}
\]
for all $t\ge 0$. We denote the vacuum projection acting on the future of $t$ again by $P_t=I_{\Gamma_{[0,t]}}\otimes P_{\Omega}$. For a family of triples $(R_t,e_t,b_t)_{t\ge0}$ as in Proposition \ref{prop-coeff-belton}, we define the associated SAIP $(X_t)_{t\ge0}$ by the quantum stochastic integral
\begin{equation}\label{eq-def-X}
X_t = \int_0^t P_s \left({\rm d}\Lambda(R_s) + {\rm d}A^+(e_s) + {\rm d}A(e_s) + b_s{\rm d}s\right), \qquad t\ge 0.
\end{equation}
Note that in Belton's notation the stochastic integral in Equation \eqref{eq-def-X} is written
\[
X_t = \int_0^t \check{E}\, {\rm d}\Lambda,
\]
with
\[
\check{E}(s) = \left(\begin{array}{cc} b_s & \langle e_s| \\ |e_s\rangle & R_s \end{array}\right)\otimes P_s, \qquad s\in\mathbb{R}_+,
\]
where $\langle e_s|$ and $|e_s\rangle$ denote the linear operators $\langle e_s|:\ell^2\ni v \mapsto \langle e_s,v\rangle\in\mathbb{C}$, $|e_s\rangle:\mathbb{C}\ni\lambda \mapsto \lambda e_s\in\ell^2$.

By the Ito formula for so-called $\Omega$-processes we get the following formulas for the powers of $X_t$.
\begin{lemma}
Let $(X_t)_{t\ge0}$ be defined by \eqref{eq-def-X}. Then we have the following integral representations
\[
X_t^n = \sum_{k=0}^{n-1} \int_0^t X_s^k P_s \,{\rm d}\mathcal{I}^{(n,k)}_s
\]
for $n\ge 2$, where the integrators ${\rm d} \mathcal{I}^{(n,k)}_s$ are given by
\begin{align*}
{\rm d}\mathcal{I}_s^{(n,0)} &= {\rm d}\Lambda(R_s^n) + {\rm d} A^+(R_s^{n-1}e_s)+{\rm d}A(R_s^{n-1}e_s) + \underbrace{\langle e_s,R_s^{n-2} e_s\rangle}_{=\psi_s(x^n)} {\rm d}s, \\
{\rm d}\mathcal{I}_s^{(n,k)} &=  {\rm d}A^+(R_s^{n-1-k}e_s) + {\rm d}A(R_s^{n-1-k}e_s)+ (k+1) \underbrace{\langle e_s,R_s^{n-2-k} e_s\rangle}_{=\psi_s(x^{n-k})} {\rm d}s, \\ & \qquad\qquad\qquad\qquad\qquad\qquad\qquad\qquad \mbox{ for } \quad k\in\{1,\ldots,n-2\} \\
{\rm d}\mathcal{I}^{(n,n-1)} &=  {\rm d}A^+(e_s) + {\rm d} A(e_s) + n \underbrace{b_s}_{=\psi_s(x)} {\rm d}s.
\end{align*}
\end{lemma}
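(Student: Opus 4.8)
The plan is to induct on $n$, the engine being the quantum Itô formula for vacuum-adapted (``$\Omega$-adapted'') stochastic integrals due to Belton \cite{belton-mon}, written in the explicit notation of \cite{schurmann,franz+skalski}. Note first that all coefficient processes are bounded by Proposition \ref{prop-coeff-belton}(iii), so each $R_s$ is a bounded self-adjoint operator and the quantities $R_s^m e_s$ and $\langle e_s, R_s^m e_s\rangle = \psi_s(x^{m+2})$ on the right-hand side are well defined. For the base case $n=2$ I would apply the product rule directly to $X_t^2 = X_t\cdot X_t$ with ${\rm d}X_s = P_s\big({\rm d}\Lambda(R_s) + {\rm d}A^+(e_s) + {\rm d}A(e_s) + b_s\,{\rm d}s\big)$: the two first-order terms $({\rm d}X_s)X_s$ and $X_s({\rm d}X_s)$ combine into the integrand $X_sP_s$ against ${\rm d}A^+(e_s) + {\rm d}A(e_s) + 2b_s\,{\rm d}s = {\rm d}\mathcal{I}^{(2,1)}_s$ (the gauge increment acting trivially next to the vacuum-adapted factor $X_s$, and the two ${\rm d}s$-terms adding up), while the Itô correction $({\rm d}X_s)({\rm d}X_s)$, evaluated from the vacuum-adapted Itô table together with $R_s^* = R_s$, yields the integrand $P_s$ against ${\rm d}\Lambda(R_s^2) + {\rm d}A^+(R_se_s) + {\rm d}A(R_se_s) + \langle e_s,e_s\rangle\,{\rm d}s = {\rm d}\mathcal{I}^{(2,0)}_s$.

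For the inductive step, assuming the stated representation for $X_t^n$, I would apply the product rule to $X_t^{n+1} = X_t^n\cdot X_t$ and split it into three groups. First, in $({\rm d}X_s^n)X_s$ I would substitute the inductive expression for ${\rm d}X_s^n$ and use $P_sX_s = X_s$ together with the fact that the fundamental increments at time $s$ commute with the past operator $X_s$; this turns the $k$-th summand into the integrand $X_s^{k+1}$ against the unchanged integrator ${\rm d}\mathcal{I}^{(n,k)}_s$. Second, $X_s^n({\rm d}X_s)$ contributes the integrand $X_s^nP_s$ against ${\rm d}A^+(e_s) + {\rm d}A(e_s) + b_s\,{\rm d}s$. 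Third, in the Itô correction $({\rm d}X_s^n)({\rm d}X_s)$ the gauge--gauge and gauge--creation entries of the table raise the power of $R_s$ by one --- so the $k=0$ summand of ${\rm d}X_s^n$ contributes ${\rm d}\Lambda(R_s^{n+1})$ and ${\rm d}A^+(R_s^{n}e_s)$ --- and the annihilation--creation entry produces the new scalar $\langle e_s, R_s^{n-1-k}e_s\rangle\,{\rm d}s = \psi_s(x^{n+1-k})\,{\rm d}s$ with coefficient $1$, while the remaining products vanish because gauge and annihilation increments act trivially on a vacuum-adapted integrand; this is exactly what leaves a gauge term only in the $k=0$ summand and keeps the annihilation coefficients equal to $1$. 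Collecting everything by the power of the integrand then gives: the $k=0$ term comes entirely from the correction and equals $P_s\,{\rm d}\mathcal{I}^{(n+1,0)}_s$; and for $1\le k\le n$ the integrator of the $X_s^k$-term is the unchanged ${\rm d}\mathcal{I}^{(n,k-1)}_s$ from the first group together with the scalar pieces from the correction (index $k$) and, when $k=n$, from the second group, whose coefficients add --- $\big((n-1)+1\big)$ plus $1$ in the boundary case, $k$ plus $1$ otherwise --- to the prescribed $k+1$ in ${\rm d}\mathcal{I}^{(n+1,k)}_s$. This is the representation for $n+1$.

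The only real work is this last bookkeeping: checking, index by index, that the $R_s$-powers produced by the gauge-shift in the correction match those already carried by ${\rm d}\mathcal{I}^{(n,k-1)}_s$, and that the several scalar contributions telescope to precisely $k+1$. I expect the one point of genuine content beyond routine algebra to be the consistent use of Belton's vacuum-adapted Itô calculus rather than the Hudson--Parthasarathy one: all products have to be run through that formalism, carrying the projections $P_s$ and using that gauge and annihilation increments are annihilated after such a projection, so that the correction acts as a full coefficient product with no surviving ${\rm d}A\,{\rm d}\Lambda$ term and no gauge term attached to a nontrivial integrand $X_s^k$. Once that table is fixed, the lemma follows by a finite induction with no analytic input.
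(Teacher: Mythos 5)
Your route is not the paper's: the paper does not rederive this formula at all, its proof consists of quoting \cite[Corollary 2.1]{belton-mon}, rewriting it in the present notation, reordering the sums, and identifying the ${\rm d}s$-coefficients with the values $\psi_s(x^m)$ via \eqref{eq-psi-triple}. Reproving Belton's corollary by induction from the vacuum-adapted It\^o product formula is a legitimate alternative in principle, and your base case lands on the correct expression; but the specific rules you use in the inductive step are not the vacuum-adapted rules, and with them the induction does not close.

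Concretely, you assert (i) that in $({\rm d}X_s^n)X_s$ the increments commute with $X_s$, so the $k$-th summand becomes $X_s^{k+1}P_s$ against the \emph{unchanged} integrator ${\rm d}\mathcal{I}^{(n,k)}_s$; (ii) that $X_s^n({\rm d}X_s)$ contributes ${\rm d}A^+(e_s)+{\rm d}A(e_s)+b_s\,{\rm d}s$; and (iii) that the annihilation--gauge entries of the It\^o correction vanish. The commutation in (i) is precisely the Hudson--Parthasarathy-adapted fact that fails here: the future vacuum projection carried by $X_s$ does not commute with the time-$s$ increments, and this is the whole point of the vacuum-adapted calculus. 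Moreover (i)--(iii) contradict both the statement and each other: under (i) the $k=0$ summand would attach the gauge term ${\rm d}\Lambda(R_s^{n})$ to the integrand $X_sP_s$ of $X_t^{n+1}$, which is absent from ${\rm d}\mathcal{I}^{(n+1,1)}_s$ (and which you yourself exclude a few lines later); under (i) and (ii) the integrand $X_s^nP_s$ would receive ${\rm d}A^+(e_s)+{\rm d}A(e_s)$ twice, whereas ${\rm d}\mathcal{I}^{(n+1,n)}_s$ contains it once; and under (iii) the correction from the $k=0$ summand would be ${\rm d}\Lambda(R_s^{n+1})+{\rm d}A^+(R_s^{n}e_s)+\psi_s(x^{n+1}){\rm d}s$ and \emph{not} $P_s\,{\rm d}\mathcal{I}^{(n+1,0)}_s$ as you claim, since the missing term ${\rm d}A(R_s^{n}e_s)$ is produced exactly by the product ${\rm d}A(R_s^{n-1}e_s)\,{\rm d}\Lambda(R_s)$ that you set to zero. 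The bookkeeping that does reproduce the lemma is: keep the full It\^o correction table (then the $k=0$ part of the correction alone yields ${\rm d}\mathcal{I}^{(n+1,0)}_s$, and each $k\ge 1$ part yields ${\rm d}A(R_s^{n-k}e_s)+\psi_s(x^{n+1-k})\,{\rm d}s$), while in the first-order terms only the creation and ${\rm d}s$ components survive in $({\rm d}X_s^n)X_s$ and only the annihilation and ${\rm d}s$ components survive in $X_s^n({\rm d}X_s)$, the remaining components being killed by the vacuum projection standing on the respective side (this is also what makes your $n=2$ answer come out right; with only the gauge dying, as you argue, you would get $2\,{\rm d}A^+(e_s)+2\,{\rm d}A(e_s)+2b_s{\rm d}s$). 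That accounting is exactly what the vacuum-adapted product formula of \cite{belton-mon} encodes; without stating and applying that formula precisely, rather than HP-style commutation plus ad hoc vanishing, your induction does not yield the asserted integrators.
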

\begin{proof}
Rewrite \cite[Corollary 2.1]{belton-mon} in our notation and reorder the sums. The identification of the coefficients of the `${\rm d}s$'-term with values of $\psi_t$ comes from Equation \eqref{eq-psi-triple}.
\end{proof}
Applying \cite[Theorem 2.1]{belton-mon} with $f=g=0$, we get the following integral equations for the vacuum expectation of $X_t^n$:
\begin{equation}\label{eq-int-eq}
\langle \Omega, X_t^n\Omega\rangle = \int_0^t \sum_{k=0}^{n-1} (k+1) \psi_s(x^{n-k}) \langle\Omega, X_s^k\Omega\rangle\, {\rm d}s.
\end{equation}
Since $(X_t)_{t\ge0}$ is bounded, we can develop the Cauchy transform
\[
G_{X_t}(z) = \langle \Omega, (z-X_t)^{-1}\Omega\rangle
\]
of the law of $X_t$ into a power series
\[
G_{X_t}(z) = \sum_{n=0}^\infty z^{-n-1}\langle\Omega, X_t^n\Omega\rangle
\]
for $z\in \mathbb{C}$ with $|z|> \| X_t\|$.

Using the integral equation \eqref{eq-int-eq}, we can get the following differential equation for $G_X$:
\begin{eqnarray*}
\frac{\partial}{\partial t} G_{X_t}(z) &=& \sum_{n=0}^\infty \sum_{k=0}^{n-1} (k+1) z^{-n-1}\psi_t(x^{n-k}) \langle\Omega, X_t^k\Omega\rangle  \\
&=& \sum_{\ell=0}^\infty z^{-\ell-1} \psi_t(x^\ell) \sum_{k=0}^\infty (k+1) z^{-k} \langle\Omega, X_t^k\Omega\rangle \\
&=& A_t(z) \frac{\partial}{\partial z} G_{X_t}(z)
\end{eqnarray*}
for sufficiently large $|z|$, where
\[
- A_t(z) = z^2\sum_{\ell=0}^\infty z^{-\ell-1} \psi_t(x^\ell) .
\]
In terms of the reciprocal Cauchy transform we get the same equation,
\begin{equation}\label{eq-loew-2}
\frac{\partial}{\partial t} F_{X_t}(z) = A_t(z) \frac{\partial}{\partial z} F_{X_t}(z).
\end{equation}
Let $(b_t,\sigma_t,\nu_t)$ be the characteristic triple associated to $\psi_t$ in Equation \eqref{eq-LK-type}. Then the function $A_t$ can also be written as
\[
-A_t(z) = b_t + \int \frac{1}{z-x} \left(\nu_t+\sigma_t^2 \delta_0\right)({\rm d} x).
\]
\begin{theorem}
Let $(\psi_t)_{t\ge0}$ be a family of \index{generating functional}generating functionals on $\mathbb{C}[z]$ satisfying (M) and (L). Denote by $\big((R_t,e_t,b_t)\big)_{t\ge0}$ the triples associated to $(\psi_t)_{t\ge0}$ by Proposition \ref{prop-coeff-belton}.

Then the vacuum adapted quantum stochastic integral in Equation \eqref{eq-def-X} defines a quantum stochastic process $(X_t)$. Furthermore, this quantum stochastic process is a SAIP and the reciprocal Cauchy transforms
\[
F_{st}(z)= \frac{1}{\langle \Omega, \big(z-(X_t-X_s)\big)^{-1}\Omega\rangle}, \qquad z\in\mathbb{C}^+,\quad 0\le s\le t,
\]
of the distributions of its increments are the \index{transition mappings}transition mappings of the \index{Loewner chain!additive}additive Loewner chain given by the solution of Equation \eqref{eq-loew-2} with initial condition $F_{ss}(z)=z$.
\end{theorem}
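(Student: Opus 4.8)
The plan is to verify, in turn, the three assertions of the theorem: (1) the vacuum-adapted stochastic integral \eqref{eq-def-X} is well-defined and yields a quantum stochastic process $(X_t)$; (2) this process is a SAIP; (3) the reciprocal Cauchy transforms of its increments form the additive Loewner chain solving \eqref{eq-loew-2}. The existence in (1) follows directly from Belton's results \cite{belton-mon}: the hypotheses (M) and (L) on $(\psi_t)_{t\ge0}$ translate, via Proposition \ref{prop-coeff-belton}, into exactly the measurability and local-integrability conditions (ii)--(iii) on the triples $\big((R_t,e_t,b_t)\big)_{t\ge0}$ that are required for the vacuum-adapted quantum stochastic integral to converge and define a family of bounded operators $X_t$ on the symmetric Fock space. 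Since each integrand is of the form $\hat E(s)\otimes P_\Omega$ with the coefficient matrix built from the (bounded, by (CP)) operators $R_s$, the vectors $e_s$, and the reals $b_s$, the process consists of bounded self-adjoint operators; self-adjointness is clear because the coefficient matrix $\check E(s)$ is self-adjoint.

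For (2), I would invoke \cite[Theorem 2.1]{belton-mon} (or the relevant structural statement there), which is precisely the assertion that vacuum-adapted stochastic integrals of this shape produce processes whose increments $X_t-X_s$ over disjoint time intervals are monotonically independent with respect to the vacuum state; the factorization $\Gamma_{\mathbb{R}_+}\cong\Gamma_{[0,t]}\otimes\Gamma_{[t,\infty)}$ and the presence of the projections $P_s$ are what encode the order structure of monotone independence. Conditions (a) and (b) of Definition \ref{def_saip} are automatic here since all operators are bounded and everywhere-defined, so $X_t-X_s$ is already self-adjoint on all of $\Gamma$ and every domain question is vacuous. Condition (c), weak continuity of $(s,t)\mapsto\mu_{st}$, follows once we have the explicit differential equation for the $F$-transforms together with the local integrability of the data: the solution $F_{st}$ of \eqref{eq-loew-2} depends continuously on $(s,t)$, hence so does $\mu_{st}$ by Lemma \ref{lemmaconvergence}. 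Condition (d) is the monotone independence just cited.

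For (3), the computation in the excerpt already does the main work: the Ito formula for $\Omega$-processes (the displayed Lemma) gives the integral representations of $X_t^n$, \cite[Theorem 2.1]{belton-mon} with $f=g=0$ yields the integral equations \eqref{eq-int-eq} for $\langle\Omega,X_t^n\Omega\rangle$, and summing the resulting power series shows that $G_{X_t}$, hence $F_{X_t}$, satisfies \eqref{eq-loew-2} with $F_{X_0}(z)=z$ (valid first for $|z|>\|X_t\|$ and then on all of $\C^+$ by analytic continuation, since $X_t$ is bounded). The same argument applied to the shifted process $(X_{s+t}-X_s)_{t\ge0}$, whose increment structure over $[s,s+t]$ is governed by $(\psi_{s+r})_{r\ge0}$, gives $\partial_t F_{s,s+t}=A_{s+t}(F_{s,s+t})\,\partial_z F_{s,s+t}$ with $F_{ss}(z)=z$; reparametrizing, this is exactly the statement that $(F_{st})_{0\le s\le t}$ are the transition mappings of the Loewner chain determined by the Herglotz vector field $M(z,t)=A_t(z)\partial_z$. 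Finally one checks that $A_t$ has the claimed Pick-Nevanlinna form $-A_t(z)=b_t+\int (z-x)^{-1}(\nu_t+\sigma_t^2\delta_0)({\rm d}x)$ from Equation \eqref{eq-psi-triple} and \eqref{eq-LK-type}, which confirms $M$ is an additive Herglotz vector field and so, by the Proposition on additive Loewner chains of order $d$, the solution is genuinely an additive Loewner chain.

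The main obstacle I anticipate is not any single hard estimate but the bookkeeping needed to cite Belton's framework correctly: one must match the normalization and notational conventions of \cite{belton-mon,lindsay} (vacuum-adapted integrands, the $\check E$ matrix notation, the Ito table for $\Omega$-processes) against the more classical notation of \cite{schurmann,franz+skalski} used here, and in particular verify that the coefficient identifications $\langle e_s,R_s^{n-2}e_s\rangle=\psi_s(x^n)$ etc.\ from \eqref{eq-psi-triple} are exactly the ones entering Belton's integral equations. Once that dictionary is in place, (1) and (2) are essentially quotations and (3) is the series manipulation already sketched; the only genuinely analytic point is the passage from the power-series identity for $G_{X_t}$, valid for large $|z|$, to the global validity of \eqref{eq-loew-2} on $\C^+$, which is handled by analytic continuation using boundedness of $X_t$ and the fact that $A_t$ extends holomorphically to $\C^+$ by its Pick representation.
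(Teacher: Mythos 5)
Your proposal is correct and follows essentially the same route as the paper: quote Belton's framework (via Propositions \ref{prop-cond-psi} and \ref{prop-coeff-belton}) for well-definedness and the SAIP property of the vacuum-adapted integral, then use the moment computation already carried out to see that \eqref{eq-loew-2} is the Loewner PDE \eqref{EV_Loewner} for the additive Herglotz vector field $M(z,t)=A_t(z)$, whose Pick--Nevanlinna form comes from \eqref{eq-psi-triple} and \eqref{eq-LK-type}. The only bookkeeping slip is the citation for monotone independence of the increments: the relevant statement is \cite[Theorem 3.1]{belton-mon} (your hedge covers this), while \cite[Theorem 2.1]{belton-mon} is the It\^o/moment identity used to derive \eqref{eq-int-eq}.
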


\begin{proof}
By Propositions \ref{prop-cond-psi} and \ref{prop-coeff-belton}, the family $\big((R_t,e_t,b_t)\big)_{t\ge0}$ satisfies the properties (ii) and (iii) of that proposition, therefore \cite[Theorem 3.1]{belton-mon} insures that $(X_t)_{t\ge0}$ is well-defined and a SAIP.

Equation \eqref{eq-loew-2} is the Loewner partial differential equation \eqref{EV_Loewner} for the additive Herglotz vector field $M(z,t)=A_t(z)$. This shows that the reciprocal Cauchy transforms of $(X_t)_{t\ge0}$ form indeed an additive Loewner chain, as claimed.
\end{proof}

\index{monotone increment process!additive (SAIP)|)}
\index{quantum stochastic integral|)}




\chapter[UMIPs, Markov Processes, and Loewner Chains]{Multiplicative Monotone Increment Processes, Classical Markov Processes, and Loewner Chains}\label{section_processes_mult}

In this section we will construct a bijection between a  class of unitary operator processes, multiplicative Loewner chains, and some class of Markov processes on the unit circle.

\section[UMIPs to Loewner chains]{From multiplicative monotone increment processes to multiplicative Loew\-ner chains}\label{from_mult_processes_to_Loewner}
\index{monotone increment process!multiplicative (UMIP)|(}

We will consider the following classes of unitary quantum stochastic processes.

\begin{definition}\label{def_umip}
Let $(H,\xi)$ be a concrete quantum probability space and $(U_t)_{t\ge 0}\subseteq B(H)$ a family of unitary operators
with $U_0=I$. We will call $(U_t)$ a \emph{unitary multiplicative monotone increment process (UMIP)} if the following conditions are satisfied: \vspace{0.3em}
\begin{itemize}
	\item[(a)] The mapping $(s,t)\mapsto \mu_{st}$ is continuous w.r.t.\ weak convergence, where $\mu_{st}$ denotes the distribution of $U_s^{-1}U_t$.
    
 %

\vspace{0.3em}

\item[(b)] The tuple 
\[
(U_{t_1}-I, U_{t_1}^{-1}U_{t_2}-I,\ldots,U_{t_{n-1}}^{-1}U_{t_n}-I)
\]
is \index{monotone independence}monotonically independent for all $n\in\mathbb{N}$ and all $t_1,\ldots,t_n\in\mathbb{R}$ s.t.\ $0\le t_1\le t_2\le\cdots\le t_n$.
\end{itemize}
The random variables $\{U_s^{-1} U_t: 0 \leq s \leq t\}$ are called \emph{increments}.  If $U_s^{-1} U_t$ has the same law as $U_{t-s}$ for all $0\leq s \leq t$ we say that increments are stationary and $(U_t)$ is a \index{monotone L\'evy process!multiplicative}\emph{unitary multiplicative monotone L\'evy process}. 
We call $(U_t)$ \emph{centered} if $\langle\xi,U_t\xi\rangle\in\mathbb{R}$ for all $t\ge 0$.
We will call $(U_t)$ \emph{normalized} if $\langle\xi,U_t\xi\rangle=e^{-t}$ for all $t\ge 0$. 
\end{definition}


\begin{theorem}\label{from_UMIP_to_Loewner}
Let $(U_t)_{t\ge 0}$ be a UMIP in a quantum probability space $(H,\xi)$. Let
\[
\psi_{st}(z) = \left\langle \xi, \frac{zU_s^*U_t}{1-zU_s^*U_t}\xi \right\rangle
\text{~~~and~~~}
\eta_{st}(z) = \frac{\psi_{st}(z)}{1+\psi_{st}(z)}
\]
for $0\le s\le t$ and $|z|<1$, and let $\eta_t=\eta_{0t}.$ Then $(\eta_{t})_{t\geq 0}$ is a \index{Loewner chain!multiplicative}multiplicative Loewner chain with \index{transition mappings}transition mappings $(\eta_{st})$.
\end{theorem}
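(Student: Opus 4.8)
The plan is to verify the three defining properties of a multiplicative Loewner chain from Definition \ref{EV_def:evolution_family}: namely that each $\eta_{st}$ is an $\eta$-transform of a probability measure on $\T$ (equivalently, $\eta_{st}(\D)\subseteq\D$ and $\eta_{st}(0)=0$), that $\eta_{ss}=\mathrm{id}$, that the cocycle relation $\eta_{su}=\eta_{st}\circ\eta_{tu}$ holds for $0\le s\le t\le u$, and that $(s,t)\mapsto\eta_{st}$ is continuous with respect to locally uniform convergence. First I would observe that $U_s^*U_t$ is a unitary random variable, so by the discussion in Section \ref{mono_convolutions_mult} there is a unique probability measure $\mu_{st}\in\cP(\T)$ with $\Phi_\xi\big(zU_s^*U_t/(1-zU_s^*U_t)\big)=\psi_{\mu_{st}}(z)$; hence $\psi_{st}=\psi_{\mu_{st}}$ and $\eta_{st}=\eta_{\mu_{st}}$, so by Lemma \ref{characterizationeta} each $\eta_{st}$ maps $\D$ into $\D$ and fixes $0$. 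Since $U_s^*U_s=I$, its distribution is $\delta_1$, whose $\eta$-transform is the identity, so $\eta_{ss}(z)=z$.

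The main point is the cocycle relation. Here I would use exactly the structure already set up in Section \ref{mono_convolutions_mult}: the increments $V:=U_s^*U_t$ and $W:=U_t^*U_u$ satisfy $VW=U_s^*U_u$, and monotone independence of the tuple $(U_{t_1}-I,\dots)$ in Definition \ref{def_umip}(b) implies in particular that $(U_s^*U_t - I,\, U_t^*U_u)$ is monotonically independent — this is the content of Remark \ref{u-I} together with the fact that increments over disjoint time intervals of a UMIP are monotonically independent (which follows by the same reasoning as conditions (i) and (ii) of Definition \ref{def-mon}, applied to the algebras generated by the increments $U_{t_{i-1}}^{-1}U_{t_i}-I$; one may need to insert the index $t$ as an extra subdivision point and use the definition on the refined tuple $(U_{t_1},\dots,U_s^{-1}U_t,U_t^{-1}U_u,\dots)$, then regroup). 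Given that $(V-I,W)$ is monotonically independent, the cited results \cite[Corollary 2.3]{B05}, \cite[Corollary 4.2]{franz-mult} — quoted in Section \ref{mono_convolutions_mult} — say precisely that the distribution of $VW = U_s^*U_u$ is $\mu_{st}\submm\mu_{tu}$, i.e. $\eta_{\mu_{su}} = \eta_{\mu_{st}}\circ\eta_{\mu_{tu}}$ by \eqref{eq:multiplicative_monotone_convolution}. This is exactly $\eta_{su}=\eta_{st}\circ\eta_{tu}$. For continuity, condition (a) of Definition \ref{def_umip} gives weak continuity of $(s,t)\mapsto\mu_{st}$, and Lemma \ref{lemmaconvergenceunit} then yields locally uniform convergence of $(s,t)\mapsto\eta_{\mu_{st}}=\eta_{st}$, which is condition (c) of Definition \ref{EV_def:evolution_family}. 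Together with $\eta_{st}(0)=0$ this shows $(\eta_t)$ is a multiplicative Loewner chain with transition mappings $(\eta_{st})$.

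The step I expect to be the real obstacle is the justification that the increments $U_s^*U_t$ and $U_t^*U_u$ (or rather $U_s^*U_t-I$ and $U_t^*U_u$) are monotonically independent when $s\le t\le u$, starting only from condition (b), which a priori speaks about the specific increments $U_{t_{i-1}}^{-1}U_{t_i}-I$ attached to a chosen partition. One has to argue that inserting an intermediate time is harmless — i.e. that the monotone independence is stable under refining the partition and that, conversely, the two consecutive increments over $[s,t]$ and $[t,u]$ generate the relevant algebras — and that the product of the algebras generated by $U_s^*U_t-I$ and by $U_t^*U_u-I$ contains, after adjoining the unit, the variable $U_s^*U_u-I$ (this is the multiplicative analogue of the identity $X_{su}=X_{st}+X_{tu}$ used in Theorem \ref{from_add_process_to_Loewner_chains}, and it is where the passage $U-I\leftrightarrow V$ of Remark \ref{u-I} and the formula $VW-I=(V-I)(W-I)+(V-I)+(W-I)$ enter). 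Once this structural fact is in place, everything else is a direct invocation of the lemmas and of Bercovici's multiplicative monotone convolution formula quoted in the preliminaries.
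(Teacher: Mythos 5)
Your proof is correct and follows essentially the same route as the paper: continuity of $(s,t)\mapsto\eta_{st}$ via Lemma \ref{lemmaconvergenceunit}, and the composition rule from the factorization $U_s^*U_u=(U_s^*U_t)(U_t^*U_u)$, monotone independence of the pair of increments, Remark \ref{u-I}, and formula \eqref{eq:multiplicative_monotone_convolution}. The step you flag as the ``real obstacle'' is in fact immediate: condition (b) of Definition \ref{def_umip} is assumed for \emph{every} choice of times, so taking $n=3$ with $t_1=s$, $t_2=t$, $t_3=u$ gives monotone independence of $(U_s-I,\,U_s^{-1}U_t-I,\,U_t^{-1}U_u-I)$, and restricting to the last two entries (any order-preserving subfamily of a monotonically independent family is monotonically independent, directly from Definition \ref{def-mon}) already yields what you need, with no partition-refinement or regrouping argument required.
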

\begin{proof} The continuity of $(s,t)\mapsto \mu_{st}$ implies continuity of $(s,t)\mapsto \eta_{st}$ 
 due to Lemma \ref{lemmaconvergenceunit}.
The result now follows readily from $U_s^* U_u = U_s^* U_t U_t^* U_u$, the fact that the pair $(U_s^* U_t-I,U_t^* U_u-I)$ is monotonically independent, Remark \ref{u-I}, and the formula \eqref{eq:multiplicative_monotone_convolution}.
\end{proof}

\begin{theorem}\label{thm:equiv_umip}\index{equivalence!of quantum stochastic processes} Two UMIPs are equivalent (in the sense of Definition \ref{def:equiv_saip}) if and only if the distributions of increments coincide. 
\end{theorem}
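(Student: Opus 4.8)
The plan is to mirror the proof of Theorem \ref{thm:equiv_saip} in the unitary setting, which is in fact substantially easier because all operators involved are bounded. First I would prove the ``only if'' direction: if $(U_t)$ on $(H,\xi)$ and $(U_t')$ on $(H',\xi')$ are equivalent, then taking $n=1$ and $f_1(x) = \frac{zx}{1-zx}$ in the defining equation \eqref{eq:equiv_saip} (which is a legitimate element of $C_b(\C)$ on the spectrum of the unitary $U_t$, hence of $U_t-I$ after the shift, once we note $f_1(0)=0$) gives $\psi_{0t} = \psi_{0t}'$, hence $\eta_{0t}=\eta_{0t}'$. By Theorem \ref{from_UMIP_to_Loewner} the family $(\eta_{0t})$ is a multiplicative Loewner chain, so the transition mappings are recovered as $\eta_{st} = \eta_{0s}^{-1}\circ \eta_{0t} = (\eta_{0s}')^{-1}\circ \eta_{0t}' = \eta_{st}'$; by Lemma \ref{characterizationeta} (or simply the inversion formulas \eqref{eq:Herglotz-inv1}--\eqref{eq:atom3}) this forces $\mu_{st}=\mu_{st}'$, i.e. the distributions of increments coincide.

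For the ``if'' direction, suppose the distributions of the increments of $(U_t)$ and $(U_t')$ agree. The key observation is that, because the increments are unitary and $U_{t_1}^{-1}U_{t_2}-I,\ldots$ are bounded operators vanishing (after the natural identification) on the appropriate algebras, I can directly expand a product $f_1(U_{t_1})\cdots f_n(U_{t_n})$ into a linear combination of products of increments. Concretely, writing $U_{t_i} = U_{t_{i-1}} (U_{t_{i-1}}^{-1}U_{t_i})$ and iterating, each $U_{t_i} = V_1 V_2 \cdots V_i$ where $V_j = U_{t_{j-1}}^{-1}U_{t_j}$ (with $V_1=U_{t_1}$) is an increment. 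Since each $V_j$ lies in the *-algebra generated by $V_j - I$, and the functions $f_i$ of the unitaries $U_{t_i}$ can be developed; the cleanest route is to take $f_i$ to be Laurent polynomials (i.e. polynomials in $U_{t_i}$ and $U_{t_i}^{-1}$), which by Stone--Weierstrass are uniformly dense in $C(\T)$, and by the argument of Remark \ref{rem:equiv_saip} (dominated convergence against the spectral measure) it suffices to verify \eqref{eq:equiv_saip} for such $f_i$. For Laurent polynomials, every monomial $U_{t_{i_1}}^{m_1}\cdots U_{t_{i_n}}^{m_n}$ (with $m_j\in\Z$) is a finite product of the unitary increments $V_j^{\pm 1}$ and their identities, hence a polynomial in the bounded operators $V_j - I$ and $V_j^{-1}-I = -(V_j-I)V_j^{-1}$; applying conditions (i) and (ii) of Definition \ref{def-mon} (monotone independence of the tuple of increments, via condition (b) of Definition \ref{def_umip} together with Remark \ref{u-I}) allows one to factor the vacuum expectation into a product of expectations each depending only on a single increment. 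Since the distributions of the increments coincide, \eqref{eq:equiv_saip} follows for all Laurent-polynomial $f_i$, and then by density and dominated convergence for all $f_i\in C_b(\C)$ restricted to $\T$, equivalently all $f_i\in C_b(\C)$ with the standard identification.

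The main technical point — though it is far milder here than in the self-adjoint case — is the bookkeeping needed to reduce an arbitrary product of increments and their inverses to a ``monotone word'' to which Definition \ref{def-mon} applies: the indices $i_1,\ldots,i_n$ need not be monotone, so one must repeatedly use condition (ii) (the relation $XYZ = \Phi_\xi(Y)XZ$ when $i<j>k$) to collapse local maxima, and condition (i) to factor the remaining alternating word. This is exactly the combinatorial content already invoked in Step 1 and Step 2 of the proof of Theorem \ref{thm:additive_monotone_construction}, and the same reasoning applies verbatim once one knows the generators $V_j - I$ (and $V_j^{-1}-I$) lie in the algebras $\mathcal A_{\iota}$ appearing in Definition \ref{def-mon}\eqref{enu:monotone_independence2}; this is where the hypothesis that $(U_{t_1}-I, U_{t_1}^{-1}U_{t_2}-I,\ldots)$ — rather than $(U_{t_1},U_{t_1}^{-1}U_{t_2},\ldots)$ — is monotonically independent is essential, cf.\ Remark \ref{rem:unit} and the discussion preceding \eqref{eq:multiplicative_monotone_convolution}. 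No unbounded-operator subtleties, Trotter product formulas, or core arguments are needed, so the proof is genuinely shorter than that of Theorem \ref{thm:equiv_saip}.
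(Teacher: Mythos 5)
Your proposal is correct and takes essentially the same route as the paper: the ``only if'' direction uses $n=1$ with $f_1(x)=zx/(1-zx)$ and recovers the transition mappings $\eta_{st}=\eta_{0s}^{-1}\circ\eta_{0t}$ via Theorem \ref{from_UMIP_to_Loewner}, while the ``if'' direction reduces by approximation to monomials $f_i(x)=x^{k_i}$, $k_i\in\Z$, rewrites the resulting products as polynomials in the increments minus $I$ (and their adjoints), and factorizes the vacuum expectation using monotone independence of the increments. This is exactly the paper's argument, at a comparable level of detail.
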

\begin{proof} 
The only if part is similar to the proof of Theorem \ref{thm:equiv_saip}: we now take $n=1$ and $f_1(x)=x z /(1-x z)$ for $z \in \disk$, and use $\eta$-transforms with Theorem \ref{from_UMIP_to_Loewner}. 

For the if part, by approximation it suffices to consider $f_i(x)=x^{k_i}$ for integers $k_i$. For example if $n=3$, $k_1, k_2 \geq 0 > k_3$ and $t_2  \leq t_1 \leq t_3$ we write
\begin{equation*}
\langle \xi, f_1(U_{t_1}) f_2(U_{t_2})f_3(U_{t_3})\xi\rangle = \langle \xi, (U_{t_2}U_{t_2}^{-1}U_{t_1})^{k_1}(U_{t_2})^{k_2} ((U_{t_1}^{-1}U_{t_3})^*U_{t_1}^*)^{|k_3|}\xi  \rangle. 
\end{equation*}
The product of unitaries in the RHS can further be expressed as a polynomial of $U_{t_2} -I, U_{t_2}^{-1}U_{t_1}-I, (U_{t_1} -I)^*$ and $(U_{t_1}^{-1}U_{t_3}-I)^*$. Then thanks to monotone independence for the increments the inner product of each monomial factorizes, and each factor can be computed in terms of the distribution of an increment. The final expression only depends on the distributions of increments.   
\end{proof}

\index{monotone increment process!multiplicative (UMIP)|)}

\section[From Loewner chains to Markov processes]{From multiplicative Loewner chains to $\submm$-homogeneous Markov processes}

\index{Markov process!$\submm$-homogeneous|(}

\begin{definition}\label{def_equi_mult_Markov}
A probability kernel $k$ on $\T$ is called \emph{$\submm$-homogeneous} if it satisfies
\[
\delta_x\submm k(y,\,\cdot\,) = k(x y,\,\cdot\,)
\]
for all $x,y\in\tor$. A Markov process $(M_t)_{t\geq0}$ with values in $\T$ is called a \emph{$\submm$-homogeneous Markov process} if its transition kernels $(k_{st})_{0\le s\le t}$
 satisfy the following conditions:
 \vspace{0.3em}
 
\begin{itemize}
\item[(a)] The mapping $(s,t)\mapsto k_{st}(x,\cdot)$ is continuous w.r.t.\ weak convergence for all $x\in\T$.

%

\vspace{0.3em}

\item[(b)] The kernel $k_{st}$ is \emph{$\submm$-homogeneous} for all $0\le s\le t$.
\end{itemize}
\end{definition}

\begin{theorem}\label{mult-Markov-proc}
Let $(\eta_{t})_{t\geq 0}$ be a \index{Loewner chain!multiplicative}multiplicative Loewner chain with \index{transition mappings}transition mappings
 $(\eta_{st})_{0\le s\le t}$. Then there exists a $\submm$-homogeneous Markov process $(M_t)_{t\ge 0}$ with $M_0=1$ whose transition kernels $(k_{st})_{0\le s\le t}$ are determined by
\begin{equation}\label{eq:kernel2}
\int_\mathbb{T} \frac{zy}{1-zy} k_{st}(x,{\rm d}y) = \frac{\eta_{st}(z)x}{1-\eta_{st}(z)x} ~\Big(= \psi_{\delta_x}(\eta_{st}(z)) \Big)
\end{equation}
for $0\le s\le t$ and $x\in\mathbb{T}$.
\end{theorem}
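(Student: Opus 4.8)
\textbf{Proof plan for Theorem \ref{mult-Markov-proc}.} The strategy mirrors the additive case treated in Theorem \ref{theo-Markov-proc}, with the circle $\T$ and the multiplicative monotone convolution $\submm$ replacing $\R$ and $\rhd$. First I would check that for each fixed $0\le s\le t$ and $x\in\T$ the right-hand side of \eqref{eq:kernel2}, namely $z\mapsto \psi_{\delta_x}(\eta_{st}(z)) = \eta_{st}(z)x/(1-\eta_{st}(z)x)$, is the moment generating function $\psi_\mu$ of a genuine probability measure $\mu$ on $\T$. Since $\eta_{st}$ is a multiplicative Loewner transition mapping, $\eta_{st}$ maps $\D$ into $\D$ with $\eta_{st}(0)=0$; hence $\eta_{st}(z)x\in\D$ for all $z\in\D$, and $w\mapsto wx/(1-wx)$ composed with $\eta_{st}$ is analytic on $\D$, vanishes at $0$, and satisfies $\Re[\psi_{\delta_x}(\eta_{st}(z))]\ge -\tfrac12$ because $\psi_{\delta_x}$ itself already has this property (it is $\psi_{\delta_x}(w)=\Phi(wU/(1-wU))$ with $U=x$, a unitary) and $\eta_{st}(\D)\subseteq\D$. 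By Lemma \ref{characterizationpsi} there is a unique probability measure $k_{st}(x,\cdot)$ on $\T$ with $\psi_{k_{st}(x,\cdot)} = \psi_{\delta_x}\circ\eta_{st}$, which is exactly \eqref{eq:kernel2}. The normalization $\eta_{tt}(z)=z$ (transition mapping at equal times) gives $\psi_{k_{tt}(x,\cdot)}=\psi_{\delta_x}$, so $k_{tt}(x,\cdot)=\delta_x$.

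Next I would establish that $(s,t,x)\mapsto k_{st}(x,\cdot)$ has the measurability needed for a probability kernel, and then the Chapman--Kolmogorov relation. For measurability of $x\mapsto k_{st}(x,B)$ one uses the inversion formulas \eqref{eq:Herglotz-inv1}--\eqref{eq:atom3}, which express $k_{st}(x,A)$ and $k_{st}(x,\{\alpha\})$ as pointwise limits (in $r\uparrow1$) of integrals of $\Re(2\psi_{k_{st}(x,\cdot)}(r\xi^{-1})+1)$; since $\psi_{k_{st}(x,\cdot)}(w)=\eta_{st}(w)x/(1-\eta_{st}(w)x)$ depends measurably (indeed continuously) on $x$, one gets measurability for open arcs $B$, and then extends to all Borel sets by Dynkin's $\pi$--$\lambda$ theorem exactly as in the proof of Theorem \ref{theo-Markov-proc}. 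For Chapman--Kolmogorov, fix $0\le s\le t\le u$, $x\in\T$ and $z\in\D$, and compute
\begin{align*}
\int_{\T}\int_{\T}\frac{zw}{1-zw}\,k_{st}(x,{\rm d}y)\,k_{tu}(y,{\rm d}w)
&=\int_{\T}\psi_{k_{tu}(y,\cdot)}(z)\,k_{st}(x,{\rm d}y)
=\int_{\T}\psi_{\delta_y}(\eta_{tu}(z))\,k_{st}(x,{\rm d}y)\\
&=\int_{\T}\frac{\eta_{tu}(z)\,y}{1-\eta_{tu}(z)\,y}\,k_{st}(x,{\rm d}y)
=\psi_{k_{st}(x,\cdot)}(\eta_{tu}(z))\\
&=\psi_{\delta_x}(\eta_{st}(\eta_{tu}(z)))
=\psi_{\delta_x}(\eta_{su}(z)),
\end{align*}
using the Loewner chain property $\eta_{st}\circ\eta_{tu}=\eta_{su}$. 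Since this equals $\psi_{k_{su}(x,\cdot)}(z)$ for all $z\in\D$, the uniqueness part of Lemma \ref{characterizationpsi} (equivalently Lemma \ref{characterizationeta}) forces $k_{st}\star k_{tu}=k_{su}$. Together with $k_{ss}(x,\cdot)=\delta_x$ this is exactly \eqref{eq:CK}, so by the standard existence theorem for Markov processes recalled in Section \ref{analysis_Markov_additive} there is a Markov process $(M_t)_{t\ge0}$ with initial distribution $\delta_1$ and transition kernels $(k_{st})$.

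It remains to verify $\submm$-homogeneity of each $k_{st}$ and the weak continuity in $(s,t)$. For $\submm$-homogeneity, I would compute the $\eta$-transform (or the moment generating function) of $\delta_x\submm k_{st}(y,\cdot)$: by \eqref{eq:multiplicative_monotone_convolution}, $\eta_{\delta_x\submm k_{st}(y,\cdot)} = \eta_{\delta_x}\circ\eta_{k_{st}(y,\cdot)}$, where $\eta_{\delta_x}(w)=xw$; since $\eta_{k_{st}(y,\cdot)}(w)=\eta_{st}(w)y$ by \eqref{eq:kernel2} (passing from $\psi$ to $\eta$), one gets $\eta_{\delta_x\submm k_{st}(y,\cdot)}(w)=x\eta_{st}(w)y=\eta_{st}(w)(xy)=\eta_{k_{st}(xy,\cdot)}(w)$, hence $\delta_x\submm k_{st}(y,\cdot)=k_{st}(xy,\cdot)$ by Lemma \ref{characterizationeta}. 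Weak continuity of $(s,t)\mapsto k_{st}(x,\cdot)$ follows from Lemma \ref{lemmaconvergenceunit}: the map $(s,t)\mapsto\eta_{st}$ is continuous with respect to locally uniform convergence on $\D$ (Loewner chain axiom (c)), so $(s,t)\mapsto\psi_{\delta_x}\circ\eta_{st}=\psi_{k_{st}(x,\cdot)}$ is locally uniformly continuous, and then Lemma \ref{lemmaconvergenceunit} gives $k_{st}(x,\cdot)\wto k_{s't'}(x,\cdot)$ as $(s',t')\to(s,t)$. This gives all conditions of Definition \ref{def_equi_mult_Markov}. The only genuinely delicate point, as in the additive case, is the measurability of $x\mapsto k_{st}(x,B)$ for general Borel $B$; everything else is a direct transcription of the additive argument using the $\psi$/$\eta$ dictionary of Section \ref{mono_convolutions_mult}, and I expect no real obstacle there.
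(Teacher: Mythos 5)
Your proposal is correct and takes essentially the same route as the paper, whose proof simply observes that the additive argument of Theorem \ref{theo-Markov-proc} carries over with the kernel $zy/(1-zy)$ in place of $1/(z-y)$, invoking Lemma \ref{characterizationpsi} for the existence of $k_{st}(x,\cdot)$, the inversion formulas \eqref{eq:Herglotz-inv1}--\eqref{eq:atom3} for the measurability of $x\mapsto k_{st}(x,B)$, and Lemma \ref{lemmaconvergenceunit} for the weak continuity. Your explicit Chapman--Kolmogorov and $\submm$-homogeneity computations via $\eta_{st}\circ\eta_{tu}=\eta_{su}$ and $\eta_{k_{st}(y,\cdot)}(w)=\eta_{st}(w)y$ are exactly the details the paper leaves implicit.
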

\begin{proof}
The proof is similar to that of Theorem \ref{theo-Markov-proc}; one only needs to use the kernel function $zy/(1-zy)$ instead of $1/(z-y)$. The existence of a probability measure $k_{st}(x,\cdot)$ follows from \eqref{eq:kernel2} and Lemma \ref{characterizationpsi}. The weak continuity of $(s,t)\mapsto k_{st}(x,\cdot)$ follows from Lemma \ref{lemmaconvergenceunit}. The measurability of $x\mapsto k_{st}(x,B)$ follows from the inversion formulas \eqref{eq:Herglotz-inv1} and \eqref{eq:atom3}.
\end{proof}

Let $(M_t)$ be the Markov process constructed in Theorem \ref{mult-Markov-proc}. The Markov property \eqref{eq:Markov2} entails 
\begin{equation}\label{eq:Markovianity2}
\mathbb E\left[\frac{z M_t}{1- z M_t} \Bigg| \mathcal F_s\right] =\frac{\eta_{st}(z)M_s}{1-\eta_{st}(z)M_s}~~a.s.,
\end{equation}
which plays an important role in the construction of UMIPs in the next section.

\index{Markov process!$\submm$-homogeneous|)}

\section[From Markov processes to UMIPs]{From $\submm$-homogeneous Markov processes to multiplicative monotone increment processes}

\index{monotone increment process!multiplicative (UMIP)|(}

In the unitary case there is no technical difficulty of unbounded operators that arose in the additive case.

Let $(M_t)_{t\ge 0}$ be a \index{Markov process!$\submm$-homogeneous}$\submm$-homogeneous Markov process on $\T$ with $M_0=1$ adapted to a filtration $(\mathcal F_t)_{t\geq0}$. Let $(\Omega, \mathcal F, \mathbb P)$ be the underlying probability space. 
We introduce a family of unitary operators $(U_t)_{t\geq0}$ on $L^2(\Omega, \mathcal F, \mathbb P)$ by
\begin{equation}\label{eq:UMIP}
U_t = I + (M_t -I) P_t, 
\end{equation}
where $M_t$ is regarded as a multiplication operator and $P_t$ is the conditional expectation $\mathbb E [\,\cdot\,| \mathcal F_t]$ as before. We first check that each $U_t$ is unitary. Recalling that $M_t$ and $P_t$ commute, we have
\begin{align}
U_s^* U_t
&= (I + (M_s^*-I)P_s)(I+P_t(M_t-I)) \notag\\
&= I + (M_s^*-I)P_s+ (M_t-I) P_t + (M_s^*-I)P_s(M_t-I)  \label{eq:unitary_increment}\\
&= I + (M_t-I) P_t + (M_s^*-I)P_s M_t. \notag
\end{align}
Specializing to $s=t$ shows that $U_t^*U_t =I$. Similarly we can prove that $U_t U_t^*=I$.
Moreover, $(U_t)$ is a UMIP as shown below. 

\begin{proposition}\label{prop:unitary_key}
The unitary process $(U_t)$ satisfies 
\begin{equation}\label{eq:unitary_monotone_increment}
P_s \frac{z U_s^* U_t}{1-z U_s^* U_t} P_s = \psi_{st}(z) P_s
\end{equation}
for $0 \le s \le t$ and $z \in \C\setminus\tor$, where $\psi_{st}(z)$ is defined by 
$$
\psi_{st}(z) = \int_{\mathbb T} \frac{z y}{1-z y} k_{st}(1,{\rm d}y).
$$ 
In particular, the distribution of the unitary operator $U_s^* U_t$ with respect to the state $\langle \mathbf 1_{\Omega}, \cdot \mathbf 1_\Omega\rangle$ is given by $k_{st}(1,\cdot)$.
\end{proposition}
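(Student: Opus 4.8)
The plan is to establish the identity \eqref{eq:unitary_monotone_increment} by a direct computation, using the explicit form \eqref{eq:unitary_increment} of $U_s^*U_t$ together with the Markov property \eqref{eq:Markovianity2} (in its sandwiched form) and the basic algebraic properties of the conditional expectation $P_s$ (idempotency, commutation with $M_s$, and the module property $P_s(AP_sB)=P_s(A)P_s(B)$ when $A$ is $\mathcal F_s$-measurable). First I would fix $z\in\C\setminus\tor$ and expand the bounded operator $\dfrac{zU_s^*U_t}{1-zU_s^*U_t}$ into a geometric series $\sum_{n\ge 1}z^n(U_s^*U_t)^n$, valid for $|z|$ small; the final identity then extends to all $z\in\C\setminus\tor$ by analytic continuation since both sides are analytic in $z$ there (the left side because $1-zU_s^*U_t$ is boundedly invertible away from $|z|=1$, as $U_s^*U_t$ is unitary).

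The key step is to compute $P_s(U_s^*U_t)^nP_s$. Writing $U_s^*U_t = I + (M_t-I)P_t + (M_s^*-I)P_sM_t$ from \eqref{eq:unitary_increment}, I would note that $P_sM_t = P_s M_t P_t$ (since $M_t$ commutes with $P_t$ and $P_tP_t=P_t$), and more importantly that powers of $U_s^*U_t$, when sandwiched by $P_s$ on the left and right, telescope: the operator $V:=U_s^*U_t$ satisfies $P_sV = P_s + (M_s^*-I)P_sM_tP_t + \dots$, and iterating one sees that $P_sV^nP_s$ can be reduced using the ``two-sided'' Markov relation. The cleanest route is probably to first prove the single-variable version of \eqref{eq:Markovianity2} as an operator identity, namely
\[
P_s\,\frac{zM_t}{1-zM_t}\,P_s = \left(\frac{\eta_{st}(z)M_s}{1-\eta_{st}(z)M_s}\right)P_s = \psi_{\delta_{M_s}}(\eta_{st}(z))\,P_s,
\]
which follows from \eqref{eq:kernel2} and the Markov property \eqref{eq:Markov2} applied to the multiplication operator $M_s$ acting as an $\mathcal F_s$-measurable coefficient; then feed this into the series expansion of $V^n$. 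Because $(U_s^*U_t - I) = (M_t-I)P_t + (M_s^*-I)P_sM_t$ and both summands involve $P_t$ or $P_sM_t$, each term $V^n$ after multiplication by $P_s$ on both sides collapses to an expression in $P_s\frac{M_t}{\,\cdot\,}P_s$-type quantities and powers of $M_s^{\pm 1}$, and one recognizes the resulting series as $\psi_{\delta_{M_s}}(\eta_{st}(z))P_s$. Finally, the $\submm$-homogeneity of $k_{st}$ gives $\psi_{\delta_{M_s}}(\eta_{st}(z)) = \psi_{\delta_{M_s}\submm k_{st}(1,\cdot)}(z) = \psi_{k_{st}(M_s,\cdot)}(z)$ — but one wants it in the form $\psi_{st}(z)P_s$ where $\psi_{st}$ uses $k_{st}(1,\cdot)$; this is exactly the content of \eqref{eq:kernel2} read with $x=M_s$, so the identity \eqref{eq:unitary_monotone_increment} follows with $\psi_{st}(z)=\psi_{k_{st}(1,\cdot)}(z)$.

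For the last sentence, I would apply \eqref{eq:unitary_monotone_increment} to the vector $\mathbf 1_\Omega$ (noting $P_s\mathbf 1_\Omega = \mathbf 1_\Omega$) and take the inner product with $\mathbf 1_\Omega$, obtaining
\[
\left\langle \mathbf 1_\Omega, \frac{zU_s^*U_t}{1-zU_s^*U_t}\mathbf 1_\Omega\right\rangle = \psi_{st}(z) = \int_{\tor}\frac{zy}{1-zy}\,k_{st}(1,{\rm d}y),
\]
and since the moment generating function $\psi_\mu$ determines $\mu$ (via the inversion formula of Lemma \ref{inversionunit}, cf.\ \eqref{eq:Herglotz-inv1}--\eqref{eq:atom3}), the distribution of $U_s^*U_t$ in the vacuum state equals $k_{st}(1,\cdot)$. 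I expect the main obstacle to be the bookkeeping in the series expansion of $P_sV^nP_s$: one must carefully track which conditional expectation ($P_s$ vs.\ $P_t$) appears and use the tower property and module property of conditional expectations repeatedly; a cleaner alternative, which I would try first, is to avoid the infinite series entirely by verifying directly that $W:=\psi_{st}(z)P_s + (I-P_s)\cdot(\text{something})$ — or more simply, by multiplying the desired resolvent identity through by $(1-zU_s^*U_t)$ and checking the resulting polynomial identity $P_s(zU_s^*U_t)P_s = \psi_{st}(z)P_s(1-zU_s^*U_t)P_s$ after expanding with \eqref{eq:unitary_increment} and one application of the Markov relation \eqref{eq:Markovianity2}.
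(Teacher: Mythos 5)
Your overall strategy (geometric series expansion of the sandwiched resolvent, the operator form of the Markov property $P_s\tfrac{zM_t}{1-zM_t}P_s=\tfrac{\eta_{st}(z)M_s}{1-\eta_{st}(z)M_s}P_s$, then a resummation) is indeed the route the paper takes, but the step where you identify the outcome of the resummation is wrong, and it is precisely the step carrying the content of the proposition. You claim the series collapses to $\psi_{\delta_{M_s}}(\eta_{st}(z))P_s=\tfrac{\eta_{st}(z)M_s}{1-\eta_{st}(z)M_s}P_s$ and then pass to $\psi_{st}(z)P_s$ ``by \eqref{eq:kernel2} read with $x=M_s$''. But \eqref{eq:kernel2} with $x=M_s$ identifies that operator with $\psi_{k_{st}(M_s,\cdot)}(z)$, which genuinely depends on $M_s$ and is \emph{not} the scalar $\psi_{st}(z)=\psi_{k_{st}(1,\cdot)}(z)$ asserted in the statement; the two agree only if $M_s\equiv 1$. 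In fact $\tfrac{\eta_{st}(z)M_s}{1-\eta_{st}(z)M_s}P_s$ is exactly $P_s\tfrac{zM_t}{1-zM_t}P_s$, i.e.\ what you would get if the increment were $M_t$ rather than $U_s^*U_t$; the correction terms in \eqref{eq:unitary_increment} are there precisely to kill the $M_s$-dependence. Concretely, writing $Z_s=(M_s-I)P_s$ and grouping the expansion of $P_s\left[zM_t+zZ_s^*M_t\right]^nP_s$ by the occurrences of $Z_s^*$, each block between consecutive $Z_s^*$'s is evaluated by the Markov identity, and (using $M_sM_s^*=I$) one arrives at $-P_s+P_s\tfrac{1}{1-\eta_{st}(z)M_s}\sum_{k\geq0}\bigl(\tfrac{\eta_{st}(z)(1-M_s)}{1-\eta_{st}(z)M_s}\bigr)^{k}P_s=-P_s+\tfrac{1}{1-\eta_{st}(z)}P_s=\psi_{st}(z)P_s$; the cancellation of $M_s$ in this geometric series is the heart of the proof, is absent from your sketch, and cannot be replaced by the homogeneity identification you propose.

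Two further steps would fail as written. First, analytic continuation from small $|z|$ only yields the identity on $\disk$, because $\C\setminus\tor$ is disconnected; for $|z|>1$ a separate argument is needed (the paper uses $\tfrac{zx}{1-zx}=\overline{-1-\tfrac{\overline{z}^{-1}x}{1-\overline{z}^{-1}x}}$ for $|x|=1$ and takes adjoints, reducing to $\overline{z}^{-1}\in\disk$). Second, your ``cleaner alternative'' of multiplying through by $1-zU_s^*U_t$ is not equivalent to \eqref{eq:unitary_monotone_increment}, since the inner $P_s$ does not commute with $(1-zU_s^*U_t)^{-1}$; indeed the proposed identity $P_s(zU_s^*U_t)P_s=\psi_{st}(z)P_s(1-zU_s^*U_t)P_s$ is false in general, because a short computation with \eqref{eq:unitary_increment} and the Markov property for first moments gives $P_sU_s^*U_tP_s=m_1(k_{st}(1,\cdot))P_s$, so it would force $\psi_{st}(z)=\tfrac{zm_1}{1-zm_1}$, i.e.\ $k_{st}(1,\cdot)$ would have to be a Poisson kernel. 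Your treatment of the last statement (apply \eqref{eq:unitary_monotone_increment} to $\mathbf 1_\Omega$ and invert the moment generating function) is fine.
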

\begin{proof} 
We prove the formula \eqref{eq:unitary_monotone_increment} firstly for $z\in \disk$. The $\submm$-homogeneity condition implies
$
k_{st}(x,{\rm d}y) = \delta_x \mm k_{st}(1,{\rm d}y)
$
and hence
$$
\int_{\mathbb T} \frac{z y}{1-z y} \,k_{st}(x,{\rm d}y) = \psi_{\delta_x} (\eta_{st}(z)) = \frac{\eta_{st}(z)x}{1-\eta_{st}(z)x},
$$
where $\eta_{st}:= \psi_{st}/(1+\psi_{st})$. Therefore, the Markov property \eqref{eq:Markovianity2} holds, which can also be expressed as 
\begin{equation}\label{eq:Markov}
P_s \frac{z M_t}{1-z M_t} P_s = \frac{\eta_{st}(z) M_s}{1-\eta_{st}(z)M_s} P_s, \qquad 0 \leq s \leq t, z\in \disk. 
\end{equation}
First we start from writing
\begin{align*}
P_s \frac{z U_s^* U_t}{1-z U_s^* U_t} P_s
&=-P_s + \sum_{n=0}^\infty z^n P_s \left[ I+ (M_t-I)P_t + (M_s^*-I)P_s M_t \right]^n P_s
\end{align*}
for $z\in \disk$.  In the expansion of $P_s \left[ I+ (M_t-I)P_t + (M_s^*-I)P_s M_t \right]^n P_s$, the projection $P_t$ does nothing, so that we may replace $P_t$ by the identity. This gives us
 \begin{align}
P_s \frac{z U_s^* U_t}{1-z U_s^* U_t} P_s
&=-P_s + \sum_{n=0}^\infty P_s \left[z M_t + z (M_s^*-I)P_s M_t \right]^n P_s. \label{eq:unitary1}
\end{align}
Introducing the simplified notation $Z_t = (M_t-I)P_t$ and expanding the power, the above can be expressed in the following way:
\begin{align*}
&\eqref{eq:unitary1} = -P_s \\ & + \sum_{n=0}^\infty \sum_{k=0}^n \sum_{\substack{ p_1, \dots, p_{k+1} \geq0 \\ p_1 + \dots + p_{k+1}= n-k}} P_s (z M_t)^{p_1} (z Z_s^* M_t)  (z M_t)^{p_2}  (z Z_s^* M_t) \cdots (z Z_s^* M_t) (z M_t)^{p_{k+1}} P_s \\
 &= -P_s + \sum_{k=0}^\infty \sum_{p_1, \dots, p_{k+1} \geq0} z^{p_1 + \dots + p_{k+1}+k} P_s M_t^{p_1} Z_s^* M_t^{p_2+1}  Z_s^* M_t^{p_3+1} \cdots Z_s^* M_t^{p_{k+1}+1} P_s \\
 &= -P_s + \sum_{k=0}^\infty \sum_{q_1\geq0} \sum_{q_2, \dots, q_{k+1} \geq 1} z^{q_1 + \dots + q_{k+1}} P_s M_t^{q_1} Z_s^* M_t^{q_2}  Z_s^* M_t^{q_3} \cdots Z_s^* M_t^{q_{k+1}} P_s \\
  &= -P_s + \sum_{k=0}^\infty P_s \frac{1}{1-z M_t} Z_s^* \frac{z M_t}{1-z M_t}  Z_s^* \cdots Z_s^* \frac{z M_t}{1-z M_t} P_s.
\end{align*}
Using the relations $Z_s^* P_s = P_s Z_s^* = Z_s^*$ and the Markov property \eqref{eq:Markov},  we can compute the above further to get
\begin{align*}
\eqref{eq:unitary1}&=  -P_s + \sum_{k=0}^\infty P_s \frac{1}{1-\eta_{st}(z) M_s}   \left(\frac{\eta_{st}(z) M_s}{1-\eta_{st}(z) M_s}\right)^k (Z_s^*)^k P_s \\
&= -P_s + P_s \frac{1}{1-\eta_{st}(z) M_s}  \frac{1}{1- \frac{\eta_{st}(z) (I-M_s)}{1- \eta_{st}(z)M_s}} P_s \\
&= -P_s + P_s \frac{1}{1-\eta_{st}(z)} P_s = \frac{\eta_{st}(z)}{1-\eta_{st}(z)}P_s = \psi_{st}(z)P_s,
\end{align*}
as desired for $z\in\disk$. To show this identity for $|z|>1$ we use the following identity
$$
\frac{z x}{1- z x} = \overline{-1 - \frac{\overline{z}^{-1} x}{1-\overline{z}^{-1} x}}, \qquad |z|>1, |x|=1, 
$$
and then setting $x=U_s^* U_t$ and sandwiching by $P_s$ yield 
\begin{align*}
P_s\frac{z U_s^* U_t}{1- z U_s^* U_t} P_s 
&= P_s\left(-1 - \frac{\overline{z}^{-1} U_s^* U_t}{I-\overline{z}^{-1} U_s^* U_t}\right)^* P_s = (-1 - \overline{\psi_{st}(\overline{z}^{-1}}))P_s \\
&= \int_{\tor}\overline{\left(-1- \frac{\overline{z}^{-1} x}{1-\overline{z}^{-1} x} \right) }k_{st}(1,{\rm d}x)P_s = \psi_{st}(z)P_s. 
\end{align*}

Finally applying \eqref{eq:unitary_monotone_increment} to the constant function $\mathbf 1_\Omega$ proves that
\begin{equation}\label{eq:expectation}
\left\langle \mathbf 1_\Omega, \frac{z U_s^* U_t}{1-zU_s^* U_t}\mathbf 1_\Omega \right\rangle = \psi_{st}(z),
\end{equation}
which shows the last statement.
\end{proof}

In order to generalize the functions $zx/(1-zx)$ to all continuous functions in the above proposition, we need an approximation lemma. 

\begin{lemma}\label{lem:approx2}
The set of functions 
$$
\text{\rm span}_\C\{1,zx(1-z x)^{-1}: z\in \C\setminus \tor\}
$$
is dense in $C(\tor)$. 
\end{lemma}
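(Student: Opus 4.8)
The plan is to mimic the proof of Lemma \ref{lem:approx} almost verbatim, replacing the upper half-plane by the unit disk via the change of variables that sends $\R$ to $\tor$. Write $g_z(x) = zx/(1-zx)$ for $z \in \C \setminus \tor$ and $x \in \tor$, and let $D' = \text{\rm span}_\C\{1, g_z : z \in \C \setminus \tor\}$. Since the functions $g_z$ are continuous on $\tor$ and bounded, $D' \subseteq C(\tor)$. The goal is to show $\overline{D'} = C(\tor)$ with respect to uniform convergence, and by the Stone--Weierstrass theorem it suffices to check that $\overline{D'}$ is a unital $*$-subalgebra of $C(\tor)$ that separates points.

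First I would record the product formula. For $z \neq w$ in $\C \setminus \tor$ with $z, w \neq 0$, a short partial-fractions computation in the variable $y = zx$ (or equivalently $g_z g_w = (1+g_z)(1+g_w) - (1+g_z) - (1+g_w) + 1$ together with $(1+g_z)(1+g_w) = (1-zx)^{-1}(1-wx)^{-1}$ decomposed as $\frac{z}{z-w}(1-zx)^{-1} - \frac{w}{z-w}(1-wx)^{-1}$) shows that $g_z g_w$ lies in $D'$; the cases involving $g_0 = 0$ are trivial. This is the analogue of Step 1 in Lemma \ref{lem:approx}. Next, as in Step 2 there, for fixed $z$ one approximates $g_z^2$ uniformly by $g_z g_w$ as $w \to z$, $w \neq z$, using the elementary bound $\|g_z^2 - g_z g_w\|_{C(\tor)} = \|g_z \cdot (g_z - g_w)\|_{C(\tor)} \le \|g_z\|_\infty \|g_z - g_w\|_\infty$, and $\|g_z - g_w\|_\infty \to 0$ because $g_z - g_w = \frac{(z-w)x}{(1-zx)(1-wx)}$ and $|1 - zx|$, $|1-wx|$ are bounded below uniformly in $x \in \tor$ for $w$ near $z$ (here one uses $\mathrm{dist}(z,\tor) > 0$). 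Hence $g_z^2 \in \overline{D'}$, and combined with the product formula, $\overline{D'}$ is closed under multiplication; since $1 \in D'$ it is a unital algebra.

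Then I would verify the $*$-structure: complex conjugation on $\tor$ sends $x \mapsto \bar x = x^{-1}$, and $\overline{g_z(x)} = \overline{zx/(1-zx)} = \bar z \bar x/(1 - \bar z \bar x) = g_{\bar z}(\bar x)$, which as a function of $x$ equals $g_{\bar z}$ composed with inversion; one checks directly that $\overline{g_z}$, as an element of $C(\tor)$, can be rewritten so as to lie in $D'$ — indeed $\overline{g_z}(x) = \bar z x^{-1}/(1 - \bar z x^{-1}) = \bar z/(x - \bar z)$, and since $|\bar z| = |z| \neq 1$ this is a Möbius function of $x$ regular on $\tor$; expressing it in the form $a + b\, g_w(x)$ for suitable $w = \bar z^{-1}$ and constants $a,b$ (valid because $\bar z \neq 0$; the case $z = 0$ is trivial) shows $\overline{g_z} \in D'$. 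Thus $\overline{D'}$ is a $*$-algebra. Finally, it separates points of $\tor$: for $z$ real with $|z| < 1$, $g_z(x) = zx/(1-zx)$ is injective in $x$ on $\tor$ (the Möbius map $x \mapsto zx/(1-zx)$ is a bijection of $\hat\C$), so any two distinct points of $\tor$ are separated by a single $g_z \in D'$. By the Stone--Weierstrass theorem, $\overline{D'} = C(\tor)$, which is the claim.

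The only mild subtlety — and the place I would be most careful — is the bookkeeping showing that $\overline{g_z}$ and the products $g_z g_w$ genuinely land back in $D'$ rather than merely in some larger algebra: all of this hinges on keeping track of the additive constant (which is why $1$ must be included in the spanning set, unlike in Lemma \ref{lem:approx} where $C_0(\R)$ required no constants) and on the fact that $|z| \neq 1$ keeps all denominators bounded away from zero on $\tor$. None of these steps requires any nontrivial analysis beyond the partial-fraction identities and uniform estimates already used in the proof of Lemma \ref{lem:approx}; the argument is essentially a transcription of that proof through a Möbius transformation, so I expect no real obstacle.
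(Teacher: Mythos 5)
Your proof is correct and follows essentially the same route as the paper, which simply says the argument is "similar to Lemma \ref{lem:approx}": the same Stone--Weierstrass scheme with the partial-fraction product identity, the $g_zg_w\to g_z^2$ approximation, and the conjugation identity $\overline{g_z}=-1-g_{1/\bar z}$ explaining the need for the constant function $1$. (Only trivial nitpick: in the separation step take a fixed nonzero real $z$, e.g.\ $z=1/2$, since $g_0\equiv 0$.)
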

\begin{proof} The proof is similar to Lemma \ref{lem:approx}. 
\end{proof}

\begin{theorem}\label{construction_UMIP} The process $(U_t)_{t\geq0}$ defined in \eqref{eq:UMIP} is a UMIP.
\end{theorem}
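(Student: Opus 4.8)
The plan is to verify the two defining conditions of a UMIP from Definition \ref{def_umip} for the process $(U_t)_{t\ge0}$ given by \eqref{eq:UMIP}, reusing the machinery already assembled in this section. Condition (a) — weak continuity of $(s,t)\mapsto \mu_{st}$, where $\mu_{st}$ is the law of $U_s^{-1}U_t = U_s^* U_t$ — follows essentially for free: Proposition \ref{prop:unitary_key} (in particular \eqref{eq:expectation}) identifies $\mu_{st} = k_{st}(1,\cdot)$, and the transition kernels of a $\mm$-homogeneous Markov process satisfy assumption (a) of Definition \ref{def_equi_mult_Markov}, i.e.\ $(s,t)\mapsto k_{st}(x,\cdot)$ is weakly continuous for every $x\in\T$; specialising to $x=1$ gives precisely what is needed.

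The substantive part is condition (b): monotone independence of the tuple $(U_{t_1}-I, U_{t_1}^{-1}U_{t_2}-I,\ldots,U_{t_{n-1}}^{-1}U_{t_n}-I)$, in the sense of Definition \ref{def-mon}. Writing $V_j := U_{t_{j-1}}^{-1}U_{t_j} = U_{t_{j-1}}^* U_{t_j}$ for the increments (with $t_0 = 0$, so $V_1 = U_{t_1}$), the algebras to be shown monotonically independent are $\mathcal A_j = \{f(V_j): f\in C_b(\C), f(0)=0\}$. The key tool is the analogue of Lemma \ref{lem-mon-relations} from the additive case, which I would first establish in the multiplicative setting: for $0\le s\le t\le u$, the operator $P_u$ acts as the identity on the left and right of any $g(U_s^*U_t)$ with $g(0)=0$ (because $U_s^* U_t - I = (M_t-I)P_t + (M_s^*-I)P_s M_t$ visibly maps into the range of $P_t \subseteq$ range of $P_u$, and the monomials $x^k - \delta_{k,0}$ that span a dense set of such $g$ are built from $U_s^* U_t - I$), and $P_s\, g(U_t^* U_u)\, P_s = \langle\mathbf 1_\Omega, g(U_t^*U_u)\mathbf 1_\Omega\rangle\, P_s$ for $g\in C(\T)$, which is exactly Proposition \ref{prop:unitary_key} combined with the approximation Lemma \ref{lem:approx2} to pass from the functions $zx/(1-zx)$ and constants to all of $C(\T)$, and then (as in Remark \ref{rem:equiv_saip}) from $C(\T)$ to all $f\in C_b(\C)$ restricted to $\T$ via strong convergence of functional calculus. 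With these two relations in hand, conditions (i) and (ii) of Definition \ref{def-mon} follow by the same telescoping computation as in the proof of Theorem \ref{thm:additive_monotone_construction}: for condition (i), insert the appropriate projections $P_{t_j}$ between consecutive factors and peel off the outermost increments one at a time, each step producing a scalar factor $\langle\mathbf 1_\Omega, \cdot\,\mathbf 1_\Omega\rangle$; for condition (ii), with $X = f(V_i), Y = g(V_j), Z = h(V_k)$ and $i < j > k$, write $XYZ = X P_{\cdot} Y P_{\cdot} Z = \langle\mathbf 1_\Omega, Y\mathbf 1_\Omega\rangle XZ$.

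The main obstacle I anticipate is purely bookkeeping rather than conceptual: one must be careful about the indices of the conditional expectations. The increments $V_j$ and $V_{j+1}$ share the "boundary time" $t_j$, and the relations above require knowing which $P_{t}$ to insert — specifically, $V_j = U_{t_{j-1}}^* U_{t_j}$ should be thought of as "living before $t_j$" for the purpose of the second relation and "living after $t_{j-1}$" for the first. Getting the inequalities $t_{j-1} \le t_j$ lined up correctly with the ordering hypothesis $t_1\le\cdots\le t_n$ and the index orderings $i_1 > \cdots > i_r > j < k_s < \cdots < k_1$ in Definition \ref{def-mon}(i) takes some care, but it is the same argument already executed in the additive case (Theorem \ref{thm:additive_monotone_construction}, Steps 1 and 2), so I would state the multiplicative Lemma explicitly and then say the rest proceeds "mutatis mutandis". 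A secondary, minor point is that Definition \ref{def-mon}(ii) is an operator identity (not merely a statement about vacuum expectations), so one genuinely needs the strong form $P_s\, g(U_t^*U_u)\, P_s = \langle\mathbf 1_\Omega, g(U_t^*U_u)\mathbf 1_\Omega\rangle P_s$ as operators, which Proposition \ref{prop:unitary_key} does provide.
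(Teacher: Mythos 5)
Your proposal is correct and follows essentially the same route as the paper's proof: condition (a) via Proposition \ref{prop:unitary_key} identifying the law of $U_s^*U_t$ with $k_{st}(1,\cdot)$, then the two key projection relations \eqref{eq:unitary_key1}--\eqref{eq:unitary_key2} (the multiplicative analogue of Lemma \ref{lem-mon-relations}, obtained exactly as you describe from \eqref{eq:unitary_increment} and from Proposition \ref{prop:unitary_key} with Lemma \ref{lem:approx2}), followed by the telescoping argument of Theorem \ref{thm:additive_monotone_construction}. One small correction: the algebras are $\{f(V_j-I): f\in C_b(\C),\ f(0)=0\}$, i.e.\ functions $g(V_j)$ with $g(1)=0$, not $f(V_j)$ with $f(0)=0$ as written --- your spanning functions $x^k-1$ show you had the right class in mind, and with that adjustment the argument is the paper's.
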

\begin{proof} As $M_0=1$, we have $U_0=I.$ Due to Proposition \ref{prop:unitary_key}, the distribution of $U_s^*U_t$ is equal to $k_{st}(1,\cdot)$, and the mapping $(s,t)\mapsto k_{st}(1,\cdot)$ is weakly continuous by assumption. Formula \eqref{eq:unitary_increment} implies that 
$
(U_s^* U_t -I)P_u = P_u(U_s^* U_t -I) = U_s^* U_t -I
$ 
for all $0\leq s \leq t \leq u$, and hence
\begin{equation}\label{eq:unitary_key1}
f(U_s^* U_t -I) P_u = P_u f(U_s^* U_t -I) =f(U_s^* U_t -I)
\end{equation}
for all $f \in C_b(\C)$ with $f(0)=0$.  
By Proposition \ref{prop:unitary_key}, formula \eqref{eq:expectation} and Lemma \ref{lem:approx2} we obtain $P_s g(U_t^*U_u)P_s = \langle \mathbf{1}_\Omega, g(U_t^*U_u)\mathbf{1}_\Omega\rangle P_s$ for any $0\leq s \leq t \leq u$ and $g \in C(\tor)$.  This also implies that 
\begin{equation}\label{eq:unitary_key2}
P_s f(U_t^*U_u-I)P_s = \langle \mathbf{1}_\Omega, f(U_t^*U_u-I)\mathbf{1}_\Omega\rangle P_s
\end{equation}
for all $f \in C_b(\C)$. 
With the key formulas \eqref{eq:unitary_key1} and \eqref{eq:unitary_key2} the remaining proof is similar to that of Theorem \ref{thm:additive_monotone_construction}. 
\end{proof}

\index{monotone increment process!multiplicative (UMIP)|)}

\section{Summary of the one-to-one correspondences}${}$\label{summary_multiplicative}

All in all, Theorems \ref{from_UMIP_to_Loewner}, \ref{mult-Markov-proc}, and
\ref{construction_UMIP} yield one-to-one correspondences between

\begin{enumerate}[\quad(A)]
\item \label{item:A2}
\index{Loewner chain!multiplicative}multiplicative Loewner chains $(\eta_{t})_{t\geq 0}$ in $\mathbb{D}$ (Def. \ref{EV_def:evolution_family}),

\item \label{item:B2}
$\T$-valued \index{Markov process!$\submm$-homogeneous}$\submm$-homogeneous Markov processes $(M_t)_{t\ge 0}$ with $M_0=1$ up to equivalence (Def.\ \ref{def_equi_mult_Markov} and Def.\ \ref{def:Markov_equiv}),

\item\label{item:C2} \index{monotone increment process!multiplicative (UMIP)}UMIPs: unitary multiplicative monotone increment processes $(U_t)_{t\ge 0}$ up to equivalence 
(Def.\ \ref{def_umip} and Def.\ \ref{def:equiv_saip}).

\end{enumerate}
Moreover, the above objects also correspond to:

\begin{enumerate}[\quad(A)]
\setcounter{enumi}{3}
\item\label{item:D2} families $(\mu_{st})_{0\leq s \leq t}$ of probability measures on $\T$ such that

\begin{enumerate}[(i)]
\item $\mu_{tt} = \delta_1$ for all $t\geq0,$

\item\label{hemigroup2} $\mu_{su} = \mu_{st} \submm \mu_{tu}$ for all $0\leq s\leq t\leq u,$

\item $(s,t)\mapsto \mu_{st}$ is weakly continuous.

\end{enumerate}
\end{enumerate}
\eqref{item:C2} $\Rightarrow$ \eqref{item:D2}: Given a UMIP $(U_t)$ we define $\mu_{st}$ to be the law of $U_s^{-1}U_t$. This is independent of a choice of a UMIP in the same equivalence class by Theorem \ref{thm:equiv_umip}. \eqref{item:D2} $\Rightarrow$ \eqref{item:A2}: Given $(\mu_{st})$ we define the \index{transition mappings}transition mappings $\eta_{st}=\eta_{\mu_{st}}$. Then $(\eta_{0t})$ forms a multiplicative Loewner chain. Thus our constructions yield bijections between the objects  \eqref{item:A2}--\eqref{item:D2}.

Following the additive case, we also call such a family of probability measures satisfying the three conditions in \eqref{item:D2} a \emph{weakly continuous $\submm$-convolution hemigroup}. By replacing $\submm$ by another convolution $\star$ we also define a notion of \index{hemigroup}\emph{weakly continuous $\star$-convolution hemigroups}. 
 

Furthermore, we can relate properties of the three notions as follows: 

\begin{itemize}
 \item[(1)]The Markov process $(M_t)_{t\geq0}$ is centered for all $t\ge 0$ if and only if
 the quantum process $(U_t)_{\geq0}$ is centered if and only if
  $\eta'_{t}(0)\in \R$ for all $t\ge 0.$

 \item[(2)] The Markov process $(M_t)_{t\geq0}$ is normalized with $\mathbb E[M_t]=e^{-t}$ for all $t\ge 0$ if and only if the
quantum process $(U_t)_{t\geq0}$ is normalized if and only if
$\eta'_t(0)=e^{-t}$ for all $t\ge 0.$

\item[(3)] The Markov process $(M_t)_{t\geq0}$ is stationary if and only if $(U_t)$ is stationary, i.e.\ $U_s^{*}U_t$ and $U_{t-s}$ has the same law for all $0\leq s \leq t$, if and only if the Loewner chains form a semigroup: $\eta_{s} \circ \eta_t = \eta_{s+t}$.
\end{itemize}

\section[Multiplicative free increment processes]{Construction of $\submm$-homogeneous Markov processes from multiplicative free increment processes}\label{mult_con_from_free}

\index{Markov process!$\submm$-homogeneous|(}
\index{free increment process!multiplicative}

Assume that $(\mu_{st})_{0\leq s \leq t < \infty}$ are probability distributions on $\tor$ coming from the increments of a free multiplicative increment process started at $1$, i.e. they form a weakly continuous $\boxtimes$-convolution hemigroup (see Section \ref{summary_multiplicative}) and $\mu_{tt}=\delta_1$. It is know that each $\mu_{st}$ is $\boxtimes$-infinitely divisible. There exists a family of probability measures $(\nu_{st})_{0\leq s \leq t}$ on $\tor$ such that
\[
\mu_{0t} = \mu_{0s} \mm \nu_{st}
\]
for $0 \leq s \leq t$ \cite{Bia98}. These measures are unique because the $\eta$-transforms can be obtained via
$\eta_{\nu_{st}}(z) = \eta_{\mu_{0s}} ^{-1} \circ \eta_{\mu_{0t}}(z)$ in a neighborhood of $0$. Note that the $\boxtimes$-infinite divisibility of $\mu_{0s}$ implies that $\eta_{\mu_{0s}}$ is univalent, and hence $\eta_{\mu_{0s}}^{-1}$ is defined in a neighborhood of $0$. The weak continuity for the map $(s,t)\mapsto\nu_{st}$ now follows from \cite[Proposition 2.5]{BV92}. Thus $(\nu_{st})$ forms a weakly continuous $\submm$-convolution hemigroup, and hence we can construct a $\submm$-homogeneous Markov process via the correspondence in Section \ref{summary_multiplicative}.

In particular, in the stationary case $\mu_{st}$ can be expressed as $\mu_{t-s}$, where $(\mu_t)_{t\geq0}$ forms a weakly continuous $\boxtimes$-convolution semigroup on $\mathbb{T}$ with $\mu_0 = \delta_1$. The $\eta$-transform of $\nu_{st}$ above can be expressed as
\begin{equation}\label{eq:subordination_multiplicative}
\frac{\eta_{\nu_{st}}(z)}{z} = \Big(\frac{\eta_{\mu_t}(z)}{z}\Big) ^{1-s/t},
\end{equation}
which was essentially proved in \cite{BB05}. Notice that one has to choose a suitable branch of the power function $z \mapsto z^t$ in order to define the RHS of \eqref{eq:subordination_multiplicative}. The equation \eqref{eq:subordination_multiplicative} also shows that
\begin{equation}
\nu_{st} = \mu_t ^{\hutimes (1-s/t)},
\end{equation}
where $\utimes$ is multiplicative Boolean convolution defined in \eqref{bool_convolutions_mult}.

\section[Generators, Feller property, and martingale property]{Generators, Feller property, and martingale property of $\submm$-homogeneous Markov processes}\label{analysis_mult_markov}

\subsection{Generators in the stationary case}
We compute \index{Hunt's formula}Hunt's formula for the generators of stationary $\submm$-homogeneous Markov processes on $\T$ with initial distribution $\delta_1$. Let $(k_t)$ be the transition kernels. Then the distributions $\mu_t:= k_t(1,\cdot) = \mathbb P[M_t \in \cdot]$ are weakly continuous regarding $t$ and form a $\submm$-convolution semigroup, namely
$$
\mu_s \submm \mu_t = \mu_{s+t}, \qquad s,t\geq0.
$$
As in the additive case the Hunt formula for Markov processes is closely related to the \index{L\'evy-Khintchine representation!monotone}L\'evy-Khintchine representation for the $\submm$-convolution semigroups proved by Bercovici \cite[Theorem 4.2]{B05}.
\begin{theorem}\label{UMID}
\begin{enumerate}[\rm(1)]
\item\label{CSUMMI} Let $\{\mu_t \}_{t\geq 0} \subset\cP(\tor)$ be a weakly continuous $\submm$-convolution semigroup such that $\mu_0 = \delta_1$, and let $\eta_t := \eta_{\mu_t}$. Then the right derivative $B(z) = \frac{{\rm d}}{{\rm d}z}\big|_{t=0} \eta_{t}(z)$ exists and $\{\eta_t\}$ satisfies the differential equation
\begin{equation} \label{ODE}
\frac{{\rm d}}{{\rm d}t}\eta_t (z) = \eta_t(z) B(\eta_t(z)), \qquad \eta_0(z)=z, \qquad t\geq0, z \in \D.
\end{equation}
Moreover, the function $B$ is of the Herglotz form
\begin{equation}\label{VectorFUMMI}
B(z) = i\alpha - \int_{\tor} \frac{1+z \zeta}{1-z\zeta}\rho({\rm d}\zeta),
\end{equation}
where $\alpha \in \R$ and $\rho$ is a finite, non-negative measure on $\tor$. The pair $(\alpha,\rho)$ is unique and is called the generating pair.

\item Conversely, given an analytic map $B$ of the form \eqref{VectorFUMMI}, the solution $\{\eta_t\}$ to the differential equation \eqref{ODE} can be expressed as $\eta_t=\eta_{\mu_t}$ for a weakly continuous $\submm$-convolution semigroup $\{\mu_t\}_{t\geq0}$ such that $\mu_0=\delta_1$.
\end{enumerate}
\end{theorem}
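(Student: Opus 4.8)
The plan is to mirror the structure of the additive case (Theorem \ref{MLK}), which in turn rests on the Berkson--Porta theory, but now adapted to the radial/multiplicative setting in $\D$. Let me sketch both directions.

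\textbf{Part (1): from semigroup to differential equation.} Suppose $\{\mu_t\}_{t\geq 0}$ is a weakly continuous $\submm$-convolution semigroup with $\mu_0 = \delta_1$, and set $\eta_t = \eta_{\mu_t}$. By \eqref{eq:multiplicative_monotone_convolution} the relation $\mu_s \submm \mu_t = \mu_{s+t}$ translates into $\eta_{s+t} = \eta_s \circ \eta_t$, so $(\eta_t)_{t\geq0}$ is a one-parameter semigroup of holomorphic self-maps of $\D$ fixing $0$ (each $\eta_t(0)=0$ by Lemma \ref{characterizationeta}); weak continuity of $t\mapsto\mu_t$ gives, via Lemma \ref{lemmaconvergenceunit}, that $t\mapsto\eta_t$ is continuous with respect to locally uniform convergence, so $(\eta_t)$ is a continuous one-parameter semigroup in the classical sense. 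The standard theory of such semigroups (as recalled in the Example after Remark \ref{EV_remark_ev}) gives the existence of the infinitesimal generator $H(z) = \lim_{t\searrow 0}(\eta_t(z)-z)/t$, a holomorphic vector field on $\D$, and $\eta_t$ solves $\tfrac{\rm d}{{\rm d}t}\eta_t = H(\eta_t)$, $\eta_0(z)=z$. Since every $\eta_t$ fixes $0$, differentiating gives $H(0)=0$; the Berkson--Porta formula \eqref{EV_berkson_porta} then forces the Denjoy--Wolff-type point $\tau$ to be $0$ (as in the proof of Proposition \ref{EV_multiplicative_Loewner}), so $H(z) = -z(1-\overline{0}\,z)p(z) \cdot(-1)\,\ldots$ — more precisely $H(z) = z\,q(z)$ where $q = -p$ maps $\D$ into the closed left half-plane, i.e. $\Re q(z)\le 0$. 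Writing $B(z) := H(z)/z = q(z)$ extends holomorphically across $0$ (since $H(0)=0$ and $H$ is holomorphic) and satisfies $\Re B(z)\le 0$ on $\D$. This is exactly the ODE \eqref{ODE}: $\tfrac{\rm d}{{\rm d}t}\eta_t(z) = \eta_t(z)B(\eta_t(z))$. Finally, a Herglotz-type representation of $-B$ — which is analytic on $\D$ with non-negative real part — via Lemma \ref{inversionunit} (after the Möbius identification of the disk's Herglotz class) yields $-B(z) = -i\alpha + \int_\tor \frac{1+z\zeta}{1-z\zeta}\rho({\rm d}\zeta)$ with $\alpha\in\R$ and $\rho$ a finite non-negative measure on $\tor$, which is \eqref{VectorFUMMI}; uniqueness of $(\alpha,\rho)$ follows from the uniqueness in Lemma \ref{inversionunit} (equivalently, $\alpha = -\Im B(0)$ and $\rho(\tor) = -\Re B(0)$, with $\rho$ recovered by the boundary inversion formula). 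Existence of the \emph{right} derivative $B(z)=\tfrac{\rm d}{{\rm d}t}\big|_{t=0}\eta_t(z)$ is part of the semigroup theory cited above; one should note the statement's "$\frac{\rm d}{{\rm d}z}$" is a typo for $\frac{\rm d}{{\rm d}t}$.

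\textbf{Part (2): from vector field to semigroup.} Conversely, given $B$ of the form \eqref{VectorFUMMI}, the function $H(z):=zB(z)$ is holomorphic on $\D$ with $H(0)=0$ and, by the Berkson--Porta characterization (or directly: $H(z) = (0-z)(1-\bar 0 z)(-B(z))$ with $\Re(-B)\ge 0$), $H$ is an infinitesimal generator on $\D$. Hence the initial value problem $\tfrac{\rm d}{{\rm d}t}\eta_t = H(\eta_t) = \eta_t B(\eta_t)$, $\eta_0(z)=z$, has a unique solution $(\eta_t)_{t\ge0}$ which is a continuous one-parameter semigroup of holomorphic self-maps of $\D$; since $H(0)=0$, the point $0$ is fixed by every $\eta_t$, i.e. $\eta_t(0)=0$. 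By Lemma \ref{characterizationeta}, each $\eta_t$ is the $\eta$-transform of a (unique) probability measure $\mu_t$ on $\tor$; the semigroup relation $\eta_{s+t}=\eta_s\circ\eta_t$ is, by \eqref{eq:multiplicative_monotone_convolution}, precisely $\mu_{s+t} = \mu_s\submm\mu_t$, and $\eta_0={\rm id}$ gives $\mu_0=\delta_1$ (the only measure with $\eta = {\rm id}$; cf. the remark that $\eta_\mu$ is constant iff $\mu=\haar$, and here $\eta$ is the identity). Weak continuity of $t\mapsto\mu_t$ follows from locally uniform continuity of $t\mapsto\eta_t$ (continuity of solutions in $t$) together with the equivalence in Lemma \ref{lemmaconvergenceunit}.

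\textbf{Main obstacle.} The routine parts are the algebraic translations between $\submm$ and composition of $\eta$-transforms and the Herglotz representations. The delicate point is invoking the correct form of the Berkson--Porta representation to pin down that the generator vanishes exactly at $0$ and has the stated radial shape $H(z)=zB(z)$ with $\Re B\le 0$ — one must be careful that $0$ being a \emph{fixed point} of every $\eta_t$ (not merely a Denjoy--Wolff point of the limiting dynamics) forces $\tau=0$ in \eqref{EV_berkson_porta}, which is the same issue already handled in Proposition \ref{EV_multiplicative_Loewner}; and, in the finite-variance or general case, one should double-check that no extra regularity of $\mu_t$ in $t$ is needed beyond weak continuity for the semigroup generator to exist (this is where the classical continuous-semigroup theory, as opposed to the measurable Herglotz-vector-field theory of Section \ref{subsec_LPDE}, does the work). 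I would also remark that the existence of the generating pair and the ODE in the finite-variance situation goes back to Bercovici \cite[Theorem 4.2]{B05}, and the general case is then a direct consequence of the Berkson--Porta theorem \cite{BP78}, exactly paralleling the sentence preceding Theorem \ref{MLK} in the additive case.
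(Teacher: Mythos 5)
Your argument is correct, and it is essentially the route the paper relies on: Theorem \ref{UMID} is stated there without proof, being quoted from Bercovici \cite[Theorem 4.2]{B05}, whose proof (like the additive analogue, Theorem \ref{MLK}) rests exactly on the Berkson--Porta theory of continuous composition semigroups that you invoke, with the fixed point $0$ forcing $\tau=0$ and $H(z)=zB(z)$, $\Re B\le 0$. Only trivial points to tidy: with the convention \eqref{VectorFUMMI} one has $\alpha=\Im B(0)$ (not $-\Im B(0)$) and $\rho(\tor)=-\Re B(0)$, and passing from the kernel $\frac{\zeta+z}{\zeta-z}$ of Lemma \ref{inversionunit} to $\frac{1+z\zeta}{1-z\zeta}$ requires reflecting the measure $\rho$; it is also worth observing that no $\mu_t$ can be the Haar measure (else $\eta_{t/2^n}\equiv0$ for all $n$, contradicting $\eta_t\to\mathrm{id}$ as $t\downarrow0$), so every $\eta_t$ is non-constant and the semigroup machinery applies.
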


We will firstly relate $k_t(x,\cdot)$ to the generating pairs $(\alpha,\rho)$. The proof is similar to and easier than the additive case (see Section \ref{sec:Hunt}) because any family of uniformly bounded finite, non-negative measures on $\T$ is tight.

\begin{lemma}\label{lem:multiplicative_generator}
Let $(M_t)_{t\geq0}$ be a stationary $\submm$-homogeneous Markov process with $M_0=1$ and with transition kernels $(k_t)_{t\geq0}$. Let $(\alpha,\rho)$ be the generating pair in \eqref{VectorFUMMI} associated to the $\submm$-semigroup $\{k_t(1,\cdot)\}_{t\geq0}$. Then for all $x\in \T$ we have, as $t\downarrow0$,
\begin{equation*}
\frac{1}{t}\Re(1-\bar{x}y)\,k_t(x,{\rm d}y) \to \rho({\rm d}y) ~~{\it(weakly)},
\end{equation*}
 and
\begin{equation}\label{eq:multiplicative_shift}
\frac{1}{t}\int_\T \Im(\bar{x}y)\, k_t(x,{\rm d}y) \to \alpha.
\end{equation}
\end{lemma}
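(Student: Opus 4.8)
The plan is to mirror the proof of the additive case (Lemma \ref{lem:generator}), exploiting the fact that everything lives on the compact group $\T$, so tightness is automatic and the analysis is strictly easier. Throughout, fix $x\in\T$ and write $G_{x,t}$ and $\psi_{x,t}$ for the moment generating function of $k_t(x,\cdot)$, and similarly $\eta_{x,t}$ for its $\eta$-transform; recall from $\submm$-homogeneity (the identity just before \eqref{eq:kernel2}, or \eqref{eq:Markov}) that $\psi_{x,t}(z) = \psi_{\delta_x}(\eta_t(z)) = \eta_t(z)x/(1-\eta_t(z)x)$, where $\eta_t = \eta_{\mu_t}$ is the flow from Theorem \ref{UMID}. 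Differentiating this at $t=0$ and using $\eta_0(z)=z$, $\frac{\rm d}{{\rm d}t}|_{t=0}\eta_t(z) = z B(z)$ from \eqref{ODE}, we obtain
\[
\left.\frac{\rm d}{{\rm d}t}\right|_{t=0}\psi_{x,t}(z) = \frac{z B(z) x}{(1-z x)^2}.
\]
First I would convert the left-hand side into an expression involving $k_t(x,\cdot)$: since $\psi_{x,t}(z) = \int_\T \frac{zy}{1-zy}\,k_t(x,{\rm d}y)$ and $\psi_{x,0}(z) = \frac{zx}{1-zx}$, we get
\[
\frac{z B(z) x}{(1-zx)^2} = \lim_{t\to0}\frac1t\int_\T\left(\frac{zy}{1-zy} - \frac{zx}{1-zx}\right)k_t(x,{\rm d}y)
= \lim_{t\to0}\frac1t\int_\T \frac{z(y-x)}{(1-zy)(1-zx)}\,k_t(x,{\rm d}y),
\]
so that, multiplying through by $(1-zx)/(zx)$ and setting $w = \bar x z$ (a conformal change of variable mapping $\D$ to $\D$), we are reduced to identifying the limit of $\frac1t\int_\T \frac{\bar x(y-x)}{1-w\bar x y}\,k_t(x,{\rm d}y)$; writing $\zeta = \bar x y \in \T$ this is $\frac1t\int_\T \frac{\zeta-1}{1-w\zeta}\,k_t(x,{\rm d}y)$, which is exactly the Herglotz-type kernel appearing in \eqref{VectorFUMMI} up to an affine rearrangement $\frac{\zeta-1}{1-w\zeta} = -\frac12 + \frac12\cdot\frac{1+w\zeta}{1-w\zeta} + (\text{a term that vanishes as }w\to0\text{ but we keep it})$. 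More cleanly: $\frac{2(\zeta-1)}{1-w\zeta} = -(1+w) + (1+w\zeta)\cdot\frac{?}{}$ — I would just expand $\frac{\zeta-1}{1-w\zeta}$ as a combination of $1$, $\zeta$, and $\frac{1+w\zeta}{1-w\zeta}$ and match coefficients, exactly as in the additive proof.

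The key estimates are the tightness and uniform-boundedness of the family of finite measures $\nu_t(x,{\rm d}y):=\frac1t\Re(1-\bar x y)\,k_t(x,{\rm d}y)$ on $\T$. Tightness is free since $\T$ is compact. For uniform boundedness of the total masses as $t\downarrow0$: $\Re(1-\bar x y) \ge 0$ for $y\in\T$, and $\nu_t(x,\T) = \frac1t\int_\T \Re(1-\bar x y)\,k_t(x,{\rm d}y)$; this can be controlled by evaluating the limit identity above at a convenient real point, e.g. taking $w\uparrow 1^-$ inside $\D$ or differentiating the relation $\Re\psi_{x,t}(w x) = \frac12\int_\T \Re\frac{1+w\bar x y}{1-w\bar x y}k_t(x,{\rm d}y) - \frac12$ at $t=0$, which shows $\frac1t\int_\T\Re(1-\bar x y)k_t(x,{\rm d}y)$ converges to $\Re B(\text{something}) + \text{const}$, hence is bounded. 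With tightness and bounded mass, every sequence $t_n\downarrow0$ has a subsequence along which $\nu_{t_n}(x,\cdot)$ converges weakly to some finite non-negative $\rho'$ on $\T$; plugging this into the expansion identifies $\rho'$ via the uniqueness of the Herglotz representation \eqref{VectorFUMMI} (the measure part is $\rho$), and separately extracts the imaginary constant, which forces $\alpha = \lim_{t\to0}\frac1t\int_\T\Im(\bar x y)\,k_t(x,{\rm d}y)$, i.e. \eqref{eq:multiplicative_shift}. Since the limit is independent of the subsequence, the full limit exists.

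The main obstacle — and it is minor compared with the additive case — is the bookkeeping of the affine rearrangement: one must correctly split $\frac{\zeta-1}{1-w\zeta}$ (with $\zeta=\bar xy$, $w=\bar xz$) into the Herglotz kernel $\frac{1+w\zeta}{1-w\zeta}$ plus the ``compensator'' terms linear in $\zeta$ and the constant, and then check that the compensator terms assemble, after integration against $\frac{(y-x)^2}{\cdots}$-type weights, into precisely $i\alpha$ plus something absorbed by $B$; this is the exact analogue of the $\gamma$-extraction at the end of the proof of Lemma \ref{lem:generator}, and no new idea is needed. A secondary (even more routine) point is justifying passage to the limit under the integral sign: here the dominated/tightness argument is trivial because all kernels are supported on the fixed compact $\T$ and $\Re(1-\bar xy)\le 2$, so there is no need for the delicate truncation argument ``$1+y^2\ge\frac12(v_0^2+(x-y)^2)$'' that was required on $\R$. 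I would therefore present this proof in a compressed form, explicitly invoking ``the proof is similar to and easier than that of Lemma \ref{lem:generator}'' as the text already announces, filling in only the three displayed identities above and the uniqueness-of-Herglotz-representation step.
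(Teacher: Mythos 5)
Your proposal takes essentially the same route as the paper's proof: differentiate $\psi_{x;t}(z)=x\eta_t(z)/(1-x\eta_t(z))$ at $t=0$ via the flow equation \eqref{ODE}, rewrite the difference quotient as an integral against $k_t(x,\cdot)$, rearrange algebraically so that the Herglotz kernel appears with the weight $\Re(1-\bar{x}y)$, obtain uniform boundedness of the total masses by evaluating at a special point, and conclude by compactness of $\T$ plus uniqueness of the Herglotz representation — the paper does this by multiplying by $(1-zx)^2/(xz)$ so that $B(z)=-\lim_{t\to0}\frac1t\int_\T\frac{(1-zx)(1-\bar{x}y)}{1-zy}\,k_t(x,{\rm d}y)$ and then using the exact identity $\frac{(1-zx)(1-\bar{x}y)}{1-zy}=\frac{1+zy}{1-zy}\Re(1-\bar{x}y)+i\,\Im(1-\bar{x}y)$, after which setting $z=0$ gives the mass bound with no compensator bookkeeping. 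The only blemishes in your sketch are cosmetic and fixable: the substitution should be $w=xz$ (not $\bar{x}z$), and ``taking $w\uparrow1^-$'' would not by itself give boundedness, though your alternative (differentiating the real-part Herglotz relation at a real point of $\disk$) does.
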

\begin{proof} Let $\psi_{x;t}$ and $\eta_{x;t}$ be the moment generating function and its $\eta$-transform of the distribution $k_t(x,\cdot)$.
Let $B$ be the infinitesimal generator for the semigroup $\{k_t(1,\cdot)\}_{t\geq0}$ given in \eqref{VectorFUMMI}. Computing the derivative of $\psi_{x;t}(z) = x \eta_{0;t}(z)/(1-x \eta_{0;t}(z))$ at $t=0$ we have
\begin{align*}
B(z)
&= -\lim_{t\to 0}\frac{1}{t}\int_\T \frac{(1-z x)(1-\bar{x}y)}{1-z y}\,k_{t}(x,{\rm d}y) \\
&= -\lim_{t\to 0}\frac{1}{t}\left(\int_\T \frac{1+z y}{1-z y}\Re(1-\bar{x}y)\,k_{t}(x,{\rm d}y) + i\int_{\T} \Im(1-\bar{x}y)\,k_{t}(x,{\rm d}y) \right).
\end{align*}
Substituting $z=0$ we get the uniform boundedness for the family of finite, non-negative measures $\{t^{-1}\Re(1-\bar{x}y)\,k_{t}(x,{\rm d}y): 0<t<1\}$. Therefore we can extract a weak limit $\rho'$, which is a finite, non-negative measure on $\T$.  By the uniqueness of the Herglotz representation we get $\rho'=\rho$, and the convergence \eqref{eq:multiplicative_shift} as well.
\end{proof}

To describe \index{Hunt's formula}Hunt's formula for the unitary case, we identify a function $f$ on $\T$ with the function $x\mapsto f(e^{ix})$ defined on $\R$. Similarly, we identify a measure $\mu$ on $\tor$ with the measure $B\mapsto \mu(e^{i B})$, $B$ Borel subset of $\R$.
Instead of the free difference quotient \eqref{eq:FDQ} we introduce the following operator $\delta: C^1(\T) \to C(\T^2)$:
\begin{equation}
(\delta f)(\theta,\phi) =
\begin{cases}
\frac{f(\theta)-f(\phi)}{\tan\left((\theta-\phi)/2\right)}, & \theta-\phi \notin \pi\Z, \\
2 f'(\theta),& \theta-\phi \in \pi (2\Z),\\
0,& \theta-\phi\in\pi(2\Z+1).
\end{cases}
\end{equation}
Then, for $f \in C^2(\T)$ we have the formula
\begin{equation}
(\partial_\theta\delta f)(\theta,\phi) =
\begin{cases}
\frac{f(\phi)-f(\theta) - \sin(\phi-\theta) f'(\theta)}{1-\cos(\theta-\phi)}, & \theta-\phi\notin \pi(2\Z), \\
f''(\theta), & \theta-\phi\in \pi(2\Z).
\end{cases}
\end{equation}

Let $\mathcal B_b(\tor)$ be the set of bounded Borel measurable functions from $\tor$ to $\C$. 
\begin{theorem}\label{thm:Hunt_multiplicative}
Let $(M_t)_{t\geq0}$ be a stationary $\submm$-homogeneous Markov process with $M_0=1$ and with transition kernels $(k_t)_{t\geq0}$. Let $T_t\colon \mathcal B_b(\tor)\to \mathcal B_b(\tor)$ be its transition semigroup
\begin{equation}
(T_t f)(\theta) = \int_{[0,2\pi)} f(\phi) \,k_t(\theta,{\rm d}\phi), \qquad f \in \mathcal B_b(\tor),
\end{equation}
which satisfies $T_s T_t = T_{s+t}$ for $s,t\geq0$. The generator of the transition semigroup is then given by
\begin{align*}
(G f)(\theta) &:= \left.\frac{{\rm d}}{{\rm d} t}\right|_{t=0} (T_t f)(\theta) =  \alpha f'(\theta) + \int_{[0,2\pi)}(\partial_\theta\delta f)(\theta,\phi)\, \rho({\rm d}\phi)
\end{align*}
for $f \in C^2(\T)$ and $\theta \in [0,2\pi)$, where $(\alpha,\rho)$ is the pair in \eqref{VectorFUMMI} associated to the $\submm$-semigroup $\{k_t(1,\cdot)\}_{t\geq0}$.
\end{theorem}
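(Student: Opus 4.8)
The plan is to imitate the structure of the proof of Theorem~\ref{thm:Hunt_additive}, replacing the kernel $1/(z-y)$ and the free difference quotient $\partial$ by the circle versions $zy/(1-zy)$ (equivalently the Herglotz kernel $(1+zy)/(1-zy)$) and the operator $\delta$. First I would fix $\theta\in[0,2\pi)$, write $x=e^{i\theta}$, and, for $f\in C^2(\T)$, split the increment
\[
(T_tf)(\theta)-f(\theta)=\int_{\tor\setminus\{x\}}\{f(\phi)-f(\theta)\}\,k_t(x,{\rm d}\phi)
\]
(the atom at $\phi=\theta$ contributes nothing). Using the trigonometric identity $1-\cos(\theta-\phi)=2\sin^2((\theta-\phi)/2)$ and $\sin(\phi-\theta)=2\sin((\phi-\theta)/2)\cos((\phi-\theta)/2)$, I would rewrite the integrand as
\[
f(\phi)-f(\theta)=\bigl(\partial_\theta\delta f\bigr)(\theta,\phi)\,\tfrac12\,(1-\cos(\theta-\phi))\;+\;f'(\theta)\cdot\tfrac12\sin(\phi-\theta),
\]
valid off the diagonal, where I am writing $\Re(1-\bar x\phi)=1-\cos(\theta-\phi)$ and $\Im(\bar x\phi)=\sin(\phi-\theta)$ after the identification of $\T$ with $[0,2\pi)$. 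Dividing by $t$ and sending $t\downarrow 0$, the first term is $\int_{\tor}(\partial_\theta\delta f)(\theta,\phi)\,\rho({\rm d}\phi)$ by the weak-convergence part of Lemma~\ref{lem:multiplicative_generator} (the function $\phi\mapsto(\partial_\theta\delta f)(\theta,\phi)$ is continuous and bounded on $\T$, with the correct value $f''(\theta)$ on the diagonal, since $f\in C^2$), and the second term is $\alpha f'(\theta)$ by the companion limit \eqref{eq:multiplicative_shift}. This yields exactly the claimed formula $(Gf)(\theta)=\alpha f'(\theta)+\int_{[0,2\pi)}(\partial_\theta\delta f)(\theta,\phi)\,\rho({\rm d}\phi)$.

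The remaining routine points are: (i) checking the semigroup identity $T_sT_t=T_{s+t}$, which is just the Chapman--Kolmogorov relation \eqref{eq:CK} for $(k_t)$; (ii) justifying that the pointwise limit defining $Gf$ exists and equals the above — this is immediate once the decomposition is in place, because both limits on the right-hand side exist by Lemma~\ref{lem:multiplicative_generator}; and (iii) verifying the elementary trigonometric identities relating $\delta$, $\partial_\theta\delta$, $\Re(1-\bar x\phi)$ and $\Im(\bar x\phi)$, together with the continuity of $\phi\mapsto(\partial_\theta\delta f)(\theta,\phi)$ at $\phi=\theta$ (a Taylor-expansion computation using $f\in C^2$).

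The main obstacle, as in the additive case, is not present at the level of this theorem itself but is packaged into Lemma~\ref{lem:multiplicative_generator}: one needs the two limits $t^{-1}\Re(1-\bar x\phi)\,k_t(x,{\rm d}\phi)\to\rho$ weakly and $t^{-1}\int_\T\Im(\bar x\phi)\,k_t(x,{\rm d}\phi)\to\alpha$, uniformly enough in the test function to pass to the limit against $(\partial_\theta\delta f)(\theta,\cdot)$. Since that lemma is already available (and, as its proof remarks, it is genuinely easier than the additive analogue because every family of uniformly bounded finite measures on the compact group $\T$ is automatically tight), the only real care needed here is to ensure that the off-diagonal decomposition glues continuously across $\phi=\theta$ so that the weak limit may legitimately be applied — i.e. that $(\partial_\theta\delta f)(\theta,\cdot)\in C(\T)$. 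I would therefore devote the bulk of the written proof to that gluing computation and to recording the trigonometric identities, and keep the passage to the limit itself brief, citing Lemma~\ref{lem:multiplicative_generator}.
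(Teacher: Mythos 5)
Your proposal is correct and coincides with the paper's own proof: the same splitting off of the diagonal and decomposition of $f(\phi)-f(\theta)$ into $(\partial_\theta\delta f)(\theta,\phi)\,(1-\cos(\theta-\phi))$ plus $f'(\theta)\sin(\phi-\theta)$, followed by an application of Lemma~\ref{lem:multiplicative_generator} to pass to the limit $t\downarrow0$. The only correction needed is to drop the factors $\tfrac12$ in your displayed identity: with $x=e^{i\theta}$, $y=e^{i\phi}$ one has $\Re(1-\bar{x}y)=1-\cos(\theta-\phi)$ and $\Im(\bar{x}y)=\sin(\phi-\theta)$ exactly, so the decomposition matches the paper's definition of $\partial_\theta\delta$ with no halving (keeping the $\tfrac12$'s would yield half of the stated generator).
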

\begin{proof} For $\theta, \phi \in [0,2\pi)$ the identity
\begin{align*}
&\int_{[0,2\pi)} f(\phi) \,k_t(\theta,{\rm d}\phi) -f(\theta) = \int_{[0,2\pi)\setminus\{\theta\}} \{f(\phi)-f(\theta)\} \,k_t(\theta,{\rm d}\phi)\\
&=\int_{[0,2\pi)\setminus\{\theta\}}\frac{f(\phi)-f(\theta)-\sin(\phi-\theta)f'(\theta)}{1-\cos(\phi-\theta)} (1-\cos(\phi-\theta)) k_t(\theta,{\rm d}\phi) \\
& \qquad + f'(\theta)\int_{[0,2\pi)\setminus\{\theta\}} \sin(\phi-\theta) k_t(\theta,{\rm d}\phi) \\
&=\int_{[0,2\pi)}(\partial_\theta\delta f)(\theta,\phi) (1-\cos(\phi-\theta)) k_t(\theta,{\rm d}\phi) + f'(\theta)\int_{[0,2\pi)}\sin(\phi-\theta) k_t(\theta,{\rm d}\phi)
\end{align*}
holds. Then Lemma \ref{lem:multiplicative_generator} completes the proof.
\end{proof}

\begin{example}
\begin{enumerate}[(1)]
\item The \index{Brownian motion!monotone}unitary monotone Brownian motion (see Example \ref{UMBM}) has $\submm$-infinitely divisible distributions characterized by $\alpha=0$ and $\rho=(1/2)\delta_1$ on $\T$. Then
\begin{equation*}
(G f)(\theta) =
\begin{cases}
\frac{f(0)-f(\theta)+(\sin\theta) f'(\theta)}{2(1-\cos\theta)}, & \theta \notin \pi(2\Z),\\
\frac{1}{2}f''(0), & \theta \in \pi(2\Z).
\end{cases}
\end{equation*}

\item The probability measure $\mu_t = (1-e^{-t})\haar +e^{-t}\delta_1$ forms a $\submm$-convolution semigroup with $\mu_0=\delta_1$. The $\eta$-transform is given by
$\eta_{\mu_t}(z) = e^{-t}z [1-(1-e^{-t})z]^{-1}$, and the function $B$ in \eqref{VectorFUMMI} is computed as $B(z)=z-1$. The pair $(\alpha,\rho)$ is given by $\alpha=0$ and ${\rm d}\rho(\zeta) = [1-\Re(\zeta)] \haar({\rm d}\zeta)$, and hence the generator for the associated Markov process is
\begin{equation*}
(G f)(\theta) = \int_{[0,2\pi)}(\partial_\theta\delta f)(\theta,\phi)\, (1-\cos \phi) \frac{{\rm d}\phi}{2\pi}.
\end{equation*}
for $f \in C^2(\T)$ and $\theta \in [0,2\pi)$.  The transition kernel $k_t(x,\cdot):= \delta_x \submm \mu_t$ is characterized by
\begin{equation*}
\psi_{k_t(x,\cdot)}(z) = \frac{x \eta_{\mu_t}(z)}{1-x \eta_{\mu_t}(z)} = \frac{x z}{e^t - (e^t +x-1)z},
\end{equation*}
{}from which we can prove that $k_t(x,\cdot)$ is absolutely continuous with respect to $\haar$ unless $x=1$.
\end{enumerate}

\end{example}

\subsection{Feller property in the stationary case}

\index{Feller property}

Similarly to the additive case in Section \ref{sec:Feller_additive}, $\submm$-homogeneous Markov processes also have the Feller property (see Definition \ref{def:Feller}). Let $S_t$ be the restriction of $T_t$ to $C(\tor)$, where $(T_t)_{t\geq0}$ is the transition semigroup on $\mathcal B_b(\tor)$ defined in Theorem \ref{thm:Hunt_multiplicative}.

\begin{theorem}
The family $(S_t)_{t\geq0}$ is a Feller semigroup on $C(\tor)$.
\end{theorem}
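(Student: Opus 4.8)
The plan is to mimic the proof of Theorem~\ref{thm:Feller} from the additive case, replacing $\R$ by $\tor$ and the Cauchy kernel $1/(z-x)$ by the kernel $zx/(1-zx)$ throughout, using Lemma~\ref{lem:approx2} in place of Lemma~\ref{lem:approx}. First I would show that $S_t$ maps $C(\tor)$ into itself. By Lemma~\ref{lem:approx2}, the set
\[
D=\text{\rm span}_\C\{1,\; zx(1-zx)^{-1}: z\in\C\setminus\tor\}
\]
is dense in $C(\tor)$, and by the Markov property combined with $\submm$-homogeneity (see \eqref{eq:kernel2} and Theorem~\ref{mult-Markov-proc}) we have
\[
(T_t)\!\left(\sum_i \lambda_i \frac{z_i x}{1-z_i x}\right)(x') = \sum_i \lambda_i \frac{\eta_t(z_i)x'}{1-\eta_t(z_i)x'},
\]
which again lies in $D$, since $\eta_t(z_i)\in\C\setminus\tor$ when $z_i\in\C\setminus\tor$ (for $|z|<1$ this follows from $|\eta_t(z)|\le|z|<1$ by Lemma~\ref{characterizationeta}, and for $|z|>1$ from the symmetry relation $\eta_t(z) = \overline{\eta_t(\bar z^{-1})}^{\,-1}$ used in the proof of Proposition~\ref{prop:unitary_key}; one should also note $\eta_t(z)\ne 0$ for $z\ne 0$ so the kernel stays bounded). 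Hence $S_t D\subset D$. Since $\sup_{x\in\tor}|T_tf(x)|\le\|f\|_{C(\tor)}$ (each $k_t(x,\cdot)$ is a probability measure), $S_t$ extends by density to a bounded operator $C(\tor)\to C(\tor)$.

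Next I would verify the two Feller axioms. For (F\ref{F1}), positivity and the bound $0\le S_tf\le 1$ for $0\le f\le 1$ are immediate from $k_t(x,\cdot)$ being a probability measure. For the strong continuity at $t=0$, i.e.\ $(S_tf)(\theta)\to f(\theta)$ as $t\downarrow0$: for $g\in D$ this follows from the explicit formula above together with right-continuity (indeed right-differentiability, by Theorem~\ref{UMID}) of $t\mapsto\eta_t(z)$ at $t=0$, since $\eta_0=\mathrm{id}$. For general $f\in C(\tor)$, pick $g\in D$ with $\|f-g\|_{C(\tor)}$ small and estimate
\[
|(S_tf)(\theta)-f(\theta)| \le |(S_t(f-g))(\theta)| + |(S_tg)(\theta)-g(\theta)| + |g(\theta)-f(\theta)| \le 2\|f-g\|_{C(\tor)} + |(S_tg)(\theta)-g(\theta)|,
\]
and let $t\downarrow0$. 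Finally, $S_sS_t=S_{s+t}$ is inherited from the Chapman--Kolmogorov relation \eqref{eq:CK} for $(k_{st})$, which holds because $(\eta_t)$ is a semigroup in the stationary case; alternatively it follows directly from Theorem~\ref{thm:Hunt_multiplicative} where $T_sT_t=T_{s+t}$ is already recorded.

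I do not expect any serious obstacle: the only point needing a little care is making sure the kernel $zx/(1-zx)$ and its transform $\eta_t(z)x'/(1-\eta_t(z)x')$ genuinely stay in $C(\tor)$ and in $D$ for \emph{all} $z\in\C\setminus\tor$, not merely $z\in\disk$, which is handled by the $z\leftrightarrow\bar z^{-1}$ symmetry exactly as in Proposition~\ref{prop:unitary_key}. Everything else is a routine transcription of the argument in Theorem~\ref{thm:Feller}. The proof will therefore be short: establish $S_tC(\tor)\subset C(\tor)$ via $D$, note (F\ref{F1}) is trivial, and prove (F\ref{F2}) first on $D$ and then by density.
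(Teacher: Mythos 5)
Your proposal is correct and takes essentially the same approach as the paper, which likewise sets $D=\mathrm{span}_\C\{1,\,zx/(1-zx):z\in\C\setminus\tor\}$, invokes Lemma~\ref{lem:approx2} for density, and transfers the argument of Theorem~\ref{thm:Feller} to obtain $S_tD\subset D$, the contraction property, and pointwise convergence $(S_tf)(x)\to f(x)$ as $t\downarrow0$. Your explicit handling of the case $|z|>1$ via the symmetry used in Proposition~\ref{prop:unitary_key} is a detail the paper leaves implicit, but it is exactly what makes $S_tD\subset D$ work there.
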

\begin{proof} Let $f_z(x) =zx/(1-zx)$ and 
$$
 D = \text{\rm span}_\C\{1, f_z: z \in \C\setminus \tor\} \subset C(\T).
$$
By Lemma \ref{lem:approx2} $D$ is dense in $C(\T)$. The arguments in the proof of Theorem \ref{thm:Feller} can be used to prove that $S_t D \subset D$ for all $t\geq0$, $(S_t)_{t\geq0}$ is a contraction semigroup on $C(\T)$, and $(S_t f)(x) \to f(x)$ as $t\downarrow0$ for all $f \in C(\T)$ and $x\in \T$.
\end{proof}

Let $L$ be the infinitesimal generator of $(S_t)_{t\geq0}$ with domain
\begin{equation}
\Dom(L) = \left\{f \in C(\T): \lim_{t\downarrow0} \frac{S_t f -f}{t} ~\text{exists in the uniform norm}\right\}.
\end{equation}
We can prove by direct computation that $D \subset \Dom(L)$, which implies that for functions in $D$ the convergence of the Hunt formula in Theorem \ref{thm:Hunt_multiplicative} holds uniformly. Since $D$ is a dense subspace of $C(\T)$ and is invariant under $\{S_t: t\geq0\}$, we conclude that $D$ is a core for $L$.

\subsection{Martingale property}

\index{Martingale property}

As well as the additive case, (not necessarily stationary) $\submm$-homogeneous Markov processes satisfy a martingale property. The proof is easier in this case since all moments are finite.

\begin{proposition} Let $(M_t)_{t\ge 0}$ be a $\submm$-homogeneous Markov process with a filtration $(\mathcal F_t)$ such that $M_0=1$ and let $(\eta_t)_{t\geq0}$ be the associated multiplicative Loewner chain. Then, for each fixed $u\geq0$ and $z \in \eta_u(\disk)$ the process $(N_t^{z})_{0\leq t \leq u}$ defined by 
$$
N_t^{z}=\frac{\eta_t^{-1}(z)M_t}{1-\eta_t^{-1}(z)M_t} 
$$ 
is an $(\mathcal{F}_t)$-martingale.
\end{proposition}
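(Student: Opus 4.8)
The statement is the multiplicative analogue of Proposition \ref{prop:martingale1}, so the plan is to mirror that argument, with $1/(F_t^{-1}(z)-M_t)$ replaced by $\eta_t^{-1}(z)M_t/(1-\eta_t^{-1}(z)M_t) = \psi_{\delta_{M_t}}(\eta_t^{-1}(z))$. First I would note that each $\eta_t$ is univalent by Theorem \ref{EV_univalence}, so that for $z\in\eta_u(\disk)$ the compositional inverse $\eta_t^{-1}(z)$ is well-defined for every $t\in[0,u]$: indeed, by the decreasing-range property of the Loewner chain we have $\eta_u(\disk)\subseteq\eta_t(\disk)$ for $t\le u$, hence $z\in\eta_t(\disk)$ and $\eta_t^{-1}(z)\in\disk$ makes sense; moreover $\eta_t^{-1}(z)=\eta_{st}(\eta_s^{-1}(z))$ whenever $s\le t\le u$, by applying $\eta_s^{-1}$ to the relation $\eta_t=\eta_s\circ\eta_{st}$. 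This last identity is the key algebraic fact.

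Next, fix $0\le s\le t\le u$ and abbreviate $w:=\eta_t^{-1}(z)\in\disk$. Since $w\in\disk$, the function $x\mapsto wx/(1-wx)$ is bounded and continuous on $\tor$, so by the Markov property in the form \eqref{eq:Markovianity2} (applied with the point $w$ in place of $z$, which is legitimate because $w\in\disk$) we get
\[
\mathbb E\!\left[\frac{w M_t}{1-wM_t}\,\Big|\,\mathcal F_s\right] = \frac{\eta_{st}(w)M_s}{1-\eta_{st}(w)M_s}\qquad\text{a.s.}
\]
Now substitute $w=\eta_t^{-1}(z)$ and use $\eta_{st}(\eta_t^{-1}(z))=\eta_{st}(\eta_{st}^{-1}(\eta_s^{-1}(z)))=\eta_s^{-1}(z)$, which follows from $\eta_t^{-1}(z)=\eta_{st}(\eta_s^{-1}(z))$ derived above. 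The right-hand side becomes $\eta_s^{-1}(z)M_s/(1-\eta_s^{-1}(z)M_s)=N_s^z$, so $\mathbb E[N_t^z\mid\mathcal F_s]=N_s^z$ a.s. Integrability is immediate since $|N_t^z|\le |\eta_t^{-1}(z)|/(1-|\eta_t^{-1}(z)|)<\infty$ uniformly (the bound can be taken uniform in $t\in[0,u]$ because $t\mapsto\eta_t^{-1}(z)$ is continuous on the compact interval $[0,u]$ by continuity of the Loewner chain, hence $\sup_{t\le u}|\eta_t^{-1}(z)|<1$), and adaptedness is clear. This establishes the martingale property.

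The only genuine subtlety — the analogue of the remark ``Note that each map $\eta_t$ is univalent by Theorem \ref{EV_univalence}'' in the cited proof — is making sure the inverses $\eta_t^{-1}$ are simultaneously defined and satisfy the cocycle identity $\eta_{st}\circ\eta_s^{-1}=\eta_t^{-1}$ on a neighbourhood of $z$; this is where univalence of the whole Loewner chain, together with the decreasing-range property from Remark after Definition \ref{EV_def:evolution_family}, is used. Everything else is a direct transcription of \eqref{eq:Markovianity2}. I would therefore keep the written proof as short as the cited one, essentially: invoke Theorem \ref{EV_univalence} for univalence, record the identity $\eta_t^{-1}=\eta_{st}\circ\eta_s^{-1}$, and apply \eqref{eq:Markovianity2}.
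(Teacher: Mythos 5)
Your proposal is correct and follows essentially the same route as the paper, whose proof is exactly this: apply the Markov property \eqref{eq:Markovianity2} at the point $\eta_t^{-1}(z)$ and note that each $\eta_t$ is univalent by Theorem \ref{EV_univalence}, so that the inverses are defined on $\eta_u(\disk)\subseteq\eta_t(\disk)$. One slip worth fixing: the ``key algebraic fact'' is stated with the composition in the wrong direction --- from $\eta_t=\eta_s\circ\eta_{st}$ one gets $\eta_{st}=\eta_s^{-1}\circ\eta_t$, hence $\eta_{st}\bigl(\eta_t^{-1}(z)\bigr)=\eta_s^{-1}(z)$, equivalently $\eta_t^{-1}=\eta_{st}^{-1}\circ\eta_s^{-1}$, not $\eta_t^{-1}(z)=\eta_{st}\bigl(\eta_s^{-1}(z)\bigr)$; your displayed chain of equalities actually uses the correct identity, so the argument itself goes through unchanged.
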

\begin{proof}
This is a direct consequence of \eqref{eq:Markovianity2}. Note that each map $\eta_t$ is univalent by Theorem \ref{EV_univalence}. 
\end{proof}

\begin{corollary} Let $(M_t)_{t\ge 0}$ be a $\submm$-homogeneous Markov process on $\T$ with a filtration $(\mathcal F_t)$ such that $M_0=1$. Then $\mathbb E[M_t]\neq0$ for every $t\geq0$ and the process $(M_t/ \mathbb E[M_t])_{t\geq0}$ is an $(\mathcal F_t)$-martingale.
\end{corollary}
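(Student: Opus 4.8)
The plan is to deduce the statement from the preceding Proposition, which asserts that for each fixed $u\geq0$ and $z\in\eta_u(\disk)$ the process $N_t^z=\eta_t^{-1}(z)M_t/(1-\eta_t^{-1}(z)M_t)$, $0\leq t\leq u$, is an $(\mathcal F_t)$-martingale. The key observation is that the value $z=0$ lies in $\eta_u(\disk)$ for every $u$ because $\eta_u(0)=0$ (each $\eta_u$ is the $\eta$-transform of a probability measure on $\T$, so it fixes the origin), and moreover $\eta_u$ is univalent by Theorem \ref{EV_univalence}, so the compositional inverse $\eta_t^{-1}$ is well defined near $0$ with $\eta_t^{-1}(0)=0$.

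First I would recall that $\eta_t'(0)=\int_\T x\,\mu_t({\rm d}x)=\mathbb E[M_t]$, where $\mu_t$ is the law of $M_t$, and that by the univalence of $\eta_t$ together with $\eta_t(0)=0$ the derivative $\eta_t'(0)$ is nonzero; hence $\mathbb E[M_t]\neq0$ for all $t\geq0$. Consequently $(\eta_t^{-1})'(0)=1/\eta_t'(0)=1/\mathbb E[M_t]$. Next I would take the martingale $N_t^z$ from the Proposition, divide by $z$, and let $z\to0$: since $N_t^z/z = \eta_t^{-1}(z)/z \cdot M_t/(1-\eta_t^{-1}(z)M_t)$ and $\eta_t^{-1}(z)/z\to(\eta_t^{-1})'(0)=1/\mathbb E[M_t]$ as $z\to0$ (uniformly in $\omega$, since $|M_t|=1$ forces $|\eta_t^{-1}(z)M_t|<1$ to stay bounded away from $1$ for $z$ in a small disc), one obtains that $N_t^z/z\to M_t/\mathbb E[M_t]$ boundedly. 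A martingale property is preserved under bounded limits of this kind: for $0\leq s\leq t\leq u$ we have $\mathbb E[N_t^z/z\mid\mathcal F_s]=N_s^z/z$, and passing to the limit $z\to0$ using the conditional dominated convergence theorem gives $\mathbb E[M_t/\mathbb E[M_t]\mid\mathcal F_s]=M_s/\mathbb E[M_s]$; since $u$ is arbitrary this holds for all $0\leq s\leq t$. Integrability is automatic because $|M_t|=1$.

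Alternatively, and perhaps more cleanly, I would argue directly from the Markov property \eqref{eq:Markovianity2}: taking the limit $z\to0$ in $\mathbb E\!\left[\frac{zM_t}{1-zM_t}\,\big|\,\mathcal F_s\right]=\frac{\eta_{st}(z)M_s}{1-\eta_{st}(z)M_s}$, after dividing both sides by $z$ and using $\eta_{st}(z)/z\to\eta_{st}'(0)$, yields $\mathbb E[M_t\mid\mathcal F_s]=\eta_{st}'(0)M_s$. Since $\eta_{st}=\eta_s^{-1}\circ\eta_t$ (transition mappings of the Loewner chain) one has $\eta_{st}'(0)=\eta_t'(0)/\eta_s'(0)=\mathbb E[M_t]/\mathbb E[M_s]$, whence $\mathbb E[M_t\mid\mathcal F_s]=(\mathbb E[M_t]/\mathbb E[M_s])M_s$, which is exactly the martingale property for $M_t/\mathbb E[M_t]$, and it also re-derives $\mathbb E[M_t]\neq0$.

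I do not anticipate a serious obstacle here; the main point to handle with a little care is justifying the interchange of the limit $z\to0$ with the conditional expectation. This is routine because everything in sight is uniformly bounded: $|M_t|=1$, and for $z$ in a sufficiently small punctured disc around $0$ the quantities $z M_t/(1-zM_t)$ and $\eta_{st}(z)M_s/(1-\eta_{st}(z)M_s)$ are bounded by a constant independent of the sample point and of $z$, so dominated convergence (conditional version) applies. One should also note explicitly that $\eta_{st}'(0)\neq0$, which follows from the Schwarz lemma argument already used in the excerpt (an $\eta$-transform that is univalent and fixes $0$ has nonzero derivative there, equivalently its range is an open set containing $0$).
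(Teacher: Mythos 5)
Your proposal is correct and follows essentially the same route as the paper: the paper's proof also observes $\eta_t'(0)=\mathbb E[M_t]\neq0$ by univalence and then extracts the first-order coefficient at $z=0$ from the martingale identity $\mathbb E[N_t^z\,|\,\mathcal F_s]=N_s^z$ of the preceding proposition, which is exactly your limit $N_t^z/z$ argument (and your alternative via \eqref{eq:Markovianity2} is just an unwinding of the same identity, since the proposition is itself a direct consequence of that Markov property).
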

\begin{proof}
Recall that $\eta_t$ is the $\eta$-transform of $M_t$ (since $M_0=1$) and hence $\eta_t(z)=\mathbb E[M_t]z +O(z^2)$. The map $\eta_t$ is univalent and hence $\mathbb E[M_t]\neq0$. Then $\eta_t^{-1}(z)=z/\mathbb E[M_t] +O(z^2)$ in a neighborhood of $z=0$ and then taking the derivative of $\mathbb E[N_t^z | \mathcal{F}_s]=N_s^z$ at $z=0$ concludes the proof.  
\end{proof}

\index{Markov process!$\submm$-homogeneous|)}



\chapter[Limit Theorems and Geometric Function Theory]{Limit Theorems for Additive Monotone Convolution and Geometric Function Theory}\label{section_limits_additive}
The goal of this section is to study limit theorems for additive monotone convolution, to characterize some subclasses of probability measures with univalent Cauchy transforms in terms of geometric function theory, and to find relations between the above two results.

Recall that non-commutative probability theory provides us with four further additive convolutions of probability measures on $\R$, see Section \ref{intro_convolutions}. While anti-monotone convolution is simply a reversion of the monotone convolution, we will find several interesting relations to Boolean, classical, and free convolutions.
\section{Khintchine's limit theorem and univalent Cauchy transforms}

\subsection{Preliminaries from complex analysis}

We denote by $\Univ(\R)$ the set of all probability measures on $\R$ having a univalent Cauchy transform. The Hurwitz theorem and Lemma \ref{lemmaconvergence} show that $\Univ(\R)$ is a weakly closed subset of  $\cP(\R)$.\\

The following criterion for univalence was shown by Noshiro \cite{N34} and Warschawski \cite{W35}. While its proof is simple, it is quite useful.
\begin{lemma}\label{NW}
Let $D\subset \C$ be a convex open set and let $f\colon D \to \C$ be analytic. Suppose for some $\theta \in(0,2\pi]$ it holds that $\Im(e^{i\theta} f'(z))>0$ for $z\in D$. Then
$f$ is univalent in $D$.
\end{lemma}
\begin{proof}
We can assume that $\theta=0$ by considering the function $e^{i\theta}f(z)$ instead of $f$ itself.
The following identity holds:
\[
\begin{split}
f(z_0)-f(z_1)&= \int_0^1 \frac{{\rm d}}{{\rm d}t} f((1-t)z_0+tz_1)\, {\rm d}t\\
&=(z_0-z_1)\int_0^1 f'((1-t)z_0+tz_1)\, {\rm d}t.
\end{split}
\]
Hence $|f(z_0)-f(z_1)| \geq C(z_0,z_1) |z_0-z_1|$, where
$C(z_0,z_1)=\inf\{\Im[f'((1-t)z_0+tz_1)] \mid t\in[0,1]\}>0$.
\end{proof}

The following result can be extracted from the proof of Muraki \cite[Lemma 6.3]{M00} (see also \cite[Lemma 2.3]{Has10}).
\begin{lemma}\label{finitevariance0}\index{monotone convolution!additive $\rhd$}
If $\lambda=\mu\rhd\nu$ and $\lambda$ has a finite variance, then $\mu,\nu$ also have finite variances and
$$
m_1(\lambda)=m_1(\mu)+m_1(\nu),\quad \sigma^2(\lambda)=\sigma^2(\mu)+\sigma^2(\nu).
$$
\end{lemma}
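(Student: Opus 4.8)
The plan is to analyze the behavior of the reciprocal Cauchy transform $F_\lambda = F_\mu \circ F_\nu$ along the imaginary axis, using the Pick--Nevanlinna representation and the criterion in Remark \ref{rm_finite_var}. Recall from Lemma \ref{Julia} that $\lambda$ has mean zero and variance $\sigma^2$ precisely when
\[
F_\lambda(z) = z - \frac{\sigma^2(\lambda)}{z} + {\scriptstyle\mathcal{O}}(1/|z|)
\]
as $z \to \infty$ non-tangentially. Since the first moment can be removed by centering (the map $\mu \mapsto \delta_{-m_1(\mu)} \rhd \mu$ shifts the mean to $0$ and leaves the variance unchanged, because $F_{\delta_{-m}\rhd\mu}(z) = F_\mu(z) - m$), it suffices to handle the case of vanishing means and then recover the first moment identity from Lemma \ref{first_moment}.

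First I would write, using Lemma \ref{Julia} for $\mu$ and $\nu$ separately (recalling that any $F$-transform satisfies $\Im F(z) \ge \Im z$),
\[
F_\mu(z) = z + b_\mu + \int_\R \frac{1+xz}{x-z}\,\rho_\mu({\rm d}x), \qquad F_\nu(z) = z + b_\nu + \int_\R \frac{1+xz}{x-z}\,\rho_\nu({\rm d}x),
\]
with $b_\mu, b_\nu \in \R$ and $\rho_\mu, \rho_\nu$ finite non-negative measures. The key elementary estimate is that for $z = iy$, $y\to\infty$, one has $\Im(F_\mu(iy)) \ge y$ and moreover $F_\mu(iy)/(iy) \to 1$, so $F_\mu(iy) = iy(1 + o(1))$; the same holds for $F_\nu$. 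Then I would compute
\[
\Im\big(F_\lambda(iy)\big) = \Im\big(F_\mu(F_\nu(iy))\big) \ge \Im\big(F_\nu(iy)\big) \ge y,
\]
so the imaginary part is always bounded below by $y$; the point is to control it from above. Using the Pick--Nevanlinna representation of $F_\mu$ evaluated at $w = F_\nu(iy)$ (which lies in $\C^+$ with $\Im w \to \infty$ and $w/(iy) \to 1$), one gets
\[
\Im\big(F_\mu(w)\big) = \Im(w) + \int_\R \frac{(1+x^2)\,\Im(w)}{|x-w|^2}\,\rho_\mu({\rm d}x).
\]
By monotone convergence and the finiteness of $\rho_\mu$, $\int_\R \frac{(1+x^2)\Im(w)}{|x-w|^2}\rho_\mu({\rm d}x)$ behaves like $\frac{\tau_\mu(\R)}{\Im w}(1+o(1))$ where $\tau_\mu({\rm d}x) = (1+x^2)\rho_\mu({\rm d}x)$, provided $\tau_\mu(\R) < \infty$; a similar assertion applies to $\nu$. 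Iterating,
\[
\frac{\Im\big(F_\lambda(iy)\big) - y}{\text{(something)}} \ \longrightarrow\ \tau_\mu(\R) + \tau_\nu(\R),
\]
and since $\lambda$ has finite variance, $\Im(F_\lambda(iy)) - y = \tfrac{\sigma^2(\lambda)}{y}(1+o(1))$ stays bounded after multiplication by $y$; this forces both $\tau_\mu(\R)$ and $\tau_\nu(\R)$ to be finite (otherwise the left side would blow up), hence $\mu,\nu$ have finite second moments, and taking the limit yields $\sigma^2(\lambda) = \sigma^2(\mu) + \sigma^2(\nu)$ once the means are zero. The first-moment identity $m_1(\lambda) = m_1(\mu) + m_1(\nu)$ then follows directly from Lemma \ref{first_moment} together with $\lim_{y\to\infty}\big(iy - F_\mu(iy)\big) = m_1(\mu)$ and the composition $F_\lambda = F_\mu \circ F_\nu$: writing $iy - F_\lambda(iy) = (iy - F_\nu(iy)) + (F_\nu(iy) - F_\mu(F_\nu(iy)))$ and using that $F_\nu(iy)$ tends non-tangentially to $\infty$.

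The main obstacle is the uniformity in the "$o(1)$" estimates: one must show that $\int_\R \frac{(1+x^2)\Im w}{|x-w|^2}\rho({\rm d}x) \cdot \Im(w) \to \tau(\R)$ genuinely requires $\tau(\R) < \infty$, i.e. ruling out the degenerate case where the integral is $o(1/\Im w)$ while $\tau(\R) = \infty$. This is handled by Fatou's lemma: for $w = iy'$ one has $\frac{(1+x^2)y'^2}{x^2 + y'^2} \uparrow 1+x^2$ pointwise as $y' \to \infty$, so $\liminf_{y'\to\infty} y' \,\Im(F_\mu(iy') - iy') \ge \tau_\mu(\R)$, and similarly along the non-tangential approach $w = F_\nu(iy)$ since $|x - F_\nu(iy)|^2 \sim y^2$. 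Combining the two Fatou bounds shows $\sigma^2(\lambda) = \lim y\,\Im(F_\lambda(iy)-iy) \ge \tau_\mu(\R) + \tau_\nu(\R)$ after centering, giving finiteness of both variances; the reverse inequality (hence equality) then comes from the dominated-convergence argument now that finiteness is known. Alternatively, one can cite \cite[Lemma 6.3]{M00} directly as the statement does, but the above gives a self-contained route via the Nevanlinna representation. I expect the bookkeeping of the non-tangential limits (ensuring $F_\nu(iy)$ approaches $\infty$ within a Stolz angle so that Remark \ref{rm_finite_var} applies) to be the only genuinely delicate point.
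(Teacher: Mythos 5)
Your proposal is correct, but note that the paper does not actually prove this lemma: it simply extracts it from Muraki's proof of \cite[Lemma 6.3]{M00} (see also \cite[Lemma 2.3]{Has10}), so what you have written is a self-contained argument of exactly the type those references use, based on the Pick--Nevanlinna/Maassen representation. The core of your argument is sound: writing $\Im F_\lambda(iy)-y=\bigl(\Im F_\mu(w)-\Im w\bigr)+\bigl(\Im F_\nu(iy)-y\bigr)$ with $w=F_\nu(iy)$, multiplying by $y$, and using that both terms are non-negative, monotone convergence handles the $\nu$-term, while for the $\mu$-term your Fatou bound (valid because $\Im w\ge y$, $\Im w\sim y$ and $\Re w=o(y)$, so $w\to\infty$ in a Stolz angle) forces $\tau_\mu(\R)<\infty$, and the domination $\frac{y\,\Im w}{|x-w|^2}\le\frac{y}{\Im w}\le 1$ then upgrades the bound to the exact limit; together with $\tau(\R)=\sigma^2$ from Lemma \ref{Julia}/Remark \ref{rm_finite_var} this gives both the finiteness and the additivity of variances, and your decomposition $iy-F_\lambda(iy)=(iy-F_\nu(iy))+(w-F_\mu(w))$ gives the means. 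Three cosmetic points: (i) the sign in your centering remark is off ($F_{\delta_{-m}\rhd\mu}=F_\mu+m$, not $F_\mu-m$), but centering is in fact unnecessary, since the whole variance argument only sees imaginary parts, which are unchanged by real additive constants; (ii) the displayed line with ``(something)'' is vague as stated, but it is exactly repaired by the Fatou/dominated-convergence combination you give afterwards, so you should simply replace it by that argument; (iii) for the mean identity you need $z-F_\mu(z)\to m_1(\mu)$ along the non-tangential sequence $w=F_\nu(iy)$, which does not follow verbatim from Lemma \ref{first_moment} (stated only along $iy$) but does follow immediately from the finite-variance representation $F_\mu(z)=z-m_1(\mu)+\int\frac{\tau_\mu({\rm d}x)}{x-z}$ that you already have at that stage.
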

The following result can be extracted from the proof of \cite[Theorem 2.4]{Has10}, but we show it here.  For $t \in \R$, let $\C_t$ be the set of all $z \in \C$ with $\Im (z) >t$.
\begin{lemma}\label{univfinitevariance}
If $\mu$ has a finite variance $\sigma^2(\mu)$, then \index{F-transform@$F$-transform}$F_\mu$ is univalent in $\C_{\sigma(\mu)}$.
\end{lemma}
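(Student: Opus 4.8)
The plan is to use the characterization of $F$-transforms with finite variance from Lemma~\ref{Julia}, which tells us that
$$F_\mu(z) = z + \int_\R \frac{1}{x-z}\,\tau(\mathrm dx)$$
for a finite non-negative measure $\tau$ on $\R$ with $\tau(\R) = \sigma^2(\mu)$. The idea is then to differentiate under the integral sign, estimate the error term, and apply the Noshiro--Warschawski criterion (Lemma~\ref{NW}) on the convex set $\C_{\sigma(\mu)}$.

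First I would compute
$$F_\mu'(z) = 1 + \int_\R \frac{1}{(x-z)^2}\,\tau(\mathrm dx),$$
the differentiation under the integral being justified because the integrand is dominated locally uniformly in $\C^+$. Next I would bound the integral term: for $z \in \C_t$ and $x \in \R$ one has $|x - z| \geq \Im(z) > t$, hence
$$\left| \int_\R \frac{1}{(x-z)^2}\,\tau(\mathrm dx) \right| \leq \frac{\tau(\R)}{\Im(z)^2} = \frac{\sigma^2(\mu)}{\Im(z)^2}.$$
When $z \in \C_{\sigma(\mu)}$, i.e. $\Im(z) > \sigma(\mu)$, this is strictly less than $1$, so $\Re(F_\mu'(z)) > 0$ on $\C_{\sigma(\mu)}$. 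Since $\C_{\sigma(\mu)}$ is convex and open, Lemma~\ref{NW} with $\theta$ chosen so that $e^{i\theta}$ rotates the right half-plane to the upper half-plane (e.g. $\theta = 3\pi/2$, or equivalently apply the lemma to $-iF_\mu$) yields that $F_\mu$ is univalent in $\C_{\sigma(\mu)}$. One should also check the degenerate case $\sigma(\mu) = 0$, where $\tau \equiv 0$, $F_\mu(z) = z$, and the statement is trivial.

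There is essentially no hard step here: the only mild subtlety is making sure the rotation in applying Lemma~\ref{NW} is set up correctly (the lemma is stated for $\Im(e^{i\theta}f'(z)) > 0$, whereas we naturally obtain $\Re(F_\mu'(z)) > 0$), and making sure the differentiation under the integral sign is legitimate, which follows from the local uniform domination $|x-z|^{-2} \leq (\Im z)^{-2}$ on compact subsets of $\C^+$ together with finiteness of $\tau$. The argument is genuinely short, which matches the remark in the excerpt that this can be extracted from the proof of \cite[Theorem 2.4]{Has10}.
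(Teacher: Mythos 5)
Your argument is correct, and it rests on the same key estimate as the paper's proof, but packaged differently. The paper does not pass through the derivative or Lemma \ref{NW}: it writes $F_\mu(z)=z-m_1(\mu)+\int_\R\frac{1}{x-z}\,\tau({\rm d}x)$ with $\tau(\R)=\sigma^2(\mu)$ and estimates the difference of two values directly,
\[
|F_\mu(z_0)-F_\mu(z_1)|\;\geq\;|z_0-z_1|\Bigl(1-\frac{\sigma^2(\mu)}{\Im(z_0)\,\Im(z_1)}\Bigr),
\]
which is strictly positive for distinct $z_0,z_1\in\C_{\sigma(\mu)}$. Your version is the infinitesimal form of the same inequality, namely $|F_\mu'(z)-1|\leq\sigma^2(\mu)/\Im(z)^2<1$ on $\C_{\sigma(\mu)}$, combined with the Noshiro--Warschawski criterion; since Lemma \ref{NW} is itself proved by integrating $f'$ along a segment, the two routes are essentially equivalent, the only difference being that the paper's chordal estimate yields a quantitative lower Lipschitz bound for free. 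Two small points should be fixed in your write-up, neither of which is a genuine gap. First, the representation you quote from Lemma \ref{Julia} is stated there only for measures with mean zero; for a general $\mu$ with finite variance there is an additional constant $-m_1(\mu)$ (this is precisely the point the paper handles by citing Maassen and noting his result extends to nonzero mean). The constant does not affect $F_\mu'$, or alternatively you may translate $\mu$ to be centered, which changes neither the variance nor univalence on $\C_{\sigma(\mu)}$, but the reduction should be stated. Second, your rotation has the wrong sign: $\Re(F_\mu'(z))>0$ is the condition $\Im(e^{i\theta}F_\mu'(z))>0$ with $\theta=\pi/2$, i.e.\ one applies Lemma \ref{NW} to $iF_\mu$; with $\theta=3\pi/2$ (equivalently $-iF_\mu$) you would need $\Re(F_\mu')<0$.
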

\begin{proof}
$F_\mu$ has a Nevanlinna representation
$$
F_\mu(z)=z-m_1(\mu)+\int_{\R}\frac{1}{x-z}\tau({\rm d}x),
$$
where $\tau$ is a finite, non-negative measure. It holds that $\tau(\R)=\sigma^2(\mu)$ (see \cite[Proposition 2.2]{M92}. Maassen assumed that $m_1(\mu)=0$,
but his result easily extends to the case $m_1(\mu)\neq0$).
We have
\[
\begin{split}
|F_\mu(z_0)-F_\mu(z_1)|
&=\left| z_0-z_1 + \int_{\R}\frac{z_0-z_1}{(x-z_0)(x-z_1)}\,\tau({\rm d}x)\right| \\
&\geq |z_0-z_1|\left(1-\int_{\R}\frac{1}{\Im (z_0)\, \Im (z_1)}\, \tau({\rm d}x)\right) \\
&= |z_0-z_1|\left(1-\frac{\sigma^2(\mu)}{\Im (z_0) \,\Im (z_1)}\right).
\end{split}
\]
If $z_0,z_1 \in \C_{\sigma(\mu)}$, then $1-\frac{\sigma^2(\mu)}{\Im (z_0)\, \Im (z_1)}>0$ and hence $F_\mu$ is univalent in $ \C_{\sigma(\mu)}$.
\end{proof}
\begin{remark} Maassen proved that $F_\mu\colon\C_{\sigma(\mu)} \to \C_{\sigma(\mu)}$ assumes every point of $\C_{2\sigma(\mu)}$ exactly once.
\end{remark}

\begin{lemma}\label{lemmainfinity}
For any probability measure $\mu$ on $\R$ and for any $t>0$, we have
$$
\lim_{z\in\C_t, |z|\to \infty}|F_{\mu}(z)|=\infty.
$$
\end{lemma}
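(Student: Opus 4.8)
The plan is to exploit the Pick--Nevanlinna representation of $F_\mu$ together with a crude lower bound on the integral term. By Lemma \ref{Julia}, any $F_\mu$ has the form
\begin{equation*}
F_\mu(z) = z + b + \int_\R \frac{1+xz}{x-z}\,\rho({\rm d}x),
\end{equation*}
where $b\in\R$ and $\rho$ is a finite non-negative measure on $\R$. So it suffices to show that the integral term $g(z):=\int_\R \frac{1+xz}{x-z}\,\rho({\rm d}x)$ stays bounded, or at least grows slower than $|z|$, as $z\to\infty$ inside $\C_t$; then $|F_\mu(z)|\geq |z| - |b| - |g(z)|\to\infty$.

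First I would estimate the integrand pointwise. Write $\frac{1+xz}{x-z} = -z + \frac{1+x^2}{x-z}$, so that
\begin{equation*}
g(z) = -z\,\rho(\R) + \int_\R \frac{1+x^2}{x-z}\,\rho({\rm d}x).
\end{equation*}
Hmm — but this reintroduces a term of order $|z|$, which would spoil the argument. The point is that $\rho(\R)$ here is \emph{not} free: since $F_\mu(iy)/(iy)\to 1$, one must have the coefficient of $z$ in $F_\mu$ equal to $1$, i.e. the ``$z$'' already written out front is the whole linear part and $\rho(\R)$ contributes nothing linear — indeed in the representation \eqref{EV_eq:2} the measure $\rho$ is finite and the term $\int \frac{1+xz}{x-z}\rho({\rm d}x) = {\scriptstyle\mathcal{O}}(|z|)$ but more precisely $o(|z|)$ non-tangentially. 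To get a bound valid in all of $\C_t$ (not just non-tangentially), I would instead argue directly: for $z\in\C_t$ we have $|x-z|\geq \Im(z)\geq t$, hence
\begin{equation*}
\left|\int_\R \frac{1+xz}{x-z}\,\rho({\rm d}x)\right| \leq \int_\R \frac{1+|x||z|}{t}\,\rho({\rm d}x) = \frac{\rho(\R) + |z|\int_\R|x|\,\rho({\rm d}x)}{t}.
\end{equation*}
This still has a term linear in $|z|$. The cleaner route is to keep the imaginary part alone: since $\Im(F_\mu(z))\geq \Im(z)$ for all $z\in\C^+$ (stated in Lemma \ref{Julia}), and more usefully, to use that $\Im(F_\mu(z)) = \Im(z)\bigl(1 + \int_\R \frac{1+x^2}{|x-z|^2}\rho({\rm d}x)\bigr)\geq \Im(z)$, whereas the real part satisfies $|\Re(F_\mu(z))| \leq |z| + |b| + \int_\R \frac{|1+xz|}{|x-z|}\rho({\rm d}x)$. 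So I would split into two cases according to whether $|\Re(z)|$ is large or $\Im(z)$ is large.

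So here is the case analysis I would carry out. Fix $t>0$ and let $z=u+iv\in\C_t$ with $|z|\to\infty$. If $v\to\infty$ along the sequence, then already $\Im(F_\mu(z))\geq v\to\infty$, so $|F_\mu(z)|\to\infty$. Otherwise $v$ stays bounded, say $t<v\leq R$, and then $|u|\to\infty$. On the region $\{t<\Im(z)\leq R\}$ we have $|x-z|^2 = (x-u)^2 + v^2$, and for $|u|$ large compared with the ``bulk'' of $\rho$ one checks that $\bigl|\int_\R \frac{1+xz}{x-z}\rho({\rm d}x)\bigr| = o(|u|)$: indeed split the integral at $|x|\leq \sqrt{|u|}$ and $|x|>\sqrt{|u|}$; on the first piece $|x-z|\geq |u|-\sqrt{|u|}$ so the contribution is $O\bigl(\frac{1+\sqrt{|u|}\,|z|}{|u|}\rho(\R)\bigr) = o(|u|)$ is false too — the bound is $O(|z|\sqrt{|u|}/|u|)=O(\sqrt{|u|})$, which IS $o(|u|)$; on the second piece $\rho(\{|x|>\sqrt{|u|}\})\to 0$ and $\frac{|1+xz|}{|x-z|}\leq \frac{1}{|x-z|}+|z|\frac{|x|}{|x-z|}$ where $\frac{|x|}{|x-z|}$ is bounded (by a constant times $1$ once $|x|$ exceeds, say, $2|u|$; in the intermediate range $\sqrt{|u|}<|x|\leq 2|u|$ one uses $|x-z|\geq v\geq t$ and $|x|\leq 2|u|$ to get $\frac{|x|}{|x-z|}\leq 2|u|/t$, contributing $O(|z|\cdot |u|\cdot \rho(\{|x|>\sqrt{|u|}\})/t)$ which tends to $0$ times $|u|^2$... ). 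The bookkeeping here is the one genuinely fiddly point, and I would present it carefully but it is elementary: the upshot is $g(z) = o(|z|)$ uniformly on $\C_t$, hence $|F_\mu(z)| \geq |z| - |b| - |g(z)| \to \infty$. \emph{The main obstacle} is precisely making the tail/bulk split in the non-tangential-failing region $\{t<\Im(z)\leq R,\ |\Re(z)|\to\infty\}$ fully rigorous with uniform constants; everything else (the Pick--Nevanlinna representation, the case $\Im(z)\to\infty$) is immediate from Lemma \ref{Julia}.
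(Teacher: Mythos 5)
Your case split (handle $\Im z\to\infty$ via $\Im F_\mu\geq\Im z$, then work in a strip $t<\Im z\leq R$ with $|\Re z|\to\infty$) is fine, but the estimate on which everything then hinges — that $g(z):=\int_\R\frac{1+xz}{x-z}\,\rho({\rm d}x)=o(|z|)$ uniformly on $\C_t$, so that $|F_\mu(z)|\geq|z|-|b|-|g(z)|\to\infty$ — is simply false for a general finite measure $\rho$, and no bookkeeping will rescue it. Take $\rho=\sum_{k\geq1}k^{-2}\delta_{k^3}$ (finite) and $z=k^3+iv$ with $t<v\leq R$. Since $\Im\frac{1+xz}{x-z}=\frac{\Im(z)(1+x^2)}{|x-z|^2}>0$ for every $x$, there is no cancellation in the imaginary part, and the single atom at $x=k^3$ already gives $\Im g(z)\geq k^{-2}\,(1+k^6)/v\sim k^4/v\gg|z|\sim k^3$. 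This is exactly the intermediate range $\sqrt{|u|}<|x|\leq2|u|$ where your own computation trails off into ``$\ldots$'': closing it would require a tail decay like $T^2\rho(\{|x|>T\})\to0$ (and with any other splitting threshold, at least $T\rho(\{|x|>T\})\to0$), which a finite measure need not satisfy. So the claimed ``upshot'' $g(z)=o(|z|)$ is not merely fiddly, it is wrong, and the lower bound $|z|-|b|-|g(z)|$ can be very negative; the lemma survives in such cases only because a large $g$ forces a large $\Im F_\mu$, a mechanism your proposed inequality does not see. (Also, the identity $\frac{1+xz}{x-z}=-z+\frac{1+x^2}{x-z}$ you wrote and then abandoned is incorrect; the correct partial-fraction form is $\frac{1+xz}{x-z}=-x+\frac{1+x^2}{x-z}$.)

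The paper avoids the Pick--Nevanlinna representation altogether: it proves the equivalent statement $\lim_{z\in\C_t,\,|z|\to\infty}|G_\mu(z)|=0$ for $G_\mu=1/F_\mu$, which follows in one line from dominated convergence, since $|1/(z-x)|\leq1/t$ uniformly for $z\in\C_t$, $x\in\R$, while $1/(z-x)\to0$ pointwise in $x$ as $|z|\to\infty$. If you want to keep your representation-based viewpoint, you would have to argue on real and imaginary parts separately (e.g.\ show that whenever $\Im F_\mu(z)=\Im(z)\bigl(1+\int\frac{1+x^2}{|x-z|^2}\rho({\rm d}x)\bigr)$ stays bounded, the integral term cannot cancel the $\Re(z)$ contribution), which is a genuinely different and more delicate argument; the $G_\mu$ route is the clean one.
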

\begin{proof}
We show the equivalent statement
$
\lim_{z\in\C_t, |z|\to \infty}|G_{\mu}(z)|=0.
$
Assuming this were not the case, one would find $\epsilon>0, t>0$ and $z_n \in \C_t$ such that $|z_n|\geq n$ and $|G_\mu(z_n)| \geq \epsilon$ for any integer $n \geq1$.
Since $|1/(z_n-x)| \leq 1/t$ for any $x \in \R$ and $n \geq1$ and since $1/(z_n-x) \to 0$ as $n\to \infty$ for each $x$, one may use the dominated convergence theorem to conclude
that $G_\mu(z_n) \to 0$, a contradiction.
\end{proof}

\begin{lemma}\label{equivconvergence}
\begin{enumerate}[\rm(1)]
\item\label{unique} Let $\mu,\nu,\nu' \in\cP(\R)$ such that $\mu \rhd \nu =\mu\rhd \nu'$. Then $\nu=\nu'$.

\item\label{m-conv2} Let $\mu, \mu_n, \nu_n$, $n=1,2,3,\ldots$ be probability measures on $\R$.
Assume that $\mu_n\rhd \nu_n\wto\mu$. Then $\mu_n \wto \mu$ if and only if $\nu_n \wto \delta_0$.

\item\label{m-conv1} Let $\lambda_n,\mu_n,\nu_n\in\cP(\R)$ such that $\lambda_n=\mu_n\rhd\nu_n$ and $\mu_n$ converges weakly to some probability measure. Then $\lambda_n$ converges weakly to some probability measure if and only if $\nu_n$ converges weakly to some probability measure.
\end{enumerate}
\end{lemma}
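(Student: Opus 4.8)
\textbf{Proof plan for Lemma \ref{equivconvergence}.}

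The plan is to prove the three statements in the stated order, using monotone convolution as composition of $F$-transforms together with the convergence criteria of Lemma \ref{lemmaconvergence}.

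For \eqref{unique}, the identity $\mu\rhd\nu = \mu\rhd\nu'$ translates into $F_\mu\circ F_\nu = F_\mu\circ F_{\nu'}$ on $\Ha$. Since $F_\mu$ is a Pick function and not constant, it is either injective on a suitable truncated cone (by Bercovici--Voiculescu, as recalled in Section \ref{intro_convolutions}) or we can argue directly: $F_\nu$ and $F_{\nu'}$ both map into $\Ha$, and by Lemma \ref{Julia} they satisfy $\Im(F_\nu(z))\geq\Im(z)$, so their ranges reach arbitrarily high imaginary parts; on the region where $F_\mu$ is univalent we can cancel $F_\mu$ to get $F_\nu = F_{\nu'}$ there, and then the identity theorem gives $F_\nu=F_{\nu'}$ on all of $\Ha$, hence $\nu=\nu'$.

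For \eqref{m-conv2}, suppose first $\mu_n\wto\mu$. We want $\nu_n\wto\delta_0$. By Lemma \ref{lemmaconvergence} it suffices to test convergence of $F_{\nu_n}$ at points of $\Ha$ (say along a convergent sequence of distinct points, or just locally uniformly). The key relation is $F_{\mu_n\rhd\nu_n}(z) = F_{\mu_n}(F_{\nu_n}(z))$. From $\mu_n\rhd\nu_n\wto\mu$ we get $F_{\mu_n}(F_{\nu_n}(z))\to F_\mu(z)$ locally uniformly, and from $\mu_n\wto\mu$ we get $F_{\mu_n}\to F_\mu$ locally uniformly. Fix $z_0\in\Ha$; since $\Im(F_{\nu_n}(z_0))\geq\Im(z_0)$, the points $w_n:=F_{\nu_n}(z_0)$ lie in $\C_{\Im(z_0)}$, and we must show $w_n\to z_0$. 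If not, pass to a subsequence; by Lemma \ref{lemmainfinity}, if $|w_n|\to\infty$ then $|F_{\mu_n}(w_n)|\to\infty$ (one needs a uniform version of Lemma \ref{lemmainfinity} over the family $\{F_{\mu_n}\}$, which follows since $F_{\mu_n}\to F_\mu$ locally uniformly and all these are $F$-transforms, so the tail estimate $\Im F_{\mu_n}(z)\geq\Im z$ plus equicontinuity gives control), contradicting $F_{\mu_n}(w_n)\to F_\mu(z_0)\in\Ha$. So the $w_n$ stay in a compact subset of $\Ha$, and along a further subsequence $w_n\to w\in\Ha$; then $F_{\mu_n}(w_n)\to F_\mu(w)$ by locally uniform convergence, forcing $F_\mu(w)=F_\mu(z_0)$; arguing as in part \eqref{unique} (or using that $z\mapsto F_\mu(z)-z$ has nonnegative imaginary part, so $F_\mu$ is injective near $\infty$ and we can reduce to that regime) yields $w=z_0$. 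Hence $F_{\nu_n}(z_0)\to z_0$ for every $z_0$, i.e.\ $\nu_n\wto\delta_0$. Conversely, if $\nu_n\wto\delta_0$, then $F_{\nu_n}\to\mathrm{id}$ locally uniformly, and writing $F_{\mu_n}(z) = F_{\mu_n}(F_{\nu_n}(F_{\nu_n}^{-1}$-type argument$))$ is awkward; instead argue: $F_{\mu_n\rhd\nu_n}\to F_\mu$ and $F_{\mu_n}(z)-F_{\mu_n}(F_{\nu_n}(z))\to 0$ because $F_{\mu_n}$ is a normal family (uniformly bounded on compacts, by the same tightness argument via Lemma \ref{Julia}) hence equicontinuous, and $F_{\nu_n}(z)-z\to 0$; therefore $F_{\mu_n}(z)\to F_\mu(z)$, i.e.\ $\mu_n\wto\mu$.

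For \eqref{m-conv1}, say $\mu_n\wto\mu$. If $\lambda_n\wto\lambda$, apply \eqref{m-conv2} to get $\nu_n\wto\delta_0$, in particular $\nu_n$ converges. Conversely if $\nu_n\wto\nu$, then $F_{\mu_n}\to F_\mu$ and $F_{\nu_n}\to F_\nu$ locally uniformly, so $F_{\lambda_n} = F_{\mu_n}\circ F_{\nu_n}\to F_\mu\circ F_\nu = F_{\mu\rhd\nu}$ locally uniformly (using equicontinuity of $\{F_{\mu_n}\}$ on compacts again), hence $\lambda_n\wto\mu\rhd\nu$ by Lemma \ref{lemmaconvergence}. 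The main obstacle is the direction in \eqref{m-conv2} where one must rule out escape of $F_{\nu_n}(z_0)$ to the boundary or to infinity; this is exactly where a uniform (over the weakly convergent family $\{\mu_n\}$) version of Lemma \ref{lemmainfinity} and the injectivity-near-infinity of $F_\mu$ are needed, and handling it carefully is the crux of the argument.
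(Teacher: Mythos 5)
Your part (1) is essentially the paper's argument (Bercovici--Voiculescu truncated cones plus the identity theorem), and part (3) is a routine modification of (2); the genuine gap is in part (2), at precisely the point you yourself flag as the crux. In the forward direction you reduce to: if $w$ is a subsequential limit of $F_{\nu_n}(z_0)$, then $F_\mu(w)=F_\mu(z_0)$ and $\Im(w)\ge\Im(z_0)$, hence $w=z_0$. That last implication is unjustified. In general $F_\mu$ is not injective on $\C^+$, and the univalence ``near infinity'' provided by \cite[Proposition 5.4]{BV93} holds only on truncated cones $\Gamma_{\lambda,M}$ (Lemma \ref{univfinitevariance} gives a half-plane only under a finite-variance hypothesis you do not have), while your limit point $w$ is only constrained to the half-plane $\{\Im(z)\ge\Im(z_0)\}$, which no truncated cone contains. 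The implication you invoke already fails for $\mu=\tfrac12(\delta_{-1}+\delta_1)$: there $F_\mu(z)=z-1/z$, so for $0<y<1$ the points $z_0=iy$ and $w=i/y$ satisfy $F_\mu(w)=F_\mu(z_0)$ and $\Im(w)>\Im(z_0)$ but $w\neq z_0$. The paper avoids this entirely: by \cite[Propositions 5.4 and 5.7]{BV93}, the weakly convergent families $\{\mu_n\}$ and $\{\mu_n\rhd\nu_n\}$ admit \emph{common} cones $\Gamma_{\lambda',M'},\Gamma_{\lambda,M},\Gamma_{\lambda'',M''}$ with $F_{\mu_n}(\Gamma_{\lambda',M'})\supset\Gamma_{\lambda,M}\supset F_{\mu_n\rhd\nu_n}(\Gamma_{\lambda'',M''})$ and $F_{\mu_n}^{-1}\to F_\mu^{-1}$ locally uniformly on $\Gamma_{\lambda,M}$; then $F_{\nu_n}=F_{\mu_n}^{-1}\circ F_{\mu_n\rhd\nu_n}\to \mathrm{id}$ on $\Gamma_{\lambda'',M''}$, which suffices by Lemma \ref{lemmaconvergence}. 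This uniform cone statement is the ingredient your plan lacks; your ``uniform Lemma \ref{lemmainfinity}'' (which is fine under tightness of $\{\mu_n\}$) only rules out escape to infinity, not convergence to a wrong preimage.

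The converse direction of (2) also rests on an unjustified claim: you use local boundedness/equicontinuity of $\{F_{\mu_n}\}$ ``by the same tightness argument via Lemma \ref{Julia}'', but tightness of $\{\mu_n\}$ is essentially the conclusion being proved, and nothing in the hypotheses rules out, a priori, that $F_{\mu_n}(i)\to\infty$. The paper again goes through inverses: since $\nu_n\wto\delta_0$, there is a common cone $\Gamma_{\beta,L}$ on which the right inverses $F_{\nu_n}^{-1}$ exist and converge to the identity, and $F_{\mu_n}=F_{\mu_n\rhd\nu_n}\circ F_{\nu_n}^{-1}\to F_\mu$ there. Alternatively your version can be repaired without normality: each $F_{\mu_n}$ is a contraction for the Poincar\'e hyperbolic distance $d$ on $\C^+$, so $d\bigl(F_{\mu_n}(z),F_{\mu_n\rhd\nu_n}(z)\bigr)\le d\bigl(z,F_{\nu_n}(z)\bigr)\to0$, and since $F_{\mu_n\rhd\nu_n}(z)\to F_\mu(z)\in\C^+$ this forces $F_{\mu_n}(z)\to F_\mu(z)$; but some such substitute for the tightness claim is needed.
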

\begin{proof}\eqref{unique} \,\,\, From (the proof of) Bercovici and Voiculescu \cite[Proposition 5.4]{BV93} there exist truncated cones $\Gamma_{\lambda,M}, \Gamma_{\lambda',M'}$ such that $F_\nu, F_{\nu'}$ are univalent on $\Gamma_{\lambda',M'}$ and\\ $F_\nu(\Gamma_{\lambda',M'})$, $F_{\nu'}(\Gamma_{\lambda',M'})\subset \Gamma_{\lambda,M}$, and $F_\mu$ is univalent on $\Gamma_{\lambda,M}$. Hence we may apply the inverse $F_\mu^{-1}$ to $F_\mu \circ F_\nu(z)= F_\mu \circ F_{\nu'}(z)$ from the left for $z \in\Gamma_{\lambda',M'}$. This implies by analytic continuation that $F_\nu=F_{\nu'}$ on $\C^+$ and hence $\nu=\nu'.$

\eqref{m-conv2}\,\,\, Suppose that $\mu_n\wto\mu$.
{}From \cite[Propositions 5.4 and 5.7]{BV93} and their proofs, there exist $\lambda,M,\lambda',M',\lambda'',M''>0$ such that  $F_\mu, F_{\mu_n}$ are all univalent on $\Gamma_{\lambda',M'}$ and $F_{\mu_n\rhd\nu_n}$ are all univalent on $\Gamma_{\lambda'',M''}$ such that \[F_\mu(\Gamma_{\lambda',M'}), F_{\mu_n}(\Gamma_{\lambda',M'})\supset \Gamma_{\lambda,M} \supset F_{\mu_n\rhd\nu_n}(\Gamma_{\lambda'',M''}).\] Hence the left compositional inverses of $F_\mu$, $F_{\mu_n}$ may be defined on $\Gamma_{\lambda,M}$, and moreover, we may assume that  $F_{\mu_n}^{-1}$ converge to $F_\mu^{-1}$ locally uniformly in $\Gamma_{\lambda, M}$.
Hence, for each $z \in \Gamma_{\lambda'',M''}$,
\begin{equation}
F_{\nu_n}(z)=F_{\mu_n}^{-1}(F_{\mu_n \rhd \nu_n}(z)) \to F_\mu^{-1} (F_\mu(z))=z
\end{equation}
as $n\to \infty$ (more precisely, the identity $F_{\nu_n}(z)=F_{\mu_n}^{-1}(F_{\mu_n \rhd \nu_n}(z))$ may be first justified for $z=i y$ for sufficiently large $y$ and then for all $z\in \Gamma_{\lambda'',M''}$ by the identity theorem). This implies that $\nu_n \wto \delta_0$ by Lemma \ref{lemmaconvergence}.

Conversely, if $\nu_n \wto \delta_0$, then there exists a domain $\Gamma_{\beta, L}$ such that all $F_{\nu_n}^{-1}$ are defined on $\Gamma_{\beta,L}$ as the right
compositional inverses of $F_{\nu_n}$ and that $F_{\nu_n}^{-1}(z)$ converge to $z$ locally uniformly in $\Gamma_{\beta, L}$.
Hence
$$
F_{\mu_n}(z)=F_{\mu_n\rhd\nu_n}(F_{\nu_n}^{-1}(z)) \to F_\mu(z),\quad z\in \Gamma_{\beta, L}
$$
as $n\to \infty$. This implies that $\mu_n \wto \mu$ again by Lemma \ref{lemmaconvergence}.

\eqref{m-conv1}\,\,\, The proof of \eqref{m-conv2} works with slight modifications.
\end{proof}
\begin{remark}
A similar result is shown by Wang in \cite[Proposition 2.2]{W12}.
\end{remark}

\subsection{Infinitesimal arrays and additive monotone convolution}\label{inf_arrays_additive}

\index{infinitesimal array!of measures on $\R$|(}
\index{monotone convolution!additive $\rhd$|(}

We prove under some assumptions that the limits of monotone convolutions of infinitesimal triangular arrays coincide with probability measures with univalent Cauchy transforms.
A family of probability measures $\{\mu_{n,j} \mid 1\leq j \leq k_n, n\geq1\}$ is called an \emph{infinitesimal (triangular) array} if $k_n\uparrow\infty$ and for any $\delta>0$,
\begin{equation}\label{inf_tri_array}
\lim_{n\to \infty}\sup_{1 \leq j \leq k_n} \mu_{n,j}([-\delta,\delta]^c)=0.
\end{equation}
For an associative binary operation $\star$ on $\cP(\R)$, we say that a probability measure $\mu$ is the \emph{$\star$-limit of an infinitesimal array} if the weak convergence
$$
\mu_{n,1}\star \cdots \star \mu_{n,k_n} \wto \mu\quad \text{as}~n\to \infty
$$
holds for some infinitesimal triangular array $\{\mu_{n,j} \mid 1\leq j \leq k_n, n\geq1\}$. The set of all $\star$-limits of infinitesimal arrays is denoted by $\IA(\star)$ or $\IA(\star,\R)$.

A probability measure $\mu$ on $\R$ is said to be \index{infinite divisibility}\emph{$\star$-infinitely divisible} if for every $n\in\N$ there exists $\mu_n \in \cP(\R)$ such that $\mu=\mu_n^{\star n}$ ($n$ fold convolution). The set of $\star$-infinitely divisible distributions is denoted by $\ID(\star,\R)$ or simply $\ID(\star)$. It is known that $\ID(\star)$ is closed with respect to weak convergence when $\star\in\{\ast,\boxplus\}$, but whether $\ID(\rhd,\R)$ is weakly closed is not known.

For classical convolution $\ast$, Khintchine proved that
\begin{equation}\label{CK}
\IA(\ast) = \ID(\ast),
\end{equation}
 see \cite[\S 24, Theorem 2]{GK54}. Also for free convolution $\boxplus$,  Bercovici and Pata \cite{BP00} proved the analogue of Khintchine's theorem:
 \begin{equation}\label{FK}
 \IA(\boxplus)=\ID(\boxplus).
\end{equation}

Our concern in this section is the class $\IA(\rhd,\R)$. We start from a general result valid for any convolution.

\begin{proposition}\label{closednessIA} Let $\star$ be an associative binary operation on $\cP(\R)$.
\begin{enumerate}[\rm(1)]
\item\label{CConv} The set $\IA(\star)$ is closed under the convolution $\star$.
\item\label{WC} The set $\IA(\star)$ is closed with respect to weak convergence.
\end{enumerate}
\end{proposition}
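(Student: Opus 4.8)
The plan is to prove both parts by elementary diagonal/reindexing arguments on infinitesimal arrays, using only the definitions and the associativity of $\star$.

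For part \eqref{CConv}: suppose $\mu, \nu \in \IA(\star)$, witnessed by infinitesimal arrays $\{\mu_{n,j}\}_{1\le j\le k_n}$ and $\{\nu_{n,j}\}_{1\le j\le \ell_n}$ with $\mu_{n,1}\star\cdots\star\mu_{n,k_n}\wto\mu$ and $\nu_{n,1}\star\cdots\star\nu_{n,\ell_n}\wto\nu$. I would build a new array by concatenation: for each $n$ put the $k_n$ measures $\mu_{n,j}$ first, followed by the $\ell_n$ measures $\nu_{n,j}$, giving an array with $k_n+\ell_n\to\infty$ rows. The uniform smallness condition \eqref{inf_tri_array} for the concatenated array is immediate since the supremum over the combined index set is the max of the two suprema, both of which tend to $0$. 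It remains to see that the $\star$-product of the concatenated row converges weakly to $\mu\star\nu$. By associativity this row product equals $(\mu_{n,1}\star\cdots\star\mu_{n,k_n})\star(\nu_{n,1}\star\cdots\star\nu_{n,\ell_n})$; the two factors converge to $\mu$ and $\nu$ respectively, so one needs joint weak continuity of $\star$. For the convolutions at hand this is available: classical $\ast$ is jointly weakly continuous, and monotone $\rhd$ is jointly weakly continuous by Lemma \ref{lemmaconvergence} together with the formula $F_{\mu\rhd\nu}=F_\mu\circ F_\nu$ (and on the circle one has Corollary \ref{weakconv}). So the concatenated array witnesses $\mu\star\nu\in\IA(\star)$.

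For part \eqref{WC}: suppose $\rho_m \in \IA(\star)$ for $m\ge1$ and $\rho_m\wto\rho$. Each $\rho_m$ has a witnessing infinitesimal array $\{\mu^{(m)}_{n,j}\}_{1\le j\le k^{(m)}_n}$ with row products converging weakly to $\rho_m$ and with row-uniform smallness controlled by some sequence that tends to $0$ in $n$. The idea is a diagonal extraction: for each $m$ choose $n=n(m)$ large enough that (i) the row product $\mu^{(m)}_{n(m),1}\star\cdots\star\mu^{(m)}_{n(m),k^{(m)}_{n(m)}}$ is within Prokhorov distance $1/m$ of $\rho_m$, (ii) the smallness quantity $\sup_j \mu^{(m)}_{n(m),j}([-\delta,\delta]^c)$ is $\le 1/m$ for, say, $\delta=1/m$ (or, to be safe, arrange $\sup_j \mu^{(m)}_{n(m),j}([-1/m,1/m]^c)\le 1/m$), and (iii) the row length $k^{(m)}_{n(m)}$ is strictly increasing in $m$ (possible since $k^{(m)}_n\to\infty$ as $n\to\infty$ for each fixed $m$). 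Then define a new array indexed by $m$, whose $m$-th row is the chosen $m$-th-level row; its row lengths tend to infinity, and for any fixed $\delta>0$, once $m\ge 1/\delta$ the smallness bound $1/m\to0$ kicks in, so \eqref{inf_tri_array} holds. Finally its row products are within $1/m$ of $\rho_m$, and $\rho_m\wto\rho$, so the row products converge weakly to $\rho$. Hence $\rho\in\IA(\star)$. A clean way to phrase (i)--(ii) uniformly is to metrize weak convergence on $\cP(\R)$ (or $\cP(\T)$) by a bounded metric such as the Lévy--Prokhorov metric, so all the ``sufficiently large'' choices become choices making certain explicit metric quantities small.

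The only genuinely delicate point is ensuring, in part \eqref{CConv}, that the convolution really is jointly weakly continuous for the operation $\star$ under consideration — for $\rhd$ and $\submm$ this is exactly where Lemma \ref{lemmaconvergence}, Lemma \ref{lemmaconvergenceunit}, and Corollary \ref{weakconv} are needed, and one should invoke them rather than assert continuity abstractly; for the classical convolution it is standard. In part \eqref{WC} the bookkeeping with the three simultaneous ``large $n$'' requirements is routine but must be carried out carefully so that the resulting array is genuinely infinitesimal (the row-length divergence and the uniform smallness must both survive the diagonalization). No substantial analytic obstacle is expected beyond this.
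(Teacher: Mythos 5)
Your proof is correct and takes essentially the same route as the paper: part (2) is the same diagonal extraction using a metric compatible with weak convergence (the paper's argument is virtually identical, down to the $1/m$ bookkeeping), and part (1), which the paper dismisses as ``obvious,'' is exactly the concatenation argument you spell out. Your caveat that the concatenation step requires joint weak continuity of $\star$ (available for $\rhd$, $\submm$ and the other convolutions via Lemma \ref{lemmaconvergence}, Lemma \ref{lemmaconvergenceunit}, and Corollary \ref{weakconv}) is a fair point the paper leaves implicit, since the proposition is stated for an arbitrary associative operation but is only ever applied to weakly continuous ones.
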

\begin{proof}
\eqref{CConv}\,\,\, Obvious.

\eqref{WC}\,\,\,
Let $\mu^{(m)}$ be a sequence of probability measures which are limits of infinitesimal arrays, and suppose that $\mu^{(m)}$ converges to a probability measure $\mu$ weakly.
Take an infinitesimal triangular array $\{\mu_{n,j}^{(m)} \mid 1\leq j \leq k_n^{(m)}, n\geq1\}$ such that for each $m \geq1$,
 $\mu_{n,1}^{(m)} \star \cdots \star \mu_{n,k_n^{(m)}}^{(m)}\wto \mu^{(m)}$ as $n \to \infty$.
We take a distance $d(\cdot,\cdot)$ which is compatible with the weak convergence (for example $d$ can be taken to be the L\'evy-Prokhorov distance).
Then, for each integer $m \geq 1$, there exists a positive integer $n(m)$ such that
$$
\sup_{1 \leq j \leq k_n^{(m)}} \mu_{n,j}^{(m)}([-m^{-1}, m^{-1}]^c) < \frac{1}{m} \quad \text{for}\quad n \geq n(m)
$$
and
$$
d(\mu_{n,1}^{(m)} \star \cdots \star \mu_{n,k_n^{(m)}}^{(m)}, \mu^{(m)}) < \frac{1}{m} \quad \text{for}\quad n \geq n(m).
$$
If we define $\mu_{m,j}:=\mu_{n(m),j}^{(m)}$ for $1 \leq j \leq k_m:= k_{n(m)}^{(m)},~m\geq1$, then this is an infinitesimal array converging weakly to $\mu$. Indeed, for any $\epsilon,\delta>0$, we can find an integer $M \geq1$ such that $\frac{1}{M} < \epsilon, \delta$. Then
$$
\sup_{1 \leq j \leq k_m} \mu_{m,j}([-\delta, \delta]^c) \leq \sup_{1 \leq j \leq k_{n(m)}^{(m)}} \mu_{n(m),j}^{(m)}([-m^{-1}, m^{-1}]^c) < \frac{1}{m} < \epsilon
$$
for  $m \geq M.$ By the triangular inequality,
\begin{equation*}
\begin{split}
d(\mu_{m,1} \star \cdots \star \mu_{m,k_m}, \mu)
&\leq d(\mu_{n(m),1}^{(m)} \star \cdots \star \mu_{n(m),k_{n(m)}^{(m)}}^{(m)}, \mu^{(m)}) + d(\mu^{(m)},\mu) \\
&\leq \frac{1}{m} + d(\mu^{(m)},\mu) \to 0,
\end{split}
\end{equation*}
as $m \to \infty$, which completes the proof.
\end{proof}

\begin{theorem}\label{thminfinitesimalarray} \begin{enumerate}[\rm(1)]
\item\label{IA1} \index{Cauchy transform!univalent}
\index{infinitesimal array!of measures on $\R$}$\Univ(\R)\subset\IA(\rhd)$.
\item\label{IA2} Assume that an infinitesimal array $\{\mu_{n,j} \mid 1\leq j \leq k_n, n\geq1\}$ satisfies the condition
$$
\sup_{1\leq j\leq k_n}\sigma^2(\mu_{n,j}) \to 0 \quad \text{as~} n\to\infty.
$$
If  $\mu_{n,1} \rhd \cdots \rhd \mu_{n,k_n}$ converges weakly to a probability measure $\mu$, then $G_\mu$ is univalent.
\end{enumerate}
\end{theorem}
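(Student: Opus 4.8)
\textbf{Proof plan for Theorem~\ref{thminfinitesimalarray}.}

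For part \eqref{IA1}, the plan is to invoke Theorem~\ref{intro_thm2} (or more directly Theorem~\ref{EV_embed_F}(a)): if $\mu \in \Univ(\R)$, then $F_\mu$ embeds into an additive Loewner chain $(F_t)_{t\ge0}$ with $F_1 = F_\mu$, which by the correspondences of Section~\ref{summary_additive} yields a weakly continuous $\rhd$-convolution hemigroup $(\mu_{st})$ with $\mu_{01}=\mu$. Then for each $n$ I would set $\mu_{n,j} := \mu_{(j-1)/n,\, j/n}$ for $1\le j\le k_n:=n$, so that $\mu_{n,1}\rhd\cdots\rhd\mu_{n,n} = \mu_{0,1}=\mu$ exactly, for every $n$. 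It remains to check that $\{\mu_{n,j}\}$ is an infinitesimal array: since $\mu_{tt}=\delta_0$ and $(s,t)\mapsto\mu_{st}$ is weakly continuous on the compact set $\{0\le s\le t\le1\}$, the family $\{\mu_{st}: |t-s|\le1/n\}$ converges uniformly to $\delta_0$ (uniform continuity on a compact set), which gives $\sup_{1\le j\le n}\mu_{n,j}([-\delta,\delta]^c)\to0$ for every $\delta>0$. This part is essentially bookkeeping once Theorem~\ref{EV_embed_F} is granted.

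For part \eqref{IA2}, the strategy is to pass to $F$-transforms and control the composition $F_{n,1}\circ\cdots\circ F_{n,k_n}$, where $F_{n,j}=F_{\mu_{n,j}}$. By Lemma~\ref{univfinitevariance}, each $F_{n,j}$ is univalent on $\C_{\sigma(\mu_{n,j})}$, and the variance hypothesis $\varepsilon_n:=\sup_j\sigma^2(\mu_{n,j})\to0$ will be the lever: I want to show that for any fixed $c>0$, the partial compositions $F_{n,j}\circ\cdots\circ F_{n,k_n}$ all map $\C_c$ into $\C_c$ for $n$ large, and are univalent there with a uniform modulus of injectivity. The key quantitative estimate is the one from Lemma~\ref{univfinitevariance}: for $z_0,z_1\in\C_c$,
\[
|F_{n,j}(z_0)-F_{n,j}(z_1)| \ge |z_0-z_1|\Bigl(1-\frac{\sigma^2(\mu_{n,j})}{c^2}\Bigr),
\]
combined with $\Im F_{\mu}(z)\ge\Im z$ (Lemma~\ref{Julia}), so that compositions stay in $\C_c$. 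Telescoping over $j=1,\dots,k_n$ and using $\sum_j\sigma^2(\mu_{n,j})=\sigma^2(\mu_{n,1}\rhd\cdots\rhd\mu_{n,k_n})$ by Lemma~\ref{finitevariance0}—which is bounded since the convolutions converge weakly and the limit has finite variance (here one needs that weak convergence plus the infinitesimality/variance control forces the limiting variance to be finite; this should follow from Lemma~\ref{finitevariance0} applied to the limit, or a lower-semicontinuity argument)—one gets $\prod_j(1-\sigma^2(\mu_{n,j})/c^2) \ge \prod_j(1-\sigma^2(\mu_{n,j})/c^2)$ bounded below by something like $\exp(-2\sum_j\sigma^2(\mu_{n,j})/c^2) \ge \exp(-2C/c^2)>0$ for $n$ large (using $1-x\ge e^{-2x}$ for small $x$, valid since $\varepsilon_n/c^2\to0$). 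Hence $F_{\mu_{n,1}\rhd\cdots\rhd\mu_{n,k_n}} = F_{n,1}\circ\cdots\circ F_{n,k_n}$ satisfies
\[
|F_{\mu_{n,1}\rhd\cdots\rhd\mu_{n,k_n}}(z_0) - F_{\mu_{n,1}\rhd\cdots\rhd\mu_{n,k_n}}(z_1)| \ge e^{-2C/c^2}|z_0-z_1|, \qquad z_0,z_1\in\C_c,
\]
for all sufficiently large $n$. Letting $n\to\infty$ and using that $F_{\mu_{n,1}\rhd\cdots\rhd\mu_{n,k_n}}\to F_\mu$ locally uniformly on $\C^+$ (Lemma~\ref{lemmaconvergence}), the same lower bound passes to the limit: $F_\mu$ is injective on $\C_c$. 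Since $c>0$ was arbitrary, $F_\mu$ is univalent on $\C^+$, equivalently $G_\mu=1/F_\mu$ is univalent.

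The main obstacle I anticipate is the uniform containment of the partial compositions in a fixed half-plane $\C_c$ and the justification that the relevant variances are summable uniformly in $n$. For the containment: $\Im F_\mu(z)\ge\Im z$ already shows each single $F_{n,j}$ preserves $\C_c$, and this is stable under composition, so the image of $\C_c$ under any partial composition lies in $\C_c$—that part is clean. The delicate point is bounding $\sum_{j=1}^{k_n}\sigma^2(\mu_{n,j})$ uniformly in $n$; by Lemma~\ref{finitevariance0} this sum equals $\sigma^2$ of the full convolution $\lambda_n := \mu_{n,1}\rhd\cdots\rhd\mu_{n,k_n}$, and since $\lambda_n\wto\mu$ one needs $\sup_n\sigma^2(\lambda_n)<\infty$, which is \emph{not} automatic from weak convergence alone. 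I expect this requires the infinitesimality together with the variance hypothesis: one should argue via the Pick--Nevanlinna representation \eqref{EV_finite_var} of $F_{\lambda_n}$ and a tightness/uniform-integrability argument for the representing measures $\tau_n$, or else restrict attention (as is standard in such limit theorems) to showing $\limsup_n\sigma^2(\lambda_n)<\infty$ is forced—after which $\mu$ has finite variance and Lemma~\ref{univfinitevariance} applies to $\mu$ directly on $\C_{\sigma(\mu)}$, and one combines that with the convergence argument on $\C_c$ for $c<\sigma(\mu)$ handled by the telescoping estimate above. Sorting out this uniform variance bound cleanly is where the real work lies; everything else is the Noshiro--Warschawski-style estimate from Lemma~\ref{univfinitevariance} propagated through compositions and a limit.
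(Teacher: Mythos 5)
Your part \eqref{IA1} is correct and is essentially the paper's own argument: embed $F_\mu$ into an additive Loewner chain via Theorem~\ref{EV_embed_F}(a), set $\mu_{n,j}=\mu_{(j-1)/n,\,j/n}$, note the exact identity $\mu=\mu_{n,1}\rhd\cdots\rhd\mu_{n,n}$, and get infinitesimality from the weak continuity of $(s,t)\mapsto\mu_{st}$.

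Part \eqref{IA2}, however, has a genuine gap, and it is exactly the point you flag at the end. Your telescoping estimate needs $\sup_n\sum_{j=1}^{k_n}\sigma^2(\mu_{n,j})=\sup_n\sigma^2(\lambda_n)<\infty$, where $\lambda_n=\mu_{n,1}\rhd\cdots\rhd\mu_{n,k_n}$, and this does \emph{not} follow from the hypotheses. The assumption only controls $\sup_j\sigma^2(\mu_{n,j})$, while $k_n\to\infty$, so the total variance $\sigma^2(\lambda_n)$ may diverge; weak convergence $\lambda_n\wto\mu$ gives no upper bound on $\sigma^2(\lambda_n)$ (variance is only lower semicontinuous, which bounds $\sigma^2(\mu)$ from below by $\liminf\sigma^2(\lambda_n)$ --- the wrong direction), and the limit $\mu$ in the theorem is not assumed, and need not be, of finite variance. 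Consequently the uniform modulus $e^{-2C/c^2}$ on every half-plane $\C_c$ is not available, and indeed such a bound for all $c>0$ is a far stronger conclusion than univalence and fails in general.

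The repair is to discard the quantitative estimate altogether; no modulus of injectivity is needed. Fix $\epsilon>0$. Since $\sigma_n:=\sup_j\sigma(\mu_{n,j})\to0$, for $n$ large each $F_{\mu_{n,j}}$ is univalent on $\C_\epsilon\supset$ its guaranteed domain $\C_{\sigma(\mu_{n,j})}$ by Lemma~\ref{univfinitevariance}, and each maps $\C_\epsilon$ into $\C_\epsilon$ because $\Im F_{\mu_{n,j}}(z)\ge\Im(z)$ (Lemma~\ref{Julia}). A composition of injective maps, each preserving $\C_\epsilon$, is injective on $\C_\epsilon$ --- no lower bound on $|F(z_0)-F(z_1)|$ is required. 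Since $F_{\lambda_n}=F_{\mu_{n,1}}\circ\cdots\circ F_{\mu_{n,k_n}}\to F_\mu$ locally uniformly (Lemma~\ref{lemmaconvergence}), Hurwitz's theorem gives that $F_\mu$, being non-constant, is univalent on $\C_\epsilon$; letting $\epsilon\downarrow0$ yields univalence on $\C^+$. This is the paper's proof, and once you replace your quantitative step by this qualitative one, the problematic uniform variance bound simply never arises.
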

\begin{proof} \eqref{IA1}\,\,\, By Theorem \ref{EV_embed_F} (a), we have $F_\mu = F_1$ for an \index{Loewner chain!additive}additive Loewner chain
$(F_{\mu_t})_{t\geq0}$ with \index{transition mappings}transition mappings $(F_{\mu_{st}})_{0\leq s\leq t}$. For $n\geq 1$ and $1\leq j \leq n$ we define $\mu_{n,j}=\mu_{(j-1)/n, j/n}$. This defines an infinitesimal triangular because the continuity of $(s,t)\mapsto F_{\mu_{st}}$ 
 implies \eqref{inf_tri_array}.\\
We have \[\mu=\mu_{0,1} = \mu_{0,1/n}\rhd ... \rhd \mu_{(n-1)/n,1}\]
for all $n\geq 1.$ Hence, $\mu \in \IA(\rhd).$\\
\eqref{IA2}\,\,\, For any $\epsilon>0$, there exists $N\geq1$ such that $\sigma^2_n:=\sup_{1\leq j\leq k_n}\sigma^2(\mu_{n,j}) <\epsilon$ for $n \geq N$.
By Lemma \ref{univfinitevariance}, $F_{\mu_{n,j}}$ is univalent in $\C_\epsilon \subset \C_{\sigma_n}$ for any $j$,
and since $F_{\mu_{n,j}}(\C_\epsilon) \subset \C_\epsilon$, the composition $F_{\mu_{n,1}}\circ \cdots \circ F_{\mu_{n,k_n}}$ is univalent in $\C_\epsilon$
for $n\geq N$. By taking the limit $n\to\infty$, $F_\mu$ (which is not a constant) is also univalent in $\C_\epsilon$. Since $\epsilon>0$ is arbitrary, we conclude that $F_\mu$ is univalent in $\C^+.$
\end{proof}

The second statement is hopefully the case without any assumption on the variances, and such is true for the case of the unit circle (see Theorem \ref{Khintchineunit}).
Hence, let us pose a conjecture here:
\begin{conjecture}\label{IA}
\index{Cauchy transform!univalent}
\index{infinitesimal array!of measures on $\R$}
$\Univ(\R)=\IA(\rhd)$.
\end{conjecture}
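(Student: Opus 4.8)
\textbf{Proof strategy for Conjecture \ref{IA}.} The inclusion $\Univ(\R)\subset\IA(\rhd)$ is already established in Theorem \ref{thminfinitesimalarray}\eqref{IA1}, so the whole task is the reverse inclusion $\IA(\rhd)\subset\Univ(\R)$. Since $\Univ(\R)$ is weakly closed (noted at the beginning of Section \ref{section_limits_additive}, via Hurwitz's theorem and Lemma \ref{lemmaconvergence}), and since every $\mu\in\IA(\rhd)$ is by definition a weak limit of measures of the form $\nu_n:=\mu_{n,1}\rhd\cdots\rhd\mu_{n,k_n}$ with $\{\mu_{n,j}\}$ infinitesimal, it suffices to prove that each finite monotone convolution $\mu_{n,1}\rhd\cdots\rhd\mu_{n,k_n}$ of measures from an infinitesimal array is itself in $\Univ(\R)$ --- or, more realistically, that a sufficiently good approximation of it is. The plan is therefore: (i) fix $n$ and write $F_j:=F_{\mu_{n,j}}$, so that $F_{\nu_n}=F_1\circ\cdots\circ F_{k_n}$; (ii) show that each $F_j$ is univalent on a half-plane $\C_{t_j}$ with $t_j$ controlled by the infinitesimality, and that $F_j(\C_{t_j})\subset\C_{t_j}$, so the composition is univalent on $\bigcap_j\C_{t_j}$; (iii) pass to the limit.

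The difficulty --- and this is exactly the gap that forced the authors to add the variance hypothesis in Theorem \ref{thminfinitesimalarray}\eqref{IA2} --- is step (ii): Lemma \ref{univfinitevariance} gives univalence of $F_{\mu_{n,j}}$ only on $\C_{\sigma(\mu_{n,j})}$, and infinitesimality controls only the \emph{tails} $\mu_{n,j}([-\delta,\delta]^c)$, not the variances, which can be infinite. So the first genuine step I would take is to try to replace each $\mu_{n,j}$ by a truncated/modified measure $\tilde\mu_{n,j}$ (e.g.\ conditioning $\mu_{n,j}$ on a neighbourhood of $0$, or subtracting a point mass, in the style of the classical truncation arguments behind \eqref{CK}) which is close to $\mu_{n,j}$ in a sense compatible with $\rhd$, has finite (indeed small) variance, and such that $\tilde\mu_{n,1}\rhd\cdots\rhd\tilde\mu_{n,k_n}$ still converges weakly to $\mu$. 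If such a truncation can be arranged so that $\sup_j\sigma^2(\tilde\mu_{n,j})\to0$, then Theorem \ref{thminfinitesimalarray}\eqref{IA2} applies directly and we are done. The key lemma to establish is thus: \emph{for an infinitesimal $\rhd$-array whose products converge weakly, one may pass to a variance-infinitesimal array with the same limit.} This is the main obstacle, and it is not obviously true --- the monotone convolution does not interact with truncation as transparently as the classical one (there is no additivity of cumulants except for variances, Lemma \ref{finitevariance0}), so controlling the effect of truncating each factor on the full composition $F_1\circ\cdots\circ F_{k_n}$ near the real axis is delicate.

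An alternative route, if the truncation lemma resists, is to argue directly with Loewner chains. By the weak continuity assumption and the hemigroup-type structure one expects an infinitesimal $\rhd$-array to be, in the limit, a discretization of an additive Loewner chain $(F_t)_{t\ge0}$ with $F_1=F_\mu$; making this precise (showing that the products $\mu_{n,1}\rhd\cdots\rhd\mu_{n,k_n}$ converge to the time-$1$ map of a genuine Loewner chain rather than merely to an isolated $F_\mu$) would immediately give $F_\mu$ univalent by Theorem \ref{EV_univalence}, hence $\mu\in\Univ(\R)$. Concretely I would: build the transition family $F_{a,b}^{(n)}:=F_{\mu_{n,\lfloor na\rfloor+1}}\circ\cdots\circ F_{\mu_{n,\lfloor nb\rfloor}}$ for $0\le a\le b\le 1$; use infinitesimality plus Lemma \ref{lemmaconvergence} to extract a locally uniformly convergent subsequence whose limit $(F_{a,b})$ satisfies $F_{aa}=\mathrm{id}$, $F_{ac}=F_{ab}\circ F_{bc}$ and continuity in $(a,b)$ --- i.e.\ the transition mappings of a decreasing Loewner chain on $\Ha$, additive because each $F_{\mu_{n,j}}$ fixes $\infty$ with derivative $1$; then invoke Theorem \ref{EV_univalence}. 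The obstruction here shifts to proving the continuity/compactness needed for the limiting transition family to be genuinely a Loewner chain (rather than having jumps), which again is where the uniform smallness of the factors --- only tail-infinitesimality, not variance-infinitesimality --- must be leveraged carefully, likely via the Pick--Nevanlinna representation \eqref{EV_eq:2} and the estimate $\Im F_{\mu_{n,j}}(z)\ge\Im z$ to get equicontinuity bounds on $\Im F_{a,b}^{(n)}$. I expect that making either of these two approaches fully rigorous is substantial, which is presumably why the statement is left as a conjecture.
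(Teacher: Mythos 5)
The statement you are addressing is not proved in the paper at all: it is posed as an open conjecture (the paper proves only $\Univ(\R)\subset\IA(\rhd)$ and the reverse inclusion under the extra variance hypothesis of Theorem \ref{thminfinitesimalarray}\eqref{IA2}, plus the partial result for the class $\cL$ in Remark \ref{PA}). Your text is, accordingly, a strategy sketch rather than a proof, and by your own admission neither of your two routes is carried out. So it cannot be counted as a proof of the conjecture; the honest assessment is that the genuine gaps you flag are exactly the open problem.

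Two of the gaps deserve to be named more sharply than you do. First, your opening reduction is false if read literally: it does \emph{not} suffice to show that each finite product $\nu_n=\mu_{n,1}\rhd\cdots\rhd\mu_{n,k_n}$ lies in $\Univ(\R)$, because such products typically do not. If every $\mu_{n,j}$ is finitely atomic (which is perfectly compatible with infinitesimality), then $F_{\nu_n}=F_{\mu_{n,1}}\circ\cdots\circ F_{\mu_{n,k_n}}$ is rational, so $\nu_n$ is purely atomic with at least two atoms as soon as some factor is not a point mass, and Proposition \ref{Atm} then excludes $\nu_n$ from $\Univ(\R)$. Hence the whole content must sit in the hedged clause ``a sufficiently good approximation of it is,'' i.e.\ in your truncation lemma, which is unproved and is not a routine adaptation of the classical argument behind \eqref{CK}: monotone convolution has no cumulant additivity beyond Lemma \ref{finitevariance0}, and truncating a single factor changes the composition $F_1\circ\cdots\circ F_{k_n}$ in a way that is not controlled near $\R$. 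Second, in the Loewner-chain route the missing step is precisely the joint continuity of the limiting transition family: tail-infinitesimality gives no estimate of the type $|F^{(n)}_{a,b}(z)-F^{(n)}_{a,c}(z)|\le C(c-b)/\Im(z)$ (that estimate is exactly what the variance normalization of Proposition \ref{EV_normal_add} buys), and the ``action'' of the array may concentrate on $o(k_n)$ indices, so the parametrization by $j/k_n$ can degenerate and the subsequential limit need not satisfy condition (c) of Definition \ref{EV_def:evolution_family}; without finite variance there is no half-plane-capacity-type additive quantity available to reparametrize. In short: the proposal correctly identifies where the difficulty lies, but it supplies neither of the two lemmas on which it hinges, and the paper itself leaves the statement open.
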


\begin{remark}\label{PA}
Anshelevich and Arizmendi \cite{AA} introduced a class of probability measures $\cL$ that is characterized by the property $F_\mu(z+2\pi) = F_\mu(z)+2\pi$ for all $z\in\C^+$. They proved Theorem \ref{thminfinitesimalarray}\eqref{IA1} when $\mu \in \cL\cap \Univ(\R)$ by using Theorem \ref{Khintchineunit} that we prove later; see \cite[Remark 51]{AA}. Adopting a similar argumentation also shows the following partial answer to Conjecture \ref{IA}: If $\{\mu_{n,j}\}_{1\leq j \leq k_n, 1\leq n} \subset\cL$ is an infinitesimal array, and if  $\mu_{n,1} \rhd \cdots \rhd \mu_{n,k_n}$ weakly converges to a probability measure $\mu$, then $\mu \in \Univ(\R) \cap \cL$. This partial answer makes the conjecture more reasonable.
\end{remark}

For monotone convolution, the inclusion $\ID(\rhd)\subset \IA(\rhd)$ is rather immediate from differential equations; see Section \ref{subsec_mon_inf_div_add}. By contrast to classical and free probabilities, we can prove that $\IA(\rhd)$ is strictly larger than $\ID(\rhd)$.  For this it suffices to prove that $\Univ(\R)\setminus \ID(\rhd) \neq \emptyset$ by Theorem \ref{thminfinitesimalarray}.
For this we give three counterexamples using monotone cumulants introduced in \cite{HS11} and one counterexample with a geometric proof. 

Recall that a probability measure with compact support is \index{infinite divisibility!additive monotone convolution}$\rhd$-infinitely divisible if and only if its \index{monotone cumulants}monotone cumulant sequence $\{r_n\}_{n\geq1}$ is conditionally positive definite, namely the determinant of the $n \times n$ matrix $\{r_{i+j}\}_{i,j=1}^n$ is non-negative for any $n \in\N$; see  \cite[Theorem 8.5]{H11} which contains the proof for the monotone case.

\begin{example}\label{semicircle not ID}
The standard \index{semicircle distribution}semicircle distribution is in $\Univ(\R)$ but is not in $\ID(\rhd)$. Let $\{r_n\}_{n\geq1}$ be the monotone cumulants. The values of some $r_n$ are computed in \cite[Appendix]{CGW18} (we can also use \cite[Theorem 4.8]{HS11} or \cite[Proposition 4.7]{HS11} for the computation of $r_n$):  
\begin{equation*}
\{r_n\}_{n=1}^{10} = \left\{0,1,0,\frac{1}{2},0,\frac{1}{2},0,\frac{7}{12},0,\frac{2}{3}\right\}, \qquad
\det\{r_{i+j}\}_{i,j=1}^5
= -\frac{1}{3456}<0,
\end{equation*}
which shows that the measure is not $\rhd$-infinitely divisible. However, since the semicircle distribution is $\boxplus$-infinitely divisible it has a univalent Cauchy transform (see Proposition \ref{FID}).

Moreover, numerical simulation suggests that the semicircle distribution with any mean is not $\rhd$-infinitely divisible. More precisely, let $r_n(a)$ be the monotone cumulants of the semicircle distribution with mean $a$ and variance $1$. Let $h_n(a):= \det \{r_{i+j}(a)\}_{i,j=1}^n$. From the graph drawn by simulation, the function $\min\{h_2(a),h_3(a),h_5(a)\}$ seems negative for all $a\in\R$, but a rigorous proof is perhaps difficult since $h_5(a)$ is a polynomial of degree $20$.
\end{example}

The shift of a probability measure may break the $\rhd$-infinitely divisibility.

\begin{example}\label{arcsine not ID}
The \index{arcsine distribution}arcsine law with mean $a \in \R$ and variance $t>0$
\begin{equation*}
\AS_{a,t} = \frac{1}{\pi\sqrt{2t -(x-a)^2}} \mathbf{1}_{(-\sqrt{2t }+a,\sqrt{2t}+a)}(x) \,{\rm d}x
\end{equation*}
is $\rhd$-infinitely divisible if and only if $a =0$. Indeed, it is well known that $\AS_{0,t}$ is $\rhd$-infinitely divisible (see \cite{M00}). For $a\neq0$, let $\{r_n\}_{n\geq1}$ be the monotone cumulants. We can see that
\begin{equation*}
\{r_n\}_{n=1}^4 =\left\{\sqrt{t} a, t, \frac{t^{3/2} a}{2}, \frac{t^2 a^2}{6}\right\}, \qquad \begin{vmatrix}
r_2 & r_3 \\
r_3 & r_4
\end{vmatrix}
= -\frac{t^3 a^2}{12}<0.
\end{equation*}
This shows that, when $a\neq0$, the measure $\AS_{a,t}$ is not $\rhd$-infinitely divisible. On the other hand, the Cauchy transform $G_{\AS_{a,t}}(z)= 1/\sqrt{(z-a)^2-2 t}$ is univalent on $\C^+$.
\end{example}

Note that the right monotone shift $\mu \rhd \delta_a$ is the usual shift, but the left one $\delta_a \rhd \mu$ is in general different. It also turns out that the left monotone shift may break the $\rhd$-infinite divisibility.
\begin{example}\label{arcsine not ID2} Let $a \in \R$ and $t>0$.
The left monotone shift $\delta_a \rhd \AS_{0,t}$
is $\rhd$-infinitely divisible if and only if $a =0$. Indeed, let $\{r_n\}_{n\geq1}$ be the monotone cumulants. We can see that
\begin{equation*}
\{r_n\}_{n=1}^4 =\left\{\sqrt{t} a, t, -\frac{t^{3/2} a}{2}, \frac{t^2 a^2}{6}\right\}, \qquad \begin{vmatrix}
r_2 & r_3 \\
r_3 & r_4
\end{vmatrix}
= -\frac{t^3 a^2}{12}<0.
\end{equation*}
This shows that, when $a\neq0$, the measure $\delta_a\rhd \AS_{0,t}$ is not $\rhd$-infinitely divisible. On the other hand, the reciprocal Cauchy transform $F_{\delta_a \rhd \AS_{0,t}}(z)= \sqrt{z^2-2 t} -a $ is univalent on $\C^+$.
\end{example}

\begin{remark}\label{semicircle not ID2}
We can also give a geometric proof of $\ID(\rhd) \not= \IA(\rhd)$. 
Choose $\mu$ such that $F_\mu$ is univalent and $F_\mu(\Ha)=\Ha\setminus \gamma(0,1]$ for a simple curve 
$\gamma:[0,1]\to\overline{\Ha}$ with $\gamma(0)\in\R$ and $\gamma(0,1]\subset \Ha$. This is possible for any such curve due to Theorem \ref{ftransform_images}. Assume that $\gamma$ is not a vertical line segment. 
Then $\mu\in \IA(\rhd)$ 
due to Theorem \ref{thminfinitesimalarray}.

Furthermore, assume that the curve is parametrized by \index{half-plane capacity}half-plane capacity, i.e. the unique conformal mapping 
$g_t:\Ha\setminus\gamma(0,t]\to\Ha$ with hydrodynamic normalization has half-plane capacity $t$, which means $g_t(z)=z+\frac{t}{z}+...$ 
at $\infty$. 
Then $f_t=g_t^{-1}$ is an \index{Loewner chain!additive}additive Loewner chain satisfying the Loewner equation from Proposition \ref{EV_normal_add} with 
$$M(z,t)= \frac {1}{U(t)-z},$$
where $U(t)=g_t(\gamma(t))$, see \cite[Prop. 4.4]{lawler05}. 
Any other Loewner chain generating $F_\mu$ corresponds to a time change of the Loewner chain $(f_t)$.

Now assume that $\mu\in \ID(\rhd)$. Then, by Theorem \ref{MIDLK}, $F_\mu$ can be embedded into a Loewner chain $(h_t)$ which is a semigroup. The additivity of the half-plane capacity implies that 
$h_t(z)=z-\frac{c t}{z}+...$ for some $c>0$. A time change yields $c=1$ and we have $(h_t)=(f_t)$, which implies $M(z,t)$ does not depend on 
$t$, i.e. $U(t)\equiv u\in\R$. In other words, $\gamma[0,1]$ must be a vertical line segment connecting $u$ to some $u+iT$, $T>0$, which is a contradiction to our assumption.
Hence, $\mu\not\in \ID(\rhd)$.
\end{remark}

\index{infinitesimal array!of measures on $\R$|)}
\index{monotone convolution!additive $\rhd$|)}

\section[Univalent Cauchy transforms]{Subclasses of probability measures with univalent Cauchy transforms}

\index{Cauchy transform!univalent|(}

The class $\Univ(\R)$ of probability measures on $\R$ with univalent Cauchy transforms is important in view of Theorem \ref{thminfinitesimalarray}. We present several of its subclasses.

\subsection{Monotonically infinitely divisible distributions}\label{subsec_mon_inf_div_add}

\index{infinite divisibility!additive monotone convolution|(}

We start from a basic characterization of the class $\ID(\rhd)$ proved by Muraki \cite{M00} in the finite variance case and Belinschi \cite{Bel05} in the general case.

\begin{theorem} \label{MIDLK}
\begin{enumerate}[\rm(1)]
\item If $\mu \in \ID(\rhd)$, then there exists a unique weakly continuous $\rhd$-convolution semigroup $\{\mu_t\}_{t\geq0} \subset\cP(\R)$ such that $\mu_0=\delta_0$ and $\mu_1=\mu$.
\item If $\{\mu_t\}_{t\geq0} \subset\cP(\R)$ is a weakly continuous $\rhd$-convolution semigroup such that $\mu_0=\delta_0$, then $\mu_1 \in \ID(\rhd)$.
\end{enumerate}
\end{theorem}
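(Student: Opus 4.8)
The statement to prove is Theorem \ref{MIDLK}, the L\'evy--Khintchine-type characterization of $\ID(\rhd)$: part (1) says that every $\rhd$-infinitely divisible $\mu$ embeds into a unique weakly continuous $\rhd$-convolution semigroup with $\mu_0 = \delta_0$ and $\mu_1 = \mu$, and part (2) says that $\mu_1$ of any such semigroup is $\rhd$-infinitely divisible. Part (2) is the easy direction: if $\{\mu_t\}_{t\geq 0}$ is a weakly continuous $\rhd$-convolution semigroup with $\mu_0 = \delta_0$, then for any $n \in \N$ we have $\mu_1 = \mu_{1/n} \rhd \mu_{1/n} \rhd \cdots \rhd \mu_{1/n}$ ($n$ factors) by the semigroup property, so by definition $\mu_1 \in \ID(\rhd)$. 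No analysis is needed here beyond invoking associativity of $\rhd$.

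\textbf{Part (1), existence.} The plan is to pass through the $F$-transform and the flow/differential-equation machinery already set up in the excerpt. Given $\mu \in \ID(\rhd)$, I would first produce candidate ``$n$-th roots'' $\mu_{1/n}$ with $\mu_{1/n}^{\rhd n} = \mu$, then extend to dyadic rationals $k/2^n$ via $\mu_{k/2^n} = \mu_{1/2^n}^{\rhd k}$, check consistency using Lemma \ref{equivconvergence}\eqref{unique} (left-cancellation for $\rhd$), and establish the tightness and weak-continuity needed to fill in all real $t \geq 0$ by taking weak limits. The key input is that the compositional structure on the $F$-transform side converts $\rhd$-convolution into composition, and Theorem \ref{MLK} (the monotone L\'evy--Khintchine representation for $\rhd$-convolution semigroups, already available) tells us that weakly continuous $\rhd$-convolution semigroups are exactly the flows $\{F_t\}$ generated by vector fields $A(z)$ of the form \eqref{MLK1}. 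So the real content is: \emph{show that an $\rhd$-infinitely divisible $\mu$ has $F_\mu = F_1$ for such a flow}. I would construct the generating pair $(\gamma, \rho)$ by a limiting procedure: from $\mu_{1/n}$, form the rescaled measures $n \cdot \bigl(\text{something like } (1+x^2)\,\text{(atom-removed)}\bigr)$ attached to $F_{\mu_{1/n}}$, i.e. look at $n(z - F_{\mu_{1/n}}(z))$ and show it converges to a function of the form \eqref{MLK1}, using the Pick--Nevanlinna representation (Lemma \ref{Julia}) and compactness (Montel + Helly, as in Lemma \ref{lemmaconvergence}). Once $A(z)$ is identified, Theorem \ref{MLK}(2) produces the semigroup $\{\mu_t\}$, and one checks $\mu_1 = \mu$ by the uniqueness of limits.

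\textbf{Part (1), uniqueness.} Suppose $\{\mu_t\}$ and $\{\mu_t'\}$ are both weakly continuous $\rhd$-convolution semigroups with $\mu_0 = \mu_0' = \delta_0$ and $\mu_1 = \mu_1' = \mu$. By Theorem \ref{MLK}(1) each has a generating pair, and the generator is recovered from the semigroup as a right derivative at $t=0$; so it suffices to show the two semigroups agree. A clean route: the relation $\mu_1 = \mu_1'$ forces, via left-cancellation (Lemma \ref{equivconvergence}\eqref{unique}) applied to $\mu_{1/n} \rhd (\text{rest}) = \mu_{1/n}' \rhd (\text{rest})$-type identities, that $\mu_{k/2^n} = \mu_{k/2^n}'$ for all dyadic rationals in $[0,1]$, and then weak continuity extends this to all $t \in [0,1]$; the semigroup property pushes it to all $t \geq 0$. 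Alternatively, and perhaps more robustly, identify the generators $A = A'$ directly: since $F_t \circ F_s = F_{s+t}$ with $F_1 = F_1'$, both flows pass through the same univalent map $F_\mu$ (univalent by Theorem \ref{EV_univalence}), and a flow through a given non-automorphism map in the half-plane, normalized to be an $F$-transform semigroup, is rigid — this is essentially the statement that the Loewner/Herglotz vector field is determined by the chain, which is the content of the differential equation \eqref{eq:diff1}.

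\textbf{Main obstacle.} The delicate point is the existence half of part (1): extracting a \emph{continuous} one-parameter family from the mere existence of $n$-th roots for every $n$. The $n$-th roots $\mu_{1/n}$ are not a priori unique (unlike in the classical or free case where infinite divisibility plus a continuity argument pins them down), so one must argue carefully that a consistent choice can be made and that the resulting dyadic family is tight and extends continuously. I expect to lean on Lemma \ref{equivconvergence} (cancellation and the continuity equivalences for $\rhd$), on the compactness arguments behind Lemma \ref{lemmaconvergence}, and ultimately on Theorem \ref{MLK} to do the heavy lifting — the honest new work is showing $n(z - F_{\mu_{1/n}}(z))$ converges to a generator of the form \eqref{MLK1}, for which the uniform control near $\infty$ (the normalization $F(iy)/(iy) \to 1$ and the Pick--Nevanlinna bookkeeping) is exactly what guarantees no mass escapes to $\infty$ in the limit.
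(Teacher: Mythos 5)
First, note that the paper does not prove Theorem \ref{MIDLK} at all: it is quoted as a known result of Muraki (finite variance) and Belinschi (general case), so there is no in-paper proof to measure your argument against; your proposal has to stand on its own. Part (2) of your proposal is correct and complete: $\mu_1=\mu_{1/n}^{\rhd n}$ by the semigroup law, and nothing more is needed.

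For part (1) there are genuine gaps, and they sit exactly at the hard points. For uniqueness, your first route fails: Lemma \ref{equivconvergence}(1) cancels a \emph{common left} factor, i.e.\ $\mu\rhd\nu=\mu\rhd\nu'$ gives $\nu=\nu'$, but from $\mu_{1/2}\rhd\mu_{1/2}=\mu_{1/2}'\rhd\mu_{1/2}'$ the left factors are different, so no cancellation is available; indeed compositional square roots of an $F$-transform are not unique in general, which is precisely why uniqueness of the embedding is an issue. Your fallback — that a flow through a given univalent map with the hydrodynamic normalization is "rigid" because the Herglotz vector field is determined by the chain — conflates two statements: the generator is determined by the semigroup (trivially, by differentiating), but that does not rule out two distinct semigroups sharing the same time-$1$ map. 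That phenomenon genuinely occurs in the multiplicative analogue treated in this very paper (see the remark after Theorem \ref{UMIDLK}), so the additive uniqueness must use the specific normalization $F'(\infty)=1$ in an essential way; you have not supplied that argument. For existence, your plan — choose roots $\mu_{1/n}$, build a dyadic family, prove tightness/continuity, and show $n\bigl(z-F_{\mu_{1/n}}(z)\bigr)$ converges to a vector field of the form \eqref{MLK1} — is the natural one, but every step you flag as "argue carefully" is the actual content: infinitesimality of the roots ($\mu_{1/n}\wto\delta_0$), consistency of the dyadic choices despite non-uniqueness of roots, convergence (not just subsequential compactness) of the rescaled quantities, and the identification $F_{\mu_{1/n}}^{\circ n}\to F_1$ of the generated flow (a discrete-to-continuous approximation in the spirit of a Chernoff product formula, cf.\ [AW14]). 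None of these is carried out, so as written the proposal is a proof sketch of the easy direction plus an outline of the hard direction, not a proof of Theorem \ref{MIDLK}; the substance lives in the cited work of Muraki and Belinschi.
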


Theorem \ref{MIDLK} and Theorem \ref{MLK} provide one-to-one correspondences between the following sets:

\begin{itemize}
\item[(i)]
$\ID(\rhd)$; 
\item[(ii)] the set of weakly continuous $\rhd$-convolution semigroups   $\{\mu_t\}_{t\geq0}$ such that $\mu_0=\delta_0$; 
\item[(iii)] the set of analytic mappings $A$ of the form \eqref{MLK1}. 
\end{itemize}

It is well known that
\begin{equation}
\ID(\rhd) \subset \Univ(\R)
\end{equation}
since for $\mu \in \ID(\rhd)$ the map $F_\mu$ is obtained as a time 1 map of the flow as described in Theorem \ref{MLK}.

As a subclass of $\Univ(\R)$, one missing property of $\ID(\rhd)$ is the following. 
\begin{conjecture}
The set $\ID(\rhd)$ is weakly closed.
\end{conjecture}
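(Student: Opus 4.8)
The plan is to mimic the classical and free strategy for proving closedness of the class of infinitely divisible laws, adapting it to the monotone setting where the algebraic structure is a non-commutative composition rather than a pointwise product. Suppose $\mu^{(m)} \in \ID(\rhd)$ converges weakly to $\mu$. By Theorem \ref{MIDLK} each $\mu^{(m)}$ embeds into a weakly continuous $\rhd$-convolution semigroup $\{\mu_t^{(m)}\}_{t\geq0}$ with $\mu_0^{(m)}=\delta_0$, $\mu_1^{(m)}=\mu^{(m)}$, and by Theorem \ref{MLK} this semigroup corresponds to an analytic function $A^{(m)}$ of the form \eqref{MLK1}, i.e.\ $-A^{(m)}(z) = \gamma_m + \int_\R \frac{1+zx}{z-x}\rho_m({\rm d}x)$ with $\gamma_m\in\R$ and $\rho_m$ finite non-negative. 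The first step is to obtain a priori bounds on $(\gamma_m,\rho_m)$ sufficient to extract a subsequential limit. The natural tool is to evaluate $A^{(m)}$ at $z=i$: from the differential equation $\frac{d}{dt}F_t^{(m)}(z) = A^{(m)}(F_t^{(m)}(z))$ at $t=0$, and the fact that $F_1^{(m)} = F_{\mu^{(m)}} \to F_\mu$ locally uniformly (Lemma \ref{lemmaconvergence}), one should control $\int_0^1 A^{(m)}(F_t^{(m)}(i))\,dt = F_{\mu^{(m)}}(i) - i$, which is bounded. Combined with the fact that the flow stays in a fixed compact subset of $\C^+$ on $[0,1]$ (again because $\Im F_t^{(m)}(z)\geq \Im z$ and $|F_t^{(m)}(z)|$ is controlled via the hemigroup relation $\mu_t^{(m)} \rhd \mu_{1-t}^{(m)} = \mu^{(m)}$), this should yield that $\gamma_m$ is bounded and $\rho_m(\R)$ is bounded, hence (after passing to a subsequence) $\gamma_m\to\gamma$ and $\widehat\rho_m \to \widehat\rho$ weakly on $\widehat\R$, so $A^{(m)} \to A$ locally uniformly on $\C^+$ for some $A$ of the form \eqref{MLK1} (possibly with an extra affine term $az$; one must argue $a=0$ using that $F_{\mu^{(m)}}$ are $F$-transforms of probability measures, i.e.\ have angular derivative $1$ at $\infty$, exactly as in the proof of the additive Herglotz vector field statement after equation \eqref{EV_eq:4}).

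The second step is to pass to the limit in the differential equation. By continuous dependence of solutions of ODEs on the vector field (here one needs local uniform convergence $A^{(m)}\to A$ on compacts of $\C^+$, plus a uniform Lipschitz bound on compacts, which follows from the integral representation and the uniform bound on $\gamma_m,\rho_m(\R)$), the solutions $F_t^{(m)}$ converge locally uniformly in $z$, uniformly in $t\in[0,1]$, to the solution $F_t$ of $\frac{d}{dt}F_t(z)=A(F_t(z))$, $F_0(z)=z$. By Theorem \ref{MLK}(2) this $\{F_t\}$ is the $F$-transform family of a weakly continuous $\rhd$-convolution semigroup $\{\mu_t\}_{t\geq0}$ with $\mu_0=\delta_0$. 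Finally $F_1 = \lim_m F_1^{(m)} = \lim_m F_{\mu^{(m)}} = F_\mu$, so $\mu_1=\mu$, and by Theorem \ref{MIDLK}(2) we conclude $\mu\in\ID(\rhd)$.

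The main obstacle is the tightness/compactness step: extracting uniform control of $(\gamma_m,\rho_m)$ purely from the convergence of the time-$1$ maps $F_{\mu^{(m)}}$. In the classical case the Lévy-Khintchine exponent is literally the logarithm of the characteristic function, so convergence of the laws transfers directly to convergence of the exponents; in the free case continuity of the $\boxplus$-Lévy-Khintchine correspondence plays the same role. In the monotone case the generator $A$ is recovered only through the \emph{time derivative at $0$} of the flow, not from $F_1$ directly, so one has to integrate the flow and argue backwards. The clean way is: the map $F_t^{(m)}(i)$, $t\in[0,1]$, traces a curve from $i$ to $F_{\mu^{(m)}}(i)$ inside $\{\Im z\geq 1\}$, and since $\frac{d}{dt}F_t^{(m)}(i) = A^{(m)}(F_t^{(m)}(i))$, some mean-value or compactness argument along this curve forces $|A^{(m)}|$ to be bounded at some point of a fixed compact set; then the Nevanlinna/Herglotz structure propagates this to a bound on the whole representing measure. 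Making this rigorous — in particular ruling out that the generators escape to infinity while the time-$1$ maps stay convergent (which is exactly the phenomenon that \emph{could} break closedness, and is why $\ID(\rhd)$ being closed is only conjectural) — is the crux, and it is conceivable that it genuinely fails, in which case the conjecture is false; so this plan is contingent on the a priori bound actually holding.
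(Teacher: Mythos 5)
First, note that the statement you are proving is stated in the paper only as a conjecture: the paper offers no proof, so there is nothing to compare your plan against, and your write-up does not close the gap either — by your own admission it is a plan contingent on an a priori bound that you do not establish. The unproved step is precisely the mathematical content of the conjecture. Concretely: from $\mu^{(m)}\wto\mu$ you only control the time-one maps $F_{\mu^{(m)}}$, and the identity $F_{\mu^{(m)}}(i)-i=\int_0^1 A^{(m)}(F_t^{(m)}(i))\,{\rm d}t$ bounds $A^{(m)}$ only in an averaged sense along the trajectory $t\mapsto F_t^{(m)}(i)$. To convert this into a bound on $(\gamma_m,\rho_m(\R))$ via the Herglotz lower estimate $\Im A^{(m)}(z)\gtrsim \rho_m(\R)\,\Im z/(1+|z|^2)$, you need the trajectory (or at least one mean-value point on it) to lie in a \emph{fixed} compact subset of $\C^+$, uniformly in $m$. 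The imaginary parts are indeed trapped in $[1,\Im F_{\mu^{(m)}}(i)]$, but nothing in your argument controls the real parts of the intermediate points $F_t^{(m)}(i)$, i.e.\ nothing shows that the intermediate hemigroup factors $\mu_t^{(m)}$, $t\in[0,1]$, form a tight family. Lemma \ref{equivconvergence} cannot be used to bootstrap this: parts \eqref{m-conv2}--\eqref{m-conv1} give information about one factor only when the \emph{other} factor is already known to converge, whereas here both factors $\mu_t^{(m)}$ and $\mu_{1-t}^{(m)}$ are unknown. So the scenario you yourself flag — generators escaping to infinity (large $\gamma_m$, large $\rho_m(\R)$, with partly cancelling excursions of the flow) while the time-one maps converge — is not excluded by anything you write, and excluding it is exactly what is missing.

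A secondary, smaller point: even if subsequential limits $A^{(m)}\to A$ were obtained, your appeal to "continuity of the $\boxplus$-Lévy–Khintchine correspondence" as an analogy is misleading, because in the classical and free cases the correspondence between laws and generating pairs at a \emph{fixed time} is a homeomorphism, while in the monotone case the generator is recovered only from the derivative of the flow at $t=0$, not from $F_1$; this is why no continuity statement of that kind is available to cite, and why your step one cannot be replaced by a soft argument. The ODE-stability part of your plan (step two) is fine and standard, but as it stands the proposal reformulates the conjecture as an unproved a priori estimate rather than proving it.
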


There are not many examples of $\rhd$-infinitely divisible distributions available in the literature. This is because proving a specific distribution to be $\rhd$-infinitely divisible is equivalent to embedding the map $F_\mu$ into a flow, which is a hard problem. For example it is not known whether the standard \index{normal distribution}normal distribution $N(0,1)$ is in $\ID(\rhd)$ or not. We present one family of explicit examples below.

\begin{example}\label{mst} The \index{monotonically stable distribution}monotonically stable distribution $\bfm_{\alpha,\rho,t},  \alpha\in(0,2], \rho\in[0,1]\cap [1-1/\alpha, 1/\alpha], t >0 $ is introduced in \cite{Has10} and is characterized by
\begin{equation*}
G_{\bfm_{\alpha,\rho,t}}(z) = (z^\alpha+t e^{i \alpha \rho \pi})^{-1/\alpha}, \qquad  z\in \C^+,
\end{equation*}
where the power functions $w^\beta$ are defined continuously for angles $\arg w \in(0,2\pi)$.
It is Lebesgue absolutely continuous and the density is studied in \cite{HS15}. It satisfies the semigroup property
\begin{equation}\label{MST semigroup}
\bfm_{\alpha,\rho,s} \rhd \bfm_{\alpha,\rho,t} =\bfm_{\alpha,\rho,s+t}
\end{equation}
 and hence is $\rhd$-infinitely divisible. The analytic vector field $A$ associated to the semigroup $\{\bfm_{\alpha,\rho,t}\}_{t\geq0}$ in Theorem \ref{MLK} is given by
\begin{equation*}
A(z) = \frac{1}{\alpha} e^{i\alpha\rho\pi}z^{1-\alpha}.
\end{equation*}
In particular, the case $\alpha=2$ (then only $\rho=1/2$ is allowed) corresponds to the centered \index{arcsine distribution}arcsine law with variance $t/2$
\begin{equation*}
\bfm_{2,1/2,t}({\rm d}x) = \AS_{0,t/2}= \frac{1}{\pi\sqrt{t-x^2}} \mathbf{1}_{[-\sqrt{t},\sqrt{t}]}(x) \,{\rm d}x,
\end{equation*}
and the case $\alpha=1$ corresponds to the \index{Cauchy distribution}Cauchy distribution.
\end{example}

\index{infinite divisibility!additive monotone convolution|)}

\subsection{Freely infinitely divisible distributions}\label{subsec_free_inf_div_add}

\index{infinite divisibility!additive free convolution|(}

A $\boxplus$-infinitely divisible measure has a free analogue of the \index{L\'evy-Khintchine representation!free}L\'{e}vy-Khintchine representation in terms of the \index{Voiculescu transform}Voiculescu transform \eqref{eq:Voiculescu_transform}.
\begin{theorem}[\cite{BV93}] \label{thmBV93}
For a probability measure $\mu$ on $\R$, the following statements are equivalent.
\begin{enumerate}[\rm(1)]
\item $\mu \in \ID(\boxplus)$.
\item For any $t>0$, there exists $\mu^{\boxplus t} \in\cP(\R)$ with the property $\varphi_{\mu^{\boxplus t}}(z) = t\varphi_\mu(z).$
\item $-\varphi_\mu$ extends to a Pick function, i.e.\ an analytic map of $\C^+$ into $\C^+ \cup \R$.
\item\label{FLK1} There exist $\gamma \in \R$ and a finite, non-negative measure $\rho$ on $\R$ such that
\begin{equation*}
\varphi_\mu(z)=\gamma +\int_{\mathbb{R}}\frac{1+z x}{z-x} \rho({\rm d}x) ,\qquad z\in \C^+.
\end{equation*}
The pair $(\gamma,\rho)$ is unique.
\end{enumerate}
Moreover, given $\gamma\in\R$ and a finite, non-negative measure $\rho$ on $\R$, there exists a unique $\boxplus$-infinitely divisible distribution $\mu$ which has the Voiculescu transform of the form \eqref{FLK1}.
\end{theorem}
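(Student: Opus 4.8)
This statement is the free L\'evy--Khintchine theorem of Bercovici and Voiculescu \cite{BV93}, and my plan is to run the argument through the Voiculescu transform $\varphi_\mu$ of \eqref{eq:Voiculescu_transform} together with the linearizing identity $\varphi_{\mu\boxplus\nu}=\varphi_\mu+\varphi_\nu$. The implication $(2)\Rightarrow(1)$ is trivial (take $t=1/n$), and $(4)\Rightarrow(3)$ is a one-line computation: for $z\in\C^+$ one has $\Im\bigl(\tfrac{1+xz}{z-x}\bigr)=-\tfrac{(1+x^2)\Im(z)}{|z-x|^2}\le 0$, so the function in \eqref{FLK1} maps $\C^+$ into $(-\C^+)\cup\R$, i.e.\ $-\varphi_\mu$ is a Pick function. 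For $(3)\Rightarrow(4)$ I would plug the Pick function $-\varphi_\mu$ into the Pick--Nevanlinna representation \eqref{EV_eq:1}, getting $-\varphi_\mu(z)=az+b+\int_\R\frac{1+xz}{x-z}\rho({\rm d}x)$ with $a\ge0$, $b\in\R$ and $\rho$ finite, and then show $a=0$: this follows from $\varphi_\mu(iy)=o(y)$ as $y\to\infty$, which in turn comes from $F_\mu(iy)/(iy)\to1$ (Lemma \ref{Julia}) applied both to $F_\mu$ and to its right inverse on a truncated cone, so that $F_\mu^{-1}(iy)/(iy)\to1$. Taking $\gamma=-b$ then gives \eqref{FLK1}.

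The construction part --- the ``Moreover'' statement, which also yields $(4)\Rightarrow(2)$ --- goes as follows. Fix $\gamma$ and a finite non-negative $\rho$ and set $\varphi(z)=\gamma+\int_\R\frac{1+xz}{z-x}\rho({\rm d}x)$, so that $-\varphi$ is a Pick function with $\varphi(iy)=o(y)$. For small $t>0$ I would show that $H_t(z):=z+t\varphi(z)$ is univalent on a sufficiently large truncated cone $\Gamma_{\lambda,M}$, with $H_t(\Gamma_{\lambda,M})$ containing a truncated cone and $H_t(iy)/(iy)\to1$; then its left compositional inverse is, by the truncated-cone form of the criterion in Lemma \ref{Julia} used in \cite{BV93}, the $F$-transform of a probability measure $\mu_t$ with $\varphi_{\mu_t}=t\varphi$. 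Setting $\mu_t:=\mu_{t/n}^{\boxplus n}$ extends this consistently (by additivity of $t\mapsto t\varphi$) to all $t\ge0$, yielding a weakly continuous $\boxplus$-convolution semigroup with $\mu_0=\delta_0$ --- weak continuity coming from $t_n\varphi\to t\varphi$ locally uniformly and Lemma \ref{lemmaconvergence}. Hence $\mu:=\mu_1\in\ID(\boxplus)$ with $\varphi_\mu=\varphi$, and uniqueness is immediate since $\mu$ is determined by $F_\mu$, hence by $\varphi_\mu$. Finally, $(1)\Rightarrow(2)$ follows once $(1)\Leftrightarrow(4)$ is in hand, because $t\varphi_\mu$ is again of the form \eqref{FLK1}.

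The substantial implication is $(1)\Rightarrow(4)$. Writing $\mu=\mu_n^{\boxplus n}$, the $\mu_n$ form an infinitesimal array with $\mu_n\to\delta_0$ and $\varphi_\mu=n\,\varphi_{\mu_n}$. The plan is first to invoke the uniform domain estimates of \cite[Propositions 5.4 and 5.7]{BV93}, which render all of $F_{\mu_n}$, $F_\mu$ univalent on one fixed truncated cone $\Gamma_{\lambda,M}$ with images covering a fixed cone, so that the identity $\varphi_\mu=n\varphi_{\mu_n}$ genuinely holds on $\Gamma_{\lambda,M}$. Then, writing each $F_{\mu_n}$ in Pick--Nevanlinna form $F_{\mu_n}(z)=z+b_n+\int_\R\frac{1+xz}{x-z}\rho_n({\rm d}x)$ as in Lemma \ref{Julia} and using infinitesimality to control $b_n$ and $\rho_n$, one expands $n\varphi_{\mu_n}(z)=n\bigl(F_{\mu_n}^{-1}(z)-z\bigr)$, shows the rescaled data $nb_n$ and $n\rho_n$ are bounded and tight, extracts a weak subsequential limit, and reads off a Pick--Nevanlinna representation of $-\varphi_\mu$ in the limit, i.e.\ \eqref{FLK1}. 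I expect the main obstacle to be precisely this limiting step --- securing the univalence and image bounds uniformly in $n$ on a single cone and then controlling the compensated measures $n\rho_n$ as $n\to\infty$ --- which is the technical core of \cite{BV93}. One could instead transport the classical identity $\IA(\ast)=\ID(\ast)$ of \eqref{CK} across the Bercovici--Pata bijection \cite{BP00}, but that bijection rests on the same analysis, so a self-contained proof must reproduce it.
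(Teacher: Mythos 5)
The paper does not prove Theorem \ref{thmBV93} at all: it is quoted verbatim from \cite{BV93} (note the citation in the theorem header), so there is no in-paper argument to compare yours against. What you have written is essentially a reconstruction of the original Bercovici--Voiculescu proof, and the outline is sound. The elementary implications are handled correctly: your sign computation $\Im\bigl(\tfrac{1+xz}{z-x}\bigr)=-\tfrac{(1+x^2)\Im(z)}{|z-x|^2}\le 0$ gives $(4)\Rightarrow(3)$, and in $(3)\Rightarrow(4)$ the vanishing of the linear coefficient in the representation \eqref{EV_eq:1} does follow from $\varphi_\mu(iy)=o(y)$, which comes from Lemma \ref{Julia} applied to $F_\mu$ and its right inverse on a truncated cone. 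The realization part (the ``Moreover'' statement and $(4)\Rightarrow(2)$) via univalence of $H_t(z)=z+t\varphi(z)$ on a cone for small $t$, the cone criterion for Voiculescu transforms, and the extension $\mu_t:=\mu_{t/n}^{\boxplus n}$ is likewise the standard route; just be aware that the step ``the left inverse of $H_t$ on a cone extends to an $F$-transform on all of $\C^+$'' is exactly the content of the truncated-cone criterion in \cite{BV93}, not of Lemma \ref{Julia} itself, so your proof is only as self-contained as that citation.

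Two places where the sketch leans on unproven assertions. First, in $(1)\Rightarrow(4)$ you assert that the convolution roots satisfy $\mu_n\wto\delta_0$ and form an infinitesimal array; in the free setting this requires an argument (it is part of the machinery of \cite{BV93}, not a formal triviality). Second, your plan to ``expand $n\varphi_{\mu_n}(z)=n(F_{\mu_n}^{-1}(z)-z)$'' and read off the limit of the Nevanlinna data $(nb_n,n\rho_n)$ is delicate precisely because the relation between $F_{\mu_n}^{-1}$ and $(b_n,\rho_n)$ is nonlinear; the cleaner standard route is to show $n\bigl(z-F_{\mu_n}(z)\bigr)\to\varphi_\mu(z)$ locally uniformly on a fixed cone (using the uniform estimates of \cite[Props.\ 5.4, 5.7]{BV93}), observe that each function $z-F_{\mu_n}(z)=-b_n+\int_\R\tfrac{1+xz}{z-x}\,\rho_n({\rm d}x)$ has non-positive imaginary part on all of $\C^+$, and conclude by normal families plus the identity theorem that $-\varphi_\mu$ has a Pick extension, i.e.\ condition (3), after which $(3)\Rightarrow(4)$ finishes. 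This avoids any direct tightness analysis of $n\rho_n$. As you note, these estimates constitute the technical core of \cite{BV93}; since the paper itself simply cites that source, your proposal is acceptable as a faithful sketch of the known proof rather than a new or divergent argument.
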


The following is a well known result, whose proof is provided for completeness.
\begin{proposition}\label{FID}
$\ID(\boxplus) \subset \Univ(\R)$.
\end{proposition}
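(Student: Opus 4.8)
The plan is to realize $F_\mu$ as the time-$1$ map of an additive Loewner chain and then apply Theorem \ref{EV_univalence}.

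First I would embed $\mu$ into a free convolution semigroup. By Theorem \ref{thmBV93}, for every $t>0$ there is a probability measure $\mu^{\boxplus t}$ with Voiculescu transform $\varphi_{\mu^{\boxplus t}}=t\,\varphi_\mu$. Setting $\mu_t:=\mu^{\boxplus t}$ for $t>0$ and $\mu_0:=\delta_0$, the additivity of $\varphi$ under $\boxplus$ shows that $(\mu_t)_{t\ge 0}$ is a $\boxplus$-convolution semigroup with $\mu_1=\mu$. It is weakly continuous: on a fixed truncated cone one has $F_{\mu_t}^{-1}(z)=z+t\,\varphi_\mu(z)$, so $t\mapsto F_{\mu_t}^{-1}$ is continuous there; inverting and then applying Lemma \ref{lemmaconvergence} yields weak continuity of $t\mapsto\mu_t$ (this is in any case standard, cf.\ \cite{BV93}).

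Next I would invoke the subordination property, exactly as in Section \ref{sec:Free-Markov_additive}. The family $\mu_{st}:=\mu_{t-s}$ is a weakly continuous $\boxplus$-convolution hemigroup with $\mu_{tt}=\delta_0$, so by \cite{Bia98} there exist probability measures $\nu_{st}$ (for $0\le s\le t$) with $\mu_{0t}=\mu_{0s}\rhd\nu_{st}$, satisfying $\nu_{tt}=\delta_0$, the hemigroup relation $\nu_{su}=\nu_{st}\rhd\nu_{tu}$, and weak continuity of $(s,t)\mapsto\nu_{st}$ (using \cite[Prop.\ 5.7]{BV93}); that is, $(\nu_{st})_{0\le s\le t}$ is a weakly continuous $\rhd$-convolution hemigroup. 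Taking $s=0$ in $\mu_{0t}=\mu_{00}\rhd\nu_{0t}=\delta_0\rhd\nu_{0t}=\nu_{0t}$ identifies $\nu_{0t}=\mu_t$. By the one-to-one correspondence recorded in Section \ref{summary_additive} (a weakly continuous $\rhd$-convolution hemigroup yields an additive Loewner chain, whose transition mappings are the $F$-transforms of the hemigroup), the functions $F_{\nu_{0t}}=F_{\mu_t}$, $t\ge0$, form an additive Loewner chain in the sense of Definition \ref{EV_def:evolution_family}, with transition mappings $F_{\nu_{st}}$.

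Finally, Theorem \ref{EV_univalence} asserts that every member of a Loewner chain is univalent; in particular $F_\mu=F_{\mu_1}$ is univalent, i.e.\ $\mu\in\Univ(\R)$. The argument is short once the subordination result is in place; the only point needing a little attention is the weak continuity of $t\mapsto\mu^{\boxplus t}$ (hence of the associated hemigroup $(\nu_{st})$), which is what is required to verify condition (c) of Definition \ref{EV_def:evolution_family} for the Loewner chain — but this is standard and, in hemigroup form, already recorded in Section \ref{sec:Free-Markov_additive}.
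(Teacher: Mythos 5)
Your argument is correct, but it takes a noticeably heavier route than the paper's. The paper's proof is a two-line direct argument: by Theorem \ref{thmBV93}, $-\varphi_\mu$ extends to a Pick function, so $z\mapsto z+\varphi_\mu(z)$ is analytic on all of $\C^+$; since this function coincides with the right inverse of $F_\mu$ on a truncated cone $\Gamma_{\lambda,M}$, the identity $F_\mu^{-1}(F_\mu(z))=z$ holds there and hence, by the identity theorem, on all of $\C^+$, so $F_\mu$ admits a global analytic left inverse and is univalent. You instead embed $\mu$ into the semigroup $(\mu^{\boxplus t})_{t\ge0}$, invoke Biane's subordination to produce the weakly continuous $\rhd$-convolution hemigroup $(\nu_{st})$ with $\nu_{0t}=\mu_t$, pass through the correspondence of Section \ref{summary_additive} to get an additive Loewner chain $(F_{\mu_t})_{t\ge0}$, and finish with Theorem \ref{EV_univalence}. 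Each step you use is available in the paper (and there is no circularity, since Theorem \ref{EV_univalence} does not rest on Proposition \ref{FID}), and the details you flag — weak continuity of $t\mapsto\mu^{\boxplus t}$ and of $(s,t)\mapsto\nu_{st}$ via \cite[Prop.\ 5.7]{BV93} — are handled exactly as in Section \ref{sec:Free-Markov_additive}. What your route buys is more structural information: it exhibits $F_\mu$ explicitly as the time-$1$ map of an additive Loewner chain, hence (via Theorem \ref{intro_thm2}) realizes $\mu$ as a marginal of a SAIP, essentially specializing the construction of Section \ref{sec:Free-Markov_additive} to the stationary case. What it costs is dependence on the subordination theorem and the full Loewner-chain univalence machinery, whereas the paper's argument needs only the free L\'evy--Khintchine theorem and the identity theorem.
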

\begin{proof}
The function $F_\mu^{-1}$ defined by $z+\varphi_\mu(z)$ extends analytically to $\C^+$ due to Theorem \ref{thmBV93}. Such defined $F_\mu^{-1}$ coincides with the right inverse of $F_\mu$ in a domain of the form $\Gamma_{\lambda,M}$, and so $F_\mu^{-1}(F_\mu(z))=z$ for $z\in \Gamma_{\lambda,M}$. By the identity theorem, this is the case for all $z\in\C^+$ and hence $F_\mu$ is univalent in $\C^+$.
\end{proof}

\begin{example}\label{exa:semicircle}
 The \index{free stable distribution}free (strictly) stable distribution $\bff_{\alpha,\rho,t}, \alpha\in(0,2], \rho\in[0,1]\cap [1-1/\alpha, 1/\alpha], t >0 $ is introduced in \cite{BV93} and is characterized by
\begin{equation*}
\varphi_{\bff_{\alpha,\rho,t}}(z) = - t e^{i\alpha\rho\pi}z^{1-\alpha}, \qquad z\in\C^+.
\end{equation*}
It is Lebesgue absolutely continuous and its density is studied in the Appendix of \cite{BP99} and in \cite{Dem11,HK14}. The density can be written explicitly in special cases when $\alpha=1/2, 1,2$.
In particular, $\bff_{1,\rho,t}$ is the \index{Cauchy distribution}Cauchy distribution and coincides with $\bfm_{1,\rho,t}$. The most important case is $\alpha=2$ and corresponds to the \index{semicircle distribution}semicircle distribution with mean 0 and variance $t$
\begin{equation*}
\bff_{2,1/2,t}({\rm d}x) = \frac{1}{2\pi t} \sqrt{4 t -x^2} \mathbf{1}_{[-2\sqrt{t},2\sqrt{t}]}(x)\,{\rm d}x,
\end{equation*}
which has the Cauchy transform
\begin{equation*}
G_{\bff_{2,1/2,t}}(z) = \frac{z - \sqrt{z^2-4 t}}{2 t}, \qquad z \in\C^+,
\end{equation*}
where the square root $\sqrt{w}$ is defined continuously on angles $\arg w \in(0,2\pi)$. In this case the range is the half-disk
\begin{equation}\label{half-disk}
G_{\bff_{2,1/2,t}}(\C^+) = \{z= x + i y: x^2 + y^2 < 1/t, y<0\}.
\end{equation}
\end{example}

\begin{example} The \index{free Poisson distribution}free Poisson (or Marchenko-Pastur) distribution $\MP_\lambda,\lambda>0$ is given by
\begin{equation*}
\max\{1-\lambda,0\} \delta_0 + \frac{1}{2\pi x}\sqrt{((1+\sqrt{\lambda})^2-x)(x-(1-\sqrt{\lambda})^2)}\,\mathbf{1}_ {((1-\sqrt{\lambda})^2,(1+\sqrt{\lambda})^2)}(x)\,{\rm d}x,
\end{equation*}
which has the \index{Voiculescu transform}Voiculescu and Cauchy transforms
\begin{align*}
&\varphi_{\MP_\lambda}(z)= \frac{\lambda z}{z-1},\qquad z\in\C^+, \\
& G_{\MP_\lambda}(z) = \frac{z+1-\lambda-\sqrt{(z+1-\lambda)^2-4 z}}{2 z},
\end{align*}
where the square root $\sqrt{w}$ is defined continuously on angles $\arg w \in(0,2\pi)$.
\end{example}

Recent works have found many probability measures in $\ID(\boxplus)$ including the \index{normal distribution}normal distribution \cite{BBLS11}. For other examples see \cite{Has16} and references therein.

One may wonder whether the $\ast$-infinitely divisible distributions form a subclass of $\Univ(\R)$. This is not the case in general as Section \ref{sec reg} shows.

\index{infinite divisibility!additive free convolution|)}

\subsection{Unimodal distributions}\label{subsec_unimodal_add} 

\index{unimodal distribution!on $\R$|(}

A Borel measure $\mu$ on $\R$ is said to be \emph{unimodal} with mode
$c \in \R$ if there exist a non-decreasing function $f\colon(-\infty, c)\mapsto [0,\infty)$ and a non-increasing function $g:(c,\infty)\mapsto [0,\infty)$ and $\lambda\in[0,\infty]$ such that
\begin{equation}
\mu({\rm d}x) = f(x)\mathbf{1}_{(-\infty,c)}(x)\,{\rm d}x + \lambda\delta_c +  g(x)\mathbf{1}_{(c,\infty)}(x)\,{\rm d}x.
\end{equation}
Note that $c$ need not be unique. For instance it can be any point in the support of a uniform distribution. The set of unimodal probability measures on $\R$ is denoted by $\UM(\R)$. It is closed with respect to weak convergence, see e.g.\ \cite[Exercise 29.20]{Sat13}, and the inclusion $\UM(\R)\subset \Univ(\R)$ holds as the following Theorem \ref{anotherkhintchine} shows.

The unimodal probability measures have been characterized in several way in the literature from different interests. 
\begin{theorem}\label{anotherkhintchine} Let $\mu$ be a probability measure on $\R$.  The following statements are mutually equivalent.
\begin{enumerate}[\rm(1)]
\item\label{another1} $\mu$ is unimodal with mode $c$.
\item\label{another2} $\Im((z-c)G_\mu'(z)) \geq0$ for all $z\in\C^+$.
\item\label{another3} There exists an $\R$-valued random variable $X$, independent of a uniform random variable $U$ on $(0,1)$, such that $\mu$ is the law of $U X +c$.  

\item\label{another4} The following three assertions hold:
\begin{itemize}
\item $G_\mu$ is univalent in $\C^+$.
\item \index{Cauchy transform!horizontally convex}$G_\mu(\C^+)$ is horizontally convex, namely if $z_1,z_2 \in G_\mu(\C^+)$ with the same imaginary part, then $(1-t)z_1+t z_2 \in G_\mu(\C^+)$ for any $t\in(0,1)$.
\item There exist points $z_n \in \C^+$ such that $z_n \to c$ and
$$
\lim_{n\to\infty} \Im (G_\mu(z_n)) = \inf_{z\in \C^+} \Im (G_\mu(z)).
$$
\end{itemize}
\end{enumerate}
\end{theorem}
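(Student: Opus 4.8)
The theorem combines three classical results: Khintchine's theorem characterizing unimodal distributions (the equivalence of \eqref{another1}, \eqref{another3}), a theorem of Kaplan characterizing unimodality analytically via $G_\mu$, and the geometric description due to Hengartner and Schober. The plan is to prove the cycle of implications
\[
\eqref{another3}\Rightarrow\eqref{another1}\Rightarrow\eqref{another2}\Rightarrow\eqref{another4}\Rightarrow\eqref{another1},
\]
with \eqref{another1}$\Rightarrow$\eqref{another3} handled separately (it is essentially Khintchine's classical argument). Throughout one may reduce to $c=0$: the general case follows by replacing $\mu$ by its translate $\mu(\cdot+c)$, which merely translates $G_\mu$ horizontally by $c$ and thus preserves all four conditions (horizontal convexity is translation invariant, and $(z-c)G_\mu'(z)$ becomes $zG_{\mu(\cdot+c)}'(z)$).

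\textbf{Steps.} First I would dispose of \eqref{another3}$\Leftrightarrow$\eqref{another1}: if $X$ has law $\lambda$ and $U$ is uniform on $(0,1)$ independent of $X$, then for a test function one computes
\[
\mathbb E[\varphi(UX)] = \int_{\R}\int_0^1 \varphi(ux)\,{\rm d}u\,\lambda({\rm d}x) = \int_{\R}\frac{1}{|x|}\int_0^{|x|}\varphi(\pm s)\,{\rm d}s\,\lambda({\rm d}x)
\]
(with signs according to $\operatorname{sign}x$), and a change of order of integration exhibits the density of $UX$ as $\frac{1}{y}\int_{(y,\infty)}\frac{1}{x}\lambda({\rm d}x)$ for $y>0$ and similarly for $y<0$, which is non-increasing on $(0,\infty)$ and non-decreasing on $(-\infty,0)$; conversely, given a unimodal $\mu$ with density profile $g$ on $(0,\infty)$, one recovers $\lambda$ via $\lambda({\rm d}x)=-x\,{\rm d}g(x)$ plus appropriate atoms. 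For \eqref{another1}$\Rightarrow$\eqref{another2}: write $G_\mu(z)=\int_\R (z-x)^{-1}\mu({\rm d}x)$, differentiate to get $G_\mu'(z)=-\int_\R(z-x)^{-2}\mu({\rm d}x)$, and using unimodality (say with a density $p$ non-increasing on $(0,\infty)$, non-decreasing on $(-\infty,0)$, the atom being handled by a limiting argument) integrate by parts to obtain
\[
zG_\mu'(z) = -\int_\R \frac{z}{(z-x)^2}p(x)\,{\rm d}x = -\int_\R \frac{1}{z-x}\,x\,{\rm d}p(x) + (\text{boundary terms}),
\]
and since $x\,{\rm d}p(x)\le 0$ as a signed measure (by monotonicity of $p$ on each half-line), $zG_\mu'(z)$ is a Cauchy transform of a non-negative measure up to sign, whence $\Im(zG_\mu'(z))\ge 0$; this is Kaplan's computation and I would follow \cite{K52}. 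The reverse implication \eqref{another2}$\Rightarrow$\eqref{another1} uses the Stieltjes inversion formula \eqref{Stieltjes2}: the density of $\mu$ is $-\frac{1}{\pi}\lim_{\varepsilon\downarrow0}\Im G_\mu(x+i\varepsilon)$, and the sign condition on $\Im(zG_\mu'(z))$ translates, after taking boundary values, into monotonicity of this density on the two half-lines.

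\textbf{The geometric equivalence.} The implication \eqref{another2}$\Rightarrow$\eqref{another4} is where I expect the main work. Univalence of $G_\mu$ in $\C^+$ follows directly from the Noshiro--Warschawski criterion (Lemma \ref{NW}) once one knows $\Im(zG_\mu'(z))\ge 0$ on $\C^+$: indeed this says the image of $G_\mu$ under the logarithmic derivative lies in a half-plane in a way adapted to the "radial" structure, and one argues along rays through the origin — $G_\mu$ restricted to $\{re^{i\theta}\Im z > 0\}$-type slices, or more cleanly one shows that $z\mapsto \log G_\mu(z)$ has positive imaginary derivative along horizontal lines. Horizontal convexity of $G_\mu(\C^+)$: since $\Im G_\mu$ is negative and harmonic, the condition $\Im(zG_\mu'(z))\le$ something controls how the level sets $\{\Im G_\mu = {\rm const}\}$ bend; concretely, if $z_1,z_2\in\C^+$ with $\Im G_\mu(z_1)=\Im G_\mu(z_2)$, one wants the horizontal segment between $G_\mu(z_1)$ and $G_\mu(z_2)$ to stay in the image, and this should follow from the fact that along the curve $G_\mu(\{\Im w = v\})$ the real part is monotone — which is exactly the content of the derivative condition after parametrizing. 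The third bullet (that the infimum of $\Im G_\mu$ is attained in the limit $z_n\to c=0$) reflects that the mode sits where the density is largest, i.e.\ where $-\Im G_\mu$ is largest on the boundary; I would prove it via the inversion formula together with the previously established unimodality. Finally \eqref{another4}$\Rightarrow$\eqref{another1}: univalence plus horizontal convexity of the image plus the extremal property at $c$ force, via boundary correspondence (using that $G_\mu$ is a normal function by the Schwarz--Pick-type estimate, as in the proof of Theorem \ref{ftransform_images}, and the Lehto--Virtanen theorem), the boundary values $-\frac1\pi\Im G_\mu(x+i0)$ to be monotone on each side of $c$, which is unimodality. The main obstacle is making the passage between the analytic inequality on $G_\mu'$ and the geometric statements about $G_\mu(\C^+)$ fully rigorous at the boundary; I would lean on \cite{HS70} for the precise form of this correspondence and cite \cite{K38,K52} for the classical pieces, assembling them rather than reproving everything.
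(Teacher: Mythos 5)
Your plan assembles the right classical ingredients (Khintchine, Kaplan, Hengartner--Schober), and the easy directions are workable as you sketch: (1)$\Leftrightarrow$(3) by the direct mixture computation (note a slip: the density of $UX$ at $y>0$ is $\int_{(y,\infty)}x^{-1}\lambda({\rm d}x)$, without your extra factor $1/y$, and atoms at the mode need a word), and (1)$\Rightarrow$(2) by Kaplan's integration by parts. The genuine gap is the converse direction, from the analytic condition (2) (or from (4)) back to unimodality. Your step ``the sign condition on $\Im((z-c)G_\mu'(z))$ translates, after taking boundary values, into monotonicity of the density'' is precisely the nontrivial point: boundary limits of $\Im G_\mu$ need not exist pointwise a priori, and an interior pointwise inequality does not by itself yield monotonicity of the boundary measure -- indeed Kaplan proved only (1)$\Rightarrow$(2),(4) (and for non-atomic $\mu$), not the converses. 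The paper closes this with a short argument your proposal misses entirely: condition (2) says that $(z-c)G_\mu'(z)$ is a Pick function, and since $(iy)^2G_\mu'(iy)\to-1$ it equals $-G_\nu(z)$ for a \emph{probability} measure $\nu$; integrating this identity gives
\[
G_\mu(z)=-\int_{\R\setminus\{0\}}\tfrac{1}{x}\log\tfrac{z-c-x}{z-c}\,\nu({\rm d}x)+\tfrac{\nu(\{0\})}{z-c},
\]
i.e.\ $\mu$ is a mixture of uniform laws on the intervals between $c$ and $c+x$, which is exactly (3); then (1) follows from Khintchine. This proves (2)$\Rightarrow$(3) with no boundary analysis at all, and it (together with the trivial reverse integration) is the heart of the theorem.

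Second, your route to (4) does not work as written. Univalence of $G_\mu$ is not a consequence of Noshiro--Warschawski (Lemma \ref{NW}): that lemma needs $\Im(e^{i\theta}G_\mu'(z))>0$ for one \emph{fixed} $\theta$ on a convex domain, whereas in (2) the multiplier $(z-c)$ varies with $z$, and your suggested fixes (arguing along rays, or along horizontal lines via $\log G_\mu$) are not proofs. Likewise the horizontal convexity of the image and the extremal property at $c$ are not soft consequences; the equivalence (2)$\Leftrightarrow$(4) is exactly the Hengartner--Schober theorem on functions convex in one direction (where the third bullet of (4) is an essential normalization, not a by-product), and the paper obtains it simply by transporting that theorem to $\C^+$ with a Moebius transformation. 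If, as you suggest as a fallback, you cite [HS70] for the full equivalence (2)$\Leftrightarrow$(4), that part is fine -- but then you cannot also rely on your vague (4)$\Rightarrow$(1) ``boundary correspondence / Lehto--Virtanen'' step to close the cycle; you still need the Pick-function argument above (or a rigorous substitute) for the converse implication back to unimodality.
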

\begin{remark}
\begin{enumerate}[\rm(a)]
\item Kaplan \cite{K52} proved that if $\mu$ does not have an atom, then \eqref{another1} implies \eqref{another2} and \eqref{another4}, but it seems that he did not prove the converse statements.

\item The equivalence between \eqref{another1} and \eqref{another2} is essentially proved by Isii \cite[Theorem 3.2']{Isi57}.
\end{enumerate}
\end{remark}
\begin{proof} For simplicity we assume that $c=0$. The general case follows by a simple transformation. 

\eqref{another1} $\Leftrightarrow$ \eqref{another3} is Khintchine's characterization \cite{K38} (see also \cite[\S32, Theorem 2]{GK54}), saying that a probability measure $\mu$ on $\R$ is unimodal with mode 0 if and only if
there exists a probability measure $\nu$ such that
\begin{equation}
\widehat{\mu}(t)=\frac{1}{t}\int_0^t\widehat{\nu}(s)\,{\rm d}s,
\end{equation}
where $\widehat{\mu}$ is the characteristic function of $\mu$. This is equivalent to saying that $\mu$ is the law of a random variable $U X$ where $U$ is a uniform random variable on $(0,1)$ and $X$ is any $\R$-valued random variable independent of $U$.

\eqref{another2} $\Rightarrow$ \eqref{another3}: since $z G_\mu'(z)$ is a Pick function and $\lim_{y\to\infty} iy (iy G_\mu'(iy))=-1$, there exists a probability measure $\nu$ such that
$zG_\mu'(z)=-G_\nu(z)$. Integration gives us
\begin{equation}\label{mixture uniform}
G_\mu(z)= -\int_{\R\setminus\{0\}}\frac{1}{x}\log\left(\frac{z-x}{z}\right)\,\nu({\rm d}x)+ \frac{\nu(\{0\})}{z}.
\end{equation}
Since the Cauchy transform of the \index{uniform distribution}uniform distribution on $(0,x)$ (or $(x,0)$ if $x<0$) is equal to $-\frac{1}{x}\log\left(\frac{z-x}{z}\right)$, we conclude that
$\mu$ is the law of $U X$ where $X$ has the law $\nu$.

\eqref{another3} $\Rightarrow$ \eqref{another2}: \eqref{another3} implies the representation \eqref{mixture uniform}, which implies $zG_\mu'(z)=-G_\nu(z)$.

\eqref{another2} $\Leftrightarrow$ \eqref{another4}: This follows from Hengartner and Schober \cite{HS70} with a suitable Moebius transformation from the unit disk onto the upper half-plane.
 \end{proof}

 \begin{example} The \index{uniform distribution}uniform distribution $\bfu_t$ on $(0,t), t>0$, has the Cauchy transform
\begin{equation*}
G_{\bfu_t}(z)=\frac{1}{t} \log \frac{z}{z-t},
\end{equation*}
which appeared in the proof of Theorem \ref{anotherkhintchine}.
The range $G_{\bfu_t}(\C^+)$ is the strip $\{z \in \C^-:  -\pi/t < \Im(z)< 0 \}$ which becomes smaller as $t$ becomes larger.  This domain is horizontally convex.
\end{example}

\begin{example}
The range of the Cauchy transform of the \index{semicircle distribution}semicircle distribution with mean 0 and variance $t$ is the half-disk \eqref{half-disk}, which is horizontally convex. This domain is also starlike in the sense of Definition \ref{def:starlike}.
\end{example}

Subclasses of $\UM(\R)$ are provided in Sections \ref{sec CSD} and \ref{sec FSD}.

\index{unimodal distribution!on $\R$|)}

\subsection{Selfdecomposable distributions} \label{sec SD}

\index{selfdecomposable distribution}

In classical probability, a subclass of $\IA(\ast)= \ID(\ast)$ can be provided by the selfdecomposable distributions. The free analogue was defined by Barndorff-Nielsen and Thorbj{\o}rnsen \cite{BNT02}.  We also discuss the monotone version. It turns out that all these classes are contained in $\Univ(\R)$; see Sections \ref{sec CSD}, \ref{sec FSD} for the classical and free cases and Section \ref{sec MSD} for the monotone case.

Let $\DD_c\mu$ be the scaling of a probability measure $\mu$ by a constant $c\in \R$, namely $(\DD_c\mu)(A)=\mu(A/c)$ for Borel sets $A \subset \R$ when $c\neq0$ and $\DD_0\mu=\delta_0$.
\begin{definition}\label{defMSD}
Let $\star$ be an associative binary operation on $\cP(\R)$. A probability measure $\mu$ on $\R$ is said to be \emph{$\star$-selfdecomposable} if for any $c\in(0,1)$ there exists a probability measure $\bmu^c$ such that $\mu=(\DD_c\mu)\star\bmu^c$. This class is denoted by $\SD(\star)$.
\end{definition}

The following property readily follows from the definition.
\begin{proposition}
Suppose that an associative binary operation $\star$ is commutative and satisfies $\DD_c(\mu\star\nu) = (\DD_c \mu )\star (\DD_c\nu)$ for all $c\in\R$ and $\mu,\nu \in \cP(\R)$. Then $\SD(\star)$ is closed under the operation $\star$.
\end{proposition}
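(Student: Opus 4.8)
The plan is to argue directly from Definition \ref{defMSD} using the stated hypotheses on $\star$: commutativity and the compatibility relation $\DD_c(\mu\star\nu)=(\DD_c\mu)\star(\DD_c\nu)$. First I would fix $\mu,\nu\in\SD(\star)$ and $c\in(0,1)$. By definition there are probability measures $\bmu^c$ and $\bar\nu^c$ with $\mu=(\DD_c\mu)\star\bmu^c$ and $\nu=(\DD_c\nu)\star\bar\nu^c$. The goal is to produce a probability measure $\bm\lambda^c$ such that $\mu\star\nu=(\DD_c(\mu\star\nu))\star\bm\lambda^c$. Applying the compatibility relation, $\DD_c(\mu\star\nu)=(\DD_c\mu)\star(\DD_c\nu)$, so the candidate decomposition is
\[
\mu\star\nu = \big((\DD_c\mu)\star\bmu^c\big)\star\big((\DD_c\nu)\star\bar\nu^c\big).
\]
Now I would invoke commutativity and associativity of $\star$ to reorder the right-hand side as $\big((\DD_c\mu)\star(\DD_c\nu)\big)\star\big(\bmu^c\star\bar\nu^c\big)=\big(\DD_c(\mu\star\nu)\big)\star\big(\bmu^c\star\bar\nu^c\big)$. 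Setting $\bm\lambda^c:=\bmu^c\star\bar\nu^c$, which is again a probability measure on $\R$ since $\star$ maps $\cP(\R)\times\cP(\R)$ into $\cP(\R)$, we obtain exactly the required identity $\mu\star\nu=(\DD_c(\mu\star\nu))\star\bm\lambda^c$. As $c\in(0,1)$ was arbitrary, $\mu\star\nu\in\SD(\star)$.

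There is essentially no obstacle here: the only subtlety is making sure associativity is available, but $\star$ is an associative binary operation by standing assumption in this section (every convolution considered is associative), so the reordering step is legitimate. One should perhaps remark explicitly that commutativity is used precisely to move $\DD_c\nu$ past $\bmu^c$; for a non-commutative $\star$ such as $\rhd$ this step fails, which is consistent with the fact that $\SD(\rhd)$ is treated separately later. Thus the proof is a one-line computation once the bookkeeping of the axioms is made explicit, and I would write it out essentially as above without further elaboration.
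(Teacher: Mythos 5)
Your proof is correct and is exactly the argument the paper has in mind — the paper states this proposition without proof, remarking only that it "readily follows from the definition," and your computation (decompose each factor, use the dilation compatibility plus commutativity and associativity to regroup, and take $\bmu^c\star\bar\nu^c$ as the cofactor) is that immediate argument made explicit. Nothing is missing.
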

For $\star=\ast$ or  $\boxplus$, it is known that $\SD(\star)$ is weakly closed and the measure $\bmu^c$ is unique. We later show that the same is true for monotone convolution.

\subsection{Classically selfdecomposable distributions} \label{sec CSD}

\index{selfdecomposable distribution!additive classical convolution|(}

The class $\SD(\ast)$ of classically selfdecomposable distributions is known to be a weakly closed subset of $\ID(\ast)$.
Yamazato \cite{Yam78} proved that all $\ast$-selfdecomposable distributions are unimodal:
\begin{equation*}
\SD(\ast) \subset \UM(\R),
\end{equation*}
and thus $\SD(\ast) \subset \Univ(\R)$. The \index{classical stable distribution}classical stable distributions and in particular the \index{normal distribution}normal distribution are $\ast$-selfdecomposable.

A further subclass of $\SD(\ast)$ is the class $\GGC$ of \index{generalized gamma convolutions}\emph{generalized gamma convolutions} \cite{Bon92}. This class is defined to be the weak closure of the set
\begin{equation*}
\{\gammabm(p_1,\theta_1)\ast \cdots \ast \gammabm(p_n,\theta_n)\mid p_k,\theta_k>0, n\in\N, k=1,\dots,n\},
\end{equation*}
where $\gammabm(p,\theta)$ is the \index{gamma distribution}gamma distribution
\begin{equation*}
\frac{1}{\theta^{p}\Gamma( p)} x^{p-1} e^{-x/\theta}\,\mathbf{1}_{(0,\infty)}(x)\,{\rm d}x,\qquad p,\theta>0.
\end{equation*}
It is known that all gamma distributions are $\ast$-selfdecomposable, and hence $\GGC \subset \SD(\ast)$.

Generalized gamma convolutions are all supported on $[0,\infty)$. Bondesson introduced the class of \emph{extended GGCs} (denoted by $\EGGC$, also called the Thorin class). It is the weak closure of the set
\begin{equation*}
\{\gammabm(p_1,\theta_1)\ast \cdots \ast \gammabm(p_n,\theta_n)\mid p_k>0, \theta_k\in\R, k=1,\dots,n, n\in\N\},
\end{equation*}
where $\gammabm(p,\theta)$ is defined to be the scaling $\DD_{\theta}(\gammabm(p,1))$ for all $\theta \in\R$ (this definition coincides with the original one when $\theta>0$).
Since $\SD(\ast)$ is closed under the convolution $\ast$ and $\gamma(p,\theta) \in \SD(\ast)$ for all $p>0$ and $\theta\in\R$, we conclude that $\EGGC\subset \SD(\ast)$. Thus, those well known classes $\SD(\ast)$, $\GGC$ and $\EGGC$ are all contained in $\Univ(\R)$.

\index{selfdecomposable distribution!additive classical convolution|)}

\subsection{Freely selfdecomposable distributions}\label{sec FSD}

\index{selfdecomposable distribution!additive free convolution|(}

The class $\SD(\boxplus)$ of freely selfdecomposable distributions was originally introduced by Barndorff-Nielsen and Thorbj{\o}rnsen \cite{BNT02}. It is known that $\SD(\boxplus)$ is a weakly closed subset of $\ID(\boxplus)$. This class may be characterized in terms of the free cumulant transform $C_\mu(z) = z \varphi_\mu(1/z)$, where $\varphi_\mu$ is the Voiculescu transform. 

\begin{theorem}[\cite{HST}]
For a probability measure $\mu$ on $\R$ the following statements are
equivalent.
\begin{enumerate}[{\rm (i)}]
\item\label{aa} $\mu\in \SD(\boxplus)$.
\item\label{bb} $C_\mu$ extends to an analytic map $C_{\mu}\colon\C^{-}\to\C$ such that the derivative $C_\mu'$  satisfies that
$\Im(C_{\mu}'(z))\leq 0$ for any $z\in\C^{-}$.
\item\label{cc}
There exists $\beta$ in $\R$ and a non-negative measure $\sigma$ on $\R$, satisfying that
$\int_{\R}\log(|x|+2)\,\sigma({\rm d}x)<\infty$, such that
$C_\mu'$ may be extended to all of $\C^-$ via the formula:
\begin{align}\label{corform}
C_{\mu}'(z)=\beta + \int_{\R}\frac{x+z}{1-z x }\,\sigma({\rm d}x), \qquad z\in\C^{-}.
\end{align}
\end{enumerate}
If \eqref{aa}--\eqref{cc} are satisfied, then the pair $(\beta,\sigma)$ in
\eqref{cc} is unique. Additionally, given a pair $(\beta, \sigma)$ of a real number and a non-negative measure on $\R$ satisfying the integrability condition above, then there exists a unique $\mu \in \SD(\boxplus)$ such that $C_\mu'$ is represented in the form \eqref{corform}.
\end{theorem}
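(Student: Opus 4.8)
The statement asserts a Lévy–Khintchine-type characterization of the class $\SD(\boxplus)$ of freely selfdecomposable distributions, phrased in terms of the derivative of the free cumulant transform $C_\mu(z) = z\varphi_\mu(1/z)$. The natural route is to exploit the change of variables linking the free cumulant transform and the Voiculescu transform together with the structure theorem for $\boxplus$-infinitely divisible distributions (Theorem \ref{thmBV93}) and the subordination/fixed-point machinery behind free selfdecomposability. First I would recall the known chain of implications: $\SD(\boxplus) \subset \ID(\boxplus)$, so by Theorem \ref{thmBV93} any $\mu \in \SD(\boxplus)$ has a Voiculescu transform extending to a Pick-type function, and hence $C_\mu$ extends analytically to $\C^-$. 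The task is then to characterize precisely when the selfdecomposability relation $\mu = (\DD_c\mu) \boxplus \bmu^c$ holds for all $c\in(0,1)$ in terms of analytic properties of $C_\mu'$.

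\textbf{Key steps.} The plan is to carry out the equivalence as a cycle $\eqref{aa}\Rightarrow\eqref{bb}\Rightarrow\eqref{cc}\Rightarrow\eqref{aa}$. For $\eqref{aa}\Rightarrow\eqref{bb}$: the defining relation $\mu = (\DD_c\mu)\boxplus \bmu^c$ translates, via $\varphi_{\mu\boxplus\nu} = \varphi_\mu + \varphi_\nu$ and the scaling rule $\varphi_{\DD_c\mu}(z) = c\,\varphi_\mu(z/c)$, into $\varphi_\mu(z) = c\,\varphi_\mu(z/c) + \varphi_{\bmu^c}(z)$; rewriting in terms of $C_\mu$ gives $C_\mu(z) - C_\mu(cz) = C_{\bmu^c}(z)$. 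Since each $\bmu^c$ must itself be $\boxplus$-infinitely divisible (this is a standard consequence of selfdecomposability, proved as in the classical case by iterating the decomposition), the right-hand side is a free cumulant transform of an element of $\ID(\boxplus)$; differentiating and taking $c\to 1$ should give a monotonicity/positivity statement for $C_\mu'$, namely $\Im(C_\mu'(z)) \le 0$ on $\C^-$. Here I would use Theorem \ref{thmBV93}\eqref{FLK1} to control the boundary behaviour and justify passing the limit. For $\eqref{bb}\Rightarrow\eqref{cc}$: a function $g$ on $\C^-$ with $\Im(g) \le 0$ (i.e.\ $-g$ maps $\C^-$ into $\C^+\cup\R$, a Pick function on the lower half-plane) admits, after the Möbius change of variable $z \mapsto 1/z$ or $z\mapsto$ reflection, a Nevanlinna-type integral representation; matching the normalization coming from the series expansion $C_\mu'(z) = \sigma^2(\mu) + O(z)$ near $0$ (or whatever the correct first coefficient is) yields the representation \eqref{corform} with $\beta\in\R$ and $\sigma$ a non-negative measure. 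The integrability $\int_\R \log(|x|+2)\,\sigma({\rm d}x)<\infty$ comes from requiring that $C_\mu$ itself — obtained by integrating $C_\mu'$ — remains finite, i.e.\ that the logarithmic singularities produced by integrating $\frac{x+z}{1-zx}$ are $\sigma$-integrable; this is the analogue of the classical Lévy measure condition $\int \log(1+|x|)\,\nu < \infty$ for selfdecomposable laws. For $\eqref{cc}\Rightarrow\eqref{aa}$, and for the final existence-and-uniqueness clause: given $(\beta,\sigma)$ with the integrability condition, integrate \eqref{corform} to recover $C_\mu$, hence $\varphi_\mu$, hence a candidate Pick function $-\varphi_\mu$; Theorem \ref{thmBV93} produces a unique $\mu\in\ID(\boxplus)$, and one then verifies the selfdecomposability relation by reversing the computation in the first step — the factor $\bmu^c$ is defined through $C_{\bmu^c}(z) = C_\mu(z) - C_\mu(cz)$, and one checks its right-hand side is again a valid free cumulant transform of an infinitely divisible law using the representation. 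Uniqueness of $(\beta,\sigma)$ follows from uniqueness of the Nevanlinna representation together with the Stieltjes inversion formula.

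\textbf{Main obstacle.} The delicate point is the analytic control needed to pass from the discrete selfdecomposability relation (holding for each $c\in(0,1)$) to the differential/integral characterization, and conversely to produce the measures $\bmu^c$ from $(\beta,\sigma)$ while verifying they are genuine probability measures of infinitely divisible type. In particular, justifying the limit $c\to 1$ in $\frac{C_\mu(z)-C_\mu(cz)}{1-c}$ and identifying it with $z\,C_\mu'(z)$ (or the appropriate expression), uniformly enough on $\C^-$ to deduce $\Im \le 0$ everywhere, requires care; and on the converse side, checking that $C_\mu(z) - C_\mu(cz)$ — built by integrating the representation \eqref{corform} — has the precise analytic form of a free cumulant transform of an $\ID(\boxplus)$ measure (non-tangential limits, correct asymptotics at $0$ and $\infty$) is where the integrability condition on $\sigma$ is genuinely used. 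Fortunately all of this parallels the classical theory and the free infinitely divisible theory of \cite{BV93}, so I would structure the argument to lean on those as black boxes, citing \cite{HST} for the technical estimates rather than reproving them; the novelty here is purely organizational, assembling the equivalence from ingredients already in the literature.
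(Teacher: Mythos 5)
You should first be aware that the paper you are being compared against does not prove this statement at all: it is imported verbatim from \cite{HST} (Hasebe--Sakuma--Thorbj{\o}rnsen) with a citation and no argument, so the only meaningful comparison is between your sketch and the proof in \cite{HST}. Measured against that, your outline points in the right direction — the identity $C_{\bmu^c}(z)=C_\mu(z)-C_\mu(cz)$ obtained from $\varphi_{\DD_c\mu}(z)=c\,\varphi_\mu(z/c)$ is correct, a Nevanlinna-type representation is indeed the right tool for \eqref{bb}$\Rightarrow$\eqref{cc}, and you locate the source of the $\log$-integrability correctly (the antiderivative of the kernel $\frac{x+z}{1-xz}$ grows like $\log|x|$ at infinity). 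But as written it is a plan, not a proof, and its announced reliance on \cite{HST} ``as a black box for the technical estimates'' is circular, since the statement to be proved \emph{is} the theorem of \cite{HST}.

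The concrete gaps are these. (a) In \eqref{aa}$\Rightarrow$\eqref{bb} the property you need is not a sign condition on $C_{\bmu^c}$ but on $C_{\bmu^c}(z)/z$: from Theorem \ref{thmBV93}\eqref{FLK1} one has, for $\nu\in\ID(\boxplus)$, $C_\nu(z)/z=\gamma+\int_\R\frac{x+z}{1-xz}\,\rho({\rm d}x)$ with $\rho$ finite, hence $\Im\bigl(C_\nu(z)/z\bigr)\le0$ on $\C^-$; writing $zC_\mu'(z)=\lim_{c\uparrow1}(1-c)^{-1}C_{\bmu^c}(z)$ and dividing by $z$ then gives \eqref{bb}, but this uses both the (unproved in your sketch) fact that each $\bmu^c$ is $\boxplus$-infinitely divisible and a locally uniform convergence argument to pass the inequality to the limit. (b) \eqref{bb}$\Rightarrow$\eqref{cc} is not pure function theory: a general analytic map $g\colon\C^-\to\overline{\C^-}$ has a representation with an additional term corresponding to mass at $x=\infty$ (a $-c/z$ summand) and only a \emph{finite} $\sigma$ with no $\log$-moment; excluding that term and forcing $\int\log(2+|x|)\,\sigma({\rm d}x)<\infty$ requires using that $C_\mu'$ integrates back to the cumulant transform of an actual probability measure, i.e.\ the non-tangential asymptotics $\varphi_\mu(iy)=o(y)$, which is exactly the step you defer. (c) In \eqref{cc}$\Rightarrow$\eqref{aa}, ``reversing the computation'' must actually exhibit, for each $c\in(0,1)$, a real $\gamma_c$ and a finite non-negative $\rho_c$ with $C_\mu(z)-C_\mu(cz)=\gamma_c z+z\int_\R\frac{x+z}{1-xz}\,\rho_c({\rm d}x)$ (equivalently, that the free L\'evy measure of $\mu$ has the selfdecomposable form), and it must also first establish $\mu\in\ID(\boxplus)$ from \eqref{cc}; this computation is the heart of the converse and is absent. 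So the architecture is sound and parallels \cite{HST}, but the proposal as it stands does not constitute an independent proof.
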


Hasebe and Thorbj\o rnsen \cite{HT16} showed the free analogue of Yamazato's theorem:
\begin{equation*}
\SD(\boxplus) \subset \UM(\R).
\end{equation*}
Furthermore we may show the following.
\begin{proposition}\label{FSD}
$\SD(\boxplus)\subset \SD(\rhd)$.
\end{proposition}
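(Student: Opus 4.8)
The plan is to prove the inclusion $\SD(\boxplus)\subset \SD(\rhd)$ by showing that every freely selfdecomposable distribution $\mu$ satisfies the monotone selfdecomposability condition directly from the definition. Fix $\mu \in \SD(\boxplus)$ and $c\in(0,1)$. By definition of free selfdecomposability there exists a probability measure $\nu$ on $\R$ with $\mu = (\DD_c\mu)\boxplus \nu$; moreover, since $\SD(\boxplus)\subset\ID(\boxplus)$, the measure $\mu$ (and hence $\DD_c\mu$) is freely infinitely divisible, so the Voiculescu transforms $\varphi_\mu$ and $\varphi_{\DD_c\mu}$ extend to Pick functions on $\C^+$ by Theorem \ref{thmBV93}, and so does $\varphi_\nu = \varphi_\mu - \varphi_{\DD_c\mu}$ on the relevant truncated cone. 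The key observation is the subordination-type identity: defining $\rho^c$ through its reciprocal Cauchy transform by
\[
F_{\rho^c}(z) := F_{\DD_c\mu}^{-1}(F_\mu(z)) = \varphi_{\DD_c\mu}(F_\mu(z)) + F_\mu(z),
\]
on a truncated cone where all compositions make sense, we want to show that $\rho^c$ is a genuine probability measure on $\R$ and that $\mu = (\DD_c\mu)\rhd\rho^c$, i.e.\ $F_\mu = F_{\DD_c\mu}\circ F_{\rho^c}$. This last identity is immediate once $F_{\rho^c}$ is well-defined, since $F_{\DD_c\mu}\circ F_{\DD_c\mu}^{-1} = \mathrm{id}$; the real content is the well-definedness and the probability-measure property.

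First I would verify that $F_{\rho^c}$, initially defined only on a truncated cone, extends to a holomorphic self-map of $\C^+$ of the form required by Lemma \ref{Julia}. Writing $F_{\rho^c}(z) = z + (\varphi_{\DD_c\mu}(F_\mu(z)) + F_\mu(z) - z) = z + \varphi_{\DD_c\mu}(F_\mu(z)) - \varphi_\mu(F_\mu(z)) + (F_\mu(z) - z + \varphi_\mu(F_\mu(z)))$; using $F_\mu^{-1}(w) = w + \varphi_\mu(w)$ one checks the bracketed last term contributes nothing problematic, and in fact the cleaner route is to observe that since $\mu, \DD_c\mu \in \ID(\boxplus)$, the identity $F_{\rho^c}(z) = z - \varphi_\nu(F_\mu(z))$ holds after rearrangement (using $\varphi_{\DD_c\mu} = \varphi_\mu - \varphi_\nu$ and $F_\mu^{-1}\circ F_\mu = \mathrm{id}$ on the cone), exactly as in the computation in Section \ref{sec:Free-Markov_additive} for the free-increment-process subordination. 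Then $\varphi_\nu$ extends to a Pick function on all of $\C^+$ by the $\boxplus$-infinite divisibility of $\nu$ (which I would establish by noting $\varphi_\nu = \varphi_\mu - \varphi_{\DD_c\mu}$ with both terms of the Pick form in Theorem \ref{thmBV93}\eqref{FLK1}, and the difference of such representations being of the same form because selfdecomposability forces the measures in the representations to be ordered — this uses the explicit L\'evy--Khintchine data for $\SD(\boxplus)$, or more simply a known fact that $\nu$ inherits infinite divisibility). Consequently $z\mapsto z - \varphi_\nu(F_\mu(z))$ is holomorphic on $\C^+$; since $\Im F_\mu(z)\ge \Im z$ and $\varphi_\nu$ maps $\C^+$ to $\overline{\C^+}$, one gets $\Im(-\varphi_\nu(F_\mu(z)))\le 0$ is not automatic, so I would instead check the normalization $\lim_{y\to\infty}F_{\rho^c}(iy)/(iy) = 1$ together with $F_{\rho^c}:\C^+\to\C^+\cup\R$ to apply Lemma \ref{Julia}; the map property follows because $F_{\rho^c} = F_{\DD_c\mu}^{-1}\circ F_\mu$ is a composition of $F_\mu:\C^+\to\C^+$ with the partial inverse of $F_{\DD_c\mu}$, and on a cone this lands in $\C^+$, then analytic continuation preserves this.

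The main obstacle I anticipate is showing that $F_{\rho^c}$ really is the reciprocal Cauchy transform of a probability measure, i.e.\ confirming both that it extends holomorphically and boundedly to all of $\C^+$ (not just a cone) and that it satisfies the Pick--Nevanlinna normalization of Lemma \ref{Julia}\eqref{Julia2}. The cleanest way around this is to exploit that $\DD_c\mu \in \ID(\boxplus)$ so $F_{\DD_c\mu}$ is univalent on $\C^+$ by Proposition \ref{FID}, hence $F_{\DD_c\mu}^{-1}$ is a well-defined holomorphic map from $F_{\DD_c\mu}(\C^+)$ onto $\C^+$; then one must check $F_\mu(\C^+)\subseteq F_{\DD_c\mu}(\C^+)$. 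This containment should follow from $\Im F_{\DD_c\mu}(z)\ge \Im z$ combined with a comparison of the Pick measures of $\mu$ and $\DD_c\mu$ arising from selfdecomposability (the free L\'evy measure of $\mu$ dominates, in an appropriate scaled sense, that of $\DD_c\mu$), and I expect this to be the technically delicate point. Once $F_\mu(\C^+)\subseteq F_{\DD_c\mu}(\C^+)$ is in hand, $F_{\rho^c}:=F_{\DD_c\mu}^{-1}\circ F_\mu$ is a holomorphic self-map of $\C^+$, its normalization at $\infty$ is $1$ because both $F_\mu$ and $F_{\DD_c\mu}$ have angular derivative $1$ at $\infty$ and hence so does the partial inverse, so Lemma \ref{Julia} yields $F_{\rho^c} = F_{\rho^c}$ for a probability measure $\rho^c := \bmu^c$, and $F_{\DD_c\mu}\circ F_{\rho^c} = F_\mu$ gives $\mu = (\DD_c\mu)\rhd\bmu^c$. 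Since $c\in(0,1)$ was arbitrary, $\mu\in\SD(\rhd)$, completing the proof.
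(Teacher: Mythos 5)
Your overall strategy is the right one, but as written it has a genuine gap at its center. The whole content of the statement is the claim that $F_{\DD_c\mu}^{-1}\circ F_\mu$ is the $F$-transform of a probability measure, and this is exactly the subordination property of free convolution (Biane; Belinschi--Bercovici): for \emph{arbitrary} $\lambda,\nu\in\cP(\R)$ with $\mu=\lambda\boxplus\nu$ there exists $\sigma\in\cP(\R)$ with $F_\mu=F_\lambda\circ F_\sigma$, i.e.\ $\mu=\lambda\rhd\sigma$. The paper's proof is a one-liner: apply this to $\mu=(\DD_c\mu)\boxplus\bmu^c$ and you are done — no infinite divisibility of anything is needed. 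You instead try to manufacture the subordination function by hand, and both routes you offer are left open. The route via $F_{\rho^c}(z)=z-\varphi_\nu(F_\mu(z))$ requires $\bmu^c\in\ID(\boxplus)$ so that $\varphi_{\bmu^c}$ extends to all of $\C^+$; you assert this as ``a known fact'' and sketch it as ``the difference of two Pick-type representations is again of that form because selfdecomposability orders the representing measures,'' which is not a proof (a difference of two Pick functions is in general not Pick, so the selfdecomposability input must actually be used, e.g.\ via the free L\'evy--Khintchine data, and you never do this). The alternative route via the range inclusion $F_\mu(\Ha)\subseteq F_{\DD_c\mu}(\Ha)$ you yourself flag as ``the technically delicate point'' and only express the expectation that it follows. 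Either of these sub-claims is nontrivial and neither is established, so the proposal does not close.

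Two further remarks. First, there is a sign slip: for $\nu\in\ID(\boxplus)$, Theorem \ref{thmBV93} says $-\varphi_\nu$ is a Pick function, so $\varphi_\nu$ maps $\Ha$ into $(-\Ha)\cup\R$; had you used the correct sign, the self-map property $\Im\bigl(z-\varphi_\nu(F_\mu(z))\bigr)\geq\Im(z)>0$ would be immediate (together with $\varphi_\nu(w)=o(|w|)$ nontangentially and $F_\mu(iy)\sim iy$, giving the normalization of Lemma \ref{Julia}), whereas your confusion on this point sends you off on the more convoluted range-inclusion detour. Second, if you want to avoid citing subordination, the honest completion of your argument is to prove that the free cofactor of a $\boxplus$-selfdecomposable law is $\boxplus$-infinitely divisible (the free analogue of the classical fact), which is a separate lemma requiring the explicit representation of Voiculescu transforms of measures in $\SD(\boxplus)$; the computation you then perform is the one the paper carries out in Section \ref{sec:Free-Markov_additive} for free increment processes, where $\boxplus$-infinite divisibility of the increments is available by construction. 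In short: either cite the subordination theorem, as the paper does, or supply the missing infinite-divisibility lemma; as it stands the proof is incomplete.
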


\begin{proof}
By definition, for any $c\in (0,1)$, there exists $\bmu^c$ such that $\mu=(\DD_c\mu)\boxplus \bmu^c$. From the subordination property for free convolution (see e.g.\ \cite{BB04} or the original article \cite{Bia98}), there exists another probability measure $\overline{\nu}^c$ such that $\mu=(\DD_c\mu)\rhd \overline{\nu}^c$, and hence $\mu \in \SD(\rhd)$.
\end{proof}

The above results and arguments show that
\begin{equation*}
\SD(\boxplus)\subset\ID(\boxplus)\cap \SD(\rhd) \cap \UM(\R).
\end{equation*}

\begin{example}The \index{free stable distribution}free stable distribution has the semigroup property $\bff_{\alpha,\rho,s} \boxplus \bff_{\alpha,\rho,t} = \bff_{\alpha,\rho,s+t}$ and the stability $\DD_c( \bff_{\alpha,\rho,t}) =  \bff_{\alpha,\rho,c^\alpha t}, c>0$.  These conditions imply that
\begin{equation*}
\bff_{\alpha,\rho,t} = (\DD_{c}\bff_{\alpha,\rho,t}) \boxplus \bff_{\alpha,\rho,(1-c^\alpha)t}, \qquad c\in(0,1),
\end{equation*}
and hence the $\boxplus$-selfdecomposability.
\end{example}

Other examples of freely selfdecomposable distributions are some \index{free Meixner distribution}free Meixner distributions and the \index{normal distribution}classical normal distributions (see \cite{HST}).

\index{selfdecomposable distribution!additive free convolution|)}

\subsection{Univalence and regularity of probability measures} \label{sec reg}
We have seen several sufficient conditions for a probability measure to have a univalent Cauchy transform. This section presents a necessary condition in terms of some regularity property of a probability measure. This result is a slight generalization of \cite[Theorem 3.5]{Has10}, but the proof is almost the same.
\begin{proposition}\label{Atm}
Let $\mu \in \Univ(\R)$. Suppose that $\mu$ has an isolated atom at $a \in \R$, namely $\mu(\{a\})>0$ and $\mu(\{a\})=\mu((a-\epsilon,a+\epsilon))$ for some $\epsilon>0$.
Then  $\mu|_{\R\setminus \{a\}}$ is Lebesgue absolutely continuous with $L^\infty$ density.
\end{proposition}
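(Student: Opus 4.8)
The plan is to exploit the local behaviour of the Cauchy transform near the isolated atom. Suppose $\mu$ has an isolated atom at $a$, so $\mu(\{a\}) = p > 0$ and $\mu((a-\epsilon,a+\epsilon)) = p$ for some $\epsilon > 0$. Write $\mu = p\,\delta_a + \nu$, where $\nu = \mu|_{\R\setminus\{a\}}$ is a sub-probability measure with $\nu((a-\epsilon, a+\epsilon)) = 0$. Then $G_\mu(z) = \dfrac{p}{z-a} + G_\nu(z)$, and $G_\nu$ extends holomorphically across the open interval $(a-\epsilon, a+\epsilon)$ since $\nu$ puts no mass there. The reciprocal Cauchy transform is $F_\mu = 1/G_\mu$; univalence of $G_\mu$ is equivalent to univalence of $F_\mu$. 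First I would record that $G_\mu$ has a simple pole at $a$: as $z \to a$ in $\C^+$, $G_\mu(z) \sim p/(z-a) \to \infty$. Consequently $F_\mu(z) = 1/G_\mu(z) \to 0$, and in fact $F_\mu$ extends holomorphically to a neighbourhood of $a$ in $\C$ with $F_\mu(a) = 0$ and $F_\mu'(a) = 1/p \neq 0$ (differentiate $F_\mu = (z-a)/(p + (z-a)G_\nu(z))$). So $F_\mu$ is a conformal map near $a$, and $a$ is a point where $F_\mu$ extends analytically across the real line.

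Next I would use the univalence of $F_\mu$ on $\C^+$ together with this analytic extension at $a$. The key point is a boundary-behaviour argument: since $F_\mu$ is univalent on $\C^+$, maps $\C^+$ into $\C^+$ (Lemma \ref{Julia}), and extends conformally across a real neighbourhood of $a$ with $F_\mu(a)=0 \in \R$, the extension must map a small real neighbourhood $(a-\delta, a+\delta)$ into $\R$ (the conformal reflection principle applies: a univalent map of $\C^+$ into $\C^+$ which extends analytically across a boundary arc with $F_\mu$ real there sends that arc into $\R$; here one checks $\Im F_\mu$ vanishes on the arc because $F_\mu$ is real at $a$ and analytic, combined with the fact that $\Im F_\mu \ge \Im z \ge 0$ forces $\Im F_\mu$ to have a boundary minimum along the real axis, hence its normal derivative controls the sign — more concretely $1/F_\mu = G_\mu$ and on $(a-\epsilon,a+\epsilon)\setminus\{a\}$ we have $G_\mu = p/(x-a) + G_\nu(x)$ with $G_\nu$ real there since $\nu$ is supported away; so $G_\mu$ is real on $(a-\epsilon,a)\cup(a,a+\epsilon)$, hence $F_\mu$ is real there too). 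So in fact no reflection principle is even needed: $F_\mu$ is already real-valued on the punctured interval, hence by continuity on the whole interval $(a-\delta,a+\delta)$ for $\delta \le \epsilon$.

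Then I would transfer this back to $\mu$ via the Stieltjes inversion formula \eqref{Stieltjes2}--\eqref{eq:atom2}. Since $G_\mu$ is real-analytic on $(a-\epsilon, a)\cup(a,a+\epsilon)$ already, that part is immediate there; but the real substance is to control $\mu$ on all of $\R\setminus\{a\}$, not just near $a$. For this I would argue as follows. The map $F_\mu$ is univalent and real on $(a-\delta, a+\delta)$ with $F_\mu'(a) = 1/p$; shrinking $\delta$, $F_\mu$ maps $(a-\delta, a+\delta)$ diffeomorphically onto an interval $(-r, r)$ around $0$, and maps the half-disk $\{|z-a|<\delta,\ \Im z > 0\}$ conformally onto a Jordan domain $\Omega_0 \subset \C^+$ whose boundary contains $(-r,r)$ and is a Dini-smooth (indeed analytic) arc. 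Now $F_\mu(\C^+)$ is a simply connected subdomain of $\C^+$ containing $\Omega_0$, and $0$ is a boundary point of $F_\mu(\C^+)$ reached through the analytic boundary arc. I would then invoke the argument behind Theorem \ref{ftransform_images}(a): near the origin, the boundary of $F_\mu(\C^+)$ contains an analytic (hence Dini-smooth) arc through $0$, so the complement $\C^+ \setminus F_\mu(\C^+)$ near $0$ is a Dini-smooth hull. The inverse map $G_\mu = 1/F_\mu$ therefore extends continuously (with bounded derivative) to the closure of $\C^+$ near points mapping to $0$; via the boundary correspondence and the Stieltjes inversion formula, the density of $\mu$ on $\R\setminus\{a\}$ near $a$ is bounded, and globally one uses that $\Im G_\mu$ is uniformly bounded on $\C^+$ — which holds because $G_\mu(\C^+)$, being the univalent image, is a subdomain of $-\C^+$ that is bounded away from having unbounded imaginary part near any real point other than where the pole sits (here I would need: $\Im G_\mu$ bounded on $\C^+$ $\Leftrightarrow$ $\mu|_{\R\setminus\{a\}}$ has $L^\infty$ density, by \eqref{Stieltjes2}).

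The main obstacle, and the step I expect to be delicate, is precisely the last global boundedness claim: deducing that $\sup_{z\in\C^+}\Im G_\mu(z) < \infty$ away from the pole. Univalence gives injectivity of $G_\mu$ but not automatically a bound on $\Im G_\mu$; one must use that the only way $\Im G_\mu$ can blow up at a boundary point $x_0 \ne a$ is if $\mu$ has a large concentration near $x_0$, and then injectivity of $G_\mu$ near $x_0$ versus its behaviour near $a$ (both giving nearly-tangential boundary approach) would force a violation of univalence — this is the heart of \cite[Theorem 3.5]{Has10} and I would follow that argument, adapting it to allow $a$ to be any isolated atom rather than a specific one. Concretely, if $\Im G_\mu(z_n)\to\infty$ along $z_n\to x_0$ with $x_0 \ne a$, then $G_\mu(z_n)$ and $G_\mu(w_m)$ (for $w_m \to a$) both escape to $i\infty\cdot(-1) = -i\infty$ in modulus, and a normal-families / area argument against the univalence of $G_\mu$ on a fixed truncated box yields the contradiction. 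Once boundedness of $\Im G_\mu$ is in hand, \eqref{Stieltjes2} gives that $\mu|_{\R\setminus\{a\}}$ is absolutely continuous with density bounded by $\tfrac{1}{\pi}\sup_{\C^+}|\Im G_\mu| < \infty$, completing the proof.
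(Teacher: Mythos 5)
There is a genuine gap, and it sits exactly where you flag it: the global claim that $\Im G_\mu$ is bounded on $\C^+$ away from the atom (equivalently, that $G_\mu(B_\epsilon(a)^c\cap\C^+)$ is bounded) is the whole content of the proposition, and your proposal never actually proves it. The sketch you offer --- ``$G_\mu(z_n)$ and $G_\mu(w_m)$ both escape to $-i\infty$ in modulus, and a normal-families / area argument against univalence on a fixed truncated box yields the contradiction'' --- is not an argument: two sequences of values tending to infinity do not by themselves contradict injectivity, and no area or normal-families estimate is specified that would turn this into one. Deferring to ``the heart of \cite[Theorem 3.5]{Has10}, adapted'' is precisely the step the paper writes out, so it cannot be left as a black box. (Two smaller inaccuracies: the claimed equivalence ``$\Im G_\mu$ bounded on $\C^+$ $\Leftrightarrow$ $L^\infty$ density'' is false as stated, since $\Im G_\mu$ blows up at the pole $a$ --- one only needs, and only gets, a bound outside $B_\epsilon(a)$, which suffices because $\mu$ puts no mass on $(a-\epsilon,a+\epsilon)\setminus\{a\}$; and the detour through Dini-smooth hulls and Theorem \ref{ftransform_images}(a) is unnecessary and its conclusion about ``the inverse map $G_\mu=1/F_\mu$'' is misstated, since $1/F_\mu$ is not the conformal inverse of $F_\mu$.)

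What is frustrating is that your own first paragraph already contains everything needed to close the gap, and closing it this way reproduces the paper's proof in slightly different clothing. Suppose the claim fails: then there are $z_n\in\C^+\setminus B_\epsilon(a)$ with $F_\mu(z_n)\to 0$, and $F_\mu(z_n)\in\C^+$ because $\Im F_\mu\geq\Im z>0$. You showed $F_\mu$ extends holomorphically to a neighbourhood of $a$, is real on the real segment there, with $F_\mu(a)=0$ and $F_\mu'(a)=1/p>0$; hence $F_\mu$ maps the half-disk $B_\delta(a)\cap\C^+$ conformally onto a set containing $W\cap\C^+$ for some neighbourhood $W$ of $0$ (it maps into $\C^+$, is locally injective, and sends the real segment into $\R$). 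So for large $n$ the value $F_\mu(z_n)$ is also attained at some point of $B_\delta(a)\cap\C^+$, distinct from $z_n$, contradicting univalence of $F_\mu$ on $\C^+$. The paper gets the same second preimage by applying Rouch\'e's theorem to $f_n(z)=z-a+F_\mu(z_n)\bigl(\lambda+(z-a)G_\tau(z)\bigr)$ on $B_\epsilon(a)$, where $\mu=\lambda\delta_a+\tau$, and then rules out the zero lying in $\C^-$ or on $(a-\epsilon,a+\epsilon)$ because $F_\mu$ maps those sets into $\C^-$ and $\R\cup\{\infty\}$ respectively while $F_\mu(z_n)\in\C^+$. Once the boundedness claim is in hand, your concluding use of \eqref{Der} and \eqref{Stieltjes2} (restricted to $x\in(a-\epsilon,a+\epsilon)^c$) to get absolute continuity with bounded density is fine and matches the paper.
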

\begin{proof}
We prove in fact that
\begin{equation}\label{Cinfty}
\text{$G_\mu(B_\epsilon(a)^c\cap\C^+)$ is a bounded subset of $\C^-$},
\end{equation}
where $B_\epsilon(a)$ is the ball at $a$ with radius $\epsilon>0$.
We postpone the proof of \eqref{Cinfty} and suppose for now that it is the case.

Note that, in general, the singular part of a probability measure $\mu$ is supported on the set
\begin{equation}\label{support}
\{x \in \R: \lim_{y\downarrow0}\Im(G_\mu(x+i y))=-\infty\},
\end{equation}
which follows from the simple estimate
\begin{equation}\label{Der}
\frac{\mu((x-h,x+h))}{2h} \leq \int_{|u-x|<h} \frac{h}{h^2+(u-x)^2}\,\mu({\rm d}u) \leq - \Im(G_\mu(x+i h)),\quad x\in \R, h>0,
\end{equation}
and the basic fact (see \cite[Theorem 7.15]{Rud87}) that the singular part is supported on
\begin{equation*}
\left\{x \in\R: \lim_{h\downarrow0} \frac{\mu((x-h,x+h))}{2h} =\infty\right\}.
\end{equation*}
Obviously \eqref{Cinfty} implies that the set \eqref{support} is $\{a\}$, and hence $\mu|_{\R\setminus\{a\}}$ is Lebesgue absolutely continuous. The density is essentially bounded since, thanks to \eqref{Cinfty}, the right hand side of \eqref{Der} is bounded by a uniform constant independent of $h$ and $x \in (a-\epsilon,a+\epsilon)^c$.

Now we prove the key fact \eqref{Cinfty}. Let $\tau$ be a finite, non-negative measure such that $\mu =\lambda \delta_a + \tau$ and $\lambda=\mu(\{a\})$. It is supported on $(a-\epsilon,a+\epsilon)^c$ and hence the Cauchy transform $G_\tau$ is analytic in $B_\epsilon(a)$.

Suppose that \eqref{Cinfty} is not the case. Then we can find  a sequence of points $z_1,z_2,z_3,\dots$ in $\C^+ \cap B_\epsilon(a)^c$ such that $F_\mu(z_n) \to 0$.
We look for a point $z \in \C^+$ and $n\in\N$ such that $F_\mu(z)=F_\mu(z_n)$. A solution $z$ to the equation $F_\mu(z) = F_\mu(z_n)$ is a zero of the function
\begin{equation}
f_n(z) := z-a + F_\mu(z_n)(\lambda + (z-a) G_\tau(z)).
\end{equation}
For sufficiently large $n$, the function $f_n(z) - (z-a)$ is smaller in absolute value than $f(z)$ on $\partial B_\epsilon(a)$, and hence $f$ has a unique zero in $B_\epsilon(a)$ by Rouch\'e's theorem. This zero is not $a$ since $\lambda F_\mu(z_n) \neq 0 $. Therefore, we found a point $z \neq a$ such that $F_\mu(z) = F_\mu(z_n)$. It remains to show that $z\in\C^+$. Indeed, $F_\mu$ maps $\C^-$ into $\C^-$ and $(a-\epsilon,a+\epsilon)$ into $\R\cup\{\infty\}$, and hence $z\in \C^+$. This contradicts the univalence of $F_\mu$.
\end{proof}

Since the Poisson distribution contains two isolated atoms, we obtain the following consequence.
\begin{corollary}
$\ID(\ast)$ is not a subset of $\Univ(\R)$.
\end{corollary}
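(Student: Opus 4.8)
The plan is to produce a concrete $*$-infinitely divisible distribution that fails to have a univalent Cauchy transform, and the classical Poisson distribution is the obvious candidate. The classical Poisson distribution with parameter $\lambda>0$, namely $\pi_\lambda=e^{-\lambda}\sum_{k\geq0}\frac{\lambda^k}{k!}\delta_k$, is $*$-infinitely divisible (it is a classical L\'evy process marginal), so it lies in $\ID(\ast)$. To show $\pi_\lambda\notin\Univ(\R)$ I would invoke Proposition \ref{Atm}: that proposition asserts that any $\mu\in\Univ(\R)$ with an isolated atom at some $a\in\R$ must have $\mu|_{\R\setminus\{a\}}$ Lebesgue absolutely continuous with bounded density.

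So the key step is simply to observe that $\pi_\lambda$ has (at least) two isolated atoms — in fact all of its mass sits on isolated atoms at the non-negative integers. Taking $a=0$, which is an isolated atom since $\pi_\lambda(\{0\})=e^{-\lambda}=\pi_\lambda((-1/2,1/2))$, Proposition \ref{Atm} would force $\pi_\lambda|_{\R\setminus\{0\}}$ to be absolutely continuous. But $\pi_\lambda|_{\R\setminus\{0\}}=e^{-\lambda}\sum_{k\geq1}\frac{\lambda^k}{k!}\delta_k$ is purely atomic and nonzero, a contradiction. Hence $\pi_\lambda\notin\Univ(\R)$, while $\pi_\lambda\in\ID(\ast)$, which establishes $\ID(\ast)\not\subseteq\Univ(\R)$. (The phrase "two isolated atoms" in the corollary statement is slightly more than needed; one isolated atom together with remaining mass elsewhere already suffices, and Poisson has infinitely many.)

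I expect there to be essentially no obstacle here: the corollary is an immediate application of the preceding proposition to an explicit, standard example, so the proof is just a sentence or two. The only thing to be a little careful about is to verify the hypotheses of Proposition \ref{Atm} literally — that $0$ is an \emph{isolated} atom of $\pi_\lambda$, which holds because $\pi_\lambda$ gives zero mass to $(-1/2,0)\cup(0,1/2)$ — and then to note that the conclusion it yields (absolute continuity of the part away from $0$) visibly fails. Alternatively one could write the argument even more cheaply: since $\pi_\lambda$ is not of the form "an atom plus an absolutely continuous measure with $L^\infty$ density," Proposition \ref{Atm} directly rules out $\pi_\lambda\in\Univ(\R)$.

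Here is the proof as I would write it.

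\begin{proof}
Let $\pi_\lambda = e^{-\lambda}\sum_{k=0}^\infty \frac{\lambda^k}{k!}\delta_k$ be the Poisson distribution with parameter $\lambda>0$. It is $*$-infinitely divisible, since $\pi_\lambda = \pi_{\lambda/n}^{\ast n}$ for every $n\in\N$; thus $\pi_\lambda \in \ID(\ast)$. On the other hand, $0$ is an isolated atom of $\pi_\lambda$ because $\pi_\lambda(\{0\}) = e^{-\lambda}>0$ and $\pi_\lambda((-1/2,1/2)) = e^{-\lambda} = \pi_\lambda(\{0\})$. If $\pi_\lambda$ were in $\Univ(\R)$, then by Proposition \ref{Atm} the measure $\pi_\lambda|_{\R\setminus\{0\}} = e^{-\lambda}\sum_{k=1}^\infty \frac{\lambda^k}{k!}\delta_k$ would be Lebesgue absolutely continuous, which is absurd as it is a nonzero purely atomic measure. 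Hence $\pi_\lambda \notin \Univ(\R)$, and therefore $\ID(\ast)$ is not a subset of $\Univ(\R)$.
\end{proof}
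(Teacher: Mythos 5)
Your proposal is correct and is essentially the paper's own argument: the paper likewise applies Proposition \ref{Atm} to the classical Poisson distribution (noting it has isolated atoms), so that membership in $\Univ(\R)$ would force the mass away from one atom to be absolutely continuous, contradicting the purely atomic nature of the Poisson law. Your observation that a single isolated atom plus other atomic mass suffices (versus the paper's "two isolated atoms" phrasing) is a harmless, accurate refinement of the same idea.
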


Some problems are presented below. 

\begin{problem}\label{non-isolated}
Is it possible to extend Proposition \ref{Atm} to the case where $a$ is an atom which is not isolated?
\end{problem}
As a clue to this problem, consider the probability measure
 \begin{equation}
  \mu= \frac{1}{2} \delta_0 + \frac{1}{2}\mathbf{1}_{[0,1]}(x)\,{\rm d}x,
\end{equation}
 which is unimodal and hence has a univalent Cauchy transform. The Cauchy transform is
 \begin{equation}
  G_\mu(z) = \frac{1}{2}\left(\frac{1}{z} + \log \frac{z}{z-1} \right),
\end{equation}
 where $\log$ is the principal value. Obviously we see that $G_\mu(z)\to\infty$ as $z\to1$. This example shows that we cannot expect the property \eqref{Cinfty} for any $\epsilon>0$ for a measure in $\Univ(\R)$ with a non-isolated atom. 

\begin{problem}
Is there a singular probability measure in $\Univ(\R)$?
\end{problem}

\subsection{Limit theorems and infinitely divisible distributions}\label{LThm}

As we saw many univalent Cauchy transforms can be provided by infinitely divisible distributions, which are known to appear in limit theorems. We make some observations on limit theorems and pose some problems.

The \index{infinite divisibility!additive classical convolution}\index{L\'evy-Khintchine representation!classical}classical L\'evy-Khintchine representation says that
\begin{equation*}
\ID(\ast,\R) =\{\mu_\ast^{\gamma,\rho}: \gamma\in\R, \text{$\rho$ is a finite, non-negative measure on $\R$}\},
\end{equation*}
where $\mu_\ast^{\gamma,\rho}$ is the probability measure characterized by
\begin{equation}\label{CLK1}
\int_{\R} e^{z x} \mu_\ast^{\gamma,\rho}({\rm d}x) =\exp\left(\gamma z + \int_{\R} \left(e^{z x}-1- \frac{z x}{1+x^2} \right)\frac{1+x^2}{x^2}\,\rho({\rm d}x)\right), \qquad z\in i\R.
\end{equation}
Note that the integrand $\left(e^{z x}-1- \frac{z x}{1+x^2} \right)\frac{1+x^2}{x^2}$ is defined to be $\frac{1}{2}z^2$ at $x=0$, so that it becomes a continuous function on $\R$.
Similarly, Theorem \ref{thmBV93} implies that \index{infinite divisibility!additive free convolution}
\begin{equation*}
\ID(\boxplus,\R) = \{\mu_\boxplus^{\gamma,\rho}:\gamma\in\R, \text{$\rho$ is a finite, non-negative measure on $\R$}\},
\end{equation*}
where $\mu_\boxplus^{\gamma,\rho}$ is the probability measure characterized by the parameter $(\gamma,\rho)$ in Theorem \ref{thmBV93}\eqref{FLK1}.

In addition to classical and free Khintchine's theorems \eqref{CK} and \eqref{FK}, classical and free limit theorems are equivalent in the following sense.

\begin{theorem}\label{NIID} Let $\{\mu_{n,j}\}_{1\leq j \leq k_n, n\geq1}$ be an infinitesimal array of probability measures on $\R$ and let $a_n \in \R$.
The following statements are equivalent.
\begin{enumerate}[\rm(1)]
\item\label{CLimit} $\delta_{a_n} \ast \mu_{n, 1} \ast \cdots \ast \mu_{n, k_n} \wto \mu_\ast^{\gamma,\rho}$.
\item\label{FLimit} $\delta_{a_n} \boxplus \mu_{n, 1} \boxplus \cdots \boxplus \mu_{n, k_n} \wto \mu_\boxplus^{\gamma,\rho}$.
\item\label{PairLimit} For the shifted measures $\mathring{\mu}_{n, j}(B) := \mu_{n, j}(B + a_{n, j})$ ($B$ is a Borel set) and $a_{n, j} = \int_{|x|<r} x \, \mu_{n, j}({\rm d}x)$ ($r>0$ is any fixed number), the following convergence holds:
\begin{align*}
&\sum_{j=1}^{k_n}\frac{x^2}{1+x^2}\,\mathring{\mu}_{n,j}({\rm d}x)\wto \rho, \qquad a_n+\sum_{j=1}^{k_n}\left(a_{n,j}+\int_\R\frac{x}{1+x^2}\,\mathring{\mu}_{n,j}({\rm d}x)\right) \to \gamma.
\end{align*}
\end{enumerate}
\end{theorem}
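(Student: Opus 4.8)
The statement asserts the mutual equivalence of three conditions concerning an infinitesimal array, where \eqref{CLimit} is a classical limit theorem, \eqref{FLimit} its free analogue, and \eqref{PairLimit} an intrinsic convergence condition on the array data. The overall strategy is to show \eqref{CLimit} $\Leftrightarrow$ \eqref{PairLimit} and \eqref{FLimit} $\Leftrightarrow$ \eqref{PairLimit} separately, so that \eqref{CLimit} and \eqref{FLimit} become equivalent through their common reformulation \eqref{PairLimit}. Both of these equivalences are in fact classical results: the equivalence \eqref{CLimit} $\Leftrightarrow$ \eqref{PairLimit} is the classical theory of limits of sums of infinitesimal independent arrays, the Gnedenko--Kolmogorov theory (see \cite[\S 25]{GK54}), and the equivalence \eqref{FLimit} $\Leftrightarrow$ \eqref{PairLimit} is the Bercovici--Pata correspondence \cite{BP00}, which was precisely designed to show that the classical and free limit theorems for infinitesimal arrays are governed by the same combinatorial data. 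So the plan is essentially to assemble these two inputs and check that the parametrizations match.

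First I would recall the precise form of the Gnedenko--Kolmogorov theorem. For an infinitesimal array $\{\mu_{n,j}\}$ and constants $a_n$, the shifted array $\mathring{\mu}_{n,j}(B) = \mu_{n,j}(B+a_{n,j})$ with the truncated-mean recentering $a_{n,j} = \int_{|x|<r} x\,\mu_{n,j}({\rm d}x)$ is again infinitesimal, and $\delta_{a_n}\ast\mu_{n,1}\ast\cdots\ast\mu_{n,k_n}$ converges weakly to the $\ast$-infinitely divisible law $\mu_\ast^{\gamma,\rho}$ (with canonical pair $(\gamma,\rho)$ as in \eqref{CLK1}) if and only if the two convergences in \eqref{PairLimit} hold. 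This is exactly \cite[\S 25, Theorem 1]{GK54} once one translates the classical ``accompanying infinitely divisible law'' statement into the $(\gamma,\rho)$ parametrization used here; the translation is a matter of rewriting the L\'evy measure and the shift using the truncation function $x/(1+x^2)$ and the finite measure $\rho({\rm d}x) = \frac{x^2}{1+x^2}(\text{L\'evy measure})$ plus a Gaussian atom at $0$. I would spell out this translation carefully, since the paper's \eqref{CLK1} uses the specific kernel $(e^{zx}-1-\frac{zx}{1+x^2})\frac{1+x^2}{x^2}$ and one must confirm that the pair $(\gamma,\rho)$ appearing on the right-hand side of \eqref{PairLimit} is the same pair as in \eqref{CLK1}.

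Next I would invoke the Bercovici--Pata bijection $\Lambda\colon \ID(\ast,\R)\to\ID(\boxplus,\R)$, which by construction sends $\mu_\ast^{\gamma,\rho}$ to $\mu_\boxplus^{\gamma,\rho}$ (same canonical pair, now read through the free L\'evy--Khintchine representation of Theorem \ref{thmBV93}\eqref{FLK1}). The main theorem of \cite{BP00} states that for an infinitesimal array, $\delta_{a_n}\ast\mu_{n,1}\ast\cdots\ast\mu_{n,k_n}\wto\nu$ for some (necessarily $\ast$-infinitely divisible) $\nu$ if and only if $\delta_{a_n}\boxplus\mu_{n,1}\boxplus\cdots\boxplus\mu_{n,k_n}\wto\Lambda(\nu)$, and moreover the array satisfies the same intrinsic convergence condition in both cases --- which is precisely \eqref{PairLimit}. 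Thus \eqref{FLimit} $\Leftrightarrow$ \eqref{PairLimit} follows immediately, and combining with the previous paragraph gives the full cycle of equivalences. I would close the proof by noting that both the classical and free limits are shift-covariant in $a_n$ and scale/convolution-compatible so that the recentering by $a_{n,j}$ used in \eqref{PairLimit} is harmless: absorbing the $a_{n,j}$ into $a_n$ changes neither the hypothesis nor the conclusion, which is why the condition can be phrased for the recentered array $\mathring{\mu}_{n,j}$.

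\textbf{Main obstacle.} I do not expect any deep difficulty; the content is a citation-level assembly of \cite{GK54} and \cite{BP00}. The one genuinely fiddly point, and the step I would be most careful about, is the bookkeeping in matching parametrizations: one must verify that the pair $(\gamma,\rho)$ extracted from the intrinsic condition \eqref{PairLimit} is literally the pair appearing in \eqref{CLK1} and in Theorem \ref{thmBV93}\eqref{FLK1}, including the correct handling of the Gaussian part as $\rho(\{0\})$ and the correct truncation kernel $x/(1+x^2)$ versus whatever normalization appears in the cited sources. Getting the constants and the role of the fixed truncation radius $r$ exactly right (and checking that the limit is independent of $r$, as it must be since different $r$ shift $a_{n,j}$ and $\gamma$ consistently) is the part that requires attention rather than ideas.
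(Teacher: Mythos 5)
Your proposal is correct and, like the paper, is a citation-level assembly; the classical half is identical (the equivalence of (1) and (3) via Gnedenko--Kolmogorov \cite{GK54}), but you handle the free half by a different route. The paper quotes Chistyakov and Goetze \cite[Theorem 2.2]{CG08}, who prove the equivalence of (2) and (3) directly, i.e.\ an intrinsic free analogue of the Gnedenko--Kolmogorov criterion; you instead invoke the Bercovici--Pata transfer principle to get the equivalence of (1) and (2) and then compose with the classical criterion to reach (3). That works, and it makes the role of the bijection $\mu_\ast^{\gamma,\rho}\mapsto\mu_\boxplus^{\gamma,\rho}$ explicit, whereas the paper's route never mentions the bijection and treats the free case on its own footing. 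One correction, though: the transfer theorem you need (infinitesimal array plus centering constants $a_n$, classical convergence to $\nu$ iff free convergence to the image of $\nu$, with matching canonical pairs) is the theorem of the 1999 Annals paper, cited here as \cite{BP99}; the reference \cite{BP00} that you name is the Proc.\ AMS note proving only $\IA(\boxplus)=\ID(\boxplus)$ (the Hin\v{c}in-type statement \eqref{FK}), which neither identifies the limit nor treats the centerings, so it cannot by itself give the equivalence of (2) and (3). Your final remark about matching the $(\gamma,\rho)$ parametrizations, the truncation kernel $x/(1+x^2)$, and the independence of the fixed radius $r$ is exactly the bookkeeping one must do on either route.
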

The equivalence between \eqref{CLimit} and \eqref{PairLimit} is well known in classical probability \cite[Chapter 4]{GK54}, and the equivalence between \eqref{FLimit} and \eqref{PairLimit} was proved by Chistyakov and Goetze \cite[Theorem 2.2]{CG08}. Note that a Boolean version also holds \cite{Wan08}. However, we prove that the monotone version fails to hold even if $a_n=0$.
\begin{proposition}\label{m-analogue} In the setting of Theorem \ref{NIID}, suppose moreover that $a_n=0$. Then the statement
\begin{equation}\label{MLimit}
\mu_{n,1} \rhd \cdots \rhd \mu_{n,k_n} \wto \mu_\rhd^{\gamma,\rho}
\end{equation}
is not equivalent to the statements \eqref{CLimit}--\eqref{PairLimit} in Theorem \ref{NIID}.
\end{proposition}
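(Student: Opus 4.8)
The plan is to disprove the equivalence by exhibiting an explicit infinitesimal array for which the classical-free limit (statement \eqref{PairLimit}) holds but the monotone limit \eqref{MLimit} fails, or vice versa. The natural obstruction is that monotone convolution is highly non-commutative, so the order of the factors $\mu_{n,1}\rhd\cdots\rhd\mu_{n,k_n}$ matters, whereas the right-hand sides $\mu_\ast^{\gamma,\rho}$ and $\mu_\boxplus^{\gamma,\rho}$ are insensitive to any permutation of the array. I would exploit exactly this asymmetry.

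Concretely, first I would recall that on the level of reciprocal Cauchy transforms, $F_{\mu_{n,1}\rhd\cdots\rhd\mu_{n,k_n}} = F_{\mu_{n,1}}\circ\cdots\circ F_{\mu_{n,k_n}}$, and that for a shift $\mu_{n,j}=\delta_{c_{n,j}}$ one has $F_{\delta_{c}}(z)=z-c$. Thus if all $\mu_{n,j}$ are point masses $\delta_{c_{n,j}}$, the monotone convolution is simply $\delta_{c_{n,1}+\cdots+c_{n,k_n}}$ — identical to the classical and free convolutions. So pure shifts cannot separate the notions; I need at least one non-degenerate measure whose monotone position is not symmetric. The cleanest choice is to take a single non-trivial measure, say a centered arcsine or semicircle law $\sigma_n$ with small variance $\varepsilon_n\to 0$, placed in a definite position among point masses, and compare with the same array reordered. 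More precisely, fix a target $\mu_\ast^{\gamma,\rho}=\mu_\boxplus^{\gamma,\rho}$ that is in $\ID(\ast)\cap\ID(\boxplus)$, and build two infinitesimal arrays from the same multiset of measures: one in an order that converges monotonically to $\mu_\rhd^{\gamma,\rho}$ (which exists by the inclusion $\ID(\rhd)\subset\IA(\rhd)$ and the construction in Theorem \ref{thminfinitesimalarray}), and a re-ordered one that does not — because reordering leaves \eqref{PairLimit} (a statement about sums of the shifted measures, symmetric in $j$) untouched but changes $F_{\mu_{n,1}}\circ\cdots\circ F_{\mu_{n,k_n}}$.

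The key technical step is to verify that a specific reordering genuinely changes the monotone limit. For this I would use the Hunt-formula / generator description: for the ordered array coming from a $\rhd$-convolution semigroup $(\mu_t)$ via $\mu_{n,j}=\mu_{(j-1)/n,j/n}$, the limit is $\mu_1$, with generator $A(z)=\gamma+\int\frac{1+xz}{x-z}\rho({\rm d}x)$ (up to sign, as in \eqref{MLK1}). If I instead insert the ``non-commutative part'' (a copy of an arcsine/semicircle block of total variance comparable to $1$) at the very end rather than spreading it out, the composition $F_{\mu_{n,1}}\circ\cdots\circ F_{\mu_{n,k_n}}$ converges to a different $F$, namely one where the diffusive part acts first and the drift afterwards, which is $F_{\mathrm{AS}_{0,v}\rhd\delta_c}$ versus $F_{\delta_c\rhd\mathrm{AS}_{0,v}}$ — and Example \ref{arcsine not ID2} shows precisely that $\delta_a\rhd\mathrm{AS}_{0,t}$ and $\mathrm{AS}_{0,t}\rhd\delta_a=$ (shifted arcsine) are genuinely different measures (the former has $F(z)=\sqrt{z^2-2t}-a$, the latter $F(z)=\sqrt{(z-a)^2-2t}$). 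So the two arrays, built from the same measures, have distinct monotone limits, while both satisfy \eqref{PairLimit} with the same $(\gamma,\rho)$. Hence \eqref{MLimit} cannot be equivalent to \eqref{CLimit}--\eqref{PairLimit}: \eqref{PairLimit} holds for both arrays but \eqref{MLimit} holds for at most one of them. I expect the main obstacle to be the bookkeeping needed to confirm that the re-ordered array is still infinitesimal and that its monotone convolution really converges (and to what) — this requires controlling the composition of finitely many near-identity maps $F_{\mu_{n,j}}$ and passing to the limit, using the regularity estimates for $F$-transforms (e.g. $\Im F_\mu(z)\ge\Im z$ and the Lipschitz-type bounds in Lemma \ref{Julia} and Proposition \ref{EV_normal_add}). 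A clean way to package this is to note that the monotone limit of an infinitesimal array, \emph{when it exists}, depends on the ``ordered'' object (the hemigroup / Loewner chain), which is manifestly not permutation-invariant, while \eqref{PairLimit} is; producing one concrete pair of reorderings with distinct limits then finishes the proof.

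Thus the proof structure is: (1) pick $(\gamma,\rho)$ with $\rho\ne 0$, e.g. $\gamma=0$, $\rho=\tfrac12\delta_0$, so $\mu_\rhd^{\gamma,\rho}=\mathrm{AS}_{0,1}$ and $\mu_\ast^{\gamma,\rho}=\mu_\boxplus^{\gamma,\rho}=N(0,1)$, together with some nonzero drift handled by auxiliary point masses that cancel; actually simpler: use a single arcsine block plus shifts $\pm a$ arranged so the net shift is $0$ but the relative order of the arcsine block and a shift $\delta_a$ differs between the two arrays, so one array converges monotonically to $\delta_a\rhd\mathrm{AS}_{0,1}$ and the other to $\mathrm{AS}_{0,1}\rhd\delta_{a}=\mathrm{AS}_{a,1}$; (2) check both arrays are infinitesimal and that \eqref{PairLimit} holds for both with the same $(\gamma,\rho)$ (it does, since \eqref{PairLimit} only sees $\sum_j \frac{x^2}{1+x^2}\mathring\mu_{n,j}$ and $\sum_j(\cdots)$, invariant under reordering and under replacing each measure by its centering); (3) invoke Example \ref{arcsine not ID2} (and the computation $F_{\mathrm{AS}_{a,1}}(z)=\sqrt{(z-a)^2-2}\ne\sqrt{z^2-2}-a=F_{\delta_a\rhd\mathrm{AS}_{0,1}}(z)$ for $a\ne 0$) to conclude the two monotone limits differ, so \eqref{MLimit} is not determined by \eqref{PairLimit}; (4) conclude non-equivalence. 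The verification in step (2) that the monotone convolutions of the two arrays really converge to the claimed limits — i.e. that $F_{\mu_{n,1}}\circ\cdots\circ F_{\mu_{n,k_n}}$ converges locally uniformly to $F_{\delta_a\rhd\mathrm{AS}_{0,1}}$, respectively $F_{\mathrm{AS}_{a,1}}$ — is the step I expect to require the most care, and I would do it by realizing each array as a time-slicing of an explicit additive Loewner chain (with Herglotz vector field $M(z,t)=\frac1{-z}$ on a sub-interval and $M\equiv 0$ i.e.\ a pure shift on the complementary interval, in the two possible orders), and quoting the Loewner-chain/hemigroup correspondence from Section \ref{summary_additive}.
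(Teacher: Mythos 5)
Your counterexample is, in substance, the one the paper uses: arcsine blocks plus Dirac shifts, with Examples \ref{arcsine not ID}/\ref{arcsine not ID2} supplying the non-$\rhd$-infinite-divisibility, and the classical CLT supplying \eqref{CLimit}. The paper's proof is the one-array version: take $\mu_{n,1}=\cdots=\mu_{n,n}=\AS_{0,1/n}$ and $\mu_{n,n+1}=\cdots=\mu_{n,2n}=\delta_{a/n}$ with $a\neq0$; by the semigroup property \eqref{MST semigroup} the monotone convolution equals $\AS_{0,1}\rhd\delta_a=\AS_{a,1}$ \emph{exactly for every} $n$, the classical convolution tends to $N(a,1)$, and since $\AS_{a,1}\notin\ID(\rhd)$ it cannot be $\mu_\rhd^{\gamma,\rho}$ for any pair, so \eqref{CLimit} holds while \eqref{MLimit} fails. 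Your two-reordering argument is a correct alternative wrapper: \eqref{PairLimit} is permutation-invariant while the two orderings give the distinct limits $\AS_{a,1}$ and $\delta_a\rhd\AS_{0,1}$ (your $F$-transform computation $\sqrt{(z-a)^2-2}\neq\sqrt{z^2-2}-a$ is exactly Example \ref{arcsine not ID2}), so at most one ordering could satisfy \eqref{MLimit} with the common $(\gamma,\rho)$; what this buys is that you never need to decide whether either limit is $\rhd$-infinitely divisible, at the cost of carrying two arrays where one suffices. Two small corrections to your plan: the convergence step you flag as the hard part is actually vacuous here, because $\AS_{0,1/n}^{\rhd n}=\AS_{0,1}$ and the Diracs telescope, so the monotone convolution is the \emph{same} measure for every $n$ and no Loewner-chain control of near-identity compositions is needed; and the ``net shift $0$'' arrangement is both unnecessary and inconsistent with the limits you then state ($\delta_a\rhd\AS_{0,1}$ and $\AS_{a,1}$ both have mean $a$) --- the hypothesis $a_n=0$ in Theorem \ref{NIID} only suppresses the auxiliary centering $\delta_{a_n}$, it does not require the array itself to have zero total drift.
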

\begin{proof}
Take $k_n=2 n$ and $\mu_{n,1}= \cdots =\mu_{n,n}$ to be the symmetric \index{arcsine distribution}arcsine law $\AS_{0,1/n}$ with variance $1/n$, and take $\mu_{n,n+1} = \cdots =\mu_{n,2n}=\delta_{a/n}, a\neq0$.  The semigroup property \eqref{MST semigroup} implies that $\mu_{n, 1} \rhd \cdots \rhd \mu_{n, 2n}$ is the shifted arcsine law $\AS_{a,1}$ which is not even $\rhd$-infinitely divisible (see Example \ref{arcsine not ID}). However, the central limit theorem says that the measure $\mu_{n, 1} \ast \cdots \ast \mu_{n, 2n}$ converges to $N(a,1)$.
\end{proof}

Therefore, the following question remains unsolved.
\begin{problem}\label{m-analogue2}
Characterize the convergence \eqref{MLimit}.
\end{problem}

Note that Anshelevich and Williams \cite{AW14} proved that \eqref{MLimit} is equivalent to assertions \eqref{CLimit}--\eqref{PairLimit} in Theorem \ref{NIID} when $a_n=0$ and the distributions are identical, namely $\mu_{n,1}= \cdots = \mu_{n,k_n}$. In this identical setting, the Khintchine type theorem is still open.
\begin{conjecture} \label{m-conj}
Suppose that $\mu,\mu_n \in \cP(\R)$, $n\in\N$, and $\{k_n\}_{n\geq1}$ is a sequence of strictly increasing natural numbers.  If $\mu_n^{\rhd k_n} \wto \mu$, then $\mu\in \ID(\rhd)$.
\end{conjecture}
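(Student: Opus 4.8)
The plan is to take the classical theory of limit laws for infinitely divisible distributions as a blueprint: one wants a Khintchine-type theorem of the form ``any weak limit of $\mu_n^{\rhd k_n}$ with $k_n \uparrow \infty$ lies in $\ID(\rhd)$.'' I would first reduce to a statement about reciprocal Cauchy transforms. Write $F_n = F_{\mu_n}$, so $F_{\mu_n^{\rhd k_n}} = F_n^{\circ k_n}$, and suppose $\mu_n^{\rhd k_n} \wto \mu$. By Lemma \ref{lemmaconvergence} this means $F_n^{\circ k_n} \to F_\mu$ locally uniformly in $\Ha$. The goal is to embed $F_\mu$ into a continuous one-parameter $\rhd$-convolution semigroup, i.e.\ to produce a weakly continuous $\rhd$-convolution semigroup $(\mu_t)_{t\ge0}$ with $\mu_0 = \delta_0$ and $\mu_1 = \mu$; then Theorem \ref{MIDLK} gives $\mu \in \ID(\rhd)$.

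First I would try to show that $F_n \to \mathrm{id}$ locally uniformly; this is the monotone analogue of the classical fact that the individual factors in a limit array tend to a point mass when the number of factors goes to infinity. The natural approach is a \emph{uniform infinitesimality} argument: using $F_n^{\circ k_n} \to F_\mu$ and the fact that each $F_n$ is a Pick function with $F_n(iy)/(iy) \to 1$, together with the contraction-type inequality $\Im F_n(z) \ge \Im(z)$ from Lemma \ref{Julia}, one wants to bound how far $F_n$ can move a point $z$. The key point is that $F_n^{\circ j}$ for $0 \le j \le k_n$ are nested (their images decrease), they all map into $F_\mu(\Ha)$ in the limit, and since $k_n \to \infty$ each single step $F_n$ must be asymptotically trivial on compacta. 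I would make this precise by fixing a base point, say $z_0 = i$, tracking the orbit $z_0, F_n(z_0), F_n^{\circ 2}(z_0), \dots, F_n^{\circ k_n}(z_0)$, and using monotonicity of $\Im$ along the orbit together with a distortion estimate (e.g.\ via the hyperbolic metric on $\Ha$, along which each $F_n$ is a weak contraction) to conclude $\sup_{z \in K}|F_n(z) - z| \to 0$ for every compact $K$.

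Next, once $F_n \to \mathrm{id}$, I would build the semigroup by a standard ``interpolation/compactness'' scheme. For $t \in [0,1]$ set $m_n(t) = \lfloor t k_n \rfloor$ and consider the reparametrized iterates $\Phi_n^t := F_n^{\circ m_n(t)}$; these are $F$-transforms of the probability measures $\mu_n^{\rhd m_n(t)}$. Using the infinitesimality of $F_n$ one checks the family $\{\Phi_n^t : n, t\}$ is equicontinuous and uniformly bounded on compacta, so by Montel plus a diagonal argument over a countable dense set of $t$ there is a subsequence along which $\Phi_n^t \to F_t$ locally uniformly for all $t$, with $F_1 = F_\mu$, $F_0 = \mathrm{id}$, and the semigroup relation $F_s \circ F_t = F_{s+t}$ inherited from $F_n^{\circ m_n(s)} \circ F_n^{\circ m_n(t)} \approx F_n^{\circ m_n(s+t)}$ (the error $|m_n(s)+m_n(t) - m_n(s+t)| \le 1$ step is absorbed by infinitesimality). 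Each $F_t$ is a Pick function with the correct normalization at $\infty$ (the condition $F_t(iy)/(iy)\to1$ passes to the limit by Lemma \ref{lemmaconvergence}), so $F_t = F_{\mu_t}$ for probability measures $\mu_t$ forming a weakly continuous $\rhd$-convolution semigroup with $\mu_0 = \delta_0$; weak continuity in $t$ follows from local uniform continuity of $t \mapsto F_t$ and Lemma \ref{lemmaconvergence}. Then Theorem \ref{MIDLK}(2) gives $\mu = \mu_1 \in \ID(\rhd)$.

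The main obstacle, and the reason this is only conjectured, is the very first step: proving $F_n \to \mathrm{id}$ locally uniformly (equivalently $\mu_n \wto \delta_0$) from $F_n^{\circ k_n} \to F_\mu$ alone, without any variance or uniform-infinitesimality hypothesis on the $\mu_n$. In the classical and free settings this is handled by the scalar convergence-of-types machinery (characteristic functions, resp.\ Voiculescu transforms add), but monotone convolution is non-commutative and non-additive --- the linearizing transform is composition, not addition --- so one cannot simply say ``$k_n$ copies of something bounded sum to something bounded, hence each is small.'' One would need a genuinely dynamical argument: controlling a long composition of Pick self-maps of $\Ha$ and extracting smallness of a single factor. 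It is conceivable that $F_n$ has a large ``angular part'' near $\infty$ or near the real line that does not show up in the iterates on compact subsets of $\Ha$, which is exactly the kind of pathology one must rule out; handling the behavior uniformly up to the boundary $\widehat\R$ (where the Denjoy--Wolff point sits) is where I expect the real difficulty to lie, and it is plausible that the conjecture as stated is actually false and a counterexample along the lines of Examples \ref{semicircle not ID}--\ref{arcsine not ID2} (but with growing $k_n$) could be engineered.
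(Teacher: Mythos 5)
This statement is Conjecture \ref{m-conj} of the paper: the authors state it as an open problem (``In this identical setting, the Khintchine type theorem is still open'') and give no proof, so there is nothing to compare your argument against on the paper's side. More importantly, what you have written is not a proof either, and you say so yourself in the last paragraph: the entire argument hinges on deducing $\mu_n\wto\delta_0$, i.e.\ $F_{\mu_n}\to\mathrm{id}$ locally uniformly, from $F_{\mu_n}^{\circ k_n}\to F_\mu$ alone, and that deduction is precisely the open content of the conjecture. In the classical and free settings this step comes for free from additivity of the linearizing transform (characteristic functions, resp.\ Voiculescu transforms), and the paper's discussion around Theorem \ref{NIID} and Proposition \ref{m-analogue} explains why no such additive mechanism is available for $\rhd$; the closest known result, due to Anshelevich and Williams \cite{AW14}, assumes the limit is already of the form $\mu_\rhd^{\gamma,\rho}$ and characterizes the convergence, which is a different statement. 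So the ``main obstacle'' you flag is not a technical loose end to be tightened later; it is the conjecture.

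Even granting $F_{\mu_n}\to\mathrm{id}$, the interpolation/compactness scheme is not as routine as sketched. To pass from $\Phi_n^t=F_{\mu_n}^{\circ\lfloor t k_n\rfloor}$ to a limiting family $(F_t)$ you need equicontinuity in $t$, which requires smallness of a single step $|F_{\mu_n}(w)-w|$ \emph{along the orbit points} $w=F_{\mu_n}^{\circ j}(z)$, $j\le k_n$; locally uniform convergence $F_{\mu_n}\to\mathrm{id}$ on compacta gives no control there, since the orbit may drift towards $\widehat\R$ or to $\infty$ (only the imaginary parts are monotone). You also need tightness of the intermediate laws $\mu_n^{\rhd\lfloor tk_n\rfloor}$ so that the limits $F_t$ satisfy $F_t(iy)/(iy)\to1$ and hence are $F$-transforms (Lemma \ref{lemmaconvergence} allows mass to escape, $a<1$, otherwise), plus weak continuity of $t\mapsto\mu_t$ at $t=0$ before Theorem \ref{MIDLK} applies. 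None of these points is fatal in spirit, but each would need an argument, and all of them sit downstream of the unproved infinitesimality step. In short: your outline is a reasonable description of how one might hope to attack the conjecture, and your diagnosis of where it breaks is accurate, but it does not establish the statement, and the paper does not claim to either.
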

This Khintchine type theorem fails for non-identical distributions since we know that $\IA(\rhd)$ is strictly larger than $\ID(\rhd)$; see Theorem \ref{thminfinitesimalarray} and Example \ref{semicircle not ID}. Moreover, no integral representation is known for the whole class $\IA(\rhd)$. So, even if one solves Problem \ref{m-analogue2}, there is no clue about how the convergence $\mu_{n,1} \rhd \cdots \rhd \mu_{n , k_n} \wto \mu$ could be characterized (as in Theorem \ref{NIID} \eqref{PairLimit}) for a general $\mu\in \IA(\rhd)$ and an infinitesimal array $\{\mu_{n,j}\}$.
However, the subclasses $\ID(\rhd)$ and $\UM(\R)$ of $\IA(\rhd)$ have integral representations; see \eqref{MLK1} and \eqref{mixture uniform}. As mentioned, the class $\ID(\rhd)$ already has a  characterization due to Anshelevich and Williams \cite{AW14} in terms of a limit theorem for identical distributions without a shift. It is then natural to search for limit theorems converging to measures in $\UM(\R)$.

\begin{problem}
Find a limit theorem that characterizes the class $\UM(\R)$ as the set of all possible weak limits. 
\end{problem}

\index{Cauchy transform!univalent|)}

\section[L\'evy's limit theorem]{L\'evy's limit theorem, monotone selfdecomposability, starlike Cauchy transforms, and Markov transform} \label{additive_selfdecom}

\index{selfdecomposable distribution!additive monotone convolution|(}

The most important subclass of $\Univ(\R)$ in this paper is the class of monotonically selfdecomposable distributions. We give three complete characterizations of this class in terms of L\'evy's limit theorem, starlike Cauchy transforms and the Markov transform.

\subsection{L\'evy's limit theorem}\label{sec:Levy}
We discussed in Section \ref{LThm} some limit theorems. The classes of classical and free selfdecomposable distributions also have nice characterizations in terms of a limit theorem.

\begin{definition}
For $\star \in \{\ast,\boxplus,\rhd\}$, we say that a probability measure belongs to class $\LL(\star)$ if it is the weak limit of the probability measures 
\begin{equation}\label{LevyLimit}
\DD_{b_n}(\mu_1\star \cdots  \star \mu_n),\qquad, n=1,2,3,\dots, 
\end{equation} where $b_n$ are positive real numbers and $\mu_{n}$ are probability measures on $\R$ such that\\ $\{\DD_{b_n}(\mu_k)\}_{1 \leq k \leq n, 1\leq n}$ forms an infinitesimal array.
\end{definition}
\begin{remark}
It is common to include a shift and consider the convergence of $\delta_{a_n}\star \DD_{b_n}(\mu_1\star \cdots  \star \mu_n)$ when $\star=\ast$ or $\star=\boxplus$, but in order to avoid the subtlety of shifts for monotone convolution, we  confine ourselves to the case $a_n=0$.
\end{remark}
Obviously, $\LL(\star)$ is a subset of $\IA(\star)$. L\'evy proved that (see \cite[Theorem 56]{Lev54} or \cite[\S29, Theorem 1]{GK54})
\begin{equation}\label{CL}
\SD(\ast) = \LL(\ast).
\end{equation}
The free analogue $\SD(\boxplus)=\LL(\boxplus)$ was proved by Chistyakov and Goetze \cite[Theorem 2.10]{CG08}, which is actually a consequence of \eqref{CL} and Theorem \ref{NIID}. Sakuma \cite{Sak09} gave a more direct proof.

We can prove the analogous statement for monotone convolution. Recall that, from Definition \ref{sec SD}, for every $\mu \in\SD(\rhd)$ and $c\in (0,1)$ there exists $\bmu^c\in \cP(\R)$ such that
\begin{equation}\label{msd0}
\mu = (\DD_c\mu) \rhd \bmu^c, \qquad \text{or}\quad G_\mu(z) = c^{-1} G_\mu(c^{-1} F_{\bmu^c}(z)).
\end{equation}
Moreover, we may define $\bmu^0:= \mu$ and $\bmu^1:=\delta_0$. Lemma \ref{equivconvergence} \eqref{unique} implies that $\bmu^c$ is unique and Lemma \ref{equivconvergence} \eqref{m-conv1} shows that the map $[0,1]\to\cP(\R), c\mapsto \bmu^c$ is weakly continuous.

\begin{theorem}\label{SL}
$\SD(\rhd) = \LL(\rhd).$
\end{theorem}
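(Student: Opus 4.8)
The plan is to prove the two inclusions $\SD(\rhd) \subset \LL(\rhd)$ and $\LL(\rhd) \subset \SD(\rhd)$ separately, adapting the strategy of the classical proof of L\'evy's theorem (as in \cite{Lev54} or \cite[\S29]{GK54}) to the monotone setting, where composition of $F$-transforms replaces addition of characteristic exponents.

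\textbf{Proof of $\SD(\rhd) \subset \LL(\rhd)$.} Let $\mu \in \SD(\rhd)$. For each $c \in (0,1)$ we have the measure $\bmu^c$ with $\mu = (\DD_c\mu) \rhd \bmu^c$, and by the remarks preceding the theorem the map $c \mapsto \bmu^c$ is weakly continuous on $[0,1]$ with $\bmu^0 = \mu$, $\bmu^1 = \delta_0$. The plan is to iterate the decomposition along a geometric-type mesh. Fix a sequence $c_n \uparrow 1$, for instance $c_n = 1 - 1/n$ or better $c_n = n/(n+1)$, set $b_n^{-1}$ equal to the product $c_1 c_2 \cdots c_n$ (equivalently $b_n = (c_1 \cdots c_n)^{-1}$), and iterate: writing $\mu = (\DD_{c_1}\mu) \rhd \bmu^{c_1}$, then expanding $\mu$ inside $\DD_{c_1}\mu$ again and using $\DD_{c_1}(\DD_{c_2}\mu \rhd \bmu^{c_2}) = \DD_{c_1 c_2}\mu \rhd \DD_{c_1}\bmu^{c_2}$ (dilation is a $\rhd$-homomorphism), one obtains after $n$ steps
\[
\mu = \DD_{c_1\cdots c_n}\mu \rhd (\DD_{c_1\cdots c_{n-1}}\bmu^{c_n}) \rhd \cdots \rhd (\DD_{c_1}\bmu^{c_2}) \rhd \bmu^{c_1}.
\]
Since $\DD_{c_1\cdots c_n}\mu \wto \delta_0$ (the scaling constants tend to $0$), Lemma \ref{equivconvergence}\eqref{unique} and a telescoping argument let us identify $\mu$ as the weak limit of $\DD_{b_n}(\nu_{n,1} \rhd \cdots \rhd \nu_{n,n})$ for suitable $\nu_{n,k}$ built from the $\bmu^{c_k}$. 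The remaining task is to verify that $\{\DD_{b_n}(\nu_{n,k})\}$ is an infinitesimal array, which follows because $\bmu^{c_k} \wto \delta_0$ as $c_k \to 1$ (continuity of $c \mapsto \bmu^c$ at $c=1$), so the arguments of the inner convolutions are uniformly close to $\delta_0$ after the appropriate rescaling; here one must check the scaling constants $b_n / b_k$ stay controlled, which is the familiar bookkeeping from the classical argument. Thus $\mu \in \LL(\rhd)$.

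\textbf{Proof of $\LL(\rhd) \subset \SD(\rhd)$.} Suppose $\mu$ is the weak limit of $\lambda_n := \DD_{b_n}(\mu_1 \rhd \cdots \rhd \mu_n)$ with $\{\DD_{b_n}(\mu_k)\}$ infinitesimal. The standard fact that $b_{n+1}/b_n \to 1$ (otherwise the infinitesimality of the array forces degeneracy; this is proven exactly as in the classical case by comparing $\lambda_n$ and $\lambda_{n+1}$) is established first. Now fix $c \in (0,1)$. For each $n$ choose $m = m(n)$ so that $b_m / b_n$ is as close as possible to $c$; because $b_{n+1}/b_n \to 1$ and $b_n \to \infty$, such $m(n) < n$ exists with $b_{m(n)}/b_n \to c$. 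Split the product:
\[
\lambda_n = \DD_{b_n}\big( (\mu_1 \rhd \cdots \rhd \mu_{m}) \rhd (\mu_{m+1} \rhd \cdots \rhd \mu_n) \big) = \DD_{b_n/b_m}\big(\DD_{b_m}(\mu_1 \rhd \cdots \rhd \mu_m)\big) \rhd \DD_{b_n}(\mu_{m+1} \rhd \cdots \rhd \mu_n).
\]
The first factor converges weakly to $\DD_c\mu$ (since $\DD_{b_m}(\mu_1 \rhd \cdots \rhd \mu_m) = \lambda_m \wto \mu$ and $b_n/b_m \to c$). By Lemma \ref{equivconvergence}\eqref{m-conv1}, since $\lambda_n \wto \mu$ and the first factor converges, the second factor $\DD_{b_n}(\mu_{m+1} \rhd \cdots \rhd \mu_n)$ converges weakly to some probability measure $\bmu^c$; passing to the limit gives $\mu = (\DD_c\mu) \rhd \bmu^c$, so $\mu \in \SD(\rhd)$. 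Note the second factor is again a monotone convolution of an infinitesimal array, so if one also wishes $\bmu^c \in \IA(\rhd)$ (hence in $\Univ(\R)$ by Theorem \ref{thminfinitesimalarray}) this comes for free, though it is not needed for the statement.

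\textbf{Main obstacle.} The delicate point, exactly as in the classical theorem, is the control of the dilation constants: proving $b_{n+1}/b_n \to 1$ from the infinitesimality hypothesis, and then extracting a subsequence index $m(n)$ with $b_{m(n)}/b_n \to c$. In the monotone setting one cannot appeal to characteristic functions, so this must be argued through the $F$-transforms and the continuity/injectivity statements in Lemma \ref{equivconvergence} and Lemma \ref{lemmaconvergence}; the non-commutativity of $\rhd$ means one must also be careful that the factorization $\mu_1 \rhd \cdots \rhd \mu_n = (\mu_1 \rhd \cdots \rhd \mu_m) \rhd (\mu_{m+1} \rhd \cdots \rhd \mu_n)$ is used with the correct ordering, and that $\DD_{b}$ distributes over $\rhd$ (which it does, since $F_{\DD_b\mu}(z) = b F_\mu(z/b)$ and composition is compatible with this conjugation). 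Everything else is routine once these scaling estimates and the uniqueness/continuity lemmas are in hand.
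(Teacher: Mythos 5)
Your inclusion $\LL(\rhd)\subset\SD(\rhd)$ is essentially the paper's own argument (two slips: infinitesimality forces $b_n\to 0$, not $b_n\to\infty$, since $\DD_{b_n}\mu_{k_0}\wto\delta_0$ for some non-degenerate $\mu_{k_0}$; the ratio you need is $b_n/b_{m(n)}\to c$ as in your displayed splitting, not $b_{m(n)}/b_n\to c$; and the degenerate case $\mu=\delta_0$ should be set aside before invoking convergence of types). The genuine gap is in $\SD(\rhd)\subset\LL(\rhd)$. When you substitute the decomposition of $\mu$ into the dilated head, each newly created factor lands to the \emph{left} of the previously extracted ones: after $n$ steps you have $\mu=\DD_{c_1\cdots c_n}\mu\rhd\nu_n\rhd\nu_{n-1}\rhd\cdots\rhd\nu_1$ with $\nu_k=\DD_{c_1\cdots c_{k-1}}\bmu^{c_k}$. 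Because $\rhd$ is non-commutative, this cannot be put into the form demanded by the definition of $\LL(\rhd)$: there one needs a \emph{single fixed sequence} $\mu_1,\mu_2,\dots$ and scalars $b_n$ with $\DD_{b_n}(\mu_1\rhd\cdots\rhd\mu_n)\wto\mu$, the index increasing from left to right, so that passing from $n$ to $n+1$ appends a factor on the \emph{right}. In your scheme the new factor is prepended on the left, so any relabelling $\mu^{(n)}_j=\nu_{n+1-j}$ depends on $n$; at best you obtain a row-dependent (triangular, $\IA$-type) statement, and your ``suitable $\nu_{n,k}$'' are never specified — no choice of them repairs the ordering, and your proposed $b_n=(c_1\cdots c_n)^{-1}\to\infty$ points in the wrong direction for the rescaled family to be infinitesimal.

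The paper avoids this by dilating the selfdecomposability identity upward rather than substituting downward: applying $\DD_k$ to $\mu=(\DD_{(k-1)/k}\mu)\rhd\bmu^{(k-1)/k}$ gives $\DD_k\mu=(\DD_{k-1}\mu)\rhd\mu_k$ with the fixed sequence $\mu_k:=\DD_k(\bmu^{(k-1)/k})$, so the new factor appears on the right. Iterating yields the \emph{exact} identity $\mu=\DD_{1/n}(\mu_1\rhd\cdots\rhd\mu_n)$ for every $n$ (no limit or telescoping is needed, hence no appeal to Lemma \ref{equivconvergence}), and the only remaining task is the infinitesimality of $\{\DD_{1/n}\mu_k\}_{1\le k\le n}$, where $\DD_{1/n}\mu_k=\DD_{k/n}(\bmu^{(k-1)/k})$; this follows from $\bmu^c\wto\delta_0$ as $c\uparrow1$ together with $k/n\le1$ and a uniform choice of thresholds ($k\ge k_0$ handled by the weak convergence, the finitely many $k<k_0$ by taking $n$ large). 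If you wish to keep a general geometric mesh $c_k$, you must run the iteration in this ``dilate up'' direction so that the factors accumulate on the correct side and form one fixed sequence.
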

\begin{proof}
$\SD(\rhd) \subset \LL(\rhd).$ Take the decomposition $\mu=(\DD_c\mu)\rhd\bmu^c$ for $0\leq c<1$ with convention that $\bmu^0=\mu$. Let $\mu_k:= \DD_k(\bmu^{(k-1)/k})$ for $k\in\N$. It satisfies the identity $\DD_{k}\mu=(\DD_{k-1}\mu) \rhd \mu_k$ for all $k\in\N$, and iterating this identity yields that
\begin{equation*}
\mu = \DD_{1/n}(\mu_1 \rhd \cdots \rhd \mu_n).
\end{equation*}
It then remains to prove that $\{\DD_{1/n}\mu_k\}_{1\leq k \leq n, 1\leq n}$ forms an infinitesimal array. Indeed, recalling from Lemma \ref{equivconvergence}\eqref{m-conv2} that $\bmu^c \wto \delta_0$ as $c\uparrow1$, for each $\delta,\epsilon>0$ we may take $k_0=k_0(\delta,\epsilon)\in\N$ such that
$\bmu^{(k-1)/k}([-\delta,\delta]^c) <\epsilon$ for all $k \geq k_0$, and hence $(\DD_{1/n}\mu_k)([-\delta,\delta]^c) <\epsilon$ for all $k_0 \leq k \leq n$ and $n\in\N$ too. Then there exists $n_0=n_0(\delta,\epsilon)\in\N$ such that $(\DD_{1/n_0}\mu_k)([-\delta,\delta]^c) <\epsilon$ for all $1 \leq k < k_0$. Thus we have
\begin{equation*}
\sup_{1\leq k \leq n}(\DD_{1/n}\mu_k)([-\delta,\delta]^c) <\epsilon,\qquad n \geq n_0,
\end{equation*}
showing the infinitesimality.

$\SD(\rhd) \supset \LL(\rhd).$ Take $\mu \in \LL(\rhd)$ and assume that $\mu$ is not $\delta_0$. We may take probability measures $\mu_n$ and positive numbers $b_n$ such that $\DD_{b_n}(\mu_1\rhd \cdots  \rhd \mu_n)\wto\mu$ and such that $\{\DD_{b_n}(\mu_k)\}_{1 \leq k \leq n, 1\leq n}$ forms an infinitesimal array.

Step 1. We show that
\begin{equation*}
\lim_{n\to\infty} b_n=0, \qquad \lim_{n\to\infty} \frac{b_{n+1}}{b_n}=1.
\end{equation*}
Indeed, there exists $k_0 \in \N$ such that $\mu_{k_0}$ is not equal to $\delta_0$. The infinitesimality of $\{\DD_{b_n}(\mu_k)\}_{1 \leq k \leq n, 1\leq n}$ then shows that $\DD_{b_n}\mu_{k_0} \wto \delta_0$ as $n\to\infty$ and hence $b_n\to0$. For the second limit, let $\lambda_n:=\DD_{b_n}(\mu_1\rhd \cdots  \rhd \mu_n)$. In the obvious identity
\begin{equation*}
\lambda_{n+1}= (\DD_{b_{n+1}/b_n}\lambda_n) \rhd (\DD_{b_{n+1}}\mu_{n+1}),
\end{equation*}
the first and the third measures converge to $\mu$ and $\delta_0$ respectively (the latter follows from the infinitesimality), and hence Lemma \ref{equivconvergence} \eqref{m-conv2} shows that $\DD_{b_{n+1}/b_n}\lambda_n \wto \mu$.  Since $\lambda_n\wto\mu\neq\delta_0$, we must have $b_{n+1}/b_n \to 1$ (see \cite[\S10, Theorem 2]{GK54}).

Step 2. Let $c\in (0,1)$. From Step 1 there exist subsequences $\{m(k)\}_{k\geq1}$ and $\{n(k)\}_{k\geq1}$ of $\N$ such that $m(k) < n(k)$ and $b_{n(k)}/b_{m(k)} \to c$; see the proof of \cite[Theorem 15.3]{Sat13} for details. We denote $(m(k),n(k))$ simply by $(m,n)$. In the identity
\begin{equation}
\lambda_n = (\DD_{b_n/b_m}\lambda_m) \rhd \DD_{b_n}(\mu_{m+1} \rhd \cdots \rhd \mu_n),
\end{equation}
the first and second probability measures converge to $\mu$ and $\DD_c\mu$ respectively, and hence Lemma \ref{equivconvergence} \eqref{m-conv1} shows that the third one converges to some probability measure. Thus we have $\mu \in \SD(\rhd)$.
\end{proof}
The above limit theorem shows that the reciprocals of starlike functions can be characterized as the limits of some iterated compositions of analytic self-maps.

We still lack a characterization of convergence of $\DD_{b_n}(\mu_1\rhd \cdots  \rhd \mu_n)$ to a $\rhd$-selfdecom\-posable distribution.

\begin{problem} Given $\mu \in \SD(\rhd)$, $\mu_n \in\cP(\R)$ and $b_n>0$ such that $\{\DD_{b_n}(\mu_k)\}_{1 \leq k \leq n, 1\leq n}$ forms an infinitesimal array, find a necessary and sufficient condition for the convergence $\DD_{b_n}(\mu_1\rhd \cdots  \rhd \mu_n)\wto \mu$ in a way similar to Theorem \ref{NIID} \eqref{PairLimit}.
\end{problem}

We prove a few basic properties of the class $\SD(\rhd)$.

\begin{proposition}\label{msd}
$\SD(\rhd)$ is a weakly closed subset of $\cP(\R)$.
\end{proposition}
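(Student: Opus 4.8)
The plan is to show that if $\mu^{(m)} \in \SD(\rhd)$ converges weakly to $\mu$, then $\mu \in \SD(\rhd)$, by fixing $c \in (0,1)$ and producing the required decomposition $\mu = (\DD_c\mu) \rhd \bmu^c$. For each $m$ we have $\mu^{(m)} = (\DD_c\mu^{(m)}) \rhd (\overline{\mu^{(m)}})^c$ by $\rhd$-selfdecomposability, where I write $\nu_m := (\overline{\mu^{(m)}})^c$ for the cofactor. Since $\mu^{(m)} \wto \mu$, we have $\DD_c \mu^{(m)} \wto \DD_c\mu$ (scaling is weakly continuous). Now $\DD_c \mu^{(m)} \wto \DD_c\mu$ and $(\DD_c\mu^{(m)}) \rhd \nu_m = \mu^{(m)} \wto \mu$, so Lemma \ref{equivconvergence}\eqref{m-conv1} applies: because the left factors $\DD_c\mu^{(m)}$ converge weakly and the products converge weakly, the cofactors $\nu_m$ must also converge weakly to some probability measure, say $\nu_m \wto \nu$.

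Then I would pass to the limit in the convolution identity. By weak continuity of monotone convolution in each variable (more precisely, joint weak continuity, which follows from Lemma \ref{lemmaconvergence} applied to $F_{\DD_c\mu^{(m)}} \circ F_{\nu_m} \to F_{\DD_c\mu} \circ F_\nu$ locally uniformly), we obtain $(\DD_c\mu^{(m)}) \rhd \nu_m \wto (\DD_c\mu) \rhd \nu$. But the left-hand side equals $\mu^{(m)}$, which converges to $\mu$. By uniqueness of weak limits, $\mu = (\DD_c\mu) \rhd \nu$. Since $c \in (0,1)$ was arbitrary, setting $\bmu^c := \nu$ shows $\mu \in \SD(\rhd)$, and thus $\SD(\rhd)$ is weakly closed.

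The only slightly delicate point is justifying joint weak continuity of $(\alpha,\beta) \mapsto \alpha \rhd \beta$, i.e. that $\alpha_m \wto \alpha$ and $\beta_m \wto \beta$ imply $\alpha_m \rhd \beta_m \wto \alpha \rhd \beta$. This is where I expect the main (though still routine) work: by Lemma \ref{lemmaconvergence}, $F_{\alpha_m} \to F_\alpha$ and $F_{\beta_m} \to F_\beta$ locally uniformly on $\C^+$; since the $F_{\beta_m}$ map $\C^+$ into $\C^+$ and $F_\beta$ is non-constant, the compositions $F_{\alpha_m} \circ F_{\beta_m}$ converge to $F_\alpha \circ F_\beta$ locally uniformly (using that locally uniform convergence is preserved under composition when the inner limit is an open map into the domain of uniform convergence of the outer sequence, together with the normal-family bound $\Im F_{\alpha_m}(z) \geq \Im z$ from Lemma \ref{Julia}), and then Lemma \ref{lemmaconvergence} again gives $\alpha_m \rhd \beta_m \wto \alpha \rhd \beta$. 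In fact one can avoid this altogether: Lemma \ref{equivconvergence}\eqref{m-conv1} already yields $\nu_m \wto \nu$, and then to identify the limit one only needs $(\DD_c\mu^{(m)}) \rhd \nu_m \wto (\DD_c\mu)\rhd\nu$, which is precisely the statement that weak convergence of both factors implies weak convergence of the monotone convolution; this continuity statement is standard and can be cited or proved in one line from Lemma \ref{lemmaconvergence}. So the proof reduces to invoking Lemma \ref{equivconvergence}, weak continuity of $\DD_c$, joint weak continuity of $\rhd$, and uniqueness of weak limits.
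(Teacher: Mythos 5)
Your proof is correct and takes essentially the same route as the paper's: decompose each $\mu^{(m)}=(\DD_c\mu^{(m)})\rhd\nu_m$, use Lemma \ref{equivconvergence}\,\eqref{m-conv1} (with the weak continuity of $\DD_c$) to extract a weak limit $\nu$ of the cofactors, and pass to the limit to conclude $\mu=(\DD_c\mu)\rhd\nu$. The only difference is that you make explicit the joint weak continuity of $\rhd$ used in the final identification, which the paper's one-line proof leaves implicit; your normal-family argument (via $\Im F_{\mu}(z)\ge\Im z$ and locally uniform convergence of the $F$-transforms) for that step is sound.
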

\begin{proof}Take $\mu_n \in \SD(\rhd)$ and suppose that $\mu_n\wto \mu \in\cP(\R)$. By definition, for any $c\in(0,1)$ and $n\in\N$ there exists a $\bmu_n^c \in \cP(\R)$ such that
$\mu_n=(\DD_c\mu_n)\rhd\bmu_n^c. $
Lemma \ref{equivconvergence} \eqref{m-conv1} shows that, as $n\to\infty$, the measure $\bmu_n^c$ weakly converges to some $\bmu^c \in \cP(\R)$, and hence $\mu=(\DD_c\mu) \rhd \bmu^c$.
\end{proof}

\begin{proposition}\label{prop:MSD_affine}
If $\mu \in \SD(\rhd)$, $a\in\R$ and $b>0$, then $(\DD_b \mu) \rhd \delta_a \in \SD(\rhd)$.
\end{proposition}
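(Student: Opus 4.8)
The plan is to work entirely at the level of $F$-transforms (equivalently, to use only the algebra of $\rhd$ and the dilations $\DD_b$), so that the statement reduces to a short chain of elementary identities. First I would record the bookkeeping facts I intend to use, all immediate from $F_{\lambda\rhd\kappa}=F_\lambda\circ F_\kappa$ and from the substitution computation $G_{\DD_b\lambda}(z)=\tfrac1b G_\lambda(z/b)$, i.e.\ $F_{\DD_b\lambda}(z)=bF_\lambda(z/b)$: the operation $\rhd$ is associative (being defined by composition of $F$-transforms); $\DD_b$ distributes over $\rhd$, that is $\DD_b(\lambda\rhd\kappa)=(\DD_b\lambda)\rhd(\DD_b\kappa)$; $\DD_c\circ\DD_b=\DD_{cb}$; $\DD_c\delta_a=\delta_{ca}$; and $\delta_x\rhd\delta_y=\delta_{x+y}$, so in particular $\delta_{-ac}\rhd\delta_{ac}=\delta_{ac}\rhd\delta_{-ac}=\delta_0$.

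Now set $\nu:=(\DD_b\mu)\rhd\delta_a$. Fix $c\in(0,1)$ and, using $\mu\in\SD(\rhd)$, choose $\bmu^c\in\cP(\R)$ with $\mu=(\DD_c\mu)\rhd\bmu^c$. The candidate I would propose is
\[
\overline{\nu}^c:=\delta_{-ac}\rhd(\DD_b\bmu^c)\rhd\delta_a,
\]
which lies in $\cP(\R)$ since $\rhd$ and $\DD_b$ preserve $\cP(\R)$. It then remains to check $(\DD_c\nu)\rhd\overline{\nu}^c=\nu$. Using distributivity, $\DD_c\nu=\DD_c\big((\DD_b\mu)\rhd\delta_a\big)=(\DD_b\DD_c\mu)\rhd\delta_{ca}$, and hence, by associativity together with $\delta_{ca}\rhd\delta_{-ac}=\delta_0$,
\[
(\DD_c\nu)\rhd\overline{\nu}^c=(\DD_b\DD_c\mu)\rhd\delta_{ca}\rhd\delta_{-ac}\rhd(\DD_b\bmu^c)\rhd\delta_a=(\DD_b\DD_c\mu)\rhd(\DD_b\bmu^c)\rhd\delta_a.
\]
Finally $(\DD_b\DD_c\mu)\rhd(\DD_b\bmu^c)=\DD_b\big((\DD_c\mu)\rhd\bmu^c\big)=\DD_b\mu$, so the right-hand side equals $(\DD_b\mu)\rhd\delta_a=\nu$. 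As $c\in(0,1)$ was arbitrary, this shows $\nu\in\SD(\rhd)$.

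There is no genuinely hard step: the whole argument is a composition of the translation and dilation identities above. The only point that deserves a moment's care is the claim that $\DD_b$ distributes over $\rhd$ (equivalently $F_{\DD_b\lambda}(z)=bF_\lambda(z/b)$), which I would dispatch with the one-line change of variables in the Cauchy transform; everything else is bookkeeping. (If one preferred not to mention $F$-transforms at all, the same identities can be stated directly for measures, but the $F$-transform route is the most transparent.)
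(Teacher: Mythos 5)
Your proof is correct and takes essentially the same route as the paper: the paper's one-line proof asserts exactly the identity $(\DD_b\mu)\rhd\delta_a = \DD_c\bigl((\DD_b\mu)\rhd\delta_a\bigr)\rhd\bigl(\delta_{-ca}\rhd(\DD_b\bmu^c)\rhd\delta_a\bigr)$, i.e.\ the same candidate $\overline{\nu}^c=\delta_{-ca}\rhd(\DD_b\bmu^c)\rhd\delta_a$, and you have merely written out the verification via the dilation/translation identities for $F$-transforms.
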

\begin{remark}
The distribution $\delta_a\rhd(\DD_b\mu)$ may not belong to $\SD(\rhd)$, see Example \ref{SD delta}.
\end{remark}
\begin{proof}
The identity \eqref{msd0} implies $(\DD_b\mu)\rhd \delta_a = \DD_c((\DD_b\mu )\rhd \delta_a) \rhd (\delta_{ -c a} \rhd(\DD_b\bmu^c) \rhd \delta_a)$.
\end{proof}

\subsection{Monotone selfdecomposability and starlike Cauchy transform} \label{sec MSD}

\index{Cauchy transform!starlike}

We characterize the set $\SD(\rhd)$. The key concept is the starlikeness.
\begin{definition}\label{def:starlike}
An analytic map $G\colon \C^+\to \C$ having the non tangential limit $G(\infty)=0$ is said to be \emph{starlike} if $G$ is univalent in $\C^+$ and $c G(\C^+) \subset G(\C^+)$ for any $c \in (0,1)$.
 We denote by $\Star(\R)$ the set of probability measures on $\R$ that have starlike Cauchy transforms.
\end{definition}
 Note that $\mu\in\Star(\R) $ if and only if $F_\mu$ is univalent and $cF_\mu(\Ha)\subset F_\mu(\Ha)$ for all $c\in(1,\infty),$ i.e. $F_\mu$ is starlike w.r.t. $\infty.$
\begin{theorem}\label{msd2}
 $\SD(\rhd)=\Star(\R)$.
\end{theorem}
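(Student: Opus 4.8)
The plan is to prove the two inclusions $\SD(\rhd)\subset\Star(\R)$ and $\Star(\R)\subset\SD(\rhd)$ separately, using the Loewner-chain machinery of Chapter \ref{EV_sec_Loewner} as the bridge between the algebraic self-decomposability relation and the geometric starlikeness property. The unifying observation is that the scaling relation $\mu=(\DD_c\mu)\rhd\bmu^c$, rewritten via $F$-transforms as $c^{-1}G_\mu(z)=G_\mu(c^{-1}F_{\bmu^c}(z))$ (equivalently, via reciprocals, $c\,F_\mu(z)=F_\mu(F_{\bmu^c}(z))$ after the obvious rescaling bookkeeping), says precisely that the dilated domain $c\,F_\mu(\Ha)$ is the image of $\Ha$ under a holomorphic self-map of $\Ha$ that factors through $F_\mu$; and a nested family of such domains indexed continuously by $c$ is exactly what a (reparametrized) additive Loewner chain produces.

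\textbf{Direction $\SD(\rhd)\Rightarrow\Star(\R)$.} Suppose $\mu\in\SD(\rhd)$. First I would recall from Section \ref{sec:Levy} that the map $c\mapsto\bmu^c$, $c\in[0,1]$, is well-defined, weakly continuous, with $\bmu^0=\mu$ and $\bmu^1=\delta_0$ (Lemma \ref{equivconvergence}). Set $\lambda_c := \DD_c\mu$, so $\lambda_1=\mu$, $\lambda_0=\delta_0$, and for $0\le c'\le c\le 1$ the relation $\mu=(\DD_{c'}\mu)\rhd\bmu^{c'}$ together with $\DD_{c}(\cdot)$ applied to the corresponding identity for $\mu$ at scale $c'/c$ yields a $\rhd$-convolution hemigroup structure: $\lambda_{c'} = \lambda_c \rhd \big(\DD_c \bmu^{c'/c}\big)$ for $0<c'\le c\le1$. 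Passing to $F$-transforms, $F_{\lambda_{c'}} = F_{\lambda_c}\circ F_{\DD_c\bmu^{c'/c}}$, so $(F_{\lambda_c})_{c\in[0,1]}$ is a decreasing family of $F$-transforms with nested ranges $F_{\lambda_{c'}}(\Ha)\subset F_{\lambda_c}(\Ha)$; reparametrizing $c=e^{-t}$ (and checking weak continuity of $(s,t)\mapsto\mu_{st}$ from continuity of $c\mapsto\bmu^c$) exhibits this as an additive Loewner chain. By Theorem \ref{EV_univalence} every $F_{\lambda_c}$, and in particular $F_\mu=F_{\lambda_1}$, is univalent, hence $G_\mu$ is univalent. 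Finally, from $F_{\lambda_c}(\Ha)\subset F_{\lambda_1}(\Ha)=F_\mu(\Ha)$ and $F_{\lambda_c}=F_{\DD_c\mu}$ with $F_{\DD_c\mu}(z)=c\,F_\mu(z/c)$, one gets $c\,F_\mu(\Ha)=F_{\DD_c\mu}(\Ha)\subset F_\mu(\Ha)$ for all $c\in(0,1]$; equivalently $\tilde c\,F_\mu(\Ha)\subset F_\mu(\Ha)$ for $\tilde c\ge1$, i.e. $F_\mu$ is starlike w.r.t.\ $\infty$, so $\mu\in\Star(\R)$.

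\textbf{Direction $\Star(\R)\Rightarrow\SD(\rhd)$.} Conversely, let $\mu\in\Star(\R)$, so $\Omega:=F_\mu(\Ha)$ is a simply connected domain with $c\Omega\subset\Omega$ for all $c\ge1$ and $F_\mu\colon\Ha\to\Omega$ univalent. Fix $c\in(0,1)$. The domain $c\,\Omega$ is simply connected, contained in $\Omega\subset\Ha$, and by the Koebe-type normalization at $\infty$ (using $F_\mu(z)=z+b+o(1)$ nontangentially) the map $z\mapsto c\,F_\mu(z/c)$ is an $F$-transform, say $F_{\lambda}$ with $\lambda=\DD_c\mu$, whose range is $c\,\Omega\subset\Omega$. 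Since $c\,\Omega\subset\Omega$ and both are ranges of univalent $F$-transforms with the right normalization at $\infty$, the composition $h:=F_\mu^{-1}\circ F_{\DD_c\mu}\colon\Ha\to\Ha$ is a well-defined holomorphic self-map; one checks it is again an $F$-transform — i.e.\ $h=F_{\bmu^c}$ for some $\bmu^c\in\cP(\R)$ — by verifying $\lim_{y\to\infty}h(iy)/(iy)=1$, which follows from the asymptotics of $F_\mu$ and $F_{\DD_c\mu}$ at $\infty$ (here I would invoke Lemma \ref{Julia}, and Remark \ref{rm_capacity} / Theorem \ref{ftransform_images} if a domain-side argument is cleaner). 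Then $F_{\DD_c\mu}=F_\mu\circ F_{\bmu^c}=F_{\mu\rhd\bmu^c}$ wait — more precisely $F_{\mu} = F_{\DD_c\mu}\circ$ is not quite it; one rearranges to $F_\mu\circ h = F_{\DD_c\mu}$, i.e.\ $F_{(\DD_c\mu)\rhd\bmu^c}=F_{\DD_c\mu}$ read the other way: the correct bookkeeping gives $F_\mu = F_{(\DD_c\mu)}\circ(\text{something})$; I would line up the composition order so that it reads $\mu=(\DD_c\mu)\rhd\bmu^c$, which is the definition of $\rhd$-selfdecomposability. Hence $\mu\in\SD(\rhd)$.

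\textbf{Main obstacle.} The delicate point is the normalization at infinity in the $\Star\Rightarrow\SD$ direction: a priori the scaled and composed maps are holomorphic self-maps of $\Ha$, but one must show they are genuine $F$-transforms (angular derivative $1$ at $\infty$, no drift to $\partial\Ha$), and one must get the composition order exactly right so that the factor multiplies $\DD_c\mu$ on the correct side to match Definition \ref{defMSD}. Controlling the behavior of $F_\mu^{-1}$ near $\infty$ — in particular that it has the boundary regular fixed point at $\infty$ with derivative $1$, which is where the Bracci–Contreras–Díaz-Madrigal–Gumenyuk theory (cf.\ the proof of Theorem \ref{EV_embed_F}) enters — is the real work; once that is in place, both inclusions are essentially bookkeeping with $F$-transforms and the nesting property of starlike domains.
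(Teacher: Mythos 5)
Your architecture is the paper's: for $\SD(\rhd)\Rightarrow\Star(\R)$, turn the scaling relation into a $\rhd$-hemigroup of dilates, recognize an additive Loewner chain, and quote Theorem \ref{EV_univalence}; for the converse, use the domain inclusion to define the subordinating map and verify it is an $F$-transform via the normalization at $\infty$. But the dilation bookkeeping is wrong in a way that, as written, breaks both directions. Applying $\DD_c$ to $\mu=(\DD_{c'/c}\mu)\rhd\bmu^{c'/c}$ gives $\DD_c\mu=(\DD_{c'}\mu)\rhd\big(\DD_c\bmu^{c'/c}\big)$ for $c'\le c$, i.e.\ $\lambda_c=\lambda_{c'}\rhd(\cdots)$ — the reverse of your identity $\lambda_{c'}=\lambda_c\rhd(\cdots)$. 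With the correct identity, $(F_{\DD_t\mu})_{t\in[0,1]}$ (with $F_{\DD_0\mu}=\mathrm{id}$ at time $0$, no exponential reparametrization needed; continuity from Lemma \ref{equivconvergence}) is literally a decreasing additive Loewner chain in the sense of Definition \ref{EV_def:evolution_family}, and Theorem \ref{EV_univalence} gives univalence of $F_\mu$; moreover the nesting it produces is $F_\mu(\Ha)\subset c\,F_\mu(\Ha)$ for $c\in(0,1)$, which is starlikeness w.r.t.\ $\infty$. With your swapped identity the factorization runs the wrong way (the family would start at $F_\mu$, not at the identity), so your appeal to Theorem \ref{EV_univalence} does not apply as stated, and your inclusion $F_{\lambda_c}(\Ha)\subset F_\mu(\Ha)$ together with the ``equivalently'' that converts it into $\tilde c\,F_\mu(\Ha)\subset F_\mu(\Ha)$ is a second sign flip, not an equivalence.

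In the $\Star(\R)\Rightarrow\SD(\rhd)$ direction the same error is not self-correcting and is the genuine gap. Starlikeness w.r.t.\ $\infty$ means $\tilde c\,\Omega\subset\Omega$ for $\tilde c>1$, equivalently $\Omega\subset c\,\Omega=F_{\DD_c\mu}(\Ha)$ for $c\in(0,1)$ — not ``$c\,\Omega\subset\Omega$'' as you assert two lines after stating the correct hypothesis. Consequently the only composition the hypothesis makes available is $F_{\DD_c\mu}^{-1}\circ F_\mu$ (equivalently $G_{\DD_c\mu}^{-1}\circ G_\mu$, which is what the paper uses), and that is the map one must show equals $F_{\bmu^c}$; the map $h=F_\mu^{-1}\circ F_{\DD_c\mu}$ you build, even where it is defined, yields $\DD_c\mu=\mu\rhd\bmu^c$, which is not Definition \ref{defMSD}. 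This is exactly the point you leave open (``I would line up the composition order\ldots''), and lining it up requires using the inclusion in the correct direction, not just relabeling. Once fixed, the normalization issue you flag as the main obstacle is much lighter than you suggest: $F_{\bmu^c}:=G_{\DD_c\mu}^{-1}\circ G_\mu$ is a holomorphic self-map of $\Ha$, and $G_\mu(iy)=\frac{1}{iy}(1+o(1))$ forces $F_{\bmu^c}(iy)=iy(1+o(1))$, so Lemma \ref{Julia} already identifies it as an $F$-transform; no appeal to the Bracci–Contreras–D\'iaz-Madrigal–Gumenyuk boundary fixed point theory is needed here (in the paper that machinery enters only in the embedding theorems such as Theorem \ref{EV_embed_F}).
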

\begin{proof}
$\SD(\rhd)\supset\Star(\R)$.  Suppose that $G_\mu$ is starlike. It is by definition univalent. It also satisfies that
$G_{\mu}(\C^+) \subset G_{\DD_c\mu}(\C^+)$ for every $c\in(0,1)$. Then we may define the analytic univalent map $F_c:=G_{\DD_c\mu}^{-1}\circ G_\mu\colon \C^+\to\C^+$. Since $G_\mu(iy) =\frac{1}{iy}(1+o(1))$ as $y\to\infty$, we have the asymptotic form $F_c(iy)=iy(1+o(1))$ as $y\to\infty$. Lemma \ref{Julia} implies the existence of some $\bmu^c \in \cP(\R)$ such that $F_c=F_{\bmu^c}$, and hence $\mu=(\DD_c\mu)\rhd \bmu^c$, as desired.

$\SD(\rhd)\subset\Star(\R)$. Take $\mu \in \SD(\rhd)$. It is easy to see from \eqref{msd0} that $c G_\mu(\C^+)\subset G_\mu(\C^+)$ for all $c\in(0,1)$. It then remains to prove that $\mu\in \Univ(\R)$.  The relation \eqref{msd0} implies that $\DD_t\mu = (\DD_s\mu) \rhd (\DD_t \bmu^{s/t} )$ for all $0<s\leq t \leq 1$. Let $F_t(z):=F_{\DD_t\mu}(z)$ and $f_{s,t}(z):= F_{\DD_t \bmu^{s/t}}(z)$, which are continuous with respect to $s$ and $t$ for every fixed $z$. Take $0<s\leq t \leq 1$. Then we obtain
\begin{equation*}
F_t  = F_s  \circ f_{s,t}.
\end{equation*}
Therefore, $(F_t)_{0\leq  t \leq 1}$ is (a part of) an \index{Loewner chain!additive}additive Loewner chain with \index{transition mappings}transition mappings $(f_{st})$, and hence each map $f_{st}$ is univalent due to
Theorem \ref{EV_univalence}. Therefore, by taking $t=1$, we conclude that $\bmu^s \in \Univ(\R)$ for $0<s<1$ and hence the weak limit $\mu =\lim_{s\downarrow 0}\bmu^s$ belongs to $\Univ(\R)$ too.
\end{proof}

\subsection{Monotone selfdecomposability and Markov transform}\label{label_markov_transform}

\index{Markov transform}

Next we obtain a certain integral representation for probability measures in $\SD(\rhd)=\Star(\R)$. Actually it is related to the Markov transform that is known to be a useful tool in asymptotic representation theory \cite{K98}.

A \index{Rayleigh measure}\emph{Rayleigh measure} on $\R$ is a finite signed Borel measure $\nu$ that satisfies
\begin{align}
& 0\leq \nu((-\infty,x]) \leq 1, \qquad x\in \R, \\
& \nu(\R) =1, \\
& \int_{\R} \log(1+|x|)\,|\nu|({\rm d}x) <\infty. \label{Ray3}
\end{align}
Integration by part shows that \eqref{Ray3} may be written in the equivalent form
\begin{equation}\label{Ray4}
\int_{0}^\infty \frac{1-\DF_\nu(x)}{1+x} \,{\rm d}x <\infty, \qquad \int_{-\infty}^{0} \frac{\DF_\nu(x)}{1+|x|} \,{\rm d}x <\infty,
\end{equation}
where $\DF_\nu(x)=\nu((-\infty,x])$ is the distribution function.

Let $\cR(\R)$ be the set of Rayleigh measures on $\R$. The \emph{Markov transform} \cite{K98,FF16} is a bijection $\KM\colon \cR(\R) \to \cP(\R)$ defined by
\begin{equation}\label{KM}
G_{\KM(\nu)}(z) = \exp\left(-\int_\R \log(z-x)\, \nu({\rm d}x)\right), \qquad z\in\C^+,
\end{equation}
where $\log$ is the principal value. It satisfies the master equation
 \begin{equation*}
 \frac{{\rm d}}{{\rm d} z} G_{\KM(\nu)}(z) = -G_\nu(z) G_{\KM(\nu)}(z).
\end{equation*}
Then we have the following.

\begin{theorem}\label{starlike1} Let $\mu$ be a probability measure on $\R$. The following are equivalent.
\begin{enumerate}[\rm(1)]
\item\label{star1} $\mu\in\SD(\rhd)$.
\item\label{star2} $\Im\left(\frac{G_\mu'(z)}{G_\mu(z)}\right) \geq 0$ for all $z\in\C^+$.
\item\label{star3} There exists a probability measure $\nu$ on $\R$ satisfying the integrability condition
$$
\int_{\R} \log(1+|x|)\,\nu({\rm d}x)<\infty
$$
such that $\mu = \KM(\nu)$. Moreover, if $\mu$ has a finite variance, then so does $\nu$, and in this case the mean and the variance of $\nu$ are given by $m_1(\nu)=m_1(\mu)$ and $\sigma^2(\nu)=2\sigma^2(\mu)$.
\end{enumerate}
\end{theorem}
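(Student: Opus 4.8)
\textbf{Proof plan for Theorem \ref{starlike1}.}

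The plan is to establish the cycle of implications \eqref{star1} $\Leftrightarrow$ \eqref{star2} $\Leftrightarrow$ \eqref{star3}, with \eqref{star1} $\Leftrightarrow$ \eqref{star2} coming essentially from the characterization of starlike functions in geometric function theory combined with Theorem \ref{msd2}. First I would recall that an analytic univalent map $G\colon\C^+\to\C$ with $G(\infty)=0$ has $cG(\C^+)\subset G(\C^+)$ for all $c\in(0,1)$ if and only if, roughly speaking, the boundary of $G(\C^+)$ is "seen in a monotone angular fashion from the origin" — the classical analytic criterion for starlikeness with respect to the origin. Transporting the standard disk version of this criterion (a univalent $g$ with $g(0)=0$ is starlike iff $\Re(zg'(z)/g(z))\ge0$) to the half-plane via a Möbius transformation, and taking into account that here the distinguished boundary point where $G$ vanishes is $\infty$ rather than an interior point, one obtains that $G_\mu$ is starlike (in the sense of Definition \ref{def:starlike}) precisely when $\Im(G_\mu'(z)/G_\mu(z))\ge0$ on $\C^+$. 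The subtlety is the sign and the placement of $\infty$: I would verify the direction of the inequality by testing on the known example $G_\mu(z)=1/z$ (where $G_\mu'/G_\mu=-1/z$ has non-negative imaginary part on $\C^+$), which fixes the convention. Combined with $\SD(\rhd)=\Star(\R)$ from Theorem \ref{msd2}, this gives \eqref{star1} $\Leftrightarrow$ \eqref{star2}.

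For \eqref{star2} $\Rightarrow$ \eqref{star3}, I would observe that the function $h(z):= -G_\mu'(z)/G_\mu(z)$ is analytic on $\C^+$ and, by \eqref{star2}, maps $\C^+$ into $-\C^+\cup\R$, i.e. $-h$ is a Pick function; moreover the asymptotics $G_\mu(z)=\tfrac1z(1+o(1))$, $G_\mu'(z)=-\tfrac1{z^2}(1+o(1))$ as $z\to\infty$ non-tangentially give $zh(z)\to1$, so by the Proposition characterizing Cauchy transforms (the one right after Lemma \ref{lemmaconvergence}, stating $G$ is a Cauchy transform iff $\lim_{y\to\infty}iyG(iy)=1$) there is a probability measure $\nu$ with $h=G_\nu$. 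Then the master equation $\tfrac{\rm d}{{\rm d}z}G_{\KM(\nu)}(z)=-G_\nu(z)G_{\KM(\nu)}(z)$ and the same ODE for $G_\mu$, together with the common boundary behavior $G_\mu(z)\sim 1/z\sim G_{\KM(\nu)}(z)$ at $\infty$, force $G_\mu=G_{\KM(\nu)}$ by uniqueness of solutions, i.e. $\mu=\KM(\nu)$; here I must check the integrability condition $\int\log(1+|x|)\,\nu({\rm d}x)<\infty$, which is exactly the condition \eqref{Ray3} guaranteeing the integral defining the Markov transform converges, and which follows because $\nu$ is a \emph{probability} measure (hence its distribution function $\DF_\nu$ satisfies \eqref{Ray4}) precisely when $G_{\KM(\nu)}$ behaves like $1/z$ at infinity — I would invoke the dictionary in \cite{K98,FF16} or verify \eqref{Ray4} directly using that $\mu$ is a probability measure. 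Conversely \eqref{star3} $\Rightarrow$ \eqref{star2}: if $\mu=\KM(\nu)$ then by \eqref{KM} we get $G_\mu'(z)/G_\mu(z)=-G_\nu(z)$, and since $G_\nu$ maps $\C^+$ into $-\C^+\cup\R$ we obtain $\Im(G_\mu'(z)/G_\mu(z))=-\Im(G_\nu(z))\ge0$.

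Finally, for the moment statement, assuming $\mu$ has finite variance I would use Lemma \ref{Julia}, formula \eqref{EV_finite_var}, and Remark \ref{rm_finite_var}: finite variance of $\mu$ is equivalent to $F_\mu(z)=z-\sigma^2(\mu)/z+{\scriptstyle\mathcal O}(1/|z|)$, hence $G_\mu(z)=1/z+m_1(\mu)/z^2+(\sigma^2(\mu)+m_1(\mu)^2)/z^3+\cdots$ non-tangentially. Plugging this expansion into $G_\nu=-G_\mu'/G_\mu$ and expanding in powers of $1/z$, the coefficient of $1/z^2$ gives $m_1(\nu)=m_1(\mu)$ and the coefficient of $1/z^3$ gives the second moment of $\nu$ equal to $2\sigma^2(\mu)+m_1(\mu)^2$, so $\sigma^2(\nu)=2\sigma^2(\mu)$; the existence of these moments for $\nu$ follows from the same expansion via the converse direction of Lemma \ref{Julia}. \textbf{The main obstacle} I anticipate is the first equivalence: making the half-plane starlikeness criterion with the vanishing point at $\infty$ rigorous, in particular pinning down the correct inequality and handling the non-tangential boundary limits carefully, rather than the Markov-transform computation, which is a fairly mechanical ODE-plus-asymptotics argument once the Cauchy-transform characterization is in hand.
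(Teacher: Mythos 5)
There is a genuine gap at the heart of your plan: the equivalence \eqref{star1} $\Leftrightarrow$ \eqref{star2}. You propose to obtain it by M\"obius--transporting the classical interior-point criterion ($g(0)=0$, $g$ starlike iff $\Re(zg'(z)/g(z))\geq 0$) to the half-plane, "taking into account" that the vanishing point is now the boundary point $\infty$. But that criterion does not transport: here $0$ is a \emph{boundary} point of $G_\mu(\C^+)$, corresponding to the boundary fixed point $\infty$ of $\C^+$, and starlikeness with respect to a boundary point is governed by a different analytic condition (the Robertson-type classes studied by Lecko and Lecko--Lyzzaik, proved via Julia's lemma), not by a change of variables applied to the interior case; a direct Cayley computation of the boundary-point criterion produces an inequality involving $(z^2+1)G_\mu'(z)/G_\mu(z)$, not $\Im(G_\mu'/G_\mu)\geq 0$. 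You yourself flag this as the "main obstacle", but the proposal contains no argument for it, and the shortcut you invoke does not exist in the form claimed. The paper fills exactly this hole with a bespoke argument: for \eqref{star1} $\Rightarrow$ \eqref{star2} it first shows that every translate $G_\mu(\cdot+i\sigma)$ is starlike (using that the subordination maps $F_c$ increase imaginary parts), then analyzes the Jordan curves $\gamma_\sigma=\{G_\mu(t+i\sigma)\}_{t\in\R}$, which tend to $0$ at both ends by Lemma \ref{lemmainfinity}, to get single-signedness of $\frac{{\rm d}}{{\rm d}t}\arg G_\mu(t+i\sigma)$ in $t$, rules out the identically-zero case, shows the sign is the same for all $\sigma$ by a continuity/supremum argument, and finally pins the sign with $G_\mu'/G_\mu\sim -1/z$; the converse is obtained not as \eqref{star2} $\Rightarrow$ \eqref{star1} but as \eqref{star3} $\Rightarrow$ \eqref{star1}, using Lemma \ref{NW} on $\log G_\mu$ for univalence and a winding argument with $\gamma_\sigma\cup\{0\}$ for starlikeness. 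Your cycle \eqref{star1}$\Leftrightarrow$\eqref{star2}$\Leftrightarrow$\eqref{star3} therefore places all the geometric difficulty in a step for which you have no proof.

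A secondary problem sits in \eqref{star2} $\Rightarrow$ \eqref{star3}. The identification $-G_\mu'/G_\mu=G_\nu$ via the Cauchy-transform characterization is exactly what the paper does and is fine, but your claim that the integrability $\int_\R\log(1+|x|)\,\nu({\rm d}x)<\infty$ "follows because $\nu$ is a probability measure (hence $\DF_\nu$ satisfies \eqref{Ray4})" is false as stated: \eqref{Ray3}--\eqref{Ray4} fail for heavy-tailed probability measures. Moreover your plan to compare $G_\mu$ with $G_{\KM(\nu)}$ via the master ODE presupposes that $\KM(\nu)$ is defined, which already requires the integrability. The correct order (and the paper's) is to integrate $G_\mu'/G_\mu=-G_\nu$, and then extract the integrability of $\nu$ from the fact that $\mu$ is a probability measure, i.e.\ from boundedness of $\Re\int^{iy}G_\nu$ as $y\to\infty$; this is a genuine computation (integration by parts plus monotone convergence giving finiteness of $\int_0^\infty\frac{x}{1+x^2}(1-\DF_\nu(x)+\DF_\nu(-x))\,{\rm d}x$), which you gesture at in your final hedge but do not carry out. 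Your treatment of \eqref{star3} $\Rightarrow$ \eqref{star2} and of the moment identities $m_1(\nu)=m_1(\mu)$, $\sigma^2(\nu)=2\sigma^2(\mu)$ via the $1/z$-expansion of $-G_\mu'/G_\mu$ does match the paper and is unproblematic.
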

\begin{remark} (i) The question of characterizing the range of the Markov transform of probability measures was raised in \cite[Section 7]{FF16}. 

(ii) The equivalence between (1) and (2) is known in Lecko \cite{L01} and Lecko and Lyzzaik \cite{LL03} in a more general setting by using Julia's lemma.
We give another proof similar to that in \cite[Theorem 2.5]{P75}.
\end{remark}

\begin{proof}
\eqref{star1} $\Rightarrow $ \eqref{star2}: Take $\mu \in \SD(\rhd)=\Star(\R)$. We follow several steps below.

(a)\,\,\, We first show that $G_\mu(\cdot+i\sigma) \colon\C^+\to\C^-$ is also starlike for any $\sigma>0$.
We fix $c\in(0,1)$ and $\sigma>0$ for the moment. From the argumentation in the proof of Theorem \ref{msd2} and Lemma \ref{Julia}, there exists a univalent function $F_c\colon \C^+\to\C^+$ satisfying $\Im (F_c(z))\geq \Im (z)$ for any $z\in\C^+$. This implies that $F_c(\C_\sigma) \subset \C_\sigma$ for any $\sigma>0$ (recall that $\C_\sigma$ stands for the set of $z \in \C$ such that $\Im(z)>\sigma$). Therefore, we obtain the inclusion
$$
cG_\mu(\C_\sigma) =c G_{\DD_c\mu}(F_c(\C_\sigma)) =G_\mu(c^{-1}F_c(\C_\sigma)) \subset G_\mu(c^{-1}\C_\sigma) \subset G_\mu(\C_\sigma).
$$
This implies that $G_\mu(\cdot+i\sigma) \colon\C^+\to\C^-$ is starlike.

(b)\,\,\, Since  $\gamma_\sigma=\{G_\mu(t+i\sigma)\}_{t\in\R}$ is a Jordan curve and it is the boundary of $G_\mu(\C_\sigma)$,
{}from a geometric observation of starlikeness, for each $\sigma>0$ we have either $\frac{{\rm d}}{{\rm d}t}\arg G_\mu(t+i\sigma) \geq 0$ for all $t\in\R$ or $\frac{{\rm d}}{{\rm d}t}\arg G_\mu(t+i\sigma) \leq 0$ for all $t\in\R$.

(c{})\,\,\, Next we will show that for any $\sigma>0$, $\frac{{\rm d}}{{\rm d}t}\arg G_\mu(t+i\sigma)$ is not identically 0. If it is identically 0 for some $\sigma>0$, then $G_\mu(t+i\sigma)$ lies on a line passing through 0. Lemma \ref{lemmainfinity} implies that $|G_\mu(t+i\sigma)|\to0$ as $t\to\pm\infty$, which contradicts the univalence of $G_\mu$.

(d{})\,\,\, We show that either $\frac{{\rm d}}{{\rm d}t}\arg G_\mu(t+i\sigma) \geq 0$ for any $t\in\R, \sigma>0$ or $\frac{{\rm d}}{{\rm d}t}\arg G_\mu(t+i\sigma) \leq 0$ for any $t\in\R, \sigma>0$. If there exist $\sigma_1, \sigma_2>0$ such that $\frac{{\rm d}}{{\rm d}t}\arg G_\mu(t+i\sigma_1) \geq 0$ and
$\frac{{\rm d}}{{\rm d}t}\arg G_\mu(t+i\sigma_2) \leq 0$ for any $t\in\R$, then we may assume without loss of generality that $\sigma_1<\sigma_2$. Then the supremum
$$
\sigma_3:=\sup\left\{\sigma \in[\sigma_1,\sigma_2]~ \Bigg|~ \frac{{\rm d}}{{\rm d}t}\arg G_\mu(t+i\sigma) \geq 0 \text{~for all $t\in\R$}\right\}
$$
exists in $[\sigma_1,\sigma_2]$. By continuity we must have $\frac{{\rm d}}{{\rm d}t}\arg G_\mu(t+i\sigma_3) = 0$ for any $t\in\R$, which contradicts property (c{}).

(e)\,\,\, Note that $\frac{{\rm d}}{{\rm d}t}\arg G_\mu(t+i\sigma)= \Im(\frac{{\rm d}}{{\rm d}z}\log G_\mu(z))|_{z=t+i\sigma}=\Im\left(\frac{G_\mu'(z)}{G_\mu(z)}\right)\Big|_{z=t+i\sigma}$.
Since $\frac{G_\mu'(z)}{G_\mu(z)} =-\frac{1}{z}(1+o(1))$ as $z \to \infty$ non tangentially, we have $\Im\left(\frac{G_\mu'(z)}{G_\mu(z)}\right) \geq 0$ for all $z\in\C^+$.

\eqref{star2} $\Rightarrow$ \eqref{star3}:
Since $\frac{G_\mu'(z)}{G_\mu(z)}=-\frac{1}{z}(1+o(1))$ as $z\to\infty$ non-tangentially, there exists a probability measure $\nu$ such that $\frac{G_\mu'(z)}{G_\mu(z)}=-G_\nu(z)$.
Taking integration leads to
$$
G_\mu(z)=\exp\left(-\int^z G_\nu(w)\,{\rm d}w+c \right),
$$
where $c \in \C$ is a constant and $\int^z G_\nu(w)\,{\rm d}w$ is an indefinite integral whose derivative is $G_\mu(z)$.
By integration by part,
\[
G_\nu(z) = \int_{\R} \frac{1}{z-x} \,\mu({\rm d}x)=-\int_{\R} \frac{1}{(z-x)^2} \DF_\nu(x)\,{\rm d}x,
\]
where $\DF_\nu(x)=\nu((-\infty,x])$ is the distribution function of $\nu$.
Therefore, up to constants we obtain
\begin{equation}\label{Int}
\int^z G_\nu(w)\,{\rm d}w= \int_{\R} \left(\frac{1}{z-x}+\frac{x}{1+x^2}\right)\DF_\nu(x)\,{\rm d}x.
\end{equation}

We will show that $\int_{-\infty}^0\frac{|x|}{1+x^2}\DF_\nu(x)\,{\rm d}x<\infty$ and $\int_{0}^\infty \frac{x}{1+x^2}(1-\DF_\nu(x))\, {\rm d}x<\infty$ which are equivalent to the desired integrability condition.
Combining \eqref{Int}  with the identity
$$
0 =-\log z + i\pi +\int_{0}^\infty \left(\frac{1}{z-x}+\frac{x}{1+x^2}\right){\rm d}x
$$
gives
\[
\begin{split}
-\int^z G_\nu(w)\,{\rm d}w+c
&= -\log z+ c +i\pi +\int_{0}^\infty \left(\frac{1}{z-x}+\frac{x}{1+x^2}\right)(1-\DF_\nu(x))\,{\rm d}x \\
&\qquad\qquad\qquad\qquad- \int_{-\infty}^0 \left(\frac{1}{z-x}+\frac{x}{1+x^2}\right)\DF_\nu(x)\,{\rm d}x.
\end{split}
\]
In order that  $\exp(-\int^z G_\nu(w)\,{\rm d}w+c)$ defines a Cauchy transform of a probability measure,
the function
$$
f(z)=\int_{0}^\infty \left(\frac{1}{z-x}+\frac{x}{1+x^2}\right)(1-\DF_\nu(x))\,{\rm d}x - \int_{-\infty}^0 \left(\frac{1}{z-x}+\frac{x}{1+x^2}\right)\DF_\nu(x)\,{\rm d}x
$$
must be bounded as $z \to\infty$ non-tangentially. After some computation, we have
$$
\Re[f(iy)]=\int_{0}^\infty \frac{x(y^2-1)}{(x^2+y^2)(1+x^2)}(1-\DF_\nu(x) + \DF_\nu(-x))\,{\rm d}x,
$$
which converges by monotone convergence to $
\int_{0}^\infty \frac{x}{1+x^2}(1-\DF_\nu(x) + \DF_\nu(-x))\,{\rm d}x
$
as $y\to \infty.$
This must be bounded and hence the integrability condition on $\nu$ follows.

\eqref{star3} $\Rightarrow$ \eqref{star1}. Lemma \ref{NW} shows that $\log G_\mu(z)$ is univalent.
Assume $z=x+i\sigma \in G_\mu(\C^+)$. Since $\frac{{\rm d}}{{\rm d}t}\arg G_\mu(t+i\sigma) >0$ from \eqref{star3}, the curve $\gamma_\sigma:=(G_\mu(t+i\sigma))_{t\in\R}$ does not contain the point $cz$ for any $c\in(0,1)$. By Lemma \ref{lemmainfinity},  the curve $\gamma_\sigma \cup\{0\}$ is Jordan closed curve and it
surrounds $cz$, and hence $cz \in G_\mu(\C_\sigma)$.

Suppose that the probability measure $\mu$ has a finite variance.
We can show that
$$
G_\nu(z)=-\frac{G_\mu'(z)}{G_\mu(z)} = \frac{1}{z}\left(1+\frac{m_1(\mu)}{z}+\frac{2m_2(\mu)-m_1(\mu)^2}{z^2}+o(z^{-2}) \right).
$$
Hence $m_1(\nu)=m_1(\mu)$ and $m_2(\nu)=2m_2(\mu)-m_1(\mu)^2$ and hence $\sigma^2(\nu)=2\sigma^2(\mu)$.
\end{proof}

The above characterization enables us to prove the absence of atoms.

\begin{proposition}\label{sd_no_atom}
If $\mu$ is $\rhd$-selfdecomposable and is not a delta measure, then $\mu$ does not have an atom.
\end{proposition}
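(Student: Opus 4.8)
The plan is to use the characterization from Theorem~\ref{starlike1}\eqref{star2}, namely that $\Im(G_\mu'(z)/G_\mu(z)) \geq 0$ for all $z \in \C^+$, together with the Stieltjes inversion formula \eqref{eq:atom2} for atoms. Suppose, for contradiction, that $\mu$ has an atom at some point $a \in \R$ with mass $\lambda := \mu(\{a\}) > 0$. By \eqref{eq:atom2} we have $G_\mu(z) \sim \lambda/(z-a)$ as $z \to a$ non-tangentially, and more precisely $i\epsilon G_\mu(a + i\epsilon) \to \lambda$ as $\epsilon \downarrow 0$. The key point is to analyze the logarithmic derivative $h(z) := G_\mu'(z)/G_\mu(z) = \frac{\rm d}{{\rm d}z}\log G_\mu(z)$ near $z = a$. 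Since $\mu$ is not a delta measure, $G_\mu$ is not simply $\lambda/(z-a)$, so I will need to control the ``remainder'' part of the Cauchy transform. Write $\mu = \lambda \delta_a + \tau$ with $\tau$ a non-negative measure of total mass $1 - \lambda$; then $G_\mu(z) = \lambda/(z-a) + G_\tau(z)$.

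The main step is to show that $\Im h(a + i\epsilon) < 0$ for $\epsilon$ small enough, contradicting Theorem~\ref{starlike1}\eqref{star2}. Computing, $h = G_\mu'/G_\mu$ where $G_\mu'(z) = -\lambda/(z-a)^2 + G_\tau'(z)$. Near $z = a+i\epsilon$, the dominant term of $G_\mu$ is $\lambda/(i\epsilon)$ and of $G_\mu'$ is $-\lambda/(i\epsilon)^2 = \lambda/\epsilon^2$. So to leading order $h(a+i\epsilon) \approx (\lambda/\epsilon^2)/(\lambda/(i\epsilon)) = i/\epsilon$, which has positive imaginary part --- so the leading term alone does not give the contradiction, and I must go to the next order. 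This is where the hypothesis that $\mu$ is not a delta measure enters: I expand
\[
h(z) = \frac{-\lambda(z-a)^{-2} + G_\tau'(z)}{\lambda(z-a)^{-1} + G_\tau(z)} = \frac{1}{z-a} \cdot \frac{-\lambda(z-a)^{-1} + (z-a)G_\tau'(z)}{\lambda(z-a)^{-1} + G_\tau(z)},
\]
and since $G_\tau(a+i\epsilon)$ and $(a+i\epsilon - a)G_\tau'(a+i\epsilon) = i\epsilon\, G_\tau'(a+i\epsilon)$ stay bounded (indeed the latter $\to 0$ unless $\tau$ charges a neighborhood of $a$), while $\lambda(z-a)^{-1} = \lambda/(i\epsilon) \to \infty$, I get
\[
h(a+i\epsilon) = \frac{1}{i\epsilon}\left(-1 + \frac{i\epsilon}{\lambda}\,G_\tau(a+i\epsilon)\cdot(\text{bounded correction}) + o(\epsilon)\right) = \frac{i}{\epsilon} - \frac{G_\tau(a+i\epsilon)}{\lambda} + o(1).
\]
Taking imaginary parts: $\Im h(a+i\epsilon) = \frac{1}{\epsilon} - \frac{\Im G_\tau(a+i\epsilon)}{\lambda} + o(1)$; since $\Im G_\tau \le 0$ this is positive, so this crude estimate still fails. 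The resolution must instead use that $\Im h \ge 0$ forces a rigid structure: the function $-h$ must be, up to sign, a Cauchy/Pick-type function, and one should instead argue globally. The better route: from Theorem~\ref{starlike1}\eqref{star2}, $z \mapsto -G_\mu'(z)/G_\mu(z)$ is a function mapping $\C^+$ into $\overline{\C^+}$... actually into $-\overline{\C^+}$ with the right normalization, so by the proof of \eqref{star2}$\Rightarrow$\eqref{star3} there is a probability measure $\nu$ with $-G_\mu'/G_\mu = G_\nu$, i.e. $G_\mu(z) = \exp(-\int_\R \log(z-x)\,\nu({\rm d}x))$ up to the normalization in \eqref{KM}. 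Then the order of vanishing/blow-up of $G_\mu$ at a real point $a$ is governed by the atom $\nu(\{a\})$: one has $i\epsilon\, G_\mu(a+i\epsilon) \to \lambda > 0$ forces, via the integral representation, $\nu(\{a\}) = 1$, hence $\nu = \delta_a$, hence $G_\mu(z) = 1/(z-a)$, i.e. $\mu = \delta_a$, a contradiction.

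So the cleanest plan is: (1) invoke Theorem~\ref{starlike1} to get $\mu = \KM(\nu)$ for a probability measure $\nu$ satisfying the integrability condition; (2) from \eqref{KM}, write $-\log G_\mu(z) = \int_\R \log(z-x)\,\nu({\rm d}x)$, and compute $\lim_{\epsilon\downarrow 0} \frac{\log|i\epsilon\, G_\mu(a+i\epsilon)|}{\log(1/\epsilon)}$ or more directly analyze $i\epsilon\, G_\mu(a+i\epsilon)$; splitting $\nu = \nu(\{a\})\delta_a + \nu'$ with $\nu'(\{a\}) = 0$, the $\delta_a$ part contributes $\exp(-\nu(\{a\})\log(a+i\epsilon - a)) = (i\epsilon)^{-\nu(\{a\})}$, while the $\nu'$ part contributes $\exp(-\int \log(a+i\epsilon - x)\,\nu'({\rm d}x))$ which is $\epsilon^{o(1)}$ (more carefully, it has modulus $\epsilon^{-\nu'(\{a\})} = \epsilon^0$ since $\nu'(\{a\}) = 0$, and one checks the limit exists and is finite and nonzero, or diverges only logarithmically --- this requires a short argument using the integrability condition and dominated/monotone convergence); (3) therefore $|i\epsilon\, G_\mu(a+i\epsilon)| \sim \epsilon^{1 - \nu(\{a\})} \cdot (\text{slowly varying})$, and for this to converge to the positive finite constant $\lambda$ we need $\nu(\{a\}) = 1$; (4) a probability measure with an atom of full mass is $\delta_a$, so $\nu = \delta_a$ and $\mu = \KM(\delta_a) = \delta_a$, contradicting the hypothesis. \textbf{The main obstacle} I anticipate is step~(2)--(3): rigorously controlling the asymptotics of $\exp(-\int \log(a+i\epsilon-x)\,\nu'({\rm d}x))$ as $\epsilon \downarrow 0$ when $\nu'$ may still put a lot of mass very close to $a$ (without an atom exactly at $a$), and showing this factor is genuinely $\epsilon^{o(1)}$ and does not itself blow up at a rate competing with $\epsilon^{1-\nu(\{a\})}$; this is where the integrability condition $\int \log(1+|x|)\,\nu({\rm d}x) < \infty$ and a careful splitting of the integral over $|x - a| < \delta$ versus $|x-a| \ge \delta$ will be needed, together with the fact that $\nu(\{a\}) = 0$ for $\nu'$ makes $\nu'(\{x : |x-a|<\delta\}) \to 0$ as $\delta \to 0$.
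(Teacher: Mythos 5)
Your final plan is essentially the paper's own proof: invoke Theorem \ref{starlike1} to write $\mu=\KM(\nu)$, compute the atom via $\mu(\{a\})=\lim_{y\downarrow0} iy\,G_\mu(a+iy)$, and show this limit vanishes unless $\nu=\delta_a$ (which forces $\mu=\delta_a$, contradicting the hypothesis). The obstacle you flag disappears if, instead of splitting off $\nu(\{a\})\delta_a$, you absorb the factor $iy$ into the integral using $\nu(\R)=1$, so that $iy\,G_\mu(a+iy)=\exp\left(\int_\R\log\frac{iy}{iy+a-x}\,\nu({\rm d}x)\right)$ has real part $\frac{1}{2}\int_\R\log\frac{y^2}{y^2+(a-x)^2}\,\nu({\rm d}x)$, whose integrand is nonpositive and monotonically decreasing in $y$, so monotone convergence sends it to $-\infty$ whenever $\nu(\R\setminus\{a\})>0$ --- no control of how fast $\nu$-mass accumulates near $a$ is needed (alternatively, your version closes by dominated convergence after dividing by $\log(1/y)$, using $-\log|a+iy-x|\le\min\bigl(\log\tfrac{1}{y},\log\tfrac{1}{|x-a|}\bigr)$ on $\{0<|x-a|\le 1\}$ and that the far-away mass only lowers the modulus).
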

\begin{remark}
In classical and free probabilities it is known that any $\ast$- or $\boxplus$-selfdecom\-posable distribution is Lebesgue absolutely continuous; see \cite[Theorem 27.13]{Sat13} for the classical case, and see \cite[Theorem 3.4]{HS} for the free case. Our result is a partial analogy with these results. Note that in \cite[Theorem 2.4.5]{K98} it is stated that a measure in $\SD(\rhd)$ with compact support is Lebesgue absolutely continuous, but we could not find a reference with a proof. Thus we pose the following problem.
\end{remark}
 \begin{problem}
Is there a $\rhd$-selfdecomposable distribution that has a non-zero singular continuous part?
\end{problem}

\begin{proof}[Proof of Proposition \ref{sd_no_atom}] By Theorem \ref{starlike1} there exists a probability measure $\nu$ such that $\int_{\R} \log (1+|x|) \nu({\rm d}x)<\infty$ and $\mu=\KM(\nu)$. Note that $\mu= \delta_b$ if and only if $\nu=\delta_b$. Therefore, our assumption says that $\nu$ is not a Dirac delta measure.

It is well known (and follows readily from the dominated convergence theorem) that $\mu(\{a\})= \lim_{y\downarrow} i y G_\mu(a +iy)$ for all $a\in\R$.
Using \eqref{KM} then yields
\begin{equation*}
\mu(\{a\}) = \lim_{y\downarrow0}i y G_\mu(a+i y) = \lim_{y\downarrow0}\exp\left(\int_\R \log \frac{i y}{i y + a -x} \nu({\rm d}x) \right).
\end{equation*}
Now, by the monotone convergence theorem,
\begin{equation*}
\Re \int_\R \log \frac{i y}{i y + a -x} \nu({\rm d}x) = \frac{1}{2}\int_\R \log \frac{y^2}{y^2 + (a -x)^2} \nu({\rm d}x) \to -\infty \cdot \nu(\R\setminus\{a\})  =-\infty
\end{equation*}
as $y\downarrow0$. This shows that $\mu(\{a\})=0$ for any $a\in\R$.
 \end{proof}

\begin{example}\label{SD delta}
Take $\nu=\sum_{k=1}^n w_k \delta_{a_k}$, where $-\infty < a_1 < \cdots < a_n <\infty, n \geq2$ and $\sum_{k=1}^n w_k=1, w_k >0$. Then
\begin{equation*}
G_{\KM(\nu)}(z) = \prod_{k=1}^n (z-a_k)^{-w_k},
\end{equation*}
where the powers are the principal value. The Stieltjes inversion shows that the probability measure $\KM(\nu)$ is absolutely continuous with density
\begin{equation*}
p(x) =
\begin{cases} 0,& x > a_n \text{~or~} x<a_1, \\[2mm]
\displaystyle \frac{\sin \pi(w_{p+1}+\cdots + w_n)}{\pi} \prod_{k=1}^n |x-a_k|^{-w_k}, & x \in (a_p,a_{p+1}), 1 \leq p \leq n-1.
\end{cases}
\end{equation*}
The range $G_{\KM(\nu)}(\C^+)$ is of the form $\C^- \setminus \bigcup_{p=1}^{n-1} L_p$, where $L_p= \{r e^{i\theta_p}: r \geq r_p\}$ are half-lines with
\begin{align*}
r_p &= \min_{x \in (a_{p}, a_{p+1})} \prod_{k=1}^n |x-a_k|^{-w_k}, \\
\theta_p &=-\pi (w_{p+1}+ \cdots + w_n).
\end{align*}
In particular, when $\nu = \frac{1}{2}(\delta_{-1}+\delta_1)$, then $\KM(\nu)$ is the symmetric \index{arcsine distribution}arcsine law on $[-1,1]$ and the range $G_{\KM(\nu)}(\C^+)$ is given by $\C^- \setminus i [-1,\infty)$.
Moreover, take $a \neq 0$. Since $F_{\delta_a\rhd \KM(\nu)}(z) = F_{\KM(\nu)}(z)-a$, we have $F_{\delta_a\rhd \KM(\nu)}(\C^+) = -a+ \C^+ \setminus i (0,1]$.\\
 This easily shows that $r F_{\delta_a\rhd \KM(\nu)}(\C^+) \not \subset F_{\delta_a\rhd \KM(\nu)}(\C^+)$ for every $r>1$ and hence $\delta_a \rhd \KM(\nu)$ is not in $\SD(\rhd)$.
\end{example}

\begin{example} The \index{Boolean stable distribution}Boolean stable distribution $\bfb_{\alpha,\rho,t}$, with $\alpha\in(0,2], \rho\in[0,1]\cap [1-1/\alpha, 1/\alpha], t >0 $ was introduced in \cite{SW97} and is characterized by
\begin{equation}\label{BST}
G_{\bfb_{\alpha,\rho,t}}(z) = \frac{1}{z + t e^{i \alpha \rho \pi}z^{1-\alpha}}, \qquad  z\in \C^+.
\end{equation}
It is $\rhd$-selfdecomposable if and only if $\alpha \in (0,1]$ since it satisfies
\begin{equation*}
\bfb_{\alpha,\rho,t} = \KM(((1-\alpha)\delta_0+\alpha\delta_1) \circledast \bfb_{\alpha,\rho,t}),
\end{equation*}
which was computed in \cite[Example 5.8 (2)]{AH16b} only for $\rho=t=1$, but extends to all parameters.
Various properties of Boolean stable distributions are investigated in \cite{AH14,HS15,AH16a}.
\end{example}

\begin{example} The \index{monotonically stable distribution}monotonically stable distribution $\bfm_{\alpha,\rho,t}$ is $\rhd$-selfdecomposable since it satisfies $\bfm_{\alpha,\rho,t}= (\DD_c \bfm_{\alpha,\rho,t}) \rhd \bfm_{\alpha,\rho,(1-c^\alpha)t}$. Arizmendi and Hasebe \cite[Example 5.8 (1)]{AH16b} proved that
\begin{equation*}
\bfm_{\alpha,\rho,t} = \KM(\bfb_{\alpha,\rho,t})
\end{equation*}
for $\rho=t=1$, where $\bfb_{\alpha,\rho,t}$ is the \index{Boolean stable distribution}Boolean stable distribution \eqref{BST}. This formula extends to all parameters.

\end{example}

\begin{example} The \index{free stable distribution}free stable distribution is $\boxplus$-selfdecomposable and hence it is also \\ $\rhd$-selfdecomposable (see Section \ref{sec FSD}). Arizmendi and Hasebe \cite[Example 5.8 (3)]{AH16b} proved that
\begin{equation*}
\bff_{\alpha,\rho,t} =\KM(\bff_{\alpha,\rho,t}^{\uplus \alpha})
\end{equation*}
 for $\rho=t=1$, where $\uplus$ is Boolean convolution.  This result may be generalized to arbitrary parameters.
\end{example}

We give counterexamples to some inclusions.

\begin{example}
\begin{enumerate}[\rm(1)]
\item $\SD(\rhd) \not\subset \ID(\rhd)$ since the \index{semicircle distribution}semicircle distribution is in $\SD(\boxplus)\subset \SD(\rhd)$ but is not $\rhd$-infinitely divisible (see Example \ref{semicircle not ID}).

\item $\SD(\rhd) \not \subset \UM(\R)$ since the centered \index{arcsine distribution}arcsine law is not unimodal but it is\\ $\rhd$-selfdecomposable.

\item $\UM(\R)\not\subset \SD(\rhd)$ since there is a bounded simply connected domain that is horizontally convex but not starlike. Note that such a domain can be realized as the range of $G_\mu$ for some probability measure $\mu \in \Univ(\R)$ with compact support  by Theorem \ref{ftransform_images} (b).
\end{enumerate}
\end{example}

\index{selfdecomposable distribution!additive monotone convolution|)}



\chapter{Limit Theorems for Multiplicative Monotone Convolution}\label{sectionunitcircle}

This section is the multiplicative analogue of Section \ref{section_limits_additive}, i.e. we study the convolution
$\submm$ and probability measures on $\T$ with univalent $\eta$-transform.\\

Non-commutative probability theory provides us with four further multiplicative convolutions of probability measures on $\T$, see Section \ref{mono_convolutions_mult}. Again, we will also encounter the Boolean, classical, and free convolution on $\cP(\T).$

Recall that the normalized Haar measure $\haar$ on the unit circle plays a special role for all those convolutions.

\section{Preliminaries}\label{sec Mconv}

We denote by $\Univ(\tor)$ the set of all probability measures $\mu$ on $\tor$ which have univalent $\psi_\mu$. Note that
$\psi_\mu$ is univalent if and only if $\eta_\mu$ is univalent.
The Hurwitz theorem and Lemma \ref{lemmaconvergenceunit} then show that $\Univ(\tor)\cup\{\haar\}$ is a weakly closed subset of the set $\cP(\tor)$ of all probability measures on $\tor$.

A useful and interesting example is the Poisson kernel. By Lemma \ref{characterizationeta}, for every $c\in\disk\setminus\{0\}$ there exists a probability measure $\pk_c$ on $\tor$ such that $\eta_{\pk_c}(z)=c z$, and this measure turns out to be the Poisson kernel:
\begin{equation*}
\pk_c({\rm d}\xi) = \frac{1-|c|^2}{|1-\overline{c}\xi|^2} \haar({\rm d}\xi) =\frac{1-|c|^2}{1-2|c| \cos(\theta- \arg c) + |c|^2 }\, \frac{{\rm d} \theta}{2\pi},
\end{equation*}
where $\xi= e^{i\theta}$ and $\arg c $ is of any branch. Then we have $\Sigma_{\pk_c}(z) = 1/c$, and hence $\eta_{\mu\boxtimes\pk_c}= \eta(c z)$ which implies that $\mu\boxtimes \pk_c = \mu\submm\pk_c$. Moreover, $m_n(\pk_c) = c^n$ for $n\in\N$ and hence $m_n(\mu\circledast \pk_c)=c^n m_n(\mu)$. This implies that $\psi_{\mu\circledast \pk_c}(z) = \psi_\mu(c z)$ and hence $\eta_{\mu\circledast \pk_c}(z)=\eta_\mu(cz)$. Thus we obtain
\begin{equation*}
\mu\boxtimes \pk_c = \mu\submm\pk_c = \mu \circledast \pk_c
\end{equation*}
and
\begin{equation}\label{pk2}
\pk_c^{\boxtimes n}  = \pk_c^{\mmm n} = \pk_c^{\circledast n} = \pk_{c^n}.
\end{equation}
Note that $\pk_c$ converges weakly to $\delta_\zeta$ as $c$ tends to $\zeta \in \tor$, and $\pk_c$ converges to $\haar$ as $c\to0$. These results can be verified via Lemma \ref{lemmaconvergenceunit}. Thus we may extend the parameter of $\pk_c$ to $c\in\overline{\disk}$ by weak continuity.

\section[Khintchine's limit theorem]{Khintchine's limit theorem and univalent moment generating functions}\label{subsec_khin_mult}

We prove that the set of possible limits of monotone convolution of infinitesimal arrays on $\tor$ is exactly the set of all $\mu$ for which $\psi_\mu$ is univalent together with the Haar measure $\haar$. For additive monotone convolution we have met a technical difficulty and we needed assumptions of finite variance, but for multiplicative convolution on $\tor$ we can show the complete result.
\begin{definition}
A family of probability measures  $\{\mu_{n,j} \mid 1\leq j \leq k_n, n\geq1\}$ on $\tor$ is called an \index{infinitesimal array!of measures on $\tor$}\emph{infinitesimal (triangular) array} if $k_n\uparrow\infty$ and for any $\delta\in(0,\pi)$,
$$
\lim_{n\to \infty}\sup_{1 \leq j \leq k_n} \mu_{n,j}([-\delta,\delta]^c)=0,
$$
where the arc $\{e^{i \theta}: \theta \in [-\delta,\delta]\}$ is identified with the interval $[-\delta,\delta]$, and $[-\delta,\delta]^c$ denotes the complement of the arc $[-\delta,\delta]$ in $\tor$.
\end{definition}

Take an associative binary operation $\star$ on $\cP(\tor)$. A probability measure $\mu$ on $\tor$ is called the \emph{$\star$-limit of an infinitesimal array} if there exists an infinitesimal array $\{\mu_{n,j}\}$ such that
\begin{equation*}
\mu_{n,1} \star \cdots \star \mu_{n,k_n} \wto \mu  \quad \text{as~}n\to\infty.
\end{equation*}
The set of all $\star$-limits of infinitesimal arrays is denoted by $\IA(\star,\tor)$ or $\IA(\star)$. Following the arguments of Proposition \ref{closednessIA}, we can show that $\IA(\star,\tor)$ is (operationally) closed under $\star$ and (topologically) closed with respect to weak convergence.

A probability measure $\mu$ on $\tor$ is said to be \index{infinite divisibility}\emph{$\star$-infinitely divisible} if for every $n\in\N$ there exists $\mu_n \in \cP(\tor)$ such that $\mu=\mu_n^{\star n}$ ($n$ fold convolution). The set of $\star$-infinitely divisible distributions is denoted by $\ID(\star,\tor)$ or simply $\ID(\star)$. Note that the iterative use of \eqref{Haar} shows that
\begin{equation*}
\haar =\haar^{\star n},\qquad n\in\N, \star \in\{\mmm,\boxtimes,\circledast\}
\end{equation*}
and hence the Haar measure is $\star$-infinitely divisible for the three convolutions.

The Khintchine type result is also true on $\tor$.
\begin{proposition}\index{infinite divisibility!multiplicative classical convolution}
$\IA(\circledast,\tor) = \ID(\circledast,\tor)$.
\end{proposition}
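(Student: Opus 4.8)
The plan is to reduce the statement to the classical Khintchine theorem on $\R$ via the exponential map $\R \to \tor$, $\theta \mapsto e^{i\theta}$, which intertwines the classical additive convolution $\ast$ on $\R$ with the classical multiplicative convolution $\circledast$ on $\tor$ (this is the pushforward of measures under the group homomorphism $\R \to \tor$). The inclusion $\ID(\circledast,\tor) \subset \IA(\circledast,\tor)$ is the easy direction: if $\mu = \mu_n^{\circledast n}$ for all $n$, then the array $\{\mu_{n,j}\}_{1\le j\le n}$ with $\mu_{n,j}=\mu_n$ for all $j$ is infinitesimal. Indeed, for a convergent (hence tight) family one shows, exactly as in the classical case, that $\mu_n \wto \delta_1$: this follows because the characteristic-function-type quantities, here the moments $m_k(\mu_n) = m_k(\mu)^{1/n}$ if $\mu$ has nonzero moments, or more robustly the Herglotz/Lemma \ref{lemmaconvergenceunit} machinery, force $\mu_n$ to concentrate at $1$. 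One must be slightly careful when $\mu = \haar$: then $\mu_n = \haar$ works and the array is trivially infinitesimal since $\haar$ is not required to concentrate — wait, it is; but $\haar^{\circledast n} = \haar$ and $\haar$ does \emph{not} converge to $\delta_1$, so in this case one instead notes $\haar \in \IA(\circledast)$ directly, e.g.\ by taking $\mu_{n,j} = \pk_{c_n}$ with suitable $c_n \to 0$ slowly enough, or simply observes $\haar = \haar^{\circledast k_n}$ with any $k_n \uparrow \infty$ and checks infinitesimality fails — so the clean route is: treat $\mu=\haar$ as a separate explicit case, and for $\mu \ne \haar$ proceed as above.

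For the reverse inclusion $\IA(\circledast,\tor) \subset \ID(\circledast,\tor)$, first I would dispose of the case where the limit $\mu = \haar$; since $\haar$ is $\circledast$-infinitely divisible by \eqref{Haar}, there is nothing to prove. So assume $\mu_{n,1}\circledast \cdots \circledast \mu_{n,k_n} \wto \mu$ with $\mu \ne \haar$. The key step is to lift each $\mu_{n,j}$ to a probability measure $\tilde\mu_{n,j}$ on $\R$ supported in, say, $(-\pi,\pi]$, via the inverse of the exponential map restricted near $0$ — this is well-defined and preserves the infinitesimality condition \eqref{inf_tri_array} because $\mu_{n,j}$ concentrates near $1 = e^{i0}$. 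The subtlety is that $\ast$ on $\R$ does not descend to $\circledast$ on $\tor$ exactly under truncation to $(-\pi,\pi]$: there is a ``wrapping'' discrepancy. However, under the infinitesimality assumption the total mass that each partial convolution places outside a small neighborhood of $0$ can be controlled, so that $\widetilde{\mu_{n,1}} \ast \cdots \ast \widetilde{\mu_{n,k_n}}$, pushed forward to $\tor$, agrees asymptotically with $\mu_{n,1}\circledast \cdots \circledast \mu_{n,k_n}$; more precisely one shows the two sequences of measures on $\tor$ have the same weak limit. By the classical Khintchine theorem \eqref{CK}, $\widetilde{\mu_{n,1}} \ast \cdots \ast \widetilde{\mu_{n,k_n}}$ converges (along a subsequence, after centering — but here we avoid centering issues by working with the given array directly and using tightness) to some $\tilde\mu \in \ID(\ast,\R)$, whose pushforward to $\tor$ is $\mu$. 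Finally, the pushforward under a continuous homomorphism of a $\ast$-infinitely divisible measure is $\circledast$-infinitely divisible: if $\tilde\mu = \tilde\mu_n^{\ast n}$, then $\mu$ is the $n$-th $\circledast$-power of the pushforward of $\tilde\mu_n$. Hence $\mu \in \ID(\circledast,\tor)$.

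The main obstacle I anticipate is making the ``wrapping discrepancy'' argument rigorous: one needs a quantitative statement that, for an infinitesimal array on $\tor$, the successive partial products and their $\R$-lifts stay close, uniformly enough that weak limits coincide. The clean way to handle this is to pass to characteristic functions / Fourier coefficients: for a measure $\sigma$ on $\tor$, the Fourier coefficients $\widehat\sigma(k) = m_k(\sigma)$, $k \in \Z$, and for the lift $\widetilde\sigma$ on $\R$ truncated to $(-\pi,\pi]$ one has $\widehat{\widetilde\sigma}(k) = \int e^{ik\theta}\,\widetilde\sigma(d\theta)$ which coincides with $m_k(\sigma)$ for integer $k$. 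Since $\circledast$ multiplies Fourier coefficients (by $m_k(\mu\circledast\nu)=m_k(\mu)m_k(\nu)$) and $\ast$ on $\R$ also multiplies characteristic functions, and integer frequencies are exactly preserved, the products of the lifts have the \emph{same} integer-frequency characteristic function as the products on $\tor$; convergence of measures on $\tor$ is equivalent to convergence of all integer Fourier coefficients (Lemma \ref{lemmaconvergenceunit}\eqref{moment_converge} gives one direction; Weierstrass/Stone--Weierstrass the converse), so the limits match. This reduces everything cleanly to \eqref{CK} applied to the $\R$-lifts, at the cost of verifying that the lifted array is infinitesimal on $\R$ — which is immediate — and that the limiting classical measure exists, which follows from \eqref{CK} together with the observed Fourier-coefficient convergence. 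I would write the proof in this Fourier-analytic form to avoid any delicate estimates.
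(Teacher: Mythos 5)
There is a genuine gap, and in fact both directions of your argument break on the same class of examples: measures with an \emph{idempotent factor}, i.e.\ whose decomposition contains the normalized Haar measure of a finite subgroup $\Z_p=\{e^{2\pi i k/p}:0\le k\le p-1\}$. Take $\mu=\tfrac12(\delta_1+\delta_{-1})$, the Haar measure of $\Z_2$. It is $\circledast$-infinitely divisible (indeed $\mu^{\circledast n}=\mu$), but $m_1(\mu)=0$, so \emph{every} $n$-th $\circledast$-root $\mu_n$ satisfies $m_1(\mu_n)^n=0$, hence $m_1(\mu_n)=0$, and $\mu_n$ can never converge to $\delta_1$. So your ``easy direction'' argument (take $\mu_{n,j}=\mu_n$ and argue the roots concentrate at $1$ ``as in the classical case'') fails not only for $\mu=\haar$ but for every $\mu$ with a nontrivial idempotent factor; the classical fact you are importing relies on characteristic functions of infinitely divisible laws having no zeros, which is exactly what fails on $\tor$. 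The paper handles this by invoking Parthasarathy's structure theorem, writing $\mu=\lambda\circledast\nu$ with $\lambda$ Haar on a compact subgroup and $\nu$ without idempotent factor, using the circle L\'evy--Khintchine representation to produce roots $\nu_n\wto\delta_1$ for $\nu$, and exhibiting \emph{Poisson-type} infinitesimal arrays for the idempotent part, e.g.\ $\lambda_n=(1-\tfrac1n)\delta_1+\tfrac1n\delta_{e^{2\pi i/p}}$ with $\lambda_n^{\circledast n^2}\wto$ Haar on $\Z_p$, and $\pk_{(1-1/n)^{n^2}}\wto\haar$.

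The same examples defeat your hard direction. First, the lifted convolutions on $\R$ need not converge, nor even be tight after centering: for the array just mentioned the lift of $\lambda_n^{\circledast n^2}$ is essentially $\tfrac{2\pi}{p}$ times a Poisson$(n)$ variable, whose fluctuations grow like $\sqrt{n}$, so no recentering makes it converge, and your remark that ``we avoid centering issues \ldots using tightness'' is precisely the point that fails (convergence of the integer Fourier coefficients controls the law only modulo $2\pi$, not on the line --- compare $\delta_{e^{2\pi i/n}}^{\circledast n^2}=\delta_1$ on $\tor$ versus $\delta_{2\pi n}$ on $\R$). Second, and more structurally, a limit with an idempotent factor has vanishing Fourier coefficients ($m_k=0$ for $k\notin p\Z$), whereas the characteristic function of a classical infinitely divisible law never vanishes; hence such a $\mu$ is not the pushforward of \emph{any} element of $\ID(\ast,\R)$, so no wrapping argument can produce its infinite divisibility. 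Since these measures do arise as limits of infinitesimal $\circledast$-arrays with limit $\neq\haar$, excluding $\haar$ alone does not save the reduction. This is why the paper does not deduce $\IA(\circledast,\tor)\subset\ID(\circledast,\tor)$ from the line case but cites Parthasarathy's Khintchine theorem for the circle directly; if you want a self-contained proof you would have to reprove that compact-group statement, not transport the real-line one.
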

\begin{proof}
The inclusion $\IA(\circledast,\tor) \subset\ID(\circledast,\tor)$ is known from \cite[Theorem 5.2]{Par67}. The converse inclusion can also be proved by using several results from \cite{Par67}. Namely,
every $\mu \in \ID(\circledast,\tor)$ has the form $\mu= \lambda \circledast\nu$, where $\lambda$ is the normalized Haar measure on a compact subgroup of $\tor$, namely on $\tor$ or on $\Z_p:=\{e^{2\pi i k/p}:0 \leq k \leq p-1\}$ for some $p\geq2$, and $\nu$ is an infinitely divisible distribution without an idempotent factor \cite[p106, Theorem 7.2]{Par67}. Since $\IA(\circledast,\tor)$ is closed under convolution, it suffices to show that $\lambda$ and $\nu$ both are limits of infinitesimal arrays.
We start from $\nu$. It has a \index{L\'evy-Khintchine representation!classical}L\'evy-Khintchine representation \cite[p103, Theorem 7.1]{Par67}, with which we can naturally define its convolution roots $\nu_n$ such that $\nu_n^{\circledast n}=\nu$ and $\nu_n \wto \delta_1$. Thus we conclude that $\nu \in \IA(\circledast)$.
For $\lambda$, we consider two cases. If $\lambda$ is the normalized Haar measure $\haar$ on $\tor$ (then in fact $\mu=\haar$), then we can see from \eqref{pk2} and Lemma \ref{lemmaconvergenceunit} that
\begin{equation}\label{pk conv0}
(\pk_{1-1/n})^{\circledast n^2}=\pk_{(1-1/n)^{n^2}} \wto \haar, \qquad \text{as~}n\to\infty,
\end{equation}
and so $\lambda \in \IA(\circledast)$. If $\lambda$ is the normalized Haar measure on a finite group $\Z_p$ for some $p\geq1$, then for $\lambda_n:=(1-1/n)\delta_1 +(1/n)\delta_{e^{2\pi i/p}}$ we can prove that
\begin{equation*}
\lim_{n\to\infty}m_k(\lambda_n^{\circledast n^2}) = m_k(\lambda)=
\begin{cases}
0, & k \notin p\Z, \\
1,& k\in p\Z,
\end{cases}
\end{equation*}
and hence by Lemma \ref{lemmaconvergenceunit} \eqref{moment_converge} $\lambda_n^{\circledast n^2}\wto \lambda$, and thus $\lambda \in \IA(\circledast)$. Altogether, we conclude the inclusion $ \ID(\circledast,\tor) \subset \IA(\circledast,\tor)$.
\end{proof}

The free analogue holds as well.
\begin{proposition}\index{infinite divisibility!multiplicative free convolution}
$ \IA(\boxtimes,\tor) = \ID(\boxtimes,\tor).$
\end{proposition}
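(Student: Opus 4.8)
The statement $\IA(\boxtimes,\tor) = \ID(\boxtimes,\tor)$ is the multiplicative free analogue of Khintchine's theorem on the circle, and the natural plan is to follow the same two-sided strategy used in the classical case just above, replacing the classical ingredients by their free counterparts. For the inclusion $\IA(\boxtimes,\tor) \subset \ID(\boxtimes,\tor)$, I would invoke the known structure theory for multiplicative free convolution on $\tor$: by the work of Bercovici and Voiculescu (and its extension in the papers on the $\Sigma$-transform), the limit of a $\boxtimes$-product of an infinitesimal array over $\tor$ is $\boxtimes$-infinitely divisible. Concretely, infinitesimality means each $\eta_{\mu_{n,j}}$ is close to the identity near $0$, so one passes to the additive parametrization via $u_{n,j}(z) := \log\big(\eta_{\mu_{n,j}}(z)/z\big)$ (a well-defined branch near $0$ since $\eta_{\mu_{n,j}}(z) = m_1(\mu_{n,j})z + O(z^2)$ with $m_1(\mu_{n,j})$ close to $1$), show that these linearizing maps are approximately additive under $\boxtimes$, and deduce via a convergence-of-types argument together with Lemma \ref{lemmaconvergenceunit} that the limit has a linearizing map in the L\'evy--Khintchine class; the case where the limit degenerates to $\haar$ must be handled separately, using that $\haar$ is $\boxtimes$-infinitely divisible by \eqref{Haar}.

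For the converse inclusion $\ID(\boxtimes,\tor) \subset \IA(\boxtimes,\tor)$, I would argue as in the classical proof just given: any $\mu \in \ID(\boxtimes,\tor)$ admits, for every $n\in\N$, a $\boxtimes$-convolution root $\mu_n$ with $\mu_n^{\boxtimes n} = \mu$. The key point is that one can choose these roots so that the triangular array $\{\mu_n\}_{1\le k\le n,\, n\ge 1}$ is infinitesimal, i.e. $\mu_n \wto \delta_1$ as $n\to\infty$. This follows from the $\boxtimes$-analogue of the L\'evy--Khintchine representation on $\tor$: the linearizing transform of $\mu$ (its $\Sigma$-transform, or equivalently the logarithm of $\eta_\mu(z)/z$) is represented by a Herglotz-type integral with a fixed generating pair, and dividing the ``free L\'evy exponent'' by $n$ produces the linearizing transform of $\mu_n$, which tends to that of $\delta_1$ by dominated convergence; then Lemma \ref{lemmaconvergenceunit} gives $\mu_n\wto\delta_1$. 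For $\mu = \haar$ one uses instead the explicit Poisson-kernel family: by \eqref{pk2} and the computation $(\pk_{1-1/n})^{\boxtimes n^2} = \pk_{(1-1/n)^{n^2}} \wto \haar$, completely parallel to \eqref{pk conv0}, since $\boxtimes$ agrees with $\circledast$ on the Poisson kernels.

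A slightly delicate point is the general $\boxtimes$-infinitely divisible measure with a nontrivial ``idempotent factor'': one should check whether on $\tor$ the only $\boxtimes$-idempotents are $\delta_1$ and $\haar$ (unlike the classical case, where the finite subgroups $\Z_p$ occur), or handle any additional idempotents the way $\Z_p$ was handled classically, by exhibiting an explicit infinitesimal array converging to them. I expect the decomposition $\mu = \text{(idempotent)} \boxtimes (\text{infinitely divisible without idempotent factor})$ together with the fact that $\IA(\boxtimes,\tor)$ is closed under $\boxtimes$ (the multiplicative analogue of Proposition \ref{closednessIA}\eqref{CConv}, already noted just before this proposition) to reduce the problem to the two cases above.

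\textbf{Main obstacle.} The hardest part will be making the linearization argument on $\tor$ rigorous: unlike the real line, where the Voiculescu transform $\varphi_\mu$ is defined on a truncated cone and $-\varphi_\mu$ extends to a Pick function for $\boxplus$-infinitely divisible laws, on the circle one works with $\Sigma_\mu$ which is only defined in a neighborhood of $0$, has a multiplicative (not additive) convolution property, and requires choosing consistent branches of logarithms; controlling these branches uniformly over an infinitesimal array, and proving the precise asymptotic additivity $\log\big(\eta_{\mu_{n,1}\boxtimes\cdots\boxtimes\mu_{n,k_n}}(z)/z\big) \approx \sum_j \log\big(\eta_{\mu_{n,j}}(z)/z\big)$ with an error tending to $0$ locally uniformly, is where the real work lies. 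I would expect to extract most of this from the existing literature on $\boxtimes$-infinite divisibility on $\tor$ and cite it, rather than reprove it, keeping the written proof short and parallel in structure to the classical proposition preceding it.
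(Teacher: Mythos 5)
Your proposal matches the paper's proof in essentially every respect: the inclusion $\IA(\boxtimes,\tor)\subset\ID(\boxtimes,\tor)$ is delegated to the literature (the paper cites Belinschi--Bercovici's Hin\v{c}in theorem for $\boxtimes$), and the converse is obtained exactly as you describe, by dividing the Herglotz exponent $u$ of $\Sigma_\mu$ by $n$ to get roots $\mu_n$ with $\mu_n^{\boxtimes n}=\mu$ and $\mu_n\wto\delta_1$, with $\haar$ treated separately via $(\pk_{1-1/n})^{\boxtimes n^2}\wto\haar$. Your worry about idempotent factors is resolved in the paper simply by the fact that the only $\boxtimes$-infinitely divisible measure with zero mean is $\haar$ (\cite[Lemma 6.1]{BV92}), so every other case falls under the $\Sigma$-transform representation.
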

\begin{proof}
Belinschi and Bercovici \cite{BB08} proved the inclusion $\subset$. For the converse inclusion, recall that only the Haar measure is $\boxtimes$-infinitely divisible with zero mean \cite[Lemma 6.1]{BV92}. If $\mu \in \ID(\boxtimes,\tor)$ is not the Haar measure, then there exists a function $u$ as  described in Theorem \ref{MFID} below. Defining a probability measure $\mu_n$ having the function $(1/n)u$, we obtain $\mu =\mu_n^{\boxtimes n}$ for every $n\in\N$, and also $\mu_n \wto \delta_1$ using \cite[Proposition 2.9]{BV92}, and hence $\mu\in\IA(\boxtimes)$. If $\mu=\haar$, then we can see from \eqref{pk2} and Lemma \ref{lemmaconvergenceunit} that
\begin{equation}\label{pk conv}
(\pk_{1-1/n})^{\boxtimes n^2}=\pk_{(1-1/n)^{n^2}} \wto \haar, \qquad \text{as~}n\to\infty,
\end{equation}
and hence $\haar \in \IA(\boxtimes)$.
\end{proof}

The goal of this section is to demonstrate that $\IA(\submm,\tor)=\Univ(\tor) \cup \{\haar\}$. For this we need some estimates.
\begin{lemma}\label{lemmaesitmateunit} Let $\mu$ be a probability measure on $\tor$, let $\delta\in(0,\pi/2)$ and $n \in\N$. Then we have
\begin{align}
\mu([-\delta,\delta]^c) &\leq  \frac{\pi^2}{2\delta^2}|1-m_1(\mu)|, \label{estimate11} \\
|1-m_1(\mu)|^2 &\leq 2\delta^2 + 10 (\mu([-\delta,\delta]^c))^2,\label{estimate12} \\
|1-m_n(\mu)| &\leq \frac{\pi^2 n}{2}|1-m_1(\mu)|. \label{estimate13}
\end{align}
\end{lemma}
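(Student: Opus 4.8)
\textbf{Proof plan for Lemma \ref{lemmaesitmateunit}.}

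The plan is to reduce all three inequalities to elementary real-variable estimates about the quantity $|1-m_n(\mu)|$, by writing $m_n(\mu) = \int_{\T} e^{in\theta}\,\mu({\rm d}\theta)$ after identifying $\T$ with $(-\pi,\pi]$. The basic input is the pair of pointwise bounds $|1-e^{i\theta}|\le |\theta|$ and $|1-e^{i\theta}| = 2|\sin(\theta/2)|\ge \tfrac{2}{\pi}|\theta|$ for $\theta\in[-\pi,\pi]$, together with $\Re(1-e^{i\theta}) = 1-\cos\theta \ge 0$; these will do most of the work once one is careful about constants.

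For \eqref{estimate11}, I would use that $1-\cos\theta \ge \tfrac{2\theta^2}{\pi^2}$ on $[-\pi,\pi]$ (this follows from $|\sin(\theta/2)|\ge |\theta|/\pi$), so that on $[-\delta,\delta]^c$ we have $1-\cos\theta \ge \tfrac{2\delta^2}{\pi^2}$, hence
\begin{equation*}
\frac{2\delta^2}{\pi^2}\,\mu([-\delta,\delta]^c) \le \int_{[-\delta,\delta]^c}(1-\cos\theta)\,\mu({\rm d}\theta) \le \int_{\T}(1-\cos\theta)\,\mu({\rm d}\theta) = \Re(1-m_1(\mu)) \le |1-m_1(\mu)|,
\end{equation*}
and rearranging gives \eqref{estimate11}. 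For \eqref{estimate13}, I would bound $|1-e^{in\theta}| = 2|\sin(n\theta/2)| \le 2 \cdot \min(1,n|\theta|/2) \le \tfrac{\pi n}{2}\cdot \tfrac{2}{\pi}|\theta| \cdot \mathbf{1}_{?}$ — more cleanly, use $|1-e^{in\theta}|\le \tfrac{\pi n}{2}\,|1-e^{i\theta}|$, which holds because $|\sin(n\theta/2)|\le n|\sin(\theta/2)|$ (sum of a geometric-type telescoping, or induction via the sine addition formula) and $|\sin(n\theta/2)|\le 1 \le \tfrac{\pi}{2}\cdot\tfrac{2}{\pi}\ge\dots$; the factor $\pi/2$ absorbs the passage from $|1-e^{i\theta}|=2|\sin(\theta/2)|$ being possibly as large as $2$. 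Integrating $|1-e^{in\theta}|\le \tfrac{\pi n}{2}|1-e^{i\theta}|$ against $\mu$ and using the triangle inequality $|1-m_n(\mu)|\le \int|1-e^{in\theta}|\,\mu({\rm d}\theta)$ yields \eqref{estimate13}.

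For \eqref{estimate12} I would split the integral defining $1-m_1(\mu)$ over $[-\delta,\delta]$ and its complement. On $[-\delta,\delta]$, $|1-e^{i\theta}|\le|\theta|\le\delta$, contributing at most $\delta$ in absolute value to the integral; on $[-\delta,\delta]^c$, $|1-e^{i\theta}|\le 2$, contributing at most $2\mu([-\delta,\delta]^c)$. Hence $|1-m_1(\mu)|\le \delta + 2\mu([-\delta,\delta]^c)$, and squaring via $(a+b)^2\le 2a^2+2b^2$ gives $|1-m_1(\mu)|^2 \le 2\delta^2 + 8(\mu([-\delta,\delta]^c))^2 \le 2\delta^2 + 10(\mu([-\delta,\delta]^c))^2$, which is \eqref{estimate12}. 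The main obstacle here is purely bookkeeping: chasing the numerical constants so that they land exactly at $\pi^2/2$, $2$ and $10$ as stated (in particular the $\pi^2/2$ in \eqref{estimate13} leaves slack, so the telescoping bound $|\sin(n\theta/2)|\le n|\sin(\theta/2)|$ combined with $2|\sin(\theta/2)|\le |\theta|$ and $|1-e^{in\theta}|\le 2$ is more than enough). No deep idea is needed; the only care is to verify the two trigonometric inequalities $1-\cos\theta\ge 2\theta^2/\pi^2$ on $[-\pi,\pi]$ and $|\sin(n\theta/2)|\le n|\sin(\theta/2)|$, both of which are standard.
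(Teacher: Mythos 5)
Your arguments for \eqref{estimate11} and \eqref{estimate12} are correct and close to the paper's. For \eqref{estimate11} you lower-bound $1-\cos\theta$ by $2\theta^2/\pi^2$ on the complement of the arc, which is the same mechanism as the paper's lower bound on $1-\cos\delta$ and gives exactly the constant $\pi^2/(2\delta^2)$. For \eqref{estimate12} your split $|1-m_1(\mu)|\le\delta+2\mu([-\delta,\delta]^c)$ followed by $(a+b)^2\le 2a^2+2b^2$ is slightly cleaner bookkeeping than the paper's separate treatment of real and imaginary parts, and it even lands at the constant $8$, which you then relax to $10$.

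The proposed proof of \eqref{estimate13}, however, has a genuine gap. Integrating the (correct) pointwise bound $|1-e^{in\theta}|\le n\,|1-e^{i\theta}|$ only gives $|1-m_n(\mu)|\le n\int_{\T}|1-e^{i\theta}|\,\mu({\rm d}\theta)$, and at the last step you silently replace $\int_{\T}|1-e^{i\theta}|\,\mu({\rm d}\theta)$ by $|1-m_1(\mu)|=\bigl|\int_{\T}(1-e^{i\theta})\,\mu({\rm d}\theta)\bigr|$. These are not comparable: cancellation can make the modulus of the integral of much smaller order than the integral of the modulus. For $\mu_\epsilon=\frac12\bigl(\delta_{e^{i\epsilon}}+\delta_{e^{-i\epsilon}}\bigr)$ one has $\int|1-e^{i\theta}|\,\mu_\epsilon({\rm d}\theta)=2\sin(\epsilon/2)\sim\epsilon$, while $|1-m_1(\mu_\epsilon)|=1-\cos\epsilon\sim\epsilon^2/2$, so no fixed factor of slack (such as your $\pi/2$) can repair this step. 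In fact the same example shows that no bound of the form $|1-m_n|\le Cn\,|1-m_1|$ with an absolute constant can hold: $|1-m_n(\mu_\epsilon)|/|1-m_1(\mu_\epsilon)|=(1-\cos n\epsilon)/(1-\cos\epsilon)\to n^2$ as $\epsilon\downarrow0$, which exceeds $\pi^2 n/2$ once $n\ge5$, so the inequality \eqref{estimate13} as printed is itself too strong (the paper's own derivation has an exponent slip at \eqref{interestimate14}, where the right-hand side should carry $\bigl(\int x^2\,\mu({\rm d}x)\bigr)^2$ rather than $\int x^2\,\mu({\rm d}x)$).

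The route that does work — and is essentially the paper's, once the exponent is tracked correctly — passes through the second moment rather than through $\int|1-e^{i\theta}|\,{\rm d}\mu$: from $|1-e^{in\theta}|\le n|\theta|$ and the Schwarz inequality one gets $|1-m_n(\mu)|\le n\bigl(\int_{-\pi}^{\pi}\theta^2\,\mu({\rm d}\theta)\bigr)^{1/2}$ (this is \eqref{interestimate15}), while the bound $1-\cos\theta\ge 2\theta^2/\pi^2$ that you already used for \eqref{estimate11} gives $\int_{-\pi}^{\pi}\theta^2\,\mu({\rm d}\theta)\le\frac{\pi^2}{2}|1-m_1(\mu)|$. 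Combining yields $|1-m_n(\mu)|\le\frac{\pi n}{\sqrt2}\,|1-m_1(\mu)|^{1/2}$, which is the correct substitute for \eqref{estimate13} and is still sufficient for its only use, the estimate of $\psi_\nu(z)-z/(1-z)$ in the proof of Theorem \ref{Khintchineunit}: one still gets a bound of order $|1-m_1(\nu)|^{1/2}(1-|z|)^{-2}$, which tends to $0$ uniformly on compact subsets of $\disk$ under the infinitesimality hypothesis.
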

\begin{proof} We start from the obvious inequality
\begin{equation}\label{interestimate11}
\begin{split}
\mu([-\delta,\delta]^c)
&\leq\int_{[-\delta,\delta]^c} \frac{1-\cos x}{1-\cos\delta}\,\mu({\rm d}x) \leq  \frac{1}{1-\cos \delta}\int_{(-\pi,\pi]}(1-\cos x)\,\mu({\rm d}x).
\end{split}
\end{equation}
Elementary calculus shows that $\sin x \geq 2x/\pi $ for $0\leq x\leq\pi/2$ and hence $1-\cos\delta=2\sin^2(\delta/2) \geq 2\delta^2/\pi^2$.
Thus \eqref{interestimate11} implies
\begin{equation}\label{interestimate12}
\begin{split}
\mu([-\delta,\delta]^c)
&\leq \frac{\pi^2}{2\delta^2} \left|\int_{(-\pi,\pi]}(1-e^{ix})\,\mu({\rm d}x)\right| = \frac{\pi^2|1-m_1(\mu)|}{2\delta^2},
\end{split}
\end{equation}
which is the first desired inequality.
The second inequality is verified as
\begin{equation}\label{interestimate13}
\begin{split}
|1-m_1(\mu)|^2
&= \left( \int_{|x| \leq\delta}(1-\cos x)\,\mu({\rm d}x)+\int_{[-\delta,\delta]^c}(1-\cos x)\,\mu({\rm d}x) \right)^2\\
&\quad +\left( \int_{|x| \leq\delta}\sin x\,\mu({\rm d}x)+\int_{[-\delta,\delta]^c}\sin x\,\mu({\rm d}x) \right)^2\\
&\leq (1-\cos \delta +2\mu([-\delta,\delta]^c))^2 +(\sin \delta +\mu([-\delta,\delta]^c))^2\\
&\leq 2(1-\cos \delta)^2 + 2 (2\mu([-\delta,\delta]^c))^2 + 2 \sin^2 \delta + 2(\mu([-\delta,\delta]^c))^2\\
&\leq 2\delta^2+ 10  (\mu([-\delta,\delta]^c))^2.
\end{split}
\end{equation}
For the third inequality, note that the following elementary inequalities hold:
\begin{align}
 1-\cos x &\geq \frac{2}{\pi^2}x^2,\qquad x\in[-\pi,\pi], \label{supplement1}\\
|1-e^{ix}| &\leq |x|,\qquad x\in\R. \label{supplement2}
\end{align}
On one hand \eqref{supplement1} shows that
\begin{equation}\label{interestimate14}
\begin{split}
|1-m_1(\mu)|^2
&\geq \left( \int_{-\pi}^\pi(1-\cos x)\,\mu({\rm d}x) \right)^2 \geq \frac{4}{\pi^4}\int_{-\pi}^{\pi} x^2\,\mu({\rm d}x).
\end{split}
\end{equation}
On the other hand, using \eqref{supplement2} and the Schwarz inequality shows that
\begin{equation}\label{interestimate15}
\begin{split}
|1-m_n(\mu)|
&\leq \int_{-\pi}^\pi |1-e^{inx}|\,\mu({\rm d}x) \leq  n\int_{-\pi}^\pi |x|\,\mu({\rm d}x)\leq n\left(\int_{-\pi}^\pi x^2\,\mu({\rm d}x)\right)^{1/2}.
\end{split}
\end{equation}
The third inequality follows from (\ref{interestimate14}) and (\ref{interestimate15}).
\end{proof}

Now we show the Khintchine type theorem.

\begin{theorem}\label{Khintchineunit}\index{infinitesimal array!of measures on $\tor$}
\index{moment generating function!univalent}
$\IA(\submm)=\Univ(\tor)\cup \{\haar\} $.
\end{theorem}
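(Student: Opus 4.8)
The plan is to establish the two inclusions $\IA(\submm) \subset \Univ(\tor) \cup \{\haar\}$ and $\Univ(\tor) \cup \{\haar\} \subset \IA(\submm)$ separately, mirroring the additive analogue in Theorem \ref{thminfinitesimalarray} but exploiting the fact that, on $\tor$, all the relevant bounded families of measures are automatically tight. For the easy inclusion $\supset$, I would argue exactly as for Theorem \ref{thminfinitesimalarray}\eqref{IA1}: given $\mu \in \Univ(\tor)$, Theorem \ref{embed_multiplicative}(a) embeds $\eta_\mu$ into a multiplicative Loewner chain $(\eta_t)_{t\geq0}$ with transition mappings $(\eta_{st})$; setting $\mu_{n,j}$ to be the law with $\eta$-transform $\eta_{(j-1)/n,\,j/n}$ gives a decomposition $\mu = \mu_{n,1} \submm \cdots \submm \mu_{n,n}$ for every $n$, and the continuity $(s,t)\mapsto \eta_{st}$ together with Lemma \ref{lemmaconvergenceunit} yields the infinitesimality. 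For $\mu = \haar$, the Poisson-kernel computation \eqref{pk conv} (with $\submm$ in place of $\boxtimes$, legitimate since $\pk_c^{\submm n} = \pk_{c^n}$ by \eqref{pk2}) shows $\haar \in \IA(\submm)$.

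The substantive direction is $\IA(\submm) \subset \Univ(\tor) \cup \{\haar\}$. Suppose $\mu_{n,1}\submm\cdots\submm\mu_{n,k_n} \wto \mu$ for an infinitesimal array, and assume $\mu \neq \haar$; I must show $\eta_\mu$ is univalent. The idea is that $\eta_\mu = \lim_n \eta_{\mu_{n,1}}\circ\cdots\circ\eta_{\mu_{n,k_n}}$, so it suffices to show that each finite composition is univalent on $\D$ once $n$ is large, and then pass to the limit via Hurwitz's theorem (noting $\eta_\mu$ is non-constant because $\mu \neq \haar$). To get univalence of the composition it is enough to find, for $n$ large, an open disk $r\overline{\D}$ with $r < 1$ on which every $\eta_{\mu_{n,j}}$ is univalent and which every $\eta_{\mu_{n,j}}$ maps into itself — then the composition is univalent on $r\D$, hence (letting the radius be pushed out) one argues univalence on all of $\D$. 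The estimates of Lemma \ref{lemmaesitmateunit} are the engine here: infinitesimality plus \eqref{estimate12} forces $\sup_j |1 - m_1(\mu_{n,j})| \to 0$, hence by \eqref{estimate13} $\sup_j \sup_{|z|\le r}|\eta_{\mu_{n,j}}(z) - z|$ is small; combined with the Schwarz-lemma contraction $|\eta_\mu(z)| \le |z|$ this should give both the self-mapping property on a suitable $r\overline{\D}$ and, via the Noshiro--Warschawski criterion (Lemma \ref{NW}) applied to $\eta_{\mu_{n,j}}(z)/z$ or to $\eta_{\mu_{n,j}}$ directly after checking $\Re(\eta_{\mu_{n,j}}'(z)) > 0$ on a small disk, the univalence of each factor. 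One then lets $r \uparrow 1$; since the weak limit $\mu$ exists and is not $\haar$, a normal-families argument (Montel plus Lemma \ref{lemmaconvergenceunit}) transfers univalence from $r\D$ to $\D$.

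The main obstacle I anticipate is the last step — propagating univalence of the compositions from a fixed small disk $r\overline{\D}$ to the whole unit disk $\D$, uniformly enough in $n$ to survive the limit. The self-mapping property on $r\overline{\D}$ only controls the composition there, whereas $\eta_\mu$ must be shown univalent on all of $\D$. Two routes seem available: (i) use that for each fixed $r < 1$ the tail of the array beyond a controlled index has total $\eta$-perturbation small on $r\overline{\D}$, so that $\eta_\mu$ restricted to $r\overline{\D}$ is a locally uniform limit of univalent maps, hence univalent on $r\overline{\D}$ by Hurwitz, and then take $r \uparrow 1$; the subtlety is ensuring the "head" of the composition (the first few $\mu_{n,j}$, which need not be near $\delta_1$) does not destroy injectivity — this is where one genuinely needs that $\mu \neq \haar$, since otherwise the head could collapse ranges. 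Route (ii), closer to \cite[Remark 51]{AA}, would reparametrize the finite compositions as (parts of) multiplicative Loewner chains and invoke Theorem \ref{EV_univalence}; this is cleaner but requires care in passing from discrete arrays to genuine Loewner chains. I expect to combine both: use the Loewner-chain reparametrization for the structural univalence and Lemma \ref{lemmaesitmateunit} to verify the order/continuity hypotheses, with the non-triviality $\mu \neq \haar$ invoked to rule out the degenerate limit $\eta_\mu \equiv 0$.
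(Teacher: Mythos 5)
Your strategy for both inclusions is essentially the paper's: the easy inclusion via embedding $\eta_\mu$ into a multiplicative Loewner chain plus the Poisson-kernel identity \eqref{pk2} for $\haar$, and the hard inclusion via the moment estimates of Lemma \ref{lemmaesitmateunit}, the Noshiro--Warschawski criterion (Lemma \ref{NW}) on a disk $r\D$, the self-mapping property $|\eta(z)|\le|z|$ from Lemma \ref{characterizationeta}, and Hurwitz in the limit. The one substantive correction concerns the ``main obstacle'' you anticipate: it is not there, and your proposed remedies (a ``head'' of the array, a Loewner-chain reparametrization for ``structural univalence'') rest on a misreading of infinitesimality. The supremum in the definition runs over \emph{all} $1\le j\le k_n$ in row $n$, so by \eqref{estimate12} and \eqref{estimate13} \emph{every} factor $\eta_{\mu_{n,j}}$ of row $n$ is uniformly close to the identity on $r\overline{\D}$ once $n$ is large; there is no exceptional head of factors far from $\delta_1$. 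The correct order of limits is then: fix $r<1$; for large $n$, Cauchy's integral formula on a slightly larger disk $r'\D$ upgrades $\sup_{j}\sup_{|z|\le r'}|\eta_{\mu_{n,j}}(z)-z|\to0$ (obtained from the $\psi$-estimate, using $\Re\,\psi\ge-\tfrac12$ to control denominators) to $\sup_j\sup_{|z|\le r}|\eta_{\mu_{n,j}}'(z)-1|\to0$, so each factor is univalent on $r\D$ by Lemma \ref{NW} and maps $r\D$ into itself, whence the whole composition is univalent on $r\D$; letting $n\to\infty$, Hurwitz gives univalence of $\eta_\mu$ on $r\D$, the hypothesis $\mu\neq\haar$ entering \emph{only} to guarantee that $\eta_\mu$ is non-constant. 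Finally, univalence on every $r\D$, $r<1$, is already univalence on $\D$, because any two points of $\D$ lie in a common $r\D$; no uniformity in $n$ and no passage of the self-mapping property to larger disks is needed.

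For the easy inclusion your route through Theorem \ref{embed_multiplicative}(a) and uniform continuity of $(s,t)\mapsto\mu_{st}$ on the compact triangle does yield infinitesimality, exactly as in Theorem \ref{thminfinitesimalarray}\eqref{intro_thm3-1}'s additive argument. The paper instead factors out the rotation, applies the normalized embedding of Theorem \ref{embed_multiplicative}(b) to $\delta_{e^{-i\beta}}\submm\mu$ so that $m_1(\mu_{st})=e^{-\alpha(t-s)}$, and gets the quantitative bound $\mu_{st}([-\delta,\delta]^c)\le \pi^2\alpha(t-s)/(2\delta^2)$ from \eqref{estimate11}; either way, you should treat the degenerate cases explicitly as the paper does, namely $\delta_{e^{i\beta}}=\delta_{e^{i\beta/n}}^{\submm n}$ for Dirac measures (whose $\eta$-transforms are rotations and fall outside the normalized embedding), and reassemble $\mu=\delta_{e^{i\beta}}\submm(\delta_{e^{-i\beta}}\submm\mu)$ using the closedness of $\IA(\submm)$ under $\submm$, which relies on the weak continuity of $\submm$ (Corollary \ref{weakconv}).
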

\begin{proof}
We can show that $\haar \in \IA(\submm)$ from exactly the same arguments in \eqref{pk conv} thanks to \eqref{pk2}. Similarly, delta measures are contained in $\IA(\submm)$ since $\delta_{e^{i\beta}}=\delta_{e^{i\beta/n}}\mm \cdots \mm\delta_{e^{i\beta/n}}$ ($n$ fold). 

Suppose that $\psi_\mu$ is univalent and $\mu$ is not a delta measure. This implies that $0<|m_1(\mu)| <1$ and so there exist $\alpha > 0, \beta\in [0,2\pi)$ such that $\eta_\mu'(0)=m_1(\mu)=e^{-\alpha+i\beta}$. 
By Theorem \ref{embed_multiplicative} (b), we find a \index{Loewner chain!multiplicative}multiplicative Loewner chain $\{\eta_t\}_{t\geq0}$ such that $\eta_1=\eta_{\delta_{e^{-i\beta}} \submm\mu}$ and $\eta_t'(0)=e^{-\alpha t}$. Let $\eta_{st}:= \eta_s^{-1}\circ\eta_t\colon \disk\to\disk$ be the corresponding \index{transition mappings}transition mappings. Lemma \ref{characterizationeta} shows that there exists a family of probability measures $(\mu_{st})_{0\leq s\leq t \leq \infty}$ on $\tor$ such that $\eta_{st} = \eta_{\mu_{st}}$. We have the following properties:
\begin{enumerate}[(a)]
\item\label{embedding11} $\mu_{0,1}=\delta_{e^{-i\beta}} \mm\mu$;
\item $\mu_{ss}=\delta_1$ for any $s\in [0,\infty)$;
\item\label{variancepropertyunit} $m_1(\mu_{st})=e^{-\alpha(t-s)}$;
\item \label{hemigrouppropertyunit} $\mu_{st} \mm \mu_{tu}=\mu_{su}$ for $0\leq s\leq t\leq u <\infty$.
\end{enumerate}
For $\delta\in(0,\pi/2)$ and any $0\leq s\leq t$, using (\ref{estimate11}) and the inequality $1-e^{-x} \leq x$ for $x\geq0$ shows that
\begin{equation}\label{interestimate16}
\begin{split}
\mu_{st}([-\delta,\delta]^c)
&\leq \frac{\pi^2}{2\delta^2}|1-m_1(\mu_{st})|\leq \frac{\pi^2\alpha(t-s)}{2\delta^2}.
\end{split}
\end{equation}

Let us put $\nu_{n,j}:=\mu_{\frac{j-1}{n}, \frac{j}{n}}$ ($1 \leq j \leq n$). Properties (\ref{embedding11}) and (\ref{hemigrouppropertyunit})  imply that
\begin{equation*}
\mu=\delta_{e^{i\beta}} \mm\nu_{n,1} \mm \cdots\mm\nu_{n,n}.
\end{equation*}
It then follows from \eqref{interestimate16} that
\begin{equation*}
\sup_{1 \leq j \leq n}\nu_{n,j}([-\delta, \delta]^c) \leq \frac{\pi^2\alpha}{2\delta^2 n},
\end{equation*}
and so $\{\nu_{n,j}\}$ is an infinitesimal array. Thus $\mu$ is the $\submm$-limit of an infinitesimal array.

Conversely, suppose that $\mu$ is the $\submm$-limit of an infinitesimal array $\{\mu_{n,j}: 1 \leq j \leq k_n, 1\leq n\}$ and $\mu \neq \haar$.
 The estimate \eqref{estimate12} entails 
\begin{equation}\label{interestimate17.5}
\sup_{1\leq j \leq k_n}|1-m_1(\mu_{n,j})| \to 0, \quad n\to \infty. 
\end{equation}
 Note that $\psi_{\delta_1}(z)=z/(1-z)$ and $\eta_{\delta_1}(z)=z$. Let $\nu$ be a probability measure on $\tor$.
Using \eqref{estimate13} then shows, for all $z\in\disk$,
\begin{equation}\label{interestimate17}
\begin{split}
\left|\psi_{\nu}(z) - \frac{z}{1-z}\right| &= \left|\sum_{n=1}^\infty(1-m_n(\nu))z^n\right| \\
&\leq \frac{\pi^2}{2}|1-m_1(\nu)|\sum_{n=1}^\infty n|z|^n \leq \frac{\pi^2|1-m_1(\nu)|}{2(1-|z|)^2}.
\end{split}
\end{equation}
Combining the inequalities \eqref{interestimate17.5} and \eqref{interestimate17} yields
\begin{equation*}\label{interestimate18}
\sup_{1\leq j \leq k_n, z\in r\disk}|\psi_{\mu_{n,j}}(z) - \psi_{\delta_1}(z)|\to0,\quad n\to \infty
\end{equation*}
for any $0<r<1$, and 
\begin{equation*}
\begin{split}
|\eta_{\mu_{n,j}}(z)-z|
&= \left|\frac{\psi_{\mu_{n,j}}(z)}{1+\psi_{\mu_{n,j}}(z)}-\frac{\psi_{\delta_1}(z)}{1+\psi_{\delta_1}(z)}\right|
=\frac{|\psi_{\mu_{n,j}}(z)- \psi_{\delta_1}(z)|}{|1+\psi_{\mu_{n,j}}(z)| |1+\psi_{\delta_1}(z)|} \\
&\leq 4|\psi_{\mu_{n,j}}(z)- \psi_{\delta_1}(z)|,
\end{split}
\end{equation*}
since $\Re[\psi_{\mu_{n,j}}(z)]\geq-\frac{1}{2}$. Hence, we have $\sup_{z\in r\disk, 1 \leq j \leq k_n}|\eta_{\mu_{n,j}}(z)-z| \to 0$  as $n\to \infty$.
Fix any numbers $0<r < r' <1$. Applying Cauchy's integral formula to the derivative yields that
\begin{equation*}
\frac{{\rm d}}{{\rm d}z}(\eta_{\mu_{n,j}}(z)-z) = \frac{1}{2\pi i}\int_{\partial (r'\disk)}\frac{\eta_{\mu_{n,j}}(\xi)-\xi}{(\xi-z)^2}\,{\rm d}\xi,
\end{equation*}
and hence
$\sup_{1\leq j \leq k_n,z\in r\disk}|\frac{{\rm d}}{{\rm d}z}(\eta_{\mu_{n,j}}(z)-z)| \to 0$ as $n\to\infty$.
In particular,
\begin{equation*}
\inf_{1 \leq j \leq k_n, z\in r\disk}\Re\left[ \frac{{\rm d}}{{\rm d}z}\eta_{\mu_{n,j}}(z) \right] >0
\end{equation*}
 for sufficiently large $n$. Now Lemma \ref{NW} shows that $\eta_{\mu_{n,j}}$ is univalent in $r\disk$. Since $\eta_{\mu_{n,j}}(r\disk) \subset r\disk$ (see Lemma \ref{characterizationeta}), the composition $\eta_{\mu_{n,1}}\circ \cdots \circ \eta_{\mu_{n,k_n}}$ is also univalent in $r\disk$, and hence its limit $\eta_\mu$ is univalent in $r\disk$ since it is not a constant (recall that the $\eta$-transform is constant only for the Haar measure).
Since $0<r<1$ was arbitrary, the map $\eta_\mu$ is univalent in $\disk$.
\end{proof}

Examples of probability measures in $\IA(\submm)$ are shown in Section \ref{sec:univ}.

\section[Univalent moment generating functions]{Subclasses of probability measures with univalent moment generating functions}\label{sec:univ}

We have seen that the class $\Univ(\tor)$ characterizes the monotone limit distributions for infinitesimal arrays. We now introduce several subclasses of $\Univ(\tor)$ which can be characterized by simple geometric or analytic conditions.

\subsection{Monotonically infinitely divisible distributions on $\tor$}\label{mon_inf_mult}

\index{infinite divisibility!multiplicative monotone convolution|(}

Bercovici \cite[Theorem 4.4]{B05} characterized  $\submm$-infinitely divisible distributions on the unit circle.
\
\begin{theorem} \label{UMIDLK}

\begin{enumerate}[\rm(1)]
\item\label{UM-embed} If $\mu \in \ID(\submm,\T)\setminus\{\haar\}$, then there exists a weakly continuous $\submm$-convolution semigroup $\{\mu_t\}_{t\geq0} \subset \cP(\T)$ such that $\mu_0=\delta_1$ and $\mu_1=\mu$.
\item If $\{\mu_t\}_{t\geq0} \subset\cP(\R)$ is a weakly continuous $\submm$-convolution semigroup such that $\mu_0=\delta_1$, then $\mu_1 \in \ID(\submm,\T)\setminus\{\haar\}$.
\end{enumerate}
\end{theorem}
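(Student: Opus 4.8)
The plan is to prove the second assertion by an elementary argument and to obtain the first from \cite[Theorem 4.4]{B05} (the cited measures there living on $\T$, as the statement of course intends), recast in the present notation. For the second assertion, let $\{\mu_t\}_{t\ge0}$ be a weakly continuous $\submm$-convolution semigroup on $\T$ with $\mu_0=\delta_1$. Associativity of $\submm$ gives $\mu_1=\mu_{1/n}^{\submm n}$ for every $n\in\N$, so $\mu_1\in\ID(\submm,\T)$. To see that $\mu_1\ne\haar$, I would invoke Lemma \ref{lemmaconvergenceunit} to conclude $\eta_{\mu_t}\to\eta_{\mu_0}=\mathrm{id}_\D$ locally uniformly as $t\downarrow0$; in particular $\eta_{\mu_{1/n}}'(0)\to1$, so for large $n$ the map $\eta_{\mu_{1/n}}$ is a non-constant holomorphic self-map of $\D$ fixing $0$. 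Since a composition of such maps vanishes at $0$ only to finite order and hence is non-constant, $\eta_{\mu_1}=\eta_{\mu_{1/n}}^{\circ n}$ is non-constant, whence $\mu_1\ne\haar$ because the $\eta$-transform is constant only for $\haar$.

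For the first assertion, fix $\mu\in\ID(\submm,\T)\setminus\{\haar\}$ and choose, for each $n$, a measure $\mu^{(n)}$ with $(\mu^{(n)})^{\submm n}=\mu$; on the analytic side this reads $\eta_{\mu^{(n)}}^{\circ n}=\eta_\mu$, all maps fixing $0$. The first step I would carry out is to show $m_1(\mu)\ne0$: if $\eta_\mu$ vanished at $0$ to order $\ge2$, then each $\eta_{\mu^{(n)}}$ would vanish at $0$ to order $\ge2$ as well (by the chain rule $\eta_{\mu^{(n)}}'(0)^n=m_1(\mu)=0$), and the elementary identity $\mathrm{ord}_0(g^{\circ n})=(\mathrm{ord}_0 g)^n$ for a holomorphic $g$ fixing $0$ would force $\eta_\mu$ to vanish at $0$ to order $\ge2^n$ for every $n$, hence $\eta_\mu\equiv0$ and $\mu=\haar$, a contradiction. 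Writing $m_1(\mu)=e^{-\alpha+i\beta}$ with $\alpha\ge0$, the case $\alpha=0$ gives $\mu=\delta_{e^{i\beta}}$, which embeds into the semigroup $\mu_t=\delta_{e^{it\beta}}$; so one may assume $\alpha>0$.

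In the remaining case I would pass from the discrete roots to a one-parameter semigroup. Using Montel's theorem and a diagonal argument over the dyadic roots $\mu^{(2^k)}$ (together with the uniqueness features of \cite{B05}), one extracts a coherent family of $\submm$-roots, defines $\mu_{p/2^k}$ for dyadic $p/2^k\in[0,1]$, and extends it to all $t\ge0$ by weak continuity; concretely the limiting flow $\{\eta_t\}_{t\ge0}$ is the semigroup of holomorphic self-maps of $\D$ generated by a Berkson--Porta vector field \eqref{EV_berkson_porta} with Denjoy--Wolff point $0$, normalized after a rotation so that $\eta_t'(0)=e^{-\alpha t}$, and by Lemma \ref{characterizationeta} one has $\eta_t=\eta_{\mu_t}$ for probability measures $\mu_t$ on $\T$. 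This $\{\mu_t\}_{t\ge0}$ is then a weakly continuous $\submm$-convolution semigroup with $\mu_0=\delta_1$ and $\mu_1=\mu$, and one notes in passing that combined with Theorem \ref{UMID} it yields the Herglotz-type description of the generator. The main obstacle is exactly this last step: manufacturing a jointly consistent family of roots and controlling the passage to continuous time, which is precisely where the analytic input of \cite{B05} is needed; the preliminary reductions above are routine.
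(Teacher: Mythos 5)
The paper itself offers no proof of this theorem: it is quoted verbatim from Bercovici \cite[Theorem 4.4]{B05}, so there is no in-paper argument to measure your proposal against, and your overall plan (cite \cite{B05} for part (1), argue part (2) directly) is consistent with the paper's treatment. Your proof of part (2) is correct: the semigroup law gives $\mu_1=\mu_{1/n}^{\submm n}$, weak continuity at $t=0$ together with Lemma \ref{lemmaconvergenceunit} gives $\eta_{\mu_{1/n}}'(0)=m_1(\mu_{1/n})\to 1$, so for large $n$ the order of vanishing of $\eta_{\mu_{1/n}}$ at $0$ is $1$, hence $\eta_{\mu_1}=\eta_{\mu_{1/n}}^{\circ n}$ is not identically zero, and since the $\eta$-transform is constant only for $\haar$ we get $\mu_1\neq\haar$. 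Likewise your preliminary reductions in part (1) are sound: the identity $\mathrm{ord}_0(g^{\circ n})=(\mathrm{ord}_0\,g)^n$ forces $m_1(\mu)\neq0$ for $\mu\in\ID(\submm,\T)\setminus\{\haar\}$, and $|m_1(\mu)|=1$ forces $\mu=\delta_{e^{i\beta}}$, which embeds explicitly.

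The genuine gap is in the remaining (and central) step of part (1), and the specific crutch you lean on there is unavailable. Monotone $\submm$-roots and embedding semigroups are \emph{not} unique --- the paper records this immediately after the theorem, citing \cite[Theorem 5.8]{Has13} --- so ``the uniqueness features of \cite{B05}'' cannot be used to make independently chosen dyadic roots $\mu^{(2^k)}$ coherent; there is no reason that $\mu^{(2^{k+1})}\submm\mu^{(2^{k+1})}=\mu^{(2^k)}$, and without such compatibility the measures $\mu_{p/2^k}$ are not even well defined. Moreover, Montel's theorem plus a diagonal argument only produces subsequential limits of partial compositions $\eta_{\mu^{(n)}}^{\circ\lfloor nt\rfloor}$; one must still show that a limit family satisfies $\eta_s\circ\eta_t=\eta_{s+t}$, is continuous in $t$, and has $\eta_1=\eta_\mu$, and even the normalization $\eta_t'(0)=e^{-\alpha t}$ is not automatic, since $\arg m_1(\mu^{(n)})$ is determined only modulo $2\pi/n$. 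This construction is exactly the analytic content of \cite[Theorem 4.4]{B05}. You do acknowledge this, so if part (1) is read as an outright citation of Bercovici's theorem the proposal is acceptable and matches the paper; but as a self-contained argument the embedding step remains unproved, and the uniqueness appeal as stated is incorrect.
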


In the statement \eqref{UM-embed}, the semigroup $\{\mu_t\}_{t\geq0}$ need not be unique. The non-uniqueness is studied in \cite[Theorem 5.8]{Has13}.

Recall that the $\eta$-transforms of a weakly continuous $\submm$-convolution semigroup satisfies a  differential equation \eqref{ODE}, which entails the univalence of each map $\eta_t$. Therefore, we obtain the following.
\begin{corollary}\label{MID univ}
$\ID(\submm,\tor) \subset \Univ(\tor) \cup \{\haar\}$. In particular, 
if $\mu\in\ID(\submm,\tor)\setminus\{\haar\}$, then 
the first moment of $\mu$ is different from $0$.
\end{corollary}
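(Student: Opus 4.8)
\textbf{Proof plan for Corollary \ref{MID univ}.} The statement to prove is the inclusion $\ID(\submm,\tor) \subset \Univ(\tor) \cup \{\haar\}$ together with the consequence that $m_1(\mu) \neq 0$ for $\mu \in \ID(\submm,\tor)\setminus\{\haar\}$. The plan is to split into the two cases $\mu = \haar$ and $\mu \neq \haar$, and in the latter case exploit the embedding provided by Theorem \ref{UMIDLK}\eqref{UM-embed} into a weakly continuous $\submm$-convolution semigroup, then invoke the differential equation \eqref{ODE} from Theorem \ref{UMID} to conclude univalence of $\eta_\mu$.

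First I would observe that the case $\mu = \haar$ is trivially covered since $\haar$ is explicitly included in the right-hand side of the inclusion. So fix $\mu \in \ID(\submm,\tor)$ with $\mu \neq \haar$. By Theorem \ref{UMIDLK}\eqref{UM-embed}, there is a weakly continuous $\submm$-convolution semigroup $\{\mu_t\}_{t\ge0} \subset \cP(\tor)$ with $\mu_0 = \delta_1$ and $\mu_1 = \mu$. Applying Theorem \ref{UMID}\eqref{CSUMMI} to this semigroup, the right derivative $B(z) = \frac{\rm d}{{\rm d}t}\big|_{t=0}\eta_t(z)$ exists and the family $\{\eta_t\}_{t\ge0}$ of $\eta$-transforms satisfies the differential equation \eqref{ODE}, namely $\frac{\rm d}{{\rm d}t}\eta_t(z) = \eta_t(z)B(\eta_t(z))$ with $\eta_0(z) = z$. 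The key point is that this makes $(\eta_t)_{t\ge0}$ a multiplicative Loewner chain: it is a family of holomorphic self-maps of $\disk$ fixing $0$, with $\eta_0 = \mathrm{id}$, and the semigroup property $\eta_s\circ\eta_t = \eta_{s+t}$ holds by the compositional relation $\eta_{\mu_s\submm\mu_t} = \eta_{\mu_s}\circ\eta_{\mu_t}$ (the continuity requirement follows from weak continuity of $t\mapsto\mu_t$ via Lemma \ref{lemmaconvergenceunit}, as noted in the remark following Definition \ref{EV_def:evolution_family}). Then Theorem \ref{EV_univalence} applies and tells us every $\eta_t$, in particular $\eta_1 = \eta_\mu$, is univalent. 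Since $\psi_\mu$ is univalent iff $\eta_\mu$ is univalent, we conclude $\mu \in \Univ(\tor)$.

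For the second assertion, suppose $\mu \in \ID(\submm,\tor)\setminus\{\haar\}$. We have just shown $\eta_\mu$ is univalent on $\disk$. Recall from Section \ref{mono_convolutions_mult} that $\eta_\mu(0) = 0$ and $\eta_\mu'(0) = \int_\tor x\,\mu({\rm d}x) = m_1(\mu)$. If $m_1(\mu) = 0$, then $\eta_\mu'(0) = 0$, so the power series of $\eta_\mu$ at $0$ begins $\eta_\mu(z) = c_k z^k + \cdots$ with $k \ge 2$; such a function is $k$-to-one near $0$ and cannot be univalent unless $\eta_\mu \equiv 0$, which by Lemma \ref{characterizationeta} forces $\mu = \haar$, contradicting our assumption. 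Hence $m_1(\mu) \neq 0$.

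The main obstacle — or rather, the one step requiring care — is verifying that the embedding from Theorem \ref{UMIDLK} genuinely produces a multiplicative Loewner chain in the precise sense of Definition \ref{EV_def:evolution_family}, so that Theorem \ref{EV_univalence} is applicable. This amounts to checking the semigroup identity $\eta_s\circ\eta_t = \eta_{s+t}$ and the locally-uniform continuity in $(s,t)$; both are essentially immediate from the definition of $\submm$ and weak continuity of the semigroup, but should be stated explicitly. Everything else is bookkeeping. An alternative, even more direct route for the univalence would bypass Theorem \ref{EV_univalence} and argue from the flow structure of \eqref{ODE} itself (the time-$1$ map of a flow generated by a holomorphic vector field on $\disk$ is injective by uniqueness of solutions of ODEs), but citing Theorem \ref{EV_univalence} keeps the argument shortest.
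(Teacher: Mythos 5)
Your proof is correct, and structurally it follows the paper's argument: embed $\mu\neq\haar$ into a weakly continuous $\submm$-convolution semigroup via Theorem \ref{UMIDLK}, deduce univalence of $\eta_\mu=\eta_1$ from the semigroup structure, and obtain the second claim from $m_1(\mu)=\eta_\mu'(0)$ — which is literally the paper's one-line proof of the corollary, since univalence forces $\eta_\mu'(0)\neq 0$. The one point of divergence is the mechanism for univalence: the paper derives it from the differential equation \eqref{ODE}, i.e.\ precisely the flow argument you describe at the end as the ``alternative, even more direct route'' (the time-one map of the flow generated by the Herglotz vector field is injective by uniqueness of solutions), whereas your primary route observes that the semigroup of $\eta$-transforms is a multiplicative Loewner chain (transition mappings $\eta_{st}=\eta_{t-s}$, continuity from Lemma \ref{lemmaconvergenceunit}) and then invokes the general univalence result, Theorem \ref{EV_univalence}. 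Both are sound; your route has the mild advantage of needing only the semigroup property and weak continuity (no generator representation), while the paper's is shorter once \eqref{ODE} is in hand. Your verification of the Loewner-chain axioms and your power-series argument for $m_1(\mu)\neq 0$ are both fine.
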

\begin{proof}Note that the first moment of 
$\mu$ is equal to $\eta_\mu'(0)$. 
\end{proof}
Let $\alpha\in[-\pi/2, \pi/2]$. 
A univalent function $f:\D\to\C$ with $f(0)=0$ is called 
$\alpha$-\emph{spirallike} if $e^{-e^{i\alpha}t}w \in f(\D)$ for every $t\geq0$ and $w\in f(\D)$. \\
In the special case $\alpha=0$, $f$ is a \emph{starlike} mapping. If $\alpha=\pm \pi/2$, 
then $f$ maps $\D$ onto a disc, which implies $f(z)=az$ for some $a\in\C\setminus\{0\}$.\\

An analytic $f:\D\to\C$ with $f(0)=0$ is univalent and $\alpha$-spirallike if and only if 
\begin{equation}\label{spirallike_ch}\Re\left[e^{-i\alpha}\frac{zf'(z)}{f(z)}\right]\geq0
\end{equation} on $\D$, see \cite[Theorem 6.6]{P75}. (Note that in 
\cite{P75}, $f$ is called spirallike of type $\alpha$ if $e^{-e^{-i\alpha}t}w \in f(\D)$.) 

\begin{corollary}\label{starlike_id} Let $\mu\in \cP(\T)$. 
Then $\mu \in \ID(\submm,\T)\setminus\{\haar\}$ if and only if there exist $\beta \in [-\pi/2,\pi/2]$, $R \geq0$, and a $\beta$-spirallike mapping 
$f:\D\to\C$ with $f(0)=0, f'(0)=1$, such that \[\eta_\mu = f^{-1}\circ (e^{-Re^{i\beta}}\cdot f).\]
If this holds true, then $e^{-Re^{i\beta}}$ is the first moment of $\mu$. 
\end{corollary}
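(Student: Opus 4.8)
\textbf{Proof plan for Corollary \ref{starlike_id}.}

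The plan is to convert the monotone infinite divisibility of $\mu$ into a statement about the associated $\submm$-convolution semigroup via Theorem \ref{UMIDLK}, and then to recognize the infinitesimal generator as the Berkson-Porta vector field of a spirallike map. First I would suppose $\mu \in \ID(\submm,\T)\setminus\{\haar\}$. By Theorem \ref{UMIDLK}\eqref{UM-embed} there is a weakly continuous $\submm$-convolution semigroup $\{\mu_t\}_{t\geq0}$ with $\mu_0=\delta_1$ and $\mu_1=\mu$. By Theorem \ref{UMID}, writing $\eta_t:=\eta_{\mu_t}$, the right derivative $B(z)=\frac{{\rm d}}{{\rm d}t}\big|_{t=0}\eta_t(z)$ exists, $\{\eta_t\}$ solves $\frac{{\rm d}}{{\rm d}t}\eta_t(z)=\eta_t(z)B(\eta_t(z))$, $\eta_0(z)=z$, and $B$ has the Herglotz form \eqref{VectorFUMMI}, i.e. $B(z)=i\alpha-\int_\T\frac{1+z\zeta}{1-z\zeta}\rho({\rm d}\zeta)$ with $\alpha\in\R$ and $\rho$ finite non-negative on $\T$. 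Note $-B(0)=\rho(\T)-i\alpha$, so if I set $R\geq0$ and $\beta\in[-\pi/2,\pi/2]$ by $Re^{i\beta}=\rho(\T)-i\alpha$ (i.e. $R=|{-B(0)}|$, and $\beta$ chosen in $[-\pi/2,\pi/2]$ since $\Re(-B(0))=\rho(\T)\geq0$), then $\Re(e^{-i\beta}(-B(0)))=R\geq0$.

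Next I would construct the spirallike map. Since $M(z,t)=-zp_t(z)$ with $p_t$ of positive real part corresponds to the multiplicative Herglotz vector fields, the \emph{semigroup} case has a time-independent generator $-zB(z)$ with $\Re(-B(z))\geq0$ for $z\in\D$; hence $zB(z)$ is (minus) the infinitesimal generator of a one-parameter semigroup $(\eta_t)$ of analytic self-maps of $\D$ fixing $0$. Write $-B(z)=p(z)$, so $\Re p\geq0$, $p(0)=Re^{i\beta}$. I claim that the solution $f$ of the linear ODE $f'(z)=f(z)\cdot\frac{-B(0)}{zB(z)}$ with the normalization $f(0)=0$, $f'(0)=1$ — equivalently $\log(f(z)/z)=\int_0^z\left(\frac{-B(0)}{wB(w)}-\frac1w\right){\rm d}w$ — is the desired $\beta$-spirallike map. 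Indeed, by construction $\frac{zf'(z)}{f(z)}=\frac{-B(0)}{B(z)}$, so $e^{-i\beta}\frac{zf'(z)}{f(z)}=\frac{e^{-i\beta}(-B(0))}{B(z)}=\frac{R}{-p(z)}\cdot(-1)=\frac{R}{p(z)}$; wait, more carefully: $e^{-i\beta}(-B(0))=R\geq0$ and $B(z)=-p(z)$, so $e^{-i\beta}\frac{zf'(z)}{f(z)}=\frac{R}{-p(z)}$, which has \emph{nonpositive} real part. To fix the sign I instead take $f'(z)=f(z)\cdot\frac{B(0)}{zB(z)}$, giving $e^{-i\beta}\frac{zf'(z)}{f(z)}=\frac{e^{-i\beta}B(0)}{B(z)}=\frac{-R}{-p(z)}=\frac{R}{p(z)}$, hence $\Re\left[e^{-i\beta}\frac{zf'(z)}{f(z)}\right]=R\,\Re\left(\frac1{p(z)}\right)\geq0$ because $\Re(1/p(z))\geq0$ whenever $\Re p(z)\geq0$. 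By the spirallikeness criterion \eqref{spirallike_ch} (\cite[Theorem 6.6]{P75}), $f$ is univalent and $\beta$-spirallike. Then I would verify that $g_t:=f^{-1}\circ(e^{-tRe^{i\beta}}f)$ solves the same ODE as $\eta_t$: differentiating, $\frac{{\rm d}}{{\rm d}t}g_t=(f^{-1})'(e^{-tRe^{i\beta}}f)\cdot(-Re^{i\beta})e^{-tRe^{i\beta}}f=\frac{-Re^{i\beta}(e^{-tRe^{i\beta}}f(z))}{f'(g_t(z))}=g_t(z)\cdot\frac{-Re^{i\beta}\,f(g_t(z))}{g_t(z)f'(g_t(z))}=g_t(z)\cdot\frac{-Re^{i\beta}\cdot B(g_t(z))}{B(0)}=g_t(z)B(g_t(z))$, using $Re^{i\beta}=-B(0)\cdot(\text{sign adjust})$ — here I must recheck: $Re^{i\beta}=e^{i\beta}|{-B(0)}|$; from $e^{-i\beta}B(0)=-R$ we get $B(0)=-Re^{i\beta}$, so $-Re^{i\beta}/B(0)=-Re^{i\beta}/(-Re^{i\beta})=1$, hence $\frac{{\rm d}}{{\rm d}t}g_t=g_t B(g_t)$ and $g_0=\mathrm{id}$. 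By uniqueness of solutions of \eqref{ODE}, $g_t=\eta_t$ for all $t\geq0$; taking $t=1$ gives $\eta_\mu=\eta_1=f^{-1}\circ(e^{-Re^{i\beta}}f)$. Finally $\eta_\mu'(0)=e^{-Re^{i\beta}}$, which by Corollary \ref{MID univ} (or the fact that $\eta_\mu$ is the $\eta$-transform of $\mu$ since $\mu_0=\delta_1$) equals $m_1(\mu)$.

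For the converse, suppose $\eta_\mu=f^{-1}\circ(e^{-Re^{i\beta}}f)$ for some $\beta$-spirallike $f$ with $f(0)=0$, $f'(0)=1$, and $R\geq0$. I would define $\eta_t:=f^{-1}\circ(e^{-tRe^{i\beta}}f)$; the spirallikeness of $f$ guarantees $e^{-tRe^{i\beta}}f(\D)\subset f(\D)$ for $t\geq0$, so $\eta_t$ is a well-defined analytic self-map of $\D$ fixing $0$, and clearly $\eta_{s+t}=\eta_s\circ\eta_t$, so $(\eta_t)$ is a semigroup of $\eta$-transforms. By Lemma \ref{characterizationeta} there are probability measures $\mu_t$ with $\eta_{\mu_t}=\eta_t$; the semigroup property and \eqref{eq:multiplicative_monotone_convolution} give $\mu_s\submm\mu_t=\mu_{s+t}$, and weak continuity in $t$ follows from continuity of $t\mapsto\eta_t$ together with Lemma \ref{lemmaconvergenceunit}. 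Since $\mu_0=\delta_1$ and $\mu_1=\mu$, Theorem \ref{UMIDLK}(2) yields $\mu\in\ID(\submm,\T)\setminus\{\haar\}$. The last assertion, $m_1(\mu)=\eta_\mu'(0)=e^{-Re^{i\beta}}$, is immediate.

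\textbf{Main obstacle.} The delicate point is the bookkeeping of signs and branches: matching the constant $-B(0)$ (equivalently $\rho(\T)-i\alpha$) with $Re^{i\beta}$ in the half-strip $\beta\in[-\pi/2,\pi/2]$, and ensuring the differential equation for $f$ is set up so that the spirallikeness criterion $\Re[e^{-i\beta}zf'/f]\geq0$ comes out with the correct (nonnegative) sign; one must also confirm that $\Re(1/p)\ge 0$ follows from $\Re p\ge 0$ on $\D$ (true since $1/p$ maps the right half-plane to itself), and handle the degenerate cases $R=0$ (then $\eta_\mu=\mathrm{id}$, i.e. $\mu=\delta_1$, still admissible) and $\beta=\pm\pi/2$ (then $f$ is linear, $\eta_\mu(z)=e^{-Re^{i\beta}}z$, a Poisson kernel).
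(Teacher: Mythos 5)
Your proposal is correct, but the forward direction follows a genuinely different route from the paper. The paper, after disposing of delta measures, embeds $\eta_\mu$ into the stationary Loewner chain $(\eta_{0t})$ and obtains the spirallike map abstractly as the Koenigs-type limit $f_s=\lim_{t\to\infty}e^{at}\eta_{st}$, citing \cite[Theorem 2.3]{GHKK08} or \cite[Lemma 1]{becker}; stationarity then forces the increasing chain to be $(e^{at}f_0)$, whence $f_0$ is $\beta$-spirallike and $\eta_\mu=f_0^{-1}\circ(e^{-a}f_0)$. You instead take the Herglotz generator $B$ from Theorem \ref{UMID}, define $f$ explicitly by $\frac{zf'(z)}{f(z)}=\frac{B(0)}{B(z)}$ with $f(0)=0$, $f'(0)=1$, read off spirallikeness from the criterion \eqref{spirallike_ch} via $\Re\bigl[e^{-i\beta}\tfrac{zf'}{f}\bigr]=R\,\Re\bigl(1/p\bigr)\ge0$ (with $p=-B$), and identify $f^{-1}\circ(e^{-tRe^{i\beta}}f)$ with $\eta_t$ by uniqueness of solutions of \eqref{ODE} — your verification $\frac{{\rm d}}{{\rm d}t}g_t=g_tB(g_t)$ is correct once the signs are settled as you do ($B(0)=-Re^{i\beta}$). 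The converse directions are essentially identical in both treatments (build the semigroup $f^{-1}\circ(e^{-ta}f)$, invoke Lemma \ref{characterizationeta}, weak continuity via Lemma \ref{lemmaconvergenceunit}, and Theorem \ref{UMIDLK}(2)). What your route buys is an explicit Koenigs function and independence from the limit-existence results of \cite{GHKK08,becker}; what it costs is two small checks you should make explicit: (i) $B$ is zero-free on $\D$ in the nondegenerate case — since $\Re(-B)\ge0$, the minimum principle gives either $\Re(-B)>0$ throughout $\D$ or $B$ an imaginary constant, and the latter (including $B\equiv0$, i.e.\ $R=0$) is exactly the rotation/identity case you treat separately with $f(z)=z$, matching the paper's separate treatment of delta measures; and (ii) local Lipschitzness of $w\mapsto wB(w)$ on compacts of $\D$, which justifies the ODE uniqueness you invoke. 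With those two remarks added, your argument is complete.
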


\begin{proof}
First we consider the special case $\mu=\delta_{\alpha}$. Then $\eta_\mu(z)=\alpha z$. We can take $f(z)=z$.\\

Suppose that $\mu \in \ID(\submm,\T)\setminus\{\haar\}$ is not a delta measure (and hence $\eta_\mu$ is not a rotation). By Theorem \ref{UMIDLK}, $\eta_\mu$ can be embedded into 
a \index{Loewner chain!multiplicative}multiplicative Loewner chain $(\eta_{t})_{t\geq 0}$ at $t=1$ and the \index{transition mappings}transition mappings 
$\eta_{st}$ satisfy $\eta_{st}=\eta_{0,t-s}=\eta_{t-s}$. Furthermore we have $\eta_{0t}'(0)=e^{-at}$ for some $a\in\C$. By the Schwarz Lemma we must have $\Re(a)>0$, and hence $a=Re^{i\beta}$ for some $R>0$ and $\beta \in (-\pi/2,\pi/2)$. 

Equation \eqref{EV_ord} yields
 \begin{equation*} \frac{\partial}{\partial t} \eta_{st}(z) = M(\eta_{st}(z)), \quad \text{$s\leq t$},
\end{equation*}
where $M(z)=-zp(z)$ for a holomorphic $p:\D\to \C$ with $p(0)=a$ and $\Re(p(z))>0$ for all $z\in\D$. (If $\Re(p(z))=0$ for some $z$, then $p$ is constant and $\mu$ is a delta measure, which is excluded now.) 
According to \cite[Theorem 2.3]{GHKK08} or \cite[Lemma 1]{becker}, the locally uniform limit $\lim_{t\to\infty}e^{at}\eta_{s,t}(z)=:f_s(z)$ 
exists and $(f_t)_t$ 
is an increasing Loewner chain with $f_t\circ \eta_{st}=f_s$, and $f_0(0)=0, f_0'(0)=1$. 
As $\eta_{st}=\eta_{0,t-s}$, we have 
\[f_0 = \lim_{t\to\infty}e^{at-as}\eta_{0,t-s}(z)=
e^{-as}\lim_{t\to\infty}e^{at}\eta_{s,t}(z)=e^{-as} f_s.\]
Hence, the Loewner chain has the simple form $(e^{at}f_0)_{t\geq0}$, which implies that $f_0$ is a $\beta$-spirallike mapping.
We conclude that $f_0= f_1\circ \eta_{01}=(e^{a}f_0)\circ \eta_{01}$ and thus $\eta_{01}=\eta_\mu = f_0^{-1}\circ(e^{-Re^{i\beta}}f_0)$.\\

Conversely, if $\eta_\mu$ has the form $\eta_\mu = f^{-1}\circ (e^{-Re^{i\beta}}\cdot f)$, then 
$(f_t)_{t\geq0}:=(e^{at}f)_{t\geq 0}$ with $a=Re^{i\beta}$ is an increasing Loewner chain 
and $\eta_{0t}=f_t^{-1}\circ f_0$ defines a semigroup of analytic mappings of $\disk$ with $\eta_{0t}(0)=0$ and $\eta_{01}=\eta_\mu$. Lemma \ref{characterizationeta} and Theorem \ref{UMIDLK} imply that 
$\mu \in \ID(\submm,\T)\setminus\{\haar\}$.
\end{proof}

\begin{corollary}
The set $\ID(\submm,\tor)$ is weakly closed.
\end{corollary}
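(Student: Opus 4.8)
The plan is to show that $\ID(\submm,\tor)$ is closed under weak limits by exploiting the characterization of $\submm$-infinitely divisible distributions given in Corollary \ref{starlike_id}, together with the weak-closedness of $\Univ(\tor)\cup\{\haar\}$ (recorded at the start of Section \ref{sec Mconv}). So suppose $\mu_n\in\ID(\submm,\tor)$ and $\mu_n\wto\mu$. Since $\Univ(\tor)\cup\{\haar\}$ is weakly closed and contains $\ID(\submm,\tor)$ by Corollary \ref{MID univ}, we already know $\mu\in\Univ(\tor)\cup\{\haar\}$. The Haar measure is $\submm$-infinitely divisible, so we may assume $\mu\neq\haar$, and hence $\eta_\mu$ is univalent with $\eta_\mu'(0)=m_1(\mu)$. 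We must produce the structure of Corollary \ref{starlike_id}, or equivalently embed $\eta_\mu$ into a multiplicative Loewner semigroup.

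First I would dispose of trivial cases: if $\mu$ is a delta measure it lies in $\ID(\submm,\tor)$ directly, so assume it is not. Then $m_1(\mu)\neq0$: indeed $m_1(\mu_n)\to m_1(\mu)$, and if $m_1(\mu)=0$ then $\eta_\mu'(0)=0$, forcing the univalent self-map $\eta_\mu\colon\disk\to\disk$ fixing $0$ to be constant by the Schwarz lemma, contradicting non-triviality (this also handles the subtlety that $\eta_\mu$ might a priori be $0$). Also, $|m_1(\mu)|<1$ unless $\mu$ is a rotation. Write $m_1(\mu)=e^{-Re^{i\beta}}$ with $R>0$ and $\beta\in(-\pi/2,\pi/2)$; the analogous parameters for $\mu_n$ converge to these. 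By Corollary \ref{starlike_id}, for each $n$ there is a spirallike mapping $f_n$ of type $\beta_n$ with $f_n(0)=0$, $f_n'(0)=1$, and $\eta_{\mu_n}=f_n^{-1}\circ(e^{-R_ne^{i\beta_n}}f_n)$. The idea is to pass to a subsequential limit of the $f_n$ and show the limit is again a spirallike mapping of type $\beta$ that encodes $\eta_\mu$.

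Next I would invoke compactness: spirallike mappings of $\disk$ with the normalization $f(0)=0$, $f'(0)=1$ form a locally uniformly bounded family (they are a subclass of the class $S$ of univalent functions, which is a normal family — see \cite[p.\ 11]{P75} and the characterization \eqref{spirallike_ch}). So some subsequence $f_{n_k}$ converges locally uniformly to a univalent $f$ with $f(0)=0$, $f'(0)=1$. Since $\beta_{n_k}\to\beta$, the characterization \eqref{spirallike_ch}, $\Re[e^{-i\beta_n}zf_n'(z)/f_n(z)]\geq0$, passes to the limit to give $\Re[e^{-i\beta}zf'(z)/f(z)]\geq0$, so $f$ is $\beta$-spirallike. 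Then $e^{-R_{n_k}e^{i\beta_{n_k}}}f_{n_k}\to e^{-Re^{i\beta}}f$ locally uniformly, and composing with the (locally uniformly convergent) inverses $f_{n_k}^{-1}$ — whose convergence follows from the Hurwitz-type argument for inverses of univalent maps, or from the fact that $f_{n_k}^{-1}$ is itself a normal family with $f_{n_k}^{-1}(0)=0$, $(f_{n_k}^{-1})'(0)=1$ — yields $\eta_{\mu_{n_k}}\to f^{-1}\circ(e^{-Re^{i\beta}}f)$ locally uniformly on a neighborhood of $0$. But $\eta_{\mu_{n_k}}\to\eta_\mu$ by Lemma \ref{lemmaconvergenceunit}, so $\eta_\mu=f^{-1}\circ(e^{-Re^{i\beta}}f)$. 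Applying the converse direction of Corollary \ref{starlike_id} gives $\mu\in\ID(\submm,\tor)$, completing the proof.

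The main obstacle I anticipate is the care needed in the compactness step: one must ensure the domains on which the compositions $f_{n_k}^{-1}\circ(e^{-R_{n_k}e^{i\beta_{n_k}}}f_{n_k})$ are defined do not shrink to $\{0\}$ in the limit. This requires a lower bound on the size of $f_n(\disk)$ uniform in $n$ — e.g.\ via the Koebe $1/4$-theorem applied to $f_n$, which guarantees $f_n(\disk)\supseteq\tfrac14\disk$, so that $f_n^{-1}$ is defined at least on $\tfrac14\disk$ for all $n$; this plus $e^{-R_ne^{i\beta_n}}f_n(\disk)\subseteq f_n(\disk)$ by spirallikeness makes the compositions well-defined on a fixed disk. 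Once uniform domains are secured, the convergence of inverses and compositions is routine. A secondary point to check is that $\beta\in(-\pi/2,\pi/2)$ strictly (rather than at an endpoint), which follows because $R>0$ and $|m_1(\mu)|<1$ force $\Re(Re^{i\beta})>0$; the endpoint case $\beta=\pm\pi/2$ would correspond to $m_1(\mu)$ on the unit circle, i.e.\ a rotation, already excluded.
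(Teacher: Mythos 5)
Your overall route is the same as the paper's: represent each $\eta_{\mu_n}$ as $f_n^{-1}\circ(e^{-R_ne^{i\beta_n}}f_n)$ via Corollary \ref{starlike_id}, use the compactness of the normalized univalent (spirallike) maps together with \eqref{spirallike_ch} to pass to a limit, and conclude with Lemma \ref{lemmaconvergenceunit} and the converse direction of Corollary \ref{starlike_id}; the paper compresses this into the statement that the set of all such compositions together with the zero function is compact.

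There is, however, one step that is not justified as written: the claim that ``the analogous parameters for $\mu_n$ converge to'' the principal pair $(R,\beta)$ with $m_1(\mu)=e^{-Re^{i\beta}}$. Corollary \ref{starlike_id} only asserts the existence of \emph{some} triple $(\beta_n,R_n,f_n)$, and this decomposition is not canonical (it reflects the non-uniqueness of the semigroup embedding noted after Theorem \ref{UMIDLK}). Weak convergence gives $e^{-R_ne^{i\beta_n}}=m_1(\mu_n)\to m_1(\mu)$, but this determines $R_ne^{i\beta_n}$ only modulo $2\pi i$: the real parts $R_n\cos\beta_n=-\log|m_1(\mu_n)|$ converge, while $R_n\sin\beta_n$ may escape to $\pm\infty$, i.e.\ $\beta_n\to\pm\pi/2$ and $R_n\to\infty$ is possible (already for a Poisson kernel $\pk_c$, $c\in(0,1)$, one has valid decompositions with $f(z)=z$ and $Re^{i\beta}=-\log c+2\pi i k$ for every $k\in\Z$). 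In that scenario the spirallike conditions you pass to the limit are of types $\beta_n\to\pm\pi/2$, not of your fixed type $\beta$, so the limiting identity need not have the form required with your $(R,\beta)$. The repair is routine but must be made: extract a subsequence with $\beta_n\to\beta'\in[-\pi/2,\pi/2]$. If $\beta'\in(-\pi/2,\pi/2)$, then $R_n=-\log|m_1(\mu_n)|/\cos\beta_n$ converges to a finite $R'\geq0$ with $e^{-R'e^{i\beta'}}=m_1(\mu)$, and your argument runs verbatim with $(R',\beta')$ in place of $(R,\beta)$. If $\beta'=\pm\pi/2$, then the limit $f$ is a normalized $\pm\pi/2$-spirallike map, hence $f(z)=z$ (as observed before Corollary \ref{starlike_id}), so $\eta_\mu(z)=m_1(\mu)z$ and $\mu$ is a Poisson kernel or a rotation, which lies in $\ID(\submm,\tor)$ directly (take $f(z)=z$ and the principal logarithm of $m_1(\mu)$). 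A further small slip: univalence of $\eta_\mu$ excludes $\eta_\mu'(0)=0$ because univalent functions have nonvanishing derivative; the Schwarz lemma alone does not force a self-map of $\disk$ with vanishing derivative at $0$ to be constant.
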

\begin{proof}The class of all univalent functions $f:\D\to\C$ with $f(0)=0$ and $f'(0)=1$ is compact with respect to locally uniform convergence, see \cite[Theorem 1.7]{P75}. By using \eqref{spirallike_ch}, we see that the set of all functions of the form $f^{-1}\circ (e^{-Re^{i\beta}}\cdot f)$ from Corollary \ref{starlike_id} 
together with the function $0$ forms a compact set. The result now follows by using Lemmas \ref{characterizationeta} 
and \ref{lemmaconvergenceunit}.
\end{proof}

\begin{example}\label{UMBM} The distribution $\{\mu_t\}_{t>0}$ of a \index{Brownian motion!monotone}unitary monotone Brownian motion was introduced and studied by Hamdi \cite{Ham15}. Its $\submm$-infinitely divisible distribution is characterized by $\alpha=0$ and $\rho=\frac{1}{2}\delta_1$ in \eqref{VectorFUMMI}. The moment generating function is
\begin{equation*}
\psi_{\mu_t}(z) = -\frac{1}{2} + \frac{1+z}{2\sqrt{z^2-2(2e^{-t/2}-1)z +1}}.
\end{equation*}
It is absolutely continuous with respect to $\haar$ and the density is
\begin{equation*}
\frac{{\rm d}\mu_t}{{\rm d} \haar}(e^{i\theta})=\frac{\cos(\theta/2)}{\sqrt{\cos^2(\theta/2) - e^{-t/2}}} \mathbf{1}_{(-2\arccos( e^{-t/4}), ~ 2\arccos(e^{-t/4}) )}(\theta),  \qquad -\pi < \theta < \pi,
\end{equation*}
where $\arccos$ is a strictly decreasing function from $(-1,1)$ onto $(0,\pi)$. The density diverges at the edges of the support, and hence is not unimodal. It can be observed that $\mu_t \wto \haar$ as $t\to\infty$.
\end{example}

\index{infinite divisibility!multiplicative monotone convolution|)}

\subsection{Freely infinitely divisible distributions on $\tor$} \label{FIDunit}

\index{infinite divisibility!multiplicative free convolution|(}

Bercovici and Voiculescu investigated the unit circle case, see \cite{BV92}. We exclude the measures with vanishing mean to define the $\Sigma$-transform, and correspondingly we denote by $\ID_\times(\boxtimes,\tor)$ the set of $\boxtimes$-infinitely divisible distributions with non-zero mean. It is known, see  \cite[Lemma 6.1]{BV92}, that
\begin{equation*}
\ID_\times(\boxtimes,\tor)= \ID(\boxtimes,\tor)\setminus\{\haar\}.
\end{equation*}
The class $\ID_\times(\boxtimes,\tor)$ is characterized as follows.
\begin{theorem}\label{MFID}
Let $\mu \in \cP_\times(\tor)$. The following three statements are equivalent.
\begin{enumerate}[\rm(1)]
\item $\mu\in \ID(\boxtimes,\tor)$.
\item\label{CSUMFI} There exists a weakly continuous $\boxtimes$-convolution semigroup $\{\mu_t \}_{t\geq 0} \subset\cP(\tor)$ such that $\mu_0 = \delta_1$ and $\mu_1 = \mu$.
\item\label{UMFI} There exists an analytic map $u\colon\disk \to \bH \cup i \R$ such that $\Sigma_\mu(z) = \exp(u(z))$.
\end{enumerate}
\end{theorem}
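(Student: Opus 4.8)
The plan is to prove Theorem \ref{MFID} by exhibiting the cycle of implications $(2) \Rightarrow (1) \Rightarrow (3) \Rightarrow (2)$, using the $\Sigma$-transform as the main analytic tool, exactly parallel to how Theorem \ref{thmBV93} handles the additive free case via the Voiculescu transform. The key structural fact I would rely on is that for $\mu, \nu \in \cP_\times(\tor)$ one has $\Sigma_{\mu \boxtimes \nu}(z) = \Sigma_\mu(z)\Sigma_\nu(z)$ in a neighbourhood of $0$ (formula \eqref{FM}), so that multiplicative free convolution becomes pointwise multiplication of $\Sigma$-transforms and $\boxtimes$-infinite divisibility becomes the existence of ``$\Sigma$-roots''.

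First I would treat $(2) \Rightarrow (1)$: given a weakly continuous $\boxtimes$-convolution semigroup $\{\mu_t\}_{t \ge 0}$ with $\mu_0 = \delta_1$ and $\mu_1 = \mu$, the measure $\mu_{1/n}$ satisfies $\mu_{1/n}^{\boxtimes n} = \mu_1 = \mu$, so $\mu \in \ID(\boxtimes, \tor)$ directly from the definition. No analytic input is needed here. Next, for $(1) \Rightarrow (3)$: if $\mu = \mu_n^{\boxtimes n}$ for each $n$, then since $\mu$ has non-zero mean each $\mu_n$ also has non-zero mean (the mean of a $\boxtimes$-convolution is the product of the means, by looking at the first coefficient of $\eta$), so $\Sigma_{\mu_n}$ is defined near $0$ and $\Sigma_{\mu_n}(z)^n = \Sigma_\mu(z)$ there; choosing the branch of $\Sigma_{\mu_n}$ continuous near $0$ with $\Sigma_{\mu_n}(0) = 1/m_1(\mu_n)$, I obtain $u_n(z) := \log \Sigma_{\mu_n}(z) = \tfrac1n \log \Sigma_\mu(z)$. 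The point is that $\Sigma_\mu$ maps a neighbourhood of $0$ into a region avoiding $0$, and an argument using that $\Sigma_{\mu_n}$ takes values in a half-plane-like domain (which is exactly the known range description for $\Sigma$-transforms on the disk, cf.\ \cite{BV92}) forces $\log \Sigma_\mu$ to extend to an analytic map $u \colon \disk \to \bH \cup i\R$: dividing by $n$ pins the image of $u_n$ into a thin sector, and passing $n \to \infty$ together with the semigroup-type compactness gives the half-plane-valued extension $u = \log \Sigma_\mu$ on all of $\disk$. I would quote \cite{BV92} for the precise statement that such $u$ exists and is the analytic continuation.

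Finally, $(3) \Rightarrow (2)$: given $u \colon \disk \to \bH \cup i\R$ with $\Sigma_\mu = e^u$, define $\Sigma_t := e^{tu}$ for $t \ge 0$; since $tu$ still maps $\disk$ into $\bH \cup i\R$, the implication $(3) \Rightarrow (1)$ (applied in reverse, or rather the existence half of the theorem as in \cite{BV92}) produces a probability measure $\mu_t \in \cP_\times(\tor)$ with $\Sigma_{\mu_t} = e^{tu}$, and $\Sigma_{\mu_s}\Sigma_{\mu_t} = e^{(s+t)u} = \Sigma_{\mu_{s+t}}$ gives $\mu_s \boxtimes \mu_t = \mu_{s+t}$; weak continuity follows from continuity of $t \mapsto e^{tu}$ locally uniformly on $\disk$ together with Lemma \ref{lemmaconvergenceunit} (convergence of $\eta$-transforms, hence of moments, implies weak convergence), and $\mu_0 = \delta_1$, $\mu_1 = \mu$ are immediate. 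This closes the cycle.

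The main obstacle I anticipate is the analytic continuation step inside $(1) \Rightarrow (3)$: controlling the branches of $\Sigma_{\mu_n}$ and proving that $\log \Sigma_\mu$ extends from a neighbourhood of $0$ to the whole disk with values in the closed right half-plane. Everything hinges on the precise characterization of the range of $\Sigma$-transforms of measures in $\cP_\times(\tor)$ and the monotonicity/compactness that lets one pass to the limit, and the cleanest route is simply to invoke the corresponding lemmas of Bercovici and Voiculescu \cite{BV92} rather than reprove them; the rest of the argument is then bookkeeping with the multiplicativity of $\Sigma$ and the weak-continuity criterion of Lemma \ref{lemmaconvergenceunit}.
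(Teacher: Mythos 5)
The first thing to note is that the paper itself offers no proof of Theorem \ref{MFID}: it is quoted as a known result of Bercovici and Voiculescu, with the surrounding text pointing to \cite{BV92}, so there is no in-paper argument to match your proposal against. Measured as a self-contained proof, your outline has the right skeleton --- the cycle $(2)\Rightarrow(1)\Rightarrow(3)\Rightarrow(2)$, with $(2)\Rightarrow(1)$ immediate from $\mu_{1/n}^{\boxtimes n}=\mu$, and $(3)\Rightarrow(2)$ correct modulo the existence statement that $e^{tu}$ is the $\Sigma$-transform of some $\mu_t\in\cP_\times(\tor)$ --- but the heart of the theorem, $(1)\Rightarrow(3)$, is not actually established. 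Your justification there is circular: you appeal to a ``known range description for $\Sigma$-transforms on the disk'' placing $\Sigma_{\mu_n}$ in a half-plane-like domain, but no such description exists for general measures in $\cP_\times(\tor)$; the $\Sigma$-transform of an arbitrary such measure is only defined near $0$, and the exponential-of-Herglotz form (equivalently, $\log\Sigma$ mapping $\disk$ into $\bH\cup i\R$) is precisely the property that characterizes infinite divisibility, i.e.\ the conclusion $(3)$ itself. Nothing is known a priori about the $n$-th roots $\mu_n$ beyond having non-zero mean, so ``dividing by $n$'' does not pin anything down; the genuine argument in \cite{BV92} proceeds differently, via a limit-theorem/compactness analysis of the roots (after rotating them so that they converge to $\delta_1$) and Herglotz representations, not via a range property of general $\Sigma$-transforms.

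Two smaller points. In $(3)\Rightarrow(2)$, weak continuity of $t\mapsto\mu_t$ does not follow directly from Lemma \ref{lemmaconvergenceunit}, which concerns $\psi$- and $\eta$-transforms: local uniform convergence of $\Sigma_{\mu_t}=e^{tu}$ gives convergence of $\eta_{\mu_t}^{-1}$ near $0$, and one still has to transfer this to the $\eta$-transforms themselves (the paper handles the analogous step in Section \ref{mult_con_from_free} by citing \cite[Proposition 2.9]{BV92}). Finally, since your fallback at every hard step is ``invoke \cite{BV92}'', the proposal in effect reproduces the paper's treatment --- quoting the theorem --- rather than proving it; that is acceptable as exposition, but if the goal is an actual proof, the $(1)\Rightarrow(3)$ implication must be supplied, and the mechanism you sketch for it would not work as written.
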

The analytic map $u$ in \eqref{UMFI} above can be characterized by the Herglotz representation
\begin{equation}\label{VectorFUMFI}
u(z) = -i \alpha +\int_{\tor} \frac{1+ z \zeta}{1-z\zeta}\rho({\rm d} \zeta),
\end{equation}
where $\alpha \in \R$ and $\rho$ is a finite, non-negative measure on $\tor$. 

Note that $\alpha$ is not unique due to the transformation $\alpha\mapsto \alpha +2\pi n$ for $n\in\Z$. Also the convolution semigroup $\{\mu_t \}_{t\geq 0}$ in \eqref{CSUMFI} is not unique either up to the transformation $\{\mu_t\}_{t\geq0} \mapsto \{\DD_{e^{2\pi n t i}}(\mu_t)\}_{t\geq0}$ for $n\in\Z$. However, a canonical bijection between $\{\mu_t\}_{t\geq0}$ and $u$ can be given by $\Sigma_{\mu_t}(z) = \exp(t u(z))$.
A non-canonical choice appears only when we go from $\mu \in \ID_\times(\boxtimes, \tor)$ to $\{\mu_t\}_{t\geq0}$ or from $\mu \in \ID_\times(\boxtimes, \tor)$ to $u$.

Proposition \ref{FID} has the following multiplicative analogue.
\begin{proposition}
$\ID(\boxtimes,\tor) \subset \Univ(\tor) \cup \{\haar\}$.
\end{proposition}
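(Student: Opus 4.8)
The statement to prove is $\ID(\boxtimes,\tor) \subset \Univ(\tor) \cup \{\haar\}$, which is the multiplicative analogue of Proposition \ref{FID}. The plan is to follow the same strategy as in the proof of Proposition \ref{FID}: show that for $\mu \in \ID(\boxtimes,\tor)$ the compositional inverse of $\eta_\mu$ extends to an analytic self-map of $\disk$ defined globally, and then conclude univalence via the identity theorem. First I would dispose of the Haar measure case: if $\mu = \haar$ there is nothing to prove. So assume $\mu \in \ID(\boxtimes,\tor) \setminus \{\haar\} = \ID_\times(\boxtimes,\tor)$, so that $\mu$ has non-zero mean and the $\Sigma$-transform $\Sigma_\mu$ is defined in a neighborhood of $0$.

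Next I would invoke Theorem \ref{MFID}: since $\mu \in \ID_\times(\boxtimes,\tor)$, there is an analytic map $u \colon \disk \to \bH \cup i\R$ with $\Sigma_\mu(z) = \exp(u(z))$, defined on all of $\disk$. Recall that $\Sigma_\mu(z) = z^{-1}\eta_\mu^{-1}(z)$ where $\eta_\mu^{-1}$ is the compositional inverse of $\eta_\mu$, a priori only defined in a small neighborhood of $0$. The key point is that the formula $\eta_\mu^{-1}(z) = z\,\Sigma_\mu(z) = z\exp(u(z))$ gives a candidate analytic extension of $\eta_\mu^{-1}$ to all of $\disk$; call this extended function $g(z) := z \exp(u(z))$. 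One checks $g(0) = 0$ and $g'(0) = \exp(u(0)) = 1/m_1(\mu) \neq 0$ (using $\eta_\mu'(0) = m_1(\mu)$). On a sufficiently small disk around $0$, $g$ coincides with the genuine compositional inverse of $\eta_\mu$, so $g(\eta_\mu(z)) = z$ there. By the identity theorem applied to the analytic function $g \circ \eta_\mu - \mathrm{id}$ on $\disk$ (noting $\eta_\mu(\disk) \subseteq \disk$ by Lemma \ref{characterizationeta}, so the composition $g \circ \eta_\mu$ is well-defined on $\disk$), we conclude $g(\eta_\mu(z)) = z$ for all $z \in \disk$. This exhibits a left inverse for $\eta_\mu$ on $\disk$, hence $\eta_\mu$ is injective on $\disk$, i.e.\ univalent, so $\mu \in \Univ(\tor)$.

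\textbf{Main obstacle.} The step requiring care is the identity-theorem argument and in particular verifying that $g \circ \eta_\mu$ is genuinely well-defined and analytic on all of $\disk$; this needs $\eta_\mu(\disk) \subseteq \disk$, which is exactly Lemma \ref{characterizationeta}, and it needs $g$ to be analytic on $\disk$, which follows from $u$ being analytic on $\disk$ with values in $\bH \cup i\R$ (so $\exp(u(z))$ is entire in $u$ and never zero). A secondary subtlety is to confirm that the locally-defined compositional inverse of $\eta_\mu$ near $0$ really agrees with $z\exp(u(z))$ and not merely with $z\Sigma_\mu(z)$ up to the branch ambiguity in $\Sigma_\mu$; since $\Sigma_\mu(z) = z^{-1}\eta_\mu^{-1}(z)$ is unambiguously defined near $0$ (the inverse series $\eta_\mu^{-1}$ has a convergent expansion there by the discussion preceding \eqref{FM}), and $\exp(u(z))$ is the analytic continuation matching it at $0$, the two agree near $0$ by analyticity. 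Once these points are nailed down, the conclusion is immediate, mirroring Proposition \ref{FID} exactly.
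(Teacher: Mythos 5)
Your proposal is correct and follows essentially the same route as the paper: the paper's proof also uses Theorem \ref{MFID} to extend $f(z)=z\Sigma_\mu(z)$ analytically to all of $\disk$, notes $f(\eta_\mu(z))=z$ near $0$ by the definition of $\Sigma_\mu$, and concludes by the identity theorem that $\eta_\mu$ is univalent. Your additional checks (the Haar case, $\eta_\mu(\disk)\subseteq\disk$ via Lemma \ref{characterizationeta}, and the agreement of $z\exp(u(z))$ with the local inverse) are just the details the paper leaves implicit.
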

\begin{proof}
Suppose that $\mu \in \ID_\times(\boxtimes,\tor)$. Theorem \ref{MFID} shows that $f(z):= z \Sigma_\mu(z)$ extends to an analytic map defined in $\disk$. By the definition of $\Sigma_\mu$, the identity  $f(\eta_\mu(z))=z$ holds in neighborhood of $0$, and so in $\disk$ by the identity theorem. Hence $\eta_\mu$ is univalent in $\disk$.
\end{proof}
Note that $\ID(\boxtimes,\tor)$ is weakly closed since it coincides with $\IA(\boxtimes,\tor)$.

\begin{example}
The distributions $\{\mu_t\}_{t\geq0}$ of \index{Brownian motion!free}unitary free Brownian motion, introduced by Biane \cite{Bia97a}, is characterized by $\Sigma_{\mu_t}(z)= \exp(\frac{t(1+z)}{2(1-z)})$. Biane proved that it is Lebesgue absolutely continuous at any $t>0$ \cite[Proposition 10]{Bia97b}, and Zhong proved that the density is unimodal \cite[Theorem 5.4]{Zho14}. 
\end{example}

\index{infinite divisibility!multiplicative free convolution|)}

\subsection{Unimodal distributions on $\tor$} \label{subsec_unimodal_mult}
We investigate unimodal distributions on $\T$.

\index{unimodal distribution!on $\tor$|(}

\begin{definition} \label{defunit}
Let $\alpha , \beta \in \R$ such that $0\leq \beta-\alpha \leq 2\pi$. A measure $\mu$ on $\tor$ is said to be \emph{unimodal} with antimode $e^{i\alpha}$ and mode $e^{i\beta}$
if there exist $\lambda\in[0,\infty)$ and a function $f\colon (\alpha, \alpha + 2\pi)\to[0,\infty)$, non-decreasing on $(\alpha,\beta)$ and non-increasing on $(\beta,\alpha+2\pi)$ such that
$$
\mu({\rm d}\theta) = f(\theta)\,{\rm d}\theta + \lambda\delta_{\beta}.
$$
If $\alpha=\beta$ and $\alpha+2\pi = \beta$, we understand that $f$ is non-increasing on $(\alpha,\alpha+2\pi)$ and $f$ is non-decreasing on $(\alpha,\alpha+2\pi)$, respectively. A measure $\mu$ on $\tor$ is said to be unimodal if it is unimodal with some antimode and mode.
\end{definition}
The set of unimodal distributions on $\tor$ is denoted by $\UM(\tor)$. Similarly to the case of $\R$, the set of unimodal distributions is closed with respect to weak convergence (this fact can also be deduced from Lemma \ref{lemmaconvergenceunit} and Theorem \ref{unimodalunit}).

\begin{proposition}\label{vertical}
$\UM(\tor) \subset \Univ(\tor)\cup\{\haar\}$.
\end{proposition}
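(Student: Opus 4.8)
The plan is to prove $\UM(\tor)\subset\Univ(\tor)\cup\{\haar\}$ by adapting the argument used for the real line in Theorem \ref{anotherkhintchine}, in particular the equivalence between unimodality and the condition $\Im((z-c)G_\mu'(z))\ge0$. First I would reduce to the case where the mode is $e^{i\beta}$ with $\beta=0$ by rotating the measure: if $\mu$ is unimodal with mode $e^{i\beta}$ then $\DD_{e^{-i\beta}}\mu$ is unimodal with mode $1$, and rotation clearly preserves both univalence of $\psi_\mu$ (equivalently $\eta_\mu$) and the Haar measure, since $\eta_{\DD_c\mu}(z)=\bar c\,\eta_\mu(cz)$ for $|c|=1$. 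I would also dispose of the trivial subcases: if $\mu=\haar$ there is nothing to prove, and if $\mu$ is a Dirac mass $\delta_\zeta$ then $\eta_\mu(z)=\bar\zeta z$ is obviously univalent.

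The core step is a Khintchine-type mixture representation on the circle: a probability measure $\mu\ne\haar$ on $\tor$ is unimodal with mode $1$ if and only if it is the law of $U\cdot W$ (or, in the arc picture, $UW$ where $W=e^{i\Theta}$ and the argument is scaled by a uniform $U$ on $(0,1)$), $U$ uniform on $(0,1)$ and independent of some circle-valued random variable. Concretely, I would show that unimodality with mode $1$ is equivalent to the existence of a probability measure $\nu$ on $\tor$ such that the derivative of $\psi_\mu$ is controlled by $\psi_\nu$; the right analytic statement, obtained from the corresponding interval statement via a Moebius transformation $\disk\to\Ha$, is that $z\psi_\mu'(z)$ (or an appropriate combination involving $\eta_\mu$) is again a moment-generating-type function, i.e.\ $z\frac{{\rm d}}{{\rm d}z}\eta_\mu(z)=\eta_\nu(z)\cdot(\text{something nonvanishing})$ arising as a Herglotz-type integral against a uniform-on-an-arc kernel. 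The uniform distribution on an arc $(0,\theta)$ of $\tor$ plays the role that $\bfu_t$ played on $\R$; one checks directly that its $\eta$-transform is univalent and starlike (its image is a disk sector / lens), so that the mixture representation exhibits $\eta_\mu$ as a ``generalized'' convex combination of univalent functions of a special geometric type.

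To conclude univalence from the representation I would invoke Lemma \ref{NW} (the Noshiro--Warschawski criterion) in the same spirit as in the proof of Theorem \ref{starlike1}: transferring to the disk, the mixture structure forces $\Re\big[e^{-i\alpha}\frac{z\eta_\mu'(z)}{\eta_\mu(z)}\big]\ge0$ on $\disk$ for a suitable $\alpha$ (in fact the relevant derivative condition should come out with $\alpha=0$, i.e.\ $\eta_\mu$ is starlike), and then univalence on the convex domain $\disk$ follows because $\log\eta_\mu$ has derivative with nonnegative real part of a fixed rotation; alternatively one argues radius by radius as in Theorem \ref{Khintchineunit}, proving univalence of $\eta_\mu$ on each $r\disk$ via Lemma \ref{NW} and letting $r\uparrow1$, using that $\eta_\mu$ is nonconstant precisely because $\mu\ne\haar$.

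The main obstacle I anticipate is making the circle analogue of Khintchine's mixture representation fully rigorous, in particular getting the correct kernel and handling the atom at the mode and the antimode: on $\R$ the representation \eqref{mixture uniform} is clean because the uniform distribution on $(0,x)$ has an explicit logarithmic Cauchy transform, whereas on $\tor$ one must be careful with branch choices for $\log$ and with the fact that the antimode $e^{i\alpha}$ (the ``far'' point of the arc) is genuinely present, so the relevant scaling is of arguments modulo $2\pi$ rather than of the complex variable directly. Concretely I expect to need the Herglotz-type inversion of Lemma \ref{inversionunit} together with a careful integration-by-parts in the arc variable, analogous to the passage from \eqref{Int} to the boundedness estimate in the proof of Theorem \ref{starlike1}, to verify that the candidate mixing measure is indeed a probability measure and that the resulting $\eta_\mu$ satisfies the derivative positivity needed for Lemma \ref{NW}. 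Once that analytic bookkeeping is in place, the deduction of univalence, and hence membership in $\Univ(\tor)\cup\{\haar\}$, is routine.
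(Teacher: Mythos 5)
There is a genuine gap, and it sits exactly at the step you call ``routine''. On the circle, unimodality is a \emph{two-point} condition: by Theorem \ref{unimodalunit} it is equivalent to
$\Re\big(e^{\frac{i}{2}(\alpha+\beta-\pi)}(z-e^{-i\alpha})(z-e^{-i\beta})\psi_\mu'(z)\big)\geq0$ on $\disk$, i.e.\ to $\psi_\mu$ being convex in one direction (close-to-convex in Kaplan's sense), \emph{not} to starlikeness $\Re\big(z\psi_\mu'(z)/\psi_\mu(z)\big)\geq0$. These classes are genuinely different: the unitary monotone Brownian motion (Example \ref{UMBM}) has starlike $\psi_\mu$ but is not unimodal, and conversely vertically convex, non-starlike ranges occur. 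So your concluding move --- deriving $\Re\big[e^{-i\alpha}z\eta_\mu'(z)/\eta_\mu(z)\big]\geq0$ and invoking Lemma \ref{NW} --- cannot be carried out: unimodality simply does not give that inequality, and in any case Lemma \ref{NW} requires the derivative of the function itself to lie in a half-plane, which fails for unimodal $\psi_\mu$ in general (you are conflating it with the starlike/spirallike criterion). Nor does the Khintchine-type mixture save the argument on its own: $\psi_\mu$ is a convex combination of the $\psi$-transforms of uniform-on-arc laws, but univalence is not preserved under convex combinations; even on $\R$, the univalence in Theorem \ref{anotherkhintchine} does not come from the mixture \eqref{mixture uniform} but from the Hengartner--Schober theorem on functions convex in one direction applied to the derivative condition (2).

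The second problem is the reduction. Rotating so that the mode is $1$ does not normalize the antimode, and the antimode genuinely enters the analytic condition; a single rotation can never reduce the pair $(e^{i\alpha},e^{i\beta})$ to a standard position. The paper instead reduces to antimode $1$ and mode $-1$ by the Moebius automorphism $T$ of $\disk$ appearing in Lemma \ref{unimodal-thm} (which moves two boundary points and changes the Herglotz kernel by an explicitly computed term), and then the atomless case is precisely Kaplan's theorem \cite[Theorem 3]{K52} (see also \cite{AA75}); atoms at the mode and the degenerate case $e^{i\alpha}=e^{i\beta}$ are handled by approximation, using that $\Univ(\tor)\cup\{\haar\}$ is weakly closed (Hurwitz together with Lemma \ref{lemmaconvergenceunit}). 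To repair your outline, replace the starlikeness target by the two-point close-to-convexity condition of Theorem \ref{unimodalunit}, handle the antimode by a Moebius change of variable rather than a rotation, and quote (or reprove, as in Lemmas \ref{Converse-of-Kaplan}--\ref{convex-in-one-drct}) a Kaplan/Hengartner--Schober type univalence theorem; the atomic and coincident mode--antimode cases then follow by weak approximation.
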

\begin{proof}
If $\mu$ is unimodal with antimode $1$ and mode $-1$ without an atom, then we can directly use \cite[Theorem 3]{K52} (see also \cite[Theorem 40]{AA75}). If $\mu$ has an atom, then we can resort to approximation. For a general mode and an antimode, if they are different points then we can apply a suitable Moebius transformation (see the map $T$ in the proof of Lemma \ref{unimodal-thm} below).  Finally, if the mode and antimode coincide, then we can easily find  approximating unimodal distributions whose mode and antimode are different.
\end{proof}

An analogue of Theorem \ref{anotherkhintchine} can be formulated as follows.

\begin{theorem}\label{unimodalunit}
Let $\mu$ be a probability measure on $\tor$ that is not a Haar measure. Let $\alpha, \beta \in \R$ such that $0\leq \beta-\alpha \leq 2\pi$. The following are equivalent.
\begin{enumerate}[\rm(1)]
\item\label{unimodalunitfirst} $\mu$ is unimodal on $\tor$ with antimode $e^{i\alpha}$ and mode $e^{i\beta}$.
\item\label{unimodalunitsecond} $\Re\left(e^{\frac{i}{2}(\alpha+\beta-\pi)} (z-e^{-i\alpha})(z-e^{-i\beta})\psi'_\mu(z) \right) \geq0$ in $\disk$.
\end{enumerate}
\end{theorem}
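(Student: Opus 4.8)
The strategy is to reduce the unit-circle statement to the real-line characterization of unimodality proved in Theorem~\ref{anotherkhintchine}, by transporting everything through a suitable Möbius transformation. Since $\mu$ is not the Haar measure and unimodality of $\mu$ is a property that is insensitive to a global rotation of $\T$, I would first rotate so that the antimode and mode are placed symmetrically; concretely, a rotation by $-\frac{\alpha+\beta}{2}$ sends $e^{i\alpha}$ to $e^{-i(\beta-\alpha)/2}$ and $e^{i\beta}$ to $e^{i(\beta-\alpha)/2}$, so one may assume $\alpha=-\gamma$, $\beta=\gamma$ with $\gamma=\frac{\beta-\alpha}{2}\in[0,\pi]$. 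Under a rotation $\mu\mapsto \DD_{e^{-i\gamma_0}}\mu$ one has $\psi_{\DD_{e^{-i\gamma_0}}\mu}(z)=\psi_\mu(e^{-i\gamma_0}z)$, and one checks that the quantity $(z-e^{-i\alpha})(z-e^{-i\beta})\psi_\mu'(z)$ transforms in the expected covariant way so that condition~(2) is preserved under rotations, up to the adjustment of the phase factor $e^{\frac{i}{2}(\alpha+\beta-\pi)}$. So it suffices to treat the case of antimode $e^{-i\gamma}$ and mode $e^{i\gamma}$.

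\textbf{Reduction via a Cayley-type map.} Next I would pick a Möbius transformation $T$ of the Riemann sphere carrying $\disk$ onto $\Ha$, chosen so that the two boundary points $e^{-i\gamma}$ and $e^{i\gamma}$ are sent to $\infty$ and to a finite real point $c$ (or both to real points), say $T(e^{-i\gamma})=\infty$ and $T(e^{i\gamma})=c\in\R$; when $\gamma=0$ or $\gamma=\pi$ the two boundary points coincide or are antipodal and a degenerate/limiting choice of $T$ is used. Then $\nu:=\mu\circ T^{-1}$ is a probability-like measure on $\R$ (after normalizing; the image of $\mu$ under $T$ restricted to $\R$ is a finite measure, and one rescales). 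The point is the dictionary: the moment generating function $\psi_\mu$ on $\disk$ is, up to an affine change of variables and an additive/multiplicative correction coming from $T$, the Cauchy transform $G_\nu$ on $\Ha$; a short computation relates $\psi_\mu(z)=-\tfrac12+\tfrac12\int \frac{\xi^{-1}+z}{\xi^{-1}-z}\mu(d\xi)$ (formula~\eqref{eq:psi2}) to the Nevanlinna/Herglotz data of $G_\nu$, exactly as Lemma~\ref{inversionunit} is obtained from \eqref{eq:compactification} by a Möbius transformation. Under this correspondence, ``$\mu$ unimodal on $\T$ with antimode $e^{-i\gamma}$, mode $e^{i\gamma}$'' becomes ``$\nu$ unimodal on $\R$ with mode $c$'' (the monotonicity of the density $f$ on the two arcs translates into monotonicity of the density of $\nu$ on $(-\infty,c)$ and $(c,\infty)$, because $T$ is a monotone homeomorphism of each arc onto a half-line and the Jacobian factor is itself monotone on each arc — this is where the symmetric placement of the antimode helps). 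Then Theorem~\ref{anotherkhintchine}, equivalence \eqref{another1}$\Leftrightarrow$\eqref{another2}, says this is equivalent to $\Im((w-c)G_\nu'(w))\ge 0$ for all $w\in\Ha$.

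\textbf{Translating the analytic condition back.} The final step is to push the inequality $\Im((w-c)G_\nu'(w))\ge 0$ back through $T$ to the disk and verify it equals condition~(2). Writing $w=T(z)$ and using the chain rule $G_\nu'(w)=\frac{d}{dw}(\text{expression in }\psi_\mu)$, the factor $(w-c)$ becomes, up to a nonvanishing holomorphic factor, $(z-e^{-i\gamma})(z-e^{i\gamma})/(\text{denominator of }T)$, and the derivative $G_\nu'(w)\,\frac{dw}{dz}$ supplies $\psi_\mu'(z)$ times rational factors; collecting the phases, the product $(w-c)G_\nu'(w)$ becomes a \emph{positive real multiple} of $e^{\frac{i}{2}(\alpha+\beta-\pi)}(z-e^{-i\alpha})(z-e^{-i\beta})\psi_\mu'(z)$ after undoing the initial rotation. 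Since multiplication by a positive real number and the map $T$ (which preserves the upper half-plane, hence the sign of the imaginary part) do not change the sign of $\Im$, the two conditions are equivalent. The exclusion of $\mu=\haar$ is needed precisely because $\psi_\haar\equiv 0$, so condition~(2) would be vacuously true while $\haar$ is not unimodal in the sense of Definition~\ref{defunit} with a finite density pattern; moreover the Möbius transport argument requires $\mu$ to have univalent $\psi_\mu$ at the relevant stage, which by Proposition~\ref{vertical} fails only for $\haar$.

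\textbf{Main obstacle.} The genuinely delicate point is the bookkeeping of phases and positive factors in the last step: one must verify that the real constant produced by the chain rule and by the normalization of $\nu$ is genuinely \emph{positive} (not merely nonzero) for every choice of $\gamma\in[0,\pi]$, including the two degenerate endpoints $\gamma=0$ (mode $=$ antimode) and $\gamma=\pi$ (antipodal), where $T$ must be chosen as a limiting case and the factor $(z-e^{-i\alpha})(z-e^{-i\beta})$ either becomes a perfect square or relates to a disk automorphism as in the remark after Corollary~\ref{starlike_id}. Handling these boundary cases — and checking that the monotonicity of the Möbius Jacobian on each arc has the correct sense so that unimodality really does transfer — is where the careful work lies; the interior case $0<\gamma<\pi$ is a routine, if somewhat lengthy, computation once the dictionary $\psi_\mu\leftrightarrow G_\nu$ is set up.
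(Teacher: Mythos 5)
Your reduction stands or falls with the claim that, after sending the antimode to $\infty$ and the mode to a point $c\in\R$ by a M\"obius map $T$, both condition (1) and condition (2) transfer simultaneously to the real-line statement of Theorem \ref{anotherkhintchine}. That is precisely where the argument breaks. If you take $\nu:=T_*\mu$ (the pushforward, so that Theorem \ref{anotherkhintchine} is at least about an honest probability measure), its Lebesgue density is $f(\theta(x))\,|{\rm d}\theta/{\rm d}x|$, and the boundary Jacobian $|{\rm d}\theta/{\rm d}x|=2\Im(z_0)/|x-z_0|^2$ (with $z_0=T(0)$) is a non-constant unimodal profile peaked at the image of the antipode of the antimode, not at $c$ in general; unimodality does not survive this reweighting, and not only in the degenerate endpoints. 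Concretely, even in the best (antipodal) case, with antimode $-1\mapsto\infty$ and mode $1\mapsto 0$ via $x=\tan(\theta/2)$: the probability measure on $\R$ with density proportional to $(1+x^2)^{-9/10}$ is unimodal with mode $0$, yet its pull-back density on $\T$ is proportional to $\sec^{1/5}(\theta/2)$, which attains its \emph{minimum} at $\theta=0$, so it is not unimodal with mode $1$ and antimode $-1$. Hence ``$\nu$ unimodal with mode $c$'' is not equivalent to ``$\mu$ unimodal with the prescribed mode and antimode''. Moreover $G_{T_*\mu}$ is not a holomorphic reparametrization of $\psi_\mu$ (the Herglotz kernel of $\psi_\mu$ in \eqref{eq:psi2} has its singularities at the conjugated points $\xi^{-1}$, and the change of variables brings in the conjugate measure and the same Jacobian factor), so condition (2) does not transfer under this dictionary either.

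If instead you match the analytic functions, i.e.\ transport $2\psi_\mu+1$ by a M\"obius map $S\colon\Ha\to\D$, then positivity conditions of type (2) do correspond, but the transported function is a general Pick/Herglotz function whose Stieltjes measure on $\R$ has density $f(\theta(x))$ with \emph{no} Jacobian, hence is typically an infinite measure (already when $f$ is bounded away from $0$ near the antimode) and is not the Cauchy transform of any probability measure; Theorem \ref{anotherkhintchine} therefore cannot be invoked. What your route actually requires is a representation theorem for Herglotz functions on $\D$ with real part bounded below satisfying the positivity condition in (2) --- and that is exactly what the paper proves directly in Lemmas \ref{Converse-of-Kaplan}, \ref{unimodal-thm} and \ref{convex-in-one-drct}, where the M\"obius map is used only as an automorphism of the disk to move the two marked boundary points, and the degenerate case $e^{i\alpha}=e^{i\beta}$ is handled by a separate argument (the authors explicitly remark that the limiting/approximation device you suggest for it ``seems not very easy''). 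Note also that the real-line criterion \ref{anotherkhintchine}\eqref{another4} is itself deduced from disk results of Hengartner--Schober, so the natural direction of transfer is from the disk to the half-plane, not the converse.
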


\begin{remark}
A geometric characterization of unimodality exists and strengthens Proposition \ref{vertical}. If $e^{i\alpha} \neq e^{i\beta}$, then the above conditions \eqref{unimodalunitfirst} and \eqref{unimodalunitsecond} are also equivalent to the following four geometric conditions: 
\begin{enumerate}[\rm(a)]
\item \label{eq:unimodal_a} $\psi_\mu$ is univalent;
\item\label{eq:unimodal_b} The domain $\psi_\mu(\disk)$ is \index{moment generating function!vertically convex}vertically convex, namely for any $z_1,z_2\in \psi_\mu(\disk)$ having the same real part and for any $t\in(0,1)$, the point $(1-t)z_1 +tz_2$ also belongs to $\psi_\mu(\disk)$;
\item\label{eq:unimodal_c} There exist points $z_n$ in $\disk$ converging to $e^{-i\beta}$ such that
$$
\lim_{n\to\infty}\Re (\psi_\mu(z_n))=\sup_{z\in\disk} \Re(\psi_\mu(z));
$$
\item\label{eq:unimodal_d} There exist points $z'_n$ in $\disk$ converging to $e^{-i\alpha}$ such that
$$
\lim_{n\to\infty}\Re (\psi_\mu(z_n'))=\inf_{z\in\disk} \Re(\psi_\mu(z)).
$$
\end{enumerate}
The proof can be obtained from \cite[Theorem 1]{HS70} (or the proof of \cite{RZ76}) with a suitable Moebius transformation (see the transformation $T$ in Lemma \ref{unimodal-thm} below). 

If $e^{i\alpha} = e^{i\beta}$, then we can use results in \cite[Section 6]{HS70} with a simple rotation $z \mapsto \gamma z$ for some $\gamma \in \T$. 
More specifically, if $\alpha = \beta$, then the equivalence still holds with the above \eqref{eq:unimodal_b} replaced by
\begin{enumerate}[\rm(a')]
\setcounter{enumi}{1}
\item\label{eq:unimodal_b2} $z + i y \in \psi_\mu(\disk)$ for any $z \in \psi_\mu(\disk)$ and $y \geq 0$,   
\end{enumerate}
and if $\alpha +2\pi= \beta$, then the equivalence holds with \eqref{eq:unimodal_b} replaced by
\begin{enumerate}[\rm(a'')]
\setcounter{enumi}{1}
\item\label{eq:unimodal_b3} $z + i y \in \psi_\mu(\disk)$ for any $z \in \psi_\mu(\disk)$ and $y \leq 0$.  
\end{enumerate}
\end{remark}

\begin{example}
The range domains that satisfy both (\ref{eq:unimodal_b2}') and (\ref{eq:unimodal_b3}'') are only the vertical stripes and the half-planes. These domains are realized respectively by the \index{uniform distribution}uniform distributions on arcs of $\T$ and the Dirac delta measures. For example the uniform distribution $\mu$ on the upper semicircle $\{z \in \T: \arg z \in[0,\pi]\}$ has the moment generating function 
$$
\psi_\mu(z) = \frac{i}{\pi}\log \frac{1+z}{1-z}, 
$$
which is a bijection of the unit disk onto the vertical stripe $\{w \in \C: -\frac{1}{2}< \Re(w) < \frac{1}{2} \}$. This distribution is unimodal with any antimode in the lower semicircle and  any mode in the upper semicircle. The special choice $(\text{antimode},\text{mode})=(1,1)$ corresponds to the case $\alpha=\beta=0$, and the choice $(\text{antimode},\text{mode})=(-1,-1)$ corresponds to $\alpha +2\pi=\beta$, where $\alpha=\pi$. 
\end{example}

The proof of Theorem \ref{unimodalunit} follows from the lemmas below, which strengthen \cite[Theorem 3]{K52}. Note that the number $e^{\frac{i}{2}(\alpha+\beta-\pi)}$ in Theorem \ref{unimodalunit} is different from $e^{\frac{i}{2}(\pi-\alpha-\beta)}$ in Lemma \ref{unimodal-thm} since the integral kernel is $(e^{-i\theta} +z)/(e^{-i\theta}-z)$ in the moment generating function \eqref{eq:psi2}, while the kernel $(e^{i\theta} +z)/(e^{i\theta}-z)$ is used in the lemmas below.

\begin{lemma}
\label{Converse-of-Kaplan}
Let $f$ be holomorphic on $\D$. Then $f$ satisfies
\begin{equation}
\label{Def-vertical-direction}
\inf_{z\in\D}\Re [f(z)]>-\infty \text{~~~and~~~} \Re (z^{2}-1)f'(z) \geq 0 \text{~on $\D$}
\end{equation}
if and only if there exist $c\geq0, d \in \R$ and a function $k\colon (-\pi,\pi) \to \R$ in $L^1({\rm d}\theta)$, bounded below, non-increasing on $(-\pi,0)$ and non-decreasing on $(0,\pi)$, such that $f$ is written as
\begin{equation}
\label{integral-rep}
f(z) = \int_{-\pi}^{\pi} \frac{e^{i\theta} +z}{e^{i\theta} -z} k(\theta)  \,{\rm d}\theta + c\cdot \frac{1-z}{1+z}+i d\hspace{15pt}(z \in \D).
\end{equation}
\end{lemma}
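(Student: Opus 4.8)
The plan is to pivot on the auxiliary holomorphic function $g(z):=(z^2-1)f'(z)$: the second hypothesis says precisely that $\Re g\ge0$, i.e.\ $g\colon\D\to\bH\cup i\R$ is a Herglotz function, so by Lemma \ref{inversionunit} it has the representation $g(z)=i\beta+\int_{\tor}\frac{\xi+z}{\xi-z}\,\varrho({\rm d}\xi)$ with $\beta\in\R$ and $\varrho\ge0$ finite; conversely, verifying the second hypothesis from the representation of $f$ amounts to exhibiting such a representation of $g$. Two elementary identities are used throughout: $\partial_z\frac{e^{i\theta}+z}{e^{i\theta}-z}=\frac{i}{z}\,\partial_\theta\frac{e^{i\theta}+z}{e^{i\theta}-z}$, so that with $\Psi(\theta,z):=\frac{i(z^2-1)}{z}\cdot\frac{e^{i\theta}+z}{e^{i\theta}-z}$ one has $(z^2-1)\,\partial_z\frac{e^{i\theta}+z}{e^{i\theta}-z}=\partial_\theta\Psi(\theta,z)$; and $(z^2-1)\frac{{\rm d}}{{\rm d}z}\frac{1-z}{1+z}=\frac{2(1-z)}{1+z}$, which shows the $c\frac{1-z}{1+z}$–term contributes the non-negative Herglotz summand $\frac{2c(1-z)}{1+z}$ to $g$. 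The computational core of both directions is the identity $\Psi(\pi,z)-\Psi(s,z)=\frac{2i(1-z)(1+e^{is})}{e^{is}-z}$, whose real part collapses to
\[
\Re\big[\Psi(\pi,z)-\Psi(s,z)\big]=2\sin s\cdot\frac{1-|z|^2}{|e^{is}-z|^2},\qquad z\in\D,\ s\in(-\pi,\pi);
\]
note $\Psi(-\pi,z)=\Psi(\pi,z)$ and that this real part vanishes at $s=0,\pm\pi$.

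For the direction ``representation $\Rightarrow$ hypotheses'': writing $z=re^{i\phi}$, one has $\Re f(z)=\int_{-\pi}^\pi\frac{1-r^2}{|e^{i\theta}-z|^2}k(\theta)\,{\rm d}\theta+c\,\frac{1-r^2}{|1+z|^2}$, and since the Poisson kernel is non-negative, $k$ is bounded below and $c\ge0$, $\Re f$ is bounded below. For the second hypothesis I split the $\theta$–integral at $0$ and integrate by parts on each of $(0,\pi)$ and $(-\pi,0)$, using $\partial_z(\cdot)=\frac{i}{z}\partial_\theta(\cdot)$ and arranging the Stieltjes integration by parts (with a Fubini step, using that $k$ is monotone on each arc) so that the free boundary value is the \emph{seam} value $\Psi(\pm\pi,z)$. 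This produces
\[
(z^2-1)\!\int_0^\pi\!\frac{2e^{i\theta}}{(e^{i\theta}-z)^2}k(\theta)\,{\rm d}\theta=k(0^+)\big[\Psi(\pi,z)-\Psi(0,z)\big]+\int_{(0,\pi)}\big[\Psi(\pi,z)-\Psi(s,z)\big]\,{\rm d}k(s),
\]
and the mirror formula on $(-\pi,0)$. Taking real parts and using the displayed identity: the $k(0^\pm)$–terms have vanishing real part, while $\Re\int\big[\Psi(\pi,z)-\Psi(s,z)\big]\,{\rm d}k(s)=\int 2\sin s\,\frac{1-|z|^2}{|e^{is}-z|^2}\,{\rm d}k(s)\ge0$ on each arc, because ${\rm d}k\ge0$ on $(0,\pi)$ where $\sin s>0$ and ${\rm d}k\le0$ on $(-\pi,0)$ where $\sin s<0$. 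Adding the non-negative term $\frac{2c(1-|z|^2)}{|1+z|^2}$ gives $\Re g\ge0$.

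For the converse, let $\varrho$ be the Herglotz measure of $g$. First, $\varrho(\{1\})=0$: an atom of weight $\lambda>0$ there makes $g(z)\sim\lambda\frac{1+z}{1-z}$, hence $f'(z)\sim-\lambda(1-z)^{-2}$ and $f(z)\sim-\lambda(1-z)^{-1}$ near $z=1$, so $\Re f(r)\to-\infty$ as $r\uparrow1$, contradicting boundedness below; a refinement of this radial estimate gives moreover $\int_{\tor}|1-\xi|^{-1}\varrho({\rm d}\xi)<\infty$. An atom at $-1$ produces a $+\infty$ (not $-\infty$) boundary value and is harmless; writing $\varrho=2c\,\delta_{-1}+\varrho_0$ with $c\ge0$, $\varrho_0(\{\pm1\})=0$, dividing $g$ by $z^2-1$ and integrating gives $f(z)=c\frac{1-z}{1+z}+h(z)$ up to a constant, where $h(0)=0$ and $(z^2-1)h'=i\beta+\int_{\tor}\frac{\xi+z}{\xi-z}\varrho_0({\rm d}\xi)$ is again Herglotz. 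One checks $\Re h$ is bounded below (the blow-up at $1$ is controlled by $\varrho_0(\{1\})=0$ and the $|1-\xi|^{-1}$–integrability; the other boundary singularities are bounded or tend to $+\infty$), so $\Re h$ is the Poisson integral of a signed measure $\sigma$ with absolutely continuous bounded negative part, and $h(z)=\frac{1}{2\pi}\int_{\tor}\frac{\xi+z}{\xi-z}\,{\rm d}\sigma(\xi)+id$ for some $d\in\R$. Finally the hypothesis, read on the boundary as $2\sin\theta\,\frac{{\rm d}}{{\rm d}\theta}\Re h^{*}(e^{i\theta})\ge0$ for a.e.\ $\theta$, forces $\sigma$ to be absolutely continuous on $\tor\setminus\{-1\}$—an atom of $\sigma$ at $e^{i\theta_0}$, $\theta_0\notin\{0,\pi\}$, would make $\Re(z^2-1)h'$ tend to $-\infty$ radially to $e^{i\theta_0}$ (the kernel $\Psi(\theta_0,\cdot)$ winds there), and a continuous singular part is excluded similarly—with density $k$ non-decreasing on $(0,\pi)$ and non-increasing on $(-\pi,0)$ (precisely the distributional statement $2\sin\theta\,\frac{{\rm d}}{{\rm d}\theta}\Re h^{*}\ge0$, since $\Re h^{*}=2\pi k$ a.e.) and bounded below because $\Re h$ is.

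The delicate part is this converse direction: tracking the behaviour at $\pm1$ where $z^2-1$ vanishes, ruling out the continuous singular part of $\sigma$ off $-1$, choosing branches of the logarithms appearing when $g/(z^2-1)$ is integrated, and upgrading ``$\Re h$ bounded below near every boundary point'' to ``$\Re h$ bounded below on $\D$'' (a harmonic-majorant argument on an exhaustion of $\D$). Kaplan's treatment of the case of \emph{bounded} $\Re f$ \cite{K52} is the template; the present generality—$\Re f$ only bounded below, $k$ only in $L^1$—is what forces attention to these points and accounts for the extra term $c\frac{1-z}{1+z}$.
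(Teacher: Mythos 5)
Your ``representation $\Rightarrow$ hypotheses'' direction is correct, and in fact more explicit than the paper, which only says to trace its computation backwards: the identity $\partial_z\frac{e^{i\theta}+z}{e^{i\theta}-z}=\frac{i}{z}\partial_\theta\frac{e^{i\theta}+z}{e^{i\theta}-z}$, the Fubini/Stieltjes integration by parts anchored at the seam, and the formula $\Re[\Psi(\pi,z)-\Psi(s,z)]=2\sin s\,\frac{1-|z|^2}{|e^{is}-z|^2}$ all check out.

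The converse direction, however, has a genuine gap. You reduce it to showing that the Poisson--Herglotz measure $\sigma$ of $\Re h$ is absolutely continuous on $\T\setminus\{-1\}$ with monotone density, and you rule out atoms by claiming that an atom at $\xi_0=e^{i\theta_0}$, $\theta_0\notin\{0,\pi\}$, forces $\Re[(z^2-1)h'(z)]\to-\infty$ radially. The computation goes the other way on half the circle: for $h_0(z)=\lambda\frac{\xi_0+z}{\xi_0-z}$ one gets, along $z=r\xi_0$, $\Re[(z^2-1)h_0'(z)]=-\frac{2\lambda(1+r)\cos\theta_0}{1-r}$, which tends to $+\infty$ whenever $\cos\theta_0<0$; so no contradiction arises for $|\theta_0|\in(\pi/2,\pi)$, and even where the sign is favourable you must still show the rest of $\sigma$ cannot cancel the blow-up. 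The dismissal of a singular continuous part ``similarly'' is likewise unsubstantiated (Poisson integrals blow up to $+\infty$ at singular points, which is no contradiction), the step where the interior inequality is ``read on the boundary'' as the distributional inequality $2\sin\theta\,\partial_\theta\Re h^*\ge0$ is precisely the identification that needs proof (and an a.e.\ pointwise derivative inequality alone would not yield monotonicity), and the lower bound on $\Re h$ near $-1$, which your Poisson representation requires, is only flagged. The paper sidesteps all of these boundary-regularity questions by applying the Herglotz formula to $(z^2-1)f'$ itself, whose measure $\rho$ is non-negative by hypothesis; it uses $\inf_\D\Re f>-\infty$ only through an explicit formula for $f$ and a Fatou argument along $(0,1)$ to get $\rho(\{0\})=0$ and $\int(\pi-|\theta|)\,|\sin\theta|^{-1}\rho({\rm d}\theta)<\infty$, and then builds $k(\theta)=\int_0^\theta\frac{\rho({\rm d}\phi)}{2\sin\phi}$ up to constants and a $\mathrm{sign}(\theta)$ term absorbing the $\log\frac{1-z}{1+z}$ contribution. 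In that route atoms of $\rho$ away from $\pm1$ are harmless (they only create jumps of the monotone function $k$), so no regularity of the boundary measure of $\Re f$ is ever needed; I recommend recasting your converse along those lines.
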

\begin{remark}
There might exist a condition weaker than $\inf_{z\in \D}\Re(f(z)) >-\infty$ such that the condition of $k$ being bounded below can be removed, but we do not pursue this direction.
\end{remark}

\begin{proof}
By the Herglotz formula (Lemma \ref{inversionunit}), there exists a non-negative finite measure $\rho$ on $\T$ such that
\begin{equation*}
\label{herglotz0}
(z^{2}-1)f'(z) = \int_{(-\pi,0)\cup(0,\pi)} \frac{e^{i\theta} +z}{e^{i\theta} -z} \, \rho({\rm d}\theta)
		- i \Im f'(0)
		+ \rho(\{0\})\frac{1+z}{1-z}
		+ \rho(\{\pi\})\frac{1-z}{1+z}.
\end{equation*}
Then we have
\begin{align}
f(z) &= i \int_{(-\pi,0)\cup(0,\pi)} \left(\frac{\cos\theta-1}{2} \log(1+z) -\frac{\cos\theta+1}{2}\log(1-z) +\log(1-ze^{-i\theta} )\right) \frac{\rho({\rm d}\theta)}{\sin\theta} \notag \\
&\qquad  - \frac{i\Im f'(0)}2 \log \frac{1-z}{1+z}-\frac{\rho(\{0\})z}{1 - z}- \frac{\rho(\{\pi\})z}{1+z} +f(0).\label{eq:f1}
\end{align}
To prove some integrability of the measure $\rho$, we look at
\begin{equation*}
-\Re f(r) + \Re f(0) = \int_{(-\pi,0)\cup(0,\pi)}\arg(1 -re^{-i\theta}) \frac{\rho({\rm d}\theta)}{\sin \theta} +\frac{\rho(\{0\})r}{1 - r} + \frac{\rho(\{\pi\})r}{1+r}
\end{equation*}
for $r \in (0,1)$. Since $\arg(1-e^{-i\theta}) = (\pi - |\theta|)\text{sign}(\theta)/2$ and $\Re f \geq m$ for some $m\in\R$, we conclude by Fatou's lemma that
\begin{align*}
-m + \Re f(0)
&	\ge \varliminf_{r \uparrow 1} \left(
		 \int_{(-\pi,0)\cup(0,\pi)}\arg(1 -re^{-i\theta}) \frac{\rho({\rm d}\theta)}{\sin \theta}
			+\frac{\rho(\{0\})r}{1 - r}
				+\frac{\rho(\{\pi\})r}{1+r}\right)\\
&	\ge \int_{(-\pi,0)\cup(0,\pi)}\varliminf_{r \uparrow 1}
		\arg(1 -re^{-i\theta}) \frac{\rho({\rm d}\theta)}{\sin \theta}
			+\varliminf_{r \uparrow 1}\frac{\rho(\{0\})r}{1 - r}
				+\frac{\rho(\{\pi\})}{2} \\
&	= \int_{(-\pi,0)\cup(0,\pi)}(\pi-|\theta|)\frac{\rho({\rm d}\theta)}{2|\sin \theta|}
			+\varliminf_{r \uparrow 1}\frac{\rho(\{0\})r}{1 - r}
				+\frac{\rho(\{\pi\})}{2},
\end{align*}
which proves that $\rho(\{0\}) =0$ and ${\rm d}\rho(\theta)/ |\sin \theta|$ is a finite measure on any compact subset of $(-\pi,\pi)$.

Now define the non-negative function
\begin{equation*}
h(\theta) := \int_{0}^{\theta}\frac{\rho({\rm d}\phi)}{2\sin \phi}, \qquad \theta \in (-\pi,\pi),
\end{equation*}
which is non-increasing on $(-\pi,0)$ and non-decreasing on $(0,\pi)$. Note that $h$ belongs to $L^1((-\pi,\pi),{\rm d}\theta)$.
Performing integration by parts for \eqref{eq:f1} implies that
\begin{align*}
f(z) &= i\int_{-\pi}^\pi \left(\sin\theta \cdot\log\frac{1+z}{1-z}- \frac{2z i}{e^{i\theta}-z} \right)h(\theta)\,{\rm d}\theta- \frac{i\Im f'(0)}2 \log \frac{1-z}{1+z}- \frac{\rho(\{\pi\})z}{1+z} +f(0)\\
&= \int_{(-\pi,\pi)} \frac{e^{i\theta} +z}{e^{i\theta} -z}  h(\theta)\,{\rm d}\theta +\frac{\rho(\{\pi\})}{2}\frac{-1+z}{-1-z} + i a \log \frac{1-z}{1+z} + C
\end{align*}
where $a$ is a real number and $C$ is a complex number. According to the identities
\begin{equation}\label{eq:integral}
 \int_0^{\pi}\frac{e^{i\theta} +z}{e^{i\theta} -z} \,{\rm d}\theta =2 i \log \frac{1-z}{1+z} + \pi, \qquad  \int_{-\pi}^{\pi}\frac{e^{i\theta} +z}{e^{i\theta} -z} \,{\rm d}\theta = 2\pi,
\end{equation}
 the function
\begin{equation*}
k(\theta) =
h(\theta) +\frac{\Re(C)}{2\pi} + \frac{1}{4\pi}+\frac{a}{4}\text{sign}(\theta)
\end{equation*}
and the numbers $c=\rho(\{\pi\})/2$ and $d= \Im(C)$ give the desired formula \eqref{integral-rep}.

The converse statement can be obtained by mostly tracing the above arguments backwards. When proving the boundedness of $\Re f$ from below, one should use
$$
\Re \frac{e^{i\theta}+z}{e^{i\theta}-z} \geq0
$$
and \eqref{eq:integral}.
\end{proof}

\begin{lemma}
\label{unimodal-thm}
Let $f$ be holomorphic on $\D$ and $\alpha,\beta \in \R$ such that $0<\beta-\alpha <2\pi $. Then $f$ satisfies
\begin{equation}
\label{Def-vertical-direction02}
\inf_{z\in \D} \Re[f(z)] > -\infty \text{~~~and~~~}\Re \left[e^{i\frac{\pi -\alpha -\beta }{2}}(z-e^{i\alpha})(z-e^{i\beta})f'(z)\right] \geq 0 \text{~on $\D$}
\end{equation}
if and only if there exist $c\geq0,d \in \R$ and a function $k\colon (\alpha,\alpha + 2\pi) \to \R$ in $L^1({\rm d}\theta)$, bounded below, non-decreasing on $(\alpha, \beta)$ and non-increasing on $(\beta, \alpha + 2\pi)$, such that $f$ is written as
\begin{equation}
\label{integral-rep-02}
f(z) = \int_{-\pi}^{\pi} \frac{e^{i\theta} +z}{e^{i\theta} -z} k(\theta)  \,{\rm d}\theta + c \cdot\frac{e^{i\beta} +z}{e^{i\beta} -z } + i d \hspace{15pt}(z \in \D).
\end{equation}
\end{lemma}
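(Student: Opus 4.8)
The plan is to reduce Lemma \ref{unimodal-thm} to Lemma \ref{Converse-of-Kaplan} via a M\"obius transformation of the disk that carries the pair of boundary points $\{1,-1\}$ to the pair $\{e^{i\alpha},e^{i\beta}\}$. The model case $\alpha=0$, $\beta=\pi$ (so that $e^{i\alpha}=1$, $e^{i\beta}=-1$) is exactly Lemma \ref{Converse-of-Kaplan}, since there $(z^2-1)=(z-e^{i\alpha})(z-e^{i\beta})$ and $e^{i(\pi-\alpha-\beta)/2}=1$. So the whole content is to transport that statement to a general antipodal-in-angle pair and check that the weight condition (monotone decreasing then increasing, with an atom at the mode) is preserved under the change of variables, with the correct phase factor $e^{i(\pi-\alpha-\beta)/2}$ appearing.

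\textbf{Key steps.} First I would introduce the automorphism $T$ of $\D$ determined by the requirement that it be an involution (or at least a M\"obius map of $\D$ onto $\D$) with $T(1)=e^{i\alpha}$ and $T(-1)=e^{i\beta}$; concretely one can take
\begin{equation*}
T(w) = e^{i\frac{\alpha+\beta}{2}}\,\frac{w - i\tan\!\big(\tfrac{\beta-\alpha}{4}\big)}{1 - i\tan\!\big(\tfrac{\beta-\alpha}{4}\big)\,w},
\end{equation*}
which maps $\partial\D$ to $\partial\D$, sends the arc from $1$ to $-1$ (through $i$) to the arc from $e^{i\alpha}$ to $e^{i\beta}$, and is orientation-preserving. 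Given $f$ on $\D$, set $g = f\circ T$; then $g$ is holomorphic on $\D$, and $\inf_{\D}\Re f>-\infty$ iff $\inf_\D\Re g>-\infty$. Next I would compute $g'(w) = f'(T(w))\,T'(w)$ and verify the algebraic identity
\begin{equation*}
(w^2-1) = \lambda\,\frac{(T(w)-e^{i\alpha})(T(w)-e^{i\beta})}{T'(w)}\,e^{-i\frac{\pi-\alpha-\beta}{2}}
\end{equation*}
for a suitable positive constant $\lambda$ (depending only on $\alpha,\beta$); this is the crux of the bookkeeping and is where the phase $e^{i(\pi-\alpha-\beta)/2}$ is forced. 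Consequently $\Re[(w^2-1)g'(w)] = \lambda\,\Re[e^{i(\pi-\alpha-\beta)/2}(z-e^{i\alpha})(z-e^{i\beta})f'(z)]$ with $z=T(w)$, so the differential condition in \eqref{Def-vertical-direction02} for $f$ is equivalent to the one in \eqref{Def-vertical-direction} for $g$. Applying Lemma \ref{Converse-of-Kaplan} to $g$ yields $c_0\ge0$, $d_0\in\R$ and a function $k_0$ on $(-\pi,\pi)$, bounded below, non-increasing on $(-\pi,0)$ and non-decreasing on $(0,\pi)$, with the representation \eqref{integral-rep} for $g$. Finally I would push this representation through $T^{-1}$: substituting $w=T^{-1}(z)$ and using the boundary-measure transformation rule (the pullback of a Herglotz/Poisson kernel under a disk automorphism is again such a kernel, with the density transformed by the boundary Jacobian $|{(T^{-1})}'|$ on $\partial\D$), the integral term $\int \frac{e^{i\theta}+w}{e^{i\theta}-w}k_0(\theta)\,d\theta$ becomes $\int_{\alpha}^{\alpha+2\pi}\frac{e^{i\varphi}+z}{e^{i\varphi}-z}k(\varphi)\,d\varphi$ where $k(\varphi) = k_0(T^{-1}(e^{i\varphi}))\cdot(\text{positive boundary Jacobian})$. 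Since $T^{-1}$ is an increasing homeomorphism of the arc $(e^{i\alpha},e^{i\beta})$ onto the upper arc $(1,-1)$ and of $(e^{i\beta},e^{i(\alpha+2\pi)})$ onto the lower arc $(-1,1)$, and since multiplication by a positive factor and addition of a bounded-below term preserve the ``non-decreasing then non-increasing'' shape after possibly absorbing a constant into $d$, the function $k$ has the stated monotonicity on $(\alpha,\beta)$ and $(\beta,\alpha+2\pi)$; the atom term $c_0\frac{1-w}{1+w}$ at $w=-1$ maps to $c\,\frac{e^{i\beta}+z}{e^{i\beta}-z}$ at $z=e^{i\beta}$ with $c>0$ proportional to $c_0$; and $i d_0$ maps to $i d$. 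The converse direction is obtained by reading the same chain of equivalences backwards, exactly as in the proof of Lemma \ref{Converse-of-Kaplan}.

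\textbf{Main obstacle.} The genuinely delicate point is verifying that the boundary Jacobian of $T$ is positive and bounded away from $0$ and $\infty$ on the relevant arcs, so that (i) $k\in L^1$, (ii) $k$ is bounded below, and (iii) monotonicity is transported without sign issues — i.e. one must be sure $T$ (hence $T^{-1}$) is orientation-preserving on the circle and sends the arc ``through $i$'' to the arc ``from $e^{i\alpha}$ to $e^{i\beta}$'' in the intended sense, since choosing the wrong automorphism would swap the monotone pieces or flip decreasing/increasing. Getting the phase factor $e^{i(\pi-\alpha-\beta)/2}$ correct (rather than its conjugate or a sign variant) is the accompanying computational care-point; both are routine once the explicit $T$ above is fixed, but they are where an error would creep in.
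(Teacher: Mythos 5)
Your overall strategy is exactly the one the paper uses: conjugate by a M\"obius automorphism $T$ of $\D$ sending $\pm1$ to $e^{i\alpha},e^{i\beta}$, check that the differential condition transforms into the one of Lemma \ref{Converse-of-Kaplan} with the phase $e^{i(\pi-\alpha-\beta)/2}$ emerging from $(w^2-1)T'(w)$, apply that lemma, and push the representation back. However, the step where you transport the Herglotz representation through $T^{-1}$ contains a genuine gap. You claim the boundary density transforms as $k(\varphi)=k_0(\arg T^{-1}(e^{i\varphi}))\cdot(\text{positive boundary Jacobian})$, and you then justify the required monotone shape of $k$ by saying that ``multiplication by a positive factor'' preserves the non-decreasing/non-increasing pattern. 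That last claim is false: a monotone function multiplied by a non-constant positive weight need not be monotone (e.g.\ $\theta\mapsto\theta$ times a rapidly decreasing positive Jacobian), so as written the key structural property of $k$ is not established. In fact the claimed transformation rule itself is not the right one: by conformal invariance of harmonic measure, $P(T^{-1}(z),e^{i\theta})\,{\rm d}\theta = P(z,e^{i\varphi})\,{\rm d}\varphi$ with $e^{i\varphi}=T(e^{i\theta})$, so the Jacobian coming from ${\rm d}\theta$ cancels the Jacobian in the kernel, and the new density is simply the monotone reparametrization $k(\varphi)=k_0(\arg T^{-1}(e^{i\varphi}))$ (up to an additive constant and a purely imaginary, $w$-independent correction that is absorbed into $id$). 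This cancellation is precisely what the paper makes explicit through the identity
\begin{equation*}
\frac{T^{-1}(e^{i\varphi}) + z}{T^{-1}(e^{i\varphi}) - z}\cdot\frac{(T^{-1})'(e^{i\varphi})}{T^{-1}(e^{i\varphi})}\, e^{i\varphi} \;=\; \frac{e^{i\varphi} +w}{e^{i\varphi}-w} \;-\; \frac{2i \,\Im(b e^{i\varphi})}{|1+b e^{i\varphi}|^2},
\end{equation*}
with $b=ia\overline{\gamma}$; once you have it, the monotonicity of $k$ on $(\alpha,\beta)$ and $(\beta,\alpha+2\pi)$ is immediate because $\theta\mapsto\arg T(e^{i\theta})$ is strictly increasing, and no positivity/boundedness argument about a Jacobian weight is needed (or available). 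So the missing ingredient is this kernel identity (or the harmonic-measure argument), not the estimate you flag as the ``main obstacle''.

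Two further computational slips, of the kind you yourself flag as care points but which would need fixing: your explicit $T(w)=e^{i(\alpha+\beta)/2}\frac{w-i\tan((\beta-\alpha)/4)}{1-i\tan((\beta-\alpha)/4)\,w}$ is not an automorphism of $\D$ (the standard form requires the denominator $1+i\tan(\cdot)\,w$; with your sign, e.g.\ $t=1/2$, $w=-i$ is sent to $-3i$), and even after correcting the sign it sends $-1$ to $-e^{i\beta}$ rather than $e^{i\beta}$, so the stated normalization fails; compare the paper's choice $T(z)=\gamma\frac{z+ia}{1-iaz}$ with $\gamma=e^{i(\alpha+\beta-\pi)/2}$ and $(a+i)/(a-i)=e^{i(\beta-\alpha+\pi)/2}$. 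Relatedly, your displayed identity carries the factor $e^{-i(\pi-\alpha-\beta)/2}$, which is the conjugate of the phase needed to match \eqref{Def-vertical-direction02}; the correct bookkeeping (as in the paper) gives $(z^{2}-1)\tilde f'(z)=\frac{1+a^{2}}{1-a^{2}}\,e^{i\frac{\pi-\alpha-\beta}{2}}(w-e^{i\alpha})(w-e^{i\beta})f'(w)$.
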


\begin{proof}
Let $w= T(z):= \gamma \frac{z+i a}{1-i a z}$, where $\gamma = e^{i \frac{\alpha + \beta -\pi}{2}}$ and $a \in (-1,1)$ is uniquely determined by
$(a+i) / (a-i) = e^{i\frac{\beta-\alpha+\pi}{2}}$. Then $T$ defines a homeomorphism of $\overline{\D}$ and it is an analytic bijection of $\D$. By direct computation, a branch of $\theta \mapsto \arg T(e^{i\theta})$ is strictly increasing on $[0,2\pi)$, and $T(1) = e^{i\alpha}$ and $T(-1) = e^{i\beta}$. Defining $\tilde{f}(z) = f(T(z)) = f(w)$ and by direct computation one has
$$
(z^2-1) \tilde f'(z) = \frac{1+a^2}{1-a^2} \frac{1}{\gamma} (w- e^{i\alpha})(w-e^{i\beta}) f'(w).
$$
Thus we apply Lemma \ref{Converse-of-Kaplan} to $\tilde f$ and obtain an integral representation
$$
\tilde f(z) = \int_{-\pi}^{\pi} \frac{e^{i\theta} +z}{e^{i\theta} -z} \tilde k(\theta)  \,{\rm d}\theta + \tilde c\cdot \frac{-1+ z}{-1-z} +i \tilde d .
$$
Going back to $f(w)$, we apply the change of variable $e^{i\theta} = T^{-1}(e^{i\varphi})$ to get
$$
f(w) = \int_{-\pi}^{\pi} \frac{T^{-1}(e^{i\varphi}) + z}{T^{-1}(e^{i\varphi}) - z} \tilde k(\arg T^{-1}(e^{i\varphi})) \frac{(T^{-1})'(e^{i\varphi})}{T^{-1}(e^{i\varphi})} e^{i\varphi}  \,{\rm d}\varphi + \tilde c \cdot \frac{-1+z}{-1-z}+i\tilde d.
$$
Defining $b=i a \overline{\gamma}$ for simplicity, the following identity holds:
$$
\frac{T^{-1}(e^{i\varphi}) + z}{T^{-1}(e^{i\varphi}) - z}\cdot\frac{(T^{-1})'(e^{i\varphi})}{T^{-1}(e^{i\varphi})} e^{i\varphi} = \frac{e^{i\varphi} +w}{e^{i\varphi}-w} - \frac{2i \Im(b e^{i\varphi})}{|1+b e^{i\varphi}|^2}.
$$
Hence
$$
f(w) = \int_{-\pi}^{\pi} \frac{e^{i\varphi} + w}{e^{i\varphi} - w} \tilde k(\arg T^{-1}(e^{i\varphi})) \,{\rm d}\varphi + c \cdot \frac{e^{i\beta}+w}{e^{i\beta}-w} + C,
$$
where $c = \tilde c (1-a^2)/(1+a^2)$ and $C \in \C$ is a constant. We can incorporate the real part of $C$ into $\tilde k(\arg T^{-1}(e^{i\varphi}))$ using the formula \eqref{eq:integral}. The function $\varphi\mapsto \tilde k(\arg T^{-1}(e^{i\varphi}))$ satisfies the desired monotonicity. It is also integrable since $\tilde k$ is integrable and the determinant regarding the change of variable $e^{i\varphi}= T(e^{i\theta})$ is continuous and strictly positive on the circle.

Lastly, the converse statement can be obtained by tracing the above arguments backwards.
\end{proof}

In Lemma \ref{unimodal-thm} we have assumed $\alpha \neq \beta \mod 2\pi$. It is tempting to prove the statement for $\alpha = \beta \mod 2\pi$ by some approximation arguments, but it seems not very easy. Instead we give a proof following similar lines of the proof of Lemma \ref{unimodal-thm}.

\begin{lemma}
\label{convex-in-one-drct}
Let $f$ be holomorphic on $\D$ and $\alpha\in\R$.
Then $f$ satisfies
\begin{equation}
\label{convex-in-one-drct01}
\inf_{z\in \D} \Re[f(z)] > -\infty \text{~~~and~~~} \Re \left[ e^{i(\frac{\pi}{2} - \alpha)}(z-e^{i\alpha})^{2}f'(z)\right] \geq 0 \text{~on  $\D$}
\end{equation}
if and only if there exist $c \geq0, d\in \R$ and a function $k \colon (\alpha,\alpha + 2\pi) \to \R$ in $L^1({\rm d}\theta)$, non-increasing and bounded below, such that $f$ is written as
\begin{equation}
\label{convex-in-one-drct02}
f(z) = \int_{\alpha}^{\alpha+ 2\pi} \frac{e^{i\theta} +z}{e^{i\theta} -z} k(\theta)  \,{\rm d}\theta + c\cdot\frac{e^{i\alpha}+z}{e^{i\alpha}-z} + i d
\hspace{15pt}(z \in \D).
\end{equation}
\end{lemma}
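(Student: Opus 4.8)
The plan is to follow the pattern of the proof of Lemma~\ref{Converse-of-Kaplan}, the new feature being that the single distinguished boundary point $e^{i\alpha}$ now enters as a \emph{double} zero of the twisting factor. First I would reduce to $\alpha=0$ by the rotation $g(z):=f(e^{i\alpha}z)$: one has $\inf_{\disk}\Re g=\inf_{\disk}\Re f$, and a short computation turns the second condition in \eqref{convex-in-one-drct01} into
\[
\Re\bigl[i(z-1)^2 g'(z)\bigr]\ge 0,\qquad z\in\disk .
\]
Hence $h(z):=i(z-1)^2g'(z)$ is a Herglotz function, unless it is a purely imaginary constant, a case I would dispatch directly (then $g(z)=\mu/(1-z)+\text{const}$, and $\inf\Re g>-\infty$ forces $\mu\ge0$, which gives \eqref{convex-in-one-drct02} with $c=\mu/2$ and $k$ constant). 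By Lemma~\ref{inversionunit} one may therefore write $h(z)=ib+\int_{\tor}\frac{\xi+z}{\xi-z}\,\rho(d\xi)$ for some $b\in\R$ and a finite non-negative measure $\rho$ on $\tor$.

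Next I would divide by $i(z-1)^2$ and integrate term by term. A partial fraction decomposition of $\frac{1}{(z-1)^2}\cdot\frac{\xi+z}{\xi-z}$ in $z$ (poles at $z=1$ of order two and at $z=\xi$) yields, after integration, an explicit formula for $g(z)$ built from $\log\frac{z-1}{\xi-z}$ and $\frac1{z-1}$ integrated against $\rho$. Two elementary identities, with $\xi=e^{i\phi}$, make this tractable: $\frac{2\xi}{(\xi-1)^2}=-\frac{1}{2\sin^2(\phi/2)}\in\R$ and $\frac{\xi+1}{\xi-1}=-i\cot(\phi/2)$, so the apparently complex coefficients are in fact real, respectively purely imaginary. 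Then, exactly as in Lemma~\ref{Converse-of-Kaplan}, the hypothesis $\inf_{\disk}\Re g>-\infty$ together with Fatou's lemma (evaluating $\Re g$ along $z=r\to1$) produces the integrability facts $\rho(\{1\})=0$ and $\int\frac{\rho(d\phi)}{\sin^2(\phi/2)}<\infty$ on every compact subset of $(0,2\pi)$, and in addition the inequality $b+\int_0^{2\pi}\cot(\phi/2)\,\rho(d\phi)\ge0$, which is what will force $c\ge0$.

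Then I would set $h_0(\theta):=\int_\pi^\theta\frac{\rho(d\phi)}{2\sin^2(\phi/2)}$, a non-decreasing $L^1$-function on $(0,2\pi)$, and integrate by parts in the $\log$-integral, using $\frac{d}{d\phi}\log\frac{z-1}{e^{i\phi}-z}=-\frac{ie^{i\phi}}{e^{i\phi}-z}=-\frac i2\bigl(\frac{e^{i\phi}+z}{e^{i\phi}-z}+1\bigr)$. This converts the $\log$-integral into $\int_0^{2\pi}\frac{e^{i\theta}+z}{e^{i\theta}-z}\bigl(-\tfrac12 h_0(\theta)\bigr)\,d\theta$ plus constants, while the residual $\frac1{z-1}$-terms (coming from $b$ and from $\int\cot(\phi/2)\rho(d\phi)$) collect into $c\,\frac{1+z}{1-z}$ with $c=\tfrac12\bigl(b+\int_0^{2\pi}\cot(\phi/2)\,\rho(d\phi)\bigr)\ge0$, and the additive constants are absorbed into $k$ and into $id$ using $\int_0^{2\pi}\frac{e^{i\theta}+z}{e^{i\theta}-z}\,d\theta=2\pi$ from \eqref{eq:integral}. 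Taking $k(\theta)=-\tfrac12 h_0(\theta)+\text{const}$ (non-increasing on $(0,2\pi)$, bounded below, in $L^1$) and undoing the rotation, so that $k$ is defined on $(\alpha,\alpha+2\pi)$, gives \eqref{convex-in-one-drct02}. For the converse I would run these steps backwards: from \eqref{convex-in-one-drct02}, differentiation shows $i(z-1)^2g'(z)$ is Herglotz, and $\inf_{\disk}\Re g>-\infty$ follows from $\Re\frac{e^{i\theta}+z}{e^{i\theta}-z}\ge0$, the lower bound on $k$, integration by parts, and \eqref{eq:integral}, precisely as indicated at the end of the proof of Lemma~\ref{Converse-of-Kaplan}.

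I expect the Fatou step to be the main obstacle. Because $z=1$ is now a second-order pole, the integrability to be extracted from $\inf\Re g>-\infty$ is stronger (one integrates $\rho$ against $1/\sin^2(\phi/2)$, not against $1/|\sin\theta|$), and one must carefully separate the genuinely $\Re$-unbounded pieces of the explicit formula from the pieces that recombine with the admissible term $c\,\frac{1+z}{1-z}$: it is only the linear combination $b+\int_0^{2\pi}\cot(\phi/2)\,\rho(d\phi)$ that is forced to be non-negative, not $b$ or $\rho(\{1\})$ separately. The subsequent sign bookkeeping in the integration by parts---ensuring one lands on ``non-increasing'' and on $c\ge0$ rather than their opposites---is the other delicate point, though it is routine once the identities above are in hand.
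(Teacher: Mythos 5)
Your route is the paper's own: rotate so that the distinguished point sits at $\pm1$, observe that the twisted derivative is a Herglotz function, integrate its Herglotz representation, use $\inf_\D\Re f>-\infty$ plus Fatou near the distinguished boundary point to kill the atom of $\rho$ there and to extract integrability of $\rho$ against the second-order weight, then integrate by parts to produce the monotone $L^1$ density $k$ and collect the residual pole into $c\,\frac{e^{i\alpha}+z}{e^{i\alpha}-z}$ with $c\ge0$; converse by reversal. The differences (rotation by $e^{i\alpha}$ instead of $e^{i(\alpha+\pi)}$, explicit partial fractions) are cosmetic, and your degenerate case $h\equiv ib$ is handled correctly.

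The genuine gap is at the step you yourself flag as the crux: Fatou ``along $z=r\to1$'' does not deliver what you need. First, a radial limit is blind to the atom: a mass $m=\rho(\{1\})>0$ contributes $-im\,z/(1-z)^2$ to $g$, whose real part $m\,\Im\frac{z}{(1-z)^2}$ vanishes identically on $z=r\in(0,1)$ (while it tends to $-\infty$ along, say, $z=1-\epsilon e^{i\pi/4}$), so no radial estimate can force $\rho(\{1\})=0$. Second, along the radius the relevant integrand is not one-signed: $\Re g(r)-\Re g(0)=\int_0^r\frac{\Im h(t)}{(1-t)^2}\,{\rm d}t$ with $\Im h(t)=b-\int\frac{2t\sin\phi}{|e^{i\phi}-t|^2}\,\rho({\rm d}\phi)$, and $\sin\phi$ changes sign, so Fatou is not applicable and at best one could hope for the signed first-order combination $b\pm\int\cot(\phi/2)\,\rho({\rm d}\phi)$ -- not the integrability your construction of $h_0$ requires. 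Note also that the statement you record, ``$\int\rho({\rm d}\phi)/\sin^2(\phi/2)<\infty$ on every compact subset of $(0,2\pi)$,'' is vacuous for a finite measure: the needed information lives at the endpoints, namely $\int\frac{\rho({\rm d}\phi)}{\sin^2(\phi/2)}<\infty$ up to one side of the distinguished point (so that $h_0$ is finite and $k$ is bounded below) and a first-order condition of the type $\int\frac{\rho({\rm d}\phi)}{\phi}<\infty$ on the other side (so that $h_0\in L^1$). The paper obtains exactly this asymmetric integrability, together with the vanishing of the atom, by sending $z$ to the distinguished point along the tangential curve $z=iy/(1-iy)$ (in its normalization), along which the integrand is shown to be non-negative so Fatou applies; the radial limit is used only at the very end, after the representation is established, to obtain $c\ge0$. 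A secondary caution: before that integrability is known, the individual partial-fraction pieces (e.g.\ $\cot(\phi/2)\cdot\frac{1}{z-1}$) need not be $\rho$-integrable, so the term-by-term integration must be organized with a combined integrand whose convergence near the distinguished angle is checked by a Taylor expansion, as in the paper's formula for $-i\tilde f$.
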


\begin{proof}
Let $w = e^{i(\alpha+\pi)}z$ and $\tilde f(z) := f(e^{i(\alpha+\pi)}z) = f(w)$. Then
\begin{equation}
\label{convex-in-one-drct03}
e^{i(\frac{\pi}{2} - \alpha)}(w-e^{i\alpha})^{2}f'(w) = -i (z+1)^2 \tilde f'(z)
\end{equation}
for all $z \in \D$. Hence the Herglotz formula
\begin{equation}
\label{herglotz}
-i(z+1)^{2}\tilde f'(z) = \int_{(-\pi,\pi)} \frac{e^{i\theta} +z}{e^{i\theta} -z} \, \rho({\rm d}\theta)
		+  \rho(\{\pi\}) \frac{1-z}{1+z} + i a
\end{equation}
exists for some $a \in \R$ and a finite non-negative measure $\rho$ on $\T$.
Then we have
\begin{equation}\label{eq:tilde}
\begin{split}
-i\tilde f(z) &= \int_{(-\pi,\pi)}\left(\log(1+z) - \log(1-ze^{-i\theta}) +i \sin \theta\frac{z}{1+z}  \right)\frac{ \rho({\rm d}\theta)}{1 + \cos \theta} \\
&\qquad+\frac{ \rho{(\{\pi\})} z}{(1+z)^{2}} - \frac{i a}{1+z} + C
\end{split}
\end{equation}
for some $C\in \C$. Notice that applying Taylor's theorem around $\theta=\pi$ shows that the integral converges.

In order to derive an integrability of $\rho({\rm d}\theta)/(1+\cos \theta)$, we take the curve $z= i y /(1-i y) = (-y^2 +iy)/(y^2+1), y \in \R,$ which is contained in $\D$ and tends to $-1$ as $y\to\infty$. On this curve one can see that
\begin{equation}
\begin{split}
-\Re \tilde f(z) &= \int_{(-\pi,\pi)}\left(\arg (1+i y) - \arg\left(1+ i \frac{-y \cos \theta - y^2 \sin \theta}{1+y^2(1+\cos \theta) -y\sin\theta}\right) \right)\frac{ \rho({\rm d}\theta)}{1 + \cos \theta}  \\
&\qquad + \rho(\{\pi\})y  - a + \Im(C).
\end{split}
\end{equation}
Notice that $1+y^2(1+\cos \theta) -y\sin\theta = (1+y^2) \Re (1-ze^{-i\theta}) >0$. Then one can prove that the integrand is non-negative for $y>0$. Hence we can apply Fatou's lemma as $y\to\infty$ using  the assumption of $\Re \tilde f(z)$ being bounded below to conclude that
$$
\rho (\{\pi\})=0, \qquad \infty > \int_{(-\pi,\pi)} \left(\frac{\pi}{2} + \frac{\theta}{2} \right) \frac{ \rho({\rm d}\theta)}{1 + \cos \theta}.
$$
This shows that one can integrate out the term $i \sin \theta z/(1+z)$ from \eqref{eq:tilde}, and the function
\begin{equation}
h(\theta) :=\frac12\int_{\theta}^{\pi} \frac{ \rho({\rm d}\phi)}{1+\cos\phi}
\end{equation}
is finite, non-increasing on $(-\pi,\pi)$ and $h \in L^1((-\pi,\pi),{\rm d}\theta)$.
Then one obtains, for some $b \in \R$ and $D \in \C$,
\begin{align*}
-i\tilde f(z)
&= -2\int_{(-\pi,\pi)}\left(\log(1+z) - \log(1-ze^{-i\theta}) \right){\rm d}h(\theta)  +\frac{i b}{1+z} + D \\
&=    \int_{(-\pi,\pi)}\frac{-2i z}{e^{i\theta}-z}h(\theta)\,{\rm d}\theta  +\frac{i b}{1+z} + D
\end{align*}
and hence
\begin{align*}
\tilde f(z)
&=\int_{-\pi}^\pi\frac{e^{i\theta}+z}{e^{i\theta}-z}h(\theta)\,{\rm d}\theta -\frac{b}{1+z} + i D - \int_{-\pi}^\pi h(\theta) \,{\rm d}\theta \\
& =\int_{(-\pi,\pi)}\frac{e^{i\theta}+z}{e^{i\theta}-z}\tilde k(\theta)\,{\rm d}\theta +\frac{\tilde b(-1+z)}{-1-z} + i \tilde a,
\end{align*}
where $\tilde k(\theta)=h(\theta) + d$ for some $d, \tilde a, \tilde b \in\R$.  The assumption $\inf_{z \in \D}\Re \tilde f (z) >-\infty$ and the dominated convergence theorem imply $2\tilde b=\lim_{r\downarrow-1}\Re(1+r)\tilde f (r) \geq 0$.
The desired statement for $f$ can be obtained by the rotation $z \mapsto e^{-i(\pi+ \alpha)}z$.

The converse statement can be obtained by tracing the above arguments backwards.
\end{proof}

\index{unimodal distribution!on $\tor$|)}

\subsection{Starlike moment generating functions}

\index{moment generating function!starlike|(}

We find a class of probability measures on $\tor$ analogous to $\SD(\rhd)$, namely those which may be characterized by the starlikeness of $\psi$-transform.

\begin{definition}
Let $\mu \in \cP(\tor)$. We say that $\psi_\mu$ is \emph{starlike} if $\psi_\mu$ is univalent and $\psi_\mu(\disk)$ is star-shaped with respect to $0$, i.e.\ $c\psi_\mu(\disk) \subset \psi_\mu(\disk)$ for every $c\in(0,1)$. The set of probability measures with starlike moment generating functions is denoted by $\Star(\tor)$.
\end{definition}

By contrast to probability measures on the real line, there is no natural concept of ``dilation'' on the unit circle. In order to find a probabilistic characterization of $\Star(\tor)$, we propose an alternative operation. Observe first that for all $\mu \in\cP(\tor)$ the identity
\begin{equation*}
\psi_{(1-c)\haar + c \mu} = c \psi_\mu
\end{equation*}
holds.
\begin{definition}
A probability measure $\mu$ on $\tor$ is said to be of type H if $\mu \in \cP_\times(\tor)$ and for every $c\in(0,1)$ there exists $\bmu^c \in \cP(\tor)$ such that $(1-c)\haar + c \mu = \mu \submm \bmu^c$.
The set of such probability measures is denoted by $\cH(\submm)$.
\end{definition}

\begin{remark}
We need to assume that $\mu \in \cP_\times(\tor)$; otherwise Theorem \ref{starlikeunit} below does not hold. Indeed, take $\mu = \frac{1}{2}(\delta_1+\delta_{-1})\notin\cP_\times(\tor)$ and define $\eta_c(z)=\sqrt{c} z / \sqrt{1-(1-c)z^2}$ for $c\in(0,1)$. We can check that $\eta_c(0)=0$ and $\eta_c(\disk) \subset \disk$, and so by Lemma \ref{characterizationeta} there exists a probability measure $\bmu^c \in \cP(\tor)$ such that $\eta_c=\eta_{\bmu^c}$. We can easily check that $c \psi_\mu =  \psi_\mu\circ \eta_c$ for all $c \in (0,1)$, but $\psi_\mu(z) = z^2/(1-z^2)$ is not univalent.
\end{remark}

For $\mu \in \cH(\submm)$, the condition $\mu \in \cP_\times(\tor)$ implies that the inverse series $\psi_\mu^{-1}(z)= (1/m_1(\mu))z + \cdots$ exists and converges in a neighborhood of $0$ and hence $\bmu^c$ is uniquely determined by the formula
\begin{equation}\label{TH}
\eta_{\bmu^c}(z) = \psi_\mu^{-1}(c \psi_\mu(z))
\end{equation}
around the origin. We may also define $\bmu^0=\haar$ and $\bmu^1=\delta_1$. Then \eqref{TH} and Lemma \ref{lemmaconvergenceunit} imply that $[0,1]\mapsto \cP(\tor), c \mapsto \bmu^c$ is weakly continuous. Note that $\haar \notin \cH(\submm) \cup \Star(\tor)$.

\begin{theorem}\label{starlikeunit}
$\Star(\tor)=\cH(\submm)$.
\end{theorem}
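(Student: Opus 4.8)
The plan is to prove the two inclusions separately, relating both classes to the additive case via the theory of Loewner chains. The key structural analogy is that $\Star(\tor)$ should play the role of $\Star(\R)=\SD(\rhd)$ and $\cH(\submm)$ the role of $\SD(\rhd)$, with the dilation $\DD_c$ on $\R$ replaced by the multiplication $\psi \mapsto c\psi$ at the level of moment generating functions.

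First I would prove $\cH(\submm)\subset\Star(\tor)$. Take $\mu \in \cH(\submm)$, which in particular lies in $\cP_\times(\tor)$. For $c\in(0,1)$ we have $(1-c)\haar + c\mu = \mu\submm\bmu^c$, i.e.\ $\eta_{(1-c)\haar+c\mu} = \eta_\mu\circ\eta_{\bmu^c}$; equivalently $\psi_{(1-c)\haar+c\mu}=c\psi_\mu$. The first thing to establish is the containment $c\psi_\mu(\disk)\subset\psi_\mu(\disk)$, which follows directly from the range of $\psi_{(1-c)\haar+c\mu}=c\psi_\mu$ being contained in that of $\psi_\mu$: the identity $\psi_{(1-c)\haar+c\mu}=\psi_\mu\circ\eta_{\bmu^c}$ and $\eta_{\bmu^c}(\disk)\subset\disk$ (Lemma \ref{characterizationeta}) give $c\psi_\mu(\disk)\subset\psi_\mu(\disk)$ for all $c\in(0,1)$. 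It remains to show $\psi_\mu$ is univalent. Here I would mimic the $\SD(\rhd)\subset\Star(\R)$ argument from the proof of Theorem \ref{msd2}: from $\eqref{TH}$, for $0<s\leq t\leq 1$ one has the relation $(1-st)\haar + st\,\mu$ can be re-expressed so that, setting $\psi_t := \psi_{(1-t)\haar+t\mu} = t\psi_\mu$ and $f_{st}:=\eta_{\bmu^{s/t}}$ suitably reparametrized, one obtains $\psi_s\circ f_{st}=\psi_t$, hence $\eta_{(1-t)\haar+t\mu}$ (a rescaled family) forms a multiplicative Loewner chain with transition mappings $(f_{st})$. Then Theorem \ref{EV_univalence} forces each $f_{st}$, and by taking a limit the map $\eta_\mu$ itself (as the weak limit of $\bmu^s$ as $s\downarrow 0$ is $\mu$, using weak continuity of $c\mapsto\bmu^c$ and Lemma \ref{lemmaconvergenceunit}), to be univalent. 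Since $\eta_\mu$ univalent is equivalent to $\psi_\mu$ univalent, we conclude $\mu\in\Star(\tor)$.

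For the reverse inclusion $\Star(\tor)\subset\cH(\submm)$, suppose $\psi_\mu$ is univalent and starlike. Note $\mu\neq\haar$ (the Haar measure has constant $\psi$). Univalence of $\psi_\mu$ gives $m_1(\mu)=\eta_\mu'(0)\neq0$, so $\mu\in\cP_\times(\tor)$. For $c\in(0,1)$, starlikeness gives $c\psi_\mu(\disk)=\psi_{(1-c)\haar+c\mu}(\disk)\subset\psi_\mu(\disk)$, so the analytic map $\eta:=\psi_\mu^{-1}\circ(c\psi_\mu)=\psi_\mu^{-1}\circ\psi_{(1-c)\haar+c\mu}$ is a well-defined univalent self-map of $\disk$ with $\eta(0)=0$. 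By Lemma \ref{characterizationeta} there exists $\bmu^c\in\cP(\tor)$ with $\eta_{\bmu^c}=\eta$, and then $\eta_\mu\circ\eta_{\bmu^c}=\eta_{(1-c)\haar+c\mu}$, i.e.\ $\mu\submm\bmu^c=(1-c)\haar+c\mu$. Thus $\mu\in\cH(\submm)$.

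The main obstacle I anticipate is the univalence half of $\cH(\submm)\subset\Star(\tor)$: the containment of ranges is immediate, but deducing injectivity of $\psi_\mu$ requires carefully building the Loewner chain from the family $\{(1-t)\haar+t\mu\}$ and checking that the reparametrized transition mappings genuinely satisfy the composition law, normalization at $0$, and continuity conditions of Definition \ref{EV_def:evolution_family}, so that Theorem \ref{EV_univalence} applies — and then handling the boundary case $c\to 0$ via a limiting argument (Hurwitz's theorem plus the fact that $\eta$-transforms are constant only for $\haar$, which is excluded since $\mu\neq\haar$ and $m_1(\mu)\neq 0$). The bookkeeping with the parameter substitution $s/t$ and the fact that $\psi_t = t\psi_\mu$ is a genuine rescaling (not a dilation of the measure) is where one must be most careful to get a valid Loewner chain rather than merely a nested family of domains.
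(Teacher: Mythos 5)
Your direction $\Star(\tor)\subset\cH(\submm)$ is correct and coincides with the paper's argument: univalence gives $\psi_\mu'(0)=m_1(\mu)\neq0$, starlikeness gives $c\,\psi_\mu(\disk)\subset\psi_\mu(\disk)$, and Lemma \ref{characterizationeta} applied to $\psi_\mu^{-1}\circ(c\psi_\mu)$ produces $\bmu^c$. The range-containment half of the converse is also fine. The gap is exactly where you feared it: the univalence half of $\cH(\submm)\subset\Star(\tor)$. The additive scheme of Theorem \ref{msd2} does not transfer, because the analogy $\DD_c\leftrightarrow H_c\mu=(1-c)\haar+c\mu$ breaks at the endpoint: $\DD_t\mu\wto\delta_0$ as $t\downarrow0$, so the additive family $(F_{\DD_t\mu})_{0\le t\le1}$ starts at the identity and genuinely is (part of) a Loewner chain containing $F_\mu$ as an element; but $H_t\mu\wto\haar$ as $t\downarrow0$ and $\eta_\haar\equiv0$, so the family $(\eta_{H_t\mu})$ degenerates to a constant rather than the identity and is \emph{not} a multiplicative Loewner chain in the sense of Definition \ref{EV_def:evolution_family}. (Also, the correct composition identity is $\psi_{H_s\mu}=\psi_{H_t\mu}\circ\eta_{\bmu^{s/t}}$ for $s\le t$, not $\psi_s\circ f_{st}=\psi_t$, though that alone is a fixable indexing slip.) What the Loewner machinery legitimately yields — via $\eta_{\bmu^c}\circ\eta_{\bmu^d}=\eta_{\bmu^{cd}}$, proved near $0$ from \eqref{TH} and extended by the identity theorem — is a weakly continuous $\submm$-convolution semigroup $\mu_t:=\bmu^{e^{-t}}$, hence univalence of each transition map $\eta_{\mu_t}$ (Theorem \ref{EV_univalence}, equivalently Corollary \ref{MID univ}). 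That is univalence of the cofactors $\bmu^c$, not of $\eta_\mu$ itself.

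Your proposed repair by a limit rests on a false statement: $\bmu^s$ does \emph{not} converge weakly to $\mu$ as $s\downarrow0$. By \eqref{TH}, $\eta_{\bmu^s}\to0$ locally uniformly, i.e.\ $\bmu^s\wto\haar$ (the paper itself sets $\bmu^0=\haar$ and $\bmu^1=\delta_1$), so neither endpoint of $c\mapsto\bmu^c$ is $\mu$, and Hurwitz plus weak closedness cannot deliver univalence of $\eta_\mu$. The missing ingredient is the paper's local-to-global propagation, for which you have no substitute: since $\mu\in\cP_{\times}(\tor)$, $\psi_\mu$ is univalent on some $\epsilon\disk$; the decomposition gives $\psi_\mu=e^{t}\,\psi_\mu\circ\eta_{\mu_t}$, and since $\mu_t\wto\haar$ forces $\eta_{\mu_t}\to0$ locally uniformly, for any $r<1$ one can choose $t$ with $\eta_{\mu_t}(r\disk)\subset\epsilon\disk$; then $\psi_\mu$ is univalent on $r\disk$ as the composition of the univalent map $\eta_{\mu_t}$ with $\psi_\mu|_{\epsilon\disk}$, and letting $r\uparrow1$ finishes. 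Without this step (or an equivalent one) your argument establishes only that the $\eta_{\bmu^c}$ are univalent, which is strictly weaker than $\mu\in\Star(\tor)$.
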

\begin{proof}
In the proof we adopt the notation $H_c \mu:=(1-c)\haar + c \mu$.

$\Star(\tor)\subset \cH(\submm)$. Let $\mu \in \Star(\tor)$. Since $\psi_\mu$ is univalent, $\psi_\mu'(0) \neq 0$ and hence $\mu\in\cP_\times(\tor)$. From the inclusion $\psi_{H_c\mu}(\disk)= c \psi_\mu(\disk)\subset \psi_\mu(\disk)$ we may define the univalent map $\eta_c=\psi_\mu^{-1}\circ \psi_{H_c\mu}\colon \disk\to\disk$. It satisfies $\eta_c(0)=0$, and then applying Lemma \ref{characterizationeta} shows that $\eta_c=\eta_{\bmu^c}$ for some $\bmu^c \in \cP(\tor)$. This $\bmu^c$ satisfies $\mu \submm \bmu^c =H_c\mu$.

$\Star(\tor)\supset \cH(\submm)$.
Take $\mu \in \cH(\submm)$ and take the decomposition $H_c\mu = \mu \submm \bmu^c$. This relation obviously shows that $c \psi_\mu(\disk)\subset \psi_\mu(\disk)$ for all $c\in (0,1)$, and so it suffices to prove the univalence of $\psi_\mu$. In a neighborhood of $0$ we have \eqref{TH}, which implies that $\bmu^c \submm \bmu^d = \bmu^{c d}$. Introducing the reparametrization $\mu_t:= \bmu^{\exp(-t)}$ then shows that
\begin{equation}\label{star-semigroup}
\mu_s \submm \mu_t =\mu_{s+t}, \qquad s,t\geq0, \qquad \mu_0=\delta_1.
\end{equation}
Therefore, $\{\mu_t\}_{t\geq0}$ is a weakly continuous $\submm$-convolution semigroup, and by Corollary \ref{MID univ}, we have $\mu_t \in \Univ(\tor)$ for all $t\geq0$.

Take and fix $\epsilon\in(0,1)$ such that $\psi_\mu$ is univalent in $\epsilon \disk$. Take $0<r<1$. Since $\mu_t \wto \haar$ as $t\to\infty$, by Lemma \ref{lemmaconvergenceunit} there exists $t =t(\epsilon,r)>0$ such that $\sup_{z\in r\disk}|\eta_{\mu_t}(z)| < \epsilon$. Since $\eta_{\mu_t}$ is univalent on $r\disk$, the function $\psi_\mu = e^t \psi_\mu \circ \eta_{\mu_t}$ is also univalent on $r\disk$. Since $0<r<1$ was arbitrary, we conclude that $\psi_\mu$ is univalent on $\disk$.
\end{proof}

The starlike functions have an well known characterization; see \cite[Theorem 2.6]{P75}.

\begin{theorem}
Suppose that $\mu \in \cP_\times(\tor)$. The following are equivalent.
\begin{enumerate}[\rm(1)]
\item $\mu \in \cH(\submm)$.
\item $\Re\left(\frac{z \psi_\mu'(z)}{\psi_\mu(z)}\right) \geq0$.
\end{enumerate}
If the above conditions hold, then the infinitesimal generator $B$ for the convolution semigroup \eqref{star-semigroup} is given by $B(z)= -\frac{\psi_\mu(z)}{z \psi_\mu'(z)}$.
\end{theorem}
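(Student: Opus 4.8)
The plan is to transfer the statement to the classical theory of starlike univalent functions in the unit disk via the identity $\eta_\mu = \psi_\mu/(1+\psi_\mu)$, which, since $\mu\in\cP_\times(\tor)$, is a conformal change in a neighborhood of $0$ and satisfies $\eta_\mu(0)=0$, $\eta_\mu'(0)=m_1(\mu)\neq 0$. Recall from Theorem \ref{starlikeunit} that the conditions $\mu\in\cH(\submm)$ and $\psi_\mu$ starlike are equivalent, so it suffices to characterize starlikeness of $\psi_\mu$ analytically. For the equivalence $(1)\Leftrightarrow(2)$ I would invoke \cite[Theorem 2.6]{P75}: an analytic $g\colon\disk\to\C$ with $g(0)=0$, $g'(0)\neq 0$ is univalent with $g(\disk)$ star-shaped with respect to $0$ if and only if $\Re\bigl(zg'(z)/g(z)\bigr)\geq 0$ on $\disk$. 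Applying this with $g=\psi_\mu$ gives $(1)\Leftrightarrow(2)$ directly. (Strictly speaking P\'ommerenke's theorem has strict inequality on $\disk\setminus\{0\}$; one reaches $\Re(\cdot)\geq 0$ by noting that the quantity $z\psi_\mu'(z)/\psi_\mu(z)$ is holomorphic at $0$ with value $1$, so the closed half-plane version and the open one coincide here, and one passes to the closure to accommodate the degenerate possibility of a half-plane image — though the case where $\Re$ vanishes somewhere forces $\psi_\mu$ onto a half-plane, which is handled since $\psi_\mu(0)=0$ sits on the boundary only if $\psi_\mu$ is linear, and I would remark on this explicitly.)

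For the formula for the generator $B$, I would first recall from Theorem \ref{starlikeunit} and its proof that under these conditions the reparametrized family $\mu_t:=\bmu^{\exp(-t)}$ is a weakly continuous $\submm$-convolution semigroup with $\mu_0=\delta_1$, and the corresponding $\eta$-transforms $\eta_t:=\eta_{\mu_t}$ form a multiplicative Loewner chain satisfying $\eta_s\circ\eta_t=\eta_{s+t}$. By Theorem \ref{UMID}\eqref{CSUMMI}, this semigroup admits a Herglotz-form generator $B$ with $\frac{{\rm d}}{{\rm d}t}\eta_t(z)=\eta_t(z)B(\eta_t(z))$, $\eta_0(z)=z$. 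Now I would exploit the explicit description of $\eta_t$ coming from the proof of Theorem \ref{starlikeunit}: in a neighborhood of $0$ one has $\eta_t(z)=\psi_\mu^{-1}(e^{-t}\psi_\mu(z))$. Differentiating this relation in $t$ at $t=0$ and using the chain rule,
\[
\frac{{\rm d}}{{\rm d}t}\Big|_{t=0}\eta_t(z)
= \frac{{\rm d}}{{\rm d}t}\Big|_{t=0}\psi_\mu^{-1}\bigl(e^{-t}\psi_\mu(z)\bigr)
= (\psi_\mu^{-1})'(\psi_\mu(z))\cdot\bigl(-\psi_\mu(z)\bigr)
= -\frac{\psi_\mu(z)}{\psi_\mu'(z)}.
\]
On the other hand $\frac{{\rm d}}{{\rm d}t}\big|_{t=0}\eta_t(z)=z B(z)$ by the differential equation \eqref{ODE} with $\eta_0(z)=z$. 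Equating the two gives $zB(z)=-\psi_\mu(z)/\psi_\mu'(z)$, hence $B(z)=-\dfrac{\psi_\mu(z)}{z\psi_\mu'(z)}$ in a neighborhood of $0$, and then on all of $\disk$ by analytic continuation (both sides are holomorphic on $\disk$: the left side by Theorem \ref{UMID}, the right side because $\psi_\mu$ is univalent so $\psi_\mu'$ never vanishes and $\psi_\mu$ vanishes only at $0$ to first order, making $\psi_\mu(z)/(z\psi_\mu'(z))$ holomorphic and nonzero).

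The main obstacle I anticipate is the careful justification at the boundary between "open half-plane'' and "closed half-plane'' in the equivalence $(1)\Leftrightarrow(2)$, i.e.\ handling the degenerate case where $\psi_\mu(\disk)$ is itself a half-plane (equivalently $\psi_\mu$ linear, $\mu=\pk_c$), and, relatedly, confirming that P\"ommerenke's starlikeness criterion applies verbatim given only $\mu\in\cP_\times(\tor)$ rather than some stronger normalization. A second, more technical point is to make sure the identity $\eta_t(z)=\psi_\mu^{-1}(e^{-t}\psi_\mu(z))$ — valid a priori only near $0$ — is used correctly before passing to analytic continuation for the generator formula; since $B$ is already known to extend holomorphically to $\disk$ from Theorem \ref{UMID}, and the right-hand side does too, the continuation is routine, but I would state it explicitly to avoid any gap.
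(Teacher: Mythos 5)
Your proposal is correct and follows essentially the same route as the paper: the equivalence is the combination of Theorem \ref{starlikeunit} with the classical starlikeness criterion of \cite[Theorem 2.6]{P75} applied to $\psi_\mu$, and the generator formula comes from differentiating $\eta_{\mu_t}(z)=\psi_\mu^{-1}\bigl(e^{-t}\psi_\mu(z)\bigr)$ at $t=0$ against the differential equation of Theorem \ref{UMID}, followed by analytic continuation. One minor remark: the passage from $\Re\bigl(z\psi_\mu'(z)/\psi_\mu(z)\bigr)\geq 0$ to the strict inequality needed for the criterion is immediate from the minimum principle for harmonic functions together with the value $1$ at $z=0$ (and condition (2) already forces $\psi_\mu$ to have no zeros in $\disk\setminus\{0\}$), so your digression about a possible half-plane image is unnecessary, though it does not affect the correctness of the argument.
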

\begin{remark}
If the above conditions hold, then there exists $c\in\overline{\disk}\setminus\{0\}$ and a probability measure $\rho$ on $\tor$ such that
\begin{equation}\label{int:star}
\psi_\mu(z) = c z \exp\left(-2 \int_\tor \log (1- \xi z)\,\rho({\rm d}\xi) \right).
\end{equation}
However, the parameters $c$ and $\rho$ must satisfy a (seemingly complicated) additional condition in order to have $\Re[\text{RHS of \eqref{int:star}}] \geq -1/2$.
\end{remark}

\begin{example}
The distribution $\mu_t$ of \index{Brownian motion!monotone}unitary monotone Brownian motion (see Example \ref{UMBM}) has a univalent $\psi_{\mu_t}$ which has the starlike range
\begin{equation*}
\psi_{\mu_t}(\disk) = \{z \in \C: \Re(z) >-1/2\} \setminus [ ((1-e^{-t/2})^{-1/2}-1)/2,\infty).
\end{equation*}
\end{example}

\begin{example}
Take $\rho = w_1 \delta_{e^{-i \theta_1}} +w_2 \delta_{e^{-i\theta_2}}$, where $-\pi \leq \theta_1 < \theta_2 <\pi$ and $w_1+w_2=1, w_1,w_2>0$. Then \eqref{int:star} at $z=e^{i\theta}$ reads
\begin{equation*}
\psi(e^{i\theta}) =
\begin{cases}
\frac{c e^{i(w_1\theta_1+w_2 \theta_2) +i(w_1-w_2)\pi}}{2 (1-\cos(\theta-\theta_1))^{w_1} (1-\cos(\theta-\theta_2))^{w_2}},  & \theta_1 < \theta < \theta_2, \\[2mm]
\frac{-c e^{i(w_1\theta_1+w_2 \theta_2)}}{2 (1-\cos(\theta-\theta_1))^{w_1} (1-\cos(\theta-\theta_2))^{w_2}}, & \theta_2 < \theta < \theta_1 + 2\pi.
\end{cases}
\end{equation*}
The range is $\C \setminus (L_1 \cup L_2)$ where $L_k$ are half-lines of the form $\{r e^{i \alpha_k}: r \geq r_k\}$, and hence $\Re[\psi(z)] \geq -1/2$ can never be satisfied.
\end{example}

\begin{example}
Take $\rho = (1-t)\haar + t \delta_1, t\in(0,1)$. Then the RHS of \eqref{int:star} at $z=e^{i\theta}$ reads
\begin{equation*}
\psi(e^{i\theta}) = \frac{c e^{i(1-t)\theta + i t \pi}}{|2 \sin (\theta/2)|^{2 t}}, \qquad 0 < \theta < 2\pi.
\end{equation*}
The range of $\psi$ is contained in $\{z\in\C: \Re(z)> -1/2\}$ if $0<c \leq  2^{2t -1}$ and $0<t < 1/2$. In this case the probability measure $\mu$ such that $\psi_\mu=\psi$ is Haar absolutely continuous with density
\begin{equation*}
\frac{{\rm d}\mu}{{\rm d}\haar}(e^{i\theta})=\frac{c \cos ((1-t)\theta + t \pi) }{2^{2t-1} \sin^{2t }(\theta/2)} +1, \qquad 0 < \theta < 2\pi.
\end{equation*}
\end{example}

\index{moment generating function!starlike|)}

\subsection{Univalence and regularity of probability measures} The multiplicative version of Proposition \ref{Atm} holds true. 
\begin{proposition}\label{MAtm}
Let $\mu \in \Univ(\tor)$. Suppose that $\mu$ has an isolated atom at $\zeta \in \tor$. Then $\mu|_{\tor\setminus \{\zeta\}}$ is absolutely continuous with respect to the Haar measure $\haar$ and its density is $L^\infty$. 
\end{proposition}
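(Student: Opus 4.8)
The plan is to mimic the proof of Proposition \ref{Atm}, transporting it to the unit disk via a Möbius transformation. The key is the analogue of the crucial fact \eqref{Cinfty}: if $\mu$ has an isolated atom at $\zeta\in\tor$, say $\mu(\{\zeta\})=\mu(\{e^{i\theta}:|\theta-\arg\zeta|<\epsilon\})$ for some $\epsilon>0$, then I claim that $\psi_\mu$ (equivalently $\eta_\mu$) stays away from the ``singular'' boundary behaviour on the part of $\tor$ away from $\zeta$. Concretely, I would show that for any $\delta\in(0,\epsilon)$ there is a constant $C_\delta>0$ such that $\Re(2\psi_\mu(r\xi^{-1})+1)\le C_\delta$, or more directly that $1+\psi_\mu$ stays bounded away from $0$, uniformly as $r\uparrow 1$ and $\xi$ ranges over the arc $\{|\arg\xi-\arg\zeta|\ge\delta\}$. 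Via the Herglotz-type inversion formulas \eqref{eq:Herglotz-inv1}--\eqref{eq:atom3}, this bound forces the singular part of $\mu$ to be concentrated at $\zeta$ and gives an $L^\infty$ bound on the density on $\tor\setminus\{\zeta\}$, exactly as \eqref{Der} does in Proposition \ref{Atm}.

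First I would set up the decomposition $\mu=\lambda\delta_\zeta+\tau$ with $\lambda=\mu(\{\zeta\})>0$, where $\tau$ is a finite non-negative measure supported on the closed arc complementary to a small neighbourhood of $\zeta$; consequently $\psi_\tau$ (hence $\psi_\mu-\lambda\psi_{\delta_\zeta}$) extends analytically across the arc near $\zeta$. Suppose the desired boundedness fails: then there is a sequence $z_n$ in $\disk$, staying a fixed distance from $\zeta$ on the circle, with $\eta_\mu(z_n)\to 1$ (the value $\eta_{\delta_1}$ plays here the role that $0$ played for $F_\mu$). I would then look for $w\in\disk$ with $\eta_\mu(w)=\eta_\mu(z_n)$ by a Rouché argument: writing $\psi_\mu(z)=\dfrac{\lambda z\zeta^{-1}}{1-z\zeta^{-1}}+\psi_\tau(z)$ and using $\eta_\mu=\psi_\mu/(1+\psi_\mu)$, the equation $\eta_\mu(z)=\eta_\mu(z_n)$ becomes, after clearing denominators, the vanishing of a function that on a small circle around $\zeta$ is dominated by its ``$(z-\zeta)$''-type leading term for $n$ large, so it has a unique zero $w$ in that small disk, and $w\ne\zeta$ since the contribution of the atom is non-vanishing. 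Finally $w$ must lie in $\disk$: $\eta_\mu$ maps $\disk$ into $\disk$, maps $\C\setminus\overline\disk$ into $\C\setminus\overline\disk$ by the symmetry $\psi_\mu(z)=\overline{-1-\psi_\mu(1/\bar z)}$, and maps the boundary arc near $\zeta$ (where $\tau$ has no mass) into $\tor$; so $w\in\disk$, contradicting univalence of $\eta_\mu$.

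Having established the boundedness, the regularity conclusion follows exactly as in Proposition \ref{Atm}: by \eqref{eq:Herglotz-inv1} the density of $\mu$ with respect to $\haar$ on any arc disjoint from $\zeta$ is controlled by $\limsup_{r\uparrow1}\Re(2\psi_\mu(r\xi^{-1})+1)$, which is uniformly bounded there, and the Radon--Nikodym derivative bound $\mu(A)/\haar(A)\le$ (uniform constant) shows $\mu|_{\tor\setminus\{\zeta\}}\ll\haar$ with $L^\infty$ density. The standard fact that the singular part of a measure on $\tor$ is supported where the Poisson-type integral blows up (the circle analogue of \cite[Theorem 7.15]{Rud87}, \cite[Theorem 7.15]{Rud87}) then confines the singular part to $\{\zeta\}$.

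The main obstacle I anticipate is the Rouché/normalization bookkeeping in the Möbius-transported setting: in Proposition \ref{Atm} the function $F_\mu$ fixes $\infty$ with derivative $1$ and the competing point is $0=G_\mu(\infty)$, and these normalizations make the comparison $|f_n(z)-(z-a)|<|f(z)|$ on $\partial B_\epsilon(a)$ transparent. On $\disk$ the corresponding normalization for $\eta_\mu$ is $\eta_\mu(0)=0$ with $\eta_\mu'(0)=m_1(\mu)$, which need not have modulus $1$, and the ``target'' value $1$ sits on $\tor$ rather than at $\infty$; so I must be careful that the leading behaviour of the cleared-denominator function near $\zeta$ genuinely dominates, and that the sign/phase of the atomic term $\lambda\eta_\mu(z_n)$ (or its reciprocal) does not interfere. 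I expect this to be a routine but slightly delicate computation; once it is in place, the rest of the argument is a direct translation of the real-line proof.
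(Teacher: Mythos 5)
Your proposal is correct and is exactly the paper's intended argument: the published proof is only a sketch saying that $\psi_\mu$ and $\eta_\mu$ play the roles of $G_\mu$ and $F_\mu$ in Proposition \ref{Atm}, with the Poisson-kernel bound $\mu(\{e^{i\varphi}:|\varphi-\theta|<h\})/h\le \Re\bigl(2\psi_\mu(re^{-i\theta})+1\bigr)$ replacing \eqref{Der}, and your Rouch\'e-plus-reflection reconstruction (using $\psi_\mu(z)=\overline{-1-\psi_\mu(1/\bar z)}$ and $\Re\psi_\mu=-1/2$ on the arc of analyticity) supplies precisely the omitted details. The only fixes needed are the bookkeeping you yourself flag: with the paper's kernel \eqref{def:psi-eta} the atom at $\zeta$ contributes $\lambda\zeta z/(1-\zeta z)$, whose pole sits at $\bar\zeta=\zeta^{-1}$, so the Rouch\'e disk and the arc across which $\psi_\tau$ is analytic must be centred at $\bar\zeta$ rather than $\zeta$, and the substantive boundedness claim is your first formulation ($\Re(2\psi_\mu+1)$ bounded, i.e.\ $\eta_\mu$ staying away from $1$), not ``$1+\psi_\mu$ bounded away from $0$'', which holds automatically since $\Re\psi_\mu\ge-\tfrac12$.
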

\begin{proof}
The proof is similar to that of Proposition \ref{Atm}. The functions $\psi_\mu$ and $\eta_\mu$ play the roles of $G_\mu$ and $F_\mu$, respectively. For example the inequality \eqref{Der} can be replaced by 
$$
\frac{\mu(\{e^{i \varphi}: |\varphi-\theta| <h\})}{h} \leq \int_{|\varphi-\theta| <h} \frac{1-r^2}{1+r^2-2r\cos(\varphi-\theta)}\,\mu({\rm d}e^{i\varphi}) \leq \Re(2\psi_\mu(r e^{-i\theta})+1), 
$$
where $h = 1-r \in(0,1)$ and $\theta \in [0,2\pi)$. The remaining arguments are omitted here.  
\end{proof}
\begin{remark}
Anshelevich and Arizmendi proved a weaker version using the wrapping map \cite[Proposition 56]{AA}. We are not sure if the above result can also be proved with the wrapping map.
\end{remark}
\begin{corollary}
$\ID(\circledast)$ is not a subset of $\Univ(\tor)\cup\{\haar\}$.
\end{corollary}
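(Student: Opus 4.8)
The plan is to deduce the statement from Proposition~\ref{MAtm}, in exact parallel with the additive corollary following Proposition~\ref{Atm}. Proposition~\ref{MAtm} says that any $\mu\in\Univ(\tor)$ with an isolated atom is Haar-absolutely continuous with $L^\infty$ density off that single atom; consequently, to prove $\ID(\circledast,\tor)\not\subset\Univ(\tor)\cup\{\haar\}$ it suffices to exhibit a measure in $\ID(\circledast,\tor)$ which is not $\haar$ and which possesses at least two \emph{isolated} atoms, since then the measure obtained by deleting one of those atoms is still a nonzero atomic measure, hence not Haar-absolutely continuous.

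The candidate I would use is, for a fixed integer $p\ge 2$, the normalized Haar measure on the finite subgroup $\Z_p=\{e^{2\pi i k/p}:0\le k\le p-1\}$, that is $\lambda_p=\frac1p\sum_{k=0}^{p-1}\delta_{e^{2\pi i k/p}}$. Being the Haar measure of a compact group, $\lambda_p$ is idempotent for $\circledast$, i.e.\ $\lambda_p\circledast\lambda_p=\lambda_p$, so $\lambda_p=\lambda_p^{\circledast n}$ for all $n\in\N$ and therefore $\lambda_p\in\ID(\circledast,\tor)$; this also matches the structure of $\ID(\circledast,\tor)$ recalled in the proof of the Khintchine-type theorem $\IA(\circledast,\tor)=\ID(\circledast,\tor)$, taking the factor $\nu=\delta_1$ there. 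The atoms of $\lambda_p$ are the $p\ (\ge 2)$ distinct $p$-th roots of unity, each isolated in $\tor$, and $\lambda_p\ne\haar$ because $\haar$ has no atoms. Applying Proposition~\ref{MAtm} with $\zeta=1$: if $\lambda_p$ were in $\Univ(\tor)$, then $\lambda_p|_{\tor\setminus\{1\}}=\frac1p\sum_{k=1}^{p-1}\delta_{e^{2\pi i k/p}}$ would be Haar-absolutely continuous, which is false. Hence $\lambda_p\notin\Univ(\tor)\cup\{\haar\}$, which proves the corollary. (As an independent sanity check, one computes directly $\psi_{\lambda_p}(z)=z^p/(1-z^p)$ via $\sum_{k=0}^{p-1}(1-e^{2\pi i k/p}z)^{-1}=p/(1-z^p)$, and this is plainly not univalent on $\disk$ for $p\ge 2$.)

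I do not foresee a genuine obstacle in this argument; the only point requiring care — and the reason one cannot simply imitate the additive proof verbatim with a Poisson law — is the isolation of the atoms. A Poisson random variable $X$ on $\Z_{\ge 0}$ has atoms at $0,1,2,\dots$, which are isolated on $\R$ but whose images $\{e^{ik}:k\ge 0\}$ are dense in $\tor$, so Proposition~\ref{MAtm} would not apply. Working inside a finite subgroup repairs this; equivalently one may push a classical Poisson law on $\Z$ forward along the homomorphism $k\mapsto e^{2\pi i k/p}$, which yields a non-uniform $\circledast$-infinitely divisible measure on $\Z_p$ (again $\circledast$-infinitely divisible because pushforward along a group homomorphism commutes with convolution) with all $p\ge 2$ atoms isolated, giving an example that mirrors the additive case even more closely.
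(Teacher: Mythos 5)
Your proof is correct and takes essentially the same route as the paper: the paper's example is $\mu=p\delta_1+(1-p)\delta_{-1}$ with $\tfrac12\le p<1$, whose case $p=\tfrac12$ is exactly your $\lambda_2$, and it likewise concludes via Proposition \ref{MAtm} applied to a purely atomic measure with isolated atoms. The only (cosmetic) difference is that the paper certifies $\circledast$-infinite divisibility by writing down the explicit convolution semigroup $\mu^{\circledast t}$, whereas you use idempotence of the Haar measure on $\Z_p$.
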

\begin{proof}
Let $\mu = p \delta_{1}+ (1-p) \delta_{-1}, 1/2 \leq p<1$. It embeds into the $\circledast$-convolution semigroup
\begin{equation*}
\mu^{\circledast t} =  \frac{1+ (2p-1)^t}{2} \delta_1 +  \frac{1- (2p-1)^t}{2} \delta_{-1}, \qquad t\geq0
\end{equation*}
and hence $\mu$ is $\circledast$-infinitely divisible. Proposition \ref{MAtm} shows that $\mu$ is not in $\Univ(\tor)$.
\end{proof}

\subsection{Problems} We can ask several multiplicative versions of questions that appeared in the additive case.
\begin{enumerate}[\rm(1)]
\item Can we characterize the unimodal distributions in terms of a limit theorem?

\item We characterized unimodal distributions in terms of $\psi_\mu$, and characterized a class of probability measures which have starlike $\psi_\mu$. On the other hand many other classes of univalent functions on the unit disk are known in the literature, e.g.\ close-to-convex functions and spiral-like functions. Can we characterize probability measures whose moment generating function $\psi_\mu$ belong to those  classes?

\item Are the multiplicative versions of Problem \ref{m-analogue} and Conjecture \ref{m-conj} true? Cf.\ \cite{AW14}.

\end{enumerate}

\chapter*{Acknowledgements}
The authors thank Octavio Arizmendi, Ikkei Hotta, and Tadahiro Miyao for useful discussions and providing proofs of some results.  

\appendix



\newpage
\printindex

\end{document}